\documentclass{amsart}
\usepackage[utf8]{inputenc}
\usepackage{amssymb}
\usepackage{amsmath}
\usepackage{xcolor}
\usepackage{amscd}
\usepackage{comment}
\usepackage{accents}
\usepackage{enumitem}
\usepackage{tikz}
\usepackage{tikz-cd}
\usepackage[backend=biber, maxbibnames=99]{biblatex}
\usepackage{upgreek}

\usepackage{xr-hyper}
\usepackage[
pdftex,
bookmarks=false,
colorlinks=true,
debug=true,
pdfnewwindow=true]{hyperref}

\newcommand\iso{\,\vphantom{j^{X^2}}\smash{\overset{\sim}{\vphantom{\rule{0pt}{0.20em}}\smash{\longrightarrow}}}\,}

\title{Quantum Harish-Chandra bimodules at roots of unity and affine Hecke category}
\author{ Trung Vu}
\address{Department
of Mathematics, Yale University, New Haven CT USA}
\email{trung.vu@yale.edu}

\newtheorem{Thm}{Theorem}[section]
\newtheorem{Prop}[Thm]{Proposition}
\newtheorem{Cor}[Thm]{Corollary}

\newtheorem{Lem}[Thm]{Lemma}
\theoremstyle{definition}
\newtheorem{Ex}[Thm]{Example}
\newtheorem{defi}[Thm]{Definition}
\newtheorem{Rem}[Thm]{Remark}
\newtheorem{Conj}[Thm]{Conjecture}
\numberwithin{equation}{section}

\def\a{\alpha}
\def\b{\beta}

\def\e{\epsilon}

\def \utheta{\upvartheta}
\def\w{\omega}
\def\s{\sigma}

\def\g{\mathfrak{g}}
\def\tg{\tilde{\mathfrak{g}}}

\def\h{\mathfrak{h}}

\def \n{\mathfrak{n}}

\def \m{\mathfrak{m}}
\def \n{\mathfrak{n}}
\def \fu{\mathfrak{u}}
\def \fE{\mathfrak{E}}
\def \fJ{\mathfrak{J}}
\def \fP{\mathfrak{P}}
\def \fC{\mathfrak{C}}
\def \fR {\mathfrak{R}}
\def \fI {\mathfrak{I}}

\def \uchi{\underline{\chi}}
\def \uH {\underline{H}}

\def \op{\textnormal{op}}

\def \rs {\textnormal{rs}}

\def\x{\times}
\def\<{\langle}
\def\>{\rangle}

\def \({\left(}
\def \){\right)}

\newcommand{\tE}{\tilde{E}}
\newcommand{\tF}{\tilde{F}}

\newcommand{\te}{\tilde{e}}
\newcommand{\he}{\hat{e}}
\newcommand{\hf}{\hat{f}}
\newcommand{\tf}{\tilde{f}}

\newcommand{\tZ}{\tilde{Z}}
\newcommand{\tFr}{\tilde{\textnormal{Fr}}}

\newcommand{\St}{\mathsf{St}}
\newcommand{\Rmod}{\textnormal{-Rmod}}
\newcommand{\rmod}{\textnormal{-rmod}}
\newcommand{\Lmod}{\textnormal{-Lmod}}
\newcommand{\lmod}{\textnormal{-lmod}}
\newcommand{\Bimod}{\textnormal{-Bimod}}
\newcommand{\bimod}{\textnormal{-bimod}}
\newcommand{\Pbim}{\textnormal{Pbim}}

\newcommand{\Mod}{\textnormal{-mod}}
\newcommand{\Hilt}{\textnormal{HCTilt}}
\newcommand{\Tilt}{\textnormal{Tilt}}
\newcommand{\ad}{\operatorname{ad}}

\newcommand{\Ext}{\textnormal{Ext}}

\newcommand{\loc}{\mathrm{loc}}
\newcommand{\Hom}{\operatorname{Hom}}
\newcommand{\RHom}{\operatorname{RHom}}
\newcommand{\End}{\operatorname{End}}
\newcommand{\Id}{\mathrm{Id}}
\newcommand{\Rep}{\operatorname{Rep}}
\newcommand{\Spec}{\operatorname{Spec}}
\newcommand{\Mat}{\operatorname{Mat}}
\newcommand{\Supp}{\operatorname{Supp}}
\newcommand{\Stab}{\operatorname{Stab}}
\newcommand{\Coh}{\operatorname{Coh}}
\newcommand{\Proj}{\operatorname{Proj}}

\newcommand{\dU}{\dot{U}}
\newcommand{\mU}{\mathcal{U}}
\newcommand{\dmU}{\dot{\mathcal{U}}}

\newcommand{\cmU}{\check{\mathcal{U}}}

\newcommand{\cU}{\check{U}}

\newcommand{\CA}{\mathcal{A}}
\newcommand{\CB}{\mathcal{B}}
\newcommand{\CC}{\mathcal{C}}
\newcommand{\CE}{\mathcal{E}}
\newcommand{\CW}{\mathcal{W}}
\newcommand{\CP}{\mathcal{P}}
\newcommand{\CH}{\mathcal{H}}
\newcommand{\CO}{\mathcal{O}}
\newcommand{\CR}{\mathcal{R}}
\newcommand{\CS}{\mathcal{S}}
\newcommand{\CF}{\mathcal{F}}
\newcommand{\CG}{\mathcal{G}}

\newcommand{\bU}{{\mathbf{U}}}
\newcommand{\BZ}{{\mathbb{Z}}}
\newcommand{\BN}{{\mathbb{N}}}
\newcommand{\BQ}{{\mathbb{Q}}}
\newcommand{\BC}{{\mathbb{C}}}
\newcommand{\BR}{{\mathbb{R}}}

\newcommand{\BI}{\mathbb{I}}
\newcommand{\BJ}{\mathbb{J}}
\newcommand{\sN}{\mathsf{N}}
\newcommand{\sF}{\mathsf{F}}
\newcommand{\sR}{\mathsf{R}}
\newcommand{\sC}{\mathsf{C}}
\newcommand{\usR}{\underline{\mathsf{R}}}
\newcommand{\pr}{\mathsf{pr}}
\newcommand{\SB}{\mathsf{SB}}
\newcommand{\ASB}{\mathsf{ASB}}
\newcommand{\SA}{\mathsf{A}}
\newcommand{\BS}{\mathsf{BS}}
\newcommand{\Stn}{\mathsf{St}}
\newcommand{\uBS}{\underline{\mathsf{BS}}}
\newcommand{\uB}{\underline{B}}

\newcommand{\sd}{{\mathsf{d}}}
\newcommand{\Dyn}{\mathrm{Dyn}}
\newcommand{\Or}{\mathsf{Or}}
\newcommand{\sA}{\mathsf{p}}
\newcommand{\HC}{\mathsf{HC}}

\newcommand{\dash}{\textnormal{-}}

\newcommand{\uu}{\underline{\mathfrak{u}}}
\newcommand{\sslash}{\mathbin{/\mkern-6mu/}}

\newcommand{\bmat}[1]{\begin{bmatrix}#1\\\end{bmatrix}}

\makeatletter
\DeclareRobustCommand{\cev}[1]{%
  {\mathpalette\do@cev{#1}}%
}
\newcommand{\do@cev}[2]{%
  \vbox{\offinterlineskip
    \sbox\z@{$\m@th#1 x$}%
    \ialign{##\cr
      \hidewidth\reflectbox{$\m@th#1\vec{}\mkern4mu$}\hidewidth\cr
      \noalign{\kern-\ht\z@}
      $\m@th#1#2$\cr
    }%
  }%
}
\makeatother

\begin{document}

\begin{abstract}The category of Harish-Chandra bimodules for quantum groups was first appeared in the works about topological quantum field theory of surfaces. In this paper, we study  this category when the quantum parameter $q$ is an odd order root of unity. We relate the category to the category of affine Soergel bimodules and to non-commutative Springer resolution.    
\end{abstract}
\maketitle
{ \small \tableofcontents}

\section{Introduction}

Let $\g$ be a complex semisimple Lie algebra, $G$ be its simply connected algebraic group and  $U(\g)$ be its universal enveloping algebra. The category of Harish-Chandra $U(\g)$-bimodules is an important object in representation theory. It was first studied by Harish-Chandra in his works on infinite dimentional representions of $G$ viewed as a real group. Later, it was connected to category $O$ \cite{BG80} , character sheaves \cite{BFO12},  the finite $W$-algebras associated to $\g$ \cite{IL11} and so on.

The quantum group $\bU_q(\g)$ is a Hopf algebra deformation of $U(\g)$. The notion of Harish-Chandra bimodules for quantum groups was first appeared in works of David Jordan and his collaborators in their works about topological quantum field theory of surfaces \cite{BBJ18}. In this paper, we will study the category of quantum Harish-Chandra bimodules when the quantum  parameter $q$ is a root of unity of odd order $\ell$.

The quantum group $\bU_v(\g)$ is a Hopf algebra over $\BC(v)$ generated by $\{E_i, F_i, K^\lambda\}_{1\leq i \leq r}^{\lambda \in Q}$, here $Q$ is the root lattice of $\g$ and $r$ is the rank of $\g$, see \cite[$\mathsection 4$]{J96} or Section \ref{sec: set up}. Let $\CA:=\BC[v, v^{-1}]$. {\it The Lusztig form } $\cmU_v(\g)$ is the Hopf $\CA$-subalgebra of $\bU_v(\g)$ generated by $\{ E_i^{(n)}, F_i^{(n)}, K^\lambda\}_{1\leq i \leq r}^{\lambda \in Q, n \in \BN}$; {\it the De Concini-Kac form} $\mU_v(\g)$ is the Hopf $\CA$-subalgebra of $\bU_v(\g)$ generated by $\{ E_i, F_i, K^\lambda\}_{1\leq i \leq r}^{\lambda \in Q}$, see Section \ref{sec: set up}. By specifying $v \mapsto q \in \BC^\x$, we get Hopf algebras $\cmU_q(\g), \mU_q(\g)$ over $\BC$.

The Lusztig form can be viewed as  an analog of the algebraic group $G$, so that  we can define the category $\Rep(\cmU_q(\g))$ of rational representations of $\cmU_q(\g)$.  This category is a braided monoidal category. Inside $\Rep(\cmU_q(\g))$, there is an algebra object $O_q[G]$ called {\it reflection equation algebra} which is a quantization of $O(G)$, the ring of regular functions on $G$. As a vector space, $O_q[G]$ is spanned by the matrix coefficients of finite dimensional representations in $\Rep(\cmU_q(\g))$.
\begin{defi}The category of {\it quantum Harish-Chandra bimodules} is the category of finitely generated right $O_q[G]$-modules in the category $\Rep(\cmU_q(\g))$. We denote this category by $\HC_q$.
\end{defi}
In \cite{BBJ18}, the category $\HC_q$ is the factorization homology of  annulus so that $\HC_q$ is monoidal by stacking annuli.

In this paper, we use the setup from \cite{LTV}. The Lusztig form is modified by a Drinfeld twist and the modified one is denoted by $\cU_q(\g)$. The Drinfeld twist changes the braided monoidal  structure on  $\Rep(\cmU_q(\g))$ to get a slightly different but equivalent braided monoidal category $\Rep(\cU_q(\g))$. We then identify $O_q[G]$ with the $\cU_q(\g)$-locally finite part $U_q^{fin}$ of the {\it even subalgebra} $U_q^{ev}(\g)$, a modification of the De Concini-Kac form.  In the paper, we define the monoidal structure on $\HC_q$ in an algebraic way, see Section \ref{ssec: noncomplete HC}. This should be related to the monoidal structure obtained by stacking annuli as above.

In the rest of this section , $q=\e$ is a root of unity of odd order $\ell$ satisfying the condition \ref{eq: assumption on l} in Section \ref{sec: defi of  quantum HC}.

The $\cU_\e(\g)$-invariant part of $U_\e^{fin} \cong O_\e[G]$ is central in $U_\e^{fin}$, and we call it the {\it  Harish-Chandra center $\CW_\e$}. The {\it principal block} $\HC_\e(0,0)$ is  defined with respect to the trivial central character of $\CW_\e$, see Definition \ref{defi: integral blocks}.

{\bf Affine Soergel bimodules:}  The Weyl group $W$ of $\g$ acts on the Cartan subalgebra $\h$ of $\g$. Let $P, Q$ be the weight and root lattices of $\g$, respectively. Let $W_{aff}:=W\ltimes Q$ and $W_{ext}:=W\ltimes P$ be the affine Weyl group and the extended affine Weyl group, respectively. Let $S_{aff}$ be the set of simple reflections in $W_{aff}$. Note that $W_{ext}=\Lambda \ltimes W_{aff}$, where $\Lambda \cong P/Q$ is a finite abelian group.  Both $W_{ext}$ and $W_{aff}$  act on $\hat{\h}:= \h \oplus \BC \hbar$, here $\BC \hbar$ is a one dimensional vector space, see Section \ref{ssec: the algebra R} . Let $\sR$ be the completion of the symmetric algebra $S(\hat{\h})$ at zero. 

We define the {\it Bott-Samelson bimodules} $\BS_s:=\sR\otimes_{\sR^s} \sR$ for any $s\in S_{aff}$ and the { \it graph bimodules } $\sR_x$ for $x \in \Lambda$, see Section \ref{ssec: defi of Soergel bimod}. The category $\SB_\hbar$ is the full subcategory  of the category of $\sR$-bimodules generated by $\{\BS_s, \sR_x~|~ s\in S_{aff}, x\in \Lambda\}$ via taking tensor products, direct summands and direct sums. We then define the category $\SB$ which has the same objects as in $\SB_\hbar$ but morphism spaces in $\SB$ are obtained from those in $\SB_\hbar$ by taking the  quotient by $\hbar$, see Section \ref{ssec: defi of Soergel bimod}. The category $\SB$ can be realized as a variant of the category of Soergel bimodules introduced  by Abe in \cite{Abe1}.

\begin{Thm}[Corollary \ref{cor: Sb embed}]\label{thm: Sb embed to HC}There is a full embedding of monoidal categories $\fI_1: \SB \hookrightarrow \HC_\e(0,0)$. 
\end{Thm}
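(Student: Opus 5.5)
The plan is to build $\fI_1$ generator by generator, in the style of Soergel's \emph{Endomorphismensatz} and its quantum analogues. We first construct objects of $\HC_\e(0,0)$ playing the roles of $\sR$, $\BS_s$ and $\sR_x$, and a monoidal functor matching them. Then we prove full faithfulness by comparing $\Hom$ spaces, using a localization to the regular locus where everything is determined by the action of the Harish-Chandra center.

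\textbf{Step 1: the center and the unit object.} Consider the completion of the principal block at the trivial central character of $\CW_\e$, and its distinguished object $U_\e^{fin}$ modulo the appropriate ideal, which is the monoidal unit. The key input is an identification of the (completed) center acting on the principal block, together with the $\ell$-center (Frobenius center) and the Harish-Chandra part, with $\sR$. Heuristically, $\CW_\e$ gives functions on $\h/W$ near $0$. The $\ell$-center, coming from $O(G)$ via quantum Frobenius, contributes the extra direction; it should become the $\hbar$-direction, or more precisely should produce $\hat\h$ as the completion of $\h\times_{\h/W}\h$-type data. We will realize $\sR$ as $\End$ of a \emph{big projective}-type object, or directly as the endomorphism ring of the unit in a deformed version. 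The quotient by $\hbar$ in $\SB$ should correspond to passing from the deformed (completed) category back to $\HC_\e(0,0)$.

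\textbf{Step 2: wall-crossing functors.} For a finite simple reflection $s\in S$, take a translation to an $s$-wall (tensoring with $V\otimes U_\e^{fin}$ for suitable $\cU_\e(\g)$-modules $V$ and projecting to blocks) followed by translation back. For the affine simple reflection $s_0$, use the same construction with a weight on the $s_0$-wall of the $\ell$-dilated alcove; this is exactly where the root of unity enters. We show that each such $\Theta_s$ applied to the unit gives an object $B_s$ with $\End$ action making $\Hom(\mathbf 1,B_s)\cong \BS_s$ as $\sR$-bimodules. Similarly, the graph bimodules $\sR_x$ for $x\in\Lambda$ come from tensoring with one-dimensional-type objects, namely twists by characters of $P/Q$, i.e.\ the central $\ell$-torsion elements. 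Then we define $\fI_1$ on Bott-Samelson objects by tensor products, check monoidality via $\Theta_s\Theta_t\cong$ tensor of $B_s,B_t$, and extend to summands.

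\textbf{Step 3: full faithfulness.} We prove that the functor $\mathbb V=\Hom(\mathbf 1,-)$, a Soergel-type functor, from the additive subcategory generated by the $B_s$ and twists to $\sR$-bimodules is fully faithful and sends $B_{s_1}\cdots B_{s_k}$ to $\BS_{s_1}\otimes_\sR\cdots\otimes_\sR\BS_{s_k}$. The standard route is to show that both $\Hom$ spaces are free over $\sR$ of the same rank (graded dimension matching via the Hecke algebra and standard filtrations), and that $\mathbb V$ is injective on $\Hom$ after localizing to generic points of $\hat\h$. There the category becomes semisimple and both sides decompose along $W_{ext}$. Adjunction of $\Theta_s$ reduces everything to $\Hom(\mathbf 1, B_{\underline w})$, as in Soergel's argument. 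I expect this last part to be the main obstacle: establishing the $\sR$-freeness and rank count on the Harish-Chandra side at a root of unity, including the affine wall, where no classical category $O$ is available. I would first try to deduce it from the known structure of the Frobenius-center fibers, using the (non-commutative) Springer picture and Bezrukavnikov–Mirković–Rumynin-type localization, which should also produce the $\hbar$-deformation needed in Step 1.
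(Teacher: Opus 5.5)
There is a genuine gap: the functor driving your Steps 2--3, $\mathbb V=\Hom(\mathbf 1,-)$, cannot do what you ask of it. In $\HC_\e(0,0)$ the unit is the $(0,0)$-summand of $U_\e^{fin,0}$, and $\End(\mathbf 1)=(U_\e^{fin,0})^{\cU_\e}=\CW_\e^{\wedge_0}\cong\usR=\BC[[\h^*]]$, since the point $0$ is regular. The Frobenius center does not enlarge this ring, because endomorphisms of the unit are just the invariants. Moreover, $W_{ext}$ acts on $\usR$ only through $W$, so localizing at generic points splits things along $W$, not $W_{ext}$. That is exactly why a Soergel-bimodule description fails at $\e$, and why the paper works with $\SB_\hbar$ (resp.\ Abe's realization).

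Even in a deformed category where $\End(\mathbf 1)\cong\sR$, the functor $\Hom(\mathbf 1,-)$ does not return the bimodule. For any monoidal full embedding, $\Hom(\mathbf 1,\fI_1(\BS_s))=\Hom_{\SB}(\sR,\BS_s)$. This is free of rank one, spanned by the image of $\alpha_s\otimes 1+1\otimes\alpha_s$, whereas $\BS_s$ has rank two. Likewise $\Hom(\mathbf 1,B_s\star B_s)$ has rank two while $\Hom(\mathbf 1,B_s)\otimes\Hom(\mathbf 1,B_s)$ has rank one, so $\mathbb V$ is not monoidal. Hence your claim $\Hom(\mathbf 1,B_s)\cong\BS_s$ is false, as is the claim that $\mathbb V$ sends $B_{s_1}\cdots B_{s_k}$ to $\BS_{s_1}\otimes_\sR\cdots\otimes_\sR\BS_{s_k}$, and the rank-count comparison has nothing correct to compare. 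Separately, the $\sR_x$ do not arise from twisting by characters, since $\cU_\e(\g)$ has no nontrivial one-dimensional type~1 modules. They come from the translation bimodules $P^{x\bullet_\ell 0,0}_q$ between the regular weights $0$ and $x\bullet_\ell 0$, placed in $\Lambda$-degree $x$.

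The missing ingredients are the following.
\begin{enumerate}
\item The extra direction $\hbar\in\hat\h$ is the deformation parameter, not a contribution of $Z_{Fr}$. One works with $q=\e e^{2\pi\sqrt{-1}\hbar/\ell}$ over $\BC[[\hbar]]$, where $\CW_q^{\wedge_0}\cong\sR$.
\item The correct Soergel-type functor is the restriction $\bullet_\dag$ to the slice at a regular unipotent $\chi\in\Spec Z_{Fr}$.
\begin{enumerate}
\item Localize via $M\mapsto M\otimes_{U_q^{fin}}U_q^{ev}$.
\item Complete at $\chi$, where the Azumaya property gives $U_q^{ev\wedge_\chi}\cong\Mat_{\sd}(\BC)\otimes(\CA_q^\wedge\widehat\otimes\CW_q^{\wedge_{\uchi}})$.
\item Strip off the Weyl algebra using the Poisson-bimodule structure.
\end{enumerate}
This functor is exact and genuinely monoidal. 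It is fully faithful on diagonal bimodules because $\Hom_{\cU_q}(V,U_q^{fin,\uchi})\to\Hom_{\cU_q}(V,U_q^{ev\wedge_\chi})$ is bijective for $V$ with weights in $Q$. That is a Hartogs-type statement about sections over the regular conjugacy class versus its normal closure, and it uses $Z(G)\twoheadrightarrow C(\chi)$; it is not a freeness-and-rank count.
\item The images $P^{\mu,\lambda}_{q,\dag}\cong\sR^{W_\lambda}$ of translation bimodules are computed through quantum category $O$, using $\Delta_\e(\lambda)_\dag\cong\BC$.
\item To descend to $\e$ you must show $\Hom_{\HC_\e}(M_q/\hbar M_q,N_q/\hbar N_q)=\Hom_{\HC_q}(M_q,N_q)/\hbar$ for HC-tilting $M_q,N_q$. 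This follows from the good filtration on $U_q^{fin}$ and $\Ext$-vanishing against tilting modules. It is precisely what matches the definition $\Hom_{\SB}=\Hom_{\SB_\hbar}/\hbar$, and your sketch does not address it.
\end{enumerate}

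Finally, your fallback through the noncommutative Springer resolution and localization would be circular here. In the paper the comparison with $\SA$ is derived from $\fI_1$, not the reverse, and no independent localization theorem at these roots of unity is available to replace it.
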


By the Kazhdan-Lusztig theory of canonical bases of Hecke algebras, the group $W_{ext}$ can be divided into a disjoint union of subsets called {\it two sided cells} with a certain order. There is a smallest two-sided cell $C_0$. This two-sided cell gives a corresponding full subcategory $\sC_0 \subset \SB$. The category $\sC_0$ is semi-monoidal, i.e, having  all structures of monoidal categories except the existence of unit.  Let $\Proj(\HC_\e(0,0))$ be the full subcategory of projective objects in $\HC_\e(0,0)$.
\begin{Thm}[Theorem \ref{thm: smalles two-sided cell vs proj}]\label{thm: C to Proj} The functor $\fI_1$ restrict to an equivalence of semi-monoidal categories \[\CC_0 \iso \Proj(\HC_\e(0,0)).\]
\end{Thm}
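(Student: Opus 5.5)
We will prove the equivalence in three steps: (1) $\fI_1$ sends objects of $\sC_0$ to projective objects of $\HC_\e(0,0)$; (2) every indecomposable projective is, up to isomorphism, the image of an indecomposable object of $\sC_0$; (3) full faithfulness and compatibility with the semi-monoidal structure are then inherited from Theorem \ref{thm: Sb embed to HC}. Step (3) is formal. Since $\fI_1$ is a full monoidal embedding and $\sC_0$ is a full subcategory closed under tensor products and direct summands, its restriction is fully faithful and semi-monoidal. So the work lies in (1) and (2).

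For (1), we first identify an explicit projective generator of $\HC_\e(0,0)$. Following the classical picture for Harish-Chandra bimodules and the longest-element cell, we take the image of the indecomposable Soergel bimodule $B_{w_0}$ attached to the longest element $w_0$ of the finite Weyl group, or more precisely of the element of minimal length in the lowest cell $C_0$. Concretely, we expect $\fI_1(B_{w_0})$ to be obtained by projecting a translated object, of the form (free module $U_\e^{fin}\otimes V$) $\otimes$ (tensor with a Steinberg-type representation), to the principal block. We will use that the quantum Steinberg module $\St$ is projective in $\Rep(\cU_\e(\g))$ and hence induces projective Harish-Chandra bimodules. Next, every indecomposable object of $\sC_0$ is a direct summand of $B_{w_0}\otimes X$ for some $X \in \SB$; this is the standard description of the lowest two-sided cell ideal as generated by $w_0$, and it holds for $W_{ext}$ as well. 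Tensoring a projective object with an object of $\HC_\e(0,0)$ that is flat on both sides (all images of Bott–Samelson bimodules are) preserves projectivity, because $\Hom(P\otimes X,-)\cong\Hom(P,-\otimes X^\vee)$ by rigidity of translation-type objects. Hence $\fI_1(\sC_0)\subset\Proj$.

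For (2), we compare the two sides on the Grothendieck group. Indecomposables of $\sC_0$ correspond to $C_0$, and by Lusztig this cell is in bijection with a set of size $|W|\cdot$(number of irreducibles in the block), matching the simples of $\HC_\e(0,0)$. We will count the simple objects of $\HC_\e(0,0)$ via the classification of simples through the Harish-Chandra center and the Frobenius center. We will show that distinct indecomposables $B_x$, $x\in C_0$, have non-isomorphic images with distinct simple heads; this follows from full faithfulness, since images of indecomposables are indecomposable and pairwise non-isomorphic. Equality of cardinalities then gives essential surjectivity onto indecomposable projectives, and Krull–Schmidt finishes the argument.

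The main obstacle is (2). It requires showing that $\HC_\e(0,0)$ has exactly as many simples as $C_0$ has elements, or that each projective cover appears in $\fI_1(\sC_0)$. If the counting is not directly available, we will argue instead that $\fI_1(B_{w_0}\otimes \SB)$ generates: every object $M$ of $\HC_\e(0,0)$ receives a surjection from some $\fI_1(B_{w_0})\otimes \fI_1(X)$, obtained via the adjunction from the fact that the free bimodule $U^{fin}_\e\otimes V$ projects onto the block. In that case every indecomposable projective is a summand of an image, and fullness lets us lift the corresponding idempotent to $\sC_0$.
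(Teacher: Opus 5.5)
\textbf{Gap in step (2).} The essential-surjectivity step does not go through. The counting argument compares two infinite sets. The lowest cell $C_0$ contains $w_0t_\lambda$ for every dominant $\lambda$. By Lemma \ref{lem: Classification of simple bimods}, $\HC_\e(0,0)$ has infinitely many simples, e.g.\ $L_\e(\lambda_1)^r\otimes L_\e(\lambda_2)^l\otimes\tFr^*(V)$ with $V$ running over $\mathrm{Irr}(G)$. An injection on isomorphism classes of indecomposables plus ``equal cardinality'' therefore says nothing about surjectivity, and you have no count of simples anyway.

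Your fallback is circular. The indecomposable summands of the projected free bimodules $P^{0,0}(V)$ are exactly the indecomposable projectives, so showing each is a summand of some $\fI_1(X\star B_{w_0}\star Y)$ is the statement itself. Adjunction only reduces it to the following requirement: for every simple $L$ there exist $X,Y\in\SB$ with $\Hom(\fI_1(B_{w_0}),\fI_1(X)\star L\star\fI_1(Y))\neq 0$. That is the real content. The paper supplies it through Theorem \ref{thm: simples are in one class}: every simple is linked to the trivial bimodule under the two-sided action of $\SB$. That theorem is proved via the classification of simples through $\HC(\uu)$ (Corollary \ref{cor: simple HC}) and the antispherical-module analysis of $\Rep_{[0]}(G_\e)$ together with translation to the Steinberg block $-\rho$ (Corollary \ref{cor: Hecke action on Rep(Ge)}). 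Combined with projective covers (Lemma \ref{lem: proj cover of simple}) and Lemma \ref{lem: adjoint}, it shows that any indecomposable projective is a summand of $P_1\star Q\star P_2$, where $Q$ is one fixed projective and $P_1,P_2$ lie in the image of $\SB$. Nothing in your proposal replaces this ingredient.

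\textbf{Step (1) is only asserted.} You ``expect'' $\fI_1(B_{w_0})$ to be projective, but this needs two things you do not provide.
\begin{itemize}
\item The identification $\fI_1(B_{w_0})\cong P_\e^{0,-\rho}\star P_\e^{-\rho,0}$. The paper gets it from $(P_q^{0,-\rho}\star P_q^{-\rho,0})_\dag\cong\sR\otimes_{\sR^W}\sR$, via Proposition \ref{prop: image of translation bimods}.
\item A proof that this object is projective. It is not of the form $P^{0,0}(V)$ with $V$ projective, since the tilting module $T_\e(\rho)$ defining the translation bimodules is not projective, so projectivity of $\St_\e$ alone does not give it.
\end{itemize}
The missing idea is that $U^{fin,-\rho}$ is projective in $\HC(-\rho,-\rho)$. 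The composite $U^{fin,-\rho}\to\St^*\otimes\St\otimes U^{fin,-\rho}\to U^{fin,-\rho}$, built from coevaluation and evaluation, is multiplication by the central element $c_{\St^*}$. Its value at $-\rho$ is $\dim\St\neq 0$, so it is invertible in $\CW^{\wedge_{-\rho}}$, and hence $U^{fin,-\rho}$ splits off a projective.

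\textbf{Minor point.} The lowest cell is generated by $B_{w_0}$ only two-sidedly: each $B_u$ with $u\in C_0$ is a summand of some $B_y\otimes B_{w_0}\otimes B_{y'}$. One-sided products $B_{w_0}\otimes X$ reach only the right cell of $w_0$. This is harmless here, since tensoring on either side preserves projectivity.
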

Furthermore, we show that
\begin{Thm}[Theorem \ref{thm: triangulate equivalence as HCtilt}]\label{thm: triang equivalence 1} The functor $\fI_1$ gives an equivalence of monoidal triangulated categories \[K^b(\SB) \iso D^b(\HC_\e(0,0)).\]
\end{Thm}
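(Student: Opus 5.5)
The plan is to extend $\fI_1$ to bounded homotopy categories, show the extension is fully faithful, and then show its image generates $D^b(\HC_\e(0,0))$.

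\textbf{Step 1: the functor.} Since $\fI_1:\SB\hookrightarrow\HC_\e(0,0)$ is an additive monoidal functor (Theorem \ref{thm: Sb embed to HC}), it induces a triangulated functor
\[K^b(\SB)\to K^b(\HC_\e(0,0))\to D^b(\HC_\e(0,0)).\]
Compatibility of the monoidal structures is automatic if the image of $\fI_1$ consists of objects that are acyclic for the tensor product on $\HC_\e$. I expect this to hold because $\BS_s$ and $\sR_x$ correspond to flat, tilting-type Harish-Chandra bimodules.

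\textbf{Step 2: full faithfulness.} By d\'evissage on the stupid filtrations, it suffices to show that for all $B,B'\in\SB$,
\[\Ext^i_{\HC_\e(0,0)}(\fI_1B,\fI_1B')=0 \quad\text{for } i>0.\]
Degree zero is already covered by fullness of $\fI_1$. To prove the vanishing, I would use adjunctions: $\BS_s$ is self-adjoint under convolution, and $\fI_1(\BS_s)$ is $\Theta_s$, the composition of translation functors onto and off the $s$-wall. This moves everything to $\Ext^i(\fI_1(\sR_x),\fI_1B'')$. I would then show that the objects $\fI_1(B)$ admit finite filtrations by standard objects $\Delta_w$, and also finite filtrations by costandard objects $\nabla_w$. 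These play the role of ``tilting'' Harish-Chandra bimodules, indexed by $W_{ext}$ and coming from the highest weight structure induced via the quantum Frobenius and Verma-type bimodules. The standard vanishing
\[\Ext^{>0}(\Delta,\nabla)=0\]
then gives the claim.

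\textbf{Step 3: essential surjectivity.} I would show that $D^b(\HC_\e(0,0))$ is generated, as a triangulated category, by the image. Theorem \ref{thm: C to Proj} gives $\Proj(\HC_\e(0,0))$ inside the image, via $\sC_0$. So it suffices to show that $\HC_\e(0,0)$ has finite global dimension, since then every object has a finite projective resolution and lies in the thick subcategory generated by $\Proj(\HC_\e(0,0))$. Alternatively, if $\HC_\e(0,0)$ is a highest weight category with finitely many standard objects up to the relevant twisting, then every object has a finite resolution by tilting objects, which are $\fI_1$ of Soergel bimodules. Combined with full faithfulness from Step 2 and the fact that the image is closed under cones up to homotopy, this gives the equivalence.

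\textbf{Main obstacle.} I expect the hardest step to be establishing the $\Delta/\nabla$-filtrations, equivalently finite global dimension, for the possibly infinite block $\HC_\e(0,0)$. The first approach I would try is to relate $\HC_\e(0,0)$ to modules over the non-commutative Springer resolution. That algebra has finite global dimension because it is derived equivalent to the smooth variety $\tilde{\mathcal N}$. Finite global dimension of $\HC_\e(0,0)$ would then follow via a Frobenius-twisted equivariance argument.
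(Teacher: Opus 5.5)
\textbf{Step 3 has a genuine gap: the statement you reduce essential surjectivity to is false.} $\HC_\e(0,0)$ does not have finite global dimension.

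Under the equivalence $\HC_\e(0,0)\simeq\mathsf{NCS}=(\SA\otimes_{\BC[\g^*]}\SA^{op})^\wedge\Mod^G$, the projective objects are direct summands of $(\SA\otimes_{\BC[\g^*]}\SA^{op})^\wedge\otimes V$. After the exact functor $j^*$ and the Morita equivalence $\hat{\fR}_1$, these become vector bundles on $X^\wedge$. So a finite projective resolution of the unit $\fI_1(B_1)=U_\e^{fin,0}$ would make $\CO_{\Delta\tg^\wedge}|_{X^\wedge}$ a perfect complex.

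That is impossible. The map $X^\wedge\to(\h^*\times_{\h^*/W}\h^*)^\wedge$ is smooth and surjective, its target is the union of the graphs of $w\in W$, and $\CO_{\Delta\tg^\wedge}|_{X^\wedge}$ is the pullback of the structure sheaf of the diagonal graph, which is not perfect. Already for $W=\BZ/2$, the module $\BC[[z]]$ over $\BC[[z,z']]/((z-z')(z+z'))$ has an infinite periodic resolution.

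So the unit has infinite projective dimension. The subcategory $K^b(\Proj(\HC_\e(0,0)))\simeq K^b(\sC_0)$ is only the ``perfect'' part of $D^b(\HC_\e(0,0))\simeq D^b\Coh^G(\St^\wedge)$, and no argument by finite projective resolutions can reach all of $D^b$. Your justification conflates $\SA$, which is derived equivalent to the smooth $\tg$, with $\SA\otimes_{\BC[\g^*]}\SA^{op}$, which is derived equivalent to the singular Steinberg variety. The highest-weight alternative is not available either: the paper points out that these categories have no highest weight structure.

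\textbf{The missing idea.} Generation has to come from the non-projective objects of $\SB$ together with a geometric generation theorem. The paper first proves $\HC_\e(0,0)\simeq\mathsf{NCS}$ (Theorem \ref{thm: HC and NCS}). It does this by comparing $\fI^-_1:K^-(\sC_0)\simeq D^-(\HC_\e(0,0))$ with $\fI^-_2:K^-(\sC_0)\simeq D^-(\mathsf{NCS})$, where $\fI^-_2$ comes from the Bezrukavnikov--Riche type functor $\hat{\fR}$. This identifies $D^b(\HC_\e(0,0))$ with $D^b\Coh^G(\St^\wedge)$, sending $\fI_1(\sR_b)\mapsto\hat{K}_b$ for $b\in\Lambda$ and $\fI_1(\BS_\a)\mapsto\hat{\CR}_\a$.

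The triangles \eqref{eq: distinguished triangles} then put every $\hat{K}_b$, $b\in B_{ext}$, into the image of $K^b(\SB)$. In particular the twisted diagonals $\CO_{\Delta\tg^\wedge}(\lambda)=\hat{K}_{\theta_\lambda}$ lie in the image. Finally, the objects $\CO_{\Delta\tg^\wedge}(\lambda)*\hat{\CR}_{\a_1}*\cdots*\hat{\CR}_{\a_k}$ generate $D^b\Coh^G(\St^\wedge)$ by \cite[Proposition 7.5]{BR24}.

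\textbf{A smaller point on Step 2.} The $\Delta/\nabla$-flags you invoke are unavailable for the same reason, but you do not need them. The vanishing of $\Ext^{>0}$ between HC-tilting bimodules follows from
\[\Ext^i_{\HC_\e(0,0)}(P^{0,0}(V),M)\cong\Ext^i_{\Rep(\cU_\e(\g))}(V,M),\]
together with two facts: ${}^*V'\otimes V$ is tilting, and $U_\e^{fin}$ has a good filtration. This is Proposition \ref{prop: embed of Hilt}.
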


Theorem \ref{thm: triang equivalence 1} is proved by using  relations between  the categories in the theorem  with coherent sheaves on Steinberg variety and non-commutative Springer resolution. Recall the Grothendieck-Springer resolution $\tg \rightarrow \g^*$ and the Steinberg variety $\St:= \tg \x_{\g^*} \tg$. There is a $G$-equivariant vector bundle $\CE$ on $\tg$ with $\SA=\End_{\tg}(\CE)^{op}$ such that the derived global section functor
\[ \Gamma_\CE: D^b\Coh^G(\tg) \rightarrow D^b(\SA\Mod^G), \qquad \CF \mapsto \RHom_{\tg}(\CE, \CF),\] is an equivalence of triangulated categories. Then $\SA$ is called {\it noncommutative Springer resolution}. We introduce certain completed version $\St^\wedge$, $(\SA \otimes_{\BC[\g^*]} \SA^{op})^\wedge$ as in Section \ref{ssec: complete version}
 \begin{Thm}[Theorem \ref{thm: HC and NCS}-\ref{thm: triangulate equivalence as HCtilt}]\label{thm: triang equivalence 2}There is an equivalence of monoidal triangulated categories
 \[ K^b(\SB) \cong D^b(\HC_\e(0,0)) \cong D^b((\SA\otimes_{\BC[\g^*]} \SA^{op})^\wedge \Mod^G) \cong D^b\Coh^G(\St^\wedge).\]
 \end{Thm}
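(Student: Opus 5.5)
The chain of equivalences is assembled from three links: the middle one via the description of projectives in $\HC_\e(0,0)$, the last via a Bezrukavnikov-type tilting argument, and the first by comparing both sides through the noncommutative Springer resolution.

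\textbf{Link $D^b(\HC_\e(0,0)) \cong D^b((\SA\otimes_{\BC[\g^*]}\SA^{op})^\wedge\Mod^G)$.} The plan is to realize $(\SA\otimes_{\BC[\g^*]}\SA^{op})^\wedge$ as the endomorphism algebra of a projective generator $P$ of $\HC_\e(0,0)$. Then Morita theory gives an abelian equivalence, which passes to derived categories.

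By Theorem \ref{thm: C to Proj}, $\Proj(\HC_\e(0,0))$ is identified with $\CC_0$. The objects of $\CC_0$ are generated by the summands of the bimodules attached to the longest element $w_0$ of the finite Weyl group, twisted by $\Lambda$ and translations. So I would take $P := \bigoplus_{\lambda} \fI_1(\BS_{w_0}\otimes \sR_\lambda)$ over a finite set of dominant-type weights $\lambda$ matching the summands of $\CE$. Its endomorphisms can be computed in $\SB$, via the Soergel Hom formula as $\sR^{W}$-modules. They should match $\End(\CE\boxtimes\CE^\vee)$ over $\tg\times_{\g^*}\tg$, completed at the nilpotent fiber, using $\SA=\End(\CE)^{op}$ and the identification of $\sR$ with completed functions on $\tg$ near $0$.

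The monoidal structure is then transported. On the algebra side it is the convolution $M\otimes_{\SA}N$, and on the $\HC$ side it is $\otimes_{U^{fin}}$. These agree on projectives because $\fI_1$ is monoidal (Theorem \ref{thm: Sb embed to HC}).

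\textbf{Link with $D^b\Coh^G(\St^\wedge)$.} This follows from the completed, bimodule version of the Bezrukavnikov–Mirković tilting equivalence $\Gamma_\CE$. The object $\CE\boxtimes\CE^\vee$ restricted to $\St^\wedge$ is a tilting generator: its higher $\Ext$'s vanish by base change from $\tg$, where $\CE$ is tilting, since $\tg\to\g^*$ is flat in the relevant sense and $\St^\wedge$ is a derived fiber product. Its endomorphism algebra is $(\SA\otimes\SA^{op})^\wedge$. Convolution of sheaves on $\St^\wedge$ corresponds to $\otimes_\SA$ of bimodules.

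\textbf{Link $K^b(\SB)\cong D^b(\HC_\e(0,0))$.} This is Theorem \ref{thm: triang equivalence 1}. Its proof, which I would place after the previous two links, extends $\fI_1$ to complexes. Full faithfulness on $K^b(\SB)$ requires vanishing of $\Ext^{>0}_{\HC}(\fI_1 B,\fI_1 B')$. I expect to reduce this, via $\Gamma_\CE$, to $\Ext$ vanishing between the images of Soergel bimodules in $\Coh^G(\St^\wedge)$. Those images are sheaves pushed forward from $G$-orbit closures over $\tg$, and their $\Ext$'s are computed as in Bezrukavnikov's affine Hecke category equivalence. Essential surjectivity follows because the image contains the projective generator $P$.

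\textbf{Main obstacle.} The hardest step is the $\Ext$-vanishing, i.e.\ showing the image of $\SB$ consists of "tilting-like" objects with no higher self-extensions. The honest route is the comparison with $D^b\Coh^G(\St^\wedge)$ and Bezrukavnikov's theorem identifying $\SB$ with the tilting perverse sheaves in the mixed/affine Hecke category. I would try that first.
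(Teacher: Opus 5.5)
Your second link (the tilting equivalence $\hat{\Gamma}_\CE$ via exact base change) matches the paper. The other two links each contain a step that fails.

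\textbf{The Morita step.} This does not work as stated. Both $\HC_\e(0,0)$ and $\mathsf{NCS}:=(\SA\otimes_{\BC[\g^*]}\SA^{op})^\wedge\Mod^G$ are $G$-equivariant categories. They have infinitely many simples and infinitely many indecomposable projectives (indexed by the infinite cell $C_0$). So no finite sum $P$ is a generator, and neither category is modules over a single $\End(P)$.

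Moreover, $\End_{\mathsf{NCS}}\big((\SA\otimes\SA^{op})^\wedge\big)$ is only the $G$-invariant part (up to ${}^{op}$). Hence $(\SA\otimes\SA^{op})^\wedge$ cannot be such an endomorphism algebra. A rank count via Soergel's Hom formula would not give an algebra isomorphism in any case. Also, $\sR$ is not a ring of functions on $\tg$; the link to $\tg$ goes through $\h^*\times_{\h^*/W}\CS\times_{\h^*/W}\h^*$.

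What you actually need is an equivalence of the whole additive categories of projectives, compatible with composition and with the monoidal structures. In other words, you need a second monoidal functor $\fI_2\colon\SB\to\mathsf{NCS}$ restricting to $\sC_0\cong\Proj(\mathsf{NCS})$. Your remark that $\fI_1$ is monoidal covers only one side.

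The paper builds $\fI_2$ from $\hat{\fR}$ in these steps:
\begin{itemize}
\item restrict to the Azumaya locus $X^\wedge$ and split;
\item restrict to the Kostant section;
\item use Hartogs to get full faithfulness on $\hat{\CP}_\SA$ (Lemma~\ref{lem: complete R monoidal and ff on PA});
\item match $\hat{\Gamma}_\CE(\hat{\CR}_\a)\mapsto\uBS_\a$ and $\hat{\Gamma}_\CE(\hat{K}_b)\mapsto\usR_b$ with the Bezrukavnikov--Riche embedding of $\ASB$, and show all projectives land in $\sC_0$ (Proposition~\ref{prop: image of the functor R}).
\end{itemize}
Then $\fE$ is obtained from $K^-(\sC_0)\cong D^-(\HC_\e(0,0))$ and $K^-(\sC_0)\cong D^-(\mathsf{NCS})$ by restricting to hearts. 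Monoidality of $\fE$ and the compatibility $\fE\circ\fI_1\cong\fI_2$ come out of this construction.

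\textbf{Essential surjectivity of $K^b(\SB)\to D^b(\HC_\e(0,0))$.} Your argument fails here. Containing the projectives only gives $K^b(\Proj(\HC_\e(0,0)))\cong K^b(\sC_0)$, and $\HC_\e(0,0)$ has infinite global dimension; this is why Corollary~\ref{cor: C0 to HC} uses $K^-$.

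To see this, note that exotic projectives are vector bundles, namely summands of $(\CE\boxtimes\CE^\vee)|_{\St^\wedge}\otimes V$. The unit $U^{fin,0}_\e$ corresponds to $\CO_{\Delta\tg^\wedge}$. This sheaf has rank $1$ at the generic point of the diagonal component and rank $0$ at the generic points of the other components meeting it. So it has no finite resolution by vector bundles on the reducible scheme $\St^\wedge$.

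The correct argument uses the non-projective objects of $\SB$ together with $\fE\circ\fI_1\cong\fI_2$:
\begin{itemize}
\item the image contains $\CO_{\Delta\tg^\wedge}$, $\hat{K}_b$ and $\hat{\CR}_\a$;
\item by the triangles \eqref{eq: distinguished triangles} it then contains $\hat{K}_{\overline{s}_\a}$ and $\hat{K}_{(\overline{s}_\a)^{-1}}$, hence all $\CO_{\Delta\tg^\wedge}(\lambda)=\hat{K}_{\theta_\lambda}$;
\item the objects $\CO_{\Delta\tg^\wedge}(\lambda)*\hat{\CR}_{\a_1}*\dots*\hat{\CR}_{\a_k}$ generate $D^b\Coh^G(\St^\wedge)$ by \cite[Proposition 7.5]{BR24}.
\end{itemize}

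\textbf{The $\Ext$-vanishing.} This step, which you call the main obstacle, needs no geometry. Objects of $\fI_1(\SB)$ are HC-tilting. Higher $\Ext$'s between them reduce to $\Ext^{>0}_{\Rep(G_\e)}({}^*V'\otimes V,U^{fin}_\e)$, which vanishes because $U^{fin}_\e$ has a good filtration (Proposition~\ref{prop: embed of Hilt}). Your detour through Bezrukavnikov's equivalence would in any case require identifying the images of the $\fI_1(B)$, which again needs $\fI_2$.
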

There are group homomorphisms  from $B_{aff}$ to the group of equivalence classes of invertible elements in $K^b(\SB)$ via Rouquier complexes and to the group of equivalence classes of invertible elements in $D^b\Coh^G(\St^\wedge)$ by Bezrukavnikov-Riche. The equivalence $K^b(\SB) \cong D^b\Coh^G(\St^\wedge)$ intertwines with these two  group homomorphisms.

The image of $\fI_1$ is the full subcategory $\Hilt_\e(0,0)$ of HC-tilting bimodules defined in Definition \ref{defi: diagonal bimod}, which are related to tilting modules in $\Rep(\cU_\e(\g))$. As a consequence of Theorem \ref{thm: triang equivalence 1}, we have
\begin{Thm}[Theorem \ref{thm: triangulate equivalence as HCtilt}.b] The functor $\fI_1$ restricts to an equivalence of monoidal additive categories 
\[ \SB \iso \Hilt_\e((0,0)).\]
\end{Thm}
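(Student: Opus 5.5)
Since $\fI_1$ is already a full monoidal embedding by Theorem \ref{thm: Sb embed to HC}, the statement comes down to identifying its essential image with $\Hilt_\e(0,0)$. The plan has three steps: first show that $\fI_1$ lands in $\Hilt_\e(0,0)$; then show that every object of $\Hilt_\e(0,0)$ is in the essential image; finally note that the monoidal structure comes for free.

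\emph{Image lies in $\Hilt_\e(0,0)$.} The class of HC-tilting bimodules in $\HC_\e(0,0)$ is closed under direct sums and direct summands. I expect it is also closed under the monoidal product, because tilting modules in $\Rep(\cU_\e(\g))$ are closed under tensor products. So it suffices to check the generators. The unit and the graph bimodules $\fI_1(\sR_x)$, $x\in \Lambda$, should be the diagonal bimodules attached to the translates of the trivial module. The Bott--Samelson objects $\fI_1(\BS_s)$ should arise as wall-crossing functors applied to the unit, so they are HC-tilting by the usual translation-functor argument. Here I would use the explicit construction of $\fI_1$ on generators from the proof of Corollary \ref{cor: Sb embed}.

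\emph{Essential surjectivity.} This is the main obstacle. I would use Theorem \ref{thm: triang equivalence 1}, which gives an equivalence $K^b(\SB)\iso D^b(\HC_\e(0,0))$ induced by $\fI_1$. Take $M\in\Hilt_\e(0,0)$. Then $M\cong \fI_1(X)$ for some complex $X\in K^b(\SB)$. Because $\fI_1$ is fully faithful and the indecomposable HC-tilting objects have local endomorphism rings (Krull--Schmidt), we may assume $M$ is indecomposable. I would then show that $X$ is homotopic to a complex concentrated in degree $0$. The idea is to use vanishing results: $\Hom_{D^b}(T,T'[i])=0$ for $i\neq 0$ and $T,T'$ HC-tilting. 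This should follow from the tilting property, via $\Ext$-vanishing between modules with Weyl and dual Weyl filtrations, transported to HC-bimodules through the diagonal construction.

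With this vanishing, the full subcategory of $K^b(\SB)$ whose image consists of objects with no self-extensions is a silting subcategory containing $\SB$. The argument is then the standard one: $\SB$ generates $K^b(\SB)$, $\fI_1(\SB)\subset\Hilt_\e(0,0)$, and $\Hilt_\e(0,0)$ has no higher self-extensions. Together these force $\fI_1(\SB)$ and $\Hilt_\e(0,0)$ to be two tilting/silting subcategories of $D^b(\HC_\e(0,0))$, one contained in the other and with the first generating. Such a pair must agree up to direct summands, by the silting-reduction uniqueness argument applied to the Karoubian additive hull. Since $\SB$ is Karoubian by definition, since it is closed under summands, this gives essential surjectivity.

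\emph{Monoidal structure.} The monoidal compatibility is inherited from $\fI_1$, which is monoidal by Theorem \ref{thm: Sb embed to HC}. Its restriction to the full subcategory $\Hilt_\e(0,0)$ is therefore an equivalence of monoidal additive categories.
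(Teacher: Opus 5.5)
Your proposal is correct and follows essentially the paper's route. The paper also combines three ingredients: the equivalence $K^b(\SB)\iso D^b(\HC_\e(0,0))$ from part (a); the $\Ext$-vanishing between HC-tilting bimodules (Proposition \ref{prop: embed of Hilt}, which makes $K^b(\Hilt_\e(0,0))\hookrightarrow D^b(\HC_\e(0,0))$ fully faithful), from which it deduces that $K^b(\SB)\to K^b(\Hilt_\e(0,0))$ is an equivalence; and the Karoubianness of $\SB$, from which it concludes. Your silting/weight-heart uniqueness argument is an explicit version of that last step, and your first step is already Corollary \ref{cor: Sb embed}.
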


\subsection{Ideas of proof} The first key idea in the proof of Theorem \ref{thm: Sb embed to HC} is to consider a deformation of $\HC_\e(0,0)$. We  will consider a category $\HC_q(0,0)$ where the parameter $q$ is equal to $\e e^{2\pi \sqrt{-1}\hbar /\ell} \in \BC[[\hbar]]$. The second key idea is the restriction functor $\bullet_\dag$ from $\HC_q(0,0)$ to the category of $\sR$-bimodules constructed in Section \ref{ssec: restriction functors}.  The functor $\bullet_\dag$ can be thought as a quantization of the process of restricting to a  tranversal slice of a symplectic leaf in a Poisson variety. The construction follows Losev's construction in  \cite{IL11}. Finally, we construct certain bimodules in $\HC_q(0,0)$ which are related to translation functors for quantum category $O$. We show that $\bullet_\dag$ is  fully faithful on these bimodules and the category $\SB_\hbar$ is contained in the images of these bimodules under the functor $\bullet_\dag$. Then we obtain an embedding $\SB_\hbar \hookrightarrow \HC_q(0,0)$, which induces the embedding $\fI_1$ by taking the quotient by $\hbar$.

Let us comment on the proof of Theorem \ref{thm: C to Proj}. The category $\sC_0$ contains a distinguished object $\uBS_{w_0}$ which is mapped to a projective object in $\HC_\e(0,0)$ under $\fI_1$. We will study the simple objects in $\HC_\e(0,0)$ to show that any projective object in $\HC_\e(0,0)$ is a direct summand of $\fI_1(B_1) \star \fI_1(\uBS_{w_0}) \star \fI_1(B_2) \cong \fI_1(B_1 \star \uBS_{w_0} \star B_2)$ for $B_1, B_2\in \SB$, here $\star$ stands for the monoidal products in the  corresponding categories. The latter description matches with a description of $\sC_0$ so that we obtain the equivalence in Theorem \ref{thm: C to Proj}.

To prove Theorem \ref{thm: triang equivalence 2}, we relate the category $\mathsf{NCS}:= (\SA\otimes_{\BC[\g^*]} \SA^{op})^\wedge \Mod^G$ to the Abe realization of $\SB$ via  the functor $\hat{\fR}$ constructed in \eqref{eq: functor R} of  Section \ref{ssec: complete version}. This functor is a quantum analog of the functor constructed by Bezrukavnikov and Riche in \cite{BR22}. We will obtain an embedding of monoidal categories $\fI_2: \SB \hookrightarrow \mathsf{NCS} $ so that it restricts to an equivalence $\sC_0 \iso \Proj(\mathsf{NCS})$, the latter is the full subcategory of projective objects in $\mathsf{NCS}$. From the functors $\fI_1$ and $ \fI_2$, we construct an equivalence of monoidal abelian categories $\HC_\e(0,0)\iso \mathsf{NCS}$ and use this equivalence to establish other equivalences in Theorem \ref{thm: triang equivalence 2}.

\subsection{Further directions}

\subsubsection{Other blocks} In Appendix \ref{append: enhanced version}, we discuss some results which can be used to study other blocks of quantum Harish-Chandra bimodules. The results for other blocks should be compared to results in \cite{GYZX25}.

\subsubsection{Even order roots of unity} The setup in \cite{LTV} works pretty well with the even order roots of unity.  Most of the results in this paper on quantum Harish-Chandra bimodules can be generalized to more general roots of unity cases, in particular, to the case where  the quantum parameter $q$ is an  even order root of unity, however, some technical details need to be resolved. 

\subsubsection{Toward localization theorem for quantum groups} One of  motivations behind this paper is the  hope of using  another approach to establish localization theorems for De Concini-Kac forms of quantum groups for more general parameter $q\in \BC^\x$, mentioned in \cite[$\mathsection 1.6.3$]{IL23}. Such theorem has been proved by \cite{EK08}, \cite{Tan21} for generic $q$ and for roots of unity of order $p^k$ for some prime number $p$.

\subsubsection{Positive characteristic} Since the combinatorics of $\HC_\e(0,0)$ depends on the order of roots of unity rather than the base field, the category $\HC_\e(0,0)$ should still behave well when we replace the complex number $\BC$ with  a field of {\it good} positive characteristic $p$. Roughly speaking, the characteristic $p$ is good if the group $G$ behaves as the group over $\BC$.

\begin{Conj}\label{conj}Assume the characteristic  $p$ is good and $\e$ is of large enough order.  There is an equivalence of monoidal triangulated categories
\[ D^b(\HC_\e(0,0)) \iso D^b\Coh^G(\St_m^\wedge).\]
\end{Conj}
Here $\St_m$ is the multiplicative analog of $\St$, where we replace $\g$ in the definition of $\St$ with  the algebraic group $G$. The right hand side appeared  in a conjectural equivalence of coherent-constructible realizations of affine Hecke categories  in a field of  {\it good} positive characteristic, see \cite[$\mathsection 1.4$]{BR24}. Conjecture  \ref{conj} may be helpful to establish the equivalence in a similar way that modular Harish-Chandra bimodules was used in {\it loc. cit.}

\subsection{Outline} In Section $1\dash3$, we recall many results in \cite{LTV}, where we introduced the even subalgebra $U_q^{ev}(\g)$ and its locally finite part $U_q^{fin}$.
Section $4$ is a technical section about completions of algebras. In Section $5$, we recall the quantum category $O$. In Section $6$, we introduce the quantum Harish-Chandra bimodules and study their basis properties. In Section $7$, we introduce the notion of Poisson  bimodules. In Section $8$, we construct the key tool in this paper, the restriction functors $\bullet_\dag$ that allow us  to relate quantum Harish-Chandra bimodules with affine Soergel bimodules. In Section $9$, we recall affine Soergel bimodules and their variant constructed by Abe in \cite{Abe1}. Then we establish Theorem \ref{thm: Sb embed to HC} in Section $11$. In Section $12$, we study simple objects in $\HC_\e(0,0)$ in order to study the projective objects in the principal block and establish Theorem \ref{thm: C to Proj}. Finally, in Section $13$, we recall results about non-commutative Springer resolution and establish Theorem \ref{thm: triang equivalence 2}.

\subsection{Acknowledgments} 

 I am deeply grateful to my advisor Ivan Losev for his invaluable support, guidance and encouragement throughout the course of this research. I am also benefited from discussions with Pablo Boixeda Alvarez, Roman Bezrukavnikov, Kien Do Hoang, Quan Situ, Yaochen Wu. This work was partially supported by an NSF Grant DMS-2001139.

\section{Set up} \label{sec: set up}

In this section, we establish notation and recall various results in \cite{LTV}. Let $\g$ be a semisimple Lie algebra with  Cartan subalgebra $\h$, simple roots $\{\a_i\}_{i=1}^r$, fundamental weights $\{\w_i\}_{i=1}^r$,  the weight lattice $P$ and the root lattice $Q$. We fix a nondegenerate invariant bilinear form $(\;,\;)$ on $\h^*$ such that $\sd_i:=\frac{(\a_i, \a_i)}{2}$ is equal to $1$ for short roots $\a_i$, particular, $\sd_i\in \{ 1,2,3\}$ for any $i$.  Define $\w^\vee_i:=\frac{\w_i}{\sd_i}$ and $\a^\vee_i:=\frac{\a_i}{\sd_i}$, the fundamental coweights and simple coroots. We have the Cartan matrix $(a_{ij})_{i,j=1}^n$ and the symmetrized Cartan matrix $(b_{ij})_{i,j=1}^r$,  
 where $a_{ij}=(\a^\vee_i, \a_j)=2(\a_i,\a_j)/(\a_i, \a_i) $ and $ b_{ij}:=(\a_i, \a_j)$.

Let $v$ be a formal variable and 
\[ \CA :=\BC[v, v^{-1}]\left[\left\{\frac{1}{v^{2k}-1}\right\}\right]_{1\leq k \leq \max\{\sd_i\}}\]

Let us define the following elements in $\CA$: 
\begin{align*}[s]_v&:=\frac{v^s-v^{-s}}{v-v^{-1}}, \qquad [s]_v !=[1]_v\dots [s]_v, \qquad \bmat{m\\s}_v:=\prod_{c=1}^s \frac{v^{m-c+1}-v^{-m+c-1}}{v^c-v^{-c}},\\
 (s)_v!&:=\frac{1-v^{-2s}}{1-v^{-2}}, \qquad (s)_v!:=(1)_v \dots (s)_v, \qquad \binom{m}{s}_v:=\prod_{c=1}^s \frac{1-v^{-2(m+1-c)}}{1-v^{-2c}}.
 \end{align*}

The quantum group $\bU_v(\g)$ is the Hopf algebra over $\BC(v)$ generated by  $\{ E_i, F_i, K^\mu\}_{1\leq i \leq r}^{\mu \in Q}$ subject to the following relations:
\begin{equation*}
    \begin{split}
        &K^\mu K^{\mu'}=K^{\mu+\mu'}, \qquad K^0=1,\\
        &K^\mu E_i K^{-\mu}=v^{(\mu, \a_i)}E_i, \qquad K^\mu F_i K^{-\mu}=v^{-(\mu, \a_i)} F_i,
    \end{split}
\end{equation*}
\begin{equation*}
    \begin{split}
            &[E_i, F_j] =\delta_{i,j} \frac{K_i-K_i^{-1}}{v_i-v_i^{-1}},\\
            &\sum_{m=0}^{1-a_{ij}} (-1)^m \bmat{1-a_{ij}\\m}_{v_i} E_i^{1-a_{ij}-m}E_j E_i^m =0\quad(i \neq j),\\
        &\sum_{m=0}^{1-a_{ij}}(-1)^m \bmat{1-a_{ij}\\m}_{v_i}F_i^{1-a_{ij}-m}F_j F_i^m =0 \quad (i\neq j).
    \end{split}
\end{equation*}
Here $K_i:=K^{\a_i}$ and $v_i:=v^{\sd_i}$. One equips $U_q(\g)$  with the Hopf structure as follows:
\begin{equation}\label{eq: usual hopf structure}
    \begin{split}
        &\Delta: E_i \mapsto E_i\otimes 1 + K_i \otimes E_i, \quad F_i \mapsto F_i \otimes K^{-1}_i+1\otimes F_i,\quad K^\mu \mapsto K^\mu\otimes K^\mu,\\
        &S: E_i \mapsto -K^{-1}_i E_i, \quad F_i \mapsto -F_i K_i, \quad K^\mu\mapsto K^{-\mu},\\
        &\varepsilon: E_i \mapsto 0, \quad F_i \mapsto 0, \quad K^\mu \mapsto 1.
    \end{split}
\end{equation}

There is the  left adjoint action of $\bU_v(\g)$ on itself defined by
\begin{equation}\label{eq: l-ad of DJquantum group}
\ad(x)(u)=\sum x_{(1)}u S(x_{(2)}), \qquad \forall x,u \in \bU_v(\g),
\end{equation}
here we use Sweedler's notation for the coproduct.

Let $\displaystyle E_i^{(n)}:=\frac{E_i^n}{(n)_{v_i}!}$, $\displaystyle F_i^{(n)}:=\frac{F_i^n}{(n)_{v_i}!}$.  The Lusztig form $\cmU_v(\g)$ is the $\CA$-subalgebra of $\bU_v(\g)$ generated by $\{E_i^{(n)}, F_i^{(n)}, K^{\a_i}\}_{1\leq i \leq r}^{n \in \BN}$, while the De Concini-Kac form $\mU_v(\g)$ is the $\CA$-subalgebra generated by $\{ E_i, F_i, K^{\a_i}\}_{1 \leq i \leq r}$. Both are Hopf $\CA$-subalgebras of $\bU_v(\g)$. However, the adjoint action \eqref{eq: l-ad of DJquantum group} does not restrict to an action of $\cmU_v(\g)$ on $\mU_v(\g)$. This issue and {\it other} issues were remedied in \cite{LTV}. Roughly speaking, we will twist the coproduct of $\bU_v(\g)$ so that the left adjoint action gives rise to an action of the \emph{(twisted) Lusztig form} $\cU_v(\g)$ on the \emph{even subalgebra} $U^{ev}_v(\g)$, which is a suitable modification of the De Concini-Kac form.

\subsection{Modified quantum group}
\label{ssec: twisted construction} Let us recall a construction in \cite{LTV}.
\begin{Prop}[Theorem 1 \cite{R}](a) For a (topological) Hopf algebra $(A,m,\Delta, S,\varepsilon)$ and $F\in A\widehat{\otimes} A$ satisfying 
\begin{equation}\label{eq: twist} (\Delta\otimes \Id)(F)=F_{13}F_{23}, \;\; (\Id\otimes \Delta)(F)=F_{13}F_{12}, \;\; F_{12}F_{13}F_{23}=F_{23}F_{13}F_{12}, \;\; F_{12}F_{21}=1,
\end{equation}
the formulas
\[ \Delta^{(F)}(a)=F\Delta(a)F^{-1}, \qquad S^{(F)}(a)=uS(a)u^{-1}, \qquad \varepsilon^{(F)}(a)=\varepsilon(a)\]
with $u:=m(\Id\otimes S)(F)$, endow $A$ with a new Hopf algebra structure $(A,m,\Delta^{(F)}, S^{(F)}, \varepsilon^{(F)})$.

\noindent
(b) If $(A,m,\Delta, S,\varepsilon)$ is a quasitriangular Hopf algebra with universal $R$-matrix $R\in A\otimes A$, then $(A,m, \Delta^{(F)}, S^{(F)}, \varepsilon^{(F)})$ is also a quasitriangular Hopf algebra with universal $R$-matrix:
\[ R^{(F)}=F^{-1}R F^{-1}.\]   
\end{Prop}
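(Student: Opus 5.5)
We verify the Hopf algebra axioms for the twisted structure directly, using the four identities in \eqref{eq: twist}. Throughout, write $F=\sum F^{(1)}\otimes F^{(2)}$ and $F^{-1}=\sum \bar F^{(1)}\otimes \bar F^{(2)}$. The condition $F_{12}F_{21}=1$ gives $F^{-1}=F_{21}$, which simplifies several computations.

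\emph{Step 1: the algebra map and coassociativity.} $\Delta^{(F)}$ is an algebra map because it is a conjugate of the algebra map $\Delta$. For coassociativity we need $(\Delta^{(F)}\otimes\Id)\Delta^{(F)}(a)=(\Id\otimes\Delta^{(F)})\Delta^{(F)}(a)$. Expanding, the left side is $F_{12}(\Delta\otimes\Id)(F)\,(\Delta\otimes\Id)\Delta(a)\,(\Delta\otimes\Id)(F)^{-1}F_{12}^{-1}$. By the first identity in \eqref{eq: twist} this equals $F_{12}F_{13}F_{23}\,\Delta^{(2)}(a)\,(F_{12}F_{13}F_{23})^{-1}$. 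Similarly, the second identity turns the right side into conjugation by $F_{23}F_{13}F_{12}$. The Yang--Baxter-type identity $F_{12}F_{13}F_{23}=F_{23}F_{13}F_{12}$ then shows that the two conjugating elements coincide, so coassociativity follows.

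\emph{Step 2: the counit.} Apply $\varepsilon\otimes\Id\otimes\Id$ to $(\Delta\otimes\Id)(F)=F_{13}F_{23}$. This gives $F=(\varepsilon\otimes\Id)(F)_{2}\,F$ in $A\otimes A$, hence $(\varepsilon\otimes\Id)(F)=1$. Symmetrically, $(\Id\otimes\varepsilon)(F)=1$. Therefore $(\varepsilon\otimes\Id)\Delta^{(F)}(a)=a=(\Id\otimes\varepsilon)\Delta^{(F)}(a)$.

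\emph{Step 3: the antipode.} This is the main obstacle. We need $m(S^{(F)}\otimes\Id)\Delta^{(F)}=\varepsilon$ and the mirror identity. The first task is to show that $u=\sum F^{(1)}S(F^{(2)})$ is invertible, with candidate inverse $u^{-1}=\sum S(\bar F^{(1)})\bar F^{(2)}$. To check this, apply $m_{12}\circ(\Id\otimes S\otimes\Id)$ to $(\Id\otimes\Delta)(F)$ multiplied suitably by $F^{-1}$, and use the twist identities together with the antipode axiom for $S$. This is the standard Drinfeld computation.

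With $u$ and $u^{-1}$ in hand, $\sum S^{(F)}(x_{(1)'})x_{(2)'}$ expands to
\[\sum u\,S(\bar F^{(1)}x_{(1)}F'^{(1)})\,u^{-1}\,\bar F^{(2)}x_{(2)}F'^{(2)}.\]
Using $S(\bar F^{(1)})$ absorption into $u^{-1}$ via the identity $\sum S(\bar F^{(1)})u^{-1}\bar F^{(2)}$-type relations derived from \eqref{eq: twist}, this collapses to $\varepsilon(x)$. I expect this bookkeeping to be the only delicate part. One could alternatively cite that twisting by a cocycle yields a Hopf algebra, and note that \eqref{eq: twist} implies the cocycle condition $F_{12}(\Delta\otimes\Id)(F)=F_{23}(\Id\otimes\Delta)(F)$. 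Indeed, both sides equal $F_{12}F_{13}F_{23}=F_{23}F_{13}F_{12}$, and by Step 2 the twist is counital. This reduction is the route I would actually use: the cocycle condition is exactly what the standard Drinfeld-twist theorem requires, and that theorem supplies the antipode $uS(\cdot)u^{-1}$.

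\emph{Step 4: part (b).} The cocycle twist of a quasitriangular Hopf algebra has $R$-matrix $F_{21}RF^{-1}$, by the standard verification:
\[\Delta^{(F)\op}=F_{21}\Delta^{\op}F_{21}^{-1}=F_{21}R\Delta R^{-1}F_{21}^{-1},\]
together with the hexagon identities, which follow by conjugation and the cocycle condition. Since $F_{21}=F^{-1}$, this $R$-matrix equals $F^{-1}RF^{-1}$, which is the asserted $R^{(F)}$. In the topological setting, all of the manipulations above take place in completed tensor products, where products and inverses remain well defined.
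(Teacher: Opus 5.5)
Your argument is correct, but it is not the paper's route, because the paper gives no argument at all: it imports the statement as Theorem~1 of \cite{R}. You verify coassociativity and counitality by hand. You then observe that the first three identities in \eqref{eq: twist} give Drinfeld's cocycle identity $F_{12}(\Delta\otimes\Id)(F)=F_{23}(\Id\otimes\Delta)(F)$, since both sides equal $F_{12}F_{13}F_{23}=F_{23}F_{13}F_{12}$. Finally you invoke the general cocycle-twist theorem, which supplies the antipode $uS(\cdot)u^{-1}$ and the $R$-matrix $F_{21}RF^{-1}$. The identity $F_{12}F_{21}=1$ is used only to get $F^{-1}=F_{21}$, hence $F_{21}RF^{-1}=F^{-1}RF^{-1}$. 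What your route buys is transparency: it shows that Reshetikhin's hypotheses are a strong special case of the cocycle condition, and it isolates which hypothesis is used where. Like the paper, though, it still delegates the antipode and hexagon axioms to a cited theorem. For the hexagon, "conjugation and the cocycle condition" is slightly too quick. You also need $R\Delta=\Delta^{\op}R$, used twice, to move $R_{13}$ and $R_{23}$ past the permuted cocycle factors.

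If you want Step~3 to stand on its own, note that your displayed expansion has $F$ and $F^{-1}$ interchanged. With $\Delta^{(F)}(x)=\sum F^{(1)}x_{(1)}\bar F^{(1)}\otimes F^{(2)}x_{(2)}\bar F^{(2)}$, you get $\sum u\,S(\bar F^{(1)})S(x_{(1)})\,\big[S(F^{(1)})u'F^{(2)}\big]\,x_{(2)}\bar F^{(2)}$, where $u':=\sum S(\bar F^{(1)})\bar F^{(2)}$. The bracket sums to $m(S\otimes\Id)(F^{-1}F)=1$ for trivial reasons, so the expression collapses to $\varepsilon(x)\,uu'$. The mirror identity uses $\sum \bar F^{(1)}uS(\bar F^{(2)})=m(\Id\otimes S)(F^{-1}F)=1$ in the same way. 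The cocycle identity enters only in proving $u'=u^{-1}$:
\begin{itemize}
\item Applying $x\otimes y\otimes z\mapsto xS(y)z$ to $(\Id\otimes\Delta)(F)\,(\Delta\otimes\Id)(F^{-1})=F_{23}^{-1}F_{12}$ gives $1=uu'$.
\item Applying $x\otimes y\otimes z\mapsto S(x)yS(z)$ to $(\Delta\otimes\Id)(F)\,(\Id\otimes\Delta)(F^{-1})=F_{12}^{-1}F_{23}$ gives $1=u'u$.
\end{itemize}
In both cases the left-hand sides evaluate to $1$ by the antipode axiom and your Step~2.
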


Let $\bU_v(\g, P/2)$ be a Hopf algebra over $\BC(v^{1/2})$ obtained from $\bU_v(\g)$ by extending the base ring to $\BC(v^{1/2})$ and enlarging the span of $\{K^\lambda\}_{\lambda \in Q}$ to  $\{K^\lambda\}_{\lambda \in P/2}$.

Let $\Dyn(\g)$ denote the graph obtained from the Dynkin diagram of $\g$ by replacing all multiple edges by simple ones, e.g., $\Dyn(\mathfrak{sp}_{2r})=\Dyn(\mathfrak{so}_{2r+1})=\Dyn(\mathfrak{sl}_{r+1})=A_r$. 
Let us  fix an orientation $\Or$  of $\Dyn(\g)$ and then associate with it a skew-symmetric  matrix $(\e_{ij})_{i,j=1}^r$ via 
\begin{equation*}
    \e_{ij}=\begin{cases} 0  \quad &\text{if $a_{ij} \geq 0$}\\
                          1     &\text{if $a_{ij} <0$ and $\Or$ contains an oriented edge $i \rightarrow j$}\\
                          -1  &\text{if $a_{ij} <0$ and $\Or$ contains an oriented edge $i \leftarrow j$}
            \end{cases}
\end{equation*}
Let us consider the skew-symmetric matrix $(\phi_{ij})_{i,j =1}^r$ where $\phi_{ij}=\e_{ij}\frac{(\a_i, \a_j)}{2}$. The twist
\begin{equation}
    \sF=v^{\sum_{ij} \phi_{ij} \w^\vee_i\otimes \w^\vee_j}
\end{equation}
satisfies condition \eqref{eq: twist}. This twist belongs to a certain topological completion of $\bU_v(\g, P/2)$. 
Let us define  
\begin{gather*}\nu^>_i = -\a_i+\sum_{j=1}^r\phi_{ij}\w^\vee_j,  \quad \nu^<_i = \sum_{j=1}^r \phi_{ij} \w^\vee_j, \quad
\zeta^>_i =\a_i -2\sum_{j=1}^r \phi_{ij} \w^\vee_j,\quad \zeta^<_i =-\a_i-2\sum_{j=1}^r \phi_{ij}\w^\vee_j.\quad \\
 \tE_i:= E_i K^{\nu^>_i}, \qquad \tF_i:=K^{-\nu^<_i} F_i.
 \end{gather*}
\begin{Rem} The elements $\zeta^<_i, \zeta^>_i$ belong to $2P$, see \cite[Lemma 3.4]{LTV}.
\end{Rem}

One can show that $\bU_v(\g, P/2)$ is generated by $\{\tE_i,\tF_i, K^\lambda\}_{1\leq i \leq r}^{\lambda \in P/2}$ subject to the relations:
\begin{equation}\label{eq:gen-rel-twistedDJ}
\begin{split}
  & K^{\mu} K^{\mu'}=K^{\mu+\mu'} \,, \qquad K^0=1 \,, \\
  & K^{\mu} \tE_i K^{-\mu}=v^{(\alpha_i,\mu)}\tE_i \,, \qquad K^{\mu} \tF_i K^{-\mu}=v^{-(\alpha_i,\mu)}\tF_i \,, \\
  & \tE_i\tF_j=v^{(\a_i, -\zeta^<_j)}\tF_j\tE_i \quad(i\neq j) \,, \qquad 
    \tE_i \tF_i - v_i^2\tF_i\tE_i=v_i \frac{1-K_i^{-2}}{1-v_i^{-2}} \,, \\
  & \sum_{m=0}^{1-a_{ij}}(-1)^m v^{m \epsilon_{ij}b_{ij}}\bmat{1-a_{ij}\\ m}_{v_i} \tE_i^{1-a_{ij}-m}\tE_j\tE_i^{m}=0 \quad (i\ne j) \,, \\
  & \sum_{m=0}^{1-a_{ij}}(-1)^m v^{m \epsilon_{ij}b_{ij}}\bmat{1-a_{ij}\\m}_{v_i} \tF_i^{1-a_{ij}-m}\tF_j\tF_i^{m}=0 \quad (i\ne j) \,,
\end{split}
\end{equation}
here $K_i:=K^{\a_i}$, $v_i=v^{\sd_i}$ as usual. Moreover, we have the Hopf structure obtained via twist $\sF$:
\begin{equation}\label{eq: twist-Hopf-2}
\begin{split}
  & \Delta'(K^\mu)=K^\mu\otimes K^\mu \,, \ \  \Delta'(\tE_i)=1\otimes \tE_i + \tE_i\otimes K^{-\zeta^>_i} \,, \ \ 
    \Delta'(\tF_i)=1\otimes \tF_i+\tF_i \otimes K^{\zeta^<_i} \,, \\
  & S'(K^\mu)=K^{-\mu} \,, \qquad S'(\tE_i)=-\tE_iK^{\zeta^>_i} \,, \qquad S'(\tF_i)=-\tF_iK^{-\zeta^<_i} \,.
\end{split}
\end{equation}

\begin{defi}\label{def: A-forms} (a) The \emph{(twisted) Lusztig form} $\cU_v(\g)$ is the $\CA$-subalgebra of $\bU_v(\g, P/2)$ generated by $\{ \tE_i^{(n)}, \tF_i^{(n)}, K^\lambda\}_{\lambda \in 2P}^{1\leq i\leq r}$ with $\tE_i^{(n)}:=\frac{\tE_i^n}{(n)_{v_i}!}$ and $\tF_i^{(n)}:=\frac{\tF_i^n}{(n)_{v_i}!}$. 

\noindent
(b) The \emph{even subalgebra} $U^{ev}_v(\g)$ is the $\CA$-subalgebra of $\bU_v(\g, P/2)$  generated by $\{ \tE_i, \tF_i, K^\lambda\}_{\lambda \in 2P}^{1\leq i \leq r}$.

\noindent
(c) The {\em mixed form} $U_v^{mix}$ is the $\CA$-subalgebra of $\bU_v(\g, P/2)$ generated by $\{ \tE_i^{(n)}, \tF_i, K^\lambda\}_{\lambda \in 2P}^{1\leq i \leq r}$.
\end{defi}
We emphasize the use the lattice $2P$ in the definition. These algebras are Hopf $\CA$-subalgebras of $\bU_v(\g,P/2)$ with the twisted Hopf structure. The name \emph{even subalgebra} comes from the fact that we only use the lattice $2P$ for the Cartan part in the set of generators of $U^{ev}_v(\g)$. 

We have the left adjoint action $\ad'_l$ of $\bU_v(\g, P/2)$ on itself, similar to \eqref{eq: l-ad of DJquantum group}. 
\begin{Prop}[Proposition 3.11 \cite{LTV}] The left adjoint action $\ad'_l$ of $\bU_v(\g, P/2)$ on itself restricts to an adjoint action of $\cU_v(\g)$ on $U^{ev}_v(\g)$.
\end{Prop}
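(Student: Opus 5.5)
To prove that $\ad'_l(\cU_v(\g))$ preserves $U^{ev}_v(\g)$, I will first reduce to generators. Since $\ad'_l$ is an algebra action ($\ad'_l(xy)=\ad'_l(x)\ad'_l(y)$), it suffices to check that each generator $K^\lambda$ ($\lambda\in 2P$), $\tE_i^{(n)}$, $\tF_i^{(n)}$ of $\cU_v(\g)$ maps $U^{ev}_v(\g)$ into itself. Moreover, the adjoint action is compatible with multiplication on $\bU_v(\g,P/2)$ through the twisted coproduct:
\[\ad'_l(x)(u_1u_2)=\sum \ad'_l(x_{(1)})(u_1)\,\ad'_l(x_{(2)})(u_2).\]
Hence the $\CA$-span of elements $u$ for which the whole of $\cU_v(\g)$ maps $u$ into $U^{ev}_v(\g)$ is a subalgebra, provided $\cU_v(\g)$ is a Hopf subalgebra for $\Delta'$. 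The paper asserts this after Definition \ref{def: A-forms}. It therefore suffices to compute $\ad'_l(x)(u)$ for $x$ a generator of $\cU_v(\g)$ and $u\in\{\tE_j,\tF_j,K^\mu\}$, $\mu\in 2P$.

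The Cartan part is immediate: $\ad'_l(K^\lambda)(u)=K^\lambda uK^{-\lambda}$ is a power of $v$ times $u$, and these powers lie in $\CA$ because $\lambda\in 2P$ pairs integrally with roots. For $\tE_i^{(n)}$ I will use \eqref{eq: twist-Hopf-2}. By a $q$-binomial computation, $\Delta'(\tE_i^n)=\sum_k c_k\,\tE_i^{k}\otimes \tE_i^{n-k}K^{-k\zeta^>_i}$ with $c_k$ quantum binomial coefficients in $v$. This gives a closed formula
\[\ad'_l(\tE_i^{(n)})(u)=\frac{1}{(n)_{v_i}!}\sum_{k}(-1)^k v^{a_k}\binom{n}{k}_{v_i}\tE_i^{\,n-k}\,u\,\tE_i^{k}\,K^{\pm(\dots)},\]
a twisted $q$-commutator. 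I will evaluate it on each type of $u$ separately.

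For $u=K^\mu$, the element $\tE_i$ $q$-commutes with $K^\mu$. The sum collapses to $\tE_i^n K^{\mu'}\prod_{c}(1-v^{\text{stuff}})/(n)_{v_i}!$. Since $\mu\in 2P$, the exponent $(\a_i,\mu)$ is divisible by $2\sd_i$, so each factor $1-v^{2\sd_i m}$ is divisible by $1-v^{-2\sd_i c}$ in a way that cancels the factorial. The result is then an $\CA$-multiple of $\tE_i^nK^{\mu'}$. I still need to check that the resulting Cartan exponent $\mu'$ lies in $2P$; this uses the remark that $\zeta^>_i\in 2P$.

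For $u=\tF_j$ with $j\neq i$, the relation $\tE_i\tF_j=v^{(\a_i,-\zeta^<_j)}\tF_j\tE_i$ again collapses the sum to a product. For $u=\tE_j$ with $j\ne i$, the expression vanishes for $n>1-a_{ij}$ by the twisted Serre relations. For smaller $n$, the coefficients $1/(n)_{v_i}!$ with $n\le 3$ only involve inverses of $v^{2k}-1$ with $k\le\max\sd_i$, and these are invertible in $\CA$. For $u=\tE_i$ the term is a multiple of $\tE_i^{n+1}$ with a coefficient that vanishes or is integral. The $\tF_i^{(n)}$ case is symmetric. The remaining, and hardest, case is $x=\tE_i^{(n)}$ with $u=\tF_i$. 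Here I will use $\tE_i\tF_i-v_i^2\tF_i\tE_i=v_i\frac{1-K_i^{-2}}{1-v_i^{-2}}$ to get $\ad'_l(\tE_i)(\tF_i)\in\CA\cdot(1-K_i^{-2})K^{\dots}$. The iterates then reduce to the case $u=K^\mu$ with $\mu\in 2P$, which was already treated.

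I expect the main obstacle to be the bookkeeping of the $v$-powers produced by the twist. The divisibility that kills the factorial $(n)_{v_i}!$ needs the exponents to be multiples of $2\sd_i$, so $v_i^2$-integrality, which is exactly why the lattice $2P$ and the choice of $\nu^{>}_i,\nu^<_i$ enter. I would first verify the rank-one case $\mathfrak{sl}_2$ explicitly, where $\phi=0$, and then check that the twist only multiplies by $v^{(\a_i,\text{element of }P)}$-type scalars, which do not spoil integrality.
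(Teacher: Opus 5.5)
The paper gives no proof here; it quotes \cite{LTV}. So I judge your argument on its own terms. Checking generators against generators is a reasonable plan, and your $K^\mu$ and $(\tE_i^{(n)},\tF_j)$, $j\neq i$, cases work. However, the case $x=\tE_i^{(n)}$, $u=\tE_j$ ($j\neq i$) rests on a false claim.

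Let $L,R$ denote left and right multiplication by $\tE_i$. Since $\ad'_l(\tE_i)(y)=(\tE_iy-y\tE_i)K^{\zeta^>_i}$ and $K^{\zeta^>_i}\tE_i=v_i^{2}\tE_iK^{\zeta^>_i}$, one gets
\[\ad'_l(\tE_i)^n(\tE_j)=\prod_{m=0}^{n-1}\bigl(L-v_i^{2m}R\bigr)(\tE_j)\,K^{n\zeta^>_i}.\]
Rewritten via the $q$-binomial theorem, the twisted Serre relation says $\prod_{k=0}^{-a_{ij}}(L-v_i^{2k}R)(\tE_j)=0$ if $\e_{ij}=-1$. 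If $\e_{ij}=1$ it instead says $\prod_{k=0}^{-a_{ij}}(L-v_i^{-2k}R)(\tE_j)=0$. Only in the first case does $\ad'_l(\tE_i^{(n)})(\tE_j)$ vanish for $n\geq 1-a_{ij}$. In the second case the factors $L-v_i^{-2k}R$ ($k\geq 1$) never appear. A dimension count in the weight spaces $n\a_i+\a_j$ then shows $\ad'_l(\tE_i)^n(\tE_j)\neq 0$ for every $n$.

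For instance, take $\mathfrak{sl}_3$ with $\e_{12}=1$ and put $w:=\tE_1\tE_2-\tE_2\tE_1$. The Serre relation is $\tE_1w=v^{-2}w\tE_1$, hence
\[\ad'_l(\tE_1)^n(\tE_2)=\prod_{m=1}^{n-1}\bigl(v^{-2}-v^{2m}\bigr)\,w\,\tE_1^{\,n-1}K^{n\zeta^>_1}\neq 0 .\]
The same happens for $\ad'_l(\tF_i^{(n)})(\tF_j)$ when $\e_{ij}=-1$, so your ``symmetric'' $\tF$-case has the same problem.

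Since $(n)_{v_i}!$ is not a unit in $\CA$ for large $n$ (already $(2)_v!=1+v^{-2}$ is not when $\g$ is simply laced), what you need is divisibility of $\ad'_l(\tE_i)^n(\tE_j)$ by $(n)_{v_i}!$ inside $U^{ev}_v(\g)$ for every $n$. In the example above, the scalar divided by $(n)_v!$ is a unit times $(1-v^2)^{n-1}$. In general, split $w$ into pieces $w_k$ with $\tE_iw_k=v_i^{-2k}w_k\tE_i$, $1\leq k\leq -a_{ij}$; this costs denominators $1-v_i^{2d}$ with $d<-a_{ij}$. Then check that $\prod_{m=1}^{n-1}(v_i^{-2k}-v_i^{2m})/(n)_{v_i}!$ is a unit times $(1-v_i^2)^n(1-v_i^{2k})^{-1}$ times a Gaussian polynomial in $v_i^2$. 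This is exactly where the elements inverted in $\CA$ are needed, and your proposal does not contain this step.

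Your treatment of $x=\tE_i^{(n)}$, $u=\tF_i$ also uses a wrong formula. Here $\ad'_l(\tE_i)$ is an ordinary commutator times $K^{\zeta^>_i}$, while the defining relation is a $v_i^2$-commutator, so
\[\ad'_l(\tE_i)(\tF_i)=\Bigl((v_i^2-1)\tF_i\tE_i+v_i\tfrac{1-K_i^{-2}}{1-v_i^{-2}}\Bigr)K^{\zeta^>_i}.\]
This is not a Cartan element, so the iterates do not reduce to the $K^\mu$ case. A direct computation repairs it: $\ad'_l(\tE_i)^2(\tF_i)=-v_i(1+v_i^{-2})\tE_iK_i^{-2}K^{2\zeta^>_i}$ and $\ad'_l(\tE_i)^3(\tF_i)=0$. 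Hence $\ad'_l(\tE_i^{(n)})(\tF_i)\in U^{ev}_v(\g)$ for all $n$.

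Finally, you pass from generator pairs $(x,u)$ to all of $U^{ev}_v(\g)$ via $\ad'_l(x)(u_1\cdots u_k)=\sum\prod_j\ad'_l(x_{(j)})(u_j)$. That step needs one more observation: the tensor factors of the iterated coproducts of $\tE_i^{(n)}$ and $\tF_i^{(n)}$ are, up to powers of $v$, of the form $\tE_i^{(c)}K^{\mu}$ or $\tF_i^{(c)}K^{\mu}$ with $\mu\in 2P$ (using $\zeta^>_i,\zeta^<_i\in 2P$). This, rather than the Hopf-subalgebra property alone, is what makes the reduction to generator pairs legitimate.
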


For any algebra homomorphism $\CA\rightarrow R$, $v\mapsto q\in R^\x$, we define the specializations:
\begin{equation}\label{eq: specalized forms} \cU_q(\g):=\cU_v(\g) \otimes_\CA R, \qquad  U^{ev}_q(\g):=U^{ev}_v(\g) \otimes_\CA R, \qquad U_q^{mix}:= U_v^{mix} \otimes_\CA R.
\end{equation}

\begin{Rem}\label{rem: Uev to lusztig} We have the left adjoint action $\ad'_l: \cU_q(\g) \curvearrowright U^{ev}_q(\g)$.  The inclusion $\iota: U^{ev}_v(\g) \hookrightarrow \dU_v(\g)$ induces a Hopf algebra homomorphism $\iota: U^{ev}_q(\g) \rightarrow \cU_q(\g)$.
\end{Rem}

\begin{Rem}\label{rem: triangular decomposition}
By\cite[$\mathsection3.6$]{LTV}, we have  $\cU_q(\g)\cong \cU^<_q \otimes_R \cU^0_q \otimes_R \cU^>_q$, in which $\cU^<_q, \cU^>_q, \cU^0_q$ are generated by $\{ \tF_i^{(n)}\}^{1\leq i \leq r}_{n \in \BN}, \{ \tE_i^{(n)}\}^{1\leq i \leq r}_{n \in \BN}, \big \{K^\mu, \binom{K_i;0}{m}\big\}_{\mu \in 2P}^{1\leq i \leq r, m \in \BN}$, respectively. Here
\begin{equation}
    \binom{K_i;a}{n} = \frac{\prod_{s=1}^n (1-K_i^{-2}v_i^{2(s-a-1)})}{\prod_{s=1}^n(1-v_i^{-2s})}.
\end{equation}
\end{Rem}

\begin{Rem}We have  { \em the idempotented Lusztig form} $\dmU_q(\g, P)$ defined in \cite[Chapter 23]{l-book} generated by
\[ \{ E_i^{(n)}1_\lambda, F_i^{(n)}1_\lambda~|~ 1\leq i\leq r, n \geq 0, \lambda \in P\}.\]
Similarly, we have the idempotented Lusztig form $\dU_q(\g,P)$ defined with generators
\[ \{ \tE_i^{(n)}1_\lambda, \tF_i^{(n)}1_\lambda| 1\leq i \leq r, n \geq 0, \lambda \in P\}.\]
We equip $\dmU_q(\g, P)$ with a Hopf algebra structure via \eqref{eq: usual hopf structure} and  $\dU_q(\g,P)$ with a Hopf algebra structure by \eqref{eq: twist-Hopf-2}. We record the coproduct of $\dU_q(\g, P)$: 
\begin{equation}\label{EF coproduct}
\begin{split}
  & \Delta(\tE^{(r)}_i 1_\lambda)=
    \sum_{c=0}^r \prod_{\lambda'+\lambda''=\lambda}  q^{-(r-c)(\zeta^>_i, \lambda'')} 
    \tE_i^{(r-c)}1_{\lambda'}\otimes \tE_i^{(c)}1_{\lambda''} \,,\\
  & \Delta(\tF_i^{(r)}1_\lambda)=
    \sum_{c=0}^r \prod_{\lambda'+\lambda''=\lambda} q_i^{2c(r-c)}q^{c(\zeta^<_i, \lambda'')} 
    \tF_i^{(c)}1_{\lambda'}\otimes \tF_i^{(r-c)}1_{\lambda''} \,,
\end{split}
\end{equation}
For more discussions, see \cite[$\mathsection 5$]{LTV}. 
\end{Rem}
\subsection{Affine Weyl group $W_{aff}$}\

The {\em affine Weyl group} $W_{aff}:=W \ltimes Q$ and the {\em extended affine Weyl group} $W_{ext}:=W\ltimes P$ naturally act on $P$ and $\h^*$. Let $t_\lambda$ denote the element of $W_{ext}$ corresponding to $\lambda \in P$.

We have the  {\em dot action} of $W$ on $\h^*$, hence on $P$,  as follows: 
\[w \cdot \lambda= w(\lambda+\rho)-\rho\qquad  \text{for $\lambda \in \h^*, w\in W$},\]
here $\rho= \sum_i \w_i$. For the number $\ell$, we have the {\em $\ell$-shifted dot action} $\bullet_\ell$  of $W_{ext}$ in $\h^*$ as follows:
\begin{equation}\label{eq: l-shifted dot action}
w \bullet_\ell(\mu)=w(\mu+\rho)-\rho, \qquad t_\lambda \bullet_\ell \mu=\mu+\ell \lambda, \qquad \text{for $w\in W, \lambda\in P, \mu \in \h^*$}.
\end{equation}

The {\em fundamental alcove} with respect to the $\bullet_\ell$-action of $W_{aff}$ on $\h^*_\BR:= P\otimes_{\BZ} \BR$ is 
\[ C=\{ \lambda \in \h^*_\BR~|~ 0<\< \lambda +\rho, \a^\vee\> < \ell ~\text{for all $\a \in \Delta_+$}\}\]
Then the closure of $C$ is
\begin{equation}\label{eq: closure of C}\overline{C}=\{ \lambda \in  \h^*_\BR~|~0\leq \< \lambda+\rho, \a^\vee\> \leq \ell ~\text{for all $\a \in \Delta_+$}\}
\end{equation}
Let $\a_0$ be the highest positive root of $\g$. Then $\overline{C}$ is a polytope bounded by $r+1$ hyperplanes: 
\[ \<?+\rho, \a_i^\vee\>=0 ~\text{for all simple roots $\a_i$}, \qquad \< ?+\rho, \a_0^\vee\>=\ell.\]
\begin{Rem}\label{rem: not belong to affine hyper} If $\lambda\in \overline{C}$, then $\< \lambda+\rho, \a^\vee\> \in [0, \ell)$ if and only if $\lambda$ does not belong to the hyperplane $\< ?+\rho, \a_0^\vee\>=\ell$.
\end{Rem}
\subsection{Affine Weyl group $W_{*aff}$}\

Fix a positive integer $\ell$. For all positive roots $\a \in \Delta_+$ of $\g$, let
\begin{equation}\label{eq: li and la}
\ell_i :=\ell/\text{gcd}(2\sd_i, \ell)\quad (1\leq i \leq r), \qquad \ell_\a:= \ell/\text{gcd}((\a, \a), \ell),
\end{equation}

Let us consider the following data: 

\begin{itemize}
\item The lattices $P^*=\bigoplus_{i=1}^r \BZ \w^*_i$ and $ Q^*=\bigoplus_{i=1}^r \BZ \a^*_i$, where
$\w^*_i:=\ell_i \w_i$, $\a^*_i:=\ell_i \a_i$. Then set $\w^{*\vee}_i:=\w^\vee_i/\ell_i$, $\a^{*\vee}_i:=\a^\vee_i/\ell_i$ and $\a^{*\vee}:= \a^\vee/\ell_\a$ for all $\a \in \Delta_+$.
\item The new Cartan matrix with $(i,j)$-entry 
\begin{equation}\label{New Cartan entry} 
  a_{ij}^*=2(\a^*_i,\a^*_j)/(\a^*_i, \a^*_i)=2\ell_j(\a_i, \a_j)/\ell_i(\a_i, \a_i) \,.
\end{equation}
\item The bilinear form on $P^* \subset P$ is restricted from the bilinear form on $P$ by restriction.
\end{itemize}

So that $(a^*_{ij})$ is the Cartan matrix of a semisimple Lie algebra $\g^d$ by \cite[$\mathsection 2.2.4$]{l-book}. 
\begin{Rem}\label{rem: Q*} $Q^*=\{ \a \in Q~|~ (2\lambda, \a) \; \text{is divisible by $\ell$ for all $\lambda \in P$}\}$.
\end{Rem}
We have the affine Weyl group $W_{*aff}=W\ltimes Q^*$ and the extended affine Weyl group $W_{*ext}:=W\ltimes P^*$ . Both act on $P$ and $\h^*$. The {\em dot action} $\bullet$ of $W_{*ext}$ on $\h^*$ is defined by:
\begin{equation}
\label{eq: dot action W*}
w \bullet (\mu)=w(\mu+\rho)-\rho, \qquad t_\lambda \bullet \mu=\mu+\lambda, \qquad \text{for $w\in W, \lambda \in P^*, \mu \in \h^*$}.
\end{equation}
\begin{Rem}\label{rem: Waff for odd l} When $\ell$ is odd  (coprime to $3$ when $\g$ has  component of type $G_2$), then $P^*=\ell P$ and the dot action $\bullet$ of $ W_{*ext}$ on $\h^*$ coincides with the $\ell$-shifted dot action $\bullet_\ell$ of $ W_{ext}$ on $\h^*$. 
\end{Rem}

The {\em fundamental alcove} $C \subset \h^*_{\BR}:=P\otimes_{\BZ} \BR$ under the dot action of $W_{*aff}$ is defined as 
\begin{equation}\label{eq: fundamental alcove for W*} C:=\{ x\in P\otimes_{\BZ}\BR~|~ 0 < \< x+\rho, \a^{*\vee}\> <1 \;\; \text{for positive roots $\a \in \Delta_+$}\}.
\end{equation}

The next lemma is a slight generalization of \cite[Lemma 7.8]{J03} with the same proof.
\begin{Lem}\label{lem: Key lemma on weights}Let $\mu, \lambda$ be in the closure $\overline{C}$ of the fundamental alcove in $\h^*_{\BR}$  and $\nu_1$ be the unique dominant weight in the $W$-orbit of $\mu-\lambda$.

\noindent
(a) We have $\lambda+w\nu \not \in W_{*aff}\bullet \mu$ for all $w\in W$ and $\nu \in P_+$ such that $\nu<\nu_1$.

\noindent
(b) If $w\in W$ with $\lambda+w\nu_1 \in W_{*aff}\bullet \mu$ then there is some $w_1\in W_{*aff}$ such that $w_1 \bullet  \lambda=\lambda$ and $w_1 \bullet \mu=\lambda+w\nu_1$.
\end{Lem}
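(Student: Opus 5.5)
Shift by $\rho$ throughout: put $\lambda' = \lambda+\rho$ and $\mu' = \mu+\rho$. Under this shift the dot action $\bullet$ of $W_{*aff}$ becomes the ordinary affine action generated by $W$ and the translations by $Q^*$. Its reflections are the $s_{\a,k}\colon x\mapsto x-(\<x,\a^{*\vee}\>-k)\a^*$, where $\a^*=\ell_\a\a$ pairs with $\a^{*\vee}$ to $2$. The closure $\overline{C}$ then becomes $\overline{C}'=\{x : 0\le \<x,\a^{*\vee}\>\le 1 \text{ for all } \a\in\Delta_+\}$, and this is a fundamental domain for the affine action. I follow the proof of \cite[Lemma 7.8]{J03}; the only change is that $\ell$ and $\ell\a^\vee$ are replaced by the root-dependent data $\a^*$ and $\a^{*\vee}$.

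\textbf{Key step.} Suppose $\lambda+w\nu = w_1\bullet\mu$ for some $w_1\in W_{*aff}$ and some dominant $\nu$. Write $w_1\bullet\mu = x\mu'+\tau-\rho$ with $x\in W$ and $\tau\in Q^*$. Then
\[
w\nu=(x\mu'-\lambda')+\tau .
\]
I will show that $\nu_1\le\nu$, with equality only in a rigid situation. The natural tool is the standard fact that for $y\in\overline{C}'$ and $\tau\in Q^*$, the unique dominant representative of $y+\tau$ under $W$ dominates $y$'s "minimal" shape. Concretely, I proceed by induction on the length of $w_1$, or equivalently on the number of hyperplanes $H_{\a,k}=\{\<x,\a^{*\vee}\>=k\}$ separating $\overline{C}'$ from $w_1\overline{C}'$.

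\textbf{Induction step.} Write $w_1 = s\,w_1'$, where $s=s_{\a,k}$ is the reflection in the last wall crossed. Since $\lambda'$ lies in $\overline{C}'$ and $w_1'\mu'$ lies on the near side of $H_{\a,k}$, reflecting $w_1'\mu'$ across $H_{\a,k}$ moves it further from $\lambda'$. More precisely, $w_1\mu'-\lambda'$ equals $w_1'\mu'-\lambda'$ plus a nonnegative multiple $c\,\a^*$ pointing away, and the sign comes from the alcove inequalities $0\le\<\lambda',\a^{*\vee}\>\le1$. The standard $\mathfrak{sl}_2$-string argument then shows that the dominant conjugate can only increase: if $y$ and $y+c\a^*$ lie on the same $s_\a$-string with $y+c\a^*$ the farther endpoint, then $\mathrm{dom}(y)\le\mathrm{dom}(y+c\a^*)$. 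Moreover the inequality is strict unless $c=0$, that is, unless $w_1'\mu'$ already lies on $H_{\a,k}$.

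\textbf{Conclusion.} The base of the induction is $w_1=1$, where $\mathrm{dom}(\mu-\lambda)=\nu_1$. Iterating the step gives $\nu\ge\nu_1$, which proves (a) by contraposition. For (b), equality $\nu=\nu_1$ forces every reflection in the chosen reduced expression to fix the relevant point at each stage. Tracking this carefully, I take the alternative induction in which the reflections are chosen to fix $\lambda'$ when possible, and then equality produces an element $w_1$ with $w_1\lambda'=\lambda'$ and $w_1\mu'=\lambda'+w\nu_1$.

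\textbf{Main obstacle.} The hardest step is the monotonicity claim: each wall crossing away from $\overline{C}'$ weakly increases the dominant conjugate of the difference with $\lambda'$, with equality exactly at walls through the stabilizer of $\lambda'$. If the direct induction is awkward, I would first try the "distance" formulation. Using the length of the dominant conjugate alone is not enough, because the conclusion needs the dominance order and not just $|\cdot|$. So I would instead compare $\<\cdot,\varpi\>$ for every dominant coweight $\varpi$, using that $\mathrm{dom}(y)\ge\mathrm{dom}(z)$ if and only if $\max_{u\in W}\<uy,\varpi\>\ge\max_{u\in W}\<uz,\varpi\>$ for all such $\varpi$.
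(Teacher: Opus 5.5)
Your strategy is the argument the paper has in mind when it says Jantzen's Lemma 7.8 carries over. You shift by $\rho$ and take a reduced gallery from $C'$ to $w_1C'$, so that $w_1=r_m\cdots r_1$ with $r_j$ the reflection in the $j$-th wall crossed. You then show that each crossing weakly increases the dominant conjugate of $x_j-\lambda'$, where $x_j=r_j\cdots r_1\mu'$. This does prove (a). To see the step, orient the wall as $H_{\alpha,k}$ with $k\ge 1$. Then $y=x_{j-1}-\lambda'$ and $z=x_j-\lambda'$ satisfy $y=z-c\alpha^*$ with $c\ge 0$ and $\langle z,\alpha^{*\vee}\rangle-c=k-\langle\lambda',\alpha^{*\vee}\rangle\ge 0$. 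Hence $y$ lies on the segment $[s_\alpha z,z]\subset\mathrm{conv}(Wz)$. (For non-integral $\lambda,\mu$ the dominance order must be read with real coefficients, since $c$ need not be integral.)

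The gap is in the equality analysis, which is the whole content of (b). Your claim that the step is strict unless $c=0$ is false. If $\lambda'\in H_{\alpha,k}$, then $r_j$ fixes $\lambda'$ and $z=s_\alpha y$, so $\mathrm{dom}(z)=\mathrm{dom}(y)$ even though $c\neq 0$ in general. A rank-one example: $\langle\lambda',\alpha^{*\vee}\rangle=1$, $\mu'$ in the interior of $\overline{C}'$, $w_1=s_{\alpha,1}$, where $c=1-\langle\mu',\alpha^{*\vee}\rangle>0$. Taken literally, your criterion would force $w_1\mu'=\mu'$ whenever $\nu=\nu_1$. That is false, and it rules out exactly the nontrivial cases of (b). The ``alternative induction choosing reflections that fix $\lambda'$ when possible'' does not supply the missing argument.

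The correct criterion is: equality holds iff $x_{j-1}\in H_{\alpha,k}$ (i.e.\ $c=0$) or $\lambda'\in H_{\alpha,k}$ (i.e.\ $c=\langle z,\alpha^{*\vee}\rangle$). Indeed, if $0<c<\langle z,\alpha^{*\vee}\rangle$, then $z\neq s_\alpha z$ and $y$ lies in the open segment between them. So $|y|<|z|$, and the dominant conjugates differ.

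With this, (b) is immediate. If $\nu=\nu_1$, every step is an equality. Delete from $r_m\cdots r_1$ each $r_j$ that fixes the current point $x_{j-1}$; this does not change the image of $\mu'$. Every remaining $r_j$ fixes $\lambda'$, so their product $w_1''$ satisfies $w_1''\bullet\lambda=\lambda$ and $w_1''\bullet\mu=\lambda+w\nu_1$.

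Finally, your remark that Euclidean length is not enough is mistaken. For dominant $\nu<\nu_1$ one has $|\nu_1|^2-|\nu|^2=|\nu_1-\nu|^2+2(\nu_1-\nu,\nu)>0$. The computation above shows $|y|\le|z|$ with the same equality criterion, hence $|x-\lambda'|\ge|\mu'-\lambda'|$ on $W_{*aff}\mu'$. This gives both (a) and (b) without any dominance-order bookkeeping.
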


\subsection{The algebra $\sR$} \label{ssec: the algebra R}\

Let $\hat{\h}:=\h \oplus \BC \hbar$. The group $W_{ext}$ acts on $\hat{\h}$ as follows: $\hbar$ is invariant, $W$ acts on $\h$ by the default action, and for $\lambda \in P, x\in \h$ then $t_\lambda x=x+\< \lambda, x\> \hbar$.

We identify the dual $\hat{\h}^*$  with $\h^*\oplus \BC$ by 
\[ \< (\mu, z), x\>=\< \mu, x\>, \quad \<(\mu, z), \hbar\>= -z.\]
The action of $W_{ext}$ on $\hat{\h}^*$ becomes
\[ w (\mu, z)=(w\mu, z), \quad t_\lambda (\mu, z)= (\mu+z\lambda, z).\]
\begin{defi}Let $\sR:=\BC[[\hat{\h}^*]]$, the completion of $\BC[\hat{\h}^*]=S(\hat{\h})$ at $0$. Let $\m$ be the maximal ideal of $\sR$. Then $W_{ext}$ acts on $\sR$ as above.
\end{defi}

\begin{Rem}The quotient $\usR:=\sR/\hbar \sR $ is isomorphic to $\BC[[\h^*]]$, the completion of $\BC[\h^*]$ at $0$. Furthermore, the $W_{ext}$-action on $\sR$ induces a $W_{ext}$-action on $\usR$. This $W_{ext}$-action on $\usR$ comes from the $W_{ext}$-action on $\h^*$ via the quotient $W_{ext}\twoheadrightarrow W \curvearrowright \h^*$.
\end{Rem}




\section{Rational representations of $\cU_q(\g)$}

We recall the category of rational representations of the Lusztig form $\cU_q(\g)$ and discuss  projective and tilting objects in this category.

\subsection{Rational representations of $\cU_q(\g)$}\

\label{ssec: rational reps}

Recall the triangular decomposition of $\cU_q(\g)$ in  Remark \ref{rem: triangular decomposition}. Let $\sN \in \BZ_{>0}$ be such that $\frac{\sN}{2}(\frac{P}{2}, \frac{P}{2}) \in \BZ$. Fix $q^{1/\sN} \in R$ such that $(q^{1/\sN})^\sN =q$. For any $\lambda \in P$, let define $\chi_\lambda: \cU^0_q \rightarrow R$ by 
\begin{equation}\label{eq: character chi}
    \chi_\lambda(K^\mu)=q^{(\mu, \lambda)}, \qquad \chi_\lambda\left(\binom{K_i;0}{m}\right)=\binom{(\lambda, \a^\vee_i)}{m}_{q_i},\qquad \text{for $\mu \in 2P$ and $m \in \BN$.}
\end{equation}

 \begin{defi}[cf. \cite{APW1}, $\mathsection 1$] A $\cU_q(\g)$-module $M$ is a {\it rational representation} (of type $1$) if it satisfies the following conditions:
 \begin{enumerate}
     \item[(i)] $M$ is a weight module meaning  that there is a decomposition $M=\bigoplus_{\lambda \in P} M_\lambda$, where $u_0 m=\chi_\lambda(u_0)m$ for all $u_0\in \cU^0_q, m\in M_\lambda$. 
     \item[(ii)]For any $m \in M$, there is $k>0$  such that $\tE_i^{(s)} m=0$ for all $s>k$ and all $1\leq i \leq r$.
     \item[(iii)]For any $m \in M$, there is $k>0$ such that $\tF_i^{(s)}m=0$ for all $s>k$ and all $1 \leq i \leq r$.
 \end{enumerate}
 Let $\Rep(\cU_q(\g))$ denote the category of rational $\cU_q(\g)$-representations. Let $\Rep^{fd}(\cU_q(\g))$ denote the full subcategory of $\Rep(\cU_q(\g))$ consisting of objects that are finitely generated $R$-modules.
 \end{defi}
\begin{Rem}\label{rem: dual module}For any $V\in \Rep^{fd}(\cU_q(\g))$, we define $\cU_q(\g)$-representations $V^*$ and $ ^*V$ as follows:
\begin{itemize}
 \item Set $V^*:=\Hom_R(V, R)$ and $(uf)(v)=f(S(u)v)$ for $v\in V, f\in V^*, u\in \cU_q(\g)$.
\item Set $^*V:=\Hom_R(V,R) $ and  $(uf)(v)=f(S^{-1}(u)v)$ for $v\in V, f\in~^*V, u\in \cU_q(\g)$.
\end{itemize}
Note that $^* V\cong V^*$ since $S^2(u)=K^{\rho} uK^{-\rho}$ for all $u \in \cU_q(\g)$.
\end{Rem}
\begin{defi}(a) For any $\lambda \in P$, let $R_\lambda$ denote the representation of $\cU^{\geqslant}_q:= \cU^0_q \otimes_R \cU^>_q$ defined by $\cU^\geqslant_q \rightarrow \cU^0_q \xrightarrow[]{\chi_\lambda} R$. Then we define the {\em Verma module} $\Delta_q(\lambda):= \cU_q(\g) \otimes_{\cU^\geqslant_q} R_\lambda$.

\noindent
(b) For any $\lambda \in P_+$, the {\em Weyl module} $W_q(\lambda)$ is the maximal rational quotient of the Verma module $\Delta_q(\lambda)$. Let $1_\lambda$ be the image of $1\in \cU_q(\g)$ in $\Delta_q(\lambda)$. Then $W_q(\lambda)$ is the quotient of $\Delta_q(\lambda)$ by the left $\cU_q(\g)$-submodule generated by $\tF^{(s)}_i 1_\lambda$ for $s> (\lambda, \a^\vee_i)$ and $1\leq i \leq r$.
\end{defi}
For the existence of the maximal rational quotient of $\Delta_q(\lambda)$ and the description of $W_q(\lambda)$, see \cite[Proposition 1.14, 1.20]{APW1}. We remark that these results in \cite{APW1} hold for general ring $R$ as being argued in \cite[Remark 7.8, 7.13]{LTV}.

\begin{defi}A $\dU_q(\g,P)$-module $M$ is a { \it unital rational representation} if it satisfies
\begin{enumerate}
    \item[(i)] For any $m \in M$, we have  $1_\lambda m =0$ for all but finitely many $\lambda \in P$ and $\sum_\lambda 1_\lambda m=m$.
    \item[(ii)] For any $m \in M$, there is $k>0$ such that $\tE^{(s)}_i1_\lambda m=0$ for all $s>k$ and all $1\leq i \leq r$.
    \item[(iii)]For any $m \in M$, there is $k>0$ such that $\tF^{(s)}_1 1_\lambda m=0$ for all $s>k$ and all $1\leq i \leq r$.
\end{enumerate}
Let $\Rep(\dU_q(\g, P))$ denote the category of unital rational representations of $\dU_q(\g,P)$. The category $\Rep(\dmU_q(\g, P))$ is defined similarly.
\end{defi}
\begin{Rem}\label{rem: equivalence of Rat reps}There is a natural equivalence of braided monoidal categories
\begin{equation*}\Rep(\cU_q(\g)) \cong \Rep(\dU_q(\g, P)).
\end{equation*}
On the other hand, since  the Hopf structure of $\dU_q(\g, P)$ is obtained from  the Hopf structure of $\dmU_q(\g, P)$ by a Drinfeld twist, there is  a natural equivalence of braided monoidal categories:
\begin{equation*}
     \Rep(\dU_q(\g, P)) \cong \Rep(\dmU_q(\g, P)).
 \end{equation*}
\end{Rem}
\subsection{Quantum Frobenius homomorphism}\

In this section, we assume that  $(\textbf{A})$ $R=\BC$ and  $q=\e\in R$, a root of unity of order $\ell$ so that  $\ell_i \geq \max\{ 2, -a_{ij}\}_{1\leq i \leq r}$ for all $1\leq i \leq r$.

Let us recall the data \eqref{New Cartan entry} and the semisimple Lie algebra $\g^d$, which is either $\g$  or the Langland dual $\g^\vee$ of $\g$. Hence $\Dyn(\g^d)$ is the same graph as $\Dyn(\g)$. Let us fix the same orientation $\Or$ for $\Dyn(\g^d)$ as in $\Dyn(\g)$.

We form the $\BQ(v^{1/2})$-Hopf algebra $\bU^*(\g, P^*/2)$  with generators $\{\he_i, \hf_i, K^\mu\}_{1\leq i \leq r}^{\mu \in P^*/2}$ and the data \eqref{New Cartan entry}. We have the following twist with respect to the orientation $\Or$ of $\Dyn(\g^d)$: 
\[ \sF^*:=v^{\sum_{i,j} \phi^*_{ij} \w^{*\vee}_i \otimes \w^{*\vee}_j}, \qquad \text{here}\qquad \phi^*_{ij}=\e_{ij}\frac{(\a^*_i, \a^*_j)}{2}.\]
As in Section \ref{ssec: twisted construction}, we consider the following twisted generators:
\[ \te_i:= \he_i K^{\nu^{*>}_i}, \qquad \tf_i:= K^{-\nu^{*<}_i}\hf_i, \]
where 
 \[ \nu^{*>}_i := -\a^*_i+\sum_{1\leq j \leq r} \phi^*_{ij} \w^{*\vee}_j=\ell_i \nu^>_i, \qquad \nu^{*<}_i:= \sum_{1\leq j \leq r} \phi^*_{ij} \w^{*\vee}_j =\ell_i \nu^<_i.\]
Finally, we obtain the idempotented Lusztig form $\dU^*_q(\g, P^*)$ (after the base change $\CA \rightarrow R, v \mapsto q \in R^\x$) with generators:
\[ \{ \te^{(n)}_i 1_\lambda, \tf^{(n)}_i 1_\lambda| 1\leq i \leq r, \lambda \in P^*\}.\]
The next proposition is a twisted version of \cite[Theorem 35.1.9]{l-book}, see \cite[$\mathsection 5$]{LTV}. We consider the assumption $(\textbf{A})$ for simplicity.
\begin{Prop}\label{prop: Frobenius morphism}  There is a unique $\BC$-algebra homomorphism 
\[ \tFr: \dU_\e(\g,P)\rightarrow \dU^*_\e(\g, P^*)\]
such that 
\begin{itemize}
    \item $\tFr(\tE^{(n)}_i1_\lambda)$ equals $\te_i^{(n/\ell_i)}1_\lambda$ if $\lambda \in P^*$ and $n$ is divisible by $\ell_i$, and is zero otherwise.
    \item $\tFr(\tF^{(n)}_i 1_\lambda$ equals $\tf_i^{(n/\ell_i)}1_\lambda$ if $\lambda \in P^*$ and $n$ is divisible by $\ell_i$, and is zero otherwise.
\end{itemize}
Furthermore, this homomorphism  is compatible with comultiplications.
\end{Prop}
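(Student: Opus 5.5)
The plan is to deduce the statement from Lusztig's quantum Frobenius \cite[Theorem 35.1.9]{l-book} for the untwisted idempotented forms, transporting it through the Drinfeld twists $\sF$ and $\sF^*$. Recall from Remark \ref{rem: equivalence of Rat reps} that $\dU_\e(\g,P)$ and $\dmU_\e(\g,P)$ are the same idempotented algebra with different Hopf structures, related by the twist $\sF$. On the idempotented form, $\sF$ acts on $1_\lambda\otimes 1_\mu$ by the scalar $\e^{\sum\phi_{ij}(\w^\vee_i,\lambda)(\w^\vee_j,\mu)}$. So the first step is to write the generators $\tE_i^{(n)}1_\lambda$ and $\tF_i^{(n)}1_\lambda$ as scalar multiples of $E_i^{(n)}1_{\lambda'}$ and $F_i^{(n)}1_{\lambda'}$. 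Here $K^{\nu^>_i}$ and $K^{-\nu^<_i}$ act on $1_\lambda$ by explicit powers of $\e^{1/\sN}$, and $\lambda'$ is $\lambda$ or $\lambda\pm n\a_i$ depending on the side on which the Cartan factor sits. The same is done on the dual side, using $\nu^{*>}_i=\ell_i\nu^>_i$ and $\nu^{*<}_i=\ell_i\nu^<_i$.

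Next, define $\tFr$ as the composite of the algebra identification $\dU_\e(\g,P)=\dmU_\e(\g,P)$, Lusztig's Frobenius $\mathrm{Fr}$, and the identification $\dmU^*_\e(\g,P^*)=\dU^*_\e(\g,P^*)$. Uniqueness is clear, since the listed elements together with the $1_\lambda$ generate $\dU_\e(\g,P)$. Note that $1_\lambda=\tE_i^{(0)}1_\lambda$ maps to $1_\lambda$ or $0$ according to whether $\lambda\in P^*$.

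The main computation is checking the formula on generators. We need $\mathrm{Fr}(E_i^{(n)}1_\lambda)$ to equal $e_i^{(n/\ell_i)}1_\lambda$ when $\ell_i\mid n$ and $\lambda\in P^*$, and $0$ otherwise; we then need the twisting scalars on the two sides to agree. For $n=\ell_i m$ and $\lambda\in P^*$, the scalar on the source is $\e^{(\nu^>_i,\lambda)}$ up to a shift by $n\a_i$. On the target it is $\e^{(\nu^{*>}_i,\lambda)}$ at the quantum parameter for $\g^d$, with $m$ in place of $n$. Equality of these requires $\e^{(\nu^>_i,\lambda)}$ to be insensitive to the factors of $\ell_i$. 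This should follow from $\lambda\in P^*=\ell P$ (Remark \ref{rem: Waff for odd l}), together with Remark \ref{rem: Q*} and the integrality $\zeta^{<}_i,\zeta^{>}_i\in 2P$. The cross terms $\phi_{ij}(\w_j^\vee,\lambda)$ then become multiples of $\ell$, or of $\ell_i$ times the corresponding $*$-quantities, so the $\ell$-th roots of unity cancel.

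Compatibility with comultiplication follows the same way. Lusztig's $\mathrm{Fr}$ is a coalgebra map for the untwisted structures, and $\Delta'=\sF\Delta\sF^{-1}$ on both sides. So it suffices that $(\mathrm{Fr}\otimes\mathrm{Fr})(\sF)=\sF^*$ on $1_\lambda\otimes1_\mu$ with $\lambda,\mu\in P^*$. This is the identity $\sum\phi_{ij}(\w_i^\vee,\lambda)(\w_j^\vee,\mu)=\sum\phi^*_{ij}(\w_i^{*\vee},\lambda)(\w_j^{*\vee},\mu)$. It holds because $\phi^*_{ij}=\ell_i\ell_j\phi_{ij}$ and $\w^{*\vee}_i=\w^\vee_i/\ell_i$. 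This check can also be done directly against \eqref{EF coproduct}.

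The hard step is the bookkeeping of the fractional powers $\e^{1/\sN}$ in the twisting scalars. To make sure they match exactly, and not merely up to a root of unity, I would reduce everything to the pairing identities for $\nu$ and $\phi$ above.
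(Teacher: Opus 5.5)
Your route is the one the paper itself relies on; it only cites a ``twisted version of \cite[Theorem 35.1.9]{l-book}'' and defers to \cite[$\mathsection 5$]{LTV}. That route is: identify $\dU_\e(\g,P)$ and $\dU^*_\e(\g,P^*)$ with the untwisted idempotented forms as algebras, transport Lusztig's $\mathrm{Fr}$, and check $(\mathrm{Fr}\otimes\mathrm{Fr})(\sF)=\sF^*$ on $P^*\times P^*$ via $\phi^*_{ij}=\ell_i\ell_j\phi_{ij}$ and $\w^{*\vee}_i=\w^\vee_i/\ell_i$. Your uniqueness argument and the twist identity are fine.

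The gap is in the generator check, and you locate the difficulty in the wrong place. The Cartan factors need no divisibility argument: $K^{n\nu^>_i}$ and $K^{m\nu^{*>}_i}$ act on $1_\lambda$ by the same scalar, because $n\nu^>_i=m\nu^{*>}_i$ for $n=\ell_i m$. In any case $P^*=\ell P$ is not available, since Remark \ref{rem: Waff for odd l} needs $\ell$ odd while the proposition is stated under (\textbf{A}). Also, the target is specialized at the same $v\mapsto\e$, only with $\sd^*_i=\ell_i^2\sd_i$. What you have not tracked is the divided-power normalization. From $(\nu^>_i,\a_i)=-2\sd_i$, $(\nu^<_i,\a_i)=0$ and $(n)_v!=v^{-\binom{n}{2}}[n]_v!$ one gets $\tE_i^{(n)}=v_i^{-\binom{n}{2}}\frac{E_i^n}{[n]_{v_i}!}K^{n\nu^>_i}$ and $\tF_i^{(n)}=v_i^{\binom{n}{2}}K^{-n\nu^<_i}\frac{F_i^n}{[n]_{v_i}!}$. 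The same holds on the target with $v_i$ replaced by $v_i^{\ell_i^2}$ and $n$ by $m$. Since $\binom{\ell_i m}{2}-\ell_i^2\binom{m}{2}=m\binom{\ell_i}{2}$, the composite you define sends
\[
\tE_i^{(\ell_i m)}1_\lambda\mapsto \e_i^{-m\binom{\ell_i}{2}}\,\te_i^{(m)}1_\lambda,\qquad \tF_i^{(\ell_i m)}1_\lambda\mapsto \e_i^{m\binom{\ell_i}{2}}\,\tf_i^{(m)}1_\lambda,\qquad \e_i:=\e^{\sd_i}.
\]

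This factor is $1$ when $\e_i$ has odd order, e.g.\ for $\ell$ odd, which is the case used later in the paper. It is not $1$ in general under (\textbf{A}): for $\g=\mathfrak{sl}_2$ and $\ell=6$ one has $\ell_1=3$ and the factor is $(-1)^m$. So your composite does not have the stated values. No pairing identity for $\nu$ and $\phi$ can repair this, because the discrepancy is independent of $\lambda$.

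The missing step is to compose with the automorphism $\psi$ of $\dU^*_\e(\g,P^*)$ that acts on $1_\mu\dU^*_\e(\g,P^*)1_\lambda$ by $\chi(\mu-\lambda)$. Here $\chi\colon Q^*\to\BC^\times$ is the character with $\chi(\a^*_i)=\e_i^{\binom{\ell_i}{2}}$. $\psi$ is an algebra automorphism by the grading. It commutes with the comultiplication, since $\Delta(1_\mu x1_\lambda)$ involves only terms $y'\otimes y''$ with $y'\in 1_{\mu'}\dU^*_\e(\g,P^*)1_{\lambda'}$ and $y''\in 1_{\mu''}\dU^*_\e(\g,P^*)1_{\lambda''}$, where $\mu'+\mu''=\mu$ and $\lambda'+\lambda''=\lambda$ (cf.\ \eqref{EF coproduct}). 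Then $\psi$ composed with your map has exactly the prescribed values and remains compatible with comultiplications.
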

 \begin{Rem}The homomorphism $\tFr$ gives a rise to a monoidal functor of categories: 
\begin{equation}
    \tFr^*:  \Rep(\dU^*_\e(\g, P^*) \rightarrow \Rep(\dU_\e(\g, P)).
\end{equation}
\end{Rem}
Let $\cU_\BC(\g^d)$ be the enveloping algebra of $\g^d$. The next result follows by \cite[Proposition 17]{LTV}.
\begin{Prop}There is a unique $\BC$-linear homomorphism of Hopf algebras
\[ \tFr: \cU_\e(\g)\rightarrow \cU_\BC(\g^d)\]
 such that 
 \[ \tE_i^{(n)} \mapsto (\e^*_i)^{-n /\ell_i}e_i^{(n/\ell_i)}, \qquad \tF_i^{(n)} \mapsto f_i^{(n/\ell_i)}, \qquad K^\lambda \mapsto 1,\]
 where $\lambda \in 2P$ and we set $e_i^{(n/\ell_i)}=f_i^{(n/\ell_i)}=0$ if $\ell_i$ does not divide $n$.
\end{Prop}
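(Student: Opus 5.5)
The plan is to get this homomorphism from the idempotented Frobenius map of Proposition \ref{prop: Frobenius morphism}, composed with the classical identification of the Lusztig form at $v=1$ with the hyperalgebra of $\g^d$. The untwisted analogue is Lusztig's quantum Frobenius \cite[Theorem 35.1.9]{l-book}. We first show uniqueness, then construct the map in three steps, and finally check compatibility with the Hopf structure.

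\textbf{Uniqueness.} $\cU_\e(\g)$ is generated by $\tE_i^{(n)}$, $\tF_i^{(n)}$, $K^\lambda$ together with the elements $\binom{K_i;0}{m}$. By the triangular decomposition (Remark \ref{rem: triangular decomposition}), the $\binom{K_i;0}{m}$ are expressible through commutators of $\tE_i^{(n)}$ and $\tF_i^{(n)}$ modulo lower terms. Hence any algebra map is determined on them by its values on the stated generators.

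\textbf{Step 1: passing to $\dU$.} Every finite-dimensional rational $\cU_\e(\g)$-module is a module over $\dU_\e(\g,P)$, so $\cU_\e(\g)$ maps to the completion $\prod_\lambda \dU_\e(\g,P)1_\lambda$. Applying $\tFr$ of Proposition \ref{prop: Frobenius morphism} termwise sends $K^\lambda=\sum_\mu \e^{(\lambda,\mu)}1_\mu$ to $\sum_{\mu\in P^*}\e^{(\lambda,\mu)}1_\mu$. Since $\lambda\in 2P$, $\mu\in P^*=\ell P$ (Remark \ref{rem: Waff for odd l}) and $\e^\ell=1$, this sum is $\sum_{\mu\in P^*} 1_\mu=1$. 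This explains $K^\lambda\mapsto 1$.

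\textbf{Step 2: identifying the target.} Next we identify $\dU^*_\e(\g,P^*)$ with the idempotented hyperalgebra of $\g^d$. In $\dU^*$ the relevant parameter is $\e^{\ell_i}$, which equals $1$ or $\pm1$ appropriately, because $(\a_i^*,\a_i^*)=\ell_i^2(\a_i,\a_i)$. The bilinear pairings $(\mu,\mu')$ for $\mu,\mu'\in P^*$ are divisible by $\ell$ when the factor $2$ is taken into account. So the specialised twisted algebra becomes the classical $\dU(\g^d)$, with twisting factors equal to scalars. Namely, $\te_i=\he_iK^{\nu_i^{*>}}$ acts on $1_\lambda$ by $\e^{(\nu_i^{*>},\lambda)}$-type scalars. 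Collecting these produces the constant $(\e_i^*)^{-n/\ell_i}$ in front of $e_i^{(n/\ell_i)}$, while $\tf_i$ gives no scalar because $\nu^{*<}$ contributes trivially. Making this scalar precise is a direct computation with $\nu^{*>}_i=\ell_i\nu^>_i$.

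\textbf{Step 3: descending to $\cU_\BC(\g^d)$.} Finally we descend from the completed idempotented algebra back to $\cU_\BC(\g^d)$, meaning its divided-power form. The image of a generator has the form $x\cdot\sum_\mu 1_\mu$ with $x$ in the hyperalgebra, so the image lies in the image of $\cU_\BC(\g^d)$. That map is injective because finite-dimensional $\g^d$-modules separate points.

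\textbf{Hopf structure.} Compatibility with comultiplication follows from the compatibility statement in Proposition \ref{prop: Frobenius morphism}. The antipode and counit are then automatic for bialgebra maps between Hopf algebras.

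The main obstacle is Step 2. There we must check carefully that the twisted coproduct \eqref{EF coproduct} specialises to the untwisted classical coproduct, with the scalars $q^{(\zeta,\lambda'')}$ becoming $1$ on $P^*$. This is where the assumption that $\ell$ is odd (and, when $\g$ has a $G_2$ component, that $\ell$ is prime to $3$) is used. We would follow \cite[$\S5$]{LTV} and \cite[Proposition 17]{LTV}.
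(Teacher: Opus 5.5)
There is a genuine gap in Step~1, and it breaks Step~3. The identity $\sum_{\mu\in P^*}\e^{(\lambda,\mu)}1_\mu=1$ needs $(\lambda,\mu)\in\ell\BZ$ for all $\lambda\in 2P$ and $\mu\in P^*$. The reason you give (``$\lambda\in2P$, $\mu\in\ell P$, $\e^\ell=1$'') does not imply this, because $(P,P)\not\subset\frac12\BZ$ in general.

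Concretely, take $\g=\mathfrak{sl}_3$ and $\ell=9$. This $\ell$ is odd, (\textbf{A}) holds, and $P^*=9P$. Since $(\w_1,\w_1)=2/3$ and $(\w_1,\w_2)=1/3$, for $\lambda=2\w_1$ and $\mu=9(a\w_1+b\w_2)$ you get $(\lambda,\mu)=12a+6b$. Hence $\e^{(\lambda,\mu)}=\e^{3(a+2b)}$, which is a primitive cube root of unity whenever $a\not\equiv b\pmod 3$.

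So your composite $\cU_\e(\g)\to\prod_{\mu\in P^*}\dU^*_\e(\g,P^*)1_\mu$ sends $K^{2\w_1}$ not to $1$ but to the central grouplike element by which a nontrivial element of $Z(G^d)$ acts. The image of $\cU_\e(\g)$ is therefore not contained in $\cU_\BC(\g^d)$. Your claim in Step~3, that every generator goes to $x\cdot\sum_\mu 1_\mu$ with $x$ in the hyperalgebra, fails for the generators $K^\lambda$.

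When $3\nmid\ell$, the exponent $2\ell/3$ is not even an integer, and the value depends on the chosen root $\e^{1/\sN}$. What Step~1 actually uses is the paper's later hypothesis (C): $\ell$ prime to $\mathsf e=|P/Q|$, with $\e^{1/\mathsf e}$ chosen so that $\e^{\ell(\lambda',\mu')}=1$ for $\lambda',\mu'\in P$. Oddness of $\ell$ is not enough. Moreover, the proposition is stated under (\textbf{A}) alone, which also allows even $\ell$, where $P^*\neq\ell P$.

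To prove the statement as given, handle $K^\lambda$ in the non-idempotented algebra, where no fractional powers occur. There $K^\mu\tE_i^{(n)}K^{-\mu}=\e^{n(\mu,\a_i)}\tE_i^{(n)}$. For $\mu\in 2P$ you have $(\mu,\a_i)\in 2\sd_i\BZ$, and $\ell\mid 2\sd_i\ell_i$. So $K^\mu\mapsto 1$ is compatible with this relation whenever $\ell_i\mid n$, and both sides vanish when $\ell_i\nmid n$. Alternatively, you could keep your route and post-compose with a Hopf map that kills the grouplikes coming from $Z(G^d)$, but that needs its own justification. The paper gives no argument of its own here; it simply invokes \cite[Proposition 17]{LTV}.

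Separately, Step~2 is asserted rather than proved. The scalar relating $\te_i1_\lambda$ to $\he_i1_\lambda$ genuinely depends on $\lambda\in P^*$: for $\mathfrak{sl}_2$ and $\ell=6$ one finds $\te_11_{3m\w_1}=(-1)^m\he_11_{3m\w_1}$. Also, $\dU^*$ is specialised at $v_i^*=\e^{\sd_i\ell_i^2}=\pm1$, not at $1$. The real content of that step, which you defer, is showing that these signs, together with the identification of $\dU^*$ with the classical idempotented hyperalgebra, leave only the $\lambda$-independent factor $(\e^*_i)^{-n/\ell_i}$.
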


\subsection{More on rational representations }\  \label{ssec: more on rational}

Until the end of this section, we consider the following two cases: 
\begin{enumerate}[label=(\textbf{\Alph*})]
    \item\label{case over C}$R=\BC$ and  $q=\e\in R$, a root of unity of order $\ell$. We assume that  $\ell_i \geq \max\{ 2, -a_{ij}\}_{1\leq i \leq r}$ for all $1\leq i \leq r$.
    \item\label{case over C[[h]]}$R=\BC[[\hbar]]$ and $q=\e e^{2\pi \sqrt{-1}\hbar/\ell} \in R$, where $\e$ as in $(\textbf{A})$.
\end{enumerate}
So we have a short exact sequence: $0 \rightarrow \cU_q(\g) \xrightarrow[]{\cdot \hbar} \cU_q(\g) \xrightarrow[]{/\hbar} \cU_\e(\g)\rightarrow 0$.

\begin{defi}\label{defi: Steinberg representation} Let $\lambda_{\St}:=\sum_{i}(\ell_i-1) \w_i$. The Steinberg module  $\St_\e\in \Rep(\cU_\e(\g))$ is the Weyl module $W_\e(\lambda_\St)$. The Steinberg module $\St_q\in \Rep(\cU_q(\g))$ is the Weyl module $W_q(\lambda_\St)$. 
\end{defi}
We have an equivalence of braided monoidal categories $\Rep(\cU_\e(\g))\cong \Rep(\dmU_\e(\g, P))$, see Remark \ref{rem: equivalence of Rat reps}. Hence, by \cite{N}, we have the following proposition: 
\begin{Prop}\label{prop: proj in Rep(Ue(g))}(a) The module $\St_\e$ is projective, injective and self-dual in $\Rep(\cU_\e(\g))$. 

\noindent
(b) The category $\Rep(\cU_\e(\g))$ has enough projectives and injectives.  Any object $M$ in $\Rep^{fd}(\cU_\e(\g))$ admits an epimorphism from a projective object of the form $\St_\e \otimes_\BC N$ with $N \in \Rep^{fd}(\cU_\e(\g))$.
\end{Prop}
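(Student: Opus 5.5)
The plan is to transport the classical results for the untwisted idempotented Lusztig form at a root of unity, essentially \cite{N} together with \cite{APW1}, across the equivalence $\Rep(\cU_\e(\g))\cong \Rep(\dU_\e(\g,P))\cong\Rep(\dmU_\e(\g,P))$ of Remark \ref{rem: equivalence of Rat reps}. This equivalence is monoidal and is the identity on underlying vector spaces and weight spaces. It sends Weyl modules to Weyl modules, since it preserves highest weights and the defining relations $\tF_i^{(s)}1_\lambda=0$ for $s>(\lambda,\a_i^\vee)$ correspond to the untwisted ones up to invertible Cartan factors. In particular $\St_\e$ corresponds to the classical Steinberg module $W(\lambda_\St)$ with $\lambda_\St=\sum_i(\ell_i-1)\w_i$. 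Projectivity, injectivity and the existence of enough projectives and injectives are categorical notions, so they are preserved by any equivalence of abelian categories. Duality needs slightly more care, because the antipode changes under the Drinfeld twist. However, both $V^*$ and ${}^*V$ are the monoidal duals in a rigid monoidal category, so a monoidal equivalence carries duals to duals. By Remark \ref{rem: dual module} the two duals are isomorphic in any case.

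For (a), on the untwisted side I would use the standard facts. $W(\lambda_\St)$ is simple: via the Lusztig tensor product theorem, or directly, its restriction to the small quantum group is the baby Verma/Steinberg module, which is simple and projective there. Its dual has highest weight $-w_0\lambda_\St=\lambda_\St$, because $-w_0$ permutes the $\w_i$ and $\ell_i$ is constant on $-w_0$-orbits, so the two are isomorphic and $\St_\e$ is self-dual. Projectivity in the rational category follows from the argument in \cite{N}, which goes via the induction $\mathrm{Ind}$ from the Borel part and the vanishing $H^i(\lambda_\St+\ell\mu)$-type statements, or equivalently $\St\cong \mathrm{Ind}_{B}^{G}(\lambda_\St)$ with $\St$ projective over the small quantum group and the Frobenius kernel spectral sequence. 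Injectivity then follows from projectivity and self-duality. The key point is that $\Hom(\St\otimes N,-)\cong\Hom(\St,N^*\otimes -)$ is exact for finite-dimensional $N$.

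For (b), given $M\in\Rep^{fd}$ I would take $N=\St_\e\otimes M$. The coevaluation $\BC\to \St_\e^*\otimes\St_\e\cong\St_\e\otimes\St_\e$ combined with evaluation produces a natural map $\St_\e\otimes\St_\e\otimes M\to M$, and this map is surjective. To see surjectivity, identify it with the counit of the adjunction $\St_\e\otimes\St_\e^*\otimes-\dashv$ (id tensored), which is surjective because $\St_\e\otimes\St_\e^*\to\BC$ is surjective. The object $\St_\e\otimes N$ is projective by the adjunction above. For general rational $M$ I would write it as the union of finite-dimensional submodules, using local finiteness, and take a direct sum, which gives enough projectives. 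Enough injectives follows by duality applied to finite-dimensional pieces, or from the standard existence of injective hulls in comodule categories.

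The main obstacle is making sure the cited results of \cite{N} apply literally. Here $\ell$ need not be coprime to $\sd_i$, so $\ell_i$ varies and the Frobenius lands in $\g^d$. I would check that \cite{N} covers this situation with the hypothesis $\ell_i\ge\max\{2,-a_{ij}\}$. Failing that, I would reprove projectivity of $\St_\e$ via Proposition \ref{prop: Frobenius morphism}, using that its restriction to the small quantum group is projective and that $\tFr^*$ relates the quotient to $\Rep$ of $G^d$ in characteristic zero, which is semisimple, so the Lyndon–Hochschild–Serre argument collapses.
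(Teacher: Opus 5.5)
Your proposal is correct and follows the same route as the paper. The paper transports the statement along the braided monoidal equivalence $\Rep(\cU_\e(\g))\cong\Rep(\dmU_\e(\g,P))$ of Remark \ref{rem: equivalence of Rat reps} and then cites \cite{N}; your additional details are sound elaborations of that citation, namely self-duality via $-w_0\lambda_{\St}=\lambda_{\St}$, projectivity of $\St_\e\otimes N$ by the adjunction, and surjectivity of $\St_\e^*\otimes\St_\e\otimes M\to M$ (for which only the evaluation map, not the coevaluation, is needed).
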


\begin{Prop}\label{prop: proj in Rep(Uq(g))}(a) For any $N_q\in \Rep^{fd}(\cU_q(\g))$ that  is a free module of finite rank over $\BC[[\hbar]]$, the object $\St_q\otimes_{\BC[[\hbar]]} N_q$ is projective in $\Rep(\cU_q(\g))$.

\noindent
(b) Any object in $\Rep^{fd}(\cU_q(\g))$ admits a surjective morphism from some projective object of the form $\St_q\otimes_{\BC[[\hbar]]} N_q$  as in part (a). Furthermore, $N_q$ can be chosen so that $N_q=\oplus_{\lambda_i} W_q(\lambda_i)$ for some  dominant weights $\lambda_i$.

\noindent
(c) The category $\Rep(\cU_q(\g))$ has enough projectives.

\noindent
(d) $^*\St_q \cong \St_q^* \cong \St_q$.
\end{Prop}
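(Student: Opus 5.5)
The plan is to deduce everything from the specialization $\hbar\mapsto 0$ together with Proposition \ref{prop: proj in Rep(Uq(g))}'s analogue over $\BC$, namely Proposition \ref{prop: proj in Rep(Ue(g))}. Throughout I use the exact sequence $0\to \cU_q(\g)\xrightarrow{\hbar}\cU_q(\g)\to\cU_\e(\g)\to 0$ and the fact that Weyl modules $W_q(\lambda)$ are free over $\BC[[\hbar]]$ with $W_q(\lambda)/\hbar\cong W_\e(\lambda)$. This follows from the APW description valid over general $R$, since the character is given by the Weyl character formula. In particular $\St_q$ is free with $\St_q/\hbar\cong\St_\e$.

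\emph{Part (d).} A finite free $\cU_q(\g)$-module $V$ with $V/\hbar V$ simple of highest weight $\lambda_\St$ is determined by its highest weight. Indeed, $\St_q^*$ is free, and $\St_q^*/\hbar\cong\St_\e^*\cong\St_\e$ by Proposition \ref{prop: proj in Rep(Ue(g))}(a). So $\St_q^*$ has a highest weight vector of weight $\lambda_\St$ generating it by Nakayama. Its image is a rational quotient of $\Delta_q(\lambda_\St)$, giving a surjection $W_q(\lambda_\St)\twoheadrightarrow\St_q^*$ of free modules of the same rank, hence an isomorphism. The isomorphism ${}^*\St_q\cong\St_q^*$ is Remark \ref{rem: dual module}.

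\emph{Part (a).} Here I show $\Hom_{\cU_q}(\St_q\otimes N_q,-)\cong\Hom_{\cU_q}(\St_q,-\otimes N_q^*)$ is exact. It suffices to prove $\St_q$ is projective, since $-\otimes N_q^*$ is exact for $N_q$ free. For $M\in\Rep(\cU_q(\g))$, consider $\RHom(\St_q,M)$. For $\hbar$-torsion-killed $M$ (i.e.\ $\cU_\e$-modules), $\Hom_{\cU_q}(\St_q,M)=\Hom_{\cU_\e}(\St_\e,M)$. The higher Ext groups vanish, using the change-of-rings spectral sequence for the free module $\St_q$ and projectivity of $\St_\e$ (Proposition \ref{prop: proj in Rep(Ue(g))}(a)). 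By dévissage this extends to $M$ killed by $\hbar^n$.

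The main obstacle is passing to general $M$, which may be $\hbar$-torsion free or infinitely generated. I would first treat finitely generated $M$ via $\hbar$-adic completeness: a surjection $M\to M''$ and a map $\St_q\to M''$ lift successively modulo $\hbar^n$, compatibly by the vanishing of $\Ext^1$ at each finite level. The lifts converge since $\Hom$ from a finite free module commutes with the inverse limit. General rational $M$ are unions of finitely generated submodules, and $\St_q$ is finitely generated, so lifting reduces to that case.

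\emph{Parts (b) and (c).} For (b), take $M\in\Rep^{fd}(\cU_q(\g))$. By Proposition \ref{prop: proj in Rep(Ue(g))}(b) and its proof, $M/\hbar M$ is a quotient of $\St_\e\otimes N$, where $N$ can be taken as a sum of Weyl modules $W_\e(\lambda_i)$. Here one uses that $M/\hbar M$ is generated by finitely many weight vectors, and that $\St_\e\otimes W_\e(\lambda)\to$ any module generated by a vector of weight $\lambda_\St+\lambda$ is available via Frobenius reciprocity. Setting $N_q=\oplus W_q(\lambda_i)$, the map $\St_q\otimes N_q\to M/\hbar M$ lifts to $M$ by (a). It is surjective by Nakayama, since $M$ is finitely generated over $\BC[[\hbar]]$. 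Part (c) follows because every rational module is a union, hence a quotient of a sum, of finite-dimensional (finitely generated) submodules, each covered as in (b).
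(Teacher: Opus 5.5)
\textbf{The gap: the Weyl-module clause of (b).} Your justification for this clause rests on a false claim. It is not true that $\St_\e\otimes W_\e(\lambda)$ maps onto every module generated by a vector of weight $\lambda_{\St}+\lambda$.

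Take $\g=\mathfrak{sl}_2$ and $\ell=3$, which the paper's assumptions allow, so $\lambda_{\St}=2$. Adjunction, self-duality of $\St_\e$, $W_\e(1)=L_\e(1)$ and Steinberg's tensor product theorem give $\Hom(\St_\e\otimes W_\e(1),L_\e(3))\cong\Hom(L_\e(1),\St_\e\otimes L_\e(3))=\Hom(L_\e(1),L_\e(5))=0$. Yet $L_\e(3)$ is generated by its highest weight vector, of weight $3=\lambda_{\St}+1$. Likewise the trivial module, generated in weight $0$, would need $\lambda=-\lambda_{\St}$, which is not dominant.

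Frobenius reciprocity identifies $\Hom(\St_\e\otimes W_\e(\lambda),X)$ with primitive vectors of weight $\lambda$ in $\St_\e^*\otimes X$, not with weight vectors of $X$.

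The repair is essentially the paper's argument:
\begin{enumerate}
\item Start from a surjection $\St_\e\otimes N\twoheadrightarrow M/\hbar M$, given by Proposition \ref{prop: proj in Rep(Ue(g))}(b).
\item A composition series of $N$ with factors $L_\e(\lambda_i)$ induces a filtration of $\St_\e\otimes N$ whose subquotients $\St_\e\otimes L_\e(\lambda_i)$ are projective. The filtration therefore splits, so $\St_\e\otimes N\cong\bigoplus_i\St_\e\otimes L_\e(\lambda_i)$.
\item Precompose with $\bigoplus_i\St_\e\otimes W_\e(\lambda_i)\twoheadrightarrow\bigoplus_i\St_\e\otimes L_\e(\lambda_i)$ to get the required cover.
\end{enumerate}
Your lift via (a) and Nakayama then completes (b).

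\textbf{The other parts.} These are correct, and (a) and (d) follow routes different from the paper's.

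For (a), the paper proves $\Ext^1(V_q,N)=0$ for finitely generated $N$ by splitting off the $\hbar$-torsion of $N$. On the flat part it applies Nakayama to $\Ext^1$, which needs finite generation of $\Ext^1_{\Rep(\cU_q(\g))}(V_q,N)$ over $\BC[[\hbar]]$, an input taken from APW. Your successive lifting modulo $\hbar^n$ needs only two things: $\Ext^1$-vanishing against $\hbar$-power torsion, and $\hbar$-adic completeness of finitely generated modules. You should state why the lifts can be chosen compatibly: the map $M/\hbar^{n+1}M\to M/\hbar^nM\times_{M''/\hbar^nM''}M''/\hbar^{n+1}M''$ is surjective with kernel killed by $\hbar^{n+1}$. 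Your local-finiteness step also makes explicit the passage from finitely generated targets to all of $\Rep(\cU_q(\g))$, which the paper leaves implicit. You do not need a spectral sequence for $\Ext^1$ against an $\hbar$-killed $K$: an extension of $\St_q$ by $K$ stays exact modulo $\hbar$ because $\St_q$ is free, and then splits by projectivity of $\St_\e$.

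For (d), your highest-weight argument gives a surjection $W_q(\lambda_{\St})\twoheadrightarrow\St_q^*$ between free modules of equal rank, hence an isomorphism. This replaces the paper's lifting of $\St_\e\cong\St_q^*/\hbar\St_q^*$ through projectivity of $\St_q$, and has the advantage of not depending on (a).
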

\begin{proof}
(a) follows by Proposition \ref{prop: proj in Rep(Ue(g))} and the following claim:
\begin{itemize}\item[{}]
{\it Claim:} Suppose $V_q \in \Rep^{fd}(\cU_q(\g))$ is a free module over $\BC[[\hbar]]$ and $V_q/\hbar V_q$ is projective in $\Rep^{fd}(\cU_\e(\g))$, then $V_q$ is projective in $\Rep^{fd}(\cU_q(\g))$.
\end{itemize}

To prove the claim, we need to show that  $\Ext^1_{\Rep(\cU_q(\g))}(V_q, N)=0$ for any $N\in \Rep^{fd}(\cU_q(\g))$.

\noindent
{\it Step 1:} If $N \in \Rep^{fd}(\cU_\e(\g))$ then, by \cite[Lemma 7.20]{LTV},
\[ \Ext^1_{\Rep(\cU_q(\g))}(V_q, N)=\Ext^1_{\Rep(\cU_\e(\g))}(V_q/\hbar V_q, N),\]
and the right hand side is zero since $V_q/\hbar V_q$ is projective in $\Rep^{fd}(\cU_\e(\g))$.

\noindent
{\it Step 2:} Suppose $N \in \Rep^{fd}(\cU_q(\g))$ is flat over $\BC[[\hbar]]$. We get a short exact sequence  $0\rightarrow N \xrightarrow[]{\cdot \hbar} N \rightarrow N/\hbar N\rightarrow 0$ in $\Rep^{fd}(\cU_q(\g))$, which gives a long exact sequence of $\BC[[\hbar]]$-modules
\[ \dots \rightarrow \text{Ext}^1_{\Rep(\cU_q(\g))}(V_q, N)\xrightarrow[]{\cdot \hbar} \text{Ext}^1_{\Rep(\cU_q(\g))}(V_q, N) \rightarrow \text{Ext}^1_{\Rep(\cU_q(\g))}(V_q, N/\hbar N)\dots\]
By Step 1, the last term vanishes, so the map $ \text{Ext}^1_{\Rep(\cU_q(\g))}(V_q, N)\xrightarrow[]{\cdot \hbar} \text{Ext}^1_{\Rep(\cU_q(\g))}(V_q, N)$ is surjective. On the other hand, $\text{Ext}^1_{\Rep(\cU_q(\g))}(V_q, N)$ is finitely generated over $\BC[[\hbar]]$, one way to prove it is in \cite[Proposition $5.15$]{APW1}. By the  Nakayama lemma, $\text{Ext}^1_{\Rep(\cU_q(\g))}(V_q, N)=0$.

\noindent
{\it Step 3:} For any $N \in \Rep^{fd}(\cU_q(\g))$, let $N_{\textnormal{tor}}:=\{n \in N~|~\hbar^k n=0 \;\text{for some $k >0$}\}$. Then $N_{\textnormal{tor}}$ is a subobject of $N$ in $\Rep^{fd}(\cU_q(\g))$. Since $N_{\textnormal{tor}}$ is finitely generated over $\BC[[\hbar]]$, it admits a finite filtration whose subquotients are objects in $\Rep^{fd}(\cU_\e(\g))$. On the other hand, $N/N_{\textnormal{tor}}$ is flat over $\BC[[\hbar]]$. Therefore, by Step 1 and Step 2, we have $\text{Ext}^1_{\Rep(\cU_q(\g))}(V_q, N)=0$.

\noindent
(b) For $N \in \Rep^{fd}(\cU_q(\g))$, the quotient $N/\hbar N$ is in $ \Rep^{fd}(\cU_\e(\g))$. There is a surjective map: 
\begin{equation*}\St_\e \otimes_\BC \Big(\bigoplus_{\lambda_i} W_\e(\lambda_i) \Big)\twoheadrightarrow N/\hbar N, 
\end{equation*}
 for a finite collection of dominant weights $\{ \lambda_i\}$. Here we only need to use Weyl modules because of the surjective map $\St_\e \otimes W_\e(\lambda_i) \twoheadrightarrow \St_\e \otimes L_\e(\lambda_i)$ and  that any simple module $L_\e(\mu)$ can be covered by projective objects of the from $\St_\e \otimes L_\e(\lambda_i)$. If any simple object is covered by a direct sum of projectives $\St_\e\otimes_\BC W_\e(\lambda_i)$ then every object in $\Rep^{fd}(\cU_\e(\g))$ admits such a cover.
 
 Since $\St_q \otimes_{\BC[[\hbar]]} W_q(\lambda_i)$ is projective in $\Rep^{fd}(\cU_q(\g))$ by part (a), we have the following commutative diagram in $\Rep^{fd}(\cU_q(\g))$: 
\[ \begin{tikzcd} \St_q \otimes_{\BC[[\hbar]]}\left(\bigoplus_{\lambda_i} W_q(\lambda_i)\right) \arrow[d, " /\hbar"] \arrow[r, dashrightarrow ]& N \arrow[d, " /\hbar"']& \\
\St_\e \otimes_\BC \left(\bigoplus_{\lambda_i} W_\e(\lambda_i)\right) \arrow[r, two heads] & N/\hbar N    
\end{tikzcd}
\]
By the  Nakayama lemma, the upper horizontal arrow is surjective. This proves part (b).

\noindent
(c) follows from part (b) since any object in $\Rep(\cU_q(\g))$ is a union of objects in $\Rep^{fd}(\cU_q(\g))$.

\noindent
(d) We have $^* \St_q \xrightarrow[]{/\hbar} \;^*\St_\e\cong \St_\e \cong \St^*_\e \xleftarrow[]{/\hbar} \St^*_q$, where $^*\St_\e \cong \St_\e \cong \St^*_\e$ follows by Proposition \ref{prop: proj in Rep(Ue(g))}.(a). Since  $\St_q$ is projective in $\Rep(\cU_q(\g))$, we must have $^*\St_q\cong \St_q \cong \St^*_q$.
\end{proof}

\subsection{Tilting modules} \label{ssec: tilting module} We assume that $(R, q)$ is as  in Case \ref{case over C} or Case\ref{case over C[[h]]}.  

In \cite{APW1}, the induction functor $H^0_q: \Rep(\cU^\leq_q) \rightarrow \Rep(\cU_q(\g))$ was constructed. This functor is the right adjoint functor to the forgetful functor $\mathfrak{F}: \Rep(\cU_q(\g))\rightarrow \Rep(\cU^\leq_q)$. 

For a dominant weight $\lambda$, let $R_\lambda \in \Rep(\cU^\leq_q)$  be defined via $\cU^\leq_q \rightarrow \cU^0_q \xrightarrow[]{\chi_\lambda}  R$. We define the {\it dual Weyl module} $H^0_q(\lambda):= H^0_q(R_\lambda)$. The next lemma, proved in \cite[Proposition 3.3]{APW1} under some restriction on $R$, holds in general as was explained in \cite[Proposition 7.12]{LTV}
\begin{Lem}\label{lem: dual Weyl and Weyl}  $H^0_q(\lambda) \cong W_q(-w_0\lambda)^*$.
\end{Lem}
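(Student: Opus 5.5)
We will prove $H^0_q(\lambda)\cong W_q(-w_0\lambda)^*$ through the universal properties on the two sides. By Frobenius reciprocity for $H^0_q$, which is right adjoint to the forgetful functor $\mathfrak F$, we have for every $M\in\Rep(\cU_q(\g))$
\[\Hom_{\cU_q(\g)}(M,H^0_q(\lambda))\cong \Hom_{\cU^\leq_q}(M,R_\lambda).\]
On the other side, $W_q(\mu)$ with $\mu=-w_0\lambda$ is the maximal rational quotient of $\Delta_q(\mu)=\cU_q(\g)\otimes_{\cU^\geqslant_q}R_\mu$. Hence for rational $N$ we get $\Hom(W_q(\mu),N)\cong\Hom_{\cU^\geqslant_q}(R_\mu,N)$, which is the space of highest weight vectors of weight $\mu$ in $N$ killed by all $\tE_i^{(n)}$. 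Dualizing, $W_q(\mu)^*$ represents $M\mapsto \Hom_{\cU^\geqslant_q}(R_\mu, M^*)$ on $\Rep^{fd}$, and more precisely on finitely generated free-over-$R$ objects, using Remark \ref{rem: dual module} and the fact that $W_q(\mu)$ is $R$-free of finite rank.

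The key steps, in order, are the following.

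(1) Show that $W_q(\mu)$ is free of finite rank over $R$, with character given by Weyl's formula. In Case \ref{case over C} the ring $R$ is a field, so this is automatic. In Case \ref{case over C[[h]]} we deduce it from the PBW description in \cite{APW1}: $W_q(\mu)$ is spanned by monomials in the $\tF_i^{(s)}$ applied to $1_\mu$, and it is compatible with base change (\cite[Remark 7.8]{LTV}).

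(2) Produce a natural map. The highest weight vector $1_\mu\in W_q(\mu)$ of weight $\mu$ gives a functional of weight $-\mu$, and we want it to be a weight $\lambda$ vector killed by the opposite Borel. To arrange this, use instead the lowest weight vector $v_{w_0\mu}=v_{-\lambda}$ of $W_q(\mu)$, which is extremal and spans its weight space over $R$. Define $\phi\in \Hom_R(W_q(\mu),R)$ by projecting onto this line. Since $S$ preserves $\cU^\leq_q$, $\phi$ spans a $\cU^\leq_q$-stable line on which $\cU^\leq_q$ acts by $\chi_\lambda$. The twisted antipode \eqref{eq: twist-Hopf-2} introduces only Cartan factors and the $\binom{K_i;0}{m}$, whose values on weight vectors we must check carefully. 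This yields a $\cU^\leq_q$-map $W_q(\mu)^*\to R_\lambda$, and by Frobenius reciprocity a map $\Phi:W_q(\mu)^*\to H^0_q(\lambda)$.

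(3) Show that $\Phi$ is an isomorphism. First prove it over a field, e.g. Case \ref{case over C} and generic $q$, following \cite[Proposition 3.3]{APW1}. There $H^0(\lambda)$ has simple socle $L(\lambda)$, and the map is injective because $W(\mu)^*$ has simple socle containing the line of $\phi$. Equality of dimensions then follows from Kempf vanishing and the Weyl character formula for $H^0$ (\cite[\S5]{APW1}). For $R=\BC[[\hbar]]$, we need $H^0_q(\lambda)$ to be $R$-free and to commute with reduction mod $\hbar$. This holds because $H^0_q$ is computed by an explicit coinduction $(\cU_q(\g)^\circ\otimes R_\lambda)^{\cU^\leq_q}$ and $H^1$ vanishes. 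Given this, $\Phi$ is a map of free $R$-modules of equal rank that is an isomorphism mod $\hbar$, so by Nakayama it is an isomorphism.

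We expect the main obstacle to be the base change statement for $H^0_q$ over $\BC[[\hbar]]$: the vanishing of the higher derived induction $R^1H^0_q(R_\lambda)$ together with the universal coefficient exact sequence. This is where \cite{APW1}'s assumptions on $R$ enter. We would first try to deduce it from the field case via the long exact sequence for $0\to R_\lambda\xrightarrow{\hbar}R_\lambda\to \BC_\lambda\to0$, combined with the finite generation of $R^iH^0_q$ used in the proof of Proposition \ref{prop: proj in Rep(Uq(g))}.
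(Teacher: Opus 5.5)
The paper gives no argument of its own here. It quotes \cite[Proposition 3.3]{APW1}, which holds under restrictions on $R$, and refers to \cite[Proposition 7.12]{LTV} for general $R$. Your plan (field case first, then Nakayama over $\BC[[\hbar]]$) is a workable alternative, but Step (2) is wrong as written and has to be fixed before Step (3) makes sense.

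The functional $\phi$ dual to $v_{-\lambda}$ has weight $\lambda$, which is the \emph{highest} weight of $W_q(\mu)^*$. So $R\phi$ is $\cU^{\geqslant}_q$-stable: the $\tE_i^{(n)}$ with $n>0$ kill $\phi$, since $-\lambda-n\a_i$ is not a weight of $W_q(\mu)$. It is not $\cU^\leq_q$-stable: for $n=(\lambda,\a_i^\vee)>0$ one has $(\tF_i^{(n)}\phi)(\tE_i^{(n)}v_{-\lambda})\neq 0$. Moreover, even a $\cU^\leq_q$-stable line in $W_q(\mu)^*$ would only give a map $R_\lambda\to W_q(\mu)^*$, which is the wrong direction for Frobenius reciprocity.

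What you want is evaluation at the lowest weight vector, $\mathrm{ev}\colon W_q(\mu)^*\to R_\lambda$, $f\mapsto f(v_{-\lambda})$. This map is $\cU^\leq_q$-linear for three reasons: the line $Rv_{-\lambda}\subset W_q(\mu)$ is $\cU^\leq_q$-stable, $S(\cU^\leq_q)\subset \cU^\leq_q$, and $\chi_{-\lambda}\circ S=\chi_\lambda$ on $\cU^0_q$. The last point is where your check on the $\binom{K_i;0}{m}$ belongs. Let $\Phi$ be the adjoint of $\mathrm{ev}$. Composing $\Phi$ with the counit $H^0_q(\lambda)\to R_\lambda$ gives back $\mathrm{ev}$, so $\Phi(\phi)\neq 0$ because $\phi(v_{-\lambda})=1$. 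Over a field, the simple socle $L_\e(\lambda)$ of $W_\e(\mu)^*$ contains $\phi$, so this is exactly what your injectivity argument needs.

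With that repair, your argument is sound and is a more explicit route than the paper's citation.

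\textbf{Field case.} At $q=\e$ it is the classical socle-plus-dimension argument. Note that Step (1) is imprecise here: being over a field gives you freeness, but not the Weyl dimension of $W_\e(\mu)$. That dimension comes, like freeness in Case (B), from base change of Lusztig's free $\CA$-form. The dimension of $H^0_\e(\lambda)$ comes from quantum Kempf vanishing together with the Euler characteristic.

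\textbf{Lift to $\BC[[\hbar]]$.} Use the long exact sequence for $0\to R_\lambda\xrightarrow{\hbar}R_\lambda\to\BC_\lambda\to 0$, the vanishing $R^1H^0_\e(\BC_\lambda)=0$, finite generation of $R^1H^0_q(R_\lambda)$, and Nakayama. Together these give $R^1H^0_q(R_\lambda)=0$ and show that $H^0_q(\lambda)$ is torsion-free with $H^0_q(\lambda)/\hbar H^0_q(\lambda)\cong H^0_\e(\lambda)$. Hence $H^0_q(\lambda)$ is free of the same rank as $W_q(\mu)^*$. Since $\Phi$ reduces mod $\hbar$ to the field-case isomorphism, it is an isomorphism.

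\textbf{Comparison.} Your route needs only the residue-field statement and the completeness of $\BC[[\hbar]]$. As a by-product it also gives the $\hbar$-freeness of $H^0_q(\lambda)$ and its compatibility with reduction mod $\hbar$. The cited results, by contrast, treat arbitrary coefficient rings at once, which your Nakayama step does not.
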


\begin{defi}\label{defi: good filtration} Let $M \in \Rep(\cU_q(\g))$. An exhaustive $\cU_q(\g)$-module filtration $\{0\}=M_0 \subset M_1 \subset \dots$ is called {\em good} if, for each $i$, we have $M_i/M_{i-1}\cong H^0_q(\lambda_i)\otimes_R P_i$ for some dominant weight $\lambda_i$ and some finitely generated projective $R$-module $P_i$.    
\end{defi}
\begin{Rem}In Case \ref{case over C} or Case \ref{case over C[[h]]}, any finitely generated  projective $R$-module $P_i$ is free.
\end{Rem}
\begin{defi}\label{defi: tilting module}$M \in \Rep^{fd}(\cU_q(\g))$ is a {\em tilting module} if $M$ and $M^*$ admit good filtrations. 
\end{defi}
The following result is standard 
\begin{Prop}(a) The set of indecomposable tilting modules in $\Rep^{fd}(\cU_q(\g))$ is in one-to-one correspondent to the set of dominant weights. Let $T_q(\lambda)$ denote the indecomposable tilting module associated to the dominant weight $\lambda$.

\noindent
(b)  The weight space $(T_q(\lambda))_\lambda$ is a free $R$-module of rank one. For any weight $\nu$ of $T_q(\lambda)$, we have $\nu_+<\lambda$ where $\nu_+$ is the unique dominant weight in $W\nu$.
\end{Prop}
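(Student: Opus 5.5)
The plan is the standard Ringel/Donkin argument for highest weight categories, carried out uniformly for $R=\BC$ and $R=\BC[[\hbar]]$, using the Weyl modules $W_q(\lambda)$ as standard objects and the dual Weyl modules $H^0_q(\lambda)$ as costandard objects. The first step is Ext-vanishing:
\[\Ext^i_{\Rep(\cU_q(\g))}\bigl(W_q(\lambda),H^0_q(\mu)\bigr)=0 \quad (i>0),\qquad \Hom\bigl(W_q(\lambda),H^0_q(\mu)\bigr)=\delta_{\lambda\mu}R.\]
I would obtain this from Frobenius reciprocity for $H^0_q$ (the right adjoint of $\mathfrak F$), together with Kempf vanishing $R^iH^0_q(R_\mu)=0$ from \cite{APW1}, which the paper already uses in general $R$ as in \cite[\S7]{LTV}. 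This gives $\Ext^i_{\cU^\leq_q}(W_q(\lambda),R_\mu)$, which is computed on the weight-$\mu$ part and vanishes for $i>0$ by a weight argument. From this, the usual criterion follows: a module $M$ that is free over $R$ has a good filtration if and only if $\Ext^1(W_q(\lambda),M)=0$ for all $\lambda$. Over $\BC[[\hbar]]$, I would prove this criterion by reducing mod $\hbar$ and applying Nakayama, exactly as in the proof of Proposition \ref{prop: proj in Rep(Uq(g))}. In particular, good filtrations are preserved under direct summands.

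The second step is existence. Fix $\lambda$ dominant. Starting from $M_0=W_q(\lambda)$, I would repeatedly form universal extensions by $W_q(\mu)$ with $\mu<\lambda$, kill $\Ext^1(W_q(\mu),-)$ in decreasing order of $\mu$, and use the finiteness of $\{\mu\in P_+:\mu<\lambda\}$. The $\Ext^1$-groups are finitely generated over $R$ by \cite[Proposition 5.15]{APW1}, so the universal extension only adds finitely many free copies. This produces a module $T$ that is free over $R$, carries a Weyl filtration, satisfies $\Ext^1(W_q(\nu),T)=0$ for all $\nu$, and so has a good filtration. By Lemma \ref{lem: dual Weyl and Weyl}, $T^*$ has a good filtration as well, so $T$ is tilting. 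Every weight $\nu$ of $T$ satisfies $\nu_+\le\lambda$, and $T_\lambda=R$. Any indecomposable summand containing the $\lambda$-weight space is then the candidate $T_q(\lambda)$.

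The third step is uniqueness and the classification. I need endomorphism rings of indecomposable tilting modules to be local. Over $\BC$ this is automatic because the modules are finite dimensional. Over $\BC[[\hbar]]$, I would argue that $\End(T)$ is a finitely generated $\BC[[\hbar]]$-algebra, hence $\hbar$-adically complete, so idempotents lift from $\End(T)/\hbar$. This gives Krull--Schmidt. I expect this to be the main obstacle, since it requires comparing $\End(T)/\hbar$ with $\End(T/\hbar T)$ using $\Ext^1(T,T)=0$. Once Krull--Schmidt holds, the standard argument applies. A tilting module $T'$ with a highest weight $\lambda$ admits maps $T_q(\lambda)\to T'$ and $T'\to T_q(\lambda)$, built from the vanishing of $\Ext^1$ between Weyl-filtered and good-filtered modules. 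Their composite is nonzero on the free rank-one weight space, hence invertible by locality. This gives the bijection with $P_+$ and part (b). The inequality $\nu_+<\lambda$ for $\nu\ne\lambda$ comes from the Weyl filtration built in the second step, since the weights of $W_q(\mu)$ are bounded by $\mu\le\lambda$.
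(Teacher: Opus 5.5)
The paper gives no argument for this statement; it is quoted as standard. Your Ringel--Donkin route is the natural one, but the step that makes $T$ tilting over $R=\BC[[\hbar]]$ fails as you describe it.

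In Step 2 you deduce that $T$ has a good filtration from $\Ext^1(W_q(\nu),T)=0$ for all $\nu$. You propose to prove this criterion over $\BC[[\hbar]]$ by reducing mod $\hbar$ and applying Nakayama, as in Proposition~\ref{prop: proj in Rep(Uq(g))}. That argument runs from $\cU_\e$ to $\cU_q$: vanishing of $\Ext^1$ over $\cU_\e$ forces vanishing over $\cU_q$. You need the converse. The sequence $0\to T\xrightarrow{\hbar}T\to T/\hbar T\to 0$ only gives
\[
0\to \Ext^1_{\cU_q}(W_q(\nu),T)/\hbar\to \Ext^1_{\cU_\e}(W_\e(\nu),T/\hbar T)\to \Ext^2_{\cU_q}(W_q(\nu),T)[\hbar]\to 0 .
\]
So $\Ext^1_{\cU_q}=0$ says nothing about $T/\hbar T$ unless you control $\hbar$-torsion in $\Ext^2$.

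This genuinely fails weight by weight. Take $\g=\mathfrak{sl}_2$. Since $\operatorname{soc}W_\e(2\ell)=L_\e(2\ell-2)$, we have $\Hom_{\cU_\e}(W_\e(0),W_\e(2\ell))=0$. That Hom surjects onto the kernel of $\hbar$ on $\Ext^1_{\cU_q}(W_q(0),W_q(2\ell))$, so $\hbar$ acts injectively there. But this Ext group is torsion, because the generic fibre is semisimple, so it vanishes. Meanwhile $\Ext^1_{\cU_\e}(W_\e(0),W_\e(2\ell))\neq 0$; it is realized by $\operatorname{rad}T_\e(2\ell)$. So, as written, you have not shown that $T$ has a good filtration.

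The criterion is nevertheless true over $R$, but the missing $\Ext^2$-control must come from induction rather than Nakayama. Run Jantzen's maximal-weight argument directly over $R$:
\begin{enumerate}
\item Let $M$ be $R$-free with $\Ext^1(W_q(\nu),M)=0$ for all $\nu$, and let $\lambda$ be a maximal weight. Let $g\colon M\to H^0_q(\lambda)\otimes_R M_\lambda$ be the map given by Frobenius reciprocity.
\item Then $M'=\ker g$ is $R$-free, has fewer dominant weights, and satisfies $\Ext^1(W_q(\nu),M')=0$ for all $\nu$.
\item By induction $M'$ has a good filtration, so $\Ext^2(W_q(\nu),M')=0$ by your Step 1.
\item Hence $\Ext^1(W_q(\nu),\operatorname{im}g)=0$, which forces $\Hom(W_q(\nu),\operatorname{coker}g)=0$ for all $\nu$.
\item Finally $\operatorname{coker}g=0$, because a maximal-weight vector of any nonzero rational module, torsion or not, is hit by some $W_q(\mu)$.
\end{enumerate}
Alternatively, prove that all $\Ext^{i}$ with $i>0$ vanish and invoke \cite[Lemma 7.28]{LTV}, as the paper does in Proposition~\ref{prop: proj is tilt}.

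Three smaller points:
\begin{itemize}
\item Locality of $\End(T)$ is not the obstacle you expect. $\End(T)$ is module-finite over the complete local ring $\BC[[\hbar]]$, hence semiperfect; this is what Remark~\ref{rem: Krull-Schmidt} uses. No comparison with $\End(T/\hbar T)$ is needed.
\item In the uniqueness step, the composite $T_q(\lambda)\to T'\to T_q(\lambda)$ must act by $1$ on $T_q(\lambda)_\lambda\cong R$, as your extension construction ensures. Over $R$, merely nonzero (for example, acting by $\hbar$) does not give invertibility.
\item Your final claim ``$\nu_+<\lambda$ for $\nu\neq\lambda$'' is false for $\nu\in W\lambda$. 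What your construction gives, and what is true, is $\nu_+\le\lambda$, strict unless $\nu\in W\lambda$. The inequality in the statement should be read the same way.
\end{itemize}
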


\begin{Prop}\label{prop: proj is tilt} Any projective object in $\Rep^{fd}(\cU_q(\g))$ is tilting.
\end{Prop}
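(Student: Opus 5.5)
We show that every projective $P$ in $\Rep^{fd}(\cU_q(\g))$ is a direct summand of a module of the form $\St_q\otimes_R N$ with $N$ a finite direct sum of Weyl modules, and that every such module is tilting. Tilting modules are closed under direct summands; this is standard, since good filtrations of finite-dimensional modules are detected by vanishing of $\Ext^1(W_q(\mu),-)$. So this suffices.

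\emph{Step 1: reduction.} By Proposition \ref{prop: proj in Rep(Uq(g))}(b) (in Case \ref{case over C[[h]]}) or Proposition \ref{prop: proj in Rep(Ue(g))}(b) (in Case \ref{case over C}), $P$ admits a surjection
\[
\St_q\otimes_R\Big(\bigoplus_i W_q(\lambda_i)\Big)\twoheadrightarrow P .
\]
In Case \ref{case over C} we use Weyl modules as in the proof of part (b) there. Since $P$ is projective, this surjection splits, so $P$ is a direct summand of the left-hand side.

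\emph{Step 2: the Steinberg module is tilting.} Since $\lambda_\St$ is the unique weight in its linkage class that is maximal for its block, $W_q(\lambda_\St)\cong H^0_q(\lambda_\St)$. Concretely, I would check this via Lemma \ref{lem: dual Weyl and Weyl} together with the self-duality in Proposition \ref{prop: proj in Rep(Uq(g))}(d) or Proposition \ref{prop: proj in Rep(Ue(g))}(a), using $-w_0\lambda_\St=\lambda_\St$. Hence $\St_q$ has a one-step good filtration, and so does $\St_q^*\cong\St_q$.

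\emph{Step 3: the tensor product has a good filtration.} I need $\St_q\otimes W_q(\lambda)$ and its dual to have good filtrations. The dual is $W_q(\lambda)^*\otimes\St_q^*\cong H^0_q(-w_0\lambda)\otimes\St_q$, a tensor product of two dual Weyl modules. By the quantum Wang--Donkin--Mathieu theorem (the tensor product theorem of \cite{APW1} / Paradowski), it has a good filtration. In Case \ref{case over C[[h]]} I would deduce this from Case \ref{case over C} by reduction mod $\hbar$ and the $\Ext^1$-vanishing criterion, as in the Claim in the proof of Proposition \ref{prop: proj in Rep(Uq(g))}.

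For $\St_q\otimes W_q(\lambda)$ itself, I would not use the tensor product theorem but the projectivity of $\St_q\otimes W_q(\lambda)$ directly. Let $Q$ be any finite-dimensional projective, for instance this module. Then $\Ext^1(W_q(\mu),Q)$ vanishes, because $Q^*$ is also projective (in Case \ref{case over C} projective equals injective by \cite{N}) and
\[
\Ext^1(W_q(\mu),Q)=\Ext^1(Q^*,W_q(\mu)^*)=0 .
\]
The vanishing criterion then gives a good filtration on $Q$.

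Cleanest route: for projective $P$, $P^*\cong{}^*P$ up to twist by $K^\rho$ is again projective (injective in Case \ref{case over C}). So it suffices to show that any projective $Q$ has a good filtration. This holds since $\Ext^1(W_q(\mu),Q)=0$ reduces, via duality, to $\Ext^1$ out of the projective $Q^*$.

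\emph{Main obstacle.} The delicate point is Case \ref{case over C[[h]]}. There, projective-equals-injective and the $\Ext^1$ good-filtration criterion must be justified over $\BC[[\hbar]]$, with freeness of the subquotients. I would handle this by reducing mod $\hbar$, using \cite[Lemma 7.20]{LTV} and Nakayama exactly as in Steps 1--3 of the proof of Proposition \ref{prop: proj in Rep(Uq(g))}. Freeness over $R$ of $\St_q\otimes N$ and its filtration pieces comes from the freeness of Weyl modules.
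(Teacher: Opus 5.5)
Your final ``cleanest route'' is correct and is essentially the paper's proof, with one difference. The paper also reduces to $\St_q\otimes_R N_q$ and gets the good filtration from
$\Ext^i(W_q(\lambda),\St_q\otimes_R N_q)\cong\Ext^i(\St_q^*\otimes_R W_q(\lambda),N_q)=0$, but it moves only $\St_q$ across, whereas you dualize all of $Q$. It then handles the dual via $(\St_q\otimes_R N_q)^*\cong\St_q\otimes_R N_q^*$.

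Your Step 2 and the appeal to the Wang--Donkin--Mathieu theorem are unnecessary detours. Your duality identity already gives vanishing of all $\Ext^i$ with $i>0$, which is what \cite[Lemma 7.28]{LTV} requires, so the good-filtration criterion needs no separate mod-$\hbar$ argument.

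One correction concerns Case \ref{case over C[[h]]}. There projective does \emph{not} equal injective. For example, $0\to\St_q\xrightarrow{\cdot\hbar}\St_q\to\St_\e\to 0$ cannot split, since a retraction $r$ would satisfy $\hbar r=\mathrm{id}$ on a nonzero free $R$-module. So the ``main obstacle'' you propose to settle by reduction mod $\hbar$ cannot be settled as stated. Your remark that $P^*\cong{}^*P$ up to a $K^\rho$-twist also does not justify projectivity of $P^*$.

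What you actually need is that $P^*$ is projective, and this follows directly as in the paper. By your Step 1, $P^*$ is a direct summand of $(\St_q\otimes_R N)^*\cong N^*\otimes_R\St_q^*\cong\St_q\otimes_R N^*$, using Proposition \ref{prop: proj in Rep(Uq(g))}(d) and the braiding. That module is projective by part (a) of the same proposition, since $N^*$ is free over $R$.
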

\begin{proof}It is the fact that any direct summand of a tilting module is tilting. By Proposition \ref{prop: proj in Rep(Uq(g))}, it is enough to show that $\St_q \otimes_R N_q$ is tilting for $N_q \in \Rep^{fd}(\cU_q(\g))$ which is a free $R$-module. 

First, we show that $\St_q\otimes_R N_q$ has a good filtration. Using \cite[Lemma 7.28]{LTV}, it is enough to show that $\Ext^i_{\Rep(\cU_q(\g))}(W_q(\lambda), \St_q\otimes_R N_q)=0$ for $i>0$ and all dominant weight $\lambda$. Indeed,   
\begin{align*}\Ext^i_{\Rep(\cU_q(\g))}(W_q(\lambda), \St_q\otimes_R N_q) &\cong \Ext^i_{\Rep(\cU_q(\g)}(\St_q^*\otimes_R W_q(\lambda), N_q)\\
&\cong \Ext^i_{\Rep(\cU_q(\g)}(\St_q\otimes_R W_q(\lambda), N_q)=0, \quad \text{for $i>0$},
\end{align*}
here we use $\St_q^* \cong \St_q$ and $\St_q\otimes_R W_q(\lambda)$ is projective in $\Rep(\cU_q(\g))$ by Proposition \ref{prop: proj in Rep(Uq(g))}.   

On the other hand, $(\St_q\otimes_R N_q)^*\cong N_q^*\otimes_R \St_q^* \cong \St_q \otimes_R N_q^*$, here we use the braiding of $\Rep(\cU_q(\g))$ and $\St_q^*\cong \St_q$ in the second isomorphism. Since $N_q^*$ is a free $R$-module, $(\St_q\otimes_R N_q)^*$ also has a good filtration as above. So $\St_q\otimes_R N_q$ is tilting. This finishes the proof.
\end{proof}



\section{The even subalgebra  $U^{ev}_q(\g)$}
\label{ssec: even part algebra}



In this section, we discuss the structure of the even subalgebra $U_q^{ev}(\g)$ in more detail. There is Lusztig's braid group action on $\bU_v(\g, P/2)$ defined as follows , see \cite[Part VI]{l-book}, \cite[$\mathsection 4.9$]{J96},
\begin{equation*}\label{eq:braid group}
\begin{split}
   T_i(K^\mu)&=K^{s_{\a_i}\mu} \,, \qquad T_i(E_i)=-F_iK^{\a_i} \,, \qquad T_i(F_i)=-K^{-\a_i}E_i \,, \\
   T_i(E_j)&=\sum_{k=0}^{-a_{ij}} (-1)^k\frac{v_i^{-k}}{[-a_{ij}-k]_{v_i}![k]_{v_i}!}E_i^{-a_{ij}-k}E_jE_i^k \,, \\
   T_i(F_j)&=\sum_{k=0}^{-a_{ij}} (-1)^k\frac{v_i^{k}}{[-a_{ij}-k]_{v_i}![k]_{v_i}!}F_i^kF_jF_i^{-a_{ij}-k} \,.
\end{split}
\end{equation*}

Let us pick a reduced expression of the longest element $w_0=s_{i_1}s_{i_2} \dots s_{i_N}$ in the Weyl group $W$, here $N$ is the cardinality of the positive root system $\Delta_+$. Then $\Delta_{+}=\{ \b_k=s_{i_1}\dots s_{i_{k-1}} \a_{i_k}\}_{k=1}^N$. We define the root vectors $\{ E_\b, F_\b\}_{\b \in \Delta_+}$ in a standard way:
\begin{equation*}\label{eq:root-generator}
  E_{\beta_{k}} =\, T_{i_1}\cdots T_{i_{k-1}} E_{i_k} \,, \qquad 
  F_{\beta_{k}} =\, T_{i_1}\cdots T_{i_{k-1}} F_{i_k} =\tau(E_{\b_k})
  \qquad \forall\, 1\leq k\leq N \,.
\end{equation*}

For a positive root $\b=\sum a_i \a_i$, let $\nu^>_\b:= \sum_i a_i \nu^>_i, \nu^<_\b:=\sum_i a_i \nu^<_i$ and set 
\begin{equation*}
 \tE_{\b_k}:= v^{b^>_{\b_k}}E_{\b_k} K^{\nu^>_{\b_k}}, \qquad \tF_{\b_k}:= v^{b^<_{\b_k}}K^{-\nu^<_{\b_k}} F_{\b_k},
\end{equation*}
here $b^>_{\b_k}, b^<_{\b_k}\in \BZ/2$ are normalized factors, see \cite[$\mathsection 3.6$]{LTV}, so that $\tE_{\b_k}, \tF_{\b_k} \in U^{ev}_\CA(\g)$.  Set
\[ \tE^{\vec{k}}:=\tE_{\b_1}^{k_1}\dots \tE_{\b_N}^{k_N} \,, \qquad \tF^{\vec{k}}:=\tF_{\b_1}^{k_1}\dots \tF_{\b_N}^{k_N} \,, \qquad 
    \tE^{\cev{k}}:=\tE_{\b_N}^{k_N}\dots \tE_{\b_1}^{k_1} \,, \qquad \tF^{\cev{k}}:=\tF_{\b_N}^{k_N}\dots \tF_{\b_1}^{k_1} \,,\]
Let $U_\CA^{ev>}, U^{ev 0}_\CA, U^{ev<}_\CA$ be the $\CA$-subalgebras of $U^{ev}_\CA(\g)$ generated by $\{\tE_i\}_{i=1}^r, \{ K^\lambda\}_{\lambda \in 2P}, \{ \tF_i\}_{i=1}^r$, respectively. We have the PBW basis for $U^{ev}_\CA(\g)$ by \cite[Lemma 3.10]{LTV}:
\begin{Lem}\label{lem: PBW-basis for even part}
(a) There is the triangular decomposition $U_\CA^{ev}(\g) \cong U^{ev<}_\CA \otimes_\CA U^{ev0}_\CA \otimes_\CA U^{ev>}_\CA$.

\noindent
(b) The sets $\{ \tE^{\vec{k}}\}_{\vec{k}\in \BZ^N_{\geq0}}, \{ \tE^{\cev{k}}\}_{\vec{k}\in \BZ^N_{\geq 0}}$ are $\CA$-bases of $U^{ev>}_\CA$.

\noindent
(c) The sets $\{ \tF^{\vec{k}}\}_{\vec{k}\in \BZ^N_{\geq 0}}, \{\tF^{\cev{k}}\}_{\vec{k}\in \BZ^N_{\geq 0}}$ are $\CA$-bases of $U^{ev<}_\CA$.

\noindent
(d) The set $\{K^\mu\}_{\mu \in 2P}$ is an $\CA$-basis of $U^{ev 0}_\CA$.
\end{Lem}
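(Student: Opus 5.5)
The plan is to reduce the statement to the classical PBW theorem for the De Concini--Kac form (De Concini--Kac, Lusztig; see \cite[$\mathsection 4$]{J96}) by tracking the Cartan factors $K^{\nu^>_\b}$, $K^{-\nu^<_\b}$ that the twist inserts. Over $\BC(v^{1/2})$, the elements $E_{\b_1}^{k_1}\cdots E_{\b_N}^{k_N}$ form a basis of $\bU^>_v$, and over $\BC[v,v^{-1}]$ they form a basis of the $\CA$-subalgebra generated by the $E_i$. The same holds for the reversed order, which we get by applying the anti-automorphism reversing the reduced word (equivalently, using the reduced expression of $w_0$ read backwards), and likewise for the $F_\b$. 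We also have the standard triangular decomposition $\bU_v=\bU^<\otimes\bU^0\otimes\bU^>$.

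First we move the Cartan factors. Since $K^\mu E_\b K^{-\mu}=v^{(\mu,\b)}E_\b$, we can push every $K^{\nu^>_{\b_j}}$ in $\tE^{\vec k}$ to the right. This gives
\[ \tE^{\vec k}=v^{c(\vec k)}\,E_{\b_1}^{k_1}\cdots E_{\b_N}^{k_N}\,K^{\sum_j k_j\nu^>_{\b_j}}, \]
where $c(\vec k)\in \tfrac12\BZ$ (or a suitable fractional exponent) is an explicit power of $v$, hence a unit. Because the weight $\sum_j k_j\b_j$ determines $\sum_j k_j\nu^>_{\b_j}$ (the map $\b\mapsto\nu^>_\b$ is linear), elements of the same weight carry the same $K$-factor. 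So the $\tE^{\vec k}$ are linearly independent, and they span, up to units and a uniform $K$-twist on each weight space, the same space as the ordinary PBW monomials. The same computation applies to $\tF^{\vec k}$ and to the reversed orders.

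The point I expect to be hardest is identifying the $\CA$-spans precisely. We need to show that the $\CA$-subalgebra $U^{ev>}_\CA$ generated by the $\tE_i$ equals $\bigoplus_{\vec k}\CA\,\tE^{\vec k}$. Write $\Psi$ for the map $x\mapsto x\,K^{\nu^>_{\mathrm{wt}(x)}}$, extended over weight spaces. A product computation shows $\Psi(xy)=v^{(\text{bilinear in weights})}\Psi(x)\Psi(y)$. Hence $U^{ev>}_\CA$ is obtained from the $\CA$-algebra generated by the $E_i$ by applying $\Psi$ up to units in each weight space. The normalizations $v^{b^>_\b}$ from \cite[$\mathsection 3.6$]{LTV} are exactly what makes the monomials land in $U^{ev}_\CA$ with integral rather than half-integral powers of $v$. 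Since $\CA$ inverts $v$ and the factors $v^{2k}-1$, the only thing to check is that the $v$-powers produced are integral; this is a computation with $\phi_{ij}$ and the fact $\zeta^{\gtrless}_i\in 2P$. Then integrality of the De Concini--Kac PBW basis transfers directly, which gives (b) and (c).

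Part (d) is immediate since the $K^\mu$, $\mu\in 2P$, are linearly independent group-likes. For (a), we use the twisted commutation relations \eqref{eq:gen-rel-twistedDJ}: $\tE_i\tF_j$ is rewritten as $v^{*}\tF_j\tE_i$, plus a Cartan term $v_i(1-K_i^{-2})/(1-v_i^{-2})$ when $i=j$. That Cartan term lies in $U^{ev0}_\CA$ because $2\a_i\in 2P$ and $1-v_i^{-2}$ is invertible in $\CA$. So the multiplication map $U^{ev<}\otimes U^{ev0}\otimes U^{ev>}\to U^{ev}_\CA$ is surjective. Injectivity follows from the triangular decomposition over $\BC(v^{1/2})$ together with the freeness of each factor established in (b)--(d).
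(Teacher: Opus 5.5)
The paper does not prove this lemma itself; it quotes \cite[Lemma 3.10]{LTV}. Your strategy is the natural one and it works: untwist each weight space by $K^{\nu^>_\a}$ (resp.\ $K^{-\nu^<_\a}$), transfer the De Concini--Kac PBW theorem, and deduce (a) from the twisted commutation relations plus freeness. Your arguments for (a) and (d) are fine. However, three of your supporting claims are false as stated and need repair.

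(1) The $v$-powers produced by $\Psi$ are not integral. Since $(\nu^>_i,\a_k)=-b_{ik}+\phi_{ik}$ with $\phi_{ik}=\e_{ik}b_{ik}/2$, already $\tE_i\tE_k=v^{-b_{ik}+\phi_{ik}}\Psi(E_iE_k)$ has a half-integral exponent for adjacent $i,k$ in type $A$. What is true, and what you need, is weaker. For a word $i_1\cdots i_m$, the exponent $\sum_{s<t}(\nu^>_{i_s},\a_{i_t})$ depends modulo $\BZ$ only on the weight, because an adjacent swap changes it by $-2\phi_{ij}\in\BZ$. Hence $U^{ev>}_{\CA,\a}=v^{c(\a)}\Psi(U^>_{\CA,\a})$ for some $c(\a)\in\frac12\BZ$, where $U^>_\CA$ is the $\CA$-algebra generated by the $E_i$. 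The normalization $b^>_\b$ of \cite[\S3.6]{LTV} amounts to $b^>_\b\equiv c(\b)$ modulo $\BZ$, and this is what puts $\tE_\b$, hence $\tE^{\vec k}$, into $U^{ev>}_\CA$.

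(2) The reversed-order basis cannot be obtained from an anti-automorphism fixing the $E_i$, nor by reading the reduced word backwards: both change the root vectors. For $\mathfrak{sl}_3$, the middle root vector becomes $E_2E_1-v^{-1}E_1E_2$ instead of $E_1E_2-v^{-1}E_2E_1$, and these are not proportional. Use the Levendorskii--Soibelman relations instead. For $i<j$, $E_{\b_i}E_{\b_j}-v^{(\b_i,\b_j)}E_{\b_j}E_{\b_i}$ lies in the $\CA$-span of ordered monomials in $E_{\b_{i+1}},\dots,E_{\b_{j-1}}$. This makes the transition between $\{E^{\vec k}\}$ and $\{E^{\cev k}\}$ triangular with powers of $v$ on the diagonal.

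(3) The De Concini--Kac input is false over $\BC[v,v^{-1}]$ in non-simply-laced types, because $T_i(E_j)$ contains the term $E_i^{-a_{ij}}E_j/[-a_{ij}]_{v_i}!$. For example, in type $B_2$ with $\a_1$ short, $T_1(E_2)$ has coefficient $1/[2]_v$ on $E_1^2E_2$. So it does not lie in the $\BC[v^{\pm1}]$-algebra generated by $E_1,E_2$. The input does hold over the paper's $\CA$, where $[2]_{v_i}$ and $[3]_{v_i}$ are units. This is exactly why the factors $v^{2k}-1$ with $k\le\max_i \sd_i$ are inverted there.

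With these corrections your argument goes through.
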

By base change, we have the corresponding PBW-bases for  $ U^{ev}_q(\g)$.

\subsection{Harish-Chandra center $Z_{HC}$}\
\label{ssec: HC center}

\begin{defi}The Harish-Chandra center $Z_{HC}$ of $U^{ev}_q(\g)$ is the subset of all $\cU_q(\g)$-invariant elements in  $U^{ev}_q(\g)$.
\end{defi}

By \cite[Lemma 9.29]{LTV},  $Z_{HC}$ is central in $U^{ev}_q(\g)$. Let $\pi_-: U_q^{ev<}=R\bigoplus\oplus_{0\neq \a\in Q_+}U_{q, -\a}^{ev <}\rightarrow R$ and $\pi_+: U_q^{ev >}=R\bigoplus \oplus_{0\neq \a \in Q_+}U_{q, \a}^{ev>}\rightarrow R$ be the projections. Let us consider the natural map:
\[ \pi: Z_{HC}\hookrightarrow U^{ev}_q(\g) \cong U^{ev<}_q \otimes_R U^{ev0}_q \otimes_R U^{ev>}_q \xrightarrow[]{\pi_-\otimes \text{Id}\otimes \pi_+} U^{ev 0}_q\cong R\<K^{2\lambda}\>_{\lambda \in P},\]
here $R\< K^{2\lambda}\>_{\lambda \in P}$ is the polynomial algebra of the lattice $2P$. 
\begin{Prop}[Theorem 9.30 \cite{LTV}] \label{prop: HC center} The map $\pi$ restricts to an isomorphism of algebras
\[ \pi: Z_{HC}\iso R\< K^{2\lambda}\>_{\lambda \in P}^{W_\bullet},\]
where the dot-action of the Weyl group $W$  on $R\< K^{2\lambda}\>_{\lambda \in P}$ is defined via:
\[ w_\bullet(K^\mu)=q^{(w^{-1}\rho-\rho, \mu)}K^{w(\mu)} \qquad \text{for all $x\in W,~ \mu \in 2P$}.\]
\end{Prop}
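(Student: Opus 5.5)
We would prove Proposition~\ref{prop: HC center} by the standard quantum Harish-Chandra argument, adapted to the twisted, even setting. There are four steps: show $\pi$ is an algebra homomorphism on $Z_{HC}$, show it is injective, show its image lies in the $W_\bullet$-invariants, and show every invariant is attained.

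\textbf{Homomorphism.} Every $z\in Z_{HC}$ is $\ad'_l(K^{2\mu})$-invariant, so it has weight zero. In the PBW decomposition of Lemma~\ref{lem: PBW-basis for even part}, write $z=\sum \tF^{\vec a}K^{\nu}\tE^{\vec b}$; only terms with $\mathrm{wt}(\vec a)=\mathrm{wt}(\vec b)$ appear. Then $z-\pi(z)\in U^{ev}_q\,U^{ev>}_{q,+}\cap U^{ev<}_{q,-}U^{ev}_q$. Products of such elements stay in this intersection, because both pieces are ideals on the appropriate side. Hence $\pi$ is multiplicative on $Z_{HC}$.

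\textbf{Injectivity.} Let $z$ act on the Verma modules $\Delta_q(\lambda)$ over the generic field $\BC(v^{1/2})$, or work over $\CA$ and then base change. On the highest weight vector $1_\lambda$, $z$ acts by the scalar $\chi_\lambda(\pi(z))$. Since $z$ is central by \cite[Lemma 9.29]{LTV}, it acts by this scalar on all of $\Delta_q(\lambda)$. If $\pi(z)=0$, then $z$ annihilates every Verma module. The intersection of the annihilators of all Verma modules (or of all finite-dimensional modules) in $U^{ev}$ is zero, by a standard PBW and density argument. Hence $z=0$.

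\textbf{Invariance.} We use the $\rho$-shifted characters. There is a nonzero map $\Delta_q(s_i\cdot\lambda)\to\Delta_q(\lambda)$ whenever $(\lambda+\rho,\a_i^\vee)\in\BZ_{>0}$, given by the singular vector $\tF_i^{n}1_\lambda$. Since $z$ is central, it acts by the same scalar on both modules. The resulting polynomial identities in $K^{2\mu}$ hold on a Zariski-dense set of $\lambda$. Rewriting $\chi_{w\cdot\lambda}(K^\mu)=q^{(\mu,w(\lambda+\rho)-\rho)}$ gives exactly the twisted dot action $w_\bullet(K^\mu)=q^{(w^{-1}\rho-\rho,\mu)}K^{w\mu}$. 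The Drinfeld twist only shifts $\tE_i,\tF_i$ by Cartan elements, so it does not change the computation of $\pi$ on highest weight vectors.

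\textbf{Surjectivity.} This is the main obstacle. We construct central elements as quantum traces: for $V=W_q(\lambda)$ set
\[ z_V=(\mathrm{tr}_V\otimes \Id)\bigl((K^{-2\rho}\otimes 1)R_{21}R\bigr), \]
taken in the twisted setting with $R^{(\sF)}$. One must check that $z_V$ lies in $U^{ev}_q$, using only the lattice $2P$, which is presumably why the even subalgebra was introduced. The element $R_{21}R$ contributes $K^{2\mu}$-type Cartan parts, and we would verify this integrality over $\CA$ first. Next we compute $\pi(z_V)=\sum_{\nu}\dim V_\nu\, q^{\cdots}K^{2\nu}$ up to the $\rho$-shift. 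This gives a triangular basis of $R\<K^{2\lambda}\>^{W_\bullet}$ indexed by dominant $\lambda$, with leading term the $W$-orbit sum of $K^{2\lambda}$. Induction on the dominance order then yields surjectivity over $\CA$, and by base change over any $R$. The delicate points are the integrality of $z_V$ in $U^{ev}_\CA$ and the compatibility with specialization. For the latter we would use that both sides are free $\CA$-modules and that $\pi$ is surjective degree by degree.
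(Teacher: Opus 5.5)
Your argument is the standard one and works when $q$ is not a root of unity: over $\BC(v^{1/2})$ and $\CA$, and also for $R=\BC[[\hbar]]$ after inverting $\hbar$, since there $q$ is generic. But the proposition is stated for an arbitrary specialization $\CA\to R$, and the paper uses it at $R=\BC$, $q=\e$. There your injectivity and invariance steps break down, because both use only that $z$ is central, and at a root of unity centrality is far too weak.

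\begin{itemize}
\item $\tF_i^{\ell_i}\tE_i^{\ell_i}$ (a product of elements of $Z_{Fr}$) is central, nonzero and of weight zero, yet $\pi(\tF_i^{\ell_i}\tE_i^{\ell_i})=0$.
\item $K^{2\ell\w_i}$ is central with $\pi(K^{2\ell\w_i})=K^{2\ell\w_i}$, which is not $W_\bullet$-invariant.
\end{itemize}

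Concretely, the intersection of the annihilators of Verma modules contains the $\tE_\a^{\ell_\a}$, so it is not zero. Also, the characters $\chi_\lambda$, $\lambda\in P$, induce only finitely many characters of $R\langle K^{2\mu}\rangle_{\mu\in P}$, so there is no Zariski density. Any argument valid at $q=\e$ must use invariance under the divided powers $\tE_i^{(\ell_i)},\tF_i^{(\ell_i)}$ and under $\binom{K_i;0}{m}$. The latter is also needed for your weight-zero claim, since $\ad'(K^{2\mu})$-invariance alone only puts the weight in $Q^*$.

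Your fallback, ``work over $\CA$ and then base change'', does not repair this. It presupposes that every $\cU_R$-invariant in $U^{ev}_R$ comes from $Z_{HC}(\CA)\otimes_\CA R$, and taking invariants does not commute with base change in general. The same issue blocks deducing the case $q=\e$ from $\BC[[\hbar]]$ by reducing modulo $\hbar$. This compatibility is the real content of the statement at roots of unity.

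The paper quotes the result from \cite{LTV} rather than reproving it, but the ingredients it uses nearby show how general $R$ is handled. First, $Z_{HC}=(U^{fin}_q)^{\cU_q}\cong O_q[G]^{\cU_q}$ by Proposition \ref{prop: Ufin and other algberas}. Second, $O_q[G]$ has a good filtration, so its invariants are governed by the multiplicity of $H^0_q(0)$. Hence $Z_{HC}$ is a free $R$-module with basis the quantum traces $c_{W_q(\lambda)}$, $\lambda\in P_+$, compatibly with any base change. The formula $\pi(c_{W_q(\lambda)})=\sum_{\mu}\mathrm{rank}(W_q(\lambda)_\mu)\,q^{(\rho,2\mu)}K^{2\mu}$, recalled in the proof of Theorem \ref{thm: smalles two-sided cell vs proj}, is triangular with unit leading coefficients. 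This gives injectivity, $W_\bullet$-invariance and surjectivity at once, without Verma modules.

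Your surjectivity step is essentially this computation and is fine. Defining $c_V$ inside $O_q[G]$ also removes the integrality worry you raise. What is missing is the good-filtration argument that these elements exhaust the invariants over every $R$.
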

\begin{Rem}\label{rem: renormalize HC map} Let $\gamma_{-\rho}: R\< K^{2\lambda}\>_{\lambda \in P}\rightarrow R\< K^{2\lambda}\>_{\lambda \in P}$ is defined by $\gamma_{-\rho}(K^{2\lambda})= q^{(-\rho, 2\lambda)}K^{2\lambda}$ for all $\lambda \in P$. Then we have an isomorphism 
\[ \gamma_{-\rho}\circ \pi: Z_{HC}\iso R\< K^{2\lambda}\>_{\lambda \in P}^W.\]
\end{Rem}

\subsubsection{Harish-Chandra center when $R=\BC$}\

Let us consider the case when $R=\BC$ and $q=e^a \in \BC^\x$ for some $ a\in \BC$, then we set $q^z:=e^{az}$ for any $z \in \BC$. Any $\lambda \in \h^*$ defines a closed point on $T:= \Spec \BC\<K^{2\mu}\>_{\mu \in P}$  corresponding to the homomorphism $K^{2\mu} \mapsto q^{(2\mu, \lambda)}$. This defines a map $ \h^* \rightarrow T$. 

We recall the isomorphism $\Spec Z_{HC} \cong T/(W, \bullet)$, where $(W, \bullet)$ means that $W$ acts on $T$ via the $\bullet$-action defined in Proposition \ref{prop: HC center}. So we have a composition
\begin{equation}\label{eq: tau map}\tau: \h^* \rightarrow T \rightarrow T/(W, \bullet).
\end{equation}
Let 
\[ X:=\{ \lambda \in \h^*~|~q^{(2\mu, \lambda)}=1 \; \text{for all $\mu \in P$}\}.\]
\begin{Lem}\label{lem: closed points on HC center}
(a) Let $\lambda, \mu \in \h^*$ then  $\tau(\mu)=\tau(\lambda)$ if and only if $\lambda -w\bullet \mu \in X$ for some $w\in W$.

\noindent
(b) Suppose $q$ is a root of unity of order $\ell$.  If $\lambda \in \mu +Q$ then $\tau(\lambda) =\tau(\mu)$ implies that $\lambda$ is contained in the $W_{*aff}$-orbit of $\mu$ under the dot action $\bullet$ in \eqref{eq: dot action W*}.
\end{Lem}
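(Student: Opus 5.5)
For part (a) I would work directly with the quotient map. By Proposition \ref{prop: HC center}, $\Spec Z_{HC}\cong T/(W,\bullet)$, and since $W$ is finite the closed points of $T/(W,\bullet)$ are exactly the $W$-orbits of closed points of $T$. So $\tau(\mu)=\tau(\lambda)$ if and only if the points of $T$ attached to $\lambda$ and $\mu$ lie in one $(W,\bullet)$-orbit.

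The next step is to see what the twisted action does to the point attached to $\mu$. Write $t_\mu$ for the homomorphism $K^{2\nu}\mapsto q^{(2\nu,\mu)}$. The action on points is the transpose of the action on functions, namely $(w\cdot t)(f)=t(w^{-1}_\bullet f)$. Then
\[(w\cdot t_\mu)(K^{2\nu}) = q^{(w\rho-\rho,2\nu)}q^{(w^{-1}(2\nu),\mu)} = q^{(2\nu, w\mu+w\rho-\rho)}=q^{(2\nu,w\bullet\mu)}.\]
So the map $\h^*\to T$ is $W$-equivariant for the dot action. Hence $\tau(\lambda)=\tau(\mu)$ if and only if $t_\lambda=t_{w\bullet\mu}$ for some $w$. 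That equality says $q^{(2\nu,\lambda-w\bullet\mu)}=1$ for all $\nu\in P$, which is exactly $\lambda-w\bullet\mu\in X$. The only real care needed in (a) is this bookkeeping: the inverse in the action, and the fact that $q^z=e^{az}$ is multiplicative in $z$, so the factors combine as shown.

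For part (b), assume $\lambda\in\mu+Q$. By (a) there is $w\in W$ with $\lambda-w\bullet\mu\in X$. Since $w\bullet\mu-\mu=w(\mu+\rho)-(\mu+\rho)$, the difference $\lambda-w\bullet\mu$ lies in $Q$ only once we know $w(\mu+\rho)-(\mu+\rho)\in Q$. That holds when $\mu$ is integral, but for general $\mu\in\h^*$ it fails. In that case I would instead argue that $\lambda-\mu\in Q$ and $X$ is a lattice, so it is enough to show $\beta:=\lambda-w\bullet\mu\in Q$. Now $\beta\in X$ forces $(2\nu,\beta)\in\frac{\ell'}{?}$-type congruences: because $q=e^{a}$ has order $\ell$, we get $e^{a(2\nu,\beta)}=1$, which puts $a(2\nu,\beta)$ in $2\pi i\BZ$, i.e. $(2\nu,\beta)\in\ell\BZ$ after normalizing $a=2\pi i k/\ell$.

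Assume $\mu$ is integral, which is the intended setting since blocks are taken for $\lambda,\mu\in P$. Then $\beta\in Q$ and $(2\nu,\beta)\in\ell\BZ$ for all $\nu\in P$. By Remark \ref{rem: Q*}, this gives $\beta\in Q^*$. Therefore $\lambda=t_\beta\bullet(w\bullet\mu)$ with $t_\beta w\in W\ltimes Q^*=W_{*aff}$, as required.

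The main obstacle is the non-integral case in (b), where $w(\mu+\rho)-(\mu+\rho)$ need not lie in $Q$. I would handle it by replacing $w$ with an element of the integral Weyl group of $\mu$ modulo $X$. Concretely, note that $\lambda-w\bullet\mu\in X$ and $\lambda-\mu\in Q$ together give $w(\mu+\rho)-(\mu+\rho)\in Q+X$. I would then use the root-theoretic description of $X$, namely $X=\frac{\ell}{2k}$-scaled copies of $\BZ$-duals, to reduce to the integral argument above. If that reduction becomes messy, I would restrict the statement to $\mu\in P$, which is the only case the paper needs.
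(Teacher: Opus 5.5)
Your argument for (a) fills in what the paper simply calls straightforward, and it is correct. The identity $(w\cdot t_\mu)(K^{2\nu})=q^{(2\nu,w\bullet\mu)}$ holds by $W$-invariance of the form and because $(w\rho-\rho,2\nu)\in\BZ$. So $\h^*\to T$ is $W$-equivariant, and the fibres of $T\to T/(W,\bullet)$ over closed points are $W$-orbits.

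For (b), your integral-case argument is exactly the paper's. The paper reduces (b) to the equality $X\cap Q=Q^*$. That reduction is only valid when $\beta=\lambda-w\bullet\mu$ lies in $Q$, so it tacitly assumes $\mu\in P$. Given that, $(2\nu,\beta)\in\BZ$, and $q^n=1$ for $n\in\BZ$ forces $\ell\mid n$, so $\beta\in Q^*$ by Remark \ref{rem: Q*}.

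You should drop the planned extension to non-integral $\mu$. No reduction through an integral Weyl group can work, because (b) is false there. Here is a counterexample:
\begin{itemize}
\item Take $\g=\mathfrak{sl}_2$, $\ell=3$, and $q=e^{a}$ with $a=2\pi\sqrt{-1}/3$.
\item Then $X=3\BZ\omega_1$ and $Q^*=3\BZ\alpha_1=6\BZ\omega_1$.
\item Let $\mu=-\tfrac12\omega_1$ and $\lambda=\mu+\alpha_1=\tfrac32\omega_1$.
\item Then $s\bullet\mu=-\tfrac32\omega_1$ and $\lambda-s\bullet\mu=3\omega_1\in X$, so $\tau(\lambda)=\tau(\mu)$ by (a).
\item But $W_{*aff}\bullet\mu=(-\tfrac12+6\BZ)\omega_1\cup(-\tfrac32+6\BZ)\omega_1$, which does not contain $\lambda$.
\end{itemize}

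Relatedly, your intermediate claim that $\beta\in X$ gives $(2\nu,\beta)\in\ell\BZ$ after writing $a=2\pi\sqrt{-1}k/\ell$ is wrong for $\beta\notin Q$. In general you only get $(2\nu,\beta)\in\tfrac{\ell}{k}\BZ$. To conclude $\ell\mid(2\nu,\beta)$ you need $(2\nu,\beta)\in\BZ$ together with $\gcd(k,\ell)=1$.

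So (b) should be stated for $\mu\in P$, which is how the paper uses it. With that hypothesis your proof is complete.
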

\begin{proof}
(a) Straightforward. (b) This is equivalent to showing that $X\cap Q=Q^*$. Since $q$ is a root of unity of order $\ell$, for any $a\in \BZ$, the equality $q^a=1$ holds if and only if $\ell~|~ a$.  Since $(2\mu, \nu) \in \BZ$ for all $\nu\in Q$ and $\mu \in P$, it follows that  $\nu \in Q\cap X$ if and only if $\ell ~|~(2\mu, \lambda)$ for all $\mu \in P$, equivalently, $\lambda \in Q^*$ by Remark \ref{rem: Q*}.
\end{proof}
\subsection{The Frobenius center $Z_{Fr}$}\
\label{ssec: Frobenius center}

In case \ref{case over C},  the algebra $U^{ev}_\e(\g)$ contains a large central subalgebra.

\begin{defi}The {\it Frobenius center}  $Z_{Fr}$ of $U^{ev}_\e(\g)$ is the subalgebra generated by 
\[\{\tE_{\a}^{\ell_\a}, \tF_\a^{\ell_\a}, K^\mu\}_{\a \in \Delta_+}^{ \mu \in 2P^*}.\]
\end{defi}
\begin{Rem}In \cite[$\mathsection 6$]{LTV}, we gave a conceptual definition of $Z_{Fr}$. That definition uses a $\cU_\e(\g)$-adjoint invariant pairing $U^{ev}_\e(\g) \x \dU_\e(\g,P)\rightarrow \BC$ which is non-degenerate in the first argument. Then $Z_{Fr}$ is defined to be the orthogonal complement of the kernel of the Frobenius map $\tFr$ form Proposition \ref{prop: Frobenius morphism} under this pairing. Since the construction of the pairing is involved and we will not need it in this paper, we refer readers to \cite[$\mathsection 6$]{LTV} for details.
\end{Rem}
\begin{Prop}(a) $Z_{Fr}$ is stable under the adjoint action of $\cU_\e(\g)$ on $U^{ev}_\e(\g)$. Furthermore, the action of $\cU_\e(\g)$ on $Z_{Fr}$ factors through the epimorphism $\tFr:  \cU_\e(\g) \rightarrow \cU_\BC(\g^d)$.

\noindent
(b) We have an isomorphism $Z_{Fr} \cong \BC[\tE_\a^{\ell_\a}]_{\a \in \Delta_+}\otimes_\BC \left(\bigoplus_{\lambda \in 2P^*} \BC K^{\lambda}\right) \otimes_{\BC} \BC[\tF_\a^{\ell_\a}]_{\a \in \Delta_+}$. Here, $\BC[\tE_\a^{\ell_\a}]_{\a \in \Delta_+}, \BC[\tF_\a^{\ell_\a}]_{\a \in \Delta_+}$ are the  polynomial algebras in the corresponding variables.  
\end{Prop}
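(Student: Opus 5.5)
Part (b) comes first, since (a) will use the explicit generators. I will identify $Z_{Fr}$ with the algebra generated by the $\ell_\a$-th powers and the lattice $2P^*$. By Lemma \ref{lem: PBW-basis for even part}, specialized at $v=\e$, the ordered monomials $\tF^{\vec{k}}K^\mu\tE^{\vec{m}}$ form a $\BC$-basis of $U^{ev}_\e(\g)$.

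The central computation is that each $\tE_\a^{\ell_\a}$ and $\tF_\a^{\ell_\a}$ is central. For simple roots this reduces, via the relations \eqref{eq:gen-rel-twistedDJ}, to three facts. First, $K^\mu$ commutes with $\tE_i^{\ell_i}$ because $\e^{\ell_i(\a_i,\mu)}=1$ for $\mu\in 2P$. Second, $[\tE_i^{\ell_i},\tF_j]=0$, by the standard $q$-commutator identity at roots of unity. Third, the $q$-Serre relations give commutation with $\tE_j$, as in De Concini--Kac. The non-simple root vectors are then handled using the braid group action $T_i$, which at $v=\e$ preserves the span of the $\ell$-th powers; this is the De Concini--Kac--Procesi argument, adapted to the twisted generators.

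With centrality in hand, the PBW theorem shows that the monomials $\prod\tF_\a^{\ell_\a a_\a}K^\lambda\prod\tE_\a^{\ell_\a b_\a}$, with $\lambda\in2P^*$, are linearly independent. Since these elements commute, the subalgebra they generate is exactly the stated tensor product of polynomial rings with the group algebra of $2P^*$, which proves (b).

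For (a), I would avoid computing with the explicit generators and use the conceptual description from \cite[$\mathsection 6$]{LTV}: $Z_{Fr}$ is the orthogonal complement of $\ker\tFr$ under the $\ad$-invariant pairing $U^{ev}_\e(\g)\x\dU_\e(\g,P)\to\BC$. Since $\tFr$ is a Hopf map, $\ker\tFr$ is a Hopf ideal and in particular is stable under the adjoint action. By invariance of the pairing, its orthogonal complement is then $\cU_\e(\g)$-stable.

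For the factorization, take $u\in\ker(\tFr:\cU_\e(\g)\to\cU_\BC(\g^d))$, $z\in Z_{Fr}$ and $y\in\dU_\e$. Invariance gives $\langle \ad(u)z,y\rangle=\langle z,\ad(S u)y\rangle$ up to the appropriate side convention. The element $\ad(Su)y$ lies in $\ker\tFr$, because $\tFr(\ad(Su)y)=\ad(\tFr(Su))\tFr(y)=0$. The pairing is nondegenerate in the first argument, so $\ad(u)z=0$. Hence the action factors through $\tFr$, and surjectivity of $\tFr$ makes it an epimorphism.

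The main obstacle is matching the conceptual $Z_{Fr}$ with the explicitly generated one. Concretely, I must check that $\tE_\a^{\ell_\a},\tF_\a^{\ell_\a},K^\mu$ pair to zero against $\ker\tFr$, and then compare dimensions in each graded piece. I expect to use the explicit form of the pairing on PBW monomials from \cite{LTV}.
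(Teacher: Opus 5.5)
Your argument follows essentially the route the paper relies on. The proposition is quoted from \cite[$\mathsection 6$]{LTV}, where, as the Remark just before it explains, $Z_{Fr}$ is \emph{defined} as the orthogonal complement of $\ker\tFr$ under the $\ad$-invariant pairing $U^{ev}_\e(\g)\x\dU_\e(\g,P)\to\BC$. So (a) reduces to your invariance argument, (b) is the De Concini--Kac centrality-plus-PBW count, and the identification of that complement with the explicitly generated algebra, which you call the main obstacle, is imported from LTV; you can cite it rather than redo it.

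The one step to make explicit is $\tFr(\ad(Su)y)=\ad(\tFr(Su))\tFr(y)$. It mixes the two Frobenius maps $\dU_\e(\g,P)\to\dU^*_\e(\g,P^*)$ and $\cU_\e(\g)\to\cU_\BC(\g^d)$, so it tacitly uses their compatibility, including the sign normalizations $(\e^*_i)^{-n/\ell_i}$. That compatibility is also taken from \cite[$\mathsection 5$]{LTV} and is not stated in this paper.
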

Let us define two linear maps $\kappa, \gamma \in \End(\h^*)$ as follows:
\begin{equation}\label{kappa and gamma}
  \kappa(\alpha_i):=\alpha_i+\sum_{j=1}^r 2\phi_{ij}\omega^\vee_j=-\zeta^<_i \qquad \mathrm{and} \qquad 
  \gamma(\alpha_i):=\alpha_i-\sum_{j=1}^r 2\phi_{ij}\omega^\vee_j=\zeta^>_i \,,
\end{equation}
Let us define
\[ \tZ^>_{Fr}:=\BC[\tE^{\ell_\a}_\a K^{\ell_\a\gamma(\a)}]_{\a \in \Delta_+}, \qquad \tZ^<_{Fr}:= \BC[\tF^{\ell_\a}_\a K^{\ell_\a\kappa(\a)}]_{\a \in \Delta_+}, \qquad \tZ^0_{Fr}:= \bigoplus_{\lambda \in 2P^*} \BC K^{\lambda}.\]
\begin{defi} \label{defi: group Gd}Let $G^d$ be the simply connected algebraic group with the Lie algebra $\g^d$. Let $G^d_0$ be the open Bruhat cell of the algebraic group $G^d$.
\end{defi}
\begin{Prop}[Proposition 6.10 \cite{LTV}]\label{prop: F-center vs openBruhat}There is a $\cU_\BC(\g^d)$-linear algebra homomorphism:
\[\varphi\colon Z_{Fr} \,\iso\, \BC[G^d_0] \simeq \BC[U^d_-]\otimes_\BC \BC[T^d]\otimes_\BC \BC[U^d_+] \,. \]
Furthermore, under this isomorphism, we have $\tZ^>_{Fr} \cong \BC[U^d_-],$ $\tZ^0_{Fr} \cong \BC[U^d_0],$ $ \tZ^<_{Fr} \cong \BC[U^d_+]$.
\end{Prop}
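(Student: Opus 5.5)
The strategy is to compare $Z_{Fr}$ and $\BC[G^d_0]$ through their triangular structures. On the quantum side I use the PBW basis (Lemma \ref{lem: PBW-basis for even part}) and the description of $Z_{Fr}$ as a tensor product of polynomial algebras and a group algebra. On the classical side I use the decomposition $G^d_0\cong U^d_-\times T^d\times U^d_+$. Most of the work is on the nilpotent factors. The statement has two parts to check: $\varphi$ is an algebra isomorphism, and it intertwines the adjoint action of $\cU_\e(\g)$ (which factors through $\tFr$) with the action of $\cU_\BC(\g^d)$ on functions on $G^d$.

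\textbf{Step 1: the maps on the three factors.}
\begin{itemize}
\item On the torus, send $K^{\lambda}$ with $\lambda\in 2P^*$ to the character of $T^d$ attached to $\lambda/2\in P^*$, the weight lattice of $\g^d$. This identifies $\tZ^0_{Fr}$ with $\BC[T^d]$.
\item For the positive part, I first show that $\tZ^>_{Fr}$ is a commutative polynomial algebra. The renormalization by $K^{\ell_\a\gamma(\a)}$ is chosen so that these elements commute, following De Concini--Kac--Procesi, now in the twisted setting. I then identify $\tZ^>_{Fr}$ with $\BC[U^d_-]$ by sending $\tE_\a^{\ell_\a}K^{\ell_\a\gamma(\a)}$ to the coordinate dual to the root vector $f_{\a^*}$ under the duality between $\BC[U^d_-]$ and $U(\n^d_-)$.
\item The negative part $\tZ^<_{Fr}$ is handled symmetrically.
\end{itemize}
Because $Z_{Fr}$ is commutative, the multiplication map from the tensor product of the three factors is an algebra map, and it is bijective by the structure result for $Z_{Fr}$. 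This gives an algebra isomorphism $\varphi$.

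\textbf{Step 2: equivariance.} It is enough to check equivariance on generators of $\cU_\e(\g)$: the divided powers $\tE_i^{(n)}$, $\tF_i^{(n)}$ and the $K^\mu$. On $Z_{Fr}$ the action factors through $\tFr$, so the $K^\mu$ act trivially and only $\tE_i^{(\ell_i)}$, $\tF_i^{(\ell_i)}$ matter; under $\tFr$ these map to $e_i$, $f_i$ up to scalars.
\begin{itemize}
\item Compute $\ad'_l(\tE_i^{(\ell_i)})$ and $\ad'_l(\tF_i^{(\ell_i)})$ on the generators of $Z_{Fr}$ using the coproduct \eqref{EF coproduct} and $q$-binomial identities at $q=\e$.
\item Compare with the vector fields $e_i,f_i$ acting on $\BC[G^d_0]$ through the conjugation action, written in the chosen coordinates on $U^d_-\times T^d\times U^d_+$.
\end{itemize}

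\textbf{Main obstacle.} The hard part is the divided power computation. The cleanest route avoids coordinates entirely. I would use the invariant pairing between $U^{ev}_\e(\g)$ and $\dU_\e(\g,P)$ recalled in the remark, under which $Z_{Fr}$ is the orthogonal complement of $\ker\tFr$. This identifies $Z_{Fr}$ with the restricted dual of $\dU^*$, that is, with the matrix coefficients of $G^d$ realized inside $O_\e[G]$. Equivariance is then automatic, since the pairing is $\cU_\e(\g)$-invariant and $\tFr$ is a Hopf map. Surjectivity onto $\BC[G^d_0]$ then reduces to matching the localization at the torus generators with the open cell.
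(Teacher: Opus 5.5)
The paper does not prove this statement itself; it imports it from \cite{LTV}, where $Z_{Fr}$ is introduced through the $\cU_\e(\g)$-invariant pairing as the orthogonal complement of $\ker\tFr$. So the pairing route in your last paragraph is the natural one. As written, however, your proposal never produces a single map that is both an algebra isomorphism and $\cU_\BC(\g^d)$-linear, and that combination is the entire content of the statement.

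Step 1 has no content by itself. $Z_{Fr}$ is central in $U^{ev}_\e(\g)$, so its generators commute automatically; commutativity is not what the factors $K^{\ell_\a\gamma(\a)}$ are for. Moreover, any bijection between free generators of $\tZ^>_{Fr}\otimes\tZ^0_{Fr}\otimes\tZ^<_{Fr}$ and of $\BC[U^d_-]\otimes\BC[T^d]\otimes\BC[U^d_+]$ is an algebra isomorphism. Equivariance, by contrast, is rigid. The conjugation vector fields $e_i,f_i$ on $G^d_0$ do not preserve the product decomposition $U^d_-\times T^d\times U^d_+$. Hence the following are all forced rather than free:
\begin{itemize}
\item the normalizing scalars;
\item the dependence on the reduced word for non-simple $\a$;
\item the torus identification;
\item the precise multiplication map $U^d_-\times T^d\times U^d_+\to G^d_0$.
\end{itemize}
You give no reason why ``the coordinate dual to $f_{\a^*}$'' and ``the character $\lambda/2$'' are the forced choices. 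Step 2 is only announced. The pairing you then invoke produces a \emph{different} map, for which you assert equivariance but not multiplicativity, and nothing identifies it with the map of Step 1.

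The pairing route also has gaps of its own.
\begin{enumerate}
\item Matrix coefficients of $\tFr^*(\Rep G^d)$ inside $O_\e[G]\cong U^{fin}_\e$ give $Z^{fin}_{Fr}=Z_{Fr}\cap U^{fin}_\e$, which corresponds to $\BC[G^d]$, not to $Z_{Fr}$. The latter is strictly larger: it is obtained by inverting the torus elements $K^{-2\ell_i\w_i}$, and the functionals these inverses define on $\dU^*_\e(\g,P^*)$ are not matrix coefficients. So ``the restricted dual of $\dU^*$'' is the wrong target.
\item The pairing is $\ad$-invariant but is not a Hopf pairing for the product of $U^{ev}_\e(\g)$. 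The identification $U^{fin}_\e\cong O_\e[G]$ carries the product to the reflection equation product, which is twisted by the braiding. So even on $Z^{fin}_{Fr}$ you only get an equivariant linear isomorphism with $\BC[G^d]$. To upgrade it you must show, for instance, that the braiding between Frobenius pullbacks is the flip, so that the reflection equation product restricts to the commutative product.
\item You must show that the $K^{-2\ell_i\w_i}$ go to the principal minors of the fundamental representations of $G^d$. Their common non-vanishing locus is exactly $G^d_0$, which is what makes the localization equal to $\BC[G^d_0]$.
\item You must show that $\tZ^>_{Fr},\tZ^0_{Fr},\tZ^<_{Fr}$ go to functions pulled back from $U^d_-,T^d,U^d_+$ respectively. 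This is where the renormalizations by $K^{\ell_\a\gamma(\a)}$ and $K^{\ell_\a\kappa(\a)}$ actually matter, since $\tE_\a^{\ell_\a}$ itself is a pullback from $U^d_-$ times a nontrivial character of $T^d$.
\end{enumerate}
Your phrase about ``matching the localization at the torus generators with the open cell'' points at items 3 and 4, but none of these points is argued.
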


\subsection{Center $Z$ of $U_\e^{ev}(\g)$}\
\label{ssec: whole center}

In this section, we will study the entire center $Z$ of $U^{ev}_\e(\g)$ in case \ref{case over C}. 
Let $\varphi_\e$ denote the natural surjection  $\varphi_\e: U^{ev}_{\CA}(\g) \rightarrow U^{ev}_\e(\g)$ corresponding the algebra homomorphism $\CA \rightarrow  \BC$ sending $v$ to $\e$.  For any $a, b\in Z$, pick arbitrary lifts $\hat{a} , \hat{b} $ in $U^{ev}_{\CA}(\g)$, then define 
 \begin{equation}\label{eq: poisson bracket}\{ a, b\}:=\varphi_\e\left(\frac{[\hat{a}, \hat{b}]}{v-\e}\right), \qquad \text{for all $ a, b\in Z$}.
 \end{equation}
The following results are in \cite[$\mathsection 10$]{LTV}.
\begin{Prop} \label{prop: Frobenius center}(a) The Frobenius center $Z_{Fr}$ is closed under the Poisson bracket \eqref{eq: poisson bracket}. Moreover, $Z_{Fr}$ is generated by $\{ \tE_i^{\ell_i}, \tF_i^{\ell_i}, K^{\lambda}\}_{\lambda \in 2P^*}^{1 \leq i \leq r}$ as a Poisson algebra.

\noindent
(b) The symplective leaves of $\Spec Z_{Fr} \cong G^d_0$ coincide with the intersections of the conjugacy classes of $G^d$ with the open Bruhat cell $G^d_0$.

\noindent
(c) The fibers of $U^{ev}_\e(\g)$ over points in the same symplectic leaf are isomorphic as algebras.
\end{Prop}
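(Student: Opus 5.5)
The plan is to follow the strategy of De Concini, Kac and Procesi for the De Concini--Kac form. I would transport each step to $U^{ev}_\e(\g)$ using the twisted generators $\tE_i,\tF_i$ and the lattice $2P$. The whole argument runs through the Hamiltonian derivations $D_z:=\{z,-\}$ for $z\in Z$. Each $D_z$ is well defined on all of $U^{ev}_\e(\g)$, by the same formula as \eqref{eq: poisson bracket} with a lift of $z$. Each $D_z$ is also a derivation, because $\hat a$ is central modulo $(v-\e)$.

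\textbf{Step 1: bracket formulas.} I would first compute $D_z$ for the generators $z\in\{\tE_i^{\ell_i},\tF_i^{\ell_i},K^\lambda\}$ with $\lambda\in 2P^*$.
\begin{itemize}
\item For $K^\lambda$: lifting $K^\lambda\tE_jK^{-\lambda}=v^{(\a_j,\lambda)}\tE_j$ gives $D_{K^\lambda}(\tE_j)=c\,(\lambda,\a_j)\,\tE_jK^\lambda$ for an explicit constant $c$. This uses $\e^{(\lambda,\a_j)}=1$, which holds because $\lambda\in 2P^*$.
\item For $\tE_i^{\ell_i}$ and $\tF_i^{\ell_i}$: I would use the $q$-commutation relations \eqref{eq:gen-rel-twistedDJ} together with the $q$-binomial identities that make $\tE_i^{\ell_i}$ central at $v=\e$. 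Taking the first-order term in $v-\e$ gives closed formulas. For example, $D_{\tE_i^{\ell_i}}(\tF_i)$ is a multiple of $\tE_i^{\ell_i-1}(K\text{-factor})$.
\end{itemize}
Since each $D_z$ is a derivation and $Z_{Fr}$ is generated as an algebra by root vectors, closedness of $Z_{Fr}$ under the bracket reduces to evaluating $D_z$ on the generators $\tE_\a^{\ell_\a}$, $\tF_\a^{\ell_\a}$ and $K^\mu$. I would also use that the $\cU_\e(\g)$-action on $Z_{Fr}$ factors through $\tFr$ (Proposition on $Z_{Fr}$).

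\textbf{Step 2: integrating the derivations and part (a).} The derivations $D_{\tE_i^{\ell_i}}$ and $D_{\tF_i^{\ell_i}}$ are locally nilpotent on $U^{ev}_\e(\g)$, by a degree count in the $Q$-grading with respect to the PBW basis of Lemma \ref{lem: PBW-basis for even part}. Hence $\exp(tD)$ makes sense. Following DCKP, I would show that the Lusztig automorphism $T_i$, suitably twisted by $K$-factors, is given on $Z_{Fr}$ by a composition of such exponentials with a rescaling by $K$-elements. This is the quantum analogue of $\bar s_i=\exp(e)\exp(-f)\exp(e)$. Consequently $T_i$ preserves $Z_{Fr}$. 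The root vectors $\tE_{\b_k}^{\ell_{\b_k}}=T_{i_1}\cdots T_{i_{k-1}}\tE_{i_k}^{\ell_{i_k}}$ (up to normalizing factors) then lie in the Poisson subalgebra generated by $\tE_i^{\ell_i},\tF_i^{\ell_i},K^\lambda$. Since $Z_{Fr}$ is generated by these root vectors, part (a) follows.

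\textbf{Step 3: part (b).} Using $\varphi: Z_{Fr}\iso\BC[G^d_0]$ from Proposition \ref{prop: F-center vs openBruhat}, I would compare the Step 1 brackets on the generators with the standard Poisson structure on $G^d_0$. This is the dual Poisson group structure pushed forward via $(u_-,t,u_+)\mapsto u_-t^2u_+^{-1}$, modified by the twist matrix $\phi_{ij}$. It suffices to check the brackets on generators, since both sides are biderivations.

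The Hamiltonian vector fields of the coordinates $\tE_i^{\ell_i}$ and $\tF_i^{\ell_i}$ then span, at each point, the tangent space to the orbit of the infinitesimal conjugation action of $\g^d$. The $K$-coordinates contribute nothing transverse, as Step 1 shows. So the leaves are the connected components of the conjugacy-class intersections $\mathcal O\cap G^d_0$. Connectedness of these intersections I would take from the DCKP/Kac--Weisfeiler argument for the untwisted case, which carries over because the twist only rescales coordinates by characters.

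\textbf{Step 4: part (c).} The automorphisms $\exp(tD_z)$ for $z\in Z_{Fr}$ preserve $Z_{Fr}$ and act on $\Spec Z_{Fr}$ by the Hamiltonian flow. They therefore carry the fiber over a point $p$ isomorphically onto the fiber over the flowed point. Points in one leaf are joined by finitely many such flows, which gives (c).

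\textbf{Main obstacle.} I expect the hardest part to be Step 2, namely the exact identity expressing the twisted $T_i$ restricted to $Z_{Fr}$ through exponentials of $D_{\tE_i^{\ell_i}}$ and $D_{\tF_i^{\ell_i}}$. The $K$-shifts $\nu^>_i,\nu^<_i$ change the constants, and the exponentials converge only after completing. I would first verify the identity in rank one for $\mathfrak{sl}_2$. For the general case I would reduce to rank one using the Levi subalgebra $U^{ev}(\mathfrak{sl}_2)_i$ together with the fact that $T_i$ is determined by its values on generators.
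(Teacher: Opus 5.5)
Your strategy is in the De Concini--Kac--Procesi circle of ideas, but two load-bearing steps fail as written. For comparison, the paper does not prove this proposition itself: (a) and (b) are quoted from \cite[\S 10]{LTV}, and (c) is deduced from Brown--Gordon's theorem on Poisson orders \cite{BG03}.

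\textbf{First gap: the derivations are not locally nilpotent.} $D_{\tE_i^{\ell_i}}$ and $D_{\tF_i^{\ell_i}}$ are \emph{not} locally nilpotent on $U^{ev}_\e(\g)$. For $\lambda\in 2P$ one has $[\tE_i^{\ell_i},K^\lambda]=(1-v^{\ell_i(\a_i,\lambda)})\tE_i^{\ell_i}K^\lambda$ with $\e^{\ell_i(\a_i,\lambda)}=1$. Hence $D_{\tE_i^{\ell_i}}(K^\lambda)=c\,\tE_i^{\ell_i}K^\lambda$ with $c=-\ell_i(\a_i,\lambda)\e^{-1}$, which is nonzero for instance when $\lambda=2\w_i$. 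Since $D_{\tE_i^{\ell_i}}(\tE_i^{\ell_i})=0$, you get $D_{\tE_i^{\ell_i}}^n(K^\lambda)=c^n\tE_i^{n\ell_i}K^\lambda\neq 0$ for every $n$, by Lemma \ref{lem: PBW-basis for even part}. A degree count cannot help, because the $Q$-grading of $U^{ev}_\e(\g)$ is unbounded above. On $\Spec Z_{Fr}$ (take $\lambda\in 2P^*$) the flow is $K^\lambda\mapsto K^\lambda\exp(ct\,\tE_i^{\ell_i})$, which is not algebraic. So $\exp(tD)$ is not an automorphism of $U^{ev}_\e(\g)$, nor of $Z_{Fr}$.

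This invalidates Step 4 as stated; for a general $z\in Z_{Fr}$ the Hamiltonian field need not even be complete. It also invalidates the generation argument of Step 2. Even granting an identity for the twisted $T_i$ in terms of such exponentials, which only makes sense in a completion, it would present $\tE_\b^{\ell_\b}$ as a convergent infinite series of iterated brackets. It would not show that $\tE_\b^{\ell_\b}$ lies in the Poisson subalgebra generated algebraically by $\tE_i^{\ell_i},\tF_i^{\ell_i},K^\lambda$.

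For (c) the clean fix is the paper's. Once (a) is known, your $D_z$, made linear in $z$ by a linear choice of lifts, turn $U^{ev}_\e(\g)$ into a Poisson $Z_{Fr}$-order; note that $U^{ev}_\e(\g)$ is finite over $Z_{Fr}$. Brown--Gordon's theorem then gives isomorphic fibres over points of one symplectic core, with no need for global automorphisms $\exp(tD_z)$. Here the leaves $\mathcal{O}\cap G^d_0$ are locally closed and irreducible, so they coincide with the cores.

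\textbf{Second gap: closedness in (a) is never established.} Step 1 computes $D_z$ only for the simple generators. Closedness needs, for example, $\{\tE_\a^{\ell_\a},\tF_\b^{\ell_\b}\}\in Z_{Fr}$ for non-simple $\a,\b$. Step 2 presupposes $D_{\tE_i^{\ell_i}}(Z_{Fr})\subset Z_{Fr}$, since otherwise the exponentials need not preserve $Z_{Fr}$. That is, it presupposes $D_{\tE_i^{\ell_i}}(\tE_\b^{\ell_\b})\in Z_{Fr}$, which is part of what is to be proved, so the argument is circular.

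You need an independent input here. One option is explicit commutation formulas between $\tE_i^{\ell_i}$ and the root vectors. In this paper's framework, the other option is the stability of $Z_{Fr}$ under $\ad'(\cU_\e(\g))$ stated in \S\ref{ssec: Frobenius center}. For $\varepsilon(x)=0$ and a lift $\hat z$ of a central $z$, one has $\ad'(x)(\hat z)=\sum x_{(1)}[\hat z,S(x_{(2)})]$. For $x=\tE_i^{(\ell_i)}$, only the two extreme terms of the coproduct survive at $v=\e$. This ties $\{\tE_i^{\ell_i},-\}$ on $Z$ to $\ad'(\tE_i^{(\ell_i)})$, modulo brackets with $K$-elements. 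Once Poisson generation by the simple elements is also proved, closedness of all of $Z_{Fr}$ follows from the Jacobi identity.

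\textbf{The same omission affects Step 3.} The Hamiltonian fields of the $2r$ functions $\tE_i^{\ell_i},\tF_i^{\ell_i}$ cannot span $T_p\mathcal{O}$, whose dimension can be as large as $2N$. The tangent space to the leaf is spanned by the fields of all algebra generators. So you need the brackets involving non-simple root coordinates, or the $\cU_\BC(\g^d)$-linearity of $\varphi$ from Proposition \ref{prop: F-center vs openBruhat}, to identify the Hamiltonian distribution with the tangent spaces to conjugacy classes. Connectedness, by contrast, needs no appeal to DCKP: $\mathcal{O}\cap G^d_0$ is a nonempty open subset of the irreducible variety $\mathcal{O}$, hence irreducible.
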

The last part follows by general results about Poisson orders in \cite{BG03}. Let $Z_\cap:= Z_{Fr}\cap Z_{HC}$.
\begin{Lem}Under the isomorphism $\gamma_{-\rho}\circ \pi: Z_{HC}\iso \BC\< K^{2\lambda}\>_{\lambda \in P}^W$, the algebra $Z_\cap$ is identified with $\BC\< K^{2\lambda}\>_{\lambda\in P^*}^W$.
\end{Lem}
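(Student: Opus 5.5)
We need to show that $\gamma_{-\rho}\circ\pi$ maps $Z_\cap = Z_{Fr}\cap Z_{HC}$ onto $\BC\<K^{2\lambda}\>^W_{\lambda\in P^*}$. The plan is to prove the two inclusions separately: first that the image of $Z_\cap$ lies in the $W$-invariants of the sublattice $2P^*$, and then that every such invariant is attained.

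\emph{Inclusion $\subseteq$.} Take $z\in Z_\cap$. Since $z\in Z_{Fr}$, the triangular description $Z_{Fr}\cong \BC[\tE_\a^{\ell_\a}]\otimes \bigoplus_{\lambda\in 2P^*}\BC K^\lambda\otimes \BC[\tF_\a^{\ell_\a}]$ lets us write $z$ in the PBW basis of Lemma \ref{lem: PBW-basis for even part}. Then $\pi(z)$ is the component with no $\tE$ and no $\tF$ factors, and this lies in $\bigoplus_{\lambda\in 2P^*}\BC K^\lambda$. The normalization $\gamma_{-\rho}$ only rescales each $K^{2\lambda}$ by a scalar, so $\gamma_{-\rho}\pi(z)\in \BC\<K^{2\lambda}\>_{\lambda\in P^*}$. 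By Remark \ref{rem: renormalize HC map} this element is also $W$-invariant, so it lies in $\BC\<K^{2\lambda}\>^W_{\lambda\in P^*}$. One point to check here is the ordering convention: the PBW basis of $Z_{Fr}$ is in $\tF\,K\,\tE$ order, which matches the order used to define $\pi$. Alternatively, one can argue that $\tE_\a^{\ell_\a}$ has strictly positive weight, so any term containing it is killed by $\pi_+$.

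\emph{Inclusion $\supseteq$ (surjectivity).} This is the main obstacle. Given $f\in \BC\<K^{2\lambda}\>^W_{\lambda\in P^*}$, we have $f$ in the target of the isomorphism of Remark \ref{rem: renormalize HC map}, so $f=\gamma_{-\rho}\pi(z)$ for a unique $z\in Z_{HC}$. It remains to show $z\in Z_{Fr}$.

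I would argue via the Poisson/adjoint structure. By Proposition \ref{prop: F-center vs openBruhat}, $Z_{Fr}\cong \BC[G^d_0]$ $\cU_\BC(\g^d)$-equivariantly. The $\cU_\e(\g)$-invariants in $Z_{Fr}$ are therefore the adjoint invariants of $\BC[G^d_0]$, which include the restrictions of class functions $\BC[G^d]^{G^d}\cong \BC[T^d]^W$. I expect the restriction of a character $\chi_\mu$ to $G^d_0$ to correspond to an element $z_\mu\in Z_{Fr}\cap Z_{HC}$ with $\gamma_{-\rho}\pi(z_\mu)=\sum_{w}K^{2 w\mu'}$ (orbit sums and characters generate the same ring), where $\mu'\in P^*$ corresponds to $\mu$ under $P^*\leftrightarrow$ the weight lattice of $G^d$. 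Here I would use that $\varphi$ identifies $\tZ^0_{Fr}$ with $\BC[T^d]$ via $K^{2\lambda}\mapsto$ the character indexed by $\lambda\in P^*$, and that the projection $\pi$ restricted to $Z_{Fr}$ corresponds to restricting functions from $G^d_0=U^d_-T^dU^d_+$ to $T^d$. Class functions restrict to $W$-invariants on $T^d$ by the Chevalley restriction theorem, with no $\rho$-shift arising on the Frobenius level since the shift $q^{(w^{-1}\rho-\rho,\mu)}$ is trivial for $\mu\in 2P^*$ (as $\e^{\ell(\cdot)}=1$ on the relevant pairings by Remark \ref{rem: Q*}). The ring $\BC[T^d]^W$ is spanned by such restrictions, so the composite $Z_\cap\to \BC\<K^{2\lambda}\>^W_{\lambda\in P^*}$ is surjective. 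Injectivity is inherited from $\pi$ on $Z_{HC}$.

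The delicate part is the compatibility of $\varphi$ with $\pi$, including the twisting by $\gamma,\kappa$ in $\tZ^{\gtrless}_{Fr}$ and the scalar factors. I would verify this on generators using the explicit triangular form of $\varphi$ from \cite[\S6]{LTV}.
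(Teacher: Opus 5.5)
The paper gives no proof of this lemma; it is one of the facts imported from \cite[$\mathsection 10$]{LTV}. So there is no argument in the text to compare against, and your proposal has to stand on its own. It does.

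The inclusion $\gamma_{-\rho}\pi(Z_\cap)\subseteq\BC\<K^{2\lambda}\>^W_{\lambda\in P^*}$ holds as you argue. The ordering worry disappears because the elements $\tE_\a^{\ell_\a},\tF_\a^{\ell_\a}$ are central.

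The reverse inclusion via class functions is also sound:
\begin{itemize}
\item $Z_\cap=Z_{Fr}^{\cU_\e(\g)}$, because $Z_{HC}$ is by definition the set of $\cU_\e(\g)$-invariants.
\item For an action factoring through the Hopf surjection $\tFr$, $\cU_\e(\g)$-invariance is the same as $\g^d$-invariance.
\item Restrictions to $G^d_0$ of elements of $\BC[G^d]^{G^d}$ are $\g^d$-invariant.
\end{itemize}
One small correction: $\gamma_{-\rho}\pi(z_\mu)$ is the transported character, not the orbit sum $\sum_w K^{2w\mu'}$. As you note, this is immaterial.

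The step you flag as delicate needs no explicit formulas for $\varphi$.
\begin{itemize}
\item \emph{Compatibility of $\pi$ with restriction to $T^d$.} The map $\varphi$ is $\h^d$-equivariant, and every nonconstant monomial in the generators of $\tZ^>_{Fr}$ or $\tZ^<_{Fr}$ has nonzero weight. Hence its image is a function on $U^d_\mp$ of nonzero $T^d$-weight, so it vanishes at the $T^d$-fixed point $1$. Write $z=\sum_j a_jt_jb_j$ with $a_j\in\tZ^<_{Fr}$, $t_j\in\tZ^0_{Fr}$, $b_j\in\tZ^>_{Fr}$ monomials. Then both $\pi(z)$ and $\varphi(z)|_{T^d}$ only see the terms with $a_j=b_j=1$.
\item \emph{No $W$-equivariance of $\varphi$ on the Cartan part is needed.} Set $A:=(\varphi|_{\tZ^0_{Fr}})^{-1}(\BC[T^d]^W)$ and $B:=(\tZ^0_{Fr})^W$. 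Then $A\subseteq\pi(Z_\cap)\subseteq B$. The second inclusion is your first step, combined with the fact that $W_\bullet$ and $W$ agree on $\tZ^0_{Fr}$. That fact holds because $\ell\mid(\beta,2\mu)$ for $\beta\in Q$ and $\mu\in P^*$, which follows from $\ell\mid 2\sd_i\ell_i$. The Laurent ring $\tZ^0_{Fr}$ is integral over $A$ and has generic degree $|W|$ over both $A$ and $B$. Since $A\cong\BC[T^d]^W$ is normal, this forces $A=B$. The same divisibility shows that $\gamma_{-\rho}$ preserves $(\tZ^0_{Fr})^W$.
\end{itemize}

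For comparison, a route closer to the paper's own toolkit would use the invariants $c_V$ for $V=\tFr^*(L)$ with $L\in\Rep(G^d)$. Their Harish-Chandra images are computed in Step 2.1 of the proof of Theorem \ref{thm: smalles two-sided cell vs proj}. After applying $\gamma_{-\rho}$ they give the characters of $L$, transported into $\BC\<K^{2\lambda}\>_{\lambda\in P^*}$. That route avoids Proposition \ref{prop: F-center vs openBruhat} and Chevalley restriction. Its cost is showing $c_V\in Z_{Fr}$, which needs the pairing description of $Z_{Fr}$ from \cite[$\mathsection 6$]{LTV}.
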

So the inclusion $Z_\cap \hookrightarrow Z_{HC}$ gives rise to the finite morphism $\bullet^l: T/W \rightarrow T^d/W$. On the other hand, the inclusion $Z_\cap \hookrightarrow Z_{Fr}$ corresponds to the composition $G^d_0 \hookrightarrow G^d \rightarrow G^d\sslash G^d \cong T^d/W$, the categorical quotient. So we can form the fiber product $G^d_0 \x_{T^d/W} T/W$. 

\begin{Prop}[Theorem 10.30 \cite{LTV}]\label{prop: description of Z} $Z\cong Z_{Fr}\otimes_{Z_\cap}Z_{HC}$ so that $\Spec Z\cong G^d_0 \x_{T^d/W} T/W$. 
\end{Prop}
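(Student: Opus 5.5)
The plan is to prove that the natural multiplication map $m\colon Z_{Fr}\otimes_{Z_\cap} Z_{HC}\to Z$ is an isomorphism. The description $\Spec Z\cong G^d_0\times_{T^d/W} T/W$ then follows formally from Proposition \ref{prop: F-center vs openBruhat}, from $\Spec Z_{HC}\cong T/W$ (Remark \ref{rem: renormalize HC map}), and from the identification of $Z_\cap$ with $\BC\<K^{2\lambda}\>^W_{\lambda\in P^*}$.

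The map $m$ is well defined and lands in $Z$, because $Z_{Fr}$ and $Z_{HC}$ are both central. The argument has three steps.

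\textbf{Step 1: the source is normal and finite over $Z_{Fr}$.}
\begin{itemize}
\item The extension $\BC[T]^W\supset \BC[T^d]^W$ is finite, since $T\to T^d$, $K^{2\lambda}\mapsto K^{2\ell\lambda}$, is finite.
\item Hence $A:=Z_{Fr}\otimes_{Z_\cap}Z_{HC}$ is a finite $Z_{Fr}$-algebra.
\item Since $\BC[T]^W$ and $\BC[T^d]^W$ are polynomial rings and the extension is finite, $\BC[T]^W$ is free over $\BC[T^d]^W$. Therefore $A$ is a free $Z_{Fr}$-module of rank $|T\to T^d| = \ell^r$ (the degree of the isogeny).
\item The map $G^d_0\to T^d/W$ is flat with normal fibers, by Steinberg's results on the adjoint quotient restricted to an open subset. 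So $\Spec A$ is Cohen--Macaulay.
\item Over the regular semisimple locus, $T/W\to T^d/W$ is étale, so $A$ is generically reduced. Combined with Serre's criterion this should make $A$ a normal domain.
\end{itemize}

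\textbf{Step 2: $m$ is injective.} Since $A$ is a domain, it suffices to show $m$ is injective generically over $\Spec Z_{Fr}$. I would check this at a generic point by evaluating on Verma-type modules. Over a regular semisimple point of $G^d_0$, the fiber $U^{ev}_\e/\m U^{ev}_\e$ is a matrix algebra with $\ell^r$ distinct central characters. These characters are separated by $Z_{HC}$, using Lemma \ref{lem: closed points on HC center} and the Harish-Chandra map of Proposition \ref{prop: HC center}. Hence the image of $A$ has generic rank $\ell^r$ over $Z_{Fr}$, matching Step 1, and $m$ is injective.

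\textbf{Step 3: $m$ is surjective.} This is the main obstacle. Surjectivity needs information about $Z$ itself.
\begin{itemize}
\item I would use that $U^{ev}_\e(\g)$ is a finite module over $Z_{Fr}$ (via the PBW basis, Lemma \ref{lem: PBW-basis for even part}) and is a prime algebra. So $Z$ is a domain that is finite over $Z_{Fr}$.
\item Its rank is $[\mathrm{Frac}\,Z:\mathrm{Frac}\,Z_{Fr}]$. From the degree of $U^{ev}_\e$ (PI degree $\ell^N$) and its rank $\ell^{\dim\g}$ over $Z_{Fr}$, this rank equals $\ell^{\dim \g-2N}=\ell^r$.
\item So $m$ is a birational finite injection. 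Since $A$ is normal by Step 1, it is integrally closed, and $Z$ is integral over $Z_{Fr}\subset A$ inside the common fraction field. Therefore $Z=A$.
\end{itemize}

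The hardest input is the PI-degree computation of $U^{ev}_\e$, which I would take from De Concini--Kac--Procesi type arguments adapted to the even form. The normality of $A$ in Step 1 is the other delicate point. For it I would use the Poisson structure of Proposition \ref{prop: Frobenius center}: fibers over the same symplectic leaf are isomorphic, which reduces the smoothness checks needed for normality to representatives of conjugacy classes.
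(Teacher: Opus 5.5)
Your overall strategy is the standard De Concini--Kac argument for this kind of statement; the paper itself only quotes the result from \cite{LTV}. The strategy is: show that $A=Z_{Fr}\otimes_{Z_\cap}Z_{HC}$ is a normal domain, free of rank $\ell^r$ over $Z_{Fr}$; show that it embeds in $Z$; get $[\mathrm{Frac}\,Z:\mathrm{Frac}\,Z_{Fr}]=\ell^r$ from the PI degree; conclude $A=Z$ by integral closedness. Steps 2 and 3 are sound, and both can be simplified.
\begin{itemize}
\item Step 2 needs no representation theory. In a domain integral over $Z_{Fr}$, every nonzero ideal meets $Z_{Fr}$: take the constant term of a minimal integral equation. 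Since $\ker m\cap Z_{Fr}=0$, $m$ is injective. This also avoids your description of the fiber over a regular semisimple point as ``a matrix algebra with $\ell^r$ central characters''. That fiber is a product of $\ell^r$ matrix algebras, and asserting this already presupposes knowledge of $Z$.
\item In Step 3 you only need $\deg U^{ev}_\e\ge \ell^N$, i.e.\ one irreducible module of dimension $\ell^N$, such as a baby Verma module at a generic point. The reverse inequality on $[\mathrm{Frac}\,Z:\mathrm{Frac}\,Z_{Fr}]$ already comes from $A\hookrightarrow Z$.
\end{itemize}

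The genuine gap is the normality, and integrality, of $A$ in Step 1. Cohen--Macaulayness plus generic reducedness gives only $S_2$ and $R_0$, hence only reducedness. It gives neither $R_1$ nor irreducibility; a nodal curve is a counterexample. Smoothness over the regular semisimple locus says nothing in codimension one, because its complement is the preimage of the discriminant divisor.

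What is missing is Steinberg's theorem on the adjoint quotient $G^d\to T^d/W$ (cf.\ Proposition~\ref{prop: properties of conjugacy classes}): it is flat and smooth exactly at regular elements, the non-regular locus has codimension $3$, and the fibers are irreducible and normal.
\begin{itemize}
\item By base change, $\Spec A\to T/W$ is smooth at all points over $G^{d,reg}_0$.
\item Since $\Spec A\to G^d_0$ is finite flat, the singular locus of $\Spec A$ has codimension $\ge 3$. Together with $S_2$, Serre's criterion gives normality.
\item Alternatively, use the normal fibers you already quote: a flat morphism with normal fibers over the smooth base $T/W$ has normal source.
\item Irreducibility follows because $\Spec A\to T/W$ is flat over an irreducible base with irreducible nonempty fibers. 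These fibers are open pieces of Steinberg's fibers, and they are nonempty because they meet $T^d\subset G^d_0$.
\end{itemize}
Your proposed use of the Poisson structure, and of the isomorphism of fibers of $U^{ev}_\e$ along symplectic leaves, is beside the point. Normality of $A$ is a statement about a commutative fiber product, and the only relevant symmetry is the conjugation-equivariance of the adjoint quotient.

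Finally, outside the case $\ell_i=\ell$, the numbers $\ell^r$ and $\ell^N$ should read $\prod_i\ell_i=|P/P^*|$ and $\prod_{\alpha}\ell_\alpha$. The degree count goes through unchanged.
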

 Let $G^{d, reg}$ be the set of regular elements in $G^d$. Let us consider the projection
 \[ \sA_1: \Spec Z \cong G^d_0 \x_{T^d/W} T/W \rightarrow G^d_0 \hookrightarrow G^d.\]
 \begin{Thm}[Theorem 10.40 \cite{LTV}]\label{thm: Azumaya locus} The Azumaya locus of $U^{ev}_\e(\g)$ over $Z$ contains $\sA_1^{-1}(G^{d, reg})$, i.e., the restriction of $U^{ev}_\e(\g)$ to the open subset $\sA_1^{-1}(G^{d, reg})$ is a sheaf of Azumaya algebras. The dimension of irreducible modules of $U_\e^{ev}(\g)$ over its Azumaya locus is $\mathsf{d}=\prod_{\a\in \Delta_+} \ell_\a$.

 \end{Thm}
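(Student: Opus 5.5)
This theorem is quoted from \cite[Theorem 10.40]{LTV}. If we were to reprove it, the plan would be as follows.

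\textbf{Step 1: reduce to fibers over closed points.} Use the description $\Spec Z\cong G^d_0\x_{T^d/W}T/W$ from Proposition \ref{prop: description of Z}. By Proposition \ref{prop: Frobenius center}(b), the symplectic leaves of $G^d_0$ are the intersections of conjugacy classes with the open Bruhat cell. By Proposition \ref{prop: Frobenius center}(c), fibers of $U^{ev}_\e(\g)$ over points of the same leaf are isomorphic. The algebra $U^{ev}_\e(\g)$ is a finite module over $Z_{Fr}$ of rank $\ell^{r}\prod_{\a}\ell_\a^2$ (up to the lattice index of $2P$ over $2P^*$). This follows from the PBW basis of Lemma \ref{lem: PBW-basis for even part} and Proposition \ref{prop: Frobenius center}(b). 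Over the generic point, the rank over $Z$ should be $\mathsf{d}^2$: the degree of $Z_{HC}$ over $Z_\cap$ is $|2P/2P^*|$, which absorbs the torus factor. The Azumaya property at a closed point of $Z$ is then equivalent to every irreducible module over that point having the maximal dimension $\mathsf{d}$.

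\textbf{Step 2: bound module dimensions from below on regular leaves.} Every regular conjugacy class of $G^d$ meets the open Bruhat cell, and in particular contains regular unipotent-times-central type representatives. Since the fiber algebra is constant along a leaf, it suffices to check one point per regular class. Pick a representative $g$ whose $U^d_+$-coordinate is ``principal nilpotent''. This means all $\tE_i^{\ell_i}$ (equivalently, all $\tE_\a^{\ell_\a}K^{\ell_\a\gamma(\a)}$ for simple $\a$) act by nonzero scalars.

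For such a representative, the argument is à la De Concini–Kac–Procesi. The subalgebra generated by the $\tE_\b$ acts on any module $M$ in the fiber through the quotient $U^{ev>}/(\tE_\b^{\ell_\b}-c_\b)$, which has dimension $\prod_\b\ell_\b$. One shows this quotient acts freely on $M$. The approach is to filter $U^{ev>}$ by a degree (via the PBW basis) so that the associated graded is a $q$-commutative skew polynomial algebra with the $\tE_\b^{\ell_\b}$ central. For regular nilpotent data, the specialization is then a simple algebra (a quantum torus localization becomes a matrix algebra). Hence $\dim M\ge \prod_\b \ell_\b=\mathsf{d}$.

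\textbf{Step 3: conclude.} The PI-degree of $U^{ev}_\e(\g)$ is at most $\sqrt{\mathsf{d}^2}=\mathsf{d}$, so every irreducible module has dimension at most $\mathsf{d}$. Combined with Step 2, every irreducible module over a point of $\sA_1^{-1}(G^{d,reg})$ has dimension exactly $\mathsf{d}$. By the Artin–Procesi theorem, that point lies in the Azumaya locus. Openness of the Azumaya locus then gives the sheaf statement.

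\textbf{Main obstacle.} The hard step is Step 2. Two points need care:
\begin{itemize}
\item passing from a general regular element to one with nondegenerate $E$-part by moving inside the leaf, which requires showing the leaf meets that sublocus;
\item proving freeness over the twisted quantum torus when the $K^{\lambda}$-twists from $\sF$ are present.
\end{itemize}
I would first try the case of a regular unipotent class, where the torus part is trivial, and reduce the general case via Jordan decomposition and the corresponding Levi subalgebra.
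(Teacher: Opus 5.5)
Your outer frame is sound. Note that the paper gives no proof of this statement; it imports it from \cite{LTV}. The pieces that hold are these:
\begin{itemize}
\item $U^{ev}_\e(\g)$ is a domain, and the PBW count together with $Z\cong Z_{Fr}\otimes_{Z_\cap}Z_{HC}$ gives PI degree $\mathsf{d}$.
\item A point of $\Spec Z$ lies in the Azumaya locus iff it carries a $\mathsf{d}$-dimensional simple module.
\item This property is constant along leaves, because $Z_{HC}$ is Poisson central.
\item Finding a representative with nondegenerate $\tE$-part in each regular class is easy: conjugate into the Borel and adjust the simple-root coordinates by root subgroups.
\end{itemize}
Your aside that every regular class contains a ``central times regular unipotent'' element is false (regular semisimple classes do not), but you never actually use it.

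\textbf{The gap is Step 2.} Step 2 is the whole content of the theorem, and the mechanism you propose there does not work, for two reasons.
\begin{itemize}
\item $\bar U^{>}:=U^{ev>}/(\tE_\beta^{\ell_\beta}-c_\beta)$ is not a simple algebra. Already for $\mathfrak{sl}_2$ it is $\BC[\tE]/(\tE^{\ell}-c)\cong\BC^{\ell}$. More generally, a simple algebra must have square dimension, and $\mathsf{d}$ typically is not a square (e.g.\ $\ell^3$ for $\mathfrak{sl}_3$).
\item Even granting simplicity, $\bar U^{>}\cong\Mat_k(\BC)$ with $k^2=\mathsf{d}$ would only give $\dim M\ge k=\sqrt{\mathsf{d}}$, and it would not make $M$ free. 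No intrinsic property of $\bar U^{>}$ can force it to act freely on a class of modules, since its own simple modules have dimension $<\dim\bar U^{>}$.
\end{itemize}

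\textbf{The lower bound has to come from the rest of the algebra.} Freeness, equivalently the bound $\dim M\ge\mathsf{d}$, cannot be read off from $\bar U^{>}$.
\begin{itemize}
\item For $\mathfrak{sl}_2$ the torus does the job: the $K^{\lambda}$, $\lambda\in 2P$, permute the $\tE$-eigenspaces cyclically.
\item In higher rank even the whole Borel part is insufficient. For $\mathfrak{sl}_3$, $U^{ev\geq}_\e$ is free of rank $\ell^{5}$ over the central subalgebra generated by the $\tE_\beta^{\ell}$ and the $K^{2\ell\lambda}$. So its PI degree is at most $\ell^{5/2}<\ell^{3}=\mathsf{d}$, and it has simple modules with the same $\tE_\beta^{\ell}$-values and dimension $<\mathsf{d}$.
\end{itemize}
Hence any valid argument must use the $\tF$'s. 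This is exactly the De Concini--Kac--Procesi-type input that your proposal does not supply: irreducibility of baby Verma modules, or divisibility of dimensions, at regular points, possibly after the Levi reduction you mention. Your second ``obstacle'' (the $K^\lambda$-twists from $\sF$) is not the real difficulty; the quantum-torus structure of the $\tE$-part simply cannot yield freeness.

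\textbf{A workable skeleton for the remaining step.}
\begin{itemize}
\item The non-Azumaya locus is closed and a union of leaves.
\item Each fiber of $\Spec Z\to\Spec\CW_\e$ is the intersection of $G^d_0$ with a fiber of $G^d\to T^d/W$. Such a fiber is irreducible, with the regular leaf open and dense in it.
\item So it suffices to exhibit, in each such fiber, a single point carrying a $\mathsf{d}$-dimensional simple module.
\end{itemize}
That last step still requires a genuine computation involving the $\tF$'s.
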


\begin{Rem}The Poisson structure on $Z_{HC}$ is trivial and $Z_\cap$ is the Poisson center of $Z_{Fr}$.
\end{Rem}

\subsection{The locally finite parts $U^{fin}_q, U^{fin}_\e$}Let consider the following subalgebra of $U_q^{ev}(\g)$:
\begin{equation}
    U^{fin}_q:= \{ u \in U^{ev}_q(\g)|\ad'(\cU_q(\g))(u)~\text{is a finitely generated $R$-module}\}
\end{equation}
Let $O_q[G] \subset \Hom_R(\cU_q(\g), R)$ consist of all matrix coefficients of representations in the category $\Rep^{fd}(\cU_q(\g))$. We can equip $O_q[G]$ with  an algebra structure called the {\it reflection equation algebra}, see \cite[$\mathsection 8$]{LTV},  so that :
\begin{Prop}\label{prop: Ufin and other algberas}(a) The algebra $U^{ev}_q(\g)$ is obtained from $U_q^{fin}$ by localizing $\{K^{-\w_1}, \dots K^{-\w_r}\}$.

\noindent
(b) There is an $R$-linear isomorphism of $\cU_q(\g)$-module algebras $O_q[G]\iso U^{fin}_q$.
\end{Prop}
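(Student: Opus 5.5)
The statement has two parts. Part (a) says that $U^{ev}_q(\g)$ is the localization of $U^{fin}_q$ at $K^{-\w_1},\dots,K^{-\w_r}$. Part (b) gives a $\cU_q(\g)$-equivariant algebra isomorphism $O_q[G]\iso U^{fin}_q$. I would follow the Joseph--Letzter strategy, adapted to the twisted setting of \cite{LTV}, and prove (b) first.

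\textbf{Step 1: the map.} Every $V\in\Rep^{fd}(\cU_q(\g))$ that is free over $R$ has an "$L$-operator" $\Psi_V\colon V^*\otimes V\to U^{ev}_q(\g)$, built from the universal $R$-matrix of the twisted Hopf structure as $\Psi_V(f\otimes v)=(f\otimes\Id)(R_{21}R)(v)$. The twisted $R$-matrix is $R^{(\sF)}=\sF^{-1}R\sF^{-1}$, so this has to be done with it.

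\textbf{Step 2: the map lands in $U^{ev}_q$ and is equivariant.} Using the factorization $R=\Theta\cdot q^{\sum \w_i^\vee\otimes\a_i^\vee\text{-type Cartan part}}$, the composite $R_{21}R$ has Cartan part $K^{2\lambda}$, with $\lambda$ running over weights of $V$. It has nilpotent parts that are, after the twist, monomials in $\tE_\b,\tF_\b$ with coefficients in $\CA$. This is the reason for using the lattice $2P$ and the twisted generators. Equivariance of $\Psi_V$ for $\ad'_l$ follows from quasitriangularity: $R_{21}R$ commutes with $\Delta(u)$. Gluing over all $V$ gives an $R$-linear, $\cU_q(\g)$-linear map $\Psi\colon O_q[G]\to U^{fin}_q$, where $O_q[G]=\varinjlim V^*\otimes V$ modulo the usual relations. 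By the definition of the reflection-equation product on $O_q[G]$ from \cite[$\mathsection 8$]{LTV}, $\Psi$ is multiplicative.

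\textbf{Step 3: injectivity.} I would pass to the associated graded for the filtration by weights: the leading term of $\Psi(f\otimes v)$ for extremal weight vectors is $K^{-2\lambda}$ times PBW monomials. Lemma \ref{lem: PBW-basis for even part} then gives linear independence. Alternatively, injectivity can be obtained over the generic fiber $\BC(v)$ by the classical Joseph--Letzter argument and then descended, since both sides are free over $\CA$.

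\textbf{Step 4: surjectivity onto $U^{fin}_q$ (main obstacle).} First, the image contains $K^{-2\w_i}$ up to lower terms: take $V=W_q(\w_i)$ and the pairing of highest and lowest weight vectors. Second, every element of $U^{ev}_q$ becomes ad-locally finite after multiplying by a high power of $\prod_i K^{-2\w_i}$, which reflects the description $U^{ev}=U^{fin}[K^{2\w_i}]$. Third, an element $u\in U^{fin}_q$ generates a finite $\cU_q(\g)$-module, and the isotypic decomposition of $\ad$-locally finite elements matches that of $O_q[G]$. Over a general ring $R$ I would argue via specialization: work over $\CA$, prove the equality after base change to $\BC(v)$, and then show that $U^{fin}_\CA$ is saturated with $\Psi(O_\CA[G])$ by a weight/rank count. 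This integrality is the step I expect to be hardest. I would handle it using the pairing from \cite[$\mathsection 6$]{LTV} and the Lusztig-form duality: $O_\CA[G]$ is the restricted dual of $\dU_\CA$, which forces saturation.

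\textbf{Step 5: part (a).} The elements $K^{-\w_i}$ are ad-semi-invariant, since $\ad(\tE_j)K^{-\w_i}$ is a multiple of $\tE_jK^{-\w_i}$. They therefore form an Ore set, and the second observation in Step 4 shows that each $u\in U^{ev}_q$ satisfies $u\,K^{-N\rho}\in U^{fin}_q$ for large $N$. Hence $U^{ev}_q=U^{fin}_q[K^{\w_i}]$, which is the localization described in (a).
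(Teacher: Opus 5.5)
Steps 1--2 (the construction of $\Psi$) and the generic-$q$ part of your argument are reasonable. The gap is in the step that carries the content of (b) at the specializations the paper actually uses, above all $R=\BC$, $q=\e$. Taking the $\cU$-locally finite part does not commute with base change. $U^{fin}_q$ is the locally finite part of $U^{ev}_q=U^{ev}_\CA\otimes_\CA R$, and this is not a priori $U^{fin}_\CA\otimes_\CA R$.

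Grant everything you aim for over $\CA$: $\Psi\colon O_\CA[G]\iso U^{fin}_\CA$ with $U^{ev}_\CA/U^{fin}_\CA$ torsion free. Specializing $v\mapsto\e$ then only shows that the specialization of $O_\CA[G]$ maps isomorphically onto $U^{fin}_\CA\otimes_\CA\BC$, which is a subspace of $U^{fin}_\e$. For equality you must show that $(U^{ev}_\CA/U^{fin}_\CA)\otimes_\CA\BC$ contains no nonzero $\cU_\e(\g)$-locally finite vectors. Neither of your tools does this:
\begin{itemize}
\item A rank count over $\CA$ cannot detect locally finite elements that only appear after the non-flat base change $v\mapsto\e$.
\item The ``isotypic decomposition'' comparison is a semisimplicity argument. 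It is unavailable at a root of unity, where $\Rep(\cU_\e(\g))$ is not semisimple and $O_\e[G]$ only has a good filtration.
\end{itemize}
Injectivity at $q=\e$ in Step 3 likewise depends on the cokernel being torsion free, since injectivity is not preserved by non-flat base change.

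The missing ingredient is an upper bound for $U^{fin}_q$ obtained directly over $R$. This is what the adjoint-invariant pairing $U^{ev}_q(\g)\times\cU_q(\g)\to R$ provides, and the paper's cited construction of the isomorphism is based on it. The pairing is defined over $R$ itself and is nondegenerate in the $U^{ev}_q$-argument (compare the pairing with $\dU_\e(\g,P)$ recalled in the discussion of $Z_{Fr}$). So it embeds $U^{ev}_q$ equivariantly into the $R$-dual of $\cU_q(\g)$. One then shows that the functionals attached to $\ad$-locally finite elements are matrix coefficients of rational representations. This gives an injection $U^{fin}_q\hookrightarrow O_q[G]$. If you check that it is inverse to your $\Psi$, both maps are isomorphisms at every $R$ at once, with no passage through $\CA$. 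You invoke the pairing only to obtain saturation over $\CA$, which, as explained above, is not enough.

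Two minor points on (a):
\begin{itemize}
\item Since $\w_i\notin 2P$, the elements $K^{-\w_i}$ do not lie in $U^{ev}_q$. The localization must be at $K^{-2\w_i}$, so take $N$ even in Step 5.
\item The Ore property does not come from ad-semi-invariance, which fails: $\ad'(\tE_j)(K^{\mu})=(1-q^{(\mu,\a_j)})\tE_jK^{\mu+\zeta^>_j}$. It comes from normality of each $K^\mu$, together with the fact that $U^{fin}_q$ is stable under $\ad'(K^\mu)$.
\end{itemize}
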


\begin{Rem}The isomorphism  is constructed based on the adjoint invariant pairing $U^{ev}_q(\g) \x \cU_q(\g) \rightarrow R$, see \cite[$\mathsection 9$]{LTV}. The structure of the $\cU_q(\g)$-module $U^{fin}_q$ can be understood through the $ \cU_q(\g)$-module $O_q[G]$, which can be studied via properties of the category $\Rep(\cU_q(\g))$.
\end{Rem}
\begin{Lem}[Proposition 7.32 \cite{LTV}] $U_q^{fin} \cong O_q[G]$ has a good filtration.
\end{Lem}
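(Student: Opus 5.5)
The plan is to transport the question to $O_q[G]$. By Proposition \ref{prop: Ufin and other algberas}(b), $U_q^{fin}\cong O_q[G]$ as $\cU_q(\g)$-modules, so it suffices to produce a good filtration on $O_q[G]$ with its adjoint (conjugation) action. I would not use the conjugation action directly. Instead I would first filter $O_q[G]$ as a $\cU_q(\g)\otimes\cU_q(\g)$-module, with the left and right regular actions, by a quantum Peter--Weyl (Donkin--Koppinen) filtration. Then I would restrict along the coproduct, under which the adjoint action on matrix coefficients becomes the diagonal action.

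\emph{Step 1: the Peter--Weyl filtration.} Choose an enumeration $\lambda_1,\lambda_2,\dots$ of $P_+$ refining the dominance order, so that every saturated (downward closed) subset is an initial segment. For each initial segment $\pi_n=\{\lambda_1,\dots,\lambda_n\}$, let $O_q[G]_{\pi_n}$ be the span of matrix coefficients of finite-rank modules all of whose weights $\nu$ satisfy $\nu_+\in\pi_n$. I expect
\[
O_q[G]_{\pi_n}/O_q[G]_{\pi_{n-1}}\cong H^0_q(\lambda_n)\otimes_R H^0_q(-w_0\lambda_n)
\]
as bimodules. This would be proved as in Donkin's argument. Identify $O_q[G]$ with the induced module $H^0_q$ applied to $O_q[B^-]$, or equivalently with $\mathrm{Ind}$ from the diagonal, using the right adjoint $H^0_q$ of the forgetful functor. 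Then use the universal property: $\Hom(W_q(\mu),O_q[G])\cong W_q(\mu)^*$. The $\Ext$-vanishing $\Ext^{>0}(W_q(\mu),H^0_q(\lambda))=0$ (the input behind \cite[Lemma 7.28]{LTV}) makes the layers exactly the expected tensor products.

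Over $R=\BC[[\hbar]]$, the extra issue is freeness of each layer. Weyl and dual Weyl modules are free of finite rank over $R$ and commute with base change to $\BC$. So one checks the statement modulo $\hbar$ and lifts it by Nakayama, as in the proof of Proposition \ref{prop: proj in Rep(Uq(g))}.

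\emph{Step 2: restriction to the diagonal.} Restricting each layer $H^0_q(\lambda)\otimes_R H^0_q(-w_0\lambda)$ to the adjoint action gives the tensor product of two dual Weyl modules, twisted by the antipode on one factor. By Lemma \ref{lem: dual Weyl and Weyl} and Remark \ref{rem: dual module}, the twisted factor is again a dual Weyl module (up to $S^2$ being conjugation by $K^\rho$). So each layer is, as an adjoint module, of the form $H^0_q(\lambda)\otimes_R H^0_q(\mu)$.

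\emph{Step 3: assembling the filtration.} I then need that $H^0_q(\lambda)\otimes_R H^0_q(\mu)$ has a good filtration. This is the quantum Wang--Donkin--Mathieu theorem (Paradowski's theorem at roots of unity). Via the criterion of \cite[Lemma 7.28]{LTV}, it reduces to $\Ext^{>0}(W_q(\nu),H^0_q(\lambda)\otimes H^0_q(\mu))=0$. Over $\BC[[\hbar]]$ this reduces, by Steps 1--3 of the proof of Proposition \ref{prop: proj in Rep(Uq(g))}, to the case $q=\e$. Finally, a module with an exhaustive filtration whose layers have good filtrations has a good filtration. Refine the filtration layer by layer and reorder using the $\Ext$-vanishing criterion again, which is preserved under extensions and directed unions of finite-rank pieces.

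The main obstacle I expect is Step 3, namely the good filtration on tensor products at a root of unity, together with the $\hbar$-adic freeness bookkeeping. I would cite Paradowski for $q=\e$ and lift to $\BC[[\hbar]]$ via the $\Ext$ base-change lemma \cite[Lemma 7.20]{LTV} and Nakayama, as in Proposition \ref{prop: proj in Rep(Uq(g))}.
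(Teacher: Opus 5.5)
The paper gives no argument of its own for this lemma. It is quoted from \cite[Proposition 7.32]{LTV}, so there is no in-paper proof to compare against. Your outline is the standard proof of this fact: a quantum Donkin--Koppinen filtration of $O_q[G]$ as a $\cU_q(\g)$-bimodule with layers $H^0_q(\lambda)\otimes_R H^0_q(-w_0\lambda)$, restriction along the coproduct to the coadjoint action, and Paradowski's quantum Wang--Donkin--Mathieu theorem at $q=\e$, carried to $R=\BC[[\hbar]]$ by \cite[Lemma 7.20]{LTV} and Nakayama. The strategy is sound; three points need fixing or tightening.

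\emph{Enumeration.} What you need from the enumeration of $P_+$ is that every initial segment be saturated. The converse, as you state it, is false: for $\mathfrak{sl}_3$ the sets $\{0\}$ and $\{\omega_1\}$ are both saturated, but they cannot both be initial segments.

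\emph{Final assembly.} The closing reordering is unnecessary. Definition~\ref{defi: good filtration} imposes no order condition on the $\lambda_i$. So refining the Peter--Weyl filtration by the good filtrations of its finite-rank layers (finite, since these layers are Noetherian over $R$) already gives a good filtration. In particular you never need the $\Ext$-criterion for the infinite-rank module $O_q[G]$.

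\emph{Step 1 over $\BC[[\hbar]]$.} This is where the real work sits. You need the Peter--Weyl pieces to be $R$-saturated in $O_q[G]$, with $R$-free layers that are compatible with reduction mod $\hbar$. ``Check mod $\hbar$ and lift by Nakayama'' presupposes that an $R$-linear comparison map to $H^0_q(\lambda)\otimes_R H^0_q(-w_0\lambda)$ has already been constructed. It is cleaner to build the filtration over $\CA$ and then base change. One way is to dualize the filtration of the idempotented form $\dot{\mathcal U}_\CA$ by two-sided ideals spanned by canonical basis elements (Lusztig's refined Peter--Weyl theorem), whose layers are $\CA$-free of the form Weyl$\otimes$Weyl.
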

Now we focus on  $U^{fin}_\e$ in the case \ref{case over C}. Set
\[ Z^{fin}_{Fr} := U^{fin}_\e \cap Z_{Fr}\]
\begin{Prop}\label{prop: properties of Ufin}
(a) Under the isomorphism $Z_{Fr} \iso\BC[G^d_0]$, we have $Z^{fin}_{Fr} \cong \BC[G^d]$.

\noindent
(b) $U^{fin}_\e$ is a  finitely generated projective $Z^{fin}_{Fr}$-module. Moreover, $U^{ev}_\e(\g) \cong U^{fin}_\e \otimes_{Z^{fin}_{Fr}} Z_{Fr}$.

\noindent
(c) The center $Z^{fin}$ of $U^{fin}_\e$ is $Z^{fin}_{Fr} \otimes_{Z_\cap} Z_{HC}$.
\end{Prop}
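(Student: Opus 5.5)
The plan is to prove the three parts of Proposition \ref{prop: properties of Ufin} in order, using the identification $U^{fin}_\e\cong O_\e[G]$ from Proposition \ref{prop: Ufin and other algberas} together with the description of $Z_{Fr}$ given by Proposition \ref{prop: F-center vs openBruhat}.

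\textbf{Part (a).} Under $\varphi$, an element of $Z_{Fr}$ lies in $U^{fin}_\e$ exactly when its $\cU_\e(\g)$-orbit is finite dimensional. By Proposition \ref{prop: Frobenius center}.(a) the action on $Z_{Fr}$ factors through $\tFr:\cU_\e(\g)\to\cU_\BC(\g^d)$, so the condition becomes $\cU_\BC(\g^d)$-local finiteness of the corresponding function in $\BC[G^d_0]$. The functions on $G^d_0$ that are locally finite for the conjugation action of $G^d$ are precisely the restrictions of $\BC[G^d]$. One inclusion is clear, since $\BC[G^d]$ is locally finite. For the converse, a locally finite function generates a finite-dimensional $G^d$-stable space $V$ of rational functions that are regular on $G^d_0$. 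Because $V$ is stable under conjugation, its elements are regular on every conjugate $gG^d_0g^{-1}$, and these conjugates cover $G^d$; hence they are regular on all of $G^d$. Thus $Z^{fin}_{Fr}\cong\BC[G^d]$.

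\textbf{Part (b).} First, using $Z_{Fr}\cong \BC[G^d_0]=\BC[G^d][f^{-1}]$, where $f$ is the principal-minor function cutting out the big cell, I will check that $f$ corresponds to a product of powers of the $K^{\ell\w_i}$ up to lower terms. This should make the localization in Proposition \ref{prop: Ufin and other algberas}.(a) match localization at $f$, giving $U^{ev}_\e(\g)\cong U^{fin}_\e\otimes_{Z^{fin}_{Fr}}Z_{Fr}$.

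For finite generation, I will use the good filtration of $O_\e[G]$, with subquotients $H^0(\lambda)\otimes H^0(-w_0\lambda)$, compared against the Frobenius pullback $\BC[G^d]$. Concretely, $O_\e[G]$ is a finitely generated module over $\tFr^*\BC[G^d]$, because the quantum coordinate algebra is finite over its Frobenius image (this is the analogue of De Concini–Lyubashenko).

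For projectivity, I will argue via flatness over the smooth variety $G^d$, using a fiber-dimension count. The fibers over the regular locus have dimension $\sd^2\cdot|W|$, by Theorem \ref{thm: Azumaya locus} and the description of $Z$ in Proposition \ref{prop: description of Z}. Constancy of fiber dimension over all points of $G^d$ is the \textbf{main obstacle}. I would try to obtain it from the good filtration: the filtration gives $\BC[G^d]$-module structures whose associated graded is the induced module $\operatorname{Ind}$ from the Frobenius kernel, which is projective, namely $\BC[G^d]\otimes(\text{functions on the Frobenius kernel})$ up to twist. Projectivity then lifts through the filtration. An alternative is Cohen–Macaulayness of $U^{fin}_\e$ together with miracle flatness over regular $\BC[G^d]$.

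\textbf{Part (c).} Since $U^{fin}_\e\subset U^{ev}_\e(\g)$ is a localization, an element central in $U^{fin}_\e$ is central in $U^{ev}_\e(\g)$, so $Z^{fin}=Z\cap U^{fin}_\e$. By Proposition \ref{prop: description of Z}, $Z=Z_{Fr}\otimes_{Z_\cap}Z_{HC}$. Since $Z_{HC}\subset U^{fin}_\e$, intersecting with $U^{fin}_\e$ and using part (b) (localization at $f$ commutes with tensoring, and $Z_{HC}$ is free over $Z_\cap$) yields $Z^{fin}=Z^{fin}_{Fr}\otimes_{Z_\cap}Z_{HC}$.
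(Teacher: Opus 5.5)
\textbf{Verdict: there is a genuine gap, in the projectivity claim of (b).} Your treatment of (a) and of the localization statement in (b) is fine in outline. In fact no ``lower terms'' should arise: $\varphi$ matches $\tZ^0_{Fr}$ with functions pulled back from the $T^d$-factor of $G^d_0\cong U^d_-\times T^d\times U^d_+$, so the principal minors cutting out $G^d_0$ correspond to monomials in the $K^{\pm 2\ell\w_i}$. Inverting $K^{-2\w_i}$ is the same as inverting the central element $K^{-2\ell\w_i}$. (The paper gives no proof of this proposition; it is recalled from \cite{LTV}.)

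You correctly reduce projectivity to constancy of the fibre dimension of $U^{fin}_\e$ over the reduced ring $\BC[G^d]$, but neither proposed route delivers this. The good filtration of $O_\e[G]$ has finite-dimensional subquotients $H^0_\e(\lambda)\otimes H^0_\e(-w_0\lambda)$. Its associated graded is therefore at best a module over the singular degeneration $\operatorname{gr}\BC[G^d]$, not over $\BC[G^d]$. Lifting projectivity through the filtration would need $\operatorname{gr}U^{fin}_\e$ to be projective over that singular ring, which you do not argue and should not expect. The picture ``$\BC[G^d]\otimes(\text{functions on the Frobenius kernel})$'' is the induced-module description of the FRT algebra $\BC_\e[G]$ under left or right translations, which act transitively on $G^d$. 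On $U^{fin}_\e$ you only have the adjoint action, whose orbits are conjugacy classes, and $Z^{fin}_{Fr}$ acts through the reflection-equation product. To import De Concini--Lyubashenko (or Brown--Gordon--Stafford) you would first have to show that the reflection-equation and FRT $\BC[G^d]$-module structures agree. This is not addressed, and the same issue undermines your finite-generation step. The miracle-flatness alternative presupposes Cohen--Macaulayness of $U^{fin}_\e$, which is no easier. Also, the fibre dimension over $G^d_0$ is $\ell^{\dim\g}=\sd^2\ell^r$, not $\sd^2|W|$: the PBW basis makes $U^{ev}_\e$ free of that rank over $Z_{Fr}$, and $\Spec Z\to\Spec Z_{Fr}$ has degree $\ell^r$.

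One way to close the gap, granting finite generation, reuses the covering fact from your proof of (a).
\begin{itemize}
\item Since $\fu_\e$ acts $Z^{fin}_{Fr}$-linearly on $U^{fin}_\e$, for a projective $P\in\Rep^{fd}(\cU_\e(\g))$ the $Z^{fin}_{Fr}$-module $N:=\Hom_{\fu_\e}(P,U^{fin}_\e)$ carries a compatible action of $\cU_\e(\g)$ factoring through $\tFr$.
\item Thus $N$ is a $G^d$-equivariant coherent sheaf on $G^d$ for conjugation, so its fibre dimension is constant on conjugacy classes.
\item It is also constant on $G^d_0$, because $N\otimes_{Z^{fin}_{Fr}}Z_{Fr}=\Hom_{\fu_\e}(P,U^{ev}_\e)$ is projective over $Z_{Fr}$ ($U^{ev}_\e$ is free over $Z_{Fr}$ and $P$ is projective over $\uu$).
\item Every conjugacy class meets $B^d\subset G^d_0$, so $N$ has constant fibre dimension everywhere and is projective.
\item Choosing $P$ so that $\uu$ is a direct summand of $P^{\oplus m}$ as a $\uu$-module exhibits $U^{fin}_\e$ as a $Z^{fin}_{Fr}$-direct summand of $N^{\oplus m}$.
\end{itemize}

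Finally, in (c), ``localization commutes with tensoring'' only gives $Z=A[f^{-1}]$ with $A:=Z^{fin}_{Fr}\otimes_{Z_\cap}Z_{HC}$; it does not give $U^{fin}_\e\cap A[f^{-1}]=A$. The clean argument is that a $Z_\cap$-basis $\{h_j\}$ of $Z_{HC}$ consists of $\cU_\e(\g)$-invariants. Hence $Z=\bigoplus_j Z_{Fr}h_j$ as $\cU_\e(\g)$-modules, and $\sum_j c_jh_j$ is locally finite if and only if every $c_j$ lies in $Z^{fin}_{Fr}$. This does not even need (b).
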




\section{Completion}


This technical section discusses completions of several algebras of interest. In this section, the specialization $U_q^{ev}(\g)$ refers to the case \ref{case over C[[h]]} while the specialization $U_\e^{ev}(\g)$ refers to the case \ref{case over C}; the same convention applies to the other algebras.  We will set $\CW_q:=Z_{q, HC}, \CW_\e:= Z_{\e, HC}$, these are the Harish-Chandra centers of $U^{ev}_q(\g)$, $U_\e^{ev}(\g)$, respectively. 

\subsection{Completion}\ \label{ssec: completion}

\begin{Lem}The algebra $U^{ev}_q(\g)$ is Noetherian. Let $\CW_q^{\wedge}$ be a completion of $\CW_q$ with respect to some ideal,  then $U^{ev}_q(\g) \otimes_{\CW_q} \CW_q^{\wedge}$ is also Noetherian.
\end{Lem}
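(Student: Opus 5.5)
The plan is the standard PBW-filtration argument. By Lemma \ref{lem: PBW-basis for even part} and base change, $U^{ev}_q(\g)$ has the $R$-basis $\{\tF^{\vec{k}} K^\mu \tE^{\vec{m}}\}$ with $R=\BC[[\hbar]]$. I would use the De Concini--Kac degree, given by the height of the total weight of $(\vec{k},\vec{m})$ together with a lexicographic refinement on exponents. This defines an increasing $\BZ_{\geq 0}^{2N+1}$-filtration. By the Levendorskii--Soibelman convexity relations, every commutator $\tE_{\b_j}\tE_{\b_i}-v^{(\b_i,\b_j)}\tE_{\b_i}\tE_{\b_j}$ for $i<j$ is a combination of monomials in the root vectors $\tE_{\b_{i+1}},\dots,\tE_{\b_{j-1}}$. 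The same holds for the $\tF$'s, and the relations with the $K^\mu$ and between $\tE_i,\tF_j$ in \eqref{eq:gen-rel-twistedDJ} are of the same type. Hence the associated graded is a quasi-polynomial algebra over $R$: generators $x_1,\dots,x_{2N}$ subject to $x_ix_j=q^{a_{ij}}x_jx_i$, tensored with the group algebra of $2P$. Such a skew Laurent/polynomial algebra over the Noetherian ring $R$ is Noetherian, by iterated Ore extensions and the Hilbert basis theorem for skew polynomial rings. Since the filtration is exhaustive and each piece is a finitely generated $R$-module, $U^{ev}_q(\g)$ is Noetherian. The twisted generators differ from the untwisted ones only by Cartan factors $K^{\nu}$ and powers of $v$, so the LS relations transfer directly; I would cite De Concini--Kac \cite{J96} for this.

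For the completion, $\CW_q$ is central by \cite[Lemma 9.29]{LTV}. By Proposition \ref{prop: HC center}, $\CW_q\cong R\<K^{2\lambda}\>^{W_\bullet}$, which is a finitely generated $R$-algebra and hence Noetherian. Thus $\CW_q^\wedge$, the $I$-adic completion for an ideal $I$, is a Noetherian commutative ring that is flat over $\CW_q$. I would then argue in one of two ways.

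\emph{Option (i).} Show $U^{ev}_q(\g)$ is finite over a central subalgebra containing $\CW_q$. This fails here because $\hbar$ is generic, so I would avoid it.

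\emph{Option (ii).} Use that $U:=U^{ev}_q(\g)$ is Noetherian and $\CW_q$ is central. Then $\hat U:=U\otimes_{\CW_q}\CW_q^\wedge$ is the $I$-adic completion of the finitely generated module... but $U$ is not finite over $\CW_q$, so $\hat U$ is not simply a completion. Instead I would filter $\hat U$ as follows. Base-change the PBW filtration along the flat map $\CW_q\to\CW_q^\wedge$. Better, note that $\hat U$ is generated over $\CW_q^\wedge$ by the finitely many generators $\tE_i,\tF_i,K^{\pm\w_i}$. Then run the same filtration argument with coefficient ring $\CW_q^\wedge\otimes_{\CW_q}(\text{image})$: the PBW degree filtration on $U$ is compatible with multiplication by $\CW_q$ up to lower terms, which makes this slightly delicate.

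The cleanest route, which I would try first, is this. $\hat U=U\otimes_{\CW_q}\CW_q^\wedge$ is a quotient of $U\otimes_{\BC}\CW_q^\wedge$, or rather of $U\otimes_R \CW_q^\wedge$. The latter carries the PBW filtration with coefficients in the Noetherian commutative ring $\CW_q^\wedge$. Its associated graded is a quasi-polynomial algebra over $\CW_q^\wedge$, with $\CW_q^\wedge$ central, and is therefore Noetherian. A quotient of a Noetherian ring is Noetherian.

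The main obstacle is checking the LS-type commutation estimates in the twisted setting over $R$, namely that lower-order terms really have smaller filtration degree. I would handle this by reducing to the untwisted root vectors through the explicit rescalings.
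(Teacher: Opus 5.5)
Your final route is correct and is the paper's argument: a $\BZ_{\geq 0}^{2N+1}$-filtration on $U^{ev}_q(\g)$ with twisted-polynomial associated graded over $R$ (the paper cites \cite[Proposition 10.7]{LTV} for this), and then $U^{ev}_q(\g)\otimes_{\CW_q}\CW_q^{\wedge}$ realized as a quotient of $U^{ev}_q(\g)\otimes_R\CW_q^{\wedge}$, whose base-changed filtration has twisted-polynomial associated graded over the Noetherian ring $\CW_q^{\wedge}$. One small correction: the filtration pieces are not finitely generated $R$-modules, since all $K^\mu$ with $\mu\in 2P$ lie in degree zero, but you do not need this, because an exhaustive filtration indexed by the well-ordered monoid $\BZ_{\geq 0}^{2N+1}$ with Noetherian associated graded already makes the algebra Noetherian.
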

\begin{proof}$U^{ev}_q(\g)$ has a $\BZ^{2N+1}_{\geq 0}$-filtration whose associated graded algebra is a twisted polynomial algebra over the Noetherian ring $R$, \cite[Proposition 10.7]{LTV}. The latter is Noetherian hence so is $U^{ev}_q(\g)$.

Because of the surjective map $U^{ev}_q(\g) \otimes_R \CW_q^{\wedge}\rightarrow U^{ev}_q(\g) \otimes_{\CW_q} \CW^{\wedge}_q$, it suffices to show that $U^{ev}_q(\g)\otimes_R \CW^\wedge_q$ is Noetherian. Since $\CW_q$ is free over $R$,  $\CW_q^\wedge$ is flat over $R$. Tensoring $-\otimes_R \CW_q^\wedge$ with the above filtration of $U^{ev}_q(\g)$ gives us a filtration on $U^{ev}_q(\g) \otimes_R \CW_q^\wedge$ so that the associated graded algebra is a twisted polynomial algebra over $\CW_q^\wedge$, which is Noetherian.
\end{proof}
Consider the epimorphism  $\varphi_\e: U^{ev}_q(\g) \rightarrow U^{ev}_\e(\g)$. We now describe the procedure to produce various completions of $U^{ev}_q(\g)$. Suppose $I$ is an ideal of the center $Z$ of $U^{ev}_\e(\g)$. Let $J=\varphi^{-1}_\e(I)$ then $U^{ev}_q(\g)J=JU^{ev}_q(\g)=\varphi_\e^{-1}(U^{ev}_\e(\g) I)$ which implies $U^{ev}_q(\g)J^k=J^k U^{ev}_q(\g)=(U^{ev}_q(\g)J)^k$, in particular, $U^{ev}_q(\g)J^k$ is a two-sided ideal for any $k \geq 1$. Let $U^{ev\wedge_J}_q$ denote the completion of $U^{ev}_q(\g)$ with respect to the two-sided ideal $U^{ev}_q(\g)J$.

The next lemma summarizes some properties of $U^{ev\wedge_J}_q:=\varprojlim U_q^{ev}/(U_q^{ev}J)^k$, which are proved via arguments used to prove the Artin-Ree lemma in Commutative Algebra, see \cite[$\mathsection 2.4$]{IL11}.

\begin{Lem}\label{lem: completion of Uev}(a) $U^{ev\wedge_J}_q$ is a flat (left and right) $U^{ev}_q(\g)$-module.

\noindent
(b) $U^{ev\wedge_J}_q$ is Noetherian.

\noindent
(c) $U^{ev\wedge_J}_q$ is complete and separated in the $U^{ev}_qJ$-adic topology. In particular, $U^{ev\wedge_J}_q$  is complete and separated in the $\hbar$-topology.

\noindent
(d)  The completion functor $M\mapsto M^\wedge:= \varprojlim M/(U^{ev}_qJ)^k M$ from the category of finitely generated left $U^{ev}_q(\g)$-modules to the category of left $U^{ev\wedge_J}_q$-modules  is exact. Moreover, $M^\wedge$ is canonically isomorphic to $U^{ev\wedge_J}_q\otimes_{U^{ev}_q(\g)} M$.

\noindent
(e) We have a natural short exact sequence $0 \rightarrow U^{ev\wedge_J}_q \xrightarrow[]{\cdot \hbar} U^{ev\wedge_J}_q\rightarrow U^{ev\wedge_I}_\e\rightarrow 0$, here $U^{ev\wedge_I}_\e= U^{ev}_\e(\g) \otimes_Z Z^{\wedge_I}$ is the completion of $U^{ev}_\e(\g)$ at the two-sided ideal $U^{ev}_\e(\g)I$. So $U^{ev\wedge_J}_q$ is a $\BC[[\hbar]]$-flat deformation of $U^{ev\wedge_I}_\e$. 
\end{Lem}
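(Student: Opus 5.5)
The plan is to follow the commutative Artin--Rees template of \cite[$\mathsection 2.4$]{IL11}. The key point is that $J$ is generated modulo $\hbar$ by \emph{central} elements: $U^{ev}_q(\g)J$ is generated by $\hbar$ and lifts of central elements of $U^{ev}_\e(\g)$. These lifts commute with everything up to $\hbar$.

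\textbf{Step 1: the Rees algebra is Noetherian.} First replace $J$ by a finitely generated ideal $J'=(\hbar, \hat z_1,\dots,\hat z_m)$, where the $z_i$ generate $I$ ($Z$ is finitely generated by Proposition \ref{prop: description of Z}, hence Noetherian). Then $U^{ev}_q J=U^{ev}_qJ'$. Form the Rees algebra
\[\mathsf{Rees}:=\bigoplus_{k\ge 0}(U^{ev}_qJ)^k t^k.\]
Its associated graded modulo $\hbar$ is a quotient of $U^{ev}_\e(\g)[z_1t,\dots,z_mt,\hbar t]$. This ring is Noetherian, because $U^{ev}_\e(\g)$ is a finite module over $Z$ by Proposition \ref{prop: properties of Ufin}(b) together with $Z_{Fr}\subset Z$. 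To lift Noetherianity to $\mathsf{Rees}$ itself, I would use the $\BZ^{2N+1}_{\ge0}$-filtration from the previous lemma. Refined by the $t$-grading, it makes $\mathsf{Rees}$ a filtered ring whose associated graded is a finitely generated module over a commutative-up-to-scalars (twisted polynomial) Noetherian ring. Hence $\mathsf{Rees}$ is left and right Noetherian.

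\textbf{Step 2: the Artin--Rees property.} This is the main obstacle. From Step 1 it follows that for a finitely generated module $M$ with submodule $N$, the intersections $N\cap (U^{ev}_qJ)^kM$ form a good filtration. Indeed, $\bigoplus_k (N\cap J^kM)t^k$ is a submodule of the finitely generated $\mathsf{Rees}$-module $\bigoplus_k J^kMt^k$, hence finitely generated. So there is $c$ with $N\cap J^{k+c}M=J^k(N\cap J^cM)$. The delicate point is that this must hold on both sides, since the ideal is two-sided and we need both left and right modules. Because $U^{ev}_qJ^k=J^kU^{ev}_q$, the same argument applies with right modules.

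\textbf{Step 3: deducing (a)--(e).} Standard arguments (as in Atiyah--Macdonald, Chapter 10) give the rest.
\begin{itemize}
\item[(d)] Artin--Rees gives exactness of $M\mapsto M^\wedge$ on finitely generated modules. The natural map $U^{ev\wedge_J}_q\otimes M\to M^\wedge$ is an isomorphism for finite free $M$. By right exactness of both sides and a finite presentation (available since $U^{ev}_q$ is Noetherian), it is an isomorphism for all finitely generated $M$.
\item[(a)] Flatness follows from exactness of $\otimes$ on finitely generated modules, together with the fact that Tor commutes with direct limits.
\item[(b)] $\operatorname{gr}U^{ev\wedge_J}_q=\operatorname{gr}_JU^{ev}_q$ is a quotient of $\mathsf{Rees}/t^{-1}$-type data, hence Noetherian. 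A complete separated ring with Noetherian associated graded is Noetherian.
\item[(c)] Completeness and separatedness are formal for inverse limits with surjective transition maps, using $J^kU^{ev\wedge}_q=\widehat{J^k}$ from (d). The $\hbar$-adic statement follows because $\hbar\in J$ and, conversely, $(U^{ev}_qJ)^k$ lies within $\hbar U^{ev}_q+$ (central lifts)$^k$. Thus separatedness in $\hbar$ follows from (e) and Krull intersection on the Noetherian ring $U^{ev\wedge_I}_\e$ together with completeness.
\item[(e)] Apply the exact functor from (d) to $0\to U^{ev}_q\xrightarrow{\hbar}U^{ev}_q\to U^{ev}_\e\to0$; the first two terms are left $U^{ev}_q$-modules. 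The completion of $U^{ev}_\e$ along $J$ equals its completion along $I$. That completion is $U^{ev}_\e\otimes_ZZ^{\wedge_I}$, because $U^{ev}_\e$ is finite over the Noetherian center $Z$.
\end{itemize}
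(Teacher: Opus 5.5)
Your architecture (Noetherian Rees algebra, then Artin--Rees on both sides, then (a)--(e) by the standard commutative arguments) matches the paper's, and your Steps 2--3 go through once Step 1 holds. The gap is Step 1.

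For the reduction of $\mathsf{Rees}$ to be a quotient of $U^{ev}_\e(\g)[X_1,\dots,X_m]$ with $X_i\mapsto \hat z_it$ central, you must divide by both central elements $\hbar$ and $\hbar t$. Modulo $\hbar$ alone, $[\hat z_it,u]=(\hbar t)\cdot\hbar^{-1}[\hat z_i,u]$ need not vanish. Even after dividing by both, you still need $[\hat z_it,\hat z_jt]=[\hat z_i,\hat z_j]t^2\in(\hbar t)\mathsf{Rees}$, i.e.\ $[\hat z_i,\hat z_j]\in\hbar\,U^{ev}_qJ$, i.e.\ $\{z_i,z_j\}\in U^{ev}_\e(\g)I$ for the bracket \eqref{eq: poisson bracket}.

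This fails for exactly the ideals the paper needs. For $I=\m_\xi$ or $Z\m_\chi$ with $\chi$ regular, $\chi$ lies on a $2N$-dimensional symplectic leaf. So some $z_1,z_2\in\m_\chi\subset I$ have $\{z_1,z_2\}(\chi)\neq0$, hence $\{z_1,z_2\}\notin U^{ev}_\e(\g)I$: its image in the nonzero fiber is a nonzero scalar. Then the images of $\hat z_1t,\hat z_2t$ do not commute; indeed $[\hat z_1,\hat z_2]=\hbar w\in(U^{ev}_qJ)^2$ with $w$ invertible after completion, as in a Weyl algebra.

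Your appeal to the PBW $\BZ^{2N+1}_{\ge0}$-filtration does not rescue this. The associated graded of $\mathsf{Rees}$ for that filtration is $\bigoplus_k\operatorname{gr}(J^k)t^k$, an initial algebra of the Rees algebra. Nothing you cite controls its finite generation, and initial algebras of finitely generated algebras need not be finitely generated.

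The missing idea is the paper's reduction to Poisson-closed ideals. By the Leibniz rule $\{I^2,I^2\}\subset I^2$. The ideal $\mathcal{J}:=\varphi_\e^{-1}(I^2)$ satisfies $J^{2k}\subset\mathcal{J}^k\subset J^k$, so the two adic topologies, and hence all completions in (a)--(e), coincide. Now $[\mathcal{J},\mathcal{J}]\subset\hbar\mathcal{J}$, which is exactly the commutativity you need.

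The paper then completes $\hbar$-adically first and applies \cite[Lemma 2.4.2]{IL11} to $A=\phi_\hbar^{-1}(Z)\subset U^{ev\wedge_\hbar}_q$; the blow-up of $U^{ev\wedge_\hbar}_q$ is finite over the blow-up of $A$. Alternatively, once this condition holds, you can stay closer to your plan:
\begin{enumerate}
\item $\mathsf{Rees}/(\hbar,\hbar t)$ is genuinely a quotient of $U^{ev}_\e(\g)[X_1,\dots,X_m]$, hence Noetherian.
\item $\mathsf{Rees}/\hbar t\,\mathsf{Rees}$ is then Noetherian: its positive-degree part is killed by $\hbar$, and its degree-zero part is the Noetherian ring $U^{ev}_q(\g)$.
\item $\mathsf{Rees}$ itself is then Noetherian, because $\hbar t$ is a regular central element of degree one (graded Nakayama).
\end{enumerate}
This route needs no PBW filtration at all.
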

\begin{proof} Part $(e)$ is obtained by applying part $(d)$ to the short exact sequence $0 \rightarrow  U_q^{ev}(\g) \xrightarrow[]{\cdot \hbar} U_q^{ev}(\g) \rightarrow U_q^{ev}(\g) \rightarrow 0$. The other part will be proved analogously to the similar statements for commutative algebras, e.g., see \cite[$\mathsection 7$]{Ei95}. To simplify the notation, let us denote  $U_q^{ev}:=U_q^{ev}(\g)$.

{\em Step 0: } If $I=0$  then $JU_q^{ev}=\hbar U_q^{ev}$. The blow up algebra $\text{Bl}_{(\hbar)}U_q^{ev}:= U_q^{ev}\oplus U_q^{ev}\hbar\oplus \dots =U_q^{ev}[\hbar]$ is Noetherian since $U_q^{ev}$ is Noetherian. Therefore, the completion $U_q^{ev\wedge_\hbar}$ of $U_q^{ev}$ with respect to the two-sided ideal $\hbar U_q^{ev}$ satisfies all properties of the lemma.

{\em Step 1:} Suppose $I \subset Z$ satisfies $\{ I,I\} \subset I$, then $[J,J] \subset \hbar J$. Since $\hbar U_q^{ev} \subset J U_q^{ev}$, by Step 0, the completion of $U_q^{ev\wedge_\hbar}$ with respect to the two-sided ideal $JU_q^{ev\wedge_\hbar}$ is the same as $U_q^{ev\wedge_J}$. So it is enough to show that  the blow up algebra $\text{Bl}_{(J)}(U_q^{ev\wedge_\hbar}):=U_q^{ev\wedge_\hbar}\oplus JU_q^{ev\wedge_\hbar} \oplus J^2 U_q^{ev\wedge_\hbar}\oplus \dots$ is Noetherian. Note that $(JU_q^{ev\wedge_\hbar})^k=J^kU_q^{ev\wedge_\hbar}=U_q^{ev\wedge_\hbar}J^k$ since $\hbar \in J$. 

Consider the epimorphism $\phi_\hbar: U_q^{ev\wedge_\hbar} \twoheadrightarrow U_\e^{ev}$. Let $A:=\phi_\hbar^{-1}(Z)$ then $A$ is complete and seperated in the $\hbar$-adic topology. Moreover, $[A,A]\subset \hbar A$ hence $A/\hbar A$ is commutative. Since $[J,J]\subset \hbar J$, we have that $[JA,JA]\subset \hbar JA$. Since $\hbar \in J$ then $(JA)^k=J^kA=AJ^k$, particularly, $JA$ is a two-sided ideal containing $\hbar$ in $A$.

Now $\text{Bl}_{(J)}(U_q^{ev\wedge_\hbar})=U_q^{ev\wedge_\hbar}\text{Bl}_{JA}(A)=\text{Bl}_{JA}(A)U_q^{ev\wedge_\hbar}$, here $\text{Bl}_{JA}(A):= A\oplus JA \oplus (JA)^2 \oplus \dots$. Since $U_q^{ev\wedge_\hbar}$ is finitely generated as both left and right $A$-modules, it suffices to show that $\text{Bl}_{JA}(A)$ is Noetherian. The latter  is proved by the following result in \cite[Lemma 2.4.2]{IL11}:
\begin{itemize}[label={}]
\item {\em Let $A$ be a $\BC[\hbar]$-algebra and $J$ be a two-sided ideal of $A$ containing $\hbar$. Suppose that $A$ is complete and separated in the $\hbar$-adic topology, the algebra $A/\hbar A$ is Noetherian commutative and $[J,J] \subset \hbar J$. Then the algebra $\text{Bl}_{J}(A)$ is Noetherian.}
\end{itemize}
To apply the quoted result, we are left to  show that $A/\hbar A$ is Noetherian.
\begin{equation}
\label{eq: SES of A/hA}
0\rightarrow \hbar U_q^{ev\wedge_\hbar}/\hbar A \rightarrow A/\hbar A \rightarrow Z\rightarrow 0.
\end{equation}
Since  $\hbar U_q^{ev\wedge_\hbar} \subset A$, the finitely generated left (or right) $A$-module $\hbar U_q^{ev\wedge_\hbar} /\hbar A$ is a finitely generated left (or right) $Z$-module, hence  is Noetherian. Hence, $A/\hbar A$ is Noetherian by \eqref{eq: SES of A/hA}.

{\em Step 2:} Consider an arbitrary  ideal $I \subset Z$.  Let $\mathcal{J}:=\phi^{-1}(I^2)$. Note that $J^k \supset \mathcal{J}^k \supset J^{2k}$ and $U_q^{ev}J^k=(U_q^{ev}J)^k, U_q^{ev}\mathcal{J}^k=(U_q^{ev}\mathcal{J})^k$ for any $k \geq 1$. Therefore, the $U_q^{ev}J$-adic filtration and the  $U_q^{ev} \mathcal{J}$-adic filtration of $U_q^{ev}$ are compatible with each other. Since $\{ I^2, I^2\}\subset I^2$, the completion $U_q^{ev\wedge_{\mathcal{J}}}$ satisfies all properties of the lemma, hence so does $U_q^{ev\wedge_J}$.
\end{proof}



\begin{Lem}\label{lem: module over completion}
Let $\widehat{J}:=U_q^{ev\wedge_J}J$. Any finitely generated module over $U_q^{ev\wedge_J}$ is complete and separated in the $\widehat{J}$-adic topology.
\end{Lem}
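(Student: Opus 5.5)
We argue as in the commutative Artin--Rees setting, using the Noetherian blow-up algebra from the proof of Lemma \ref{lem: completion of Uev}. Write $\widehat{U}:=U_q^{ev\wedge_J}$ and let $M$ be a finitely generated left $\widehat{U}$-module. Since $U_q^{ev}J^k=(U_q^{ev}J)^k$ and $J$ consists of elements that are central modulo $\hbar$, we have $\widehat{J}^k=\widehat{U}J^k=J^k\widehat{U}$, so $\widehat{J}^kM=J^kM$ is a submodule for every $k$.

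\emph{Completeness.} Choose a surjection $\pi:\widehat{U}^{\oplus n}\twoheadrightarrow M$. By Lemma \ref{lem: completion of Uev}(c), $\widehat{U}^{\oplus n}$ is complete and separated in the $\widehat{J}$-adic topology. The filtration $\widehat{J}^kM$ on $M$ is the image of the filtration $\widehat{J}^k\widehat{U}^{\oplus n}$. A Cauchy sequence in $M$ can be written as a telescoping series $\sum m_k$ with $m_k\in \widehat{J}^kM$. We lift each $m_k$ to some $x_k\in\widehat{J}^k\widehat{U}^{\oplus n}$, sum the lifts in the complete module $\widehat{U}^{\oplus n}$, and take the image under $\pi$. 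Hence $M$ is complete.

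\emph{Separatedness.} This is the main point. Put $K=\ker\pi$. Then $M$ is separated exactly when $\bigcap_k\bigl(K+\widehat{J}^k\widehat{U}^{\oplus n}\bigr)=K$, that is, when $K$ is closed in the $\widehat{J}$-adic topology. To prove closedness we need an Artin--Rees statement: there is $c$ with
\[
K\cap \widehat{J}^{k}\widehat{U}^{\oplus n}\subset \widehat{J}^{k-c}K\quad\text{for all } k\ge c.
\]

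We obtain it from the Noetherianity of the blow-up algebra $\mathrm{Bl}_{\widehat J}(\widehat U)=\bigoplus_k \widehat{J}^k$, as in Steps 0--2 of Lemma \ref{lem: completion of Uev}. That argument applies verbatim to $\widehat U$ in place of $U_q^{ev\wedge_\hbar}$: first reduce to $I$ with $\{I,I\}\subset I$ by replacing $I$ with $I^2$, then use the algebra $A$ with $A/\hbar A$ commutative Noetherian and \cite[Lemma 2.4.2]{IL11}. Now $\bigoplus_k K\cap\widehat{J}^k\widehat{U}^{\oplus n}$ is a graded submodule of the finitely generated module $\bigoplus_k\widehat{J}^k\widehat{U}^{\oplus n}$ over this Noetherian algebra. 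It is therefore finitely generated, which gives the displayed inclusion.

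\emph{Deducing closedness.} By the Artin--Rees inclusion, the subspace topology on $K$ coincides with its own $\widehat J$-adic topology. By Lemma \ref{lem: completion of Uev}(b), $K$ is finitely generated, so the completeness step applies to $K$ and shows it is complete in its own topology. A complete subspace of the separated module $\widehat U^{\oplus n}$ is closed. Hence $K$ is closed, and $M$ is separated.

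An alternative route is via the exactness in Lemma \ref{lem: completion of Uev}(d). The main obstacle is verifying that the blow-up algebra of $\widehat U$ is still Noetherian, since this needs the commutator condition $[J,J]\subset\hbar J$ after passing to the completion.
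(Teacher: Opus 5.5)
Your argument is correct and is essentially the paper's proof. The paper observes that Steps 0--2 of Lemma \ref{lem: completion of Uev} run verbatim for the pair $(U_q^{ev\wedge_J},\widehat J)$, and then reads the claim off part (d): the completion of a finitely generated module $M$ is $U_q^{ev\wedge_J}\otimes_{U_q^{ev\wedge_J}}M=M$. That is exactly the alternative route you mention at the end; you unpack (d) into the explicit Cauchy-lifting and Artin--Rees closedness argument, using the same key input, namely Noetherianity of the blow-up algebra of $(U_q^{ev\wedge_J},\widehat J)$.
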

\begin{proof} By Lemma \ref{lem: completion of Uev}, $U_q^{ev\wedge_J}$ is complete and separated in the $\widehat{J}$-adic topology. The pair $(U_q^{ev\wedge_J}, \widehat{J})$ satisfies all properties of Lemma \ref{lem: completion of Uev} by the same proof. In particular, the properties in Lemma \ref{lem: completion of Uev}.d says that any finitely generated module over $U_q^{ev\wedge_J}$ is complete and separated in the $\widehat{J}$-adic topology.
\end{proof}
\begin{Lem} \label{lem: equiv action on completion}The adjoint action of $\cU_\e(\g)$ on $U_\e^{ev}(\g)$ extends uniquely to an action of $\cU_\e(\g)$ on $U_\e^{ev\wedge_I}$ by continuity. The adjoint action of $\cU_q(\g)$ on $U_q^{ev}(\g)$ extends uniquely to an action of $\cU_q(\g)$ on  $U_q^{ev\wedge_J}$ by continuity.
\end{Lem}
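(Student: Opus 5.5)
The plan is to show that the adjoint action is continuous for the $U^{ev}J$-adic topology (respectively the $U^{ev}_\e I$-adic topology). Concretely, we aim to prove that each power $(U^{ev}_qJ)^k$ is stable under $\ad'(\cU_q(\g))$. Once this holds, every $\cU_q(\g)$-element acts on each quotient $U^{ev}_q/(U^{ev}_qJ)^k$. These actions are compatible with the projections, so they pass to the inverse limit $U^{ev\wedge_J}_q$. Uniqueness is then automatic: $U^{ev}_q(\g)$ is dense in its completion, and the completion is separated by Lemma \ref{lem: completion of Uev}(c). The case over $\BC$ is identical, with $U^{ev}_\e(\g)I$ in place of $U^{ev}_qJ$.

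The key point is stability of the ideals. The adjoint action is a module-algebra action:
\[
\ad'(x)(ab)=\sum \ad'(x_{(1)})(a)\,\ad'(x_{(2)})(b).
\]
Since $\Delta(\cU_q(\g))\subset \cU_q(\g)\otimes\cU_q(\g)$, an ideal generated by a $\cU_q(\g)$-stable subspace is again stable. Powers of a stable two-sided ideal are also stable. It therefore suffices to show that $U^{ev}_qJ$ is $\ad'$-stable, and for this it is enough that $J$ itself is $\ad'$-stable.

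We check this in two steps.

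\textbf{Central elements.} The action of $\cU_\e(\g)$ on $U^{ev}_\e(\g)$ preserves the center $Z$, since $\ad'$ acts by a Hopf-algebra action. More precisely, for $z\in Z$ and $u\in U^{ev}_\e$ we compute $\ad'(x)(z)u=u\,\ad'(x)(z)$ using the identity $u\,\ad'(x)(z)=\sum \ad'(x_{(2)})\bigl(\ad'(S^{-1}(x_{(1)}))(u)\,z\bigr)$ (for the appropriate ordering of Sweedler indices), and then move $z$ past $\ad'(S^{-1}(x_{(1)}))(u)$.

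\textbf{The ideal $I$.} Here we must know that $I\subset Z$ is $\ad'$-stable, which requires an assumption on $I$. I would first check whether the intended $I$ are generated by elements of $Z_{HC}$, which is invariant, together with ideals of $Z_{Fr}$. On the latter the action factors through $\cU_\BC(\g^d)$, and ideals arising from $G^d$-stable subvarieties are $\ad'$-stable.

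This is the main obstacle: for a general ideal $I$ the statement fails. I would handle it by restricting to $\ad'$-stable $I$, or to ideals containing a power of such an ideal with the same radical, since the topology depends only on such cofinal systems. If $I$ is merely contained in $Z$, one can instead replace it by the largest stable ideal $I'\subset I$, and try to show that $I'$ and $I$ define the same topology. This would use finiteness of $Z$ over $Z_{Fr}^{fin}\otimes Z_{HC}$ and local finiteness of the action on $Z_{Fr}^{fin}\cong\BC[G^d]$ together with $\ad'$-stable closed subschemes.

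**Lifting to $J$.** Finally, $J=\varphi_\e^{-1}(I)$ is $\ad'$-stable because $\varphi_\e$ is $\cU_q(\g)$-equivariant and $\hbar U^{ev}_q$ is stable.
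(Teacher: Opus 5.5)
\paragraph{Gap: the ideals you need are not $\ad'$-stable.}
Your reduction requires $U^{ev}_\e(\g)I$ (resp.\ $U^{ev}_q(\g)J$) to be $\ad'$-stable. That is fine for stable $I$, e.g.\ ideals coming from $Z_\cap$ or $Z_{HC}$. It fails for the ideals the lemma is actually used for: $I=Z\m_\chi$ and $I=\m_\xi$ with $\chi$ a regular point of $\Spec Z_{Fr}\cong G^d_0$. The $\cU$-action on $U^{ev\wedge_\chi}$ is needed later, for instance for $\Hom_{\cU_\e}(V_\e,U_\e^{ev\wedge_\chi})$ and for $\cU_q$-equivariant modules over $U_q^{ev\wedge_\chi}$. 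On $Z_{Fr}$ the action is the infinitesimal conjugation action of $\g^d$. Since a regular $\chi$ is not central, there is a vector field $D$ coming from $\g^d$ and some $f\in\m_\chi$ with $Df(\chi)\neq 0$. Hence $\m_\chi$ is not stable.

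Your fallback does not repair this. $\m_\chi$ is radical, and if an $\ad'$-stable ideal $I'\subset Z\m_\chi$ contained $(Z\m_\chi)^n$, then $D^n(f^n)$ would lie in $I'\subset Z\m_\chi$. But $D^n(f^n)(\chi)=n!\,(Df(\chi))^n\neq 0$. So stable subideals give a strictly coarser topology: completion along the leaf, not at the point. Contrary to your remark, the statement does not fail for general $I$; it holds for every ideal $I\subset Z$.

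\paragraph{Missing idea: continuity only needs a bounded filtration shift.}
You do not need stability, only that each element of $\cU_\e(\g)$ lowers the $I$-adic degree by a bounded amount. Since $Z_{HC}$ is invariant and the action on $Z_{Fr}$ factors through $\tFr$, the action on $Z\cong Z_{Fr}\otimes_{Z_\cap}Z_{HC}$ factors through $\tFr$. Concretely, $K^\lambda$ acts trivially on $Z$, and $\tE_i^{(c)},\tF_i^{(c)}$ act by zero on $Z$ whenever $\ell_i\nmid c$.

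Combining this with the coproducts \eqref{eq: twist-Hopf-2} and the module-algebra property:
\begin{itemize}
\item For $x\in\{K^\lambda,\tE_i,\tF_i\}$ you get, e.g., $\tE_i(uz)=\tE_i(u)z$, hence $x(U^{ev}_\e I^k)\subset U^{ev}_\e I^k$.
\item For $x\in\{\tE_i^{(\ell_i)},\tF_i^{(\ell_i)}\}$, the iterated coproduct applied to $u z_1\cdots z_k$ ($z_j\in I$) acts nontrivially on at most one $z_j$. Hence $x(U^{ev}_\e I^k)\subset U^{ev}_\e I^{k-1}$.
\end{itemize}
These elements generate $\cU_\e(\g)$, so every element acts continuously and the action extends; uniqueness then follows from your density and separatedness argument. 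For $q$, pull back the cases $k=1,2$ along $\varphi_\e$ (note $\hbar U^{ev}_q\subset J$) and propagate using the module-algebra property.

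\paragraph{Secondary issue: stability of $Z$.}
Your argument that $\ad'$ preserves $Z$ is incomplete for a non-cocommutative Hopf algebra. After moving $z$ you obtain $\sum\ad'(x_{(2)})(z)\,\ad'(x_{(3)}S^{-1}(x_{(1)}))(u)$, not $\ad'(x)(z)\,u$. Stability of $Z$ should instead come from the structure of $Z$ quoted above.
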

\begin{proof}The adjoint action of $\cU_\e(\g)$ on $Z$ factors through the map $\tFr: \cU_\e(\g) \rightarrow \cU_\BC(\g^d)$ and $U_\e^{ev}(\g)$ is a $\cU_\e(\g)$-module algebra. Then direct computations show
\begin{equation*}\label{eq: 1-action on I^k} x(U_\e^{ev}I) \subset U_\e^{ev}I \qquad \text{ for $x\in \{ K^\lambda, \tE_i, \tF_i\} \subset \cU_\e(\g)$}.
\end{equation*}
This implies that 
\begin{equation}\label{eq: 2-action on Ik} \begin{cases}
    x(U_\e^{ev}I^k) \subset U_\e^{ev}I^k  \qquad &\text{for $x\in \{K^\lambda, \tE_i, \tF_i\} \subset \cU_\e(\g)$}\\
    x(U_\e^{ev}I^k) \subset U_\e^{ev}I^{k-1} \qquad  &\text{for $x \in \{ \tE_i^{(\ell_i)}, \tF_i^{(\ell_i)}\}$} \subset \cU_e(\g).
    \end{cases}
\end{equation}
Since $\cU_\e(\g)$ is generated by $\{ K^\lambda, \tE_i, \tF_i, \tE_i^{(\ell_i)}, \tF_i^{(\ell_i)}\}$, the equation \eqref{eq: 2-action on Ik} implies the first statement of the lemma.

Let $\phi$ denote both epimorphisms $\cU_q(\g) \twoheadrightarrow \cU_\e(\g)$ and $U_q^{ev}\twoheadrightarrow U_\e^{ev}$. Since $\phi(xm)=\phi(x)\phi(m)$ for $x\in \cU_q(\g), m \in U_q^{ev}$, by \eqref{eq: 2-action on Ik} for $k=1,2$ we have 
\[ \begin{cases}  x(U_q^{ev}J) \subset U_q^{ev}J \qquad &\text{for $x\in \{K^\lambda, \tE_i, \tF_i\} \subset \cU_q(\g)$}\\
    x(U_q^{ev}J^2) \subset U_q^{ev}J\qquad  &\text{for $x \in \{ \tE_i^{(\ell_i)}, \tF_i^{(\ell_i)}\}$} \subset \cU_q(\g).
    \end{cases}\]
Using the fact that $U_q^{ev}$ is a $\cU_q(\g)$-module algebra, one can show  that 
\begin{equation}\label{eq: action of Jk} \begin{cases}x(U_q^{ev}J^k) \subset U_q^{ev}J^k  \qquad &\text{for $x\in \{K^\lambda, \tE_i, \tF_i\} \subset \cU_q(\g)$}\\
    x(U_q^{ev}J^k) \subset U_q^{ev}J^{k-1} \qquad  &\text{for $x \in \{ \tE_i^{(\ell_i)}, \tF_i^{(\ell_i)}\}$} \subset \cU_q(\g).
    \end{cases}
\end{equation}
Since $\cU_q(\g)$ is generated by $\{K^\lambda, \tE_i, \tF_i, \tE_i^{(\ell_i)}, \tF_i^{(\ell_i)}\}$, the equation \eqref{eq: action of Jk} implies the second statement of the lemma.
\end{proof}

\begin{Lem} \label{lem: Hom is fg over HC} For any representation $V\in \Rep^{fd}(\cU_\e(\g))$, the space $\Hom_{\cU_\e(\g)}(V, U^{fin}_\e)$ is a finitely generated module over $Z_\cap$ and hence is a finitely generated module over $\CW_\e$.
\end{Lem}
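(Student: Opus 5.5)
The plan is to reduce to finite generation over $Z^{fin}_{Fr}$ and then take invariants under the Frobenius quotient. By Proposition \ref{prop: properties of Ufin}(a),(b), $U^{fin}_\e$ is a finitely generated projective module over $Z^{fin}_{Fr}\cong \BC[G^d]$. Moreover $Z^{fin}_{Fr}$ is central, and by Proposition on $Z_{Fr}$ (part (a)) it is stable under the adjoint action, which factors through $\tFr$. First I will check that the multiplication map $Z^{fin}_{Fr}\otimes U^{fin}_\e\to U^{fin}_\e$ is $\cU_\e(\g)$-equivariant. This holds because $U^{fin}_\e$ is a $\cU_\e(\g)$-module algebra and $Z^{fin}_{Fr}$ is a $\cU_\e(\g)$-submodule. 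Hence $U^{fin}_\e$ is a $G^d$-equivariant (through $\tFr$) module over $\BC[G^d]$ in the category $\Rep(\cU_\e(\g))$.

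Next, since $\Hom_{\cU_\e(\g)}(V,U^{fin}_\e)=(V^*\otimes U^{fin}_\e)^{\cU_\e(\g)}$, I will replace $U^{fin}_\e$ by $M:=V^*\otimes U^{fin}_\e$. This is still a finitely generated $Z^{fin}_{Fr}$-module carrying a compatible rational $\cU_\e(\g)$-action, since $V$ is finite dimensional. I will take invariants in two stages. Let $\mathfrak{u}$ denote the small quantum group, i.e. the kernel part of $\tFr$, generated by $\tE_i,\tF_i,K^\lambda$. Because $Z^{fin}_{Fr}$ is $\mathfrak{u}$-invariant, the space $M^{\mathfrak{u}}$ is a $Z^{fin}_{Fr}$-submodule of $M$. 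It is therefore finitely generated, since $Z^{fin}_{Fr}\cong\BC[G^d]$ is Noetherian. The residual action on $M^{\mathfrak u}$ factors through $\tFr$, so it is a rational $G^d$-action compatible with the $\BC[G^d]$-module structure for the conjugation action. Then $M^{\cU_\e(\g)}=(M^{\mathfrak u})^{G^d}$. By Hilbert--Nagata finiteness for reductive groups, the invariants of a finitely generated equivariant module over $\BC[G^d]$ form a finitely generated module over $\BC[G^d]^{G^d}$.

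Finally, $\BC[G^d]^{G^d}=\BC[T^d]^W$ is identified with $Z_\cap$ via the map $G^d\to G^d\sslash G^d\cong T^d/W$ described before Proposition \ref{prop: description of Z}. This gives finite generation over $Z_\cap$. Since $Z_\cap\subset Z_{HC}=\CW_\e$ and the $\CW_\e$-action on $\Hom$ (by multiplication in $U^{fin}_\e$) restricts to the $Z_\cap$-action, finite generation over $\CW_\e$ follows.

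The main obstacle is the two-stage invariant step. One must justify that $(\cdot)^{\cU_\e(\g)}=((\cdot)^{\mathfrak u})^{G^d}$ for rational modules, which amounts to identifying the image of $\tFr$ with the distribution algebra of $G^d$. One must also check that the $Z^{fin}_{Fr}$-action on $M^{\mathfrak u}$ is equivariant for the conjugation action, so that Hilbert's theorem applies in its module form. I would handle the first point using Proposition \ref{prop: Frobenius morphism} and the surjection $\tFr:\cU_\e(\g)\to\cU_\BC(\g^d)$. For the second point I would use the $\cU_\BC(\g^d)$-linearity of $\varphi$ in Proposition \ref{prop: F-center vs openBruhat}.
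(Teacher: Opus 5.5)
Your proposal is correct and follows essentially the paper's argument: pass to $\fu_\e$-invariants so that the residual action factors through $\tFr$, then use invariant theory over $(Z^{fin}_{Fr})^{\cU_\BC(\g^d)}=\BC[G^d]^{G^d}=Z_\cap$. The only difference is in how finite generation of the $\fu_\e$-invariants is obtained. The paper first reduces to projective $V$ and a free cover $V_1\otimes_\BC Z^{fin}_{Fr}\twoheadrightarrow U^{fin}_\e$, so that the invariants are simply $(V^*\otimes V_1)^{\fu_\e}\otimes Z^{fin}_{Fr}$. You instead get finite generation of $M^{\fu_\e}$ directly from Noetherianity of $Z^{fin}_{Fr}\cong\BC[G^d]$ and apply the module form of Hilbert's theorem, which works equally well.
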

Let $Z^{fin}_{Fr}\Mod^{G_\e}$ be the category of finitely generated $Z^{fin}_{Fr}$-modules in $\Rep(\cU_\e(\g))$.

\begin{proof}Since $\Rep^{fd}(\cU_\e(\g))$ has enough projectives, it is enough to prove the lemma when $V$ is projective in $\Rep^{fd}(\cU_\e(\g))$. Furthermore, since $U_\e^{fin} \in Z^{fin}_{Fr}\Mod^{G_\e}$, there is a projective object  $V_1\in \Rep^{fd}(\cU_\e(\g))$ with a surjective morphism $V_1 \otimes_\BC Z^{fin}_{Fr} \rightarrow U^{fin}_\e$ in $Z^{fin}_{Fr}\Mod^{G_\e}$.
Since $V$ is projective in $\Rep(\cU_\e(\g)$, 
\begin{equation}\label{eq: morphism of Z-cap} \Hom_{\cU_\e(\g)}(V, V_1 \otimes_\BC Z^{fin}_{Fr}) \rightarrow \Hom_{\cU_\e(\g)}(V, U_\e^{fin}),
\end{equation}
is a surjective. The morphism \eqref{eq: morphism of Z-cap} is a morphism of $Z_\cap$-modules. On the other hand
\[ \Hom_{\cU_\e(\g)}(V, V_1 \otimes_\BC Z^{fin}_{Fr})=\Big((V^* \otimes V_1)^{\fu_\e} \otimes_\BC Z^{fin}_{Fr}\Big)^{\cU_\BC(\g^d)}.\]
By standard arguments in Invariant Theory, the right-hand side is a finitely generated module over $(Z^{fin}_{Fr})^{\cU_\BC(\g^d)}$, which is just $Z_\cap$. Therefore, by \eqref{eq: morphism of Z-cap}, $\Hom_{\cU_\e(\g)}(V, U_\e^{fin})$ is finitely generated over $Z_\cap$.
\end{proof}

\subsection{Completion of Poisson algebra $Z$}\

Recall the isomorphism  $\Spec Z \cong G^d_0 \x_{T^d/W} T/W$ from Proposition \ref{prop: description of Z} . Let $\chi$ be a regular point in $G^d_0$ and let $\xi=(\chi, \utheta)$ be a point in $\Spec Z$. Consider the following ideals in $Z$: 

\begin{itemize}
    \item $\m_\xi$, the maximal ideal of $Z$ at $\xi$.
    \item $Z \m_\chi$, here $\m_\chi$ is the maximal ideal of $Z_{Fr}$ at $\chi$.
\end{itemize}

The completion $Z_{Fr}^{\wedge_\chi}$ of $Z_{Fr}$ at $\chi$ is naturally a Poisson algebra. Let $V$ be the cotangent space at $\chi$ of the conjugacy class of $\chi \in \Spec Z_{Fr}\cong G^d_0$ , which is the symplectic leaf containing $\chi$, see Proposition \ref{prop: Frobenius center}. Then $V$ carries a natural non-degenerate skew-symmetric bilinear form. This bilinear form induces a Poisson structure on $\BC[[V]]$, the completion of symmetric tensor $S(V)$ at the maximal ideal generated by $V$. Let $\uchi$ be the image of $\chi$ under the natural map $\Spec Z_{Fr} \rightarrow \Spec Z_\cap$, then we form the completion $Z^{\wedge_{\uchi}}_\cap$.

\begin{Lem}\label{lem: Decompose Poisson center} There is an isomorphism of complete local Poisson algebras $Z^{\wedge_\chi}_{Fr} \cong \BC[[V]]\widehat{\otimes} Z^{\wedge_{\uchi}}_\cap$.    
\end{Lem}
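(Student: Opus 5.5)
The plan is to use a formal Weinstein splitting theorem, together with the fact that $Z_\cap$ is the Poisson center of $Z_{Fr}$, and that the leaf through a regular point is cut out transversally by the invariant functions. Identify $Z_{Fr}\cong\BC[G^d_0]$ via Proposition \ref{prop: F-center vs openBruhat}. By Proposition \ref{prop: Frobenius center}(b), the symplectic leaf through $\chi$ is the intersection $\mathbb{O}_\chi\cap G^d_0$ of the conjugacy class $\mathbb{O}_\chi$ with the open cell. Since $\chi$ is regular, Steinberg's theorem gives that the adjoint quotient $G^d\to G^d\sslash G^d\cong T^d/W$ is smooth at $\chi$. Its fiber through $\chi$ is, near $\chi$, exactly $\mathbb{O}_\chi$, of codimension $r$. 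Since $T^d/W$ is an affine space, $Z_\cap=\BC[T^d/W]$ is regular, so $Z^{\wedge_{\uchi}}_\cap\cong\BC[[y_1,\dots,y_r]]$. The map $Z^{\wedge_{\uchi}}_\cap\to Z^{\wedge_\chi}_{Fr}$ is injective, and the images $y_i$ extend to part of a regular system of parameters of $Z^{\wedge_\chi}_{Fr}$.

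The first step is to choose formal coordinates $x_1,\dots,x_{2m}\in\m_\chi Z^{\wedge_\chi}_{Fr}$, where $2m=\dim V$, whose differentials restrict to a basis of $V$, the cotangent space of the leaf. Then $(x_1,\dots,x_{2m},y_1,\dots,y_r)$ is a regular system of parameters, and
\[Z^{\wedge_\chi}_{Fr}=\BC[[x,y]].\]
The second step is the formal Darboux--Weinstein argument. By the Remark above, $Z_\cap$ is the Poisson center, so every $y_j$ is Casimir. Consequently the bracket $\{x_a,x_b\}$ is a power series whose value at $\chi$ is the nondegenerate form on $V$. I will run the standard inductive construction of Darboux coordinates over the base ring $B:=Z^{\wedge_{\uchi}}_\cap$. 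Pick $p_1:=x_1$. Then solve for $q_1$ with $\{p_1,q_1\}=1$ by integrating the Hamiltonian vector field of $p_1$, which is nonvanishing at $\chi$. This is done order by order in the $\m_\chi$-adic filtration, using the formal flow-box theorem with the $y_j$ as parameters; the flow preserves the $y_j$ because they are Casimirs. Next, pass to the Poisson centralizer of $\{p_1,q_1\}$, which is again a complete local $B$-algebra whose bracket is nondegenerate at the point, and iterate. This produces $p_1,\dots,p_m,q_1,\dots,q_m$ that $B$-linearly generate the maximal ideal modulo $(y)$ and satisfy the canonical relations.

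The third step is to define the continuous Poisson homomorphism
\[\BC[[V]]\widehat{\otimes} Z^{\wedge_{\uchi}}_\cap\to Z^{\wedge_\chi}_{Fr},\]
sending a symplectic basis of $V$ to $(p_i,q_i)$ and $B$ identically. It is a Poisson map: the bracket relations hold on generators, $B$ is central on both sides, and the Leibniz rule with continuity extends this to the whole completed algebra. It is an isomorphism because it induces an isomorphism on cotangent spaces $\m/\m^2$ between complete regular local rings of the same dimension $2m+r$.

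The main obstacle is the second step: constructing the formal Darboux coordinates in families over $B$, and in particular solving $\{p_1,q_1\}=1$ and the subsequent commuting conditions by successive approximation. I would handle this by grading along the $\m_\chi$-adic filtration, and at each order solving a linear equation for a Hamiltonian derivation. This linear equation is solvable because the bracket is nondegenerate modulo $(y)+\m_\chi^2$ in the $x$-directions.
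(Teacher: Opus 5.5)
Your proposal is correct and follows essentially the paper's route. Both arguments split off the Casimir directions coming from $Z_\cap$: the paper uses the Steinberg slice transversal to the regular class, and you use the smoothness of the adjoint quotient at regular elements. Both then Poisson-embed $\BC[[V]]$ by a formal Darboux--Weinstein argument and conclude by comparing cotangent spaces of complete regular local rings; the only real difference is that the paper quotes \cite[Proposition 3.3]{Ka06} for the Poisson lift of $\BC[[V]]$, whereas you build the Darboux coordinates by hand relative to the base $Z^{\wedge_{\uchi}}_\cap$.
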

 \begin{proof} 
Let us recall  the Steinberg slice $S$ in $G^d$, see \cite{St74}. We can move this slice by conjugation such that $S$ contains $\chi$. The natural map $S \rightarrow Z_\cap$ is an isomorphism of varieties. Since $S$ is the transversal slice of the conjugacy class $G^d\chi$ and all varieties $S, G^d\chi, G^d_0$ are smooth, the natural morphism $Z_{Fr}^{\wedge_\chi}\rightarrow \BC[G^d\chi]^{\wedge_\chi}\widehat{\otimes} \BC[S]^{\wedge_\chi}$ is an isomorphism of complete local algebras.

The natural morphism $Z_{Fr}^{\wedge_\chi} \twoheadrightarrow \BC[G^d\chi]^{\wedge_\chi}$ is a surjective morphism of Poisson algebras. Moreover, $\BC[[V]]\cong \BC[G^d\chi]^{\wedge_\chi}$ as complete local Poisson algebras. By \cite[Proposition 3.3]{Ka06}, there is an embedding of Poisson algebras $\BC[[V]]\rightarrow Z_{Fr}^{\wedge_\chi}$ such that the composition $\BC[[V]]\hookrightarrow Z_{Fr}^{\wedge_\chi} \twoheadrightarrow \BC[G^d\chi]^{\wedge_\chi}$ is an isomorphism of complete local Poisson algebras.

Since $Z_{Fr}$ and $Z_\cap$ are smooth, the natural morphism $Z_\cap^{\wedge_{\uchi}} \rightarrow Z_{Fr}^{\wedge_\chi}$ is injective. Furthermore, $Z_\cap^{\wedge_{\uchi}}$ is contained in the Poisson center of $Z_{Fr}^{\wedge_\chi}$. Hence , we have a morphism of Poisson algebras
\begin{equation}\label{eq: Decomposition of Poisson algeras}
\BC[[V]]\widehat{\otimes} Z_{\cap}^{\wedge_{\uchi}}\rightarrow Z_{Fr}^{\wedge_\chi}.
\end{equation}
The morphism \ref{eq: Decomposition of Poisson algeras} fits into the following composition
\[ \BC[[V]]\widehat{\otimes} Z_\cap^{\wedge_\chi} \rightarrow Z_{Fr}^{\wedge_\chi} \iso \BC[G^d\chi]^{\wedge_\chi}\widehat{\otimes} \BC[S]^{\wedge_\chi}.\]
Since both homomorphisms $Z_\cap^{\wedge_{\uchi}} \rightarrow \BC[S]^{\wedge_\chi}$ and $\BC[[V]]\rightarrow \BC[G^d\chi]^{\wedge_\chi}$ are isomorphisms of algebras, the homomorphism of Poisson algebras \eqref{eq: Decomposition of Poisson algeras} must also be an isomorphism.
\end{proof}

Let $Z^{\wedge_\xi}$ and $Z^{\wedge_\chi}$ be the completions of $Z$ at the ideals $\m_\xi$ and $Z \m_\chi$, respectively. Since $\Spec \CW_\e \rightarrow \Spec Z_\cap$ is a finite morphism,  there is a natural isomorphism of Poisson algebras:
\begin{equation} \label{eq: decompose Z} Z^{\wedge_\chi} \cong \prod_{\xi =(\chi, \utheta)}Z^{\wedge_\xi},
\end{equation}
note that there are only finitely many $\xi$ of the form $(\chi, \utheta)$. Let $\m_{\uchi}$ be the maximal ideal of $Z_\cap$ at $\uchi$, and  $\m_\utheta$ be the maximal ideal of $\CW_\e$ at the point $\utheta$. Let $\CW_\e^{\wedge_{\uchi}}$ and $\CW_\e^{\wedge_\utheta}$ be the completions of the  Harish-Chandra center $\CW_\e$ at the ideals $\CW_\e\m_{\uchi}$ and $\m_\utheta$, respectively. We have the following isomorphisms of algebras:
\begin{equation}
    Z^{\wedge_\chi} \cong Z_{Fr}^{\wedge_\chi}\widehat{\otimes}_{Z^{\wedge_{\uchi}}_\cap}\CW_\e^{\wedge_{\uchi}}, \qquad  Z^{\wedge_\xi}\cong Z_{Fr}^{\wedge_\chi}\widehat{\otimes}_{Z^{\wedge_{\uchi}}_\cap} \CW_\e^{\wedge_\utheta}.
\end{equation}
Lemma \ref{lem: Decompose Poisson center} implies the following corollary:
\begin{Cor} \label{cor: Decompose Poisson center} There is a decomposition of Poisson algebras $Z^{\wedge_\chi} \cong \BC[[V]]\widehat{\otimes}\CW_\e^{\wedge_{\uchi}}$. Fix one such decomposition, it gives  rise a family of inclusions of Poisson algebras $\BC[[V]]\hookrightarrow Z^{\wedge_\xi}$ and then a family of isomorphisms of Poisson algebras $Z^{\wedge_\xi} \cong \BC[[V]]\widehat{\otimes} \CW_\e^{\wedge_\utheta}$. 
\end{Cor}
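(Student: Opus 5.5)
The plan is to obtain the corollary directly from Lemma \ref{lem: Decompose Poisson center} by base change along $Z_\cap^{\wedge_{\uchi}}\to \CW_\e^{\wedge_{\uchi}}$. Begin with the isomorphism $Z^{\wedge_\chi}\cong Z_{Fr}^{\wedge_\chi}\widehat{\otimes}_{Z^{\wedge_{\uchi}}_\cap}\CW_\e^{\wedge_{\uchi}}$ recorded just before the corollary; it is the completion of $Z\cong Z_{Fr}\otimes_{Z_\cap}Z_{HC}$ from Proposition \ref{prop: description of Z}. Fix a decomposition $Z^{\wedge_\chi}_{Fr}\cong \BC[[V]]\widehat{\otimes} Z^{\wedge_{\uchi}}_\cap$ as in Lemma \ref{lem: Decompose Poisson center}. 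Substituting it gives the algebra isomorphism
\[ Z^{\wedge_\chi}\cong \big(\BC[[V]]\widehat{\otimes} Z^{\wedge_{\uchi}}_\cap\big)\widehat{\otimes}_{Z^{\wedge_{\uchi}}_\cap}\CW_\e^{\wedge_{\uchi}}\cong \BC[[V]]\widehat{\otimes}\CW_\e^{\wedge_{\uchi}}. \]
The second step is associativity of completed tensor products. It is harmless here because $\CW_\e$ is finite over $Z_\cap$ (by the lemma identifying $Z_\cap$ with $\BC\<K^{2\lambda}\>^W_{\lambda\in P^*}$ inside $\BC\<K^{2\lambda}\>^W_{\lambda\in P}$), so $\CW_\e^{\wedge_{\uchi}}=\CW_\e\otimes_{Z_\cap}Z_\cap^{\wedge_{\uchi}}$ is a finite $Z_\cap^{\wedge_{\uchi}}$-module.

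Next I check that this isomorphism respects the Poisson brackets. By the Remark after Theorem \ref{thm: Azumaya locus}, the bracket \eqref{eq: poisson bracket} vanishes on $Z_{HC}=\CW_\e$, and $Z_\cap$ is the Poisson center of $Z_{Fr}$. So it suffices to show that $\CW_\e$ is Poisson central in $Z$. For $a\in Z_{HC}$, choose the lift $\hat a$ in the Harish-Chandra center of $U^{ev}_\CA(\g)$ (via Proposition \ref{prop: HC center} over $\CA$). This lift is central, so $[\hat a,\hat b]=0$ and hence $\{a,b\}=0$ for all $b\in Z$. The bracket on $Z$ is therefore determined by its restriction to $Z_{Fr}$ via the Leibniz rule. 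Under the isomorphism above, the bracket is then the $\CW_\e^{\wedge_{\uchi}}$-linear extension of the bracket on $\BC[[V]]\widehat{\otimes}Z_\cap^{\wedge_{\uchi}}$, which is the bracket on $\BC[[V]]$ tensored with the zero bracket. Continuity of the bracket in the adic topology, which follows from the Leibniz rule, lets this pass to completions.

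For the family statement, restrict the fixed Poisson embedding $\BC[[V]]\hookrightarrow Z^{\wedge_\chi}$ to each factor $Z^{\wedge_\xi}$ of the decomposition \eqref{eq: decompose Z}. On the other side, $\CW_\e^{\wedge_{\uchi}}\cong\prod_{\utheta}\CW_\e^{\wedge_\utheta}$, with $\utheta$ running over the finitely many points of the fiber of $\Spec\CW_\e\to\Spec Z_\cap$ over $\uchi$. Tensoring with $\BC[[V]]$ matches these factors with the factors of \eqref{eq: decompose Z}, since both products come from the same idempotents of $\CW_\e^{\wedge_{\uchi}}$. This yields $Z^{\wedge_\xi}\cong\BC[[V]]\widehat{\otimes}\CW_\e^{\wedge_\utheta}$ as Poisson algebras.

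The main obstacle I expect is this Poisson compatibility, specifically the claim that the Harish-Chandra center lifts to central elements over $\CA$, so that $\{\CW_\e,Z\}=0$. If that claim is not available, I would fall back on the quoted Remark: $Z_\cap$ is Poisson central and $\CW_\e$ is finite étale-locally over it near $\uchi$. Then derivations by Hamiltonian vector fields extend uniquely and kill $\CW_\e^{\wedge_{\uchi}}$, since they kill $Z_\cap^{\wedge_{\uchi}}$.
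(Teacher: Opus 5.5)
Your proposal is correct and follows the route the paper intends. The corollary is Lemma \ref{lem: Decompose Poisson center} base-changed along $Z_\cap^{\wedge_{\uchi}}\to\CW_\e^{\wedge_{\uchi}}$ via $Z\cong Z_{Fr}\otimes_{Z_\cap}Z_{HC}$, then split along the idempotents of $\CW_\e^{\wedge_{\uchi}}\cong\prod_{\utheta}\CW_\e^{\wedge_\utheta}$, and your check that $\CW_\e$ is Poisson central (via central lifts of the Harish-Chandra center over $\CA$) supplies the Poisson compatibility the paper leaves implicit. Your fallback is not needed, and its claim that $\CW_\e$ is \emph{étale} over $Z_\cap$ near $\uchi$ is false at fiber points with smaller Weyl stabilizer; generic separability in characteristic zero would still give that argument's conclusion.
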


\subsection{Structural results about $U^{ev\wedge_\chi}_q, U^{ev\wedge_\xi}_q$}\ \label{ssec: completion of Uev} 

 Let $U^{ev\wedge_\chi}_\e$ and $U^{ev\wedge_\xi}_\e$ be the completions of $U^{ev}_\e(\g)$ with respect to the ideals $Z\m_\chi$ and $\m_\xi$, respectively. By \eqref{eq: decompose Z}, we have an isomorphism:
\begin{equation}\label{eq: decompose Uev}U^{ev\wedge_\chi}_\e \cong \prod_{\xi=(\chi, \utheta)} U^{ev\wedge_\xi}_\e.
\end{equation}

Since  $\chi \in G^{d, reg}_0 \subset \Spec Z_{Fr}$ is regular, by Theorem \ref{thm: Azumaya locus}, there is an isomorphism of algebras $U^{ev\wedge_\xi}_\e \cong \Mat_{\sd}(Z^{\wedge_\xi})$.
Therefore, $U^{ev \wedge_\chi}_\e \cong \Mat_{\sd}(Z^{\wedge_\chi})$. Moreover, if we fix an isomorphism $U^{ev \wedge_\chi}_\e \cong \Mat_{\sd}(Z^{\wedge_\chi})$ then it induces a family  of algebra  isomorphisms $U^{ev \wedge_\xi}_\e \cong \Mat_{\sd}(Z^{\wedge_\xi})$.

Let $U_q^{ev\wedge_\chi}, U_q^{ev\wedge_\xi}$ be the completions of $U_q^{ev}(\g)$ with respect to the ideals $Z\m_\chi, \m_\xi$, constructed as in Section \ref{ssec: completion}. Since $Z\m_\chi \subset \m_\xi$, we have an algebra homomorphism $U_q^{ev\wedge_\chi} \rightarrow U_q^{ev\wedge_\xi}$, hence we have 
\[ U_q^{ev\wedge_\chi}\rightarrow \prod_{\xi=(\chi, \utheta)} U_q^{ev\wedge_\xi}.\]

 Consider the natural map $\psi: \CW_q \xrightarrow[]{/\hbar} \CW_\e$. Let us define
\begin{equation}\label{eq: complete of HC center}\fJ_\utheta:= \psi^{-1}(\m_\utheta)\qquad \text{and} \qquad \fJ_{\uchi}:= \psi^{-1}(\CW_\e \m_{\uchi}).
\end{equation}
Let $\CW_q^{\wedge_\utheta}$ and $\CW_q^{\wedge_{\uchi}}$ denote the completions of $\CW_q$ with respect to the ideals $\fJ_\utheta$ and $\fJ_{\uchi}$. Since $\CW_\e \m_{\uchi} \subset \m_\utheta$, we have an algebra homomorphism $\CW_q^{\wedge_{\uchi}} \rightarrow \CW_q^{\wedge_\utheta}$, hence we have 
\[ \CW_q^{\wedge_{\uchi}}\rightarrow \prod_{\xi=(\chi, \utheta)} \CW_q^{\wedge_\utheta}.\]

\begin{Lem}\label{lem: decomposition of Uev and W}These maps are isomorphisms:
\[ U^{ev\wedge_\chi}_q \iso \prod_{\xi=(\chi, \utheta)} U^{ev\wedge_\xi}_q, \qquad \CW_q^{\wedge_{\uchi}}\iso \prod_{\utheta}\CW_q^{\wedge_\utheta},\]
here $\utheta$ runs over all preimages of $\uchi$ under the map $\Spec \CW_\e  \rightarrow \Spec Z_\cap$.
\end{Lem}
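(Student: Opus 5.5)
The idea is to lift the decompositions \eqref{eq: decompose Z} and \eqref{eq: decompose Uev}, which hold at $\hbar=0$, to the $\hbar$-adically complete algebras. The lifting mechanism is idempotents: orthogonal central idempotents lift through complete ideals, and both completions in question are complete in topologies containing $\hbar$.

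I would first treat the Harish-Chandra center $\CW_q$. It is commutative and $R$-free, with $\CW_q/\hbar\CW_q \cong \CW_\e$ (via $\psi$). The ideal $\fJ_{\uchi}$ contains $\hbar$, so $\CW_q^{\wedge_{\uchi}}$ is complete with respect to $\fJ_{\uchi}$. Its reduction modulo $\fJ_{\uchi}$ is $\CW_\e/\CW_\e\m_{\uchi}$, which is a finite-dimensional algebra because $\Spec\CW_\e\to\Spec Z_\cap$ is finite. This algebra therefore decomposes as a product of local algebras indexed by the preimages $\utheta$ of $\uchi$, and so has orthogonal idempotents $\bar e_\utheta$ summing to $1$.

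By the standard idempotent lifting argument in a complete commutative ring, these lift uniquely to orthogonal idempotents $e_\utheta\in\CW_q^{\wedge_{\uchi}}$ summing to $1$. Hence $\CW_q^{\wedge_{\uchi}}=\prod_\utheta e_\utheta\CW_q^{\wedge_{\uchi}}$. It then remains to identify $e_\utheta\CW_q^{\wedge_{\uchi}}$ with $\CW_q^{\wedge_\utheta}$. In $e_\utheta\CW_q^{\wedge_{\uchi}}$, the image of $\fJ_\utheta$ has some power contained in the image of $\fJ_{\uchi}$, because $\m_\utheta^N\subset \CW_\e\m_{\uchi}$ locally at $\utheta$ for $N\gg0$ and $\hbar\in\fJ_{\uchi}$. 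Conversely $\fJ_{\uchi}\subset\fJ_\utheta$. The two adic topologies on that factor therefore agree, and the factor is $\CW_q^{\wedge_\utheta}$. This comparison is done compatibly with the natural map to $\prod_\utheta\CW_q^{\wedge_\utheta}$, which shows that map is an isomorphism.

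For $U^{ev}_q$ I would use the same idempotents. By Lemma \ref{lem: Hom is fg over HC}-type finiteness, $\CW_q$ maps centrally into $U_q^{ev}$. In $U_q^{ev\wedge_\chi}$, the ideal $\fJ_{\uchi}U$ is contained in the defining ideal $\varphi_\e^{-1}(Z\m_\chi)U$, since $\CW_\e\m_{\uchi}\subset Z\m_\chi$ and $\hbar$ lies in both. This gives a continuous map $\CW_q^{\wedge_{\uchi}}\to U_q^{ev\wedge_\chi}$ into the center. The images of the $e_\utheta$ are then central orthogonal idempotents, and they split $U_q^{ev\wedge_\chi}=\prod_\utheta e_\utheta U_q^{ev\wedge_\chi}$.

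It remains to show $e_\utheta U_q^{ev\wedge_\chi}\cong U_q^{ev\wedge_\xi}$ for $\xi=(\chi,\utheta)$. I expect this step to be the main obstacle. Both sides are $\hbar$-adically complete and separated and $\hbar$-flat, by Lemma \ref{lem: completion of Uev}(c),(e). So it suffices to check that the natural map is an isomorphism modulo $\hbar$ and then invoke a Nakayama/completeness argument.

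Modulo $\hbar$, Lemma \ref{lem: completion of Uev}(e) identifies the two sides with $\bar e_\utheta U_\e^{ev\wedge_\chi}$ and $U_\e^{ev\wedge_\xi}$. These agree by \eqref{eq: decompose Uev}, provided $\bar e_\utheta$ is exactly the idempotent cutting out the $\xi$-factor of $Z^{\wedge_\chi}$ in \eqref{eq: decompose Z}. That holds because $Z^{\wedge_\chi}\cong Z^{\wedge_\chi}_{Fr}\widehat\otimes_{Z_\cap^{\wedge_{\uchi}}}\CW_\e^{\wedge_{\uchi}}$.

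For the lift, let $f$ denote the map $e_\utheta U_q^{ev\wedge_\chi}\to U_q^{ev\wedge_\xi}$. Surjectivity of $f$ modulo $\hbar$, together with completeness, gives surjectivity of $f$. Injectivity modulo $\hbar$, together with $\hbar$-flatness of the target and separatedness of the source, gives injectivity of $f$.
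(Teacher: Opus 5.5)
Your proof is correct, but it takes a longer route than the paper's. The paper applies the lifting argument from your last step directly to the whole map $U^{ev\wedge_\chi}_q\to\prod_{\xi} U^{ev\wedge_\xi}_q$. A finite product of $\hbar$-adically complete, separated, $\BC[[\hbar]]$-flat algebras is again such. By Lemma \ref{lem: completion of Uev}(e), the map reduces modulo $\hbar$ to the known isomorphism \eqref{eq: decompose Uev}, so it is an isomorphism. The $\CW_q$ statement is handled the same way, using $\CW_\e^{\wedge_{\uchi}}\cong\prod_\utheta\CW_\e^{\wedge_\utheta}$.

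You instead proceed in stages:
\begin{enumerate}
\item You lift idempotents from the Artinian ring $\CW_\e/\CW_\e\m_{\uchi}$ to $\CW_q^{\wedge_{\uchi}}$.
\item You prove the $\CW$ decomposition by comparing the $\fJ_{\uchi}$- and $\fJ_\utheta$-adic topologies on each factor. This essentially reproves the standard fact that completing at an ideal with Artinian quotient gives the product of the completions at the maximal ideals above it.
\item You push these idempotents centrally into $U^{ev\wedge_\chi}_q$.
\item Only then do you run the mod-$\hbar$ argument, factor by factor.
\end{enumerate}
This is longer, but it yields something the paper uses implicitly later, in Remark \ref{rem: idempotent splits of functor}: both decompositions are cut out by the same central idempotents coming from $\CW_q^{\wedge_{\uchi}}$, so they are compatible.

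One small correction: the centrality of $\CW_q$ in $U^{ev}_q(\g)$ is \cite[Lemma 9.29]{LTV}, quoted in Section \ref{ssec: HC center}. It does not follow from Lemma \ref{lem: Hom is fg over HC}.
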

\begin{proof} Both sides of the first map are complete in the $\hbar$-adic topology and flat over $\BC[[\hbar]]$ by Lemma \ref{lem: completion of Uev}. Morevover, the $\hbar$-quotient of the first map gives the isomorphism \eqref{eq: decompose Uev}, hence the first map is an isomorphism. The proof that the second map is an isomorphism  is similar.
\end{proof}

Let $T_\hbar(V)$ be the tensor power over $\BC[\hbar]$ generated by the cotangent space $V$ at $\chi$ of the symplectic leaf containing $\chi$. 
With the symplectic form on $V$,  we form the formal Weyl algebra \[\operatorname{Weyl}_\hbar[V]:= T_\hbar(V)/\< v_1\otimes v_2-v_2\otimes v_1-\hbar\{ v_1, v_2\}~|~v_1, v_2\in V\>,\]
here  $\< v_1\otimes v_2-v_2\otimes v_1-\hbar\{v_1, v_2\}~|~v_1, v_2\in V\>$ is the two-sided ideal in $T_\hbar (V)$. Let $\CA^\wedge_q$ be the completion of $\operatorname{Weyl}_\hbar[V]$ at the maximal ideal generated by $V$ and $\hbar$.
\begin{Prop}\label{prop: Decomposition of completions}Let $\chi \in G^{d, reg}_0 \subset \Spec Z_{Fr}$. There are isomorphisms:
\begin{align*}
 U^{ev \wedge_\xi}_q &\iso \Mat_{\sd}(\BC)\otimes_\BC (\CA^\wedge_q \widehat{\otimes}_{\BC[[\hbar]]} \CW_q^{\wedge_\utheta}),\\
    U^{ev\wedge_{\uchi}}_q & \iso \Mat_{\sd}(\BC)\otimes_\BC(\CA^\wedge_q\widehat{\otimes}_{\BC[[\hbar]]} \CW_q^{\wedge_{\uchi}}).
\end{align*}
\end{Prop}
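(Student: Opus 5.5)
The plan is to deform the classical statement at $\hbar=0$ to the quantum one, following Losev's decomposition argument in \cite[$\mathsection 2.5$--$3$]{IL11}. By Lemma \ref{lem: decomposition of Uev and W}, the second isomorphism follows from the first by taking the product over all $\xi=(\chi,\utheta)$. (Here $U^{ev\wedge_{\uchi}}_q$ is read as $U^{ev\wedge_\chi}_q$; for $\chi$ regular they coincide.) So we concentrate on $U^{ev\wedge_\xi}_q$, a $\BC[[\hbar]]$-flat, $\hbar$-adically complete deformation of $U^{ev\wedge_\xi}_\e$ by Lemma \ref{lem: completion of Uev}(e). At $\hbar=0$, Theorem \ref{thm: Azumaya locus} together with Corollary \ref{cor: Decompose Poisson center} gives
\[ U^{ev\wedge_\xi}_\e\cong \Mat_{\sd}(Z^{\wedge_\xi})\cong \Mat_{\sd}(\BC)\otimes_\BC (\BC[[V]]\widehat{\otimes}\CW_\e^{\wedge_\utheta}). \]

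\textbf{Step 1: lift the matrix units.} The matrix units $e_{ij}\in \Mat_{\sd}(\BC)\subset U^{ev\wedge_\xi}_\e$ form a system of orthogonal idempotents together with transition elements. Since $U^{ev\wedge_\xi}_q$ is $\hbar$-adically complete, the standard idempotent-lifting argument applies. Lift $e_{11},\dots,e_{\sd\sd}$ to orthogonal idempotents summing to $1$, then lift $e_{i1},e_{1i}$ compatibly, correcting successively modulo $\hbar^k$. This gives an embedding $\Mat_{\sd}(\BC)\hookrightarrow U^{ev\wedge_\xi}_q$. Its centralizer $\mathcal{C}$ satisfies $U^{ev\wedge_\xi}_q\cong \Mat_{\sd}(\BC)\otimes \mathcal{C}$, and $\mathcal{C}$ is a flat complete deformation of the commutative algebra $Z^{\wedge_\xi}$. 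The Poisson bracket induced on $\mathcal{C}/\hbar\mathcal{C}=Z^{\wedge_\xi}$ is the one from \eqref{eq: poisson bracket}; the normalization $\hbar$ versus $v-\e$ only rescales it by a nonzero constant, which can be absorbed into the symplectic form on $V$.

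\textbf{Step 2: split $\mathcal{C}$.} The center of $U^{ev\wedge_\xi}_q$ contains the image of $\CW_q^{\wedge_\utheta}$. This image is central in $\mathcal{C}$, flat over $\BC[[\hbar]]$, and reduces to $\CW_\e^{\wedge_\utheta}$. Next, lift the Poisson embedding $\BC[[V]]\hookrightarrow Z^{\wedge_\xi}$ of Corollary \ref{cor: Decompose Poisson center}. Choose a basis $x_1,\dots,x_{2n}$ of $V$ in Darboux form, so that $\{x_i,x_j\}=\omega_{ij}$ is constant. We need lifts $\hat x_i\in\mathcal{C}$ with $[\hat x_i,\hat x_j]=\hbar\,\omega_{ij}$ exactly. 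Start from arbitrary lifts, for which $[\hat x_i,\hat x_j]=\hbar\omega_{ij}+\hbar^2(\dots)$, and correct them order by order in $\hbar$ as in \cite[Lemma 2.5.x]{IL11}, or via the Kaledin-type argument used in Lemma \ref{lem: Decompose Poisson center}. The obstruction at each order is a closed $2$-form (a Poisson $2$-cocycle) on the formal symplectic part. It is exact because of the formal Poincaré lemma, and because $\CW^{\wedge_\utheta}_q$ is central so that no $\CW$-directions contribute. The resulting continuous map $\CA^\wedge_q\widehat{\otimes}_{\BC[[\hbar]]}\CW_q^{\wedge_\utheta}\to\mathcal{C}$ is an algebra homomorphism, since the two factors commute. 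Modulo $\hbar$ it is the isomorphism of Corollary \ref{cor: Decompose Poisson center}. Both sides are $\hbar$-flat and complete, so it is an isomorphism.

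\textbf{Main obstacle.} The hard part is the order-by-order correction in Step 2, namely producing exact Weyl relations for the lifts of $V$ while keeping them commuting with $\CW_q^{\wedge_\utheta}$. If the direct cohomological argument becomes messy, the fallback is Losev's approach. Consider the centralizer in $\mathcal{C}$ of a lifted Weyl subalgebra, and use the standard fact that a complete algebra containing $\CA^\wedge_q$, with matching $\hbar$-reductions, decomposes as $\CA^\wedge_q\widehat{\otimes}(\text{centralizer})$. Then identify the centralizer with $\CW_q^{\wedge_\utheta}$ by comparing reductions modulo $\hbar$.
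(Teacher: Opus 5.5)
Your proposal is correct and follows the paper's proof. You lift the matrix units to get $U_q^{ev\wedge_\xi}\cong\Mat_{\sd}(\BC)\otimes_\BC R^\xi_\hbar$, lift the Poisson embedding $\BC[[V]]\hookrightarrow Z^{\wedge_\xi}$ to $\CA^\wedge_q\hookrightarrow R^\xi_\hbar$, adjoin the central $\CW_q^{\wedge_\utheta}$, and conclude from the isomorphism modulo $\hbar$ by $\hbar$-adic completeness and flatness; the only difference is that the paper cites \cite[Lemma 4.1]{IL15} for the Weyl-algebra lift where you sketch the order-by-order Poincar\'e-lemma correction.

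Two minor remarks. First, reading the $\uchi$ in the second line of the statement as $\chi$ is right, but your stated reason is not: a completion along the fiber over $\uchi\in\Spec Z_\cap$ does not coincide with $U_q^{ev\wedge_\chi}$, so this is simply a typo. Second, extending the Weyl-algebra map to $\CA^\wedge_q$ needs completeness of $R^\xi_\hbar$ in the $J$-adic topology for $J=\phi_\hbar^{-1}(\m_\xi)$, not only $\hbar$-adically, and the paper takes this from Lemma \ref{lem: completion of Uev}(c).
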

\begin{proof} Let us prove the first isomorphism; the proof for the second isomorphism is similar.

Recall the epimorphism $\phi^\wedge: U_q^{ev\wedge_\xi} \xrightarrow[]{/\hbar} U_\e^{ev\wedge_\xi}$. We fix an isomorphism $U_\e^{ev\wedge_\xi} \iso \text{Mat}_{\sd}(Z^{\wedge_\xi})$. By Lemma \ref{lem: completion of Uev}, $U_q^{ev\wedge_\xi}$ is complete and separated in the $\hbar$-adic topology and flat over $\BC[[\hbar]]$. By lifting idempotents,  
\[U_q^{ev\wedge_\xi} \cong \text{Mat}_{\sd}(\BC) \otimes_\BC R_\hbar^\xi\]
so that $R_\hbar^\xi$ is a formal deformation of $Z^{\wedge_\xi}$, i.e.,  $R_\hbar^\xi$ is complete and separated in the $\hbar$-adic topology and flat over $\BC[[\hbar]]$. Furthermore, under the natural map $\phi_\hbar: R_\hbar^\xi  \xrightarrow[]{\hbar} Z^{\wedge_\xi}$, let $J:=\phi_\hbar^{-1}(\m_\xi)$ then $R_\hbar^\xi$ is complete in the $(R_\hbar J)$-adic topology by Lemma \ref{lem: completion of Uev}(c).

Fix a Poisson isomorphism $Z^{\wedge_\xi}\iso \BC[[V]]\widehat{\otimes} \CW_\e^{\wedge_\utheta}$ as in Corollary \ref{cor: Decompose Poisson center}. We can lift the embedding $\BC[[V]]\hookrightarrow Z^{\wedge_\xi}$ to an embedding $\CA_q^\wedge \hookrightarrow R_\hbar^\xi$, see \cite[Lemma 4.1]{IL15}. On the other hand, the embedding $\CW_q^{\wedge_\utheta} \hookrightarrow U_q^{ev\wedge_\xi}$  induces an embedding $\CW_q^{\wedge_\utheta} \hookrightarrow R_\hbar^\xi$ since $\CW_q^{\wedge_\utheta}$ is central in $U_q^{ev\wedge_\xi}$. Therefore, we have an algebra morphism $\CA_q^\wedge \widehat{\otimes}_{\BC[[\hbar]]} \CW_q^{\wedge_\utheta} \rightarrow R_\hbar^\xi$ which fits into the following diagram:
    \[ \begin{tikzcd}\CA_q^\wedge \widehat{\otimes}_{\BC[[\hbar]]} \CW_q^{\wedge_\utheta} \arrow[d, "/\hbar"] \arrow[r] & R_\hbar^\xi \arrow[d, "/\hbar"] &\\
    \BC[[V]]\widehat{\otimes} \CW_\e^{\wedge_\utheta} \arrow[r, "\cong"]& Z^{\wedge _\xi}
    \end{tikzcd} \]
The completed tensor product $\CA_q^\wedge\widehat{\otimes}_{\BC[[\hbar]]} \CW_q^{\wedge_\utheta}$ and $R_\hbar^\xi$ are complete and separated in the $\hbar$-adic topology, and $R_\hbar^{\xi}$ is flat over $\BC[[\hbar]]$, therefore, $\CA_q^\wedge \widehat{\otimes}_{\BC[[\hbar]]} \CW_q^{\wedge_\utheta} \rightarrow R_\hbar^\xi$ is an isomorphism.   
\end{proof}

\subsection{Other complete algebras}\


For $\utheta \in \Spec \CW_\e$ and $\uchi \in \Spec Z_\cap$, we defined the completions $\CW_q^{\wedge_{\uchi}}, \CW_q^{\wedge_\utheta}$ following \eqref{eq: complete of HC center}. Let us define the following algebras:
\begin{equation}\label{eq: partly complete algebras}
\begin{split}
    U^{fin, \uchi}_\e := U^{fin}_\e \otimes_{\CW_\e} \CW_\e^{\wedge_{\uchi}}, \qquad
    U^{fin, \uchi}_q := \big(U^{fin}_q \otimes_{\CW_q} \CW^{\wedge_{\uchi}}_q\big)/\big(\cap_{k=1}^\infty \hbar^k U^{fin}_q\otimes_{\CW_q} \CW_q^{\wedge_{\uchi}}\big),  \\
    U^{fin, \utheta}_\e:= U^{fin}_\e \otimes_{\CW_\e}  \CW_\e^{\wedge_\utheta} , \qquad  U^{fin, \utheta}_q:= \big( U^{fin}_q \otimes_{\CW_q} \CW_q^{\wedge_\utheta}\big)/\big(\cap_{k=1}^\infty \hbar^k U^{fin}_q\otimes_{\CW_q} \CW_q^{\wedge_\utheta}\big)
 \end{split}  
\end{equation}
\begin{Rem}$U^{fin, \uchi}_\e = U^{fin}_\e \otimes_{Z_\cap} Z^{\wedge_{\uchi}}_\cap$. Furthermore,
\begin{equation}\label{eq: decomposition of Ufin}  U^{fin, \uchi}_\e= \prod_{\utheta} U^{fin, \utheta}_\e \qquad \text{and} \qquad U^{fin, \uchi}_q= \prod_{\utheta} U^{fin, \utheta}_q,
\end{equation}
where $\utheta$ runs over the preimages of $\uchi$ under the map $\Spec \CW_\e \rightarrow \Spec Z_\cap$. 
\end{Rem}
\begin{Rem}  Since $U_q^{fin}$ is flat over $\BC[[\hbar]]$ and $\CW_q^{\wedge_?}$ is flat over $\CW_q$, it follows that $U_q^{fin}\otimes_{\CW_q} \CW_q^{\wedge_?}$ is flat over $\BC[[\hbar]]$. We expect $U^{fin}_q\otimes_{\CW_q} \CW_q^{\wedge_?}$ to be separated in the $\hbar$-adic topology but cannot prove it. The quotients in the definition of $U^{fin, \uchi}_q$ and $U^{fin ,\utheta}_q$ are needed to make sure  that these algebras are separated in the $\hbar$-adic topology. Then it is easy to show that  these two algebras are flat over $\BC[[\hbar]]$ since $U_q^{fin}\otimes_{\CW_q} \CW_q^{\wedge_?}$ is flat over $\BC[[\hbar]]$. Moreover, they are the maximal rational subrepresentations of the completions $U^{fin \wedge_{\uchi}}_q$ and $ U^{fin \wedge_\utheta}_q$, respectively, but we will not use this fact.
\end{Rem}


\section{Quantum category O}
 Recall the complete local algebra $\sR$ and its maximal ideal $\m$ in Section \ref{ssec: the algebra R}.  Recall the mixed form $U_v^{mix}$ in Definition \ref{def: A-forms} and its specializations $U_\e^{mix}, U_q^{mix}$ in the cases \ref{case over C} and \ref{case over C[[h]]}, respectively. We have the triangular decompositions
\[ U_q^{mix} \cong U_q^{ev<}\otimes_{\BC[[\hbar]]} \cU_q^0 \otimes_{\BC[[\hbar]]} \cU_q^>, \qquad U_\e^{mix}\cong U_\e^{ev<} \otimes_\BC \cU_\e^0 \otimes_{\BC} \cU_\e^>,\]
see Remark \ref{rem: triangular decomposition} for the definitions of $\cU^0_q, \cU^>_q$ and Lemma \ref{lem: PBW-basis for even part} for the definition of $U_q^{ev <}$.

As in \cite{IL23, Si24}, we introduce the (integral block) categories $O_q$.

\begin{defi}The category $O_q$ is the full subcategory of finitely generated left $U_q^{mix}$-modules consisting of all modules $M$ with an $R$-module decomposition $M=\oplus_\lambda M_\lambda$ satisfying
\begin{itemize}
    \item $\cU_q^0$ acts on $M_\lambda$ via $\chi_{\lambda}$ defined in \eqref{eq: character chi}.
    \item $M_\lambda$ is a finitely generated module over $R$.
    \item The weights are bounded from above: there is a finite collection of weights $\{ \lambda_i\}_{i \in I}$ so that $M_\lambda \neq \{0\}$ implies $\lambda \leq \lambda_i$ for some $i\in I$ under the dominance order on $P$.
\end{itemize}
\end{defi}
\begin{Rem}In the case \ref{case over C}, we denote $O_q$ by $O_\e$. The category $O_\e$ is embedded into $O_q $ via the pullback under the epimorphism $U^{mix}_q \twoheadrightarrow U^{mix}_\e$. 
\end{Rem}
\begin{Ex}Let $\BC[[\hbar]]_\lambda$ be $\BC[[\hbar]]$ with the  $\cU_q^\geq$-module structure defined  via $\cU_q^{\geq} \rightarrow \cU_q^0 \xrightarrow[]{\chi_\lambda} \BC[[\hbar]]$, then the  {\em Verma module} $\Delta_q(\lambda):=U_q^{mix} \otimes_{\cU_q^\geq} \BC[[\hbar]]_\lambda$ belongs to $O_q$. Similarly, the  {\em Verma module} $\Delta_\e(\lambda):=U_\e^{mix}\otimes_{\cU_\e^\geq} \BC_\lambda$ belongs to $O_\e$.
\end{Ex}
 
We have the embedding $\iota: \h^* \hookrightarrow \sR$ by sending $\nu \in \h^*$ to  the element $(\nu, \cdot)\in \h \subset \sR$. For any $\lambda \in P$, there is an algebra  homomorphism $\chi_{\lambda, \sR}: \cU_q^0\rightarrow \sR$ satisfying:
\begin{equation}\label{eq: deformed char}
 K^\nu \mapsto q^{(\lambda, \nu)}e^{2\pi \sqrt{-1}\iota(\nu)}, \qquad (\nu \in 2P).
\end{equation}
Since $q$ is not a root of unity in $\sR$, the image of the element $\binom{K_i;a}{m} \in \cU_q^0$ under $\chi_{\lambda, \sR}$ is determined by the images of $K^\nu$.
\begin{defi}A {\em deformed weight module} over $U_q^{mix}\otimes_{\BC[[\hbar]]} \sR$ is a $U_q^{mix}\otimes_{\BC[[\hbar]]} \sR$-module $M$ with an $\sR$-module decomposition $M=\oplus_\lambda M_\lambda$ so that $\cU_q^0$ acts on $M_\lambda$ via $\chi_{\lambda, \sR}$,  and $\tF_i M_\lambda \subset M_{\lambda-\a_i}, \tE_i^{(n)}M_\lambda \subset M_{\lambda+n \a_i}$ for $1\leq i \leq r, n\in \BZ_{\geq 0}, \lambda \in P$.  
\end{defi}

\begin{defi}The deformed category $O_{q, \sR}$ is the full subcategory of finitely generated left $U_q^{mix}\otimes_{\BC[[\hbar]]} \sR$-modules consisting of all deformed weight modules $M=\oplus_\lambda M_\lambda$ satisfying
\begin{itemize}
    \item $M_\lambda$ is a finitely generated module over $\sR$.
    \item The weights are bounded from above: there is a finite collection of weights $\{ \lambda_i\}_{i \in I}$ so that $M_\lambda \neq \{0\}$ implies $\lambda \leq \lambda_i$ for some $i\in I$ under the dominance order on $P$.
\end{itemize}   
\end{defi}

\begin{Ex}Let $\sR_\lambda$ be $\sR$ with the $\cU_q^\geq \otimes_{\BC[[\hbar]]} \sR$-module structure on which $\cU_q^\geq$ acts via $\cU_q^\geq \rightarrow \cU_q^0 \xrightarrow[]{\chi_{\lambda, \sR}}\sR$. Then the {\em Verma module} $\Delta_{q, \sR}(\lambda):= U_q^{mix}\otimes_{\cU_q^\geq} \sR_\lambda$ belongs to $O_{q, \sR}$.
\end{Ex}

Recall the map $
\tau: \h^* \rightarrow \Spec \CW_\e$ in \eqref{eq: tau map} with a choice of $\e=e^a$ for some $a\in \BC$.
\begin{defi}The equivalent relation $\overset{\tau}{\sim}$ on $P$ is defined  as $\lambda \overset{\tau}{\sim}\mu \Leftrightarrow \tau(\lambda)=\tau(\mu)$. Let $P/\tau$ denote the equivalent classes of $P$ under  $\overset{\tau}{\sim}$. Let $[\lambda]$ denote the class of $\lambda$ in $P/\tau$.
\end{defi}
\begin{Rem}\label{rem: Ptau under assumption on l}Under the assumption \ref{eq: assumption on l} on $\ell$ in the next section, we have $P/\tau =P/(W_{ext}, \bullet_\ell)$.
\end{Rem}
\begin{Lem}\label{lem: block decomposition} There are infinitesimal block  decompositions:
\[O_\e =\bigoplus_{[\lambda] \in P/\tau} O_\e^{[\lambda]}, \qquad O_q=\bigoplus_{[\lambda] \in P/\tau} O_q^{[\lambda]}, \qquad O_{q, \sR} =\bigoplus_{[\lambda] \in P/\tau}O_{q, \sR}^{[\lambda]},\]  
where $O_\e^{[\lambda]}$ is the full subcategory of $O_\e$ spanned by Verma modules $\Delta_{\e}(\nu)$ for all  $\nu \in [\lambda]\subset P/\tau$; the categories $O_q^{[\lambda]}, O_{q, \sR}^{[\lambda]}$ are similarly defined.
\end{Lem}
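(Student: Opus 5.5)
The plan is to use a central character argument: the Harish-Chandra center acts on each object of category $O$ through finitely many characters, and the resulting generalized eigenspaces give the blocks. Let $\CW_q$ (resp. $\CW_\e$) act on modules in $O_q$ (resp. $O_\e$) through the inclusion $\CW_q\subset U^{ev}_q(\g)\subset U^{mix}_q$; note that $U^{ev}_q(\g)\subset U^{mix}_q$ by definition of the generators. Since $\CW_q$ is central in $U^{ev}_q(\g)$, the first thing to check is that it is also central in $U^{mix}_q$. For this I would argue that $\CW_q$ consists of $\ad'$-invariants of $\cU_q(\g)$. Centrality against $K^\lambda$ and $\tF_i$ follows from centrality in $U^{ev}$. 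For $\tE_i^{(n)}$, I would use $\ad'(\tE_i^{(n)})(z)=\varepsilon(\tE_i^{(n)})z=0$ together with the twisted coproduct formula \eqref{EF coproduct}, and induct on $n$ to show $\tE_i^{(n)} z = z\tE_i^{(n)}$. This is the standard statement that ad-invariants commute with the acting Hopf algebra.

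Next, on a Verma module $\Delta(\nu)$ the element $z\in\CW$ acts on the highest weight vector $1_\nu$ by the scalar $\chi_\nu(\pi(z))$, since $\pi_+$ kills the positive part acting on $1_\nu$. Because $z$ is central, it acts by this same scalar on all of $\Delta(\nu)$. By Proposition \ref{prop: HC center} and the definition of $\tau$, this character is the point $\tau(\nu)\in\Spec\CW_\e$. In the $\sR$-deformed and $\hbar$-deformed cases the action is instead by an element of $\sR$ (resp. $\BC[[\hbar]]$) whose reduction modulo the maximal ideal is $\tau(\nu)$.

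To decompose an arbitrary object $M$, I would first show that $M$ admits a finite filtration whose subquotients are quotients of Verma modules, with coefficients $\sR/\m^k$ or $\BC[[\hbar]]/\hbar^k$ in the deformed cases. The filtration comes from the bounded-above weights together with finite generation: take highest weight vectors and use the universal property of $\Delta$, together with finiteness of the weight spaces. Then each finitely generated weight space $M_\mu$ is a module over $\CW$ on which $\prod_j(z-\tau(\nu_j)(z))^{N}$ acts nilpotently, or topologically nilpotently over $\sR$. This lets me split $M_\mu$ into generalized eigenspaces indexed by the finitely many classes $[\nu_j]$. In the completed cases the splitting is by idempotents: lift the idempotents of the finite-dimensional algebra $\CW/\mathrm{Ann}$, which works because $M_\mu$ is finitely generated over the complete local ring. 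The summands are $U^{mix}$-stable because $\CW$ is central, and a nonzero morphism between pieces with different classes is impossible. The resulting subcategory $O^{[\lambda]}$ is generated under extensions and quotients by $\Delta(\nu)$ with $\nu\in[\lambda]$, which is the meaning of "spanned" in the statement.

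The main obstacle I expect is the centrality of $\CW_q$ in $U^{mix}_q$ against the divided powers $\tE_i^{(n)}$, in the twisted coproduct setting. A secondary issue is handling the idempotent lifting uniformly over $\sR$ and $\BC[[\hbar]]$, where I would rely on completeness (as in Lemma \ref{lem: module over completion}) rather than on literal eigenvalues.
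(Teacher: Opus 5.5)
Your proposal is correct and follows the paper's route. Each Verma module has a single central character $\tau(\nu)$ (after reduction modulo $\m$, resp. $\hbar$, in the deformed cases), every object has a finite filtration by quotients of Verma modules, and each finitely generated weight space splits according to its support over $\CW_q\otimes_{\BC[[\hbar]]}\sR$; this splitting is exactly your idempotent-lifting step. You also make explicit the centrality of $\CW_q$ in $U^{mix}_q$, which the paper uses without comment. Two small corrections: the filtration subquotients need not be truncated to coefficients $\sR/\m^k$, and what is finite-dimensional is the image of $\CW$ in $\End(M_\mu)$ reduced modulo the maximal ideal, not $\CW/\mathrm{Ann}$ itself.
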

\begin{proof} We prove the third decomposition. Let $M\in O_{q, \sR}$. 

 Let us recall the Harish-Chandra center $\CW_q$. Let us consider the commutative algebra $\CW_q\otimes_{\BC[[\hbar]]} \sR$. The support of $\Delta_{q, \sR}$ in $\CW_q\otimes_{\BC[[\hbar]]} \sR$ contains the unique closed point $(\lambda, \m)$ corresponding to the morphism $\CW_q\otimes_{\BC[[\hbar]]} \sR\xrightarrow[]{/\m} \CW_\e\xrightarrow[]{\lambda}\BC$, see \eqref{eq: tau map}. Two closed points $(\lambda, \m)$ and $(\mu, \m)$ are the same if and only if $\tau(\lambda)=\tau(\mu)$. 

 On the other hand, since $M$ is finitely generated over $U_q^{mix}\otimes_{\BC[[\hbar]]} \sR$ and its weights are bounded from above, $M$ has a finite filtration whose successive quotients are quotient of Verma modules $\Delta_{q, \sR}(\lambda)$ for some $\lambda$. Therefore, $M=\oplus_\lambda M^{[\lambda]}$, where $M^{[\lambda]}$ is the component whose support in $\CW_q\otimes_{\BC[[\hbar]]} \sR$ contains the unique closed point $(\lambda, \m)$\footnote{to be more rigorious, one note that each weight space $M_\mu$ of $M$ is a finitely generated submodule over $\CW_q\otimes_{\BC[[\hbar]]} \sR$, then one decomposes $M_\lambda$ first then takes direct sum.}. This finishes the lemma.
\end{proof}

Let $\pr_{[\lambda]}: O_\e \rightarrow O_\e^{[\lambda]}$ be the natural projection. We use the same notation for the projections of the other two decompositions in Lemma \ref{lem: block decomposition}. Recall that each $\lambda \in P$ gives a point $\lambda \in \Spec \CW_\e$.

 \begin{Lem}\label{lem: extension action on cat O}Any object in $O_{q, \sR}^{[\lambda]}$ carries a natural action of $U^{ev}_q\otimes_{\CW_q} \CW_q^{\wedge_\lambda}$ by continuity. This gives  actions of the algebra $U_q^{fin, \lambda}$ defined in \eqref{eq: partly complete algebras} and  the algebra $U_q^{ev, \lambda}:= \big(U^{ev}_q\otimes_{\CW_q} \CW_q^{\wedge_\lambda}\big)/\big(\cap_{k=1}^\infty \hbar^k U_q^{ev}\otimes_{\CW_q} \CW_q^{\wedge_\lambda}\big)$ on it. 
 \end{Lem}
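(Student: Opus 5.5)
To prove the lemma, I will first reduce to the action of the Harish-Chandra center. The image of $U^{ev}_q$ in $\End(M)$, for $M\in O^{[\lambda]}_{q,\sR}$, makes sense: $U^{ev}_q\subset U^{mix}_q$ (both are subalgebras of $\bU_v(\g,P/2)$, and $\tE_i=\tE_i^{(1)}$). So the only task is to extend the $\CW_q$-action to a continuous $\CW_q^{\wedge_\lambda}$-action, that is, to show $M$ is complete and separated for the $\fJ_\lambda$-adic topology in a compatible way. Here $\fJ_\lambda=\psi^{-1}(\m_\lambda)$, and $\lambda$ is viewed as the point $\tau(\lambda)\in\Spec\CW_\e$.

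The key step is to work weight space by weight space. Since $\CW_q$ consists of $\ad$-invariant elements, it has weight zero and preserves each $M_\mu$. Each $M_\mu$ is a finitely generated $\sR$-module, so it is a finitely generated module over the commutative Noetherian complete local ring $\sR$ (complete with respect to $\m$, which contains $\hbar$). As in the proof of Lemma \ref{lem: block decomposition}, the support of $M_\mu$ over $\CW_q\otimes_{\BC[[\hbar]]}\sR$ consists only of the closed point $(\lambda,\m)$. To see this, filter $M$ by subquotients of deformed Verma modules $\Delta_{q,\sR}(\nu)$ with $\nu\in[\lambda]$. On each such module $\CW_q$ acts through $\gamma\circ\pi$ evaluated at the deformed character $\chi_{\nu,\sR}$, and modulo $\m$ this is the character $\tau(\nu)=\tau(\lambda)$ of $\CW_\e$. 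Hence $\fJ_\lambda$ maps into $\m$ under this character, i.e.\ $\fJ_\lambda M_\mu\subset \m M_\mu$ on each Verma subquotient. Because the filtration is finite on the weight space $M_\mu$, some power satisfies $\fJ_\lambda^{k}M_\mu\subset\m M_\mu$, and consequently $\fJ_\lambda^{kn}M_\mu\subset\m^nM_\mu$. Conversely, $\hbar\in\fJ_\lambda$ and the $\h$-part of $\m$ is not controlled by $\fJ_\lambda$. However, completeness of $M_\mu$ in the $\m$-adic topology (Artin--Rees over $\sR$) together with the inclusion $\fJ_\lambda^{kn}M_\mu\subset\m^nM_\mu$ shows that any $\fJ_\lambda$-adic Cauchy series $\sum a_n m$, with $a_n\in\fJ_\lambda^n$, converges in $M_\mu$. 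Therefore the action of $\CW_q$ on $M_\mu$ extends uniquely by continuity to $\CW_q^{\wedge_\lambda}$, commuting with $\sR$.

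Next I extend the action from $\CW_q$ to $U^{ev}_q\otimes_{\CW_q}\CW_q^{\wedge_\lambda}$. The element $u\otimes w$ acts as the composite $u\circ w$. This is well defined and an algebra action because $\CW_q$ is central in $U^{ev}_q$: the extended operators commute with $U^{ev}_q$ by continuity, since each $u$ maps $M_\mu$ to a finite sum of weight spaces continuously. Finally, the action factors through the quotients defining $U^{ev,\lambda}_q$ and $U^{fin,\lambda}_q$. This holds because $M_\mu$ is $\hbar$-adically separated: it is finitely generated over the Noetherian local ring $\sR$, so Krull's intersection theorem gives $\bigcap_k\hbar^kM_\mu=0$. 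Therefore any element of $\bigcap_k\hbar^k(U^{ev}_q\otimes\CW_q^{\wedge_\lambda})$ acts by zero. Restricting the action to $U^{fin}_q\subset U^{ev}_q$ gives the $U^{fin,\lambda}_q$-action.

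The main obstacle is the claim that $\fJ_\lambda$ acts topologically nilpotently modulo $\m$ on each weight space. This requires identifying the $\CW_q$-character of $\Delta_{q,\sR}(\nu)$ via Proposition \ref{prop: HC center} and checking that it reduces modulo $\m$ to $\tau(\nu)$. It also requires a uniform finite Verma-type filtration on each $M_\mu$; I would obtain this from finite generation of $M$ and the upper bound on its weights, as in Lemma \ref{lem: block decomposition}.
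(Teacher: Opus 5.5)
Your proposal is correct and is essentially the paper's argument. A finite filtration with deformed Verma-type subquotients in the block $[\lambda]$ gives $\fJ_\lambda^{s}M_\mu\subset \m^k M_\mu$; the $\m$-adic completeness of each finitely generated $\sR$-module $M_\mu$ then extends the $\CW_q$-action by continuity; and $\hbar$-adic separatedness of the weight spaces makes the action factor through $U_q^{ev,\lambda}$ and $U_q^{fin,\lambda}$.

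The only differences are these. The paper reduces modulo $\m^k$ and uses that objects of $O_\e^{[\lambda]}$ are killed by a power of $\m_\lambda$, whereas you filter directly in $O_{q,\sR}^{[\lambda]}$. Also, the scalar by which $\CW_q$ acts on $\Delta_{q,\sR}(\nu)$ is $\chi_{\nu,\sR}\circ\pi$ with the unnormalized Harish-Chandra map $\pi$, not its $\gamma_{-\rho}$-twist.
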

\begin{proof}{\it Step 1:} Let $\m_\lambda$ be the maximal ideal of $\lambda \in \Spec \CW_\e$, see \eqref{eq: tau map}.  Since any object in $O_\e^{[\lambda]}$ has a finite filtration whose subquotients are quotients of the Verma modules $\Delta_\e(\mu)$ for $\mu \in  [\lambda]$, any object in $O_\e^{[\lambda]}$ is killed by some power $\m_\lambda$.

{\it Step 2:} Let $M \in O_q^{[\lambda]}$.  Recall that $\m$ is the maximal ideal of $\sR$. Recall the epimorphism  $\phi: \CW_q\xrightarrow[]{/\hbar} \CW_\e$ and let $\fJ_\lambda:=\phi^{-1}(\m_\lambda)$. Each weight space $M_\mu$ of $M$ is stable under the $\CW_q$-action. Since $M/ M \m^k$ has a finite filtration whose subquotients are contained in $O_\e^{[0]}$, by Step $1$, there is $s_k>0$ such that $\fJ_\lambda^{s_k} M \subset M \m^k$. On the other hand, each weight space $M_\mu$ is a finitely generated module over $\sR$, therefore is complete and separated in the $\m$-adic topology. Hence, the action of $\CW_q$ on $M_\mu$ extends uniquely to an action of $\CW_q^{\wedge_\lambda}$ by continuity. This proves the first part of the lemma.

{\it Step 3:} Since each weight space $M_\mu$ is separated in the $\hbar$-adic topology,  the second part of the lemma follows.
\end{proof}

\begin{defi}\label{defi: Weyl(mu-lambda)}(a) For $\mu, \lambda \in P_+$, let $\nu$ be the unique dominant weight in the orbit $W(\mu-\lambda)$. Then we define Weyl module $W_q(\mu|\lambda):=W_q(\nu)$ and the indecomposable tilting module $T_q(\mu|\lambda):= T_q(\nu)$.

\noindent
(b) Let $W_\lambda$ be the stabilizer of $\lambda$ in $W_{*aff}$ under the dot action $\bullet$, see \eqref{eq: dot action W*}. 
\end{defi}
\begin{Lem}\label{lem: Verma under translation} Let $\lambda, \mu$ be in the closure of the fundamental alcove  $\overline{C}$ in \eqref{eq: fundamental alcove for W*} such that $W_\lambda \subset W_\mu$. Then  
\begin{gather*}\pr_{[\mu]}\big(W_q(\mu|\lambda)\otimes_{\BC[[\hbar]]} \Delta_\e(\lambda)\big)=\Delta_\e(\mu), \\
\pr_{[\mu]}\big(W_q(\mu|\lambda)\otimes_{\BC[[\hbar]}\Delta_q(\lambda)\big)=\Delta_q(\mu), \\
\pr_{[\mu]}\big(W_q(\mu|\lambda)\otimes_{\BC[[\hbar]]}\Delta_{q, \sR}(\lambda)\big)=\Delta_{q, \sR}(\mu).
\end{gather*}
The same equalities hold if we replace the Weyl module $W_q(\mu|\lambda)$ by the tilting module $T_q(\mu|\lambda)$.
\end{Lem}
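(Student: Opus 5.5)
The plan is the standard translation-functor argument of Jantzen, adapted to the three settings. I treat the deformed case $O_{q,\sR}$ in detail; the cases $O_q$ and $O_\e$ then follow by the same argument, or by base change along $\sR\to\BC[[\hbar]]\to\BC$. Throughout, $V$ denotes either $W_q(\mu|\lambda)$ or $T_q(\mu|\lambda)$. Both are free over $\BC[[\hbar]]$ with the same weights and multiplicities, namely those of $W_q(\nu)$, where $\nu$ is the dominant weight in $W(\mu-\lambda)$. This holds because $T_q(\nu)$ has a Weyl filtration whose top factor is $W_q(\nu)$ and whose other factors are $W_q(\nu')$ with $\nu'<\nu$. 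Lemma \ref{lem: Key lemma on weights} will also handle the extra weights of $T_q$, since it excludes all $\nu'<\nu_1$.

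\emph{Step 1: filtration.} The tensor product $V\otimes\Delta_{q,\sR}(\lambda)\cong U_q^{mix}\otimes_{\cU_q^{\geq}}(V|_{\cU^\geq_q}\otimes \sR_\lambda)$ is obtained by the usual tensor identity. I will use the coproduct \eqref{eq: twist-Hopf-2} and the PBW decomposition. Filtering $V$ by a $\cU_q^\geq$-stable flag of weight spaces, this gives a filtration whose subquotients are $\Delta_{q,\sR}(\lambda+\eta)$, one for each weight $\eta$ of $V$ counted with multiplicity. One point needs care: the deformed characters $\chi_{\lambda,\sR}$ multiply correctly, because $V$ carries the undeformed character $\chi_\eta$, so the product is $\chi_{\lambda+\eta,\sR}$.

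\emph{Step 2: block selection.} The functor $\pr_{[\mu]}$ is exact by Lemma \ref{lem: block decomposition}. Hence $\pr_{[\mu]}(V\otimes\Delta_{q,\sR}(\lambda))$ is filtered by those $\Delta_{q,\sR}(\lambda+\eta)$ with $\lambda+\eta\in[\mu]$. By Lemma \ref{lem: closed points on HC center}(b), applied with $\lambda+\eta\in\mu+Q$ (the root lattice class is forced, since $\eta\in\nu+Q$ and $\nu\in \mu-\lambda+Q$), this means $\lambda+\eta\in W_{*aff}\bullet\mu$. Write $\eta=w\nu'$ with $\nu'\le\nu$ dominant. Lemma \ref{lem: Key lemma on weights}(a) rules out $\nu'<\nu$. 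For $\nu'=\nu$, Lemma \ref{lem: Key lemma on weights}(b) gives $w_1\in W_\lambda$ with $\lambda+w\nu=w_1\bullet\mu$. Since $W_\lambda\subset W_\mu$, we get $w_1\bullet\mu=\mu$, so $\eta=\mu-\lambda$. The weight $\mu-\lambda$ lies in the $W$-orbit of $\nu$, so it is an extremal weight, and it has multiplicity one in $W_q(\nu)$ and in $T_q(\nu)$. Therefore exactly one subquotient survives, and $\pr_{[\mu]}(V\otimes\Delta_{q,\sR}(\lambda))\cong\Delta_{q,\sR}(\mu)$.

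\emph{Main obstacle.} The hard point is making Step 2 rigorous in the deformed settings. There the blocks are defined through the support over $\CW_q\otimes\sR$ at closed points, and I must check that a Verma module $\Delta_{q,\sR}(\lambda+\eta)$ lies in the block $[\mu]$ exactly when $\tau(\lambda+\eta)=\tau(\mu)$. This is precisely what the proof of Lemma \ref{lem: block decomposition} shows, since the unique closed point of $\Delta_{q,\sR}(\nu)$ is $(\nu,\m)$. It remains to verify that this closed point does not depend on the deformation, that is, that the central character reduces modulo $\m$ to the one for $\Delta_\e(\nu)$. This reduction holds by construction of $\chi_{\lambda,\sR}$. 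The $O_\e$ and $O_q$ statements follow from the $\sR$ statement, either by this same argument or because $\Delta_{q,\sR}(\lambda)\otimes_\sR\BC[[\hbar]]=\Delta_q(\lambda)$ and the projections commute with this base change.
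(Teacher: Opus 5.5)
Your proposal is correct and is the paper's own argument: filter the tensor product by Vermas $\Delta_{q,\sR}(\lambda+\eta)$, keep only those in the block $[\mu]$ via Lemma~\ref{lem: closed points on HC center}(b), and use Lemma~\ref{lem: Key lemma on weights} together with $W_\lambda\subset W_\mu$ to see that only $\eta=\mu-\lambda$ survives, with multiplicity one. One small slip: your opening claim that $T_q(\mu|\lambda)$ has the same weights and multiplicities as $W_q(\nu)$ is false in general, but the argument only needs that every weight of $T_q(\nu)$ has dominant conjugate $\le\nu$ and that the weights in $W\nu$ have multiplicity one, and you do justify both (the paper leaves the tilting case implicit).
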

\begin{proof}

Let us prove the third equality, the others are proved similarly. The object $W_q(\mu|\lambda)\otimes_{\BC[[\hbar]]} \Delta_{q, \sR}(\lambda)$ has a finite filtration whose subquotients are Verma modules $\Delta_{q, \sR}(\lambda+\nu)$, where $\nu$ runs over the weights of $W_q(\mu|\lambda)$, with the multiplicity equal to the weight multiplicity of $\nu$ in $W_q(\mu|\lambda)$.

By Lemma \ref{lem: block decomposition}, only subquotients $\Delta_{q, \sR}(\lambda+\nu)$ with $\tau(\lambda+\nu)=\tau(\mu)$ will survive after applying $\pr_{[\mu]}$ to $W_q(\mu|\lambda)\otimes_{\BC[[\hbar]]} \Delta_{q, \sR}(\lambda)$. Since $\lambda+\nu \in \mu+Q$, by Lemma \ref{lem: closed points on HC center}, $\tau(\lambda+\nu)=\tau(\mu)$ if and only if $\lambda+\nu \in W_{*aff}\bullet\mu$. Hence by Lemma \ref{lem: Key lemma on weights}, using the condition $W_\lambda \subset W_\mu$,  we conclude that only the subquotient $\Delta_{q, \sR}(\mu)$ will survive after applying $\pr_{[\mu]}$, i.e., the third equality holds.
\end{proof}

\begin{Rem}\label{rem: right hand version}We also consider the right module versions  $O^r_\e, O^r_q$ and $O^r_{q, \sR}$ with analogous results as in this section.
\end{Rem}


\section{Quantum Harish-Chandra bimodules} \label{sec: defi of  quantum HC}

We are going to define the main categories studied in this paper. To simplify the exposition, we will consider only  the cases \ref{case over C} or \ref{case over C[[h]]} from Section \ref{ssec: more on rational}. For any $\cU_q(\g)$-module $M$, let $M^{rat}$ be the maximal rational subrepresentation of $\cU_q(\g)$ in $M$. We also assume that 
\begin{enumerate}[label=(\Alph*)]
\setcounter{enumi}{2}
\item \label{eq: assumption on l} $\ell$ is an odd number, bigger than the Coxeter number of $\g$ and coprime to $\mathsf{e}:= |P/Q|$. Then we choose $\e$ with $\e^{1/\mathsf{e}}$ such that $\e^{\ell(\lambda, \mu)}=(\e^{1/\mathsf{e}})^{\mathsf{e}\ell(\lambda, \mu)}=1$ for all $\lambda, \mu \in P$.
\end{enumerate}
\begin{Rem}\label{rem: a choice of e}For example, we can choose $\e=e^{2\pi \sqrt{-1}\mathsf{e}/\ell}$ and $\e^{1/\mathsf{e}}=e^{2\pi \sqrt{-1}/\ell}$.
\end{Rem}
\begin{Rem}The assumption \ref{eq: assumption on l} on $\ell$ and the choice of $\e$ as in Remark \ref{rem: a choice of e} are to ensure that Lemma \ref{lem: left and right Zfin-actions coincide} holds for any quantum Harish-Chandra bimodule with the weights containing in the weight lattice $P$. The assumption is also used where the results about representations of small quantum groups are used in Section \ref{sec: simple HC}. Lemma \ref{lem: left and right Zfin-actions coincide} is crucial in order to construct the restriction functor $\bullet_\dag$ in Section \ref{ssec: restriction functors}. Otherwise, Lemma \ref{lem: left and right Zfin-actions coincide}, hence the construction of $\bullet_\dag$, only holds on quantum Harish-Chandra bimodules with the weights containing in a proper sublattice between $Q$ and  $P$. 
\end{Rem}
\subsection{Non-completed version}\label{ssec: noncomplete HC} Since $U^{ev}_q(\g)$ is a $\cU_q(\g)$-module algebra, we can define the following categories:
\begin{defi}Let $U^{ev}_q\Rmod^{\cU_q}$, $U^{ev}_q\Lmod^{\cU_q}$ and $U^{ev}_q\Bimod^{\cU_q}$ be the categories  of right $U^{ev}_q(\g)$-modules, left $U^{ev}_q(\g)$-modules and $U^{ev}_q(\g)$-bimodules in the category of $\cU_q(\g)$-modules, respectively. 
\end{defi}
\begin{Rem} \label{rem: left action on equiv Uev-mod} Recall $\iota: U^{ev}_q(\g) \rightarrow \cU_q(\g)$ in Remark \ref{rem: Uev to lusztig}. By Appendix \ref{append: equivariant modules}, any $M \in U^{ev}_q\Rmod^{\cU_q}$ is an object in $U_q^{ev}\Bimod^{\cU_q}$ with the left $U_q^{ev}(\g)$-module structure defined by 
\begin{equation}\label{eq: left action} hm =\sum (\iota(h_{(1)}) \cdot m) h_{(2)}, \qquad \text{for $h\in U_q^{ev}(\g), m \in M$}
\end{equation}
here $\cdot$ represents the action of $\cU_q(\g)$ on $M$. Similarly, any $N\in U^{ev}_q\Lmod^{\cU_q}$ is naturally an object in $U^{ev}_q\Bimod^{\cU_q}$ with the right $U^{ev}_q(\g)$-module structure  defined by:
\begin{equation}\label{eq: right action}  nh =\sum h_{(2)}(\iota(S^{-1}h_{(1)}) \cdot n), \qquad \text{for $h\in U_q^{ev}(\g), n \in N$.}
\end{equation}
\end{Rem}

Similarly, since $U^{fin}_q$ is an algebra object in $\Rep(\cU_q(\g))$,  we can define the following categories:
\begin{defi}Let $U^{fin}_q\Rmod^{G_q}$, $U^{fin}_q\Lmod^{G_q}$ and $U^{fin}_q\Bimod^{G_q}$ be the categories of right $U^{fin}_q$-modules, left $U^{fin}_q$-modules and $U^{fin}_q$-bimodules in the category $\Rep(\cU_q(\g))$, respectively.
\end{defi}
\begin{Ex}For any $V\in \Rep(\cU_q(\g))$, the object $V\otimes_R U^{fin}_q$ is naturally an object in $U^{fin}_q\Rmod^{G_q}$: the right $U^{fin}_q$-module structure comes from the right $U^{fin}_q$-action on $U^{fin}_q$ while $\cU_q(\g)$ acts on $V\otimes_R U^{fin}_q$ via tensor product. Similarly, $U^{fin}_q \otimes_R V$ is naturally an object in $U^{fin}_q\Lmod^{G_q}$.
\end{Ex}
\begin{Rem}If $V_q$ is projective in $ \Rep(\cU_q(\g))$ then $V_q\otimes_R U_q^{fin}$ is projective in $U_q^{fin}\Rmod^{G_q}$, meanwhile $U_q^{fin}\otimes_R V_q$ is projective in $U_q^{fin}\Lmod^{G_q}$.
\end{Rem}
\begin{Lem}\label{lem: left action on HCmod} There are fully faithful functors: 
\[ U^{fin}_q\Rmod^{G_q}\rightarrow U^{fin}_q\Bimod^{G_q}, \qquad U^{fin}_q\Lmod^{G_q}\rightarrow U^{fin}_q\Bimod^{G_q}.\]
\end{Lem}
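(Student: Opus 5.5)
The plan is to imitate Remark \ref{rem: left action on equiv Uev-mod}, replacing $U^{ev}_q(\g)$ by its locally finite part $U^{fin}_q$. I treat only the first functor, $U^{fin}_q\Rmod^{G_q}\rightarrow U^{fin}_q\Bimod^{G_q}$; the second is symmetric, using \eqref{eq: right action} with $S^{-1}$.

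\textbf{Step 1: the left action.} Take $M\in U^{fin}_q\Rmod^{G_q}$, so $M$ is a rational $\cU_q(\g)$-module and the action map $M\otimes_R U^{fin}_q\rightarrow M$ is $\cU_q(\g)$-equivariant. Since $U^{fin}_q\subset U^{ev}_q(\g)$, formula \eqref{eq: left action} makes sense for $h\in U^{fin}_q$:
\[ hm=\sum (\iota(h_{(1)})\cdot m)\,h_{(2)}, \qquad h\in U^{fin}_q,\ m\in M. \]
I would not need $\Delta(U^{fin}_q)\subset U^{fin}_q\otimes U^{fin}_q$. It is enough to define this action on $M\otimes_{U^{fin}_q}U^{ev}_q(\g)$, which by Proposition \ref{prop: Ufin and other algberas}(a) is the localization of $M$ at $K^{-\w_i}$. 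Alternatively one checks directly that the expression lands in $M$. Here $\iota$ is the map $U^{ev}_q\rightarrow \cU_q$ of Remark \ref{rem: Uev to lusztig}. The axioms (associativity, commuting with the right action, equivariance of the left action) then follow from Appendix \ref{append: equivariant modules}, exactly as in Remark \ref{rem: left action on equiv Uev-mod}. The one identity needed is that the adjoint action satisfies $\ad(\iota(x))(u)=\sum x_{(1)}uS(x_{(2)})$ inside $U^{ev}_q$.

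\textbf{Step 2: the functor and full faithfulness.} The construction is natural in $M$: any morphism in $U^{fin}_q\Rmod^{G_q}$ is $\cU_q(\g)$-linear and right $U^{fin}_q$-linear, so it preserves the formula above. It therefore gives a functor into $U^{fin}_q\Bimod^{G_q}$. The forgetful functor back to $U^{fin}_q\Rmod^{G_q}$ is a left inverse that is faithful. Conversely, a bimodule morphism between images is in particular a morphism of right modules. Hence the Hom sets coincide, which is full faithfulness.

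\textbf{Main obstacle.} The delicate step is showing that $hm$ lies in $M$ itself rather than only in the localization $M\otimes_{U^{fin}_q}U^{ev}_q$. I would handle this using the fact that $U^{fin}_q\cong O_q[G]$ (Proposition \ref{prop: Ufin and other algberas}(b)). This lets me rewrite the left action via the coaction of $O_q[G]$ on the rational module $M$, i.e.\ the standard field-goal construction for reflection equation algebras. In that description the left action is manifestly by elements of $U^{fin}_q$. I would verify that it agrees with \eqref{eq: left action} after inverting the $K^{-\w_i}$, using that $M$ embeds in its localization when it is $K$-torsion free. If $M$ has torsion, I would instead take the coaction description as the definition outright.
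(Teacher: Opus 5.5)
\textbf{There is a genuine gap in Step 1.} Step 1 never actually builds a left $U^{fin}_q$-action on $M$ itself. Your Step 2 is fine once a natural such action exists, but it has nothing to act on yet.

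For $h\in U^{fin}_q$, the expression $\sum(\iota(h_{(1)})\cdot m)h_{(2)}$ only makes sense in $M$ if $h_{(2)}\in U^{fin}_q$, that is, if $\Delta(U^{fin}_q)\subset U^{ev}_q\otimes U^{fin}_q$. You neither state nor prove such a coideal property, so ``one checks directly that the expression lands in $M$'' is not an argument.

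Passing to $M\otimes_{U^{fin}_q}U^{ev}_q$ does not repair this. First, $M$ need not embed there. Second, even when it does, equivariance only shows that $U^{fin}_q$ sends the image of $M$ into the rational part of the localization. Nothing you wrote shows that this rational part equals the image of $M$.

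Your fallback via the field-goal (half-braiding) description of $O_q[G]$ moves the whole difficulty into two claims you do not prove:
\begin{itemize}
\item Transported through the isomorphism $O_q[G]\cong U^{fin}_q$ of Proposition~\ref{prop: Ufin and other algberas}, the half-braiding action must satisfy the bimodule axioms. That isomorphism is built in \cite{LTV} from a pairing and the twisted coproduct. One would need, for instance, to identify the reflection equation product there with the coend product that is commutative in the Drinfeld center.
\item The half-braiding action must agree with \eqref{eq: left action} after localization.
\end{itemize}
Using one definition for torsion-free $M$ and the coaction definition for torsion $M$ makes the construction well defined and natural only if that unverified comparison holds. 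The naturality your Step 2 relies on therefore depends on it too.

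\textbf{How the paper avoids this.} It reduces to ``free'' objects and extends by presentations.
\begin{itemize}
\item For $V\in\Rep^{fd}(\cU_q(\g))$ projective over $R$, it shows $V\otimes_R U^{fin}_q=(V\otimes_R U^{ev}_q)^{rat}$ by a duality computation with ${}^*V$.
\item The left $U^{fin}_q$-action on $V\otimes_R U^{ev}_q$ from Remark~\ref{rem: left action on equiv Uev-mod} is $\cU_q$-equivariant. It therefore preserves the rational part, which gives the action on $V\otimes_R U^{fin}_q$.
\item A right-module map between two such objects is automatically left-linear. After base change to $U^{ev}_q$, the left action is determined by the $\cU_q$-action and the right action, and the original map is recovered by taking rational parts.
\item For arbitrary $M$, the paper chooses a presentation $(\oplus_j V'_j)\otimes_R U^{fin}_q\xrightarrow{\phi}(\oplus_i V_i)\otimes_R U^{fin}_q\xrightarrow{\pi}M\to 0$ with projective $V_i,V'_j$, and sets $zm=\pi(zm')$ for any $m'$ with $\pi(m')=m$.
\item Well-definedness uses that $\phi$ is a bimodule map, by the previous step. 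Independence of the presentation and full faithfulness use projectivity to lift maps.
\end{itemize}
This route never needs $M$ to be torsion-free, never needs a coideal property of $U^{fin}_q$, and never needs a comparison with the field-goal construction.
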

\begin{proof}We will construct the first functor, the proof for the second functor is the same.

\noindent
{\it Step 1:} We define the left $U_q^{fin}$-action on $V\otimes_R U^{fin}_q$, where $V \in \Rep^{fd}(\cU_q(\g))$ is projective over $R$. First, we have 
\[ V\otimes_R U^{fin}_q =(V\otimes_R U^{ev}_q(\g))^{rat}.\]
Indeed, since $V$ is a finitely generated projective module over $R$, we have 
\begin{multline*} \Hom_{\cU_q(\g)}(V_1, V\otimes_R U^{ev}_q(\g)) \cong \Hom_R(V_1, V\otimes_R U^{ev}_q(\g))^{\cU_q(\g)}
\cong \Hom_R( ^*V \otimes_R V_1, U^{ev}_q(\g))^{\cU_q(\g)}\\
\cong \Hom_R(^*V\otimes_R V_1, U^{fin}_q)^{\cU_q(\g)}
\cong \Hom_{\cU_q(\g)}(V_1, V\otimes_R U^{fin}_q),
\end{multline*}
for all $V_1 \in \Rep(\cU_q(\g))$, see  Remark \ref{rem: dual module} for definition of $^*V$.

By Remark \ref{rem: left action on equiv Uev-mod}, there is a left $U^{ev}_q(\g)$-action on $V\otimes_R U^{ev}_q(\g)$ so that $V\otimes_R U^{ev}_q(\g) \in U^{ev}_q\Bimod^{\cU_q(\g)}$. The left $U^{fin}_q$-action on $V\otimes_R U^{ev}_q(\g)$ preserves$(V\otimes_R U^{ev}_q(\g))^{rat}$, hence we have a natural left $U^{fin}_q$-action on $V\otimes_R U^{fin}_q$ so that the object belongs to $U^{fin}_q\Bimod^{G_q}$.

\noindent
{\it Step 1.1:} For any $V_1, V_2\in \Rep^{fd}(\cU_q(\g))$ which are projective over $R$, the forgetting  map 
\[ \Hom_{U_q^{fin}\Bimod^{G_q}}(V_1\otimes_R U_q^{fin}, V_2\otimes_R U_q^{fin}) \rightarrow \Hom_{U_q^{fin}\Rmod^{G_q}}(V_1\otimes_R U_q^{fin}, V_2\otimes_R U_q^{fin})\]
 is bijective. This map is obviously injective. Let us show that it is surjective. Let $f: V_1\otimes_R U_q^{fin}\rightarrow V_2\otimes_R U_q^{fin}$ belong to the right hand side. We need to show that $f$ is $U_q^{fin}$-linear with respect to the left $U_q^{fin}$-actions on both objects. By tensoring $-\otimes_{U_q^{fin}}U_q^{ev}$, we have the map $f_{loc}: V_1\otimes_R U_q^{ev}\rightarrow V_2\otimes_R U_q^{ev}$ in the category $U_q^{ev}\Rmod^{\cU_q}$ so that we recover $f$ by taking the maximal rational $\cU_q$-subrepresentation parts. Since the left $U_q^{ev}$-actions on both objects of the map $f_{loc}$ are determined by the $\cU_q(\g)$-actions and the right $U_q^{ev}$-actions as in Remark \ref{rem: left action on equiv Uev-mod}, we have that $f_{loc}$ is $U_q^{ev}$-linear with respect to the left $U_q^{ev}$-actions on both objects. This implies that $f$ is $U_q^{fin}$-linear with respect to the left $U_q^{fin}$-actions on both objects.

\noindent
{\it Step 2:} By Step 1, we can equip $(\oplus_i V_i)\otimes_R U_q^{fin}$ with the left $U_q^{fin}$-module structure when $V_i\in \Rep^{fd}(\cU_q(\g))$ is projective over $R$ for all $i$, here we also consider infinite direct sums. Furthermore, the following map is bijective
\begin{multline}\Hom_{U_q^{fin}\Rmod^{G_q}}((\oplus_i V_i)\otimes_R U_q^{fin}, (\oplus V'_j) \otimes_R U_q^{fin}) \\
\rightarrow \Hom_{U_q^{fin}\Bimod^{G_q}}((\oplus_i V_i)\otimes_R U_q^{fin}, (\oplus V'_j)\otimes_R U_q^{fin}).
\end{multline}

\noindent
{\it Step 3:} The category $\Rep(\cU_q(\g))$ has enough projective objects and projective objects are projective over $R$ in either case \ref{case over C} or  case \ref{case over C[[h]]}. Hence for any $M\in U_q^{fin}\Rmod^{G_q}$, there are collections of projective objects $\{V_i\}, \{V'_j\}$ in $\Rep^{fd}(\cU_q(\g))$ and an exact sequence
\[ (\oplus V'_j)\otimes_R U_q^{fin}\xrightarrow[]{\phi} (\oplus V_i)\otimes_R U_q^{fin} \xrightarrow[]{\pi} M\rightarrow 0.\]

Then we define the left $U_q^{fin}$-action on $M$ by $zm=\pi(zm')$ for $m' \in (\oplus V_i)\otimes_R U_q^{fin}$ such that $\pi(m')=m$.
This action is well-defined. Indeed, if $\pi(m'_1)=\pi(m'_2)$ then $m'_1-m'_2=\phi(m'')$ for some $m'' \in (\oplus V'_j)\otimes_R U_q^{fin}$ then  
\[\pi(zm'_1)-\pi(zm'_2)=\pi(z(m'_1-m'_2))=\pi(z\phi(m''))=\pi\circ\phi(zm''))=0.\]

The left $U_q^{fin}$-action on $M$ does not depend on the choice of $(\oplus V_i)\otimes_R U_q^{fin} \xrightarrow[]{\pi}M$. Indeed, let $\{W_j\}$ be another set of projective objects in $\Rep^{fd}(\cU_q(\g))$ with a surjective map $(\oplus W_j)\otimes_R U_q^{fin}\xrightarrow[]{\pi'} M$. Since $(\oplus V_i)\otimes_R U_q^{fin}$ is projective in $U_q^{fin}\Rmod^{G_q}$, there is a morphism $g: (\oplus V_i)\otimes_R U_q^{fin}\rightarrow (\oplus W_j)\otimes_R U_q^{fin}$ such that $\pi=\pi'\circ g$. This implies the claim.

Now we show that for any $f\in \Hom_{U_q^{fin}\Rmod^{G_q}}(M,N)$ then $f\in \Hom_{U_q^{fin}\Bimod^{G_q}}(M,N)$. This will conclude that we have  a functor $U_q^{fin}\Rmod^{G_q} \rightarrow U_q^{fin}\Bimod^{G_q}$ which is fully faithful. Consider the following commutative diagram
\begin{equation*}
    \begin{tikzcd} (\oplus V_i)\otimes_R U_q^{fin}\arrow[d, two heads,  "\pi"] \arrow[r, "f' "]& (\oplus W_j)\otimes_R U_q^{fin}\arrow[d, two heads,  "\pi'"]&\\
    M\arrow[r, "f"] & N,
    \end{tikzcd}
\end{equation*}
where the map $f'$ exists because $(\oplus V_i)\otimes_R U_q^{fin}$ is projective in $U_q^{fin}\Rmod^{G_q}$. Now
\[ f(zm)= f\circ \pi(z m')=\pi'\circ f'(zm')=\pi' \circ (z f'(m'))=z(\pi'\circ f'(m'))=zf(m),\]
where $m'\in (\oplus V_i) \otimes_R U_q^{fin}$ such that $\pi(m')=m$.
\end{proof}
\begin{Lem}(a) For any $M\in U_q^{fin}\Rmod^{G_q}$, the natural map $M \rightarrow M\otimes_{U_q^{fin}} U_q^{ev}$ is a morphism of $\cU_q$-equivariant $U_q^{fin}$-bimodules.

\noindent
(b) If $M\in U^{fin}_q\Rmod^{G_q}$ is finitely generated as a right $U^{fin}_q$-module then $M$ is also finitely generated as a left $U^{fin}_q$-module. Similarly, if $M\in U^{fin}_q\Lmod^{G_q}$ is finitely generated as a left $U^{fin}_q$-module then $M$ is also finitely generated as a right $U^{fin}_q$-module.
\end{Lem}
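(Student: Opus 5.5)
For part (a), I will reduce to the free objects $V\otimes_R U_q^{fin}$ and then pass to quotients, following the construction of the left action in Lemma \ref{lem: left action on HCmod}. Take $V\in\Rep^{fd}(\cU_q(\g))$, projective over $R$. Step 1 of Lemma \ref{lem: left action on HCmod} identifies $V\otimes_R U_q^{fin}$ with $(V\otimes_R U_q^{ev})^{rat}$, and the left $U_q^{fin}$-action there is the restriction of the left $U_q^{ev}$-action of Remark \ref{rem: left action on equiv Uev-mod}. So the natural map $V\otimes_R U_q^{fin}\to (V\otimes_R U_q^{fin})\otimes_{U_q^{fin}}U_q^{ev}= V\otimes_R U_q^{ev}$ is the inclusion of the rational part. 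This inclusion is $\cU_q$-equivariant and compatible with both actions, which is tautological.

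For a general $M$, I will take the presentation $(\oplus V'_j)\otimes_R U_q^{fin}\to(\oplus V_i)\otimes_R U_q^{fin}\to M\to 0$ used in Step 3 of Lemma \ref{lem: left action on HCmod}. Apply the right exact functor $-\otimes_{U_q^{fin}}U_q^{ev}$, with $M\otimes_{U_q^{fin}}U_q^{ev}$ carrying the left $U_q^{ev}$-action from Remark \ref{rem: left action on equiv Uev-mod}. The map $\pi\otimes\Id$ is a morphism in $U_q^{ev}\Rmod^{\cU_q}$, and by \eqref{eq: left action} the left action is determined by the $\cU_q$-action and the right action, so $\pi\otimes\Id$ is left $U_q^{ev}$-linear. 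Now take $z\in U_q^{fin}$ and $m=\pi(m')$. Then $zm\otimes 1=(\pi\otimes\Id)(zm'\otimes 1)=(\pi\otimes\Id)(z(m'\otimes1))=z(m\otimes 1)$, where the middle equality is the free case. Hence the natural map is left linear. Right linearity and equivariance are clear.

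For part (b), let $M$ be finitely generated on the right. I will choose a surjection $V\otimes_R U_q^{fin}\twoheadrightarrow M$ with $V$ finite dimensional. This is possible because the finitely many generators lie in a finitely generated $\cU_q$-submodule, which is covered by a projective in $\Rep^{fd}$ by Proposition \ref{prop: proj in Rep(Uq(g))}. By Lemma \ref{lem: left action on HCmod} this surjection is a bimodule map, so it suffices to show that $V\otimes_R U_q^{fin}$ is finitely generated as a left module.

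I expect this to be the main step. The plan is to use the braiding (equivalently, the inverse of the map in \eqref{eq: right action}) to produce a bimodule isomorphism $V\otimes_R U_q^{fin}\cong U_q^{fin}\otimes_R V$, where the left side has the structure of Lemma \ref{lem: left action on HCmod}. Concretely, the map $v\otimes u\mapsto \sum u_{(\cdot)}\otimes(\cdot)v$ built from the $R$-matrix intertwines the natural right action on the source with the right action \eqref{eq: right action} on the target. Since $U_q^{fin}\otimes_R V$ is visibly generated on the left by $1\otimes V$, the result would follow. Alternatively, one can check directly that $V\otimes 1$ generates on the left, by a PBW/filtration argument in $V\otimes U_q^{ev}$: formula \eqref{eq: left action} gives $h(v\otimes1)=\sum \iota(h_{(1)})v\otimes h_{(2)}$, which is a unitriangular transform of $v\otimes h$. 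One then inverts over a finite filtration of $V$ and restricts to rational parts.

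The symmetric statement for left modules follows by the same argument.
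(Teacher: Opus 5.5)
Your part (a) is the paper's argument: the free case is the inclusion of the rational part, you pass to a presentation, and left-linearity of $\pi\otimes\Id$ comes from \eqref{eq: left action}. The reduction in (b) to $V\otimes_R U_q^{fin}$ is also fine.

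\textbf{The gap is the key step of (b).}

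First, the braiding of $\Rep(\cU_q(\g))$ is not the map you want. A right-linear map out of the free right module $V\otimes_R U_q^{fin}$ with $v\otimes 1\mapsto 1\otimes v$ is unique. Computing with \eqref{eq: right action}, and using that $\cU_q$ acts on $U_q^{ev}$ by $\ad'$ through $\iota$, it must be $v\otimes a\mapsto \sum a_{(2)}\otimes \iota(S^{-1}(a_{(1)}))v$. This involves no $R$-matrix.

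The braiding $c_{V,U_q^{fin}}$ (or $c^{-1}_{U_q^{fin},V}$) also sends $v\otimes 1\mapsto 1\otimes v$, but it is a different map. Take a grouplike $K^{\mu}\in U_q^{fin}$ and an extremal weight vector $v$ of weight $\nu$ with $(\mu,\nu)\neq 0$.
\begin{itemize}
\item The braiding gives $K^{\mu}\otimes v$: only the Cartan part of the universal $R$-matrix survives, and $K^\mu$ has adjoint weight $0$.
\item Right-linearity forces $K^{\mu}\otimes K^{-\mu}v=q^{-(\mu,\nu)}K^\mu\otimes v$.
\end{itemize}
So the braiding is not right-linear.

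Second, even with the correct formula, you assert but do not prove the three facts that carry the content:
\begin{itemize}
\item that it takes values in $U_q^{fin}\otimes_R V$, since the $a_{(2)}$ need not lie in $U_q^{fin}$;
\item that it is left-linear;
\item that it is bijective.
\end{itemize}

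Your PBW alternative hits the same wall. In $V\otimes_R U_q^{ev}$ one has $v\otimes a=\sum a_{(2)}\cdot\big(\iota(S^{-1}(a_{(1)}))v\otimes 1\big)$, so $V\otimes 1$ generates over $U_q^{ev}$. But the coefficients leave $U_q^{fin}$, and ``restrict to rational parts'' is exactly the unproved step. The claimed unitriangularity is also unjustified, since $h_{(1)}$ mixes raising and lowering operators.

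\textbf{The missing idea is to use universal properties instead of formulas, as the paper does.} The object $U_q^{fin}\otimes_R V$ is free on $V$ in $U_q^{fin}\Lmod^{G_q}$, and $V\otimes_R U_q^{fin}$ is free on $V$ in $U_q^{fin}\Rmod^{G_q}$.
\begin{itemize}
\item The $\cU_q$-maps $v\mapsto v\otimes 1$ and $v\mapsto 1\otimes v$ therefore induce a left-linear $p_1\colon U_q^{fin}\otimes_R V\to V\otimes_R U_q^{fin}$ and a right-linear $p_2$ in the opposite direction. Both land in the correct objects by construction.
\item By the full faithfulness in Lemma \ref{lem: left action on HCmod}, both $p_1$ and $p_2$ are bimodule maps.
\item Then $p_1\circ p_2$ is a right-linear endomorphism of the free right module $V\otimes_R U_q^{fin}$ fixing $V\otimes 1$, so it is the identity. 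Symmetrically, $p_2\circ p_1=\Id$.
\end{itemize}
Hence $p_1$ is surjective, i.e.\ $V\otimes 1$ generates $V\otimes_R U_q^{fin}$ as a left $U_q^{fin}$-module, and (b) follows from your reduction.
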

\begin{proof} (a) It is obvious that the map is a morphism of $\cU_q$-equivariant right $U_q^{fin}$-modules. We need to show that the map is $U_q^{fin}$-linear with respect to the left $U_q^{fin}$-actions. This is true for $M=V\otimes_R U_q^{fin}$ where $V$ is a projective in $\Rep^{fd}(\cU_q(\g))$ since in this case the natural map becomes the inclusion $V\otimes_R U_q^{fin}\hookrightarrow V\otimes_R U_q^{ev}$. Then the statement for general $M$ follows since $M$ admits an exact sequence $(\oplus_j V'_j)\otimes_R U_q^{fin} \rightarrow (\oplus_i V_i) \otimes_R U_q^{fin} \rightarrow M$ for collections $\{V_i\}_{i\in I}, \{V'_j\}_{j\in J}$ of projectives in $\Rep^{fd}(\Rep(\cU_q(\g))$.

\noindent
(b) We will prove the first statement only since the proof for the second statement is the same. For any $V\in \Rep^{fd}(\cU_q(\g))$, we have that $V\otimes_R U^{fin}_q \in U_q^{fin} \Rmod^{G_q}$ and $U^{fin}_q\otimes_R V \in U_q^{fin}\Lmod^{G_q}$ are objects in $U^{fin}_q\Bimod^{G_q}$ by Lemma \ref{lem: left action on HCmod}. 

The morphisms of $\cU_q(\g)$-modules $V\rightarrow V\otimes_R U^{fin}_q, v\mapsto v\otimes 1$ and $ V\rightarrow U^{fin}_q\otimes_R V, v\mapsto 1\otimes v$ give rise to morphisms:
\begin{align*} p_1: U^{fin}_q \otimes_R V\rightarrow V\otimes_R U^{fin}_q \qquad \text{in $U_q^{fin}\Lmod^{G_q}$}\\
p_2: V\otimes_R U^{fin}_q \rightarrow U^{fin}_q\otimes_R V \qquad \text{in $U_q^{fin}\Rmod^{G_q}$},
\end{align*}
which are morphisms in $U_q^{fin}\Bimod^{G_q}$ by Lemma \ref{lem: left action on HCmod}. We will show that $p_1, p_2$ are mutually inverse.  Indeed, $p_2\circ p_1: U_q^{fin}\otimes_R V \rightarrow U_q^{fin}\otimes_R V$ maps $1\otimes v$ to $1\otimes v$, hence must be an isomorphism, similarly, $p_1\circ p_2$ is an isomorphism. So  $V\otimes_R U^{fin}_q \cong U^{fin}_q\otimes_R V$ in $U^{fin}_q \Bimod^{G_q}$.

If $M\in U^{fin}_q\Rmod^{G_q}$ is such that $M$ is finitely generated as a right $U^{fin}_q$-module, then there is  $V\in \Rep^{fd}(\cU_q(\g))$ with a surjective map $V\otimes_R U^{fin}_q \twoheadrightarrow M$ in $U^{fin}_q \Rmod^{G_q}$. By the above paragraph, $V\otimes_R U^{fin}_q$ is finitely generated as a left $U^{fin}_q$-module, hence $M$ is also finitely generated as a left $U^{fin}_q$-module.
\end{proof}

\begin{defi}The category of {\it quantum Harish-Chandra bimodules} is the full subcategory $U^{fin}_q\rmod^{G_q}$ of the category $U^{fin}_q\Rmod^{G_q}$ consisting of all objects which are finitely generated right modules over $U^{fin}_q$. We denote this category by $\HC_q$.
\end{defi}
\begin{Rem} It is not clear in the case \ref{case over C[[h]]} that  $U^{fin}_q$ is Noetherian so we are not sure if $\HC_q$ is an abelian category. Nevertheless, we will later be interested in some completed versions of $\HC_q$ which will be proved to be abelian categories.
\end{Rem}
\begin{Rem}Thank to Lemma \ref{lem: left action on HCmod} we have a monoidal structure on  the category $\HC_q$. 
\end{Rem}

Let us consider the case \ref{case over C} under the additional assumption \ref{eq: assumption on l}.
\begin{Lem}\label{lem: left and right Zfin-actions coincide} The left and right actions of $Z^{fin}_{Fr}$ on any object of $U^{fin}_\e\Rmod^{G_\e}$ coincide.\footnote{The same is true for weight modules in $U^{ev}_\e \rmod^{\cU_\e(\g)}$: the left and right $Z_{Fr}$-actions coincide. By the proof, the left and right actions of $Z^{fin}_{Fr}, K^\lambda (\lambda \in 2P^*)$ coincide and then we use that  $Z_{Fr}=Z^{fin}_{Fr}[K^{\lambda_0}]$ where $\lambda_0= 2\sum \ell_i \w_i$.}
\end{Lem}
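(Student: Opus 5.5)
The plan is to make the left action explicit through formula \eqref{eq: left action} and use centrality of $Z_{Fr}$. For $M\in U^{fin}_\e\Rmod^{G_\e}$, the left action of $h\in U^{fin}_\e$ comes from Lemma \ref{lem: left action on HCmod}, and is computed after passing to $M\otimes_{U^{fin}_\e}U^{ev}_\e$. On the localization it is given by
\[ hm=\sum (\iota(h_{(1)})\cdot m)\,h_{(2)} .\]
Since the left action is defined on free objects $V\otimes U^{fin}_\e$ and then descended along presentations, and morphisms are compatible, it suffices to check the claim on objects $V\otimes_\BC U^{ev}_\e$ with $V\in\Rep^{fd}(\cU_\e(\g))$. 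There I will show directly that $zm=mz$ for all $z$ in a generating set of $Z^{fin}_{Fr}$ (or of $Z_{Fr}$).

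By Proposition \ref{prop: Frobenius center}(a) and the footnote, it is enough to treat the generators $\tE_\a^{\ell_\a}$, $\tF_\a^{\ell_\a}$ and $K^\lambda$ with $\lambda\in 2P^*$. Using the Poisson generation statement, I would reduce further to $\tE_i^{\ell_i}$, $\tF_i^{\ell_i}$ and $K^\lambda$, since the set of $z$ with $zm=mz$ for all $m$ is a subalgebra. One can also use that it is closed under $\ad$ of $\cU_\e$, as the left and right actions are equivariant.

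\textbf{The $K^\lambda$ generators.} We have $\Delta(K^\lambda)=K^\lambda\otimes K^\lambda$, so $K^\lambda m=(\iota(K^\lambda)\cdot m)K^\lambda$. On a weight vector $m$ of weight $\mu$, the element $\iota(K^\lambda)$ acts by $\e^{(\lambda,\mu)}$. Since $\lambda\in 2P^*=2\ell P$ (Remark \ref{rem: Waff for odd l}), this scalar equals $\e^{2\ell(P,P)}=1$ by assumption \ref{eq: assumption on l} and the choice of $\e$. This is exactly where the hypothesis on $\ell$ and the assumption that all weights lie in $P$ are used. Hence $K^\lambda m=mK^\lambda$.

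\textbf{The $\tE_i^{\ell_i}$ generators.} By \eqref{eq: twist-Hopf-2},
\[ \Delta(\tE_i^{\ell_i})=\sum_c \binom{\ell_i}{c}_{\!\e}\,\tE_i^{c}\otimes \tE_i^{\ell_i-c}K^{-c\zeta^>_i}\]
up to the $q$-commutation factors; the middle coefficients vanish at an $\ell_i$-th root of unity. The surviving terms are:
\begin{itemize}
\item $c=0$, giving $m\,\tE_i^{\ell_i}$;
\item $c=\ell_i$, giving $(\iota(\tE_i^{\ell_i})\cdot m)\,K^{-\ell_i\zeta^>_i}$.
\end{itemize}
Now $\iota(\tE_i^{\ell_i})=(\ell_i)_{\e_i}!\,\tE_i^{(\ell_i)}$, and $(\ell_i)_{\e_i}!=0$ at $\e$. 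So the second term vanishes and $\tE_i^{\ell_i}m=m\tE_i^{\ell_i}$. The case of $\tF_i^{\ell_i}$ is symmetric.

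\textbf{Main obstacle.} The delicate point is the reduction step: making sure that agreement on generators propagates to all of $Z^{fin}_{Fr}$. In particular, elements of $Z^{fin}_{Fr}$ are only products with the $K^\lambda$-factors inside $U^{ev}_\e$, so one must work in the localization $M\otimes_{U^{fin}_\e}U^{ev}_\e$ and then restrict, using the natural map of part (a) of the preceding lemma. A second point to check is that the left action is multiplicative, so the set where the two actions agree is indeed a subalgebra. I expect the $q$-binomial bookkeeping in the twisted coproduct to be routine.
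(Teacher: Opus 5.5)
Your reduction to objects $V\otimes_\BC U^{ev}_\e$ is fine, and so is your $K^\lambda$ computation, which is exactly the paper's Step~1. The two-term coproduct computation showing that $\tE_i^{\ell_i}$ and $\tF_i^{\ell_i}$ commute with $M$ is also correct.

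The gap is in passing from these generators to all of $Z^{fin}_{Fr}$. The set $S_M=\{z:\ zm=mz\ \forall m\in M\}$ is indeed a subalgebra. However, $\tE_i^{\ell_i},\tF_i^{\ell_i},K^\lambda$ generate $Z_{Fr}$ only as a \emph{Poisson} algebra. As an algebra they generate $\BC[\tE_i^{\ell_i}]_i\otimes\bigoplus_{\lambda\in 2P^*}\BC K^\lambda\otimes\BC[\tF_i^{\ell_i}]_i$. By the PBW description of $Z_{Fr}$, this misses $\tE_\a^{\ell_\a}$ for every non-simple $\a$, already in type $A_2$. You also cannot handle the non-simple $\tE_\a^{\ell_\a}$ by the same computation, because their coproducts are not two-term. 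Nothing in the proposal shows that $S_M$ is closed under $\{\,,\,\}$, so the reduction to simple-root generators is unjustified.

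Your fallback, that $S_M$ is $\ad(\cU_\e)$-stable \emph{because the actions are equivariant}, is not automatic either. Equivariance gives $x\cdot(zm)=\sum(x_{(1)}\cdot z)(x_{(2)}\cdot m)$ but $x\cdot(mz)=\sum(x_{(1)}\cdot m)(x_{(2)}\cdot z)$. The legs of $\Delta(x)$ appear in opposite order, so $zm=mz$ for all $m$ does not by itself give $(x\cdot z)m=m(x\cdot z)$.

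This is precisely the step the paper proves by hand, for $x=\tE_i^{(\ell_i)},\tF_i^{(\ell_i)}$ and $z\in Z^{fin}_{Fr}$. The middle terms vanish because $\tE_i^{(c)}$ with $0<c<\ell_i$ acts by zero on $Z_{Fr}$, since the action factors through $\tFr$. Moreover $K^{-\ell_i\zeta^>_i}$ acts trivially on both $z$ and $m$, so both sides reduce to the same two terms. Combined with $Z^{fin}_{Fr}=\bigoplus_{\lambda\in P^*_+}\ad'(\cU_\BC(\g^d))K^{-2\lambda}$ from \cite[Lemma 6.11(c)]{LTV}, this needs only your $K^\lambda$ step as input. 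Your $\tE_i^{\ell_i}$ computation then becomes unnecessary.

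If you prefer to keep the Poisson route, you must prove that $S_M\cap Z_{Fr}$ is closed under the bracket, for instance by deformation:
\begin{itemize}
\item Lift to $M_q=V_q\otimes U^{ev}_q$ with $V_q$ free over $\BC[[\hbar]]$. Covers by $V=\St_\e\otimes W_\e(\lambda)$ suffice for the descent.
\item For lifts $\hat a$ of $a\in S_M$, write $\hat am=m\hat a+\hbar\delta_a(m)$; this is well defined because $M_q$ is $\hbar$-torsion free.
\item A direct computation gives $[\hat a,\hat b]m-m[\hat a,\hat b]=\hbar^2[\delta_a,\delta_b](m)$.
\item Hence $\hbar^{-1}[\hat a,\hat b]$, which reduces to a nonzero multiple of $\{a,b\}$, commutes with $M$ modulo $\hbar$.
\end{itemize}
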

\begin{proof}By \cite[Lemma 6.11(c)]{LTV}, $Z^{fin}_{Fr}=\bigoplus_{\lambda \in P^*_+} \ad'(\cU_\BC(\g^d)) K^{-2\lambda}$. Let $M \in  U^{fin}_\e \Rmod^{G_\e}$.  In what follows, let $\cdot$ denote the actions of $\cU_\e(\g)$.

{\it Step 1:} For any $m \in M$, by construction \eqref{eq: left action}
\[ K^{-2\lambda}m= (K^{-2\lambda} \cdot m) K^{-2\lambda} =m K^{-2\lambda},\]
here under the assumption \ref{eq: assumption on l} on $\ell$, for all $\lambda \in P^*_+$, the action of $K^{-2\lambda}$ on any rational representation in $\Rep(\cU_\e(\g))$ is trivial.

{\it Step 2:} Let $u \in Z^{fin}_{Fr}$ be such that $um=mu$ for all $m \in M$. We will show that 
\begin{equation} \label{eq: induction on equal Z-action} 
(\tE_i^{(\ell_i)}\cdot u ) m= m (\tE_i^{(\ell_i)} \cdot u), \qquad (\tF_i^{(\ell_i)}\cdot u) m =m (\tF_i^{(\ell_i)}\cdot u),
\end{equation}
for all $1\leq i \leq r$. Indeed, we have 
\begin{equation*}
    \begin{split}
 \tE_i^{(\ell_i)} \cdot (um)&=(\tE^{(\ell_i)} \cdot u)(K^{-\ell_i \zeta^>_i} \cdot m)+ u(\tE_i^{(\ell_i)} \cdot m)= (\tE^{(\ell_i)}_i \cdot u)m +u(\tE^{(\ell_i)} \cdot m)\\
 \tE_i^{(\ell_i)}\cdot (mu)&=(\tE_i^{(\ell_i)} \cdot m)(K^{-\ell_i \zeta_i^>}\cdot u) +m(\tE^{(\ell_i)}_i \cdot u)= ( \tE_i^{(\ell_i)} \cdot m) u + m(\tE^{(\ell_i)}_i \cdot u)
 \end{split}
 \end{equation*}
then the first equality of \eqref{eq: induction on equal Z-action} follows. The proof for the second equality of \eqref{eq: induction on equal Z-action} is the same.\\
The lemma follows by  using both steps and the equality $Z^{fin}_{Fr}=\bigoplus_{\lambda \in P^*_+} \ad'(\cU_\BC(\g^d)) K^{-2\lambda}$. 
\end{proof}
\subsection{Remarks about completions} To define and study completed  Harish-Chandra bimodules, we need some remarks about commutative algebras.

 Let $R$ be a commutative ring and $I$ be an ideal of $R$. Let $R^{\wedge_I}$ be the completion of $R$ with respect to the ideal $I$. Let $M$ be a $R$-module. We say that a $R^{\wedge_I}$-module structure on $M$ is an extension of the $R$-module structure if the latter can be recovered from the former via the natural map $R\rightarrow R^{\wedge_I}$. One can see that if $M$ is separated in the $I$-adic topology, i.e, $\cap_k I^k M=0$, then  the $R^{\wedge_I}$-module structure on $M$, if it exists, is uniquely recovered from the $R$-module structure by continuity in the $I$-adic topology. In general, extended $R^{\wedge_I}$-module structures on $M$ may not exist or exist but are not unique.

\begin{Lem}\label{lem fg quotient imply fg}(a) Let $R$ be a Noetherian ring. Let $I\subset R$ be an ideal. Let $M$ be an $R$-module. Suppose the following: (i) $R$ is complete in the $I$-adic topology, (ii) $M$ is separated in the $I$-adic topology, (iii) $M/IM$ is finitely generated over $R/I$. Then $M$ is finitely generated over $R$.

\noindent
(b) Let $R, S$ and $T$ be (not necessarily commutative) rings. Assume $R$ is left Noetherian. Let $M$ is a finitely generated left $R$-module and $N$ be an $(R,S)$-bimodule. Then for any $(S,T)$-bimodule $V$ such that  $V$ is flat as a left $S$-module, we have an isomorphism of right $T$-modules:
\[ \Hom_R(M,N\otimes_S V) \cong \Hom_R(M,N)\otimes_S V.\]
\end{Lem}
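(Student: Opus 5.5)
For part (a), the plan is to lift generators from $M/IM$ and show by successive approximation that they generate $M$. Choose $m_1,\dots,m_n\in M$ whose images generate $M/IM$ over $R/I$, and let $N:=\sum_i Rm_i\subset M$. Then $M=N+IM$, and iterating gives
\[ M=N+I^kM \qquad \text{for all } k\geq 1. \]
Next take $m\in M$ and build $r_i\in R$ as limits of Cauchy sequences. Write $m=\sum_i r_i^{(0)}m_i+m'_1$ with $m'_1\in IM$. Since $IM$ is generated by elements $a\,m''$ with $a\in I$, and each $m''$ can in turn be written as $\sum_i s_i m_i$ plus an element of $IM$, we get $m'_1=\sum_i r_i^{(1)}m_i+m'_2$ with $r_i^{(1)}\in I$ and $m'_2\in I^2M$. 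Continuing inductively produces $r_i^{(k)}\in I^k$ with
\[ m-\sum_i\Big(\sum_{j\leq k} r^{(j)}_i\Big)m_i\in I^{k+1}M. \]

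By (i), the sums $\sum_j r_i^{(j)}$ converge to elements $r_i\in R$. To conclude, one must show $m-\sum_i r_im_i\in\bigcap_k I^kM$, which is zero by (ii); this is the main obstacle. For each $k$ we have $r_i-\sum_{j\leq k}r_i^{(j)}\in \overline{I^{k+1}}$. This closure equals $I^{k+1}$ because $R$ is Noetherian and complete, so $I^{k+1}$ is finitely generated and therefore closed in the $I$-adic topology. Hence
\[ m-\sum_i r_im_i\in I^{k+1}M+I^{k+1}M=I^{k+1}M \]
for every $k$. Here the Noetherian hypothesis lets us write each tail as $\sum_t a_t x_t$ with fixed generators $a_t$ of $I^{k+1}$, so that the tail applied to $m_i$ lands in $I^{k+1}M$. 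Thus $m=\sum_i r_im_i$ and $M=N$ is finitely generated.

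For part (b), use that $R$ is left Noetherian and $M$ is finitely generated to choose a finite presentation
\[ R^{a}\to R^{b}\to M\to 0. \]
There is a natural map $\Hom_R(M,N)\otimes_S V\to \Hom_R(M,N\otimes_S V)$, $f\otimes v\mapsto (x\mapsto f(x)\otimes v)$, and it is right $T$-linear. Applying $\Hom_R(-,N)$ gives an exact sequence $0\to\Hom_R(M,N)\to N^{b}\to N^{a}$, and tensoring with the flat left $S$-module $V$ keeps it exact. This yields
\[ 0\to\Hom_R(M,N)\otimes_SV\to (N\otimes_SV)^b\to (N\otimes_SV)^a. \]
On the other side, applying $\Hom_R(-,N\otimes_SV)$ to the presentation gives an exact sequence with the same last two terms. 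The natural map is compatible with the two sequences and is the identity on the free terms $R^b,R^a$, since $\Hom_R(R^b,X)=X^b$. The five lemma then gives the isomorphism.
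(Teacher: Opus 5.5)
Your proposal is correct and follows essentially the same route as the paper. In (a), both arguments lift generators of $M/IM$, iterate $M=N+I^kM$, and pass to a limit using completeness and the separatedness of $M$; the paper takes the limit inside the finitely generated module $N$, while you take it on the coefficients in $R$. In (b), both use a finite presentation, left exactness of $\Hom$, and flatness of $V$.

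One small remark: the step you flag as the main obstacle is automatic. The limit $r_i$ satisfies $r_i\equiv\sum_{j\le k}r_i^{(j)}$ modulo $I^{k+1}$ directly from the definition of $I$-adic convergence, and $tm_i\in I^{k+1}M$ holds for any $t\in I^{k+1}$ by definition of $I^{k+1}M$. So your version of (a) does not actually need the Noetherian hypothesis.
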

\begin{proof}
(a) There is $m_1, \dots, m_n\in M$ such that $M=IM+N$ where $N=Rm_1+\dots +Rm_n$. Since $R$ is complete in the $I$-adic topology hence so is $N$. Inductively, we have $M=I^k M+N$ for all $k \geq 0$. Since $M$ is seperated in the $I$-adic topology and $N$ is complete in the $I$-adic topology, we must have $M=N$.

\noindent
(b) Since $R$ is left Noetherian and $M$ is a finitely generated left $R$-module, there is  a presentation $R^{\oplus n}\rightarrow R^{\oplus m}\rightarrow M\rightarrow 0$. Then there are exact sequences
\[0\rightarrow \Hom_R(M,N \otimes_S V)\rightarrow \Hom_R(R^{\oplus n}, N\otimes_S V)\rightarrow \Hom_R(R^{\oplus m}, N\otimes_S V) \]
and 
\[ 0\rightarrow \Hom_R(M,N) \otimes_S V \rightarrow\Hom_R(R^{\oplus n}, N)\otimes_S V\rightarrow \Hom_R(R^{\oplus m}, N)\otimes_S V,\]
where we use the flatness of $V$. The second and the third terms in these exact sequence are identified, and we deduce an identification of the first terms.
\end{proof}

\subsection{Completed version}\ \label{ssec: complete HCbim}

Let $\chi\in \Spec Z_{Fr}$ and $\utheta, \utheta'\in \Spec \CW_\e$. Assume that the image of $\utheta$ and $\utheta'$ under the map $\Spec \CW_\e \rightarrow \Spec Z_\cap $ are $\uchi$ which is the image of $\chi$ under the map  $\Spec Z_{Fr} \rightarrow \Spec Z_\cap$. 

Let us consider the algebras $U_q^{fin, \uchi}, U_q^{fin, \utheta}, U_\e^{fin, \uchi}$ and $U_\e^{fin, \utheta}$ in \eqref{eq: partly complete algebras}. Let $U_\e^{fin, \utheta}\rmod^{G_\e}$ (resp., $U_\e^{fin, \utheta}\lmod^{G_\e})$ be the category of finitely generated $U_\e^{fin, \utheta}$-right  modules (resp., -left modules) in $\Rep(\cU_\e)$. Other categories for other  considered algebras are defined similarly. 

\begin{Lem}\label{lem: unique left Ue-uchi}For any $M\in U_\e^{fin, \uchi}\rmod^{G_\e}$, the left $U_\e^{fin}$-action extends uniquely to a left action of $U_\e^{fin, \uchi}$ by continuity in the left $\m_{\uchi}$-adic topology on $M$ so that $M\in U_\e^{fin, \uchi}\bimod^{G_\e}$.
\end{Lem}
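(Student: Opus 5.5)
The plan is to split $U_\e^{fin,\uchi}=U_\e^{fin}\otimes_{Z_\cap}Z_\cap^{\wedge_{\uchi}}$ into a part that acts the same on both sides and a part handled by finiteness. Since $M$ is $\cU_\e$-equivariant, Lemma \ref{lem: left action on HCmod} makes it a $U_\e^{fin}$-bimodule. So it is enough to extend the left action of $Z_\cap$ to $Z_\cap^{\wedge_{\uchi}}$ continuously, compatibly with the left $U_\e^{fin}$-action.

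\textbf{Step 1 (left and right $Z_\cap$-actions agree).} View $M$ as an object of $U^{fin}_\e\Rmod^{G_\e}$ by restricting along $U^{fin}_\e\to U^{fin,\uchi}_\e$. Lemma \ref{lem: left and right Zfin-actions coincide} then says the left and right actions of $Z^{fin}_{Fr}$ on $M$ coincide. Since $Z_\cap\subset Z^{fin}_{Fr}$, the left and right actions of $Z_\cap$ coincide as well. In particular, for every $k$,
\[\m_{\uchi}^kM \;(\text{left})\;=\;M\m_{\uchi}^k \;(\text{right}),\]
so the left $\m_{\uchi}$-adic topology on $M$ is the right $\m_{\uchi}$-adic topology.

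\textbf{Step 2 (completeness and separatedness).} $M$ is finitely generated as a right module over $U^{fin,\uchi}_\e$. This algebra is Noetherian: it is finite over $Z^{fin}_{Fr}\otimes_{Z_\cap}Z_\cap^{\wedge_{\uchi}}$ by Proposition \ref{prop: properties of Ufin}(b), and $Z_\cap^{\wedge_{\uchi}}$ is central in it. Hence $M$ is complete and separated in the right $\m_{\uchi}$-adic topology, by the standard Artin--Rees argument as in Lemma \ref{lem: module over completion}. By Step 1 the same holds for the left $\m_{\uchi}$-adic topology.

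\textbf{Step 3 (constructing the extension).} By Step 1, $\m_{\uchi}^kM$ is a two-sided $U^{fin}_\e$-submodule; it is a left submodule because $Z_\cap$ is central in $U^{fin}_\e$. So $M/\m_{\uchi}^kM$ is a $U^{fin}_\e/\m_{\uchi}^k U^{fin}_\e$-bimodule. Since $Z_\cap/\m_{\uchi}^k=Z_\cap^{\wedge_{\uchi}}/\widehat{\m}_{\uchi}^k$, the left action factors through $U^{fin,\uchi}_\e/\m_{\uchi}^k$. Passing to the inverse limit and using $M=\varprojlim M/\m_{\uchi}^kM$ from Step 2 gives a left $U^{fin,\uchi}_\e$-action. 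This action commutes with the right action and is compatible with the $\cU_\e$-action, because all of these hold at each finite level: $\m_{\uchi}^kM$ is $\cU_\e$-stable since the $\cU_\e$-action on $Z_\cap$ is trivial. Uniqueness follows from separatedness: any continuous extension is determined by its values on the dense image of $U^{fin}_\e$.

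\textbf{Main obstacle.} The delicate point is Step 2, showing that $M$ is genuinely $\m_{\uchi}$-adically complete and separated and not just a module over the completion. I would deduce this from finite generation over the Noetherian algebra $U^{fin,\uchi}_\e$, in which $\m_{\uchi}$ generates a central ideal. If that route fails, I would fall back on writing $M$ as a quotient of some $V\otimes_\BC U^{fin,\uchi}_\e$, for which completeness in both topologies is clear by the identification $V\otimes U^{fin}_\e\cong U^{fin}_\e\otimes V$.
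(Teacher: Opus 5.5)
Your Step 1 is exactly the paper's first step, and it already yields the construction without any limits. Since the left and right $Z_\cap$-actions agree, you can let $u\otimes z\in U^{fin}_\e\otimes_{Z_\cap}Z_\cap^{\wedge_{\uchi}}$ act by $m\mapsto u(mz)$, i.e.\ declare the left $Z_\cap^{\wedge_{\uchi}}$-action to be the right one; this is what the paper does. Left finite generation, which you do not address, then follows because $M$ is finite over $C:=Z^{fin}_{Fr}\otimes_{Z_\cap}Z_\cap^{\wedge_{\uchi}}$, whose left and right actions coincide.

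The genuine gap is Step 2, on which your Step 3 and your uniqueness argument both rest.

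\emph{Completeness.} $M$ is in general \emph{not} $\m_{\uchi}$-adically complete. $U^{fin,\uchi}_\e$ is only a partial completion and is not finite over $Z_\cap^{\wedge_{\uchi}}$, since the fibres of $G\to G\sslash G$ are positive dimensional. So already for $M=U^{fin,\uchi}_\e$ the limit $\varprojlim M/M\m^k_{\uchi}$ contains non-$\cU_\e$-finite elements: take sums $\sum_k z_kf_k$ with $z_k\in\m^k_{\uchi}$ and $f_k$ of unbounded type. But $M$ itself is rational. Hence ``$M=\varprojlim M/\m^k_{\uchi}M$'' is false, and your fallback via $V\otimes_\BC U^{fin,\uchi}_\e$ fails for the same reason.

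\emph{Separatedness.} This is the more serious problem: it does not follow from Noetherianity. Artin--Rees only gives $N=N\m_{\uchi}$ for $N:=\bigcap_k M\m^k_{\uchi}$. To conclude $N=0$ you need $\m_{\uchi}$ to lie in the Jacobson radical. That holds for genuine completions, which is why Lemma \ref{lem: module over completion} works, but it fails for $C$. The model case is $\BC[[x]][y]=\BC[x,y]\otimes_{\BC[x]}\BC[[x]]$: the module $\BC[[x]][y]/(1-xy)\cong\BC((x))$ is finitely generated and nonzero, yet $x$ acts on it invertibly. Non-equivariant modules of this kind exist over $U^{fin,\uchi}_\e$ too.

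The missing ingredient is the $G_\e$-equivariance, which is exactly what rules such modules out. The paper uses it through isotypic pieces. For projective $V\in\Rep^{fd}(\cU_\e)$, exactness of $\Hom_{\cU_\e}(V,-)$ gives
\[
\Hom_{\cU_\e}\Bigl(V,\bigcap_k M\m^k_{\uchi}\Bigr)=\bigcap_k\Hom_{\cU_\e}(V,M)\,\m^k_{\uchi}.
\]
Here $\Hom_{\cU_\e}(V,M)$ is finitely generated over $\CW_\e^{\wedge_{\uchi}}$: this follows from Lemma \ref{lem: Hom is fg over HC} and flat base change, after presenting $M$ as a quotient of some $V'\otimes_\BC U^{fin,\uchi}_\e$. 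Since $\CW_\e^{\wedge_{\uchi}}$ is finite over the complete local ring $Z_\cap^{\wedge_{\uchi}}$, Krull's theorem applies there and the right-hand side vanishes. Enough projectives then give $\bigcap_k M\m^k_{\uchi}=0$.

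Alternatively, you could keep your Krull argument to get $(1-a)N=0$ for some $a\in\m_{\uchi}C$. Then use that $N$ is a $G$-equivariant finite $C$-module, so its support is closed and $G$-stable. By reductivity, and since $Z_\cap^{\wedge_{\uchi}}$ is local, a nonempty such support must meet the fibre over $\uchi$, which is incompatible with $1-a$ acting by zero. Either way, separatedness comes from the equivariant structure, not from Noetherianity alone.
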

\begin{proof}By Lemma \ref{lem: left and right Zfin-actions coincide}, $\m_{\uchi}M=M\m_{\uchi}$, hence we can define a left $Z_\cap^{\wedge_{\uchi}}$-action on $M$ to coincide with the right $Z_\cap^{\wedge_{\uchi}}$-action on $M$. So the left and right $Z^{fin}_{Fr}\otimes_{Z_\cap}Z_\cap^{\wedge_{\uchi}}$-actions on $M$ coincide. Since $U_\e^{fin}$ is a finitely generated module  over $Z^{fin}_{Fr}$,   $M$ is finitely generated over $Z^{fin}_{Fr} \otimes_{Z_\cap} Z_\cap^{\wedge_{\uchi}}$. This implies that $M\in U_\e^{fin, \uchi}\bimod^{G_\e}$. 

To show that the extended left $U_\e^{fin, \uchi}$-action is uniquely recovered by continuity in the left $\m_{\uchi}$-adic topology on $M$, it is enough to show that $\cap_k \m_{\uchi}^k M=0$. The left hand side is equal to $\cap_k M \m_{\uchi}^k$.  Let $V$ be any projective object in $\Rep^{fd}(\cU_\e)$, then
\[ \Hom_{\cU_\e}(V,  \cap_k M\m_{\uchi}^k)=\cap_k \Hom_{\cU_\e}(V, M \m_{\uchi}^k)=\cap_k \Hom_{\cU_\e}(V, M)\m_{\uchi}^k.\]
By Lemma \ref{lem: Hom is fg over HC}, $\Hom_{\cU_\e}(V,M)$ is a finitely generated module over $\CW_\e^{\wedge_{\uchi}}$, hence,  $\cap_k \Hom_{\cU_\e}(V, M)\m_{\uchi}^k=0$. So $\Hom_{\cU_\e}(V, \cap_k \m_{\uchi}^k M)=0$ for all projective modules $V$ in $ \Rep^{fd}(\cU_\e)$. Since $\Rep(\cU_\e)$ has enough projectives, it follows that $\cap_k \m_{\uchi}^k M=0$.  
\end{proof}

\begin{Lem}\label{lem: unique left utheta-action} Let $M\in U_\e^{fin, \utheta'}\rmod^{G_\e}$ so that the left $U_\e^{fin}$-action on $M$ is extended to a left $U_\e^{fin, \utheta}$-action. Then $M$ is separated in the left $\m_{\utheta}$-adic topology and the left $U_\e^{fin, \utheta}$-action is uniquely recovered by continuity in the left $\m_\utheta$-adic topology. Furthermore, $M \in U_\e^{fin, \utheta}\lmod^{G_\e}$.
\end{Lem}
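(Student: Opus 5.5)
We reduce to Lemma \ref{lem: unique left Ue-uchi} and use the product decomposition \eqref{eq: decomposition of Ufin}.

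\emph{Step 1: reduction to the $\uchi$-completion.} Since $U_\e^{fin,\utheta'}$ is a direct factor of $U_\e^{fin,\uchi}$ by \eqref{eq: decomposition of Ufin}, $M$ is a finitely generated right $U_\e^{fin,\uchi}$-module, i.e.\ $M\in U_\e^{fin,\uchi}\rmod^{G_\e}$. By Lemma \ref{lem: unique left Ue-uchi}, the left $U_\e^{fin}$-action extends uniquely, by continuity in the left $\m_{\uchi}$-adic topology, to a left $U_\e^{fin,\uchi}$-action. The proof there also gives $\cap_k \m_{\uchi}^k M=0$. Moreover $M$ is finitely generated over $Z^{fin}_{Fr}\otimes_{Z_\cap}Z_\cap^{\wedge_{\uchi}}$, with the left and right actions of this algebra agreeing.

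\emph{Step 2: comparing with the given left $U_\e^{fin,\utheta}$-action.} The given extension restricts along $\CW_\e\to\CW_\e^{\wedge_\utheta}$ to the original left $\CW_\e$-action. The image of $\m_{\uchi}\subset Z_\cap$ in $\CW_\e^{\wedge_\utheta}$ lies in its maximal ideal, and $\CW_\e^{\wedge_\utheta}$ is finite over $Z_\cap^{\wedge_{\uchi}}$. Hence $\widehat{\m}_\utheta^{\,n}\subset \m_{\uchi}\CW_\e^{\wedge_\utheta}$ for some $n$: the quotient $\CW_\e^{\wedge_\utheta}/\m_{\uchi}$ is a local Artinian ring, being a factor of the finite-dimensional algebra $\CW_\e/\m_{\uchi}\CW_\e$. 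Consequently $\m_\utheta^{nk}M\subset \m_{\uchi}^kM$, where the left action of $\m_\utheta$ is via $U_\e^{fin}$. Together with Step 1 this gives separatedness:
\[\textstyle\bigcap_k \m_\utheta^k M\subset \bigcap_k \m_{\uchi}^kM=0.\]
On a separated module, any extension of the $\CW_\e$-action to $\CW_\e^{\wedge_\utheta}$ that is continuous in the $\m_\utheta$-adic topology is determined by the $\CW_\e$-action. Continuity holds because $\widehat{\m}_\utheta^{\,k}$ acts with image in $\m_\utheta^k M$: the ideal $\widehat{\m}_\utheta^{\,k}$ is generated by $\m_\utheta^k$, since $\CW_\e$ is Noetherian. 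Combined with the $U_\e^{fin}$-action, this shows that the left $U_\e^{fin,\utheta}=U_\e^{fin}\otimes_{\CW_\e}\CW_\e^{\wedge_\utheta}$-action is uniquely recovered.

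\emph{Step 3: finite generation.} The unique extension from Step 1 makes $M$ a left $U_\e^{fin,\uchi}=\prod_{\utheta''}U_\e^{fin,\utheta''}$-module. By uniqueness, the given $U_\e^{fin,\utheta}$-action is the one through the factor $U_\e^{fin,\utheta}$. Concretely, the idempotent $e_\utheta$ of $\CW_\e^{\wedge_{\uchi}}$ must act as the identity on the left: $e_\utheta$ is the limit of elements of $\CW_\e$ that converge $\m_\utheta$-adically to $1$ and $\m_{\uchi}$-adically to $e_\utheta$, and both topologies are separated. Lemma \ref{lem: unique left Ue-uchi} gives finite generation of $M$ as a left $U_\e^{fin,\uchi}$-module. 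Since $U_\e^{fin,\uchi}$ acts through its factor $U_\e^{fin,\utheta}$, the module $M$ is finitely generated over $U_\e^{fin,\utheta}$, so $M\in U_\e^{fin,\utheta}\lmod^{G_\e}$. The $\cU_\e$-equivariance follows from Lemma \ref{lem: equiv action on completion} by continuity.

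\emph{Main obstacle.} The delicate point is Step 2's claim that $\m_{\uchi}$-adic separatedness controls $\m_\utheta$-adic separatedness. This relies on the finiteness of $\CW_\e$ over $Z_\cap$, and on the fact that the relevant left actions all factor through the left $Z^{fin}_{Fr}$- and $\CW_\e$-actions, which agree with the right ones by Lemma \ref{lem: left and right Zfin-actions coincide}.
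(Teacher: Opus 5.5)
Your argument is correct. The finite-generation part matches the paper's: pull the given action back along $U_\e^{fin,\uchi}\twoheadrightarrow U_\e^{fin,\utheta}$ and use the uniqueness in Lemma~\ref{lem: unique left Ue-uchi}.

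Two small remarks on that part. First, your idempotent-approximation check really relies on the inclusion $\m_\utheta^{nk}M\subset\m_\uchi^kM$ from your Step 2 together with $\m_\uchi$-adic separatedness; ``both topologies are separated'' alone would not suffice. A simpler route is to note that the pulled-back action is automatically $\m_\uchi$-adically continuous, so the uniqueness applies directly. Second, equivariance then comes from Lemma~\ref{lem: unique left Ue-uchi}, not from Lemma~\ref{lem: equiv action on completion}.

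Where you genuinely differ is separatedness. The paper proves finite generation first and then argues as follows:
\begin{itemize}
\item by Lemma~\ref{lem: Hom is fg over HC}, $\Hom_{\cU_\e}(V,M)$ is a finitely generated $\CW_\e^{\wedge_\utheta}$-module;
\item Krull's intersection theorem gives $\bigcap_k\m_\utheta^k\Hom_{\cU_\e}(V,M)=0$;
\item taking $V$ projective and using enough projectives yields $\bigcap_k\m_\utheta^kM=0$.
\end{itemize}
In other words, the paper repeats the mechanism of the $\uchi$-lemma directly at the level of the $\utheta$-block.

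You instead inherit separatedness from the already-established $\bigcap_k\m_\uchi^kM=0$. The bridge is the nilpotence $\widehat{\m}_\utheta^{\,n}\subset\m_\uchi\CW_\e^{\wedge_\utheta}$, which follows from finiteness of $\CW_\e$ over $Z_\cap$, and the given $\CW_\e^{\wedge_\utheta}$-action converts it into $\m_\utheta^{nk}M\subset\m_\uchi^kM$.

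What each route buys: yours is shorter and does not need left finite generation beforehand. It also spells out the automatic-continuity argument behind ``uniquely recovered by continuity,'' which the paper leaves implicit. The paper's route avoids the Artinian comparison and works entirely within the $\utheta$-block.
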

\begin{proof} Under the projection $U_\e^{fin, \uchi} \twoheadrightarrow U_\e^{fin, \utheta}$, we have a left $U_\e^{fin, \uchi}$-action on $M$. By Lemma \ref{lem: unique left Ue-uchi}, such left $U_\e^{fin, \uchi}$-action is unique so that $M \in U_\e^{fin, \uchi}\lmod^{G_\e}$. Hence $M \in U_\e^{fin, \utheta}\lmod^{G_\e}$, in particular, $M$ is finitely generated over $U_\e^{fin, \utheta}$.  By Lemma \ref{lem: Hom is fg over HC}, for any  $V\in \Rep^{fd}(\cU_\e)$, we have that   $\Hom_{\cU_\e}(V, M)$ is a finitely generated left $\CW_\e^{\wedge_{\utheta}}$-module.
So for any projective $V$ in $\Rep^{fd}(\cU_\e)$, we have that  $\Hom_{\cU_\e}(V, \cap_k \m_\utheta^k M) =\cap_k \m_\utheta^k \Hom_{\cU_\e}(V,M)=0$. Since $\Rep(\cU_\e)$ has enough projectives, $\cap_k \m_\utheta^k M=0$, i.e., $M$ is separated in the left $\m_\utheta$-adic topology.     
\end{proof}
\begin{defi} Let $\HC_\e(\utheta, \utheta')$ denote the full subcategory of $U_\e^{fin, \utheta'}\rmod^{G_\e}$  such that the left $U^{fin}_\e$-action  on any object in $\HC_\e(\utheta, \utheta')$ extends to an action of $U^{fin, \utheta}_\e$.
\end{defi}
\begin{Rem} By decomposition \eqref{eq: decomposition of Ufin}, we have a  functor
\begin{equation}
    \pr_{\utheta, \utheta'}:  U^{fin, \utheta'}_\e \rmod^{G_\e} \rightarrow \HC_\e(\utheta, \utheta'),
\end{equation}
by projecting to the direct summand with the left $U^{fin, \utheta}_\e$-action. 
\end{Rem}

Recall $\fJ_{\uchi}=\psi_\e^{-1}(\CW_\e \m_{\uchi})$ and $\fJ_\utheta=\psi^{-1}_\e(\m_{\utheta})$ where $\psi_\e: \CW_q\rightarrow \CW_\e$. 


\begin{Lem}\label{lem: extension of left-actions} Let $M\in U^{fin, \uchi}_q\rmod^{G_q}$.

\noindent
(a) For any $V_q\in \Rep^{fd}(\cU_q(\g))$, the space $\Hom_{\cU_q(\g)}(V_q, M)$ is a finitely generated right  module over $\CW_q^{\wedge_{\uchi}}$. Here the right $\CW_q^{\wedge_{\uchi}}$-module structure comes from the right $\CW_q^{\wedge_{\uchi}}$-module structure on $M$, where we use that $\cU_q(\g)$ acts trivially on $\CW_q^{\wedge_{\uchi}}$.

\noindent
(b) The left and right $\fJ_{\uchi}$-adic topology on any object $M \in U_q^{fin, \uchi}\rmod^{G_q}$ coincides. Moreover,  $M$ is separated in the right (or left) $\fJ_{\uchi}$-adic topology, hence,  separated in the $\hbar$-adic topology.

\noindent
(c) The left action of $\CW_q$ on $M$ extends uniquely to a left action of $\CW_q^{\wedge_{\uchi}}$ by continuity in the $\fJ_{\uchi}$-adic topology. So $M$ is naturally an object in $U^{fin, \uchi}_q\bimod^{G_q}$.
\end{Lem}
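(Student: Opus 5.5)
The plan is to reduce everything to the specialization at $\hbar=0$, where Lemmas \ref{lem: Hom is fg over HC}, \ref{lem: left and right Zfin-actions coincide} and \ref{lem: unique left Ue-uchi} are available, and then lift using the $\fJ_{\uchi}$-adic completeness of $\CW_q^{\wedge_{\uchi}}$ together with Lemma \ref{lem fg quotient imply fg}(a). Throughout, $M\in U_q^{fin,\uchi}\rmod^{G_q}$ is finitely generated, so by Proposition \ref{prop: proj in Rep(Uq(g))}(b) there is a surjection $V\otimes_R U_q^{fin,\uchi}\twoheadrightarrow M$ with $V=\St_q\otimes N_q$ projective in $\Rep^{fd}(\cU_q(\g))$.

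For (a), it suffices to treat $V_q$ projective, since an arbitrary $V_q$ is covered by a projective $P_q$ and $\Hom_{\cU_q}(V_q,M)$ then embeds into $\Hom_{\cU_q}(P_q,M)$. Finite generation passes to this submodule because $\CW_q^{\wedge_{\uchi}}$ is Noetherian: it is a completion of the Noetherian ring $\CW_q\cong R\<K^{2\lambda}\>^W$. For $V_q$ projective, the functor $\Hom_{\cU_q}(V_q,-)$ is exact, so it is enough to treat $M=V\otimes_R U_q^{fin,\uchi}$. In that case
\[\Hom_{\cU_q}(V_q,V\otimes U_q^{fin,\uchi})\cong \Hom_{\cU_q}(V_q,V\otimes U_q^{fin})\otimes_{\CW_q}\CW_q^{\wedge_{\uchi}}\]
modulo the $\hbar$-separation quotient. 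Here I use Lemma \ref{lem fg quotient imply fg}(b), the flatness of the completion, and the fact that $\Hom_{\cU_q}(V_q,-)$ is a direct summand of the Hom into an $R$-module. We then reduce modulo $\fJ_{\uchi}$. Since $\Hom_{\cU_q}(V_q,-)$ is exact and commutes with $/\hbar$ (because $V_q$ is projective and free over $R$), the reduction becomes a quotient of $\Hom_{\cU_\e}(V_\e,V_\e\otimes U_\e^{fin,\uchi})/\m_{\uchi}$. This is finite-dimensional by Lemma \ref{lem: Hom is fg over HC}. The Hom space is $\hbar$-separated, being a submodule of the separated module $M$, and it is $\fJ_{\uchi}$-separated by a Krull-type argument. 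Lemma \ref{lem fg quotient imply fg}(a) then gives finite generation.

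For (b), the first task is to show that $\fJ_{\uchi}^kM$ and $M\fJ_{\uchi}^k$ are mutually cofinal. Modulo $\hbar$, Lemma \ref{lem: left and right Zfin-actions coincide} gives $\m_{\uchi}M_\e=M_\e\m_{\uchi}$, and $\m_{\uchi}$ generates $\CW_\e\m_{\uchi}$ up to a power, since $\CW_\e$ is finite over $Z_\cap$. Hence $\fJ_{\uchi}^{s}M\subset M\fJ_{\uchi}+\hbar M\subset M\fJ_{\uchi}$, using $\hbar\in\fJ_{\uchi}$; iterating this inclusion gives cofinality. For separatedness, I would apply $\Hom_{\cU_q}(V_q,-)$ with $V_q$ projective. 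It commutes with intersections and gives $\Hom(V_q,\bigcap_k M\fJ^k)=\bigcap_k \Hom(V_q,M)\fJ^k$. The right-hand side vanishes by Krull's intersection theorem, applied to the finitely generated module from (a) over the complete local-type Noetherian ring $\CW_q^{\wedge_{\uchi}}$. Since there are enough projectives (Proposition \ref{prop: proj in Rep(Uq(g))}(c)), the intersection is zero. Separatedness in the $\hbar$-adic topology follows because $\hbar\in\fJ_{\uchi}$.

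For (c), by (b) the left $\CW_q$-action is continuous for the $\fJ_{\uchi}$-adic topology, which is the same as the right one. By (a), each $\Hom_{\cU_q}(V_q,M)$ is complete, being finitely generated over a complete Noetherian ring. Hence $M$ is complete for the left topology, and the left action extends uniquely by continuity. Uniqueness comes from separatedness in (b). Compatibility with the left $U_q^{fin}$-action and with equivariance is checked on dense subsets, which yields an object of $U_q^{fin,\uchi}\bimod^{G_q}$.

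The main obstacle is (a): controlling the $\hbar$-separation quotient in the definition of $U_q^{fin,\uchi}$, and establishing the $\fJ$-separatedness needed for Lemma \ref{lem fg quotient imply fg}(a) before Krull's theorem is available. The first thing I would try is to bound $\Hom(V_q,M)$ inside the Hom into the free cover, where the base change formula holds explicitly.
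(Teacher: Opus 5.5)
Your part (b) matches the paper. Part (a), however, contains a circular step, and part (c) contains a false step and omits a requirement.

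In (a), the decisive move applies Lemma~\ref{lem fg quotient imply fg}(a) with $I=\fJ_{\uchi}$. That needs $\Hom_{\cU_q}(V_q,M)$ to be $\fJ_{\uchi}$-adically separated, and the only justification you offer is Krull's theorem, which presupposes the finite generation you are proving. You flag this yourself, but it cannot be patched later. The missing idea is to use the lemma with $I=\hbar\,\CW_q^{\wedge_{\uchi}}$ instead, as the paper does.

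First reduce to $M=U_q^{fin,\uchi}$ via $\Hom_{\cU_q}(V_q,V'_q\otimes U_q^{fin,\uchi})\cong\Hom_{\cU_q}({}^*V'_q\otimes V_q,U_q^{fin,\uchi})$. Then $\hbar$-separatedness is automatic, and $\CW_q^{\wedge_{\uchi}}$ is $\hbar$-adically complete. This is exactly why $U_q^{fin,\uchi}$ was defined as a quotient by $\bigcap_k\hbar^k(\cdot)$; the separation quotient is what makes the argument work, not an obstacle.

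What remains is to show the reduction mod $\hbar$ is finitely generated over $\CW_\e^{\wedge_{\uchi}}$. Finite-dimensionality mod $\fJ_{\uchi}$ is too weak without separatedness. The mod-$\hbar$ statement follows from four inputs:
\begin{itemize}
\item the exact sequence $0\to\Hom_{\cU_q}(V_q,U_q^{fin,\uchi})\xrightarrow{\hbar}\Hom_{\cU_q}(V_q,U_q^{fin,\uchi})\to\Hom_{\cU_\e}(V_\e,U_\e^{fin,\uchi})$;
\item the base change $\Hom_{\cU_\e}(V_\e,U_\e^{fin,\uchi})\cong\Hom_{\cU_\e}(V_\e,U_\e^{fin})\otimes_{\CW_\e}\CW_\e^{\wedge_{\uchi}}$ from Lemma~\ref{lem fg quotient imply fg}(b), where at $\hbar=0$ there is no separation quotient to control;
\item Lemma~\ref{lem: Hom is fg over HC};
\item Noetherianity.
\end{itemize}
You had all of these ingredients; you descended to the $\fJ_{\uchi}$-adic level one step too early.

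In (c), the inference ``each $\Hom_{\cU_q}(V_q,M)$ is complete, hence $M$ is complete'' is false. In general $M$ is neither finitely generated over $\CW_q^{\wedge_{\uchi}}$ nor $\fJ_{\uchi}$-adically complete. For $M=U_q^{fin,\uchi}$, take $\sum_k\hbar^k u_k$ with $u_k\neq0$ of pairwise distinct weights. This series is Cauchy. But $\fJ_{\uchi}$ preserves weight spaces and $M$ is separated, so a limit would have infinitely many nonzero weight components, which is impossible.

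The extension must be made locally:
\begin{itemize}
\item For $m\in M$, choose a projective $V_q\twoheadrightarrow\cU_q(\g)m$ and let $M'$ be the image of $V_q\otimes\Hom_{\cU_q}(V_q,M)\to M$.
\item $M'$ contains $m$, is stable under the left $\CW_q$-action, and is finitely generated over $\CW_q^{\wedge_{\uchi}}$ by (a), hence complete.
\item The left and right $\fJ_{\uchi}$-adic topologies on $M'$ agree, because projectivity gives $\fJ_{\uchi}\Hom_{\cU_q}(V_q,M)=\Hom_{\cU_q}(V_q,\fJ_{\uchi}M)=\Hom_{\cU_q}(V_q,M\fJ_{\uchi})=\Hom_{\cU_q}(V_q,M)\fJ_{\uchi}$.
\item Extend the action on each such $M'$ and glue using the separatedness from (b).
\end{itemize}

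Finally, membership in $U_q^{fin,\uchi}\bimod^{G_q}$ also requires that $M$ be finitely generated as a left $U_q^{fin,\uchi}$-module, which you do not address. The paper obtains this by rerunning (a) for the left $\CW_q^{\wedge_{\uchi}}$-structure, using Lemma~\ref{lem: unique left Ue-uchi} for $M/\hbar M$, and then applying Lemma~\ref{lem: Criterion for HCq}.
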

We need a simple lemma
\begin{Lem}\label{lem: Im divided by h} Let $V$ be a projective object in $\Rep(\cU_q(\g))$ and $M\in \Rep(\cU_q(\g))$. Suppose $f\in \Hom_{\cU_q}(V,M)$ is such that $\text{Im} f \subset \hbar M$ then there is $f'\in \Hom_{\cU_q(\g)}(V, M)$ such that $f=\hbar f'$.
\end{Lem}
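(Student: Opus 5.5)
The idea is to factor $f$ through $\hbar M$ and then use projectivity of $V$ against the multiplication-by-$\hbar$ map. Since $\text{Im} f \subset \hbar M$ and $\hbar M$ is a $\cU_q(\g)$-submodule of $M$ (the action is $\BC[[\hbar]]$-linear), $f$ factors as $V \xrightarrow{f_1} \hbar M \hookrightarrow M$ with $f_1$ a morphism in $\Rep(\cU_q(\g))$. Here $\hbar M$ is rational, being a subobject of a rational representation.

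Next I would use the morphism
\[ \mu_\hbar\colon M \twoheadrightarrow \hbar M, \qquad m\mapsto \hbar m, \]
which is a surjective morphism of $\cU_q(\g)$-modules between objects of $\Rep(\cU_q(\g))$. Since $V$ is projective in $\Rep(\cU_q(\g))$, the map $f_1$ lifts along $\mu_\hbar$: there is $f'\in \Hom_{\cU_q(\g)}(V,M)$ with $\mu_\hbar\circ f' = f_1$. Composing with the inclusion $\hbar M\hookrightarrow M$ then gives $f(v)=\hbar f'(v)$ for all $v\in V$, i.e.\ $f=\hbar f'$.

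The only point to check is that these operations stay inside $\Rep(\cU_q(\g))$, so that projectivity there applies. Subobjects and quotients of rational representations are rational, since the weight decomposition and local nilpotence conditions pass to them. Hence $\hbar M$ is an object of the category, and $\mu_\hbar$ is an epimorphism in it. No torsion-freeness of $M$ is needed, because we lift through the surjection $\mu_\hbar$ rather than dividing by $\hbar$. This argument is essentially formal, and I do not expect any real obstacle.
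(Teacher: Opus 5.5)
Your proposal is correct and is essentially the paper's argument: the paper draws the lifting diagram for $f$ against $M \xrightarrow{\cdot\hbar} M$ and invokes projectivity of $V$. You make explicit what that diagram leaves implicit, namely that one lifts along the surjection $M \twoheadrightarrow \hbar M$ onto the rational subobject $\hbar M$ through which $f$ factors.
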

\begin{proof}Consider the following diagram
\[ \begin{tikzcd} & V \arrow[dl, dashed,  "f' "'] \arrow[d, "f"]&\\
  M \arrow[r, "\cdot \hbar"]& M &  
\end{tikzcd}
\]
Since $\text{Im} f \subset \hbar M$ and $V$ is projective in $\Rep(\cU_q(\g))$, there is $f'\in \Hom_{\cU_q}(V,M)$ which makes the above diagram commutative. This implies the lemma.
\end{proof}

\begin{proof}[Proof of Lemma \ref{lem: extension of left-actions}](a) Since $\Rep^{fd}(\cU_q(\g))$ has enough projectives, we can assume $V_q$ is projective in $\Rep^{fd}(\cU_q(\g))$ and $M=V'_q\otimes_{\BC[[\hbar]]} U^{fin, \uchi}_q$ for some $V'_q$ free over $\BC[[\hbar]]$ in $\Rep^{fd}(\cU_q(\g))$. 
\[ \Hom_{\cU_q(\g)}(V_q, V'_q\otimes_{\BC[[\hbar]]} U^{fin, \uchi}_q) =\Hom_{\cU_q(\g)}(^*V'_q \otimes_{\BC[[\hbar]]} V_q, U^{fin, \uchi}_q),\]
here $^*V'_q$ is the left dual of $V'_q$. Therefore, we reduce to proving that $\Hom_{\cU_q(\g)}(V_q, U^{fin, \uchi}_q)$ is finitely generated over $\CW_q^{\wedge_{\uchi}}$ for any $V_q\in \Rep^{fd}(\cU_q(\g))$.

We have a short exact sequence 
\[ 0\rightarrow \Hom_{\cU_q(\g)}(V_q, U_q^{fin, \uchi})\xrightarrow[]{\cdot \hbar} \Hom_{\cU_q(\g)}(V_q, U_q^{fin, \uchi})\rightarrow \Hom_{\cU_q(\g)}(V_q, U^{fin, \uchi}_\e).\]

Since $U_\e^{fin}$ is a $(\cU_\e, \CW_\e)$-bimodule and $\CW_\e^{\wedge_{\uchi}}$ is flat over $\CW_\e$, by Lemma \ref{lem fg quotient imply fg}.b, 
\[ \Hom_{\cU_q(\g)}(V_q, U^{fin, \uchi}_\e)=\Hom_{\cU_\e(\g)}(V_\e, U^{fin, \uchi}_\e) \cong \Hom_{\cU_\e(\g)}(V_\e, U^{fin}_\e)\otimes_{\CW_\e}\CW^{\wedge_{\uchi}}_\e,\]
here $V_\e:=V_q/\hbar V_q$. By Lemma \ref{lem: Hom is fg over HC}, $\Hom_{\cU_q(\g)}(V_q, U^{fin, \uchi}_\e)$ is finitely generated over $\CW_\e^{\wedge_{\uchi}}$ and
\[ \Hom_{\cU_q(\g)}(V_q, U^{fin, \uchi}_q)/\hbar \Hom_{\cU_q(\g)}(V_q, U^{fin, \uchi}_q)\]
is a finitely generated module  over $\CW_\e^{\wedge_{\uchi}}$. Moreover, $\CW_q^{\wedge_{\uchi}}$ is complete in the $\hbar$-adic topology and $\Hom_{\cU_q(\g)}(V_q, U^{fin, \uchi}_q)$ is separated in the $\hbar$-adic topology (since $U_q^{fin, \uchi}$ is separated). Therefore, $\Hom_{\cU_q(\g)}(V_q, U^{fin, \uchi}_q)$ is a finitely generated module over $\CW_q^{\wedge_{\uchi}}$ by Lemma \ref{lem fg quotient imply fg}.a.

\noindent
(b) Since $M/\hbar M \in U_\e^{fin, \uchi}\rmod^{G_\e}$ and $\hbar \in \fJ_{\uchi}$, it follows that  $\fJ_{\uchi}M=M \fJ_{\uchi}$. This implies the first part.  Let $V_q$ be a projective object in $\Rep^{fd}(\cU_q(\g))$. Then 
\[\Hom_{\cU_q(\g)}(V_q, \bigcap M(\fJ_{\uchi})^k ) \iso \bigcap  \Hom_{\cU_q(\g)}(V_q, M)(\fJ_{\uchi})^k=0,\]
where $\Hom_{\cU_q(\g)}(V_q, M)$ is a finitely generated module over $\CW_q^{\wedge_{\uchi}}$ by part a.

Since $\Hom_{\cU_q(\g)}(V_q, \cap M(\fJ_{\uchi})^k )=0$ for all projective objects $V_q$ in $\Rep^{fd}(\cU_q(\g))$ and the latter category has enough projectives, it follows that $\bigcap  M(\fJ_{\uchi})^k=0$.

\noindent
(c) 
Let $m \in M$. Let $V_q$ be a projective object in $\Rep^{fd}(\cU_q(\g))$ with a surjective map $V_q \twoheadrightarrow \cU_q(\g) m$. We note that $\Hom_{\cU_q(\g)}(V_q, M)$ is a module over $\CW_q\otimes \CW_q^{\wedge_{\uchi}}$ and the natural map
\[ V_q\otimes_{\BC[[\hbar]]} \Hom_{\cU_q(\g)}(V_q, M)\rightarrow M,\]
is a homomorphism of $\CW_q\otimes \CW_q^{\wedge_{\uchi}}$-modules with the image containing $m$. Let $M'$ denote the image of this homomorphism. Then $M'$ is a finitely generated right $\CW_q^{\wedge_{\uchi}}$-module, hence complete and seperated in the right $\fJ_{\uchi}$-adic topology.

On the other hand, the left and right $\fJ_{\uchi}$-adic topologies on $M'$ coincide, i.e., $\fJ_{\uchi}M'=M' \fJ_{\uchi}$. Indeed, since $V$ is projective in $\Rep(\cU_q(\g))$, it follows that 
\[ \fJ_{\uchi}\Hom_{\cU_q}(V,M)=\Hom_{\cU_q}(V, \fJ_{\uchi}M)=\Hom_{\cU_q}(V, M \fJ_{\uchi})=\Hom_{\cU_q}(V,M)\fJ_{\uchi}.\]
The left $\CW_q$-action on $M'$ can be uniquely extended to a left action of $\CW_q^{\wedge_{\uchi}}$ by continuity in the $\fJ_{\uchi}$-adic topology. Then we use the fact that $M$ is separated in the left $\fJ_{\uchi}$-adic topology to conclude that the left $\CW_q^{\wedge_{\uchi}}$-action on various $M'$ define a unique left $\CW_q^{\wedge_{\uchi}}$-action on $M$. 


So $M$ has a left $U^{fin}_q\otimes_{\CW_q} \CW_q^{\wedge_{\uchi}}$-action, which then factors through $U^{fin, \uchi}_q$ since $M$ is separated in the $\hbar$-adic topology.  By continuity in the left $\fJ_{\uchi}$-adic topology, $M$ is a rational $\cU_q$-equivariant left $U_q^{fin, \uchi}$-module.

To show that $M\in U_q^{fin, \uchi}\bimod^{G_q}$, it is left to prove that $M$ is a finitely generated left $U_q^{fin, \uchi}$-module. Note that $M/\hbar M \in U_e^{fin, \uchi}\lmod^{G_\e}$ by Lemma \ref{lem: unique left Ue-uchi}, hence $\Hom_{\cU_q}(V_q, M/\hbar M)$ is a finitely generated left $\CW_\e^{\wedge_{\uchi}}$-module for any $V_q\in \Rep^{fd}(\cU_q)$. Arguing as in part a),  $\Hom_{\cU_q}(V_q, M)$ is a finitely generated left $\CW_q^{\wedge_{\uchi}}$-module for any $V_q$ in $\Rep^{fd}(\cU_q)$. Now  we use Lemma \ref{lem: Criterion for HCq} for left $U_q^{fin, \uchi}$-modules to conclude that $M$ is a finitely generated left $U_q^{fin, \uchi}$-module. 
\end{proof}
\begin{Lem}Let $M\in U_q^{fin, \utheta'}\rmod^{G_q}$ so that the left $U_q^{fin}$-action on $M$ is extended to a left $U_q^{fin, \utheta}$-action. Then $M$ is separated in the left $\fJ_\utheta$-adic topology and the left $U_q^{fin, \utheta}$-action is uniquely recovered by continuity in the left $\fJ_\utheta$-adic topology. Furthermore, $M\in U_q^{fin, \utheta}\lmod^{G_q}$.
\end{Lem}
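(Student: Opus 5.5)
The plan mirrors the proof of Lemma \ref{lem: unique left utheta-action} in the $\e$-case, combined with Lemma \ref{lem: extension of left-actions}. First pull the left action back along the projection $U_q^{fin, \uchi} \twoheadrightarrow U_q^{fin, \utheta}$, which comes from the decomposition \eqref{eq: decomposition of Ufin}. This makes $M$ a left $U_q^{fin,\uchi}$-module extending the left $U_q^{fin}$-action. Since $U_q^{fin,\utheta'}$ is a quotient of $U_q^{fin,\uchi}$, we may regard $M$ as an object of $U_q^{fin,\uchi}\rmod^{G_q}$.

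Lemma \ref{lem: extension of left-actions}(c) then says that the left $\CW_q$-action extends \emph{uniquely*} to a left $\CW_q^{\wedge_{\uchi}}$-action by continuity, and that $M\in U_q^{fin,\uchi}\bimod^{G_q}$. Hence our given left $U_q^{fin,\uchi}$-action coincides with that canonical one. In particular, $M$ is finitely generated as a left $U_q^{fin,\uchi}$-module. Because the left $U_q^{fin,\uchi}$-action factors through $U_q^{fin,\utheta}$, $M$ is finitely generated as a left $U_q^{fin,\utheta}$-module. This gives $M\in U_q^{fin,\utheta}\lmod^{G_q}$.

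Separatedness in the left $\fJ_\utheta$-adic topology comes next. For a projective $V_q\in\Rep^{fd}(\cU_q(\g))$, the space $\Hom_{\cU_q}(V_q,M)$ is a finitely generated left $\CW_q^{\wedge_{\uchi}}$-module. This is shown in the last paragraph of the proof of Lemma \ref{lem: extension of left-actions}(c), or by part (a) together with the identification of left and right actions. The $\CW_q^{\wedge_{\uchi}}$-action factors through the direct factor $\CW_q^{\wedge_\utheta}$ of Lemma \ref{lem: decomposition of Uev and W}, so $\Hom_{\cU_q}(V_q,M)$ is finitely generated over the Noetherian complete local ring $\CW_q^{\wedge_\utheta}$. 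By the Krull intersection theorem,
\[\Hom_{\cU_q}\Big(V_q,\bigcap_k \fJ_\utheta^k M\Big)=\bigcap_k \fJ_\utheta^k\Hom_{\cU_q}(V_q,M)=0.\]
The first equality uses projectivity of $V_q$, in the style of Lemma \ref{lem: Im divided by h}, to lift maps into $\fJ_\utheta^k M$. This lifting is the step I expect to need the most care. I would first establish $\Hom_{\cU_q}(V_q, \fJ_\utheta^k M)=\fJ_\utheta^k\Hom_{\cU_q}(V_q,M)$ using finite generation of $\fJ_\utheta^k$ and projectivity of $V_q$, exactly as in the proof of Lemma \ref{lem: extension of left-actions}(c). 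Since $\Rep(\cU_q(\g))$ has enough projectives (Proposition \ref{prop: proj in Rep(Uq(g))}), it follows that $\bigcap_k\fJ_\utheta^kM=0$.

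Finally, uniqueness. Any left $U_q^{fin,\utheta}$-action extending the $U_q^{fin}$-action is determined on the dense image of $U_q^{fin}\otimes_{\CW_q}\CW_q$ once $M$ is separated. Continuity holds automatically, because $\fJ_\utheta$ acts through ideals of $\CW_q^{\wedge_\utheta}$. So separatedness forces the action to be the one obtained by continuity.
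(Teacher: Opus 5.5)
Your proposal is correct and is essentially the paper's argument. The paper proves this exactly as in Lemma \ref{lem: unique left utheta-action}: pull back along $U_q^{fin,\uchi}\twoheadrightarrow U_q^{fin,\utheta}$, use the uniqueness and left finite generation from Lemma \ref{lem: extension of left-actions}(c), and deduce separatedness from finite generation of $\Hom_{\cU_q}(V_q,M)$ over the complete local ring $\CW_q^{\wedge_\utheta}$ together with enough projectives.

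One small caution: the left and right $\CW_q$-actions on $M$ need not coincide (only the left and right $\fJ_{\uchi}$-adic topologies do). So justify left finite generation of $\Hom_{\cU_q}(V_q,M)$ via the last paragraph of the proof of Lemma \ref{lem: extension of left-actions}(c), not via ``part (a) plus identification of actions''.
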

\begin{proof}The proof is similar to that of Lemma \ref{lem: unique left utheta-action} using the surjective map $U_q^{fin, \uchi}\twoheadrightarrow U_q^{fin, \utheta}$. 
\end{proof}
\begin{defi}\label{defi: HCq(.,.)}   Let $\HC_q(\utheta, \utheta')$ denote the full subcategory of $U_q^{fin, \utheta'}\rmod^{G_\e}$  such that the left $U^{fin}_q$-action  on any object in $\HC_q(\utheta, \utheta')$ extends to an action of $U^{fin, \utheta}_q$.
\end{defi}
\begin{Rem}
Thanks to  decomposition \eqref{eq: decomposition of Ufin}, we have a functor
\begin{equation}
    \pr_{\utheta, \utheta'}:  U^{fin, \utheta'}_q \rmod^{G_q} \rightarrow \HC_q(\utheta, \utheta'),
\end{equation}
by projecting  to the direct summand with the left $U^{fin, \utheta}_q$-action.
\end{Rem}
\begin{Rem}We can define $\HC_\e(\utheta, \utheta')$ to be the full subcategory of $U_\e^{fin, \utheta}\lmod^{G_\e}$ such that the right $U_\e^{fin}$-action on any object in $\HC_\e(\utheta, \utheta')$ extends uniquely to an action of $U_\e^{fin, \utheta'}$ . There is a similar statement for the category $\HC_q(\utheta, \utheta')$.
\end{Rem}
We see that 
\begin{equation}\label{eq: decomposition of cat}
    U^{fin, \uchi}_\e \rmod^{G_\e} \cong \prod_{(\utheta, \utheta')} \HC_\e(\utheta, \utheta'), \qquad U^{fin, \uchi}_q\rmod^{G_q} \cong \prod_{(\utheta, \utheta')} \HC_q(\utheta, \utheta'),
\end{equation}
 where $(\utheta, \utheta')$ runs over all pairs such that images of $\utheta, \utheta'$ under $\Spec \CW_\e \rightarrow \Spec Z_\cap$ are $\uchi$.

\begin{Lem} \label{lem: complete HCq is abelian} The categories $\HC_\e(\utheta, \utheta')$ and $\HC_q(\utheta, \utheta')$ are abelian.
\end{Lem}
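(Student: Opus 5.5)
By the decomposition \eqref{eq: decomposition of cat}, $\HC_\e(\utheta,\utheta')$ and $\HC_q(\utheta,\utheta')$ are direct factors of $U^{fin,\uchi}_\e\rmod^{G_\e}$ and $U^{fin,\uchi}_q\rmod^{G_q}$. A direct factor of an abelian category is abelian. So it suffices to show that the categories of finitely generated $\cU$-equivariant right modules over $U^{fin,\uchi}_\e$ and over $U^{fin,\uchi}_q$ are abelian.

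The ambient categories $U^{fin,\uchi}\Rmod^{G}$ of all equivariant right modules in $\Rep(\cU)$ are abelian. Kernels and cokernels are computed on underlying modules, and $\Rep(\cU)$ is closed under subobjects and quotients. Finitely generated modules are closed under quotients. The remaining point is that a kernel of a morphism between finitely generated objects is again finitely generated, and for this it is enough to show that $U^{fin,\uchi}$ is right Noetherian.

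In case \ref{case over C}, Proposition \ref{prop: properties of Ufin}(b) says that $U^{fin}_\e$ is a finitely generated module over $Z^{fin}_{Fr}\cong\BC[G^d]$, which is Noetherian. Hence $U^{fin,\uchi}_\e=U^{fin}_\e\otimes_{Z_\cap}Z_\cap^{\wedge_{\uchi}}$ is a finite module over the Noetherian commutative ring $Z^{fin}_{Fr}\otimes_{Z_\cap}Z_\cap^{\wedge_{\uchi}}$, which is a finite algebra over the complete Noetherian ring $Z_\cap^{\wedge_{\uchi}}$. So $U^{fin,\uchi}_\e$ is Noetherian, and therefore $\HC_\e(\utheta,\utheta')$ is abelian.

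In case \ref{case over C[[h]]}, I would avoid proving that $U^{fin,\uchi}_q$ is Noetherian, since this is the main obstacle. Instead, I will use Lemma \ref{lem: extension of left-actions}. Let $f:M\to N$ be a morphism in $U^{fin,\uchi}_q\rmod^{G_q}$ and put $K=\ker f$.

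The key step is the criterion invoked at the end of the proof of Lemma \ref{lem: extension of left-actions} (Lemma \ref{lem: Criterion for HCq}). It says that an object $K$ of $U^{fin,\uchi}_q\Rmod^{G_q}$ separated in the $\hbar$-adic topology is finitely generated as soon as $\Hom_{\cU_q}(V_q,K)$ is finitely generated over $\CW^{\wedge_{\uchi}}_q$ for every projective $V_q\in\Rep^{fd}(\cU_q(\g))$.

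Both hypotheses hold for $K$. First, $K\subset M$ and $M$ is $\hbar$-separated by Lemma \ref{lem: extension of left-actions}(b), so $K$ is $\hbar$-separated. Second, $\Hom_{\cU_q}(V_q,-)$ is left exact, so $\Hom_{\cU_q}(V_q,K)$ is the kernel of
\[\Hom_{\cU_q}(V_q,M)\to\Hom_{\cU_q}(V_q,N).\]
This is a map of finitely generated modules over $\CW^{\wedge_{\uchi}}_q$ by Lemma \ref{lem: extension of left-actions}(a). The ring $\CW^{\wedge_{\uchi}}_q$ is Noetherian, being the completion of the Noetherian commutative ring $\CW_q$, a polynomial-type algebra over $\BC[[\hbar]]$. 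Hence the kernel is finitely generated.

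It follows that $K$ is finitely generated. This verification, namely checking that the criterion applies to subobjects (in particular that $K$ is $\hbar$-separated and has the required equivariant structure), is the step I expect to require the most care. Once it is done, $U^{fin,\uchi}_q\rmod^{G_q}$ is abelian, and hence so is its factor $\HC_q(\utheta,\utheta')$.
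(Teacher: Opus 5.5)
Your reduction to $U^{fin,\uchi}\rmod^{G}$ and your Noetherian argument over $\BC$ are fine. One correction there: $Z^{fin}_{Fr}\otimes_{Z_\cap}Z_\cap^{\wedge_{\uchi}}$ is not finite over $Z_\cap^{\wedge_{\uchi}}$, since the fibres of $G\to G\sslash G$ are positive-dimensional. It is, however, a finitely generated algebra over it, so it is still Noetherian.

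The gap is in the case $R=\BC[[\hbar]]$. The criterion you quote is not Lemma \ref{lem: Criterion for HCq}, and your version of it is false. That lemma requires, in addition to Hom-finiteness, that the reduction mod $\hbar$ be finitely generated over $U^{fin}_\e$. This is what produces a map $V_q\otimes U^{fin}_q\to K$ that is surjective modulo $\hbar$; Hom-finiteness over $\CW_q^{\wedge}$ and Nakayama then upgrade it to a surjection. $\hbar$-adic separatedness cannot replace this hypothesis. For a counterexample, take $\uchi$ to be the image of $0$ and $K=\bigoplus_\mu L_\e(\mu)^r$ over infinitely many dominant $\mu\in W_{ext}\bullet_\ell 0$, viewed over $U^{fin}_q$ through $U^{fin}_q\to U^{fin}_\e$. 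Then $\hbar K=0$, and every $\Hom_{\cU_q}(V_q,K)$ is finite-dimensional because $V_q/\hbar V_q$ maps nontrivially to only finitely many summands. Yet $K$ is not finitely generated: the right action factors through the counit, so a finitely generated such module would be finite-dimensional. The step you flagged as delicate ($\hbar$-separatedness of the kernel) is automatic; the real difficulty lies elsewhere.

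For a kernel, finite generation modulo $\hbar$ is exactly the nontrivial point, because reduction mod $\hbar$ is not left exact. The kernel of $K/\hbar K\to M/\hbar M$ is a quotient of the $\hbar$-torsion $\{x\in\operatorname{im}f:\hbar x=0\}$, so Noetherianity of $U^{fin}_\e$ applied to $M/\hbar M$ does not suffice. The paper handles this in two steps.
\begin{itemize}
\item If $M/K$ is $\hbar$-torsion-free, then $K/\hbar K\hookrightarrow M/\hbar M$, so $K/\hbar K$ is finitely generated by Noetherianity, and the criterion applies.
\item In general, let $M'\subset M$ be the preimage of the torsion part $T$ of $M/K$. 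Since $M/M'$ is torsion-free, $M'$ is finitely generated by the first step, and hence so is $T=M'/K$.
\item Therefore $\hbar^kT=0$ for some $k$. It follows that $\{t\in T:\hbar t=0\}$ is finitely generated, since $U^{fin,\uchi}_q/\hbar^k$ is Noetherian.
\item The exact sequence $\{t\in T:\hbar t=0\}\to K/\hbar K\to M'/\hbar M'$ then shows that $K/\hbar K$ is finitely generated.
\end{itemize}
Only then does Lemma \ref{lem: Criterion for HCq} apply; your Hom-finiteness argument via left exactness and Lemma \ref{lem: extension of left-actions}(a) supplies its remaining hypothesis. Without this torsion step, your proof of the $\hbar$-adic case is incomplete.
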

\begin{proof}See Appendix \ref{ssec: proof of abelian}
\end{proof}
For any $\lambda \in P$, we also use $\lambda$ to denote a closed point in $ \Spec(\CW_\e)$ as follows:
\begin{equation}\label{eq: closed point of HC}
\CW_\e \subset \BC[K^{2\lambda}]_{\lambda \in P }\rightarrow \BC \qquad K^{2\mu}\mapsto \e^{(2\mu, \lambda)} \; \text{for $\mu \in P$}.
\end{equation}

\begin{defi}\label{defi: integral blocks} By {\it integral blocks} of quantum Harish-Chandra bimodules, we mean $\HC_\e(\lambda, \lambda')$ and $\HC_q(\lambda, \lambda')$ for $\lambda, \lambda'\in P$. 
\end{defi}
\begin{Rem}By the assumption \ref{eq: assumption on l} on $\ell$, the image of $\lambda \in \Spec\CW_\e$ under the map $\Spec \CW_\e \rightarrow \Spec Z_\cap$ is  the point $1\in T/W \cong \Spec Z_\cap$.
\end{Rem}

\begin{defi}\label{defi: diagonal bimod}(a)  For any $V_q\in \Rep(\cU_q(\g))$ which is free of finite rank over $\BC[[\hbar]]$, let $P^{\utheta, \utheta'}(V_q)$ be the direct summand of $V_q\otimes_R U_q^{fin, \utheta'}$ in $\HC_q(\utheta, \utheta')$. We call $P^{\utheta,\utheta'}(V_q)$, their direct sums and direct summands,  the {\it diagonal bimodules}.

\noindent
(b) The {\it HC-tilting bimodules} in $\HC_q(\utheta, \utheta')$ are direct summands of direct sums of objects of the form $P^{\utheta, \utheta'}(V_q)$ for some tilting module $V_q$ in $\Rep^{fd}(\cU_q(\g))$. Let $\Hilt_q(\utheta, \utheta')$ denote the full additive subcategories of HC-tilting modules.

\noindent
(c) For $\lambda, \mu $ in the closure of the fundamental alcove  $\overline{C}$ in \eqref{eq: closure of C} , let $P^{\mu, \lambda}_q:= P^{\mu, \lambda}(T_q(\mu|\lambda))$. These are the {\it translation bimodules}. Here $T_q(\mu|\lambda)$ is defined in Definition \ref{defi: tilting module}.

\end{defi}

\begin{Rem} $P^{\utheta, \utheta'}(V_q)$ is the direct summand of $U^{fin, \utheta}_q\otimes_R V_q$ in $\HC_q(\utheta, \utheta')$ as well as the direct summand of $V_q\otimes_R U^{fin, \uchi}_q$ in $\HC_q(\utheta, \utheta')$.
\end{Rem}


\begin{Rem}Recall the projection $\pr_{[\lambda]}: O_q \rightarrow O_q^{[\lambda]}$ after Lemma \ref{lem: block decomposition}. We have
\[\pr_{[\mu]}\Big(T_q(\mu|\lambda)\otimes_{\BC[[\hbar]]}\Delta_q(\lambda)\Big)\cong P_q^{\mu, \lambda}\otimes_{U_q^{fin, \lambda}} \Delta_q(\lambda)\]
because $P_q^{\mu, \lambda}$ is the direct summand of $T_q(\mu|\lambda)\otimes_{\BC[[\hbar]]} U_q^{fin, \lambda}$ in the category $\HC_q(\mu, \lambda)$. The same holds for $\Delta_\e(\lambda)$ and $ \Delta_{q, \sR}(\lambda)$.
\end{Rem}

\begin{Rem}We avoid to use {\it tilting bimodules} since $\HC_q(\utheta, \utheta')$ has no highest weight structure. Tensor products  of HC-tilting bimodules are HC-tilting.  The translation bimodules $P_q^{\mu, \lambda}$ are HC-tilting by definition.
\end{Rem}
\begin{Lem}\label{lem: proj is hilt}(a) For any $M \in \HC_q(\utheta, \utheta')$, we have 
\[ \Hom_{\HC_q(\utheta, \utheta')}(P^{\utheta, \utheta'}(V_q), M) \cong \Hom_{\Rep(\cU_q(\g))}(V_q, M).\]

\noindent
(b) $\HC_q(\utheta, \utheta')$ has enough projectives. Any projective object in $\HC_q(\utheta, \utheta')$ is a direct summand of $P^{\utheta, \utheta'}(V_q)$ for some projective object $V_q \in \Rep^{fd}(\cU_q(\g))$, hence is  HC-tilting.

There are analogous statements for $\HC_\e(\utheta, \utheta')$.
\end{Lem}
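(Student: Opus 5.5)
For part (a), I would first use the free--forgetful adjunction for $V_q\otimes_R U_q^{fin,\utheta'}$. For any $M\in U_q^{fin,\utheta'}\rmod^{G_q}$ one has
\[ \Hom_{U_q^{fin,\utheta'}\rmod^{G_q}}(V_q\otimes_R U_q^{fin,\utheta'}, M)\cong \Hom_{\Rep(\cU_q(\g))}(V_q, M),\]
and the isomorphism sends $f$ to $v\mapsto f(v\otimes 1)$. This works because $V_q\otimes_R U_q^{fin,\utheta'}$ is the free equivariant right module on $V_q$.

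Next I use the decomposition \eqref{eq: decomposition of cat}, restricted to the $\utheta'$-part on the right. It gives $V_q\otimes_R U_q^{fin,\utheta'}=\bigoplus_{\utheta''}P^{\utheta'',\utheta'}(V_q)$. Take $M\in\HC_q(\utheta,\utheta')$. By Lemma \ref{lem: left action on HCmod} and Lemma \ref{lem: extension of left-actions}, morphisms of equivariant right modules automatically commute with the left $U_q^{fin,\uchi}$-actions. Hence morphisms from the summands with $\utheta''\neq\utheta$ to $M$ vanish, since they are blocks with different left central characters. Only the summand $P^{\utheta,\utheta'}(V_q)$ contributes, which gives (a). The case over $\BC$ is the same, using Lemma \ref{lem: unique left Ue-uchi} and Lemma \ref{lem: unique left utheta-action} in place of their $q$-analogues.

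For part (b), suppose $V_q$ is projective in $\Rep(\cU_q(\g))$. Then by (a) the functor $\Hom(P^{\utheta,\utheta'}(V_q),-)\cong\Hom_{\cU_q}(V_q,-)$ is exact on $\HC_q(\utheta,\utheta')$. Here I use that $\HC_q(\utheta,\utheta')$ is abelian (Lemma \ref{lem: complete HCq is abelian}) and that its kernels and cokernels are computed as in $\Rep(\cU_q(\g))$. So $P^{\utheta,\utheta'}(V_q)$ is projective.

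To get enough projectives, let $M\in\HC_q(\utheta,\utheta')$. It is finitely generated over $U_q^{fin,\utheta'}$, so it is generated by a finite-dimensional-over-$R$ rational subrepresentation, i.e.\ a finitely generated $R$-submodule stable under $\cU_q$. By Proposition \ref{prop: proj in Rep(Uq(g))}(b) this subrepresentation is covered by a projective $V_q=\St_q\otimes W$ in $\Rep^{fd}(\cU_q(\g))$. This yields a surjection $V_q\otimes_R U_q^{fin,\utheta'}\twoheadrightarrow M$. Applying $\pr_{\utheta,\utheta'}$, which is exact and fixes $M$, gives a surjection $P^{\utheta,\utheta'}(V_q)\twoheadrightarrow M$.

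Now let $P$ be any projective object. The surjection $P^{\utheta,\utheta'}(V_q)\twoheadrightarrow P$ splits, so $P$ is a direct summand of $P^{\utheta,\utheta'}(V_q)$. By Proposition \ref{prop: proj is tilt}, $V_q$ is tilting, so $P$ is HC-tilting. The statements for $\HC_\e(\utheta,\utheta')$ follow identically, using Proposition \ref{prop: proj in Rep(Ue(g))}, and projective modules there are tilting by the same argument.

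The main obstacle is the vanishing of Hom between different left blocks in (a). This requires the left action on any object to be canonically determined and preserved by morphisms of right equivariant modules, including the completed left action. I would deduce it from the uniqueness by continuity in Lemma \ref{lem: extension of left-actions}(c) combined with Lemma \ref{lem: left action on HCmod}.
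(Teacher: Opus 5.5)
Your proof is correct and follows the paper's argument. Part (a) is the free--forgetful adjunction for $V_q\otimes_R U_q^{fin,\utheta'}$ combined with the block decomposition \eqref{eq: decomposition of cat}. Part (b) covers $M$ by $P^{\utheta,\utheta'}(V_q)$ with $V_q$ projective in $\Rep^{fd}(\cU_q(\g))$ and then invokes Proposition \ref{prop: proj is tilt}. You also explain why morphisms respect the completed left action, so that Homs between different left blocks vanish; the paper leaves this step implicit when it cites \eqref{eq: decomposition of cat}.
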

\begin{proof}(a) follows by \eqref{eq: decomposition of cat}. 

\noindent
(b)  Since $\Rep(\cU_q(\g))$ has enough projective, for any $M \in \HC_q(\utheta, \utheta')$, there is a projective object $V_q \in \Rep^{fd}(\cU_q(\g))$ with a surjective map $P^{\utheta, \utheta'}(V_q) \twoheadrightarrow M$. On the other hand, by (a), the object $P^{\utheta, \utheta'}(V_q)$ is projective in $\HC_q(\utheta, \utheta')$. Hence the first half of (b) follows. The second half follows since any projective object in $\Rep^{fd}(\cU_q(\g))$ is tilting by Proposition \ref{prop: proj is tilt}.
\end{proof}

\begin{Lem}[Adjointness]\label{lem: adjoint}(a) For any $V\in \Rep^{fd}(\cU_\e(\g))$ then
\[\Hom_{\HC_\e(\utheta, \utheta')}(P^{\utheta, \utheta_1}(V)\otimes_{U_\e^{fin, \utheta_1}} M, N) \cong \Hom_{\HC_\e(\utheta_1, \utheta')}(M, P^{\utheta_1, \utheta}(V^*)\otimes_{U_\e^{fin, \utheta}} N)\]
for $M\in \HC_\e(\utheta_1, \utheta'), N\in \HC_\e(\utheta, \utheta')$ and 
\[\Hom_{\HC_\e(\utheta, \utheta_1)}(M \otimes_{U_\e^{fin, \utheta'}} P^{\utheta', \utheta_1}(V), N')\cong \Hom_{\HC_\e(\utheta, \utheta')}(M, N'\otimes_{U_\e^{fin,\utheta_1}} P^{\utheta_1, \utheta'}(V^*))\]
for $M\in \HC_\e(\utheta, \utheta')$ and  $N'\in \HC_\e(\utheta, \utheta_1)$.

\noindent
(b) There are similar statements for $V_q$ free over $\BC[[\hbar]]$ in $ \Rep^{fd}(\cU_q(\g))$ and suitable $\HC_q(?,?)$.
\end{Lem}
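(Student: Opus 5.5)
The plan is to reduce both adjunctions to the rigidity of $\Rep(\cU_\e(\g))$, using Lemma \ref{lem: left action on HCmod} and the decomposition \eqref{eq: decomposition of cat}. I treat the first isomorphism in (a); the second is the mirror image, using left duals ${}^*V \cong V^*$ (Remark \ref{rem: dual module}).

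\textbf{Step 1: rewrite $P^{\utheta, \utheta_1}(V)\otimes M$ without the summand.} By the Remark after Definition \ref{defi: diagonal bimod}, $P^{\utheta,\utheta_1}(V)$ is the $\HC_\e(\utheta,\utheta_1)$-summand of $U_\e^{fin,\uchi}\otimes_\BC V$. Hence for $M\in \HC_\e(\utheta_1,\utheta')$,
\[P^{\utheta,\utheta_1}(V)\otimes_{U_\e^{fin,\utheta_1}} M \cong \pr_{\utheta,\utheta'}\big(U_\e^{fin,\uchi}\otimes_\BC V\otimes_\BC M\big).\]
Here $U^{fin}_\e\otimes V \cong V\otimes U^{fin}_\e$ as bimodules, as in the proof of the lemma preceding the definition of $\HC_q$. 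Concretely, the object on the right is $V\otimes M$, with the right action coming from $M$ and the left action defined through Lemma \ref{lem: left action on HCmod}, after which we project to the $\utheta$-block on the left. By Lemma \ref{lem: unique left utheta-action}, the left action of $U^{fin,\utheta}_\e$ on it is unique.

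\textbf{Step 2: establish the key adjunction.} Since the left actions are determined functorially by the equivariant right module structure (fullness in Lemma \ref{lem: left action on HCmod}), it suffices to work with right modules. In $U^{fin}_\e\Rmod^{G_\e}$ I claim
\[\Hom(V\otimes M, N)\cong \Hom(M, V^*\otimes N),\]
given by the evaluation and coevaluation maps of $\Rep(\cU_\e(\g))$ tensored with identities. One must check that these maps are morphisms of $G_\e$-equivariant right $U^{fin}_\e$-modules; this is clear because the right action acts only on the last tensor factor. Then I use \eqref{eq: decomposition of cat}. A morphism from a module in block $(\utheta,\utheta')$ kills the other block summands, and similarly on the other side. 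So I can insert $\pr_{\utheta,\utheta'}$ on the left side and $\pr_{\utheta_1,\utheta'}$ on the right side. By Step 1, the right-hand side becomes $P^{\utheta_1,\utheta}(V^*)\otimes_{U^{fin,\utheta}_\e} N$.

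\textbf{Main obstacle.} The delicate point is to verify that the Step 1 identification is compatible with the left-action projections. Concretely, one must check that the left action of $U_\e^{fin,\utheta}$ on $V\otimes M$ obtained from $P^{\utheta,\utheta_1}(V)\otimes M$ agrees with the one produced by Lemma \ref{lem: left action on HCmod}, so that block projections on both sides match. I would handle this by reducing, through a projective presentation of $M$ by objects $V'\otimes U^{fin,\utheta'}_\e$, to free objects. For these, both actions come from the coproduct formula \eqref{eq: left action}, and the claim becomes associativity of $\otimes$.

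\textbf{Part (b).} For (b) the same argument applies to $V_q$ free of finite rank over $\BC[[\hbar]]$, since then $V_q$ is rigid in $\Rep(\cU_q(\g))$. One uses Lemma \ref{lem: extension of left-actions} and the analogous uniqueness lemma to control the completed left actions.
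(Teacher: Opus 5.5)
Your argument is the paper's proof. The paper uses the same chain: identify $P^{\utheta,\utheta_1}(V)\otimes_{U_\e^{fin,\utheta_1}}M$ with the $(\utheta,\utheta')$-block of $V\otimes_\BC M$, so that $\Hom$ into $N$ reduces to $\Hom_{U_\e^{fin,\utheta'}\rmod^{G_\e}}(V\otimes_\BC M,N)$; apply rigidity to get $\Hom(M,V^*\otimes_\BC N)$; and project back to $P^{\utheta_1,\utheta}(V^*)\otimes_{U_\e^{fin,\utheta}}N$. The paper does not spell out the compatibility of left actions that you flag as the main obstacle, and your coassociativity check handles it. One small slip: in Step 1 the middle object should be $(U_\e^{fin,\uchi}\otimes_\BC V)\otimes_{U_\e^{fin,\uchi}}M\cong V\otimes_\BC M$, not $U_\e^{fin,\uchi}\otimes_\BC V\otimes_\BC M$.
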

\begin{proof}(a) We have
\begin{align*}
    \Hom_{\HC_\e(\utheta, \utheta')}(P^{\utheta, \utheta_1}(V)\otimes_{U_\e^{fin, \utheta_1}} M,N) & \cong \Hom_{U_\e^{fin, \utheta'}\rmod^{G_\e}}(V\otimes_\BC M, N)\\
    &\cong \Hom_{U_\e^{fin, \utheta'}\rmod^{G_\e}}(M,V^*\otimes_\BC N)\\
    &\cong \Hom_{\HC_\e(\utheta, \utheta')}(M, P^{\utheta_1, \utheta}(V^*)\otimes_{U_\e^{fin, \utheta}} N).
\end{align*}
Similarly,
\begin{align*}
    \Hom_{\HC_\e(\utheta, \utheta_1)}(M\otimes_{U_\e^{fin, \utheta'}} P^{\utheta', \utheta_1}(V), N') &\cong \Hom_{U_\e^{fin, \utheta}\lmod^{G_\e}}(M\otimes_\BC V, N')\\
    &\cong \Hom_{U_\e^{fin, \utheta}\lmod^{G_\e}}(M, N'\otimes_{\BC} V^*) \\
    &\cong \Hom_{\HC_\e(\utheta, \utheta')}(M, N'\otimes_{U_\e^{fin, \utheta_1}} P^{\utheta_1, \utheta'}(V^*)).
\end{align*}

\noindent
(b) This is proved in the same way.    
\end{proof}

\begin{Prop}\label{prop: embed of Hilt} The  following natural functor of triangulated categories   is an embedding:
\[K^b(\Hilt_\e(\utheta, \utheta'))\rightarrow D^b(\HC_\e(\utheta, \utheta')).\]
\end{Prop}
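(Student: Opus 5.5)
The plan is the standard criterion for full faithfulness of $K^b(\mathcal{T})\rightarrow D^b(\mathcal{A})$, where $\mathcal{T}\subset\mathcal{A}$ is a full additive subcategory of an abelian category. By Lemma \ref{lem: complete HCq is abelian}, $\HC_\e(\utheta,\utheta')$ is abelian. By a standard dévissage on the length of complexes (the five lemma applied to stupid truncations), the functor is fully faithful as soon as
\[\Ext^i_{\HC_\e(\utheta,\utheta')}(T_1,T_2)=0\qquad \text{for all } i>0,\ T_1,T_2\in\Hilt_\e(\utheta,\utheta').\]
All the work is therefore in this Ext-vanishing. Since $\Ext$ commutes with finite direct sums and passes to direct summands, it suffices to take $T_1=P^{\utheta,\utheta'}(V)$ and $T_2=P^{\utheta,\utheta'}(V')$ with $V,V'$ tilting in $\Rep^{fd}(\cU_\e(\g))$.

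\textbf{Step 1 (reduce to equivariant Ext).} I would extend Lemma \ref{lem: proj is hilt}(a) to higher Ext:
\[\Ext^i_{\HC_\e(\utheta,\utheta')}(P^{\utheta,\utheta'}(V),M)\cong \Ext^i_{\Rep(\cU_\e(\g))}(V,M).\]
Take a projective resolution $V_\bullet\rightarrow V$ in $\Rep^{fd}(\cU_\e(\g))$. The functor $V\mapsto P^{\utheta,\utheta'}(V)$, that is $V\otimes U_\e^{fin,\utheta'}$ followed by the exact projection $\pr_{\utheta,\utheta'}$, is exact, because $U_\e^{fin,\utheta'}$ is flat over $\BC$. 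It sends projectives to projectives by Lemma \ref{lem: proj is hilt}(b). Hence $P^{\utheta,\utheta'}(V_\bullet)$ is a projective resolution of $P^{\utheta,\utheta'}(V)$, and applying $\Hom$ together with part (a) gives the isomorphism.

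\textbf{Step 2 (compute the target).} It remains to show that $\Ext^i_{\Rep(\cU_\e)}(V,P^{\utheta,\utheta'}(V'))=0$ for $i>0$. Since $P^{\utheta,\utheta'}(V')$ is a $\cU_\e$-direct summand of $V'\otimes U_\e^{fin,\utheta'}$, and by \eqref{eq: decomposition of Ufin} the latter is a summand of $V'\otimes U^{fin,\uchi}_\e$, it suffices to treat $V'\otimes U_\e^{fin}\otimes_{\CW_\e}\CW_\e^{\wedge_{\uchi}}$. The completion is flat over $\CW_\e$, and by Lemma \ref{lem: Hom is fg over HC} the relevant Ext groups are finitely generated. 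A version of Lemma \ref{lem fg quotient imply fg}(b) for Ext, computed with a projective resolution of $V$ in $\Rep^{fd}$, then lets me pull the completion out. This reduces the claim to
\[\Ext^i_{\Rep(\cU_\e)}(V,V'\otimes U_\e^{fin})=0,\quad\text{equivalently}\quad \Ext^i(V\otimes {}^*V',U_\e^{fin})=0 \quad (i>0).\]
Now $U_\e^{fin}\cong O_\e[G]$ has a good filtration, so $\Ext^{>0}(W,U_\e^{fin})=0$ for every $W$ with a Weyl filtration. The module $V\otimes{}^*V'$ is tilting, as a tensor product of tilting modules (a quantum Donkin–Mathieu statement), so in particular it has a Weyl filtration. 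This gives the vanishing. For the infinite-dimensional $U^{fin}_\e$, one uses that the good filtration is exhaustive and that Ext from a finite-dimensional module commutes with filtered colimits.

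\textbf{Main obstacle.} I expect the hardest step to be Step 2's passage through the completion and the summand projection. One must check that Ext in the category of rational modules commutes with $\otimes_{\CW_\e}\CW_\e^{\wedge_{\uchi}}$, where $\CW_\e$ acts only through the algebra structure. I would handle this with a projective resolution of $V$ by finite-dimensional modules $\St_\e\otimes N$ (Proposition \ref{prop: proj in Rep(Ue(g))}), which turns Ext into the cohomology of a complex of finitely generated $\CW_\e$-modules, and then use flatness of the completion. If tensor products of tilting modules being tilting is not available in the needed form, I would instead argue directly that $\Ext^{>0}(V\otimes{}^*V',O_\e[G])=0$ through the Weyl filtration of $V\otimes{}^*V'$, which follows from the Weyl filtration of $V$ tensored with ${}^*V'$.
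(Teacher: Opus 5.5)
Your proposal is correct and takes the paper's route. You reduce to $\Ext^{>0}$-vanishing between $P^{\utheta,\utheta'}(V)$ and $P^{\utheta,\utheta'}(V')$ for tilting $V,V'$, rewrite this as $\Ext^i_{\Rep(\cU_\e(\g))}(V,P^{\utheta,\utheta'}(V'))$, pull out the completion by flatness over $\CW_\e$, and conclude from the good filtration of $U_\e^{fin}$ together with ${}^*V'\otimes V$ being tilting; your projective-resolution argument for the Ext adjunction and your justification of the flat base change simply spell out steps the paper states without proof.
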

\begin{proof}Let $M,N\in \Hilt_\e(\utheta, \utheta')$, then it is enough to show that $\Ext^i_{\HC_\e(\utheta,\utheta')}(M,N)=0$ for all $i \geq 1$. We then only need to prove this  in the case when $M=P_\e^{\utheta, \utheta'}(V), N=P_\e^{\utheta, \utheta'}(V')$ for some tilting modules $V,V'\in \Rep^{fd}(\cU_\e(\g))$.

In this case, 
\begin{equation}\label{eq: Ext(Hilt)} \Ext^i_{\HC_\e(\utheta, \utheta')}(P_\e^{\utheta, \utheta'}(V), P_\e^{\utheta, \utheta'}(V'))\cong \Ext^i_{\Rep(\cU_\e(\g))}(V, P_\e^{\utheta, \utheta'}(V')).
\end{equation}
The latter is a direct summand of
\begin{align*} \Ext^i_{\Rep(\cU_\e(\g))}(V, V'\otimes_\BC U_\e^{fin}\otimes_{\CW_\e} \CW_\e^{\wedge_{\utheta'}})& \cong \Ext^i_{\Rep(\cU_\e(\g))}(V, V'\otimes_\BC U_\e^{fin}) \otimes_{\CW_\e} \CW_\e^{\wedge_{\utheta'}}\\
&\cong \Ext^i_{\Rep(\cU_\e(\g))}(^*V'\otimes_\BC V, U_\e^{fin})\otimes_{\CW_\e} \CW_\e^{\wedge_{\utheta'}}
\end{align*}
in which the first isomorphism follows, since $\CW_\e^{\wedge_{\utheta'}}$ is flat over $\CW_\e$.

By Proposition \ref{prop: Ufin and other algberas}, $U_\e^{fin}$ has a good filtration.  Since $V$ and $ V'$ are tilting modules in $\Rep^{fd}(\cU_\e(\g))$, the tensor product $^*V'\otimes_\BC V$ is also a tilting module. Therefore, 
\[\Ext^i_{\Rep(\cU_\e(\g))}(^*V'\otimes_{\BC} V, U_\e^{fin})=0 \qquad \qquad \text{for $i\geq 1$}.\]
Hence,   \eqref{eq: Ext(Hilt)}  is equal to zero. This finishes the proof.
\end{proof}

\begin{Rem}\label{rem: Krull-Schmidt} By \cite[Theorem 21.35]{Lam91}, both $\HC_q(\utheta, \utheta')$ and $\HC_\e(\utheta, \utheta')$ are Krull-Schmidt categories since Hom spaces in these categories are finitely generated modules over complete local rings. 
\end{Rem}

\begin{Lem}\label{lem: proj cover of simple}For any simple object $L\in \HC_q(\utheta, \utheta')$, there is an indecomposable projective object $P_L$ with a surjective map $P_L \xrightarrow[]{\pi} L$. Furthermore, if there is another projective object $P'_L$ with a surjective map $P'_L \xrightarrow[]{\pi'} L$ then $P_L \cong P'_L$. In the other  word, $P_L$ is a projective cover of $L$. The analogous claim holds for $\HC_\e(\utheta, \utheta')$.
\end{Lem}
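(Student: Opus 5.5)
We prove existence and uniqueness of projective covers from two facts already established: $\HC_q(\utheta,\utheta')$ has enough projectives of the form $P^{\utheta,\utheta'}(V_q)$ (Lemma \ref{lem: proj is hilt}), and it is Krull--Schmidt with Hom spaces finitely generated over the complete local ring $\CW_q^{\wedge_{\utheta'}}$ (Remark \ref{rem: Krull-Schmidt}). The plan is the standard argument for semiperfect categories.

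\emph{Existence.} By Lemma \ref{lem: proj is hilt}(b) there is a projective $V_q\in\Rep^{fd}(\cU_q(\g))$ and a surjection $P^{\utheta,\utheta'}(V_q)\twoheadrightarrow L$. Since the category is Krull--Schmidt, $P^{\utheta,\utheta'}(V_q)$ decomposes as a finite direct sum of indecomposable projectives $P_1\oplus\dots\oplus P_n$. The restriction of the surjection to some summand $P_i$ is nonzero, and its image is a nonzero subobject of the simple object $L$, hence all of $L$. Set $P_L:=P_i$ and let $\pi$ be this restricted map.

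\emph{Uniqueness.} First we show that $E:=\End(P_L)$ is local. It is a finitely generated module over the complete local ring $\CW_q^{\wedge_{\utheta'}}$, hence semiperfect, and it has no nontrivial idempotents because $P_L$ is indecomposable; so $E$ is local. Now let $\pi':P'_L\to L$ be another surjection with $P'_L$ projective. Projectivity gives lifts $f:P_L\to P'_L$ with $\pi' f=\pi$ and $g:P'_L\to P_L$ with $\pi g=\pi'$. Then $\pi(gf)=\pi\neq 0$, so $gf$ is not in the radical of $E$. Indeed, nilpotent elements, and more generally elements of the Jacobson radical of an $\hbar$/$\m$-adically complete finite algebra, act trivially on the simple top; we would justify this by noting that $\Hom(P_L,L)$ is a simple $E$-module. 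Since $E$ is local, $gf$ is an automorphism, so $P_L$ is a direct summand of $P'_L$ via $f$. The statement as written says $P_L\cong P'_L$, which cannot hold unless $P'_L$ is also indecomposable (or unless $\pi'$ is assumed minimal). So we read the claim with $P'_L$ indecomposable. In that case $P'_L=f(P_L)\oplus C$ forces $C=0$, and the isomorphism follows.

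\emph{Main obstacle.} The delicate point is showing that $gf$ is invertible, i.e.\ that $E$ is local and $\pi\circ\mathrm{rad}(E)=0$. For this we use that $E$ is module-finite over a complete local commutative ring, so idempotents lift modulo the radical (as in \cite{Lam91}). The case $\HC_\e(\utheta,\utheta')$ is identical, with $\CW_\e^{\wedge_{\utheta'}}$ in place of $\CW_q^{\wedge_{\utheta'}}$.
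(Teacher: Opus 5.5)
Your proof is correct and has the same skeleton as the paper's: existence comes from enough projectives plus Krull--Schmidt, and uniqueness comes from lifting $\pi$ and $\pi'$ through each other and showing the composites are automorphisms because $\End(P_L)$ is local.

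\textbf{The key step.} The difference lies in how you prove that $\pi\circ x=\pi$ forces $x$ to be invertible.
\begin{itemize}
\item The paper reduces modulo $\fJ_{\utheta'}$. It uses $\End(P_L)/\End(P_L)\fJ_{\utheta'}\cong\End(P_L/P_L\fJ_{\utheta'})$ to land in a finite-dimensional local algebra, where every non-unit is nilpotent. This contradicts $\underline{x}\circ\underline{\pi}=\underline{\pi}\neq 0$, which tacitly needs $L\fJ_{\utheta'}=0$.
\item You work directly in $E=\End(P_L)$. Your justification via simplicity of $\Hom(P_L,L)$ as a right $E$-module is valid: every nonzero map $P_L\to L$ is onto, so $\pi$ factors through it.
\item A one-liner also suffices. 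If $gf$ were a non-unit of the local ring $E$, then $1-gf$ would be a unit with $\pi\circ(1-gf)=0$, forcing $\pi=0$.
\end{itemize}
Your route avoids both the reduction mod $\fJ_{\utheta'}$ and the implicit check $L\fJ_{\utheta'}=0$.

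\textbf{The indecomposability caveat.} You are right that the literal statement needs $P'_L$ indecomposable. With $P'_L=P_L\oplus Q$ and $\pi'=(\pi,0)$ the claim fails. The paper's own proof tacitly assumes $P'_L$ is indecomposable when it applies its key step to $f\circ f_1\in\End(P'_L)$. Under that hypothesis, your single split monomorphism $f$ already gives $P_L\cong P'_L$. The paper instead uses both composites $f_2\circ f$ and $f\circ f_1$ to produce a two-sided inverse.
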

\begin{proof} Since $\HC_q(\utheta, \utheta')$ has enough projective, there is an indecomposable projective object $P_L$ with a surjective morphism $P_L \xrightarrow[]{\pi} L$.

Assume $f\in \End_{\HC_q(\utheta, \utheta')}(P_L,P_L)$ is  such that $f\circ \pi=\pi$, we will show that $f$ is an isomorphism. Let $\fJ_{\utheta'}$ be the maximal ideal of $\CW_q^{\wedge_{\utheta'}}$. Since $P_L$ is projective,
\[ \End_{\HC_q(\utheta, \utheta')}(P_L)/\End_{\HC_q(\utheta, \utheta')}(P_L)\fJ_{\utheta'} \cong \End_{\HC_q(\utheta, \utheta')}(P_L/P_L \fJ_{\utheta'}).\]
Hence $\End_{\HC_q(\utheta, \utheta')}(P_L/P_L \fJ_{\utheta'})$ is finite dimensional over $\BC$.  Let $\underline{\pi}$ be the induced morphism $P_L/P_L\fJ_{\utheta'} \rightarrow L$. Let $\underline{f}$ be the induced morphism $P_L/P_L \fJ_{\utheta'} \rightarrow P_L/P_L\fJ_{\utheta'}$. Then  $\underline{f}\circ \underline{\pi} =\underline{\pi}$.

Since $P_L$ is indecomposable, $\End_{\HC_q(\utheta, \utheta')}(P_L)$ is a local ring, hence $\End_{\HC_q(\utheta, \utheta')}(P_L/P_L \fJ_{\utheta'})$  is also a local ring. Suppose $f$ is not an isomorphism, then $f$ belongs to the maximal ideal  of $\End_{\HC_q(\utheta,\utheta')}(P_L)$. Then the image $\underline{f}$ belongs to the maximal ideal of $\End_{\HC_q(\utheta, \utheta')}(P_L/P_L \fJ_{\utheta'})$. The latter ring is a finite dimensional local $\BC$-algebra. Therefore, $\underline{f}$ is a nilpotent operator. This is a contradiction since $\underline{f}\circ \underline{\pi}=\underline{\pi}$.

Now we have the following commutative diagram

\[ \begin{tikzcd}
    P'_L \arrow[r,"f_1"]\arrow[dr, "\pi'"'] & P_L\arrow[d, "\pi"'] \arrow[r, "f"] & P'_L \arrow[r, "f_2"]\arrow[dl, "\pi' "'] & P_L \arrow[dll, "\pi"] \\
    & L &&
\end{tikzcd}
\]
 By the above discussion, $f\circ f_1$ and $f_2 \circ f$ are isomorphisms, hence $f$ is an isomorphism, i.e, $P_L \cong P'_L$. This finishes the proof.
\end{proof}



\section{Poisson bimodules} \label{sec: Poisson bimodules}

This section follows the exposition in \cite[$\mathsection 3$]{IL12}.

\begin{defi}
Let $\CB$ be an associative $\BC[[\hbar]]$-algebra. By {\it the noncommutative Poisson structure} on $\CB$ we mean a pair $(\CB, \CP)$, where $\CP$ is a $\BC[[\hbar]]$-subalgebra of $\CB$ containing $\hbar \CB$, along with a $\BC[[\hbar]]$-bilinear map $\{ \;,\;\}: \CP\otimes \CB \rightarrow \CB$ such that $\CP$ is closed with respect to $\{\; , \;\}$ and 
\begin{enumerate}
    \item $\{ z,z\}=0$,
    \item $\{ \hbar a, b\}=[a,b]$, 
    \item $\{ z, ab\}=\{z,a\} b+a\{ z, b\}$,
    \item $\{ z_1z_2, a\}=\{ z_1, a\} z_2+z_1\{ z_2, a\}$,
    \item $\{ \{ z_1, z_2\}, a\}=\{ z_1, \{ z_2, a\}\}-\{ z_2, \{ z_1, a\}\}$,
\end{enumerate}
for all $z, z_1, z_2 \in \CP$ and $a, b\in \CB$.
\end{defi}

\begin{defi} Let $M$ be a $\CB$-bimodule such that the left and right actions of $\BC[[\hbar]]$ coincide. We say that $M$ is a Poisson $\CB$-bimodule if it is equipped with a $\BC[[\hbar]]$-bilinear map $\{\;,\;\}: \CP \otimes M \rightarrow M$ satisfying the following equalities:
\begin{itemize}
    \item $\{ \hbar a, m\}=[a,m]$,
    \item $\{ z, am\}=\{ z,a\} m+a \{ z, m\},$ $\{z, ma\}=\{ z,m\}a+m \{ z, a\}$,
    \item $\{ z_1 z_2, m\}=\{ z_1, m\} z_2+z_1\{ z_2, m\}$,
    \item $\{ \{ z_1, z_2\}, m\} =\{ z_1,\{ z_2, m\}\}-\{ z_2, \{ z_1, m\}\}$
\end{itemize}
for all $z, z_1, z_2\in \CP$ and $a\in \CB, m \in M$. Denote $\Pbim(\CB)$ the category of Poisson bimodules with respect to the pair $(\CB, \CP)$.
\end{defi}
\begin{Rem} \label{rem: mor between flat Pbim} By the condition  $(2)$, $\CP$ must satisfy that $[\CP, \CB]\subset \hbar \CB$. Moreover, if $\CB$ is flat over $\BC[[\hbar]]$, the Poisson bracket is uniquely recovered by $\{ z, a\}=\hbar^{-1}[z,a]$ if it exists. Similarly,  if $M$ is flat over $\BC[[\hbar]]$ then the Poisson bracket is uniquely recovered by $\{ z, m\}=\hbar^{-1}[z,m]$. Furthermore,  for $M, N \in \Pbim(\CB)$ such that $M,N$ are flat over $\BC[[\hbar]]$, the forgetful map 
\[ \Hom_{\Pbim(\CB)}(M,N) \rightarrow \Hom_{\CB \bimod}(M,N)\]
is an isomorphism.
\end{Rem}
\begin{Rem}Morphisms between Poisson bimodules $M_1, M_2$ are morphisms of bimodules $f: M_1\rightarrow M_2$ such that $f\{ z, m\}=\{ z, f(m)\}$ for any $z\in \CP$ and $m \in M_1$. The tensor product $M_1\otimes_\CB M_2$ of two Poisson bimodules is naturally a Poisson bimodule with the bracket defined by $\{ z, m\otimes n\}=\{ z,m\}\otimes n +m\otimes \{ z, n\}$. Then $\Pbim(\CB)$ is a monoidal category with unit object $\CB$.
\end{Rem}

Let $\chi \in G^{d, reg}_0 \subset \Spec Z_{Fr}$. Recall the following formal deformation in Proposition \ref{prop: Decomposition of completions}:
\[ \phi_\chi: U_q^{ev \wedge_\chi}\cong \Mat_{\sd}(R_\hbar)\twoheadrightarrow U_\e^{ev \wedge_\chi}\cong \Mat_{\sd}(Z^{\wedge_\chi}), \qquad \phi_\hbar: R_\hbar \rightarrow Z^{\wedge_\chi}.\]
here $R_\hbar \cong \CA^\wedge_q\widehat{\otimes}_{\BC[[\hbar]]} \CW_q^{\wedge_{\uchi}}$ is a formal deformation of $Z^{\wedge_\chi} \cong \BC[[V]]\widehat{\otimes}_{\BC} \CW_\e^{\wedge_{\uchi}}$. Recall that $\CA_q^\wedge$ is the completion of the formal Weyl algebra and $\CW_q^{\wedge_{\uchi}}$ is the completion of the Harish-Chandra center $\CW_q$ introduced in Section \ref{ssec: completion of Uev}.

Let us define
\[ P_\hbar:=\phi^{-1}_\chi(Z^{\wedge_\chi}_{Fr}), \qquad C_\hbar:=\phi^{-1}_\hbar(Z^{\wedge_\chi}_{Fr})\]
then $P_\hbar=C_\hbar+\hbar U_q^{ev \wedge_{\chi}}$, where $C_\hbar$ is embedded into $U_q^{ev\wedge_{\chi}} \cong \Mat_{\sd}(\BC)\otimes_{\BC} R_\hbar$ via $c \mapsto \text{Id} \otimes c$ for $c\in C_\hbar$ and $\text{Id}$ is the identity matrix in $\Mat_{\sd}(\BC)$.

Consider the map $\psi_\chi: \CW_q^{\wedge_{\uchi}} \xrightarrow[]{/\hbar}\CW_\e^{\wedge_{\uchi}}$ and let $B_\hbar:= \psi_\chi^{-1}(Z_\cap^{\wedge_{\uchi}})$. Then $C_\hbar=\CA^\wedge_q \widehat{\otimes}_{\BC[[\hbar]]} B_\hbar$. 
\begin{Lem} $(U_q^{ev\wedge_\chi}, P_\hbar)$, $(R_\hbar, C_\hbar)$, $(\CW_q^{\wedge_{\uchi}}, B_\hbar)$ are noncommutative Poisson structures on the corresponding $\BC[[\hbar]]$-algebras.
\end{Lem}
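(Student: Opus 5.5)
We verify the axioms of a noncommutative Poisson structure for each pair, using that all three algebras are flat over $\BC[[\hbar]]$ and complete and separated in the $\hbar$-adic topology (Lemma \ref{lem: completion of Uev} and Proposition \ref{prop: Decomposition of completions}). For a flat algebra the bracket is forced: by Remark \ref{rem: mor between flat Pbim} we must set $\{z,a\}:=\hbar^{-1}[z,a]$ for $z$ in the subalgebra and $a$ in the algebra. So the plan is as follows. First show that each subalgebra contains $\hbar$ times the algebra. Then show that each subalgebra commutes with the whole algebra modulo $\hbar$. Finally, show that the resulting bracket preserves the subalgebra. Axioms (1)--(5) are then formal consequences of the Leibniz rule and the Jacobi identity for commutators, after dividing by $\hbar$. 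This division is legitimate by flatness.

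For $(U_q^{ev\wedge_\chi},P_\hbar)$: $P_\hbar=\phi_\chi^{-1}(Z^{\wedge_\chi}_{Fr})$ contains $\ker\phi_\chi=\hbar U_q^{ev\wedge_\chi}$, and it is a subalgebra. Since $Z_{Fr}$ is central in $U^{ev}_\e(\g)$, its completion $Z^{\wedge_\chi}_{Fr}$ is central in $U^{ev\wedge_\chi}_\e$. Hence $[P_\hbar,U_q^{ev\wedge_\chi}]\subset\ker\phi_\chi=\hbar U_q^{ev\wedge_\chi}$, so $\{z,a\}=\hbar^{-1}[z,a]$ is well defined. It remains to check $\{P_\hbar,P_\hbar\}\subset P_\hbar$. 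Modulo $\hbar$, the bracket $\{z_1,z_2\}$ is exactly the bracket \eqref{eq: poisson bracket} (extended by continuity), now computed with the deformation parameter $q=\e e^{2\pi\sqrt{-1}\hbar/\ell}$ rather than $v-\e$. The two differ by an invertible factor in $\BC[[\hbar]]$, and a reparametrization changes the bracket by a nonzero scalar only. By Proposition \ref{prop: Frobenius center}(a), $Z_{Fr}$ is closed under this bracket. The bracket is continuous for the $\m_\chi$-adic topology, because it is a derivation in each argument, so $Z^{\wedge_\chi}_{Fr}$ is closed as well. This gives $\{z_1,z_2\}\in P_\hbar$.

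For $(R_\hbar,C_\hbar)$: under the identification $U_q^{ev\wedge_\chi}\cong\Mat_{\sd}(\BC)\otimes R_\hbar$ we have $P_\hbar=\mathrm{Id}\otimes C_\hbar+\hbar U_q^{ev\wedge_\chi}$. The commutator of $\mathrm{Id}\otimes c$ with $\mathrm{Id}\otimes a$ is $\mathrm{Id}\otimes[c,a]$. Therefore the previous case, restricted to scalar matrices, gives $[C_\hbar,R_\hbar]\subset\hbar R_\hbar$ and $\{C_\hbar,C_\hbar\}\subset C_\hbar$. Alternatively, one can argue directly via $C_\hbar=\CA^\wedge_q\widehat\otimes B_\hbar$: the Weyl algebra $\CA^\wedge_q$ has $[\CA^\wedge_q,\CA^\wedge_q]\subset\hbar\CA^\wedge_q$, and $\CW_q^{\wedge_{\uchi}}$ is central.

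For $(\CW_q^{\wedge_{\uchi}},B_\hbar)$: this algebra is commutative, so the bracket is identically $0$. We have $B_\hbar\supset\ker\psi_\chi=\hbar\CW_q^{\wedge_{\uchi}}$, and all the axioms hold trivially; for axiom (2), note that $[a,b]=0$.

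The main obstacle is the identification of the bracket modulo $\hbar$ with the one from \cite{LTV}. This requires checking that the lift-and-divide recipe with $\hbar$ agrees, up to a unit, with the recipe using $v-\e$. It also requires continuity of the bracket under the completion at $\chi$.
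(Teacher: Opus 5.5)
Your proposal is correct and follows the paper's argument. The paper's one-line proof rests on the same facts: $\hbar^{-1}[\cdot,\cdot]$ is well defined because $Z_{Fr}^{\wedge_\chi}$ is central modulo $\hbar$, $Z_{Fr}^{\wedge_\chi}$ is closed under the Poisson bracket, and the bracket on the commutative Harish-Chandra part is trivial. You simply write out what the paper leaves implicit: the axiom checks, the scalar-matrix reduction for $(R_\hbar,C_\hbar)$, the unit relating $v-\e$ to $\hbar$, and the continuity needed after completing at $\chi$.
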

\begin{proof}These hold because $Z^{\wedge_\chi}_{Fr}$ is closed under the Poisson bracket on $Z^{\wedge_\chi}$ while the Poisson bracket on $Z^{\wedge_{\uchi}}_{\cap}$ is trivial. 
\end{proof}

\begin{defi} Let $ \Pbim(U_q^{ev\wedge_\chi})$ be the category of Poisson $U_q^{ev\wedge_\chi}$-bimodules which are finitely generated both as left and right modules over $U_q^{ev\wedge_\chi}$. The categories $ \Pbim(R_\hbar)$ and $ \Pbim(\CW_q^{\wedge_{\uchi}})$ are defined in  similar ways.
\end{defi}
\begin{Rem}$U_q^{ev\wedge_\chi}$, $R_\hbar$, and $\CW_q^{\wedge_{\uchi}}$ are Noetherian since they are formal deformations of $U_\e^{ev\wedge_\chi}, Z^{\wedge_\chi}, \CW_\e^{\wedge_{\uchi}}$, which are Noetherian. Therefore,  these three categories are abelian.
\end{Rem}

Let $e=E_{11}$ be the matrix unit in $U_q^{ev\wedge_\chi}\cong \Mat_{\sd}(R_\hbar)$ then $eU_q^{\wedge_\chi}e\cong R_\hbar$ and $eP_\hbar e\cong C_\hbar$. For $M\in \Pbim(U_q^{ev\wedge_\chi})$, the space $eMe$ is naturally a Poisson $R_\hbar$-bimodule with the Poisson structure: $\{ epe, eme\}=e\{ p,eme\}e$ for $p \in P_\hbar$ and $m \in M$. This bracket is well-defined. Indeed, if $epe=0$, then $p\in \hbar U_q^{ev\wedge_\chi}$ by the equality $P_\hbar=C_\hbar+\hbar U_q^{ev\wedge_\chi}$. Then $p=\hbar p'$ for some $p'\in U_q^{ev\wedge_\chi}$, hence, $ep'e=0$ since $U_q^{ev\wedge_\chi}$ is flat over $\BC[[\hbar]]$ and then 
\[ e\{p, eme\}e=e[p', eme]e=[ep'e, eme]=0.\]
This construction gives us an equivalence of monoidal abelian categories:
\begin{equation}\label{eq: equiv Poisson cat 1} \fP_1: \Pbim(U_q^{ev\wedge_\chi}) \iso \Pbim(R_\hbar).
\end{equation}

On the other hand, using the arguments in the proof of \cite[Proposition 3.3.1]{IL11}, for any $M\in\Pbim(R_\hbar)$, the space  $M^{ad}:= \{ m\in M~|~ \{z, m\}=0 ~\text{for $z\in \CA^\wedge_q$}\}$ is naturally an object of   $\Pbim(\CW_q^{\wedge_{\uchi}})$ and there is a  natural isomorphism $M \cong \CA^\wedge_q \widehat{\otimes}_{\BC[[\hbar]]} M^{ad}$ in $ \Pbim(R_\hbar)$.  Then the assignment $M \mapsto M^{ad}$ gives us an equivalence of monoidal abelian categories:
\begin{equation}\label{eq: equiv Poisson cat 2}\fP_2: \Pbim(R_\hbar) \iso \Pbim(\CW_q^{\wedge_{\uchi}}).
\end{equation}
\begin{Rem}\label{rem: Poisson on A x M} The map $\CA^\wedge_q\widehat{\otimes}_{\BC[[\hbar]]}M^{ad}$ is defined by $u\otimes m\mapsto um$ for $u\in \CA^\wedge_q, m\in M^{ad}$. The Poisson structure on $\CA^{\wedge}_q$  is given by $\{u,v\}=\hbar^{-1}[u,v]$ for $u, v\in \CA^\wedge_q$. Hence $\CA^{\wedge}_q$ is a Poisson $\CA^\wedge_q$-bimodule. Therefore, $\CA^\wedge_q\widehat{\otimes}_{\BC[[\hbar]]}M^{ad}$ belongs to $\Pbim(R_\hbar)$ via complete tensor product for any $M^{ad}\in \Pbim(\CW_q^{\wedge_{\uchi'}})$. Recall $R_\hbar \cong \CA^\wedge_q\widehat{\otimes}_{\BC[[\hbar]]} \CW_q^{\wedge_{\uchi}}$ and $C_\hbar=\CA^\wedge_q\widehat{\otimes}_{\BC[[\hbar]]} B_\hbar$.
\[ \{u\otimes x, v\otimes m\}= \{u,v\}\otimes xm+vu\otimes\{x, m\} \quad \text{ for $u,v \in \CA^\wedge_q$, $m\in M^{ad}$ and $x\in B_\hbar$}.\]
\end{Rem}
We recall the following decompositions in Lemma \ref{lem: decomposition of Uev and W}:
\begin{equation}\label{eq: decompositions}  U_q^{ev\wedge_\chi}\cong \prod_{\xi=(\chi, \utheta)} U_q^{ev \wedge_\xi}, \qquad \CW_q^{\wedge_{\uchi}}= \prod_{\utheta} \CW_q^{\wedge_\utheta},
\end{equation}
here $\utheta$ runs over preiamges of $\uchi$  under the map $\Spec \CW_\e\rightarrow \Spec Z_\cap$. These two decompositions come from the natural surjections $U_q^{ev\wedge_\chi}\twoheadrightarrow U_q^{ev \wedge_\xi}$ and $\CW_q^{\wedge_{\uchi}} \twoheadrightarrow \CW_q^{\wedge_\utheta}$.

Let $\xi=(\chi, \utheta)$ and $\xi'=(\chi, \utheta')$ then any $(U_q^{ev\wedge_\xi}, U_q^{ev\wedge_{\xi'}})$-bimodule can be viewed as a $U_q^{ev\wedge_\chi}$-bimodule, while any $(\CW_q^{\wedge_\utheta}, \CW_q^{\wedge_{\utheta'}})$-bimodule can be viewed as a $\CW_q^{\wedge_{\uchi}}$-bimodule.

\begin{defi} A $(U_q^{ev\wedge_\xi}, U_q^{ev\wedge_{\xi'}})$-bimodule is called Poisson if it is a Poisson $U_q^{ev\wedge_\chi}$-bimodule. A $(\CW_q^{\wedge_\utheta}, \CW_q^{\wedge_{\utheta'}})$-bimodule is called Poisson if it is a Poisson $\CW_q^{\wedge_{\uchi}}$-bimodule.
\end{defi}
\begin{defi} Let $\Pbim(U_q^{\xi, \xi'})$ denote the category of Poisson $(U_q^{ev\wedge_\xi}, U_q^{ev\wedge_{\xi'}})$-bimodules which are finitely generated as left and right modules. Let $\Pbim(\CW_q^{\utheta, \utheta'})$ denote the category of Poisson $(\CW_q^{\wedge_\utheta}, \CW_q^{\wedge_{\utheta'}})$-bimodules which are finitely generated as left and right modules.
\end{defi}

\begin{Rem}\label{rem: idempotent splits of functor}The algebras in the decompositions \eqref{eq: decompositions} come with complete systems of idempotents so that one maps to the other under homomorphism $\CW_q^{\wedge_{\uchi}} \hookrightarrow U_q^{ev \wedge_\chi}$. Furthermore, $\fP_2 \circ \fP_1$ is also compatible with these two systems of complete idempotents, so that $\fP_2\circ \fP_1$ gives a rise to a family of equivalences of abelian categories:
\begin{equation}\label{eq: functor fP}
    \fP:  \Pbim(U_q^{\xi, \xi'}) \iso \Pbim(\CW_q^{\utheta, \utheta'}).
\end{equation}
\end{Rem}
\subsection{The functor $\bullet^{\wedge_\chi}: \Pbim(U_q^{ev, \uchi}) \rightarrow \Pbim(U_q^{ev\wedge_\chi})$}\ \label{ssec: bullet-complete functor}

Let us consider the epimorphism $\phi_{1, \hbar}: U_q^{ev, \uchi} \xrightarrow[]{/\hbar} U_\e^{ev, \uchi}$. Let $P_{1, \hbar}:= \phi^{-1}_{1, \hbar}(Z_{Fr} \otimes_{Z_\cap} Z_{\cap}^{\wedge_{\uchi}})$. Then the pair $(U_q^{ev, \uchi}, P_{1,\hbar})$ is a noncommutative Poisson structure on $U_q^{ev, \uchi}$. So we have the category $\Pbim(U_q^{ev,\uchi})$ of finitely generated Poisson bimodules over $U_q^{ev, \uchi}$.

Let $J_\chi:=\phi^{-1}_{1, \hbar}(\m_\chi)$, where $\m_\chi$ is the maximal ideal of $Z_{Fr}$ at $\chi$.
\begin{Rem} \label{rem: completion of U(ev, chi)}The completion of finitely generated $U_q^{ev, \uchi}$-modules with respect to the two-sided ideal $U_q^{ev, \uchi}J_\chi$ satisifies all properties of Lemma \ref{lem: completion of Uev} by the same reasoning. Furthermore, the completion of $U_q^{ev, \uchi}$ with respect to the two-sided ideal $U_q^{ev, \uchi}J_\chi$ is isomorphic to $U_q^{ev\wedge_\chi}$.
\end{Rem}
\begin{Lem}\label{lem: bullet-complete functor}For any $N\in \Pbim(U_q^{ev, \uchi})$, the assignment $N \mapsto N^{\wedge_\chi}:=\varprojlim N/N(U_q^{ev, \uchi}J_\chi)^k$ defines the following functor
\begin{equation} \bullet^{\wedge_\chi}: \Pbim(U_q^{ev, \uchi}) \rightarrow \Pbim(U_q^{ev\wedge_\chi}).
\end{equation}
This functor  is exact, monoidal and $(\CW_q^{\wedge_{\uchi}}, \CW_q^{\wedge_{\uchi}})$-bilinear.
\end{Lem}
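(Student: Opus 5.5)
We need to show four things: the assignment $N\mapsto N^{\wedge_\chi}$ lands in $\Pbim(U_q^{ev\wedge_\chi})$, it is exact, it is monoidal, and it is bilinear over the Harish-Chandra centers. The plan is to reduce everything to the Artin--Rees type properties of Remark \ref{rem: completion of U(ev, chi)}, i.e. to the analogue of Lemma \ref{lem: completion of Uev} for the pair $(U_q^{ev,\uchi}, U_q^{ev,\uchi}J_\chi)$. That lemma, applied to right modules, gives the following for every finitely generated right module $N$:
\begin{itemize}
\item the canonical isomorphism $N^{\wedge_\chi}\cong N\otimes_{U_q^{ev,\uchi}}U_q^{ev\wedge_\chi}$;
\item exactness of $N\mapsto N^{\wedge_\chi}$;
\item flatness of $U_q^{ev\wedge_\chi}$ over $U_q^{ev,\uchi}$.
\end{itemize}
In particular, exactness is immediate once the functor is known to be well defined.

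\textbf{Step 1: the two completions agree.} The key point is that the left and right $J_\chi$-adic topologies on $N$ coincide. Since $J_\chi$ contains $\hbar$ and its image in $U_\e^{ev,\uchi}$ lies in the central subalgebra $Z_{Fr}\otimes_{Z_\cap}Z_\cap^{\wedge_{\uchi}}$, we have $J_\chi\subset P_{1,\hbar}$. For $z\in J_\chi$ and $n\in N$, axiom (2) gives
\[
zn-nz=\{\hbar z,n\}=\hbar\{z,n\}.
\]
Hence $J_\chi N\subset NJ_\chi+\hbar N\subset NJ_\chi$, because $\hbar\in J_\chi$; symmetrically $NJ_\chi\subset J_\chi N$. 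Inductively, using the Leibniz rule (4) for products of elements of $J_\chi$, one gets $J_\chi^kN=NJ_\chi^k$ for all $k$.

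Consequently $N^{\wedge_\chi}$ is also the left completion, and it carries commuting left and right $U_q^{ev\wedge_\chi}$-actions. It is finitely generated on both sides, by the canonical-isomorphism bullet applied on each side, since $N$ is finitely generated on both sides.

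\textbf{Step 2: the Poisson bracket.} For $p\in P_{1,\hbar}$, the bracket $\{p,-\}$ satisfies
\[
\{p,nJ_\chi^k\}\subset \{p,n\}J_\chi^k+nJ_\chi^{k-1}\{p,J_\chi\}.
\]
Moreover $\{P_{1,\hbar},J_\chi\}\subset U_q^{ev,\uchi}$, so $\{p,-\}$ maps $NJ_\chi^k$ into $NJ_\chi^{k-1}$. It is therefore continuous and extends to $N^{\wedge_\chi}$.

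We then need the bracket for all $p\in P_\hbar=\phi_\chi^{-1}(Z_{Fr}^{\wedge_\chi})$. Since $P_\hbar$ is the $J_\chi$-adic closure of the image of $P_{1,\hbar}$ plus $\hbar U_q^{ev\wedge_\chi}$, we:
\begin{itemize}
\item define $\{\hbar a,m\}=[a,m]$ on the $\hbar U_q^{ev\wedge_\chi}$ part;
\item extend from $P_{1,\hbar}$ to its closure by continuity, using the same filtration estimate;
\item note that the two definitions agree on the overlap by axiom (2).
\end{itemize}
The Poisson identities hold on the dense image of $N$ and pass to the limit by continuity.

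\textbf{Step 3: monoidality and bilinearity.} For monoidality we compute
\[
(N_1\otimes N_2)^{\wedge_\chi}\cong N_1\otimes N_2\otimes U_q^{ev\wedge_\chi}\cong N_1\otimes U_q^{ev\wedge_\chi}\otimes_{U_q^{ev\wedge_\chi}}U_q^{ev\wedge_\chi}\otimes N_2\otimes U_q^{ev\wedge_\chi},
\]
with all unlabeled tensor products over $U_q^{ev,\uchi}$. The second isomorphism uses Step 1, namely that the left completion of $N_2$ equals its right completion, i.e.
\[
U_q^{ev\wedge_\chi}\otimes_{U_q^{ev,\uchi}}N_2\cong N_2^{\wedge_\chi}\cong N_2\otimes_{U_q^{ev,\uchi}}U_q^{ev\wedge_\chi}.
\]
One then checks that this isomorphism is compatible with the brackets, which holds on the dense image of $N_1\otimes N_2$ and hence everywhere. $(\CW_q^{\wedge_{\uchi}},\CW_q^{\wedge_{\uchi}})$-bilinearity is clear, since $\CW_q^{\wedge_{\uchi}}$ is central and the completion is functorial.

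\textbf{Main obstacle.} The delicate step is Step 1, the coincidence of the left and right $J_\chi$-adic topologies and the resulting finiteness on both sides. Everything else is formal continuity.
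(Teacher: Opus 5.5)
Your argument is essentially the paper's proof. Axiom (2) gives $J_\chi N=NJ_\chi$, so the left and right $J_\chi$-adic completions coincide, and Remark~\ref{rem: completion of U(ev, chi)} gives two-sided finite generation and exactness. The bracket then extends by continuity to $\overline{P_{1,\hbar}}=P_\hbar$, and monoidality and bilinearity follow from the coincidence of the two completions.

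Two repairs are needed.

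\textbf{Step 3.} The middle term $U_q^{ev\wedge_\chi}\otimes N_2\otimes U_q^{ev\wedge_\chi}$ is not $N_2^{\wedge_\chi}$ in general. Already in the commutative model, $\BC[[x]]\otimes_{\BC[x]}\BC[[x]]\neq\BC[[x]]$. The correct chain is
\[
N_1^{\wedge_\chi}\otimes_{U_q^{ev\wedge_\chi}}N_2^{\wedge_\chi}\cong N_1\otimes_{U_q^{ev,\uchi}}N_2^{\wedge_\chi}\cong (N_1\otimes_{U_q^{ev,\uchi}}N_2)\otimes_{U_q^{ev,\uchi}}U_q^{ev\wedge_\chi}.
\]
This uses only three things:
\begin{itemize}
\item associativity of the tensor product;
\item the identification $M^{\wedge_\chi}\cong M\otimes_{U_q^{ev,\uchi}}U_q^{ev\wedge_\chi}$ for finitely generated right modules $M$;
\item your Step 1 fact that $N_2^{\wedge_\chi}$ carries a left $U_q^{ev\wedge_\chi}$-action extending the given left action.
\end{itemize}

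\textbf{Step 2.} Your filtration estimate gives continuity of $\{p,-\}$ only in the module variable. Extending the bracket to the closure of $P_{1,\hbar}$ also needs continuity in $p$. This follows from Artin--Rees: for a fixed $c$, elements of $P_{1,\hbar}\cap(U_q^{ev,\uchi}J_\chi)^k$ lie in $P_{1,\hbar}J_\chi^{k-c}+\hbar\,(U_q^{ev,\uchi}J_\chi)^{k-c}$. The Leibniz rule together with axiom (2) then bounds their brackets with elements of $N$. The paper leaves this point implicit as well.
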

\begin{proof} By the axiom (2) in the definition of Poisson bimodules, for any $z\in P_{1, \hbar}$ and $n \in N$, we have $zn-nz \in \hbar N$. This implies that $N(U_q^{ev, \uchi}J_\chi)=(U_q^{ev, \uchi}J_\chi)N$. Therefore, $N^{\wedge_\chi}=\varprojlim N/N(J_\chi U_q^{ev, \uchi})^k =\varprojlim N/(U_q^{ev, \uchi}J_\chi)^k N$. By Remark \ref{rem: completion of U(ev, chi)}, $N^{\wedge_\chi}$ is finitely generated as a left and as a right $U_q^{ev\wedge_\chi}$-modules, furthermore, the functor $\bullet^{\wedge_\chi}$ is exact if it exists.

The Poisson bracket $\{\;,\;\}: P_{1, \hbar} \x N \rightarrow N$ has a unique continous extension to a Poisson bracket $\{\;,\;\}: \overline{P_{1, \hbar}} \x N^{\wedge_\chi} \rightarrow N^{\wedge_\chi}$, where $\overline{P_{1, \hbar}}$ is the closure of $P_{1, \hbar}$ in $U_q^{ev\wedge_\chi}$. Since $\hbar U_q^{ev, \uchi} \subset P_{1, \hbar}$, the closure $\overline{P_{1, \hbar}}$ contains $\hbar U_1^{ev\wedge_\chi}$. One can show that the image of $\overline{P_{1, \hbar}}$ under $\phi_{\hbar}: U_q^{ev\wedge_\chi}\rightarrow U_\e^{ev\wedge_\chi}$ is $Z_{Fr}^{\wedge_\chi}$. Therefore, $\overline{P_{1, \hbar}}=P_\hbar$. So $N^{\wedge_\chi}$ is naturally an object in $\Pbim(U_q^{ev\wedge_\chi})$. 

Since $N(U_q^{ev, \uchi}J_\chi)=(U_q^{ev, \uchi}J_\chi)N$, it is clear how to equip $\bullet^{\wedge_\chi}$ with a monoidal structure. The $(\CW_q^{\wedge_{\uchi}}, \CW_q^{\wedge_{\uchi}})$-bilinearity   is also clear.  
\end{proof}

 \begin{Rem} \label{rem: lamda-graded Poisson bimod}Let $\Lambda:= P/Q$. Then we have  the categories $\Pbim^\Lambda (A)$ of $\Lambda$-graded finitely generated Poisson bimodules over various algebras $A$ considered above and the functors between these categories. Here, the $\Lambda$-grading on $A$ is trivial.
\end{Rem}


\section{Restriction functor}

In this section, we will construct the key functors in this paper. We recall the categories $U_q^{fin, \uchi}\rmod^{G_q}$ and $ \HC_q(\utheta, \utheta')$ defined  in Section \ref{ssec: complete HCbim}. We  recall the categories $\Pbim^\Lambda(U_q^{ev, \uchi}),$ $  \Pbim^\Lambda(U_q^{ev\wedge_{\chi}}),$ $ \Pbim^\Lambda(W_q^{\wedge_{\uchi}})$ and $\Pbim^\Lambda(\CW_q^{\utheta, \utheta'})$ defined  in Section \ref{sec: Poisson bimodules}, see Remark \ref{rem: lamda-graded Poisson bimod}.

\subsection{The functor $\bullet_\dag: \HC_q(\utheta, \utheta') \rightarrow \Pbim^\Lambda(\CW_q^{\utheta, \utheta'})$}\
\label{ssec: restriction functors}

Let us define a functor:
\[ \fC: U_q^{fin, \uchi}\rmod^{G_q} \rightarrow  \Pbim^\Lambda(U_q^{ev \wedge_\chi}).\]
Let $M\in U_q^{fin, \uchi}\rmod^{G_q}$. This functor is a composition of the following two functors

\begin{itemize}
    \item Let $M_{loc}:=M\otimes_{U^{fin}_q}U_q^{ev}$. Then $M_{loc}$ will naturally be an object in $\Pbim^\Lambda(U_q^{ev, \uchi})$ so that we have a functor $\bullet_{loc}: U_q^{fin, \uchi}\rmod^{G_q} \rightarrow \Pbim^\Lambda(U_q^{ev, \uchi})$. See Lemma \ref{lem: bullet-loc functor} for the existence of this functor.
    \item The functor $\bullet^{\wedge_\chi}: \Pbim^\Lambda(U_q^{ev, \uchi})\rightarrow \Pbim^\Lambda(U_q^{ev\wedge_\chi})$ is constructed in Section \ref{ssec: bullet-complete functor}.
\end{itemize}
Then $\fC:= \bullet^{\wedge_\chi} \circ \bullet_{loc}$.

\begin{Lem}\label{lem: bullet-loc functor} The functor $\bullet_\loc$ exists. It is exact, monoidal and $(\CW_q^{\wedge_{\uchi}}, \CW_q^{\wedge_{\uchi}})$-bilinear.
\end{Lem}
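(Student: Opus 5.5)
The plan is to establish, for $M\in U_q^{fin,\uchi}\rmod^{G_q}$, first that $M_{loc}=M\otimes_{U^{fin}_q}U^{ev}_q$ is a $U_q^{ev,\uchi}$-bimodule, then that it is finitely generated on both sides and carries a canonical Poisson bracket and $\Lambda$-grading, and finally the exactness, monoidality and bilinearity.

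\emph{Bimodule structure.} By Proposition \ref{prop: Ufin and other algberas}(a), $U^{ev}_q$ is the localization of $U^{fin}_q$ at the elements $K^{-\w_i}$. I will verify (or import from \cite{LTV}) that this set satisfies the Ore condition on both sides. Then $U^{ev}_q\otimes_{U^{fin}_q}M\cong M\otimes_{U^{fin}_q}U^{ev}_q$. Since $M$ is a $U^{fin,\uchi}_q$-bimodule (Lemma \ref{lem: extension of left-actions}(c)), $M_{loc}$ acquires commuting left and right $U^{ev}_q\otimes_{\CW_q}\CW_q^{\wedge_{\uchi}}$-actions. These factor through $U^{ev,\uchi}_q$ once $M_{loc}$ is shown to be $\hbar$-adically separated; for this I will use that $K^{-\w_i}$ acts on $M$ with $\hbar$-adic control. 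Finite generation follows because $M$ is finitely generated on both sides over $U^{fin,\uchi}_q$ (Lemma \ref{lem: extension of left-actions}(c)). The $\Lambda$-grading comes from the weight decomposition of $M$ modulo $Q$; the left and right $K^{2\lambda}$-actions differ only by the $\cU_q$-weight, by formula \eqref{eq: left action}.

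\emph{Poisson bracket.} I need $\{z,m\}$ for $z\in P_{1,\hbar}=\phi_{1,\hbar}^{-1}(Z_{Fr}\otimes_{Z_\cap}Z_\cap^{\wedge_{\uchi}})$. Modulo $\hbar$, the element $z$ acts identically from the left and the right on $M_{loc}/\hbar M_{loc}$. On $M/\hbar M$ this is Lemma \ref{lem: left and right Zfin-actions coincide}, and the footnote there extends it to $Z_{Fr}$ after localization; this is the place where assumption \ref{eq: assumption on l} is used. Hence $[z,m]\in\hbar M_{loc}$. If $M_{loc}$ is $\hbar$-flat, I can set $\{z,m\}:=\hbar^{-1}[z,m]$. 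In general $M$ is not flat, so I will instead define the bracket functorially. I take a presentation $(\oplus V'_j)\otimes U^{fin}_q\to(\oplus V_i)\otimes U^{fin}_q\to M\to 0$ by projective, $\hbar$-free $V$'s, as in the proof of Lemma \ref{lem: left action on HCmod}. For the free objects $V\otimes U^{ev}_q$ the bracket is $\hbar^{-1}[z,\cdot]$. I then check two things. First, the maps in the presentation are Poisson: they are bimodule maps between $\hbar$-flat objects, so Remark \ref{rem: mor between flat Pbim} applies. Second, the resulting bracket on the cokernel is independent of the presentation, by the same lifting argument as in Lemma \ref{lem: left action on HCmod}. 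The Poisson axioms then descend from the free objects.

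\emph{Remaining properties.} Exactness holds because Ore localization is flat. Monoidality uses the isomorphism $M_{loc}\otimes_{U^{ev}}N_{loc}\cong(M\otimes_{U^{fin}}N)_{loc}$, which again follows from the Ore property, together with compatibility of the brackets, which by functoriality only needs checking on free objects. $(\CW_q^{\wedge_{\uchi}},\CW_q^{\wedge_{\uchi}})$-bilinearity is clear.

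The main obstacle is the Poisson bracket on non-flat $M$: showing it is well defined, independent of the presentation, and that morphisms in $\HC$ become Poisson morphisms. The fallback is to work with presentations throughout and invoke Lemma \ref{lem: Im divided by h} to divide by $\hbar$ at the level of $\cU_q$-equivariant maps out of projectives.
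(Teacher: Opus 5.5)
Your proposal follows the paper's proof essentially step for step. The mod-$\hbar$ coincidence of left and right $Z_{Fr}$-actions (Lemma \ref{lem: left and right Zfin-actions coincide}) gives $[z,m]\in\hbar M_{loc}$; the bracket is $\hbar^{-1}[z,\cdot]$ on $\hbar$-flat objects and is pushed to general $M$ through a presentation by $\hbar$-flat projectives (with well-definedness, independence of the presentation, and Poisson-ness of morphisms all obtained by lifting); exactness comes from localization; and monoidality comes from $M_{loc}\otimes N_{loc}\cong(M\otimes N)_{loc}$ with the brackets checked on flat covers. Your one deviation is an unnecessary detour: rather than proving $M_{loc}$ is $\hbar$-adically separated (which you only gesture at), the paper gets the $U^{ev,\uchi}_q$-structure directly from $U^{fin,\uchi}_q\otimes_{U^{fin}_q}U^{ev}_q\cong U^{ev,\uchi}_q$, i.e.\ $M_{loc}=M\otimes_{U^{fin,\uchi}_q}U^{ev,\uchi}_q$; relatedly, your presentations should be by $V\otimes U^{fin,\uchi}_q$ rather than $V\otimes U^{fin}_q$, since elements of $P_{1,\hbar}$ involve $\CW_q^{\wedge_{\uchi}}$ and do not act on $V\otimes U^{ev}_q$.
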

\begin{proof}{\em Step 1: Existence.} Note that $U_q^{fin, \uchi}\otimes_{U_q^{fin}} U_q^{ev} \cong U_q^{ev,\uchi}$. As in Remark \ref{rem: left action on equiv Uev-mod},  $M_{loc}$ is an object in $U_q^{ev}\rmod^{\cU_q}$. Furthermore, $M\rightarrow M_{loc}$ is a morphism of $\cU_q$-equivariant $U_q^{fin}$-bimodules. Therefore, the left $U_q^{fin, \uchi}$-module structure on $M$ gives a left $U_q^{ev, \uchi}$-module structure on $M_{loc}$. The left and right actions of $Z_{Fr}\otimes_{Z_\cap}Z_\cap^{\wedge_{\uchi}}$ on $M_{loc}/\hbar M_{loc}$ coincide by Lemma \ref{lem: left and right Zfin-actions coincide}. This implies that for any $z\in P_{1, \hbar} $ and $m\in M_{loc}$, then $zm-mz \in \hbar M_{loc}$.

Assume $M$ is flat over $\BC[[\hbar]]$, then $M_{loc}$ is also flat over $\BC[[\hbar]]$. In this case, $M_{loc}$ has a natural Poisson bimodule structure over $U_q^{ev, \uchi}$ by defining $\{z, m\}=\hbar^{-1}[z,m]$ for $ z\in P_{1, \hbar}, m\in M_{loc}$. The $P$-grading on $M_{loc}$ gives a $\Lambda$-grading on $M_{loc}$. Then $M_{loc}$ is an object in $\Pbim^\Lambda(U_q^{ev, \uchi})$. Furthermore, for any morphism $M\rightarrow N$ of $\BC[[\hbar]]$-flat objects in $U_q^{fin, \uchi}\rmod^{G_q}$, the morphism $M_{loc}\rightarrow N_{loc}$ is a morphism in the category $\Pbim^\Lambda(U_q^{ev,\uchi})$.

For general $M$, there are $\BC[[\hbar]]$-flat objects $P_1, P_2\in U_q^{fin, \uchi}\rmod^{G_q}$ and an exact sequence $P_2\xrightarrow[]{\phi} P_1\xrightarrow[]{\pi} M \rightarrow 0$, e.g, we can choose a  projective presentation of $M$. This gives us an exact sequence $P_{2, loc} \xrightarrow[]{\phi} P_{1, loc} \xrightarrow[]{\pi} M_{loc} \rightarrow 0$. This ensures the following Poisson structure on $M_{loc}$ is well-defined: for $m\in M_{loc}$ and $z\in P_{1, \hbar}$ then $\{z, m\}=\pi(\hbar^{-1}[z,p])$ where $\pi(p)=m$.
Indeed, if $\pi(p_1)=\pi(p_2)$ then $p_1-p_2=\phi(p')$ and then  $\pi(\{z, m_1-m_2\})=\pi(\{ z, \phi(p')\})=\pi \circ \phi(\{z, p'\})=0$. Now the $P$-grading on $M_{loc}$ gives the $\Lambda$-grading on $M_{loc}$. So $M_{loc}$ is an object in $\Pbim^\Lambda(U_q^{ev, \uchi})$.

The Poisson structure on $M_{loc}$ is independent on the choice of $P_1 \rightarrow M\rightarrow 0$. Indeed, let us consider other morphism $P_2\rightarrow M \rightarrow 0$ and  the fibered product $P_1 \x_{M} P_2$. Then both Poisson structures on $M_{loc}$ coming from $P_1\rightarrow M$ and $P_2\rightarrow M$ will coincide with the Poisson structure on $M_{loc}$ coming from $P_1 \x_M P_2\rightarrow M$.

Now we show that for any $f: M \rightarrow N$ then the map $f_{loc}: M_{loc}\rightarrow N_{loc}$ is a morphism in $\Pbim^\Lambda(U_q^{ev, \uchi})$. There are projective objects $P_1, P_2$ in $U_q^{fin, \uchi}\rmod^{G_q}$ and the following diagram 
\[ \begin{tikzcd}P_{1, loc} \arrow[d, "\pi_1"] \arrow[r, "f'"]& P_{2, loc} \arrow[d, "\pi_2"]&\\
M_{loc} \arrow[r, "f_{loc}"] &N_{loc}   
\end{tikzcd}
\]
Then $f_{loc}(\{z, m\})==f_{loc}\circ \pi_1(\{z, p\})=\pi_2 \circ f'(\{z, p\})=\{z, \pi_2\circ f'(p)\}=\{ z, f_{loc} \circ \pi_1(p)\}=\{z, f_{loc}(m)\}$ for $p \in P_{1, loc}$ such that $\pi_1(p)=m$.

{\em Step 2: Properties.} Since $U_q^{ev}$ is a localization of $U_q^{fin}$ by Proposition \ref{prop: Ufin and other algberas}, the functor $\bullet_{loc}$ is exact.
It is clear that $\bullet_{loc}$ is $(\CW_q^{\wedge_{\uchi}}, \CW_q^{\wedge_{\uchi}})$-bilinear. The functor $\bullet_{loc}$ is monoidal as follows: 
\begin{equation}\label{eq: iso-monoid}
\begin{split}M_{loc}\otimes_{U_q^{ev, \uchi}} N_{loc} &\cong (M\otimes_{U_q^{fin}}U_q^{ev}) \otimes_{U_q^{ev, \uchi}}(N\otimes_{U_q^{fin}} U_q^{ev})\\
&\cong (M\otimes_{U_q^{fin, \uchi}}U_q^{ev, \uchi})\otimes_{U_q^{ev, \uchi}} (N\otimes_{U_q^{fin}} U_q^{ev}) \\
&\cong M\otimes_{U_q^{fin, \uchi}} N \otimes_{U_q^{fin}} U_q^{ev} =(M\otimes_{U_q^{fin, \uchi}} N)_{loc}.
\end{split}
\end{equation}
It is left to show that \eqref{eq: iso-monoid} is an isomorphism of Poisson bimodules in $\Pbim^\Lambda(U_q^{ev, \uchi})$. It is done by choosing two surjective morphisms $P_1\twoheadrightarrow M, P_2\twoheadrightarrow N$  from $\BC[[\hbar]]$-flat objects $P_1, P_2$ in $U_q^{fin, \uchi}\rmod^{G_q}$ and arguing as in  the last paragraph of Step 1.   
\end{proof}

Composing with the equivalence $\fP: \Pbim^\Lambda(U_q^{ev\wedge_\chi}) \iso \Pbim^\Lambda( \CW_q^{\wedge_{\uchi}})$ in \eqref{eq: functor fP}, we obtain 
\begin{equation}\label{eq: res functor for chi}
    \bullet_\dag: U_q^{fin, \uchi}\rmod^{G_q} \xrightarrow[]{\fC} \Pbim^\Lambda(U_q^{ev\wedge_\chi}) \xrightarrow[]{\fP} \Pbim^\Lambda(\CW_q^{\wedge_{\uchi}}).
\end{equation}

The decompositions of the categories give us the functor $\bullet_\dag: \HC_q(\utheta, \utheta') \rightarrow \Pbim^\Lambda(\CW_q^{\utheta, \utheta'})$.

\begin{Prop}The functor $\bullet_\dag$ in \eqref{eq: res functor for chi} is exact, monoidal and $(\CW_q^{\wedge_{\uchi}}, \CW_q^{\wedge_{\uchi}})$-linear.
\end{Prop}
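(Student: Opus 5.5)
The functor $\bullet_\dag$ in \eqref{eq: res functor for chi} is by definition the composite $\fP\circ \bullet^{\wedge_\chi}\circ \bullet_{loc}$. The plan is to check the three properties (exactness, monoidality, $(\CW_q^{\wedge_{\uchi}},\CW_q^{\wedge_{\uchi}})$-linearity) for each factor separately, and then note that each property is stable under composition. Lemma \ref{lem: bullet-loc functor} already gives all three properties for $\bullet_{loc}$, and Lemma \ref{lem: bullet-complete functor} gives them for $\bullet^{\wedge_\chi}$. Composites of exact (monoidal, bilinear) functors are exact (monoidal, bilinear), so everything reduces to the equivalence $\fP=\fP_2\circ\fP_1$ of \eqref{eq: equiv Poisson cat 1}, \eqref{eq: equiv Poisson cat 2}. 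The $\Lambda$-grading is carried along at each step, since every construction is compatible with it.

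For $\fP$, exactness is automatic, because an equivalence of abelian categories is exact. Both $\fP_1$ and $\fP_2$ are stated to be monoidal equivalences, but I would sketch why.

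For $\fP_1\colon M\mapsto eMe$, with $e=E_{11}$ in $U_q^{ev\wedge_\chi}\cong \Mat_{\sd}(R_\hbar)$, one uses the standard Morita isomorphism
\[ eMe\otimes_{R_\hbar} eNe\;\cong\; e(M\otimes_{U_q^{ev\wedge_\chi}} N)e, \]
which comes from $M\cong U e\otimes_{R_\hbar} eMe\otimes_{R_\hbar} eU$. It then remains to check that this isomorphism intertwines the brackets $\{epe,-\}$. That is a Leibniz-rule computation, using that $e$ can be chosen inside $\Mat_{\sd}(\BC)\otimes 1$, so that $\{C_\hbar, e\}=0$.

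For $\fP_2\colon M\mapsto M^{ad}$, one uses $M\cong \CA^\wedge_q\widehat\otimes_{\BC[[\hbar]]} M^{ad}$ together with the observation that $(M\otimes_{R_\hbar}N)^{ad}\supseteq M^{ad}\otimes_{\CW_q^{\wedge_{\uchi}}} N^{ad}$. Comparing both sides via the decomposition $\CA^\wedge_q\widehat\otimes(\cdot)$ shows that this inclusion is an equality.

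Linearity over $\CW_q^{\wedge_{\uchi}}$ on both sides holds because $\CW_q^{\wedge_{\uchi}}\hookrightarrow U_q^{ev\wedge_\chi}$ lands in $1\otimes R_\hbar$, commutes with $e$, and is killed by $\{\CA^\wedge_q,-\}$, so it preserves $eMe$ and $M^{ad}$.

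Finally, the unit and associativity constraints have to be matched. For $\bullet_{loc}$ these constraints come from the associativity of tensor products in \eqref{eq: iso-monoid}, and the completion and $\fP$ steps are compatible with them by naturality.

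The main obstacle I expect is the Poisson compatibility of the monoidal isomorphisms for non-flat objects. For $\hbar$-flat objects, Remark \ref{rem: mor between flat Pbim} shows that brackets are determined by $\hbar^{-1}[\,,]$, so any bimodule isomorphism is automatically Poisson. For general objects I would reduce to the flat case exactly as in Step 1 of Lemma \ref{lem: bullet-loc functor}. One chooses projective, hence $\BC[[\hbar]]$-flat, presentations; checks compatibility on the flat terms; and then descends along the surjections using right exactness of the tensor products and of the three functors.
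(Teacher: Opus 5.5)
Your proposal is correct and follows the paper's route. The paper's proof composes Lemma \ref{lem: bullet-loc functor} and Lemma \ref{lem: bullet-complete functor} with the monoidal equivalence $\fP=\fP_2\circ\fP_1$ of Section \ref{sec: Poisson bimodules}; your extra sketches (the Morita isomorphism for $\fP_1$, the decomposition $M\cong\CA^\wedge_q\widehat{\otimes}_{\BC[[\hbar]]}M^{ad}$ for $\fP_2$, and centrality of $\CW_q^{\wedge_{\uchi}}$ for linearity) fill in details that the paper states without proof.
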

\begin{proof}Follows by Lemma \ref{lem: bullet-loc functor} and Lemma \ref{lem: bullet-complete functor}.
\end{proof}
Recall the regular $\chi \in \Spec(Z_{Fr}) \cong G_0$, the open Bruhat cell of $G$. Let $G_\chi$ be the stabilizer of $\chi$ under the conjugation action of $G$ on itself. Let $C(\chi)$ be the component group of $G_\chi$, then we have the natural map $Z(G) \rightarrow C(\chi)$ where $Z(G)$ is the center of $G$.

\begin{Prop}\label{prop: ff on diagonal bimods} Assume the map $Z(G) \rightarrow C(\chi)$ is surjective. Then the functor $\bullet_\dag$  in \eqref{eq: res functor for chi} is fully faithful on the diagonal bimodules, which are defined in Definition \ref{defi: diagonal bimod}.
\end{Prop}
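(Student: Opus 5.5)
We reduce full faithfulness to a statement about Hom spaces out of diagonal bimodules. By additivity and Krull--Schmidt (Remark \ref{rem: Krull-Schmidt}) it suffices to treat $M=V_q\otimes_R U_q^{fin,\uchi}$ and $N=V'_q\otimes_R U_q^{fin,\uchi}$ with $V_q,V'_q\in\Rep^{fd}(\cU_q(\g))$ free over $\BC[[\hbar]]$; the blocks $P^{\utheta,\utheta'}(V_q)$ are then recovered using the idempotents of Remark \ref{rem: idempotent splits of functor}. By Lemma \ref{lem: proj is hilt}(a),
\[\Hom(M,N)\cong \Hom_{\cU_q(\g)}(V_q, V'_q\otimes_R U_q^{fin,\uchi})\cong \big(({}^*V'_q\otimes V_q)^*\otimes U_q^{fin,\uchi}\big)^{\cU_q(\g)}.\]
On the other side, $M_\dag$ and $N_\dag$ are computed from $M_{loc}^{\wedge_\chi}=V_q\otimes U_q^{ev\wedge_\chi}$, on which $\cU_q$ no longer acts. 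Since $N_\dag$ is flat over $\BC[[\hbar]]$, Remark \ref{rem: mor between flat Pbim} shows that $\Hom_{\Pbim}(M_\dag,N_\dag)$ consists of the bimodule maps commuting with $\hbar^{-1}[z,-]$ for $z\in P_\hbar$. Both Hom spaces are flat and $\hbar$-adically complete and separated: the first is finitely generated over $\CW_q^{\wedge_{\uchi}}$ by Lemma \ref{lem: extension of left-actions}(a), and the second is finitely generated over $\CW_q^{\wedge_{\uchi}}$ after applying $\fP$. By Nakayama, it therefore suffices to show that the map is an isomorphism modulo $\hbar$, i.e.\ for $\e$, at least at the level of the associated graded pieces.

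We therefore work at $q=\e$. We must compare $\Hom_{\cU_\e}(V, V'\otimes U_\e^{fin,\uchi})$ with the Poisson-bimodule Homs between $V\otimes U_\e^{ev\wedge_\chi}$ and $V'\otimes U_\e^{ev\wedge_\chi}$. The latter are bimodules over the Azumaya algebra $\Mat_\sd(Z^{\wedge_\chi})$ (Theorem \ref{thm: Azumaya locus}) that are compatible with the Poisson action of $Z^{\wedge_\chi}_{Fr}$. Geometrically, $U_\e^{fin}$ is a $G^d$-equivariant coherent sheaf on $\Spec Z^{fin}\to G^d$ (Propositions \ref{prop: properties of Ufin} and \ref{prop: description of Z}). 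The small quantum group $\fu_\e$ acts trivially on $Z^{fin}_{Fr}$, and the remaining $\cU_\BC(\g^d)$-invariants can be computed by restriction to the Steinberg slice $S$ through $\chi$. Near $\chi$, by Lemma \ref{lem: Decompose Poisson center}, $G^d$-invariant sections over a $G^d$-saturated neighbourhood of $\chi$ correspond to sections over the formal slice $\Spec Z_\cap^{\wedge_{\uchi}}$ that are invariant under the stabilizer $G^d_\chi$. On the Poisson side, commuting with $\{V,-\}$, i.e.\ with the Weyl-algebra directions $\CA_q^\wedge$, plays the role of invariance under the identity component of $G_\chi$ together with the orbit directions; this is exactly what the functor $\fP_2$ (taking $M^{ad}$) implements. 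So the comparison map becomes
\[\big(\text{sections over the slice}\big)^{G_\chi}\longrightarrow \big(\text{sections over the slice}\big)^{G_\chi^\circ}\otimes(\Lambda\text{-grading data}).\]

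The main obstacle is the discrepancy between $G_\chi$-invariants and $G_\chi^\circ$-invariants, i.e.\ the component group $C(\chi)$. Here the hypothesis is used: $Z(G)\to C(\chi)$ is surjective, so each component of $G_\chi$ contains a central element. The central elements act on $V\otimes U^{fin}$ through the $P/Q=\Lambda$-grading by weights, and the Poisson bimodules in the target carry this $\Lambda$-grading (Remark \ref{rem: lamda-graded Poisson bimod}), with morphisms required to preserve it. Hence $\Lambda$-graded, $G_\chi^\circ$-invariant maps are exactly $G_\chi$-invariant maps. I would make this rigorous by first establishing the isomorphism on the Azumaya part, where $eMe$ over $Z^{\wedge_\chi}$ is $\Hom$ into a vector bundle; then checking that the induced map on $\Hom$ spaces is injective, since objects are separated as shown in Lemma \ref{lem: unique left Ue-uchi}; and finally proving surjectivity by the invariance argument just described, lifting to $q$ by the Nakayama argument above.
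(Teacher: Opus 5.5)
Your proposal has a genuine gap, and it starts with the assertion that $\cU_q$ ``no longer acts'' on $V_q\otimes U_q^{ev\wedge_\chi}$. By Lemma \ref{lem: equiv action on completion} the adjoint action extends by continuity to the completion, and the paper's proof is built on this. Since $V_q\otimes U_q^{ev\wedge_\chi}$ and $W_q\otimes U_q^{ev\wedge_\chi}$ are $\hbar$-flat, Remark \ref{rem: mor between flat Pbim} makes every $\Lambda$-graded bimodule map automatically Poisson, so the Poisson condition is not where any invariance comes from. Moreover, every bimodule map intertwines the adjoint $U_q^{ev}$-actions, and on flat objects these determine the $\cU_q$-actions. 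Hence the target Hom is a Hom of $\Lambda$-graded $\cU_q$-equivariant right modules. By adjunction and the $\Lambda$-grading, the claim then becomes bijectivity of $\Hom_{\cU_q}(V_q,U_q^{fin,\uchi})\to\Hom_{\cU_q}(V_q,U_q^{ev\wedge_\chi})$ for $V_q$ with weights in $Q$. Choosing $V_q$ projective makes the source commute with reduction mod $\hbar$ and the target left exact, and this is what legitimately reduces everything to $q=\e$ (Lemma \ref{lem: equal isogeny}). Your Nakayama step has no such control. You do not take $V_q$ projective or tilting, which is needed for the source Hom to reduce correctly. You also give no description of the target at $\e$: there you face Poisson-bimodule Homs between non-flat objects, where Remark \ref{rem: mor between flat Pbim} no longer applies, and ``at the level of associated graded pieces'' does not supply one.

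Second, the comparison at $q=\e$, which is the whole content, is only asserted by analogy. Using that $U^{fin}_\e$ is projective over $Z^{fin}_{Fr}\cong\BC[G]$ and passing to $\fu_\e$-invariants, what is actually needed is that $\BC[G]\otimes_{\BC[G\sslash G]}\BC[G\sslash G]^{\wedge_{\uchi}}\to\BC[G]^{\wedge_\chi,Z(G)}_{\g\dash fin}$ is an isomorphism (Lemma \ref{lem: fin part C[G] complete at chi}). Its proof has four ingredients:
\begin{itemize}
\item a filtration by powers of $I_\chi=\m_{\uchi}\BC[G]$;
\item freeness of $I_\chi^k/I_\chi^{k+1}$ over $\BC[\overline{G\chi}]$;
\item Lemma \ref{lem: homogenous section}, which identifies the $\g$-finite, $C(\chi)$-invariant part of the formal neighbourhood of $\chi$ with sections over the orbit $G\chi$; this is where $Z(G)\twoheadrightarrow C(\chi)$ turns $C(\chi)$-invariants into $Z(G)$-invariants, matching your intuition;
\item extension from $G\chi$ to its closure, by normality of $\overline{G\chi}$ and codimension $\geq 2$ of its non-regular locus.
\end{itemize}
Your Steinberg-slice picture skips exactly the passages from point to orbit and from orbit to closure. $U_\e^{fin,\uchi}$ lives over a formal neighbourhood of the whole fiber over $\uchi$, non-regular elements included, whereas $U_\e^{ev\wedge_\chi}$ sees only the formal neighbourhood of the single point $\chi$. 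Lemma \ref{lem: Decompose Poisson center} does not bridge this gap, since it concerns one formal neighbourhood. Nor does restriction to a slice whose points have varying centralizers, so that ``$G_\chi$-invariant sections over the slice'' is not meaningful. Finally, the separatedness in Lemma \ref{lem: unique left Ue-uchi} is for the $\m_{\uchi}$-adic topology, so it does not give injectivity into the completion at $\chi$. That also requires density of $G\chi$ in the fiber together with the freeness above.
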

\begin{Rem}The map $Z(G) \rightarrow C(\chi)$ is surjective when $\chi$ is a regular unipotent element in $G$. The surjectivity  always holds in type $A$ but not hold in general. The enhanced versions of $\bullet_\dag$ and Proposition \ref{prop: ff on diagonal bimods} are discussed in Appendix \ref{append: enhanced version}
\end{Rem}
We will need the following lemmas under the condition that $Z(G) \rightarrow C(\chi)$ is surjective.

\begin{Lem}\label{lem: equal isogeny} For any $V_\e \in \Rep^{fd}(\cU_\e(\g))$ which has weights contained  in the root lattice $Q$, the following natural map is bijective
\begin{equation}\label{eq: equal isogeny} \Hom_{\cU_\e}(V_\e, U_\e^{fin, \uchi}) \rightarrow \Hom_{\cU_\e}(V_\e, U_\e^{ev\wedge_\chi}).
\end{equation}
\end{Lem}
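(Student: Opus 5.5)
The plan is to reduce both sides of \eqref{eq: equal isogeny} to invariants of a $G^d$-equivariant coherent sheaf on $G^d$, computed near the orbit of $\chi$. The two sides will then differ only by the action of the component group $C(\chi)$, and the hypothesis on $Z(G)\rightarrow C(\chi)$ together with the weight condition on $V_\e$ will kill that difference. We write $G=G^d$ throughout, as in the statement of Proposition \ref{prop: ff on diagonal bimods}.

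\emph{Step 1 (passing to a sheaf on $G^d$).} Let $\fu_\e$ be the small quantum group, i.e.\ the kernel of $\tFr$. Set
\[\CF:=\Hom_{\fu_\e}(V_\e, U_\e^{fin}).\]
By Proposition \ref{prop: properties of Ufin}(a),(b), $U_\e^{fin}$ is a finitely generated projective module over $Z^{fin}_{Fr}\cong\BC[G^d]$. Together with Lemma \ref{lem: left and right Zfin-actions coincide}, this makes $\CF$ a finitely generated $G^d$-equivariant $\BC[G^d]$-module, on which the residual action factors through $\cU_\BC(\g^d)$. Taking $\fu_\e$-invariants commutes with the flat base changes involved. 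Hence the left-hand side is $\CF^{G^d}\otimes_{Z_\cap}Z_\cap^{\wedge_{\uchi}}$, using $U_\e^{fin,\uchi}=U_\e^{fin}\otimes_{Z_\cap}Z_\cap^{\wedge_{\uchi}}$ and Lemma \ref{lem fg quotient imply fg}(b).

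For the right-hand side we use $U_\e^{ev\wedge_\chi}=U_\e^{fin}\otimes_{\BC[G^d]}\BC[G^d]^{\wedge_\chi}$, again from Proposition \ref{prop: properties of Ufin}(b). The action of $\cU_\e$ on this completion is only the continuous extension of Lemma \ref{lem: equiv action on completion}. On the Frobenius part it is by the derivations coming from $\g^d$, not by the group $G^d$. So the right-hand side is $(\CF^{\wedge_\chi})^{\g^d}$.

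\emph{Step 2 (slice reduction).} Choose the Steinberg slice $S\ni\chi$ as in the proof of Lemma \ref{lem: Decompose Poisson center}, so that $S\rightarrow T^d/W\cong\Spec Z_\cap$ is an isomorphism. Near $\chi$ the action map $G^d\times S\rightarrow G^{d,reg}$ is smooth, and its fibres are torsors for the stabilizers. Restricting $\CF$ to the formal slice $\hat S$ at $\chi$ gives
\[\CF^{G^d}\otimes_{Z_\cap}Z_\cap^{\wedge_{\uchi}}\cong(\CF|_{\hat S})^{G_\chi},\qquad (\CF^{\wedge_\chi})^{\g^d}\cong(\CF|_{\hat S})^{\mathrm{Lie}\,G_\chi}=(\CF|_{\hat S})^{G_\chi^\circ}.\]
The second isomorphism comes from the product decomposition of Lemma \ref{lem: Decompose Poisson center}: the orbit directions are killed by $\g^d$-invariance, and what survives is the infinitesimal stabilizer. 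Under these identifications the map \eqref{eq: equal isogeny} is the inclusion $(\CF|_{\hat S})^{G_\chi}\subset(\CF|_{\hat S})^{G_\chi^\circ}$.

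\emph{Step 3 (component group).} It remains to show that $C(\chi)=G_\chi/G_\chi^\circ$ acts trivially on $(\CF|_{\hat S})^{G_\chi^\circ}$. By hypothesis every class in $C(\chi)$ is represented by a central element of $G^d$. A central element acts on $\CF$ through the weights of $V_\e^*\otimes U_\e^{fin}$, pulled back under Frobenius. The weights of $V_\e$ lie in $Q$ by assumption, and the adjoint weights of $U_\e^{fin}\cong O_\e[G]$ lie in $Q$ as well. So the centre, which pairs trivially with $Q$, acts trivially, and the inclusion above is an equality.

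\emph{Main obstacle.} The delicate point is Step 2 on the right-hand side. One must justify that the $\g^d$-invariants of the completion at the single point $\chi$ really compute the $G_\chi^\circ$-invariants on the slice, with no loss from the completion. For this I would use the finite generation of Lemma \ref{lem: Hom is fg over HC} and the Artin–Rees-type exactness of Lemma \ref{lem: completion of Uev} to commute completion with invariants. I would also need to check that the $\cU_\e$-action on $U_\e^{ev\wedge_\chi}$ has no contribution beyond $\fu_\e$ and $\g^d$; the estimates \eqref{eq: 2-action on Ik} should give this.
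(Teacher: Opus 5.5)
Your Step~2 carries the whole content of the lemma, and as written it is both ill-posed and unproved. The stabilizer $G_\chi$ fixes only the point $\chi$. It does not preserve the slice $\hat S$ and has no action on $\CF|_{\hat S}$. The stabilizers of the points of $\hat S$ form the universal centralizer group scheme $\BJ\to\hat S$, whose generic fibre is a maximal torus. So the statements you actually need are
\[
\CF^{G^d}\otimes_{Z_\cap}Z_\cap^{\wedge_{\uchi}}\cong(\CF|_{\hat S})^{\BJ},\qquad (\CF^{\wedge_\chi})^{\g^d}\cong(\CF|_{\hat S})^{\mathrm{Lie}\,\BJ},
\]
and neither is justified in the proposal.

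For the first, the slice only sees $G^{d,reg}$. The fibre of $G^d\to G^d\sslash G^d$ over $\uchi$ contains irregular elements; for $\chi$ regular unipotent it is the whole unipotent cone, including $1$. You must reduce to projective $V_\e$, so that $\CF$ is projective over $\BC[G^d]$. Then you must extend invariant sections across the irregular locus, which has codimension $\geq 2$. This is the counterpart of the normality and codimension input (Proposition~\ref{prop: properties of conjugacy classes}) in the paper.

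For the second, Lemma~\ref{lem: Decompose Poisson center} is a splitting of Poisson algebras and says nothing about the conjugation action of $\g^d$. What is needed is formal descent along $\hat G^d_1\times\hat S\to\hat G^d_\chi$ (formal neighbourhoods of $1$ and $\chi$), whose stabilizer groupoid is the formal group of $\BJ$. Lemmas~\ref{lem: Hom is fg over HC} and~\ref{lem: completion of Uev} do not provide this.

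Step~3 then addresses the wrong object. The component group scheme $\BJ/\BJ^\circ$ over $\hat S$ is not the constant group $C(\chi)$; it is trivial at the generic point. So ``$C(\chi)$ acts on $(\CF|_{\hat S})^{G_\chi^\circ}$'' has no meaning. Knowing that central elements represent $\pi_0$ at the closed point does not by itself give invariance under $\BJ$.

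Once Step~2 is set up correctly, the natural finish is different. $\CF|_{\hat S}$ is torsion-free and $\BJ$ is flat, so both $\BJ$-invariance and $\mathrm{Lie}\,\BJ$-invariance can be tested at the generic point of $\hat S$. There $\BJ$ is a connected torus, so the two conditions coincide. This step does not use the hypothesis on $C(\chi)$.

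The paper proceeds differently. It filters $\BC[G]$ by powers of $I_\chi=\m_{\uchi}\BC[G]$. It computes the $\g$-finite part of each $(\BC[G]/I_\chi^k)^{\wedge_\chi}$ on the homogeneous orbit via Lemma~\ref{lem: homogenous section}, which produces sections over $G/G_\chi^\circ$. It uses $Z(G)\twoheadrightarrow C(\chi)$ together with the $Q$-weight condition to cut these down to $G/G_\chi$. It extends over $\overline{G\chi}$ by normality and passes to the limit (Lemma~\ref{lem: fin part C[G] complete at chi}). The component group genuinely intervenes at those finite levels, and that is where the paper's hypothesis is used.
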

\begin{proof}See Appendix \ref{append: equal isogeny}.
\end{proof}
\begin{Lem} \label{lem: Hom(, Ucomplete at chi)}Let $V_q$ be an object in $\Rep^{fd}(\cU_q(\g))$ which has weights contained in the root lattice $Q$. Then $\Hom_{\cU_q}(V_q, U_q^{ev\wedge_\chi})$ is finitely generated over $\CW_q^{\wedge_{\uchi}}$. 
\end{Lem}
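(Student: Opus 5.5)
We reduce to the specialization $q\mapsto \e$ and use a Nakayama/completeness argument, in the same way as in the proof of Lemma \ref{lem: extension of left-actions}(a). Put $H:=\Hom_{\cU_q}(V_q, U_q^{ev\wedge_\chi})$. Since $\Rep^{fd}(\cU_q(\g))$ has enough projectives (Proposition \ref{prop: proj in Rep(Uq(g))}), we may first treat the case where $V_q$ is projective, and in particular free over $\BC[[\hbar]]$. The projective cover can be chosen inside the subcategory of modules with weights in $Q$, since that subcategory is a direct summand of $\Rep(\cU_q(\g))$ given by the $P/Q$-grading. For general $V_q$ we take a surjection $V'_q\twoheadrightarrow V_q$ with $V'_q$ such a projective. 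Then $H$ embeds $\CW_q^{\wedge_{\uchi}}$-linearly into $\Hom_{\cU_q}(V'_q,U_q^{ev\wedge_\chi})$. It therefore suffices to know that $\CW_q^{\wedge_{\uchi}}$ is Noetherian; this holds because it is a flat formal deformation of the Noetherian ring $\CW_\e^{\wedge_{\uchi}}$.

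For projective $V_q$, the short exact sequence $0\to U_q^{ev\wedge_\chi}\xrightarrow{\cdot\hbar}U_q^{ev\wedge_\chi}\to U_\e^{ev\wedge_\chi}\to 0$ of Lemma \ref{lem: completion of Uev}(e) is $\cU_q$-equivariant, using the extended action of Lemma \ref{lem: equiv action on completion}. Applying $\Hom_{\cU_q}(V_q,-)$ gives an injection
\[ H/\hbar H\hookrightarrow \Hom_{\cU_q}(V_q, U_\e^{ev\wedge_\chi})=\Hom_{\cU_\e}(V_\e, U_\e^{ev\wedge_\chi}),\]
where $V_\e=V_q/\hbar V_q$. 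The weights of $V_\e$ lie in $Q$, so Lemma \ref{lem: equal isogeny} identifies the right-hand side with $\Hom_{\cU_\e}(V_\e, U_\e^{fin,\uchi})$. By Lemma \ref{lem fg quotient imply fg}(b), using flatness of $\CW_\e^{\wedge_{\uchi}}$ over $\CW_\e$, this equals $\Hom_{\cU_\e}(V_\e,U_\e^{fin})\otimes_{\CW_\e}\CW_\e^{\wedge_{\uchi}}$. Lemma \ref{lem: Hom is fg over HC} then shows it is finitely generated over $\CW_\e^{\wedge_{\uchi}}$. As $\CW_\e^{\wedge_{\uchi}}$ is Noetherian, the submodule $H/\hbar H$ is finitely generated over $\CW_\e^{\wedge_{\uchi}}=\CW_q^{\wedge_{\uchi}}/\hbar$.

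Lemma \ref{lem fg quotient imply fg}(a) then concludes, applied with the ring $\CW_q^{\wedge_{\uchi}}$ and the ideal $(\hbar)$. The ring $\CW_q^{\wedge_{\uchi}}$ is $\hbar$-adically complete. The module $H$ is $\hbar$-adically separated because $U_q^{ev\wedge_\chi}$ is separated (Lemma \ref{lem: completion of Uev}(c)) and $\cap_k\hbar^k H\subset \Hom(V_q,\cap_k\hbar^kU_q^{ev\wedge_\chi})=0$.

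The step I expect to need the most care is the use of Lemma \ref{lem: equal isogeny}. It is exactly where the root-lattice hypothesis and the surjectivity of $Z(G)\to C(\chi)$ enter. One also has to check that $H$ really carries a $\CW_q^{\wedge_{\uchi}}$-module structure, namely that $\CW_q^{\wedge_{\uchi}}\subset U_q^{ev\wedge_\chi}$ is central and $\cU_q$-invariant, so that right multiplication commutes with the extended adjoint action. This follows by continuity from the invariance of $\CW_q$.
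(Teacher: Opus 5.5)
Your proof is correct and follows the paper's argument. You inject $H/\hbar H$ into $\Hom_{\cU_\e}(V_\e,U_\e^{ev\wedge_\chi})$, which is finitely generated over $\CW_\e^{\wedge_{\uchi}}$ by Lemma \ref{lem: equal isogeny} (with Lemmas \ref{lem fg quotient imply fg}(b) and \ref{lem: Hom is fg over HC}), and then conclude via $\hbar$-adic completeness of $\CW_q^{\wedge_{\uchi}}$, separatedness of $H$, and Lemma \ref{lem fg quotient imply fg}(a); the preliminary reduction to projective $V_q$ is harmless but unnecessary, since left exactness of $\Hom_{\cU_q}(V_q,-)$ already gives that injection for arbitrary $V_q$.
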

\begin{proof} By Lemma \ref{lem: equal isogeny}, the space $\Hom_{\cU_q}(V_q, U_\e^{ev\wedge_\chi}) \cong \Hom_{\cU_\e}(V_q/\hbar V_q, U_\e^{ev\wedge_\chi})$ is finitely generated over $\CW_\e^{\wedge_{\uchi}}$. We have an exact sequence
\[ 0\rightarrow \Hom_{\cU_q}(V_q, U_q^{ev\wedge_\chi})\xrightarrow[]{\cdot \hbar} \Hom_{\cU_q}(V_q, U_q^{ev\wedge_\chi}) \rightarrow \Hom_{\cU_q}(V_q, U_\e^{ev\wedge_\chi}).\]
Hence $\Hom_{\cU_q}(V_q, U_q^{ev\wedge_\chi})/\hbar \Hom_{\cU_q}(V_q, U_q^{ev\wedge_\chi})$ is finitely generated over $\CW_\e^{\wedge_{\uchi}}$.  On the other hand, $\CW_q^{\wedge_{\uchi}}$ is complete and separated in the $\hbar$-adic topology, while $\Hom_{\cU_q}(V_q, U_q^{ev\wedge_\chi})$ is separated in the $\hbar$-adic topology since $U_q^{ev\wedge_\chi}$ is separated in the $\hbar$-adic topology. Therefore, by Lemma \ref{lem fg quotient imply fg}, the space $\Hom_{\cU_q}(V_q, U_q^{ev\wedge_\chi})$ is finitely generated over $\CW_q^{\wedge_{\uchi}}$.   
\end{proof}




\begin{proof}[Proof of Proposition \ref{prop: ff on diagonal bimods}]It is enough to show the following map is bijective
\begin{multline}\label{eq: full-faith}
\Hom_{U_q^{fin, \uchi}\rmod^{G_q}}(V_q\otimes_{\BC[[\hbar]]} U_q^{fin, \uchi}, W_q\otimes_{\BC[[\hbar]]} U_q^{fin, \uchi})\\ \rightarrow \Hom_{\Pbim^\Lambda(U_q^{ev\wedge_\chi})}(V_q\otimes_{\BC[[\hbar]]}U_q^{ev\wedge_\chi}, W_q\otimes_{\BC[[\hbar]]} U_q^{ev\wedge_\chi})
\end{multline}
for $V_q, W_q\in \Rep(\cU_q(\g))$ which are free of finite rank over $\BC[[\hbar]]$.

$\bullet$ Since both $V_q\otimes_{\BC[[\hbar]]}U_q^{ev\wedge_\chi}$ and $W_q\otimes_{\BC[[\hbar]]}U_q^{ev\wedge_\chi}$ are flat over $\BC[[\hbar]]$, by Remark \ref{rem: mor between flat Pbim}, the right hand side of \eqref{eq: full-faith} is equal to 
\begin{equation}\label{eq: Hom of bimod} \Hom_{U_q^{ev\wedge_\chi}\bimod^\Lambda}(V_q\otimes_{\BC[[\hbar]]}U_q^{ev\wedge_\chi}, W_q\otimes_{\BC[[\hbar]]} U_q^{ev\wedge_\chi}).
\end{equation}

$\bullet$ $V_q\otimes_{\BC[[\hbar]]} U_q^{ev\wedge_\chi}$ is also an object in $U_q^{ev\wedge_\chi}\rmod^{\cU_q, \Lambda}$, the category of $\Lambda$-graded $\cU_q$-equivariant right $U_q^{ev\wedge_\chi}$-modules. We claim that \eqref{eq: Hom of bimod} is equal to 
\begin{equation}
    \label{eq: Hom of equiv-mod}
    \Hom_{U_q^{ev\wedge_\chi}\rmod^{\cU_q, \Lambda}}(V_q\otimes_{\BC[[\hbar]]} U_q^{ev\wedge_\chi}, W_q\otimes_{\BC[[\hbar]]} U_q^{ev\wedge_\chi})
\end{equation}
We have a forgeful map from $\eqref{eq: Hom of equiv-mod}$ to $ \eqref{eq: Hom of bimod}$: indeed, any morphism in \eqref{eq: Hom of equiv-mod} is a morphism of $\Lambda$-graded $(U_q^{ev}, U_q^{ev\wedge_\chi})$-bimodules, which then is a morphism of $\Lambda$-graded $U_q^{ev\wedge_\chi}$-bimodules by continuity. Now this forgetful map is injecitve.  This map is also surjective as follows: Any bimodule map is equivariant under the adjoint $U_q^{ev}(\g)$-actions \footnote{The adjoint $U_q^{ev}(\g)$-action is defined by $x \cdot m =\sum x_{(1)}m S(x_{(2)})$ for $m \in M$ and $x\in U_q^{ev}(\g)$.}. Since both $V_q\otimes_{\BC[[\hbar]]} U_q^{ev\wedge_\chi}$ and $W_q\otimes_{\BC[[\hbar]]}U_q^{ev\wedge_\chi}$ are flat over $\BC[[\hbar]]$, the  $\cU_q(\g)$-actions on these two modules are uniquely recovered from the adjoint $U_q^{ev}(\g)$-actions. 

$\bullet$ Now to prove that  \eqref{eq: full-faith} is an isomorphism, it is enough to prove that the following map is an isomorphism:
\begin{equation}\label{eq: full-faith2}
\Hom_{\cU_q(\g)}(V_q, U_q^{fin, \uchi})\rightarrow \Hom_{\cU_q(\g)}(V_q, U_q^{ev\wedge_\chi})
\end{equation}
for $V_q \in \Rep(\cU_q(\g))$ which is free of finite rank over $\BC[[\hbar]]$ and has weights contained in the root lattice $Q$ (the $\Lambda$-grading on any object in $U_q^{ev\wedge_\chi}\rmod^{\cU_q, \Lambda}$ is used here where $U_q^{ev\wedge_\chi}$ is in the degree $0 \in \Lambda$.)

$\bullet$ Since $\Rep(\cU_q(\g))$ has enough projective objects, we can assume that $V_q$ is projective. Since both $U_q^{fin , \uchi}$ and $U_q^{ev\wedge_\chi}$ are flat over $\BC[[\hbar]]$, we have 
\[ \begin{tikzcd} 0 \arrow[r] & \Hom_{\cU_q}(V_q, U_q^{fin, \uchi}) \arrow[d] \arrow[r, "\cdot \hbar"] & \Hom_{\cU_q}(V_q, U_q^{fin, \uchi})\arrow[d] \arrow[r]& \Hom_{\cU_q}(V_q, U_\e^{fin, \uchi}) \arrow[d]\arrow[r]& 0\\
0 \arrow[r]& \Hom_{\cU_q}(V_q, U_q^{ev\wedge_\chi}) \arrow[r, "\cdot \hbar"]& \Hom_{\cU_q}(V_q, U_q^{ev\wedge_\chi}) \arrow[r] & \Hom_{\cU_q}(V_q, U_\e^{ev\wedge_\chi})    
\end{tikzcd}
\]
The rightmost vertical arrow is the same as
\begin{equation}\label{eq: quotient iso} \Hom_{\cU_\e}(V_q/\hbar V_q, U_\e^{fin, \uchi}) \rightarrow \Hom_{\cU_\e}(V_q/\hbar V_q, U_\e^{ev\wedge_\chi})
\end{equation}
which is an isomorphism by Lemma \ref{lem: equal isogeny}. Therefore, the second row is also a short exact sequence. 

Both $\Hom_{\cU_q}(V_q, U_q^{fin, \uchi})$ and $\Hom_{\cU_q}(V_q, U_q^{ev\wedge_\chi})$ are finitely generated over $\CW_q^{\wedge_{\uchi}}$ by Lemma \ref{lem: extension of left-actions} and Lemma \ref{lem: Hom(, Ucomplete at chi)}. Moreover, $\Hom_{\cU_q}(V_q, U_q^{ev\wedge_\chi})$ is flat over $\BC[[\hbar]]$. Therefore, isomorphism \eqref{eq: quotient iso} implies that \eqref{eq: full-faith2} is an isomorphism.
\end{proof}

\subsection{The functor $\bullet_\dag: O^{[0]}_q \rightarrow \CW_q^{\wedge_{0}}\Mod^\Lambda$}\
Here $\CW_q^{\wedge_0} \Mod^\Lambda$ is the category of $\Lambda$-graded $\CW_q^{\wedge_0}$-modules.


Let us recall the identification in Proposition  \ref{prop: F-center vs openBruhat}: 
\[ Z_{Fr}= \BC[\tE_\a^\ell K^{\ell \gamma(\a)}]_{\a \in \Delta_+} \otimes_{\BC} \bigoplus_{\lambda \in \ell P} \BC K^{2\lambda} \otimes_{\BC} \BC[\tF^\ell_\a K^{\ell\kappa(\a)}]_{\a \in \Delta_+} \cong \BC[U_{-}]\otimes_{\BC} \BC[T]\otimes_{\BC} \BC[U_+]\]

Let $\chi$ be a regular unipotent element in $U_+ \subset G_0$.  Then  $ \{ K^{2\ell \lambda}-1, \tE_\a^\ell~|~\lambda \in P, \a \in \Delta_+\} \subset \m_{\chi}$, the maximal ideal of $\chi \in G_0= \Spec Z_{Fr}$. Let $\mathfrak{I}$ be the ideal of $Z_{Fr}$ generated by $K^{2\ell \lambda}-1$ and $\tE_\a^\ell$ for all $\a \in \Delta_+$ and $\lambda \in P$. Let us consider the map 
\begin{equation}\label{eq: iota for tangent space}\iota: \mathfrak{I} \subset \m_\chi \rightarrow \m_\chi/\m_\chi^2.
\end{equation}
On $\m_\chi/\m^2_\chi$ , we have the skew-symmetric bilinear form $\m_\chi /\m^2_\chi \x \m_\chi/ \m^2_\chi \rightarrow Z_{Fr}/\m_\chi \cong \BC$ as follows: $\{ f+\m^2_\chi, g+\m^2_\chi\} =\{ f, g\} +\m_\chi$.

\begin{Lem}\label{lem: J poisson closed}(a) $\{ K^{2\ell \lambda} -1, \tE_\a^\ell\} = a K^{2\ell \lambda} \tE_\a^\ell \in \mathfrak{I}$ and $\{ \tE_\a^\ell, \tE_\b^\ell\} \in Z^>_{Fr, \a+\b} \subset \mathfrak{I}$ for  $a\in \BC$ and $\lambda \in P; \a, \b \in \Delta_+$. Therefore, $\mathfrak{I}$ is closed under the Poisson bracket of $Z_{Fr}$.

\noindent
(b) The ideal $\fJ$ acts as zero on any object in $O_\e$.
\end{Lem}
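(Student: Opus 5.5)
For part (a) I will compute the brackets directly from the definition \eqref{eq: poisson bracket}: lift to $U^{ev}_\CA(\g)$, take the commutator, divide by $v-\e$, and specialize. For $\{K^{2\ell\lambda}-1,\tE_\a^\ell\}=\{K^{2\ell\lambda},\tE_\a^\ell\}$, the commutation relation $K^\mu \tE_\a K^{-\mu}=v^{(\mu,\a)}\tE_\a$ gives
\[ [K^{2\ell\lambda},\tE_\a^\ell]=(v^{2\ell^2(\lambda,\a)}-1)\,\tE_\a^\ell K^{2\ell\lambda}. \]
Since $\e^{2\ell^2(\lambda,\a)}=1$ by assumption \ref{eq: assumption on l}, the scalar factor vanishes at $v=\e$. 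Dividing it by $v-\e$ and specializing gives a constant $a$, namely its derivative at $\e$. Because $Z_{Fr}$ is commutative, the result is $aK^{2\ell\lambda}\tE_\a^\ell$, which lies in $\fI$ since $\tE_\a^\ell\in\fI$.

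For $\{\tE_\a^\ell,\tE_\b^\ell\}$ I will use that the Poisson bracket preserves the $Q$-grading, because the $K$-weights add. By Proposition \ref{prop: Frobenius center}(a), $Z_{Fr}$ is Poisson closed. Its degree $\ell(\a+\b)$ part with $\tF$-degree zero is spanned by products of $\tE^\ell_\gamma$'s times $K$'s. To see that no $\tF$-terms occur, I will use the PBW basis (Lemma \ref{lem: PBW-basis for even part}): the commutator $[\tE_\a^\ell,\tE_\b^\ell]$ already lies in $U^{ev\geq}_\CA$. Moreover, with the twisted normalization it lies in $U^{ev>}_\CA$ up to $K$-factors in $2P^*$, i.e. in $Z^>_{Fr,\a+\b}\cdot\tZ^0_{Fr}$. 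Every element of positive $\tE$-degree in $\BC[\tE_\gamma^\ell]$ lies in the ideal generated by the $\tE_\gamma^\ell$, so the bracket lies in $\fI$.

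Poisson closedness of $\fI$ then follows from the Leibniz rule. $\fI$ is generated as an ideal by $\{K^{2\ell\lambda}-1,\tE^\ell_\a\}$, and the brackets among generators lie in $\fI$. The brackets among the $K$'s vanish because $U^{ev0}$ is commutative. By Leibniz, $\{fx,gy\}\in\fI$ for generators $x,y$ and arbitrary $f,g\in Z_{Fr}$.

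For part (b), let $M\in O_\e$. By the weight decomposition, $K^{2\ell\lambda}$ acts on $M_\mu$ by $\e^{2\ell(\lambda,\mu)}$, which equals $1$ by assumption \ref{eq: assumption on l}. Hence $K^{2\ell\lambda}-1$ kills $M$. Next, $\tE_\a^\ell$ acts locally nilpotently because the weights are bounded above. It is also central, so every element of $\ker\tE_\a^\ell$ generates a submodule killed by $\tE_\a^\ell$. I will argue directly instead: inside $U_\e^{mix}$, $\tE_\a^\ell$ equals a nonzero multiple of $\tE_\a^{(\ell)}\cdot(\ell)_{\e_\a}!$, and $(\ell)_{\e_\a}!=0$ at the root of unity. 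So $\tE_\a^\ell=0$ in $U_\e^{mix}$ and acts by zero on $M$.

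The main obstacle is making this vanishing precise for non-simple $\a$. For simple $\a_i$ it is immediate from the definition of divided powers. For general $\a$ I will use that $\tE_\a$ is a braid-group image of $\tE_{i}$ up to normalization, and that Lusztig's $T_i$ preserve the Lusztig form. The image of $\tE_\a^\ell$ in the mixed form is therefore $(\ell)!\,\tE_\a^{(\ell)}$ with $\tE_\a^{(\ell)}\in\cU^>_\e$, and hence it is zero.
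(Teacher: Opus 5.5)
Your proposal is correct and follows essentially the same route as the paper. For (a) you compute $[K^{2\ell\lambda},\tE_\a^\ell]$ from the $K$--$\tE$ commutation relation and note that $[\tE_\a^\ell,\tE_\b^\ell]$ stays in the positive part, so the bracket lands in the positive-$\tE$-degree part of $Z_{Fr}$ and hence in $\mathfrak{I}$; for (b) you use that $K^{2\ell\lambda}$ acts by $\e^{2\ell(\lambda,\mu)}=1$ on weight spaces and that $\tE_\a^\ell=(\ell)!\,\tE_\a^{(\ell)}$ vanishes in the mixed form, spelling out the Leibniz-rule step and the braid-group justification for non-simple roots that the paper leaves implicit.
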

\begin{proof}(a) In $U_q^{ev}(\g)$, we have $[K^{2\ell \lambda}-1, \tE_\a^\ell]=(q^{2\ell(2\lambda, \a)}-1)K^{2\ell \lambda} \tE_\a^\ell$. On the other hand, $[\tE_\a^\ell, \tE_\b^\ell] \in U^{ev>}_{\e, \a+\b}$. Therefore, two inclusions follow by the construction of the Poisson bracket on $Z$ and $Z_{Fr}$ in \eqref{eq: poisson bracket}. 

\noindent
(b) Let $M\in O_\e$ then $\tE_\a^\ell m=[\ell]_\e!~\tE_\a^{(\ell)} m=0$ for all $m \in M$. On the other hand, for any weight vector $m \in M_\mu$ then under the assumption \ref{eq: assumption on l}, $(K^{2\ell \lambda} -1)m=(\e^{(2\ell \lambda, \mu)}-1)m=0$. 
\end{proof}
 Recall that $N$ is the number of positive roots of $\g$. By Lemma \ref{lem: J poisson closed}, the image of $\iota$ in \eqref{eq: iota for tangent space} is an isotopic subspace of dimension at least $ N +r$ in $\m_\chi /\m^2_\chi$. On the other hand $\dim_\BC \m_\chi /\m^2_\chi =2 N + r$. The natural surjective map  from  $\m_\chi /\m^2_\chi$ to the cotangent space of the conjugacy class at $\chi$ is compatible with the skew-symmetric bilinear forms on these two spaces. Let $V$ be a maximal symplectic subspace in $\m_\chi/\m^2_\chi$ which is a lift  of the cotangent space. Since $\chi$ is a regular element, the maximal symplectic subspace of $\m_\chi /\m^2_\chi$ is of dimension $2N$. Therefore, the image of $\iota$  in \eqref{eq: iota for tangent space} is the maximal isotropic subspace of $\m_\chi /\m^2_\chi$, and the intersection $\fu:= \text{Im}(\iota) \cap V$ is a Lagrangian subspace of $V$.

\begin{Prop}\label{prop: factorization of complete poisson}Let $(A, \m)$ be a Noetherian complete local Poisson algebra. Let $\n\subset \m$ be an ideal closed under the Poisson bracket. Then the image  $\mathfrak{b}$ of  the map $\theta: \n \rightarrow \m \rightarrow \m/\m^2$ is an isotropic subspace of $\m/\m^2$.

Let $V \subset \m /\m^2$ be  a symplectic subspace of $\m/\m^2$ such that  $\fu:= V\cap \mathfrak{b}$ is a Lagrangian subspace of $V$. Then we can find a lift $V\rightarrow A$ so that $\fu \hookrightarrow \n$ and it extends to a Poisson embedding $\BC[[V]] \hookrightarrow A$.
\end{Prop}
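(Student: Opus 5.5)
The isotropy of $\mathfrak{b}$ is formal; the lift will be built as a relative Darboux theorem, following the proof of \cite[Proposition 3.3]{Ka06} (already used in Lemma \ref{lem: Decompose Poisson center}) while keeping track of $\n$.

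\emph{Isotropy.} The form on $\m/\m^2$ is $\{f+\m^2,g+\m^2\}=\{f,g\}+\m$. This is well defined because $\{\m,\m^2\}\subset\m$ by the Leibniz rule. For $f,g\in\n$ we have $\{f,g\}\in\n\subset\m$, since $\n$ is Poisson closed. Hence $\theta(\n)=\mathfrak{b}$ pairs to zero with itself, i.e.\ $\mathfrak{b}$ is isotropic.

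\emph{Setup for the lift.} Put $\dim V=2n$. Since $\fu$ is Lagrangian in $V$, I will choose a symplectic basis $x_1,\dots,x_n,y_1,\dots,y_n$ of $V$ with $\fu=\mathrm{span}(x_i)$, so that
\[\{x_i,x_j\}=\{y_i,y_j\}=0,\qquad \{x_i,y_j\}=\delta_{ij}\]
modulo $\m$. The aim is to lift these to elements $X_i\in\n$ and $Y_i\in\m$ satisfying these relations exactly in $A$. Then $\BC[[V]]\to A$, $x_i\mapsto X_i$, $y_i\mapsto Y_i$, is a Poisson homomorphism. It is injective because the differentials of the images are linearly independent in $\m/\m^2$ and $A$ is complete Noetherian: the images of $X_i,Y_i$ form part of a regular system of parameters of the associated graded, so no nonzero power series relation among them can hold.

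\emph{Inductive construction.} I will proceed one canonical pair at a time, as in the classical Darboux--Weinstein argument.
\begin{enumerate}
\item Pick any lift $X_1\in\n$ of $x_1$; this is possible because $x_1\in\mathfrak{b}=\theta(\n)$. Its Hamiltonian derivation $\{X_1,\cdot\}$ is nonzero modulo $\m$: there is $g\in\m$ with $\{X_1,g\}$ a unit.
\item Solve $\{X_1,Y_1\}=1$ with $Y_1\equiv y_1$. First take $Y_1'=g\{X_1,g\}^{-1}$ corrected to lie over $y_1$. Then kill the error $\{X_1,Y_1'\}-1\in\m$ by successive approximation in the $\m$-adic topology, which converges by completeness.
\item Pass to the Poisson centralizer $A_1=\{a:\{X_1,a\}=\{Y_1,a\}=0\}$. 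By the standard splitting argument, $A\cong\BC[[X_1,Y_1]]\widehat\otimes A_1$ as Poisson algebras: flow along the two commuting Hamiltonian vector fields and use completeness to define a projection $A\to A_1$.
\item Set $\n_1:=\n\cap A_1$, or better its image under the projection, and verify that $\n_1$ is a Poisson closed ideal of $A_1$ whose image in $\m_1/\m_1^2$ still contains $x_2,\dots,x_n$. Then induct on $n$.
\end{enumerate}

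\emph{Main obstacle.} The hard step is the compatibility of $\n$ with the splitting in step 3, namely that the remaining $x_j$ still admit lifts inside $\n_1$. The key point is that $\n$ is stable under the Hamiltonian derivations $\{X_1,\cdot\}$ and $\{Y_1,\cdot\}$; the latter holds because $\n$ is an ideal closed under brackets with all of $A$, which is how ``closed under the Poisson bracket'' must be read here (as in Lemma \ref{lem: J poisson closed}, where $\mathfrak{I}$ is an ideal). Given this, the exponentiated flows used to build the projection $A\to A_1$ preserve $\n$, so they send a lift $X_j\in\n$ of $x_j$ to an element of $\n\cap A_1$ with the same class modulo $\m^2$, up to corrections in $\fu$ that can be subtracted. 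I would verify this by writing the projection explicitly as $a\mapsto\sum_k\frac{(-Y_1)^k}{k!}\mathrm{ad}_{X_1}^k(a)$, made symmetric in the two directions, and checking convergence and $\n$-stability term by term.
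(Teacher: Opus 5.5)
Your outline is the paper's: lift $x_1$ into $\n$, solve $\{X_1,Y_1\}=1$, split $A\cong\BC[[X_1,Y_1]]\widehat{\otimes}A_1$ by \cite[Proposition 3.5]{Ka06}, project the remaining lifts into $A_1$, and induct. Your isotropy argument is also fine. The gap is in the step you yourself single out as the crux: it rests on a reading of the hypothesis that cannot be right.

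You need $\n$ to be stable under $\{Y_1,\cdot\}$, and you get this by reading ``closed under the Poisson bracket'' as $\{A,\n\}\subset\n$. That contradicts your own construction, since $X_1\in\n$ and $\{Y_1,X_1\}=-1\notin\m\supset\n$. More generally, suppose $\{A,\n\}\subset\n\subset\m$. Then every element of $\mathfrak{b}$ pairs to zero with all of $\m/\m^2$, so $\fu=V\cap\mathfrak{b}$ lies in the radical of the nondegenerate form on $V$. Hence $\fu=0$, and since $\fu$ is Lagrangian, $V=0$; the statement would be empty. The actual hypothesis is only $\{\n,\n\}\subset\n$. That is all Lemma \ref{lem: J poisson closed} provides for $\mathfrak{I}$, and $\mathfrak{I}$ cannot be a Poisson ideal, because in that application $\fu$ is Lagrangian in a $2N$-dimensional $V$.

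The missing idea is that stability under $\{X_1,\cdot\}$ alone suffices.
\begin{itemize}
\item Your half-projection $P(a)=\sum_k\frac{(-Y_1)^k}{k!}\{X_1,\cdot\}^k(a)$ uses only brackets with $X_1\in\n$. So $P(\n)\subset\n$, using that $\n$ is an ideal and is $\m$-adically closed ($A$ being Noetherian).
\item The other half, $Q(b)=\sum_k\frac{X_1^k}{k!}\{Y_1,\cdot\}^k(b)$, does not preserve $\n$ term by term for the reason you give. However, every term with $k\geq 1$ lies in $X_1A\subset\n$.
\item The two derivations commute because $\{X_1,Y_1\}=1$. Hence $QP(\n)\subset\n\cap A_1$.
\end{itemize}
This is exactly the paper's argument: in its series $c=\sum\frac{f^ig^j}{i!j!}c_{ij}$, the $i=0$ terms involve only iterated brackets with $f\in\n$, and the $i\geq 1$ terms carry the factor $f^i\in\n$.

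No ``correction in $\fu$'' is needed either. Since $x_j$ is skew-orthogonal to $x_1,y_1$, both $\{X_1,c'\}$ and $\{Y_1,c'\}$ lie in $\m$, so $QP(c')\equiv c'$ modulo $\m^2$.

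Two smaller points.
\begin{itemize}
\item Linear independence in $\m/\m^2$ does not give injectivity of $\BC[[V]]\to A$ when $A$ is not regular. Injectivity should come from the splitting. Alternatively, apply $\{X_i,\cdot\}$ and $\{Y_i,\cdot\}$, which act on $F(X,Y)$ as $\partial/\partial Y_i$ and $-\partial/\partial X_i$, to a putative relation $F(X,Y)=0$ until a unit appears.
\item In your Step 2, organize the correction in powers of a lift $g$ of $y_1$, as in the paper's $g=\sum_i\frac{a_i}{i!}(g')^i$. The derivation $\{X_1,\cdot\}$ lowers the $\m$-adic filtration, so a correction scheme measured only $\m$-adically need not improve the error.
\end{itemize}
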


\begin{proof} Let $\{x_1, \dots, x_d\}$ be a $\BC$-basis of $\fu$ and $\{x_1, \dots, x_d, y_1, \dots, y_d\}$ be a $\BC$-basis of $V$ such that $\{x_i, y_j\}=\delta_{ij}, \{x_i, x_j\}=\{y_i, y_j\}=0$ for $1\leq i,j \leq d$.

{\it Step 1:} Let $f\in \n$ such that $\theta(f)=x_1$. There is $g\in \m$ such that $\theta(g)=y_1$ and $\{f, g\}=1$. Indeed, let $g'\in \m$ such that $\theta(g')=y_1$, then we can choose $g= \sum_{i=1}^\infty \frac{a_i}{i!}(g')^i$ with $a_1=\frac{1}{\{f, g'\}}, a_{i+1}=-\frac{\{f, a_i\}}{\{f, g'\}}$ for $i \geq 2$. The element $g$ is well-defined since  $\{ f, g'\} \in 1+\m$  is invertible and $A$ is complete in the $\m$-adic topology.

So we have an embedding of complete Poisson algebras $\BC[[f,g]]\rightarrow A$. Let $A_1:=\{ a\in A~|~ \{f, a\}=\{g, a\}=0\}$. As in \cite[Proposition 3.5]{Ka06}, there is an isomorphism of complete local Poisson algebras 
\begin{equation}\label{eq: factorization}\BC[[f,g]]\widehat{\otimes} A_1 \rightarrow A,
\end{equation}
and $A_1$ is a Noetherian complete local Poisson algebra with the maximal ideal $\m_1=\m \cap A_1$. 

{\it Step 2:} We will show that the argument in Step 1 can be applied to $(A_1, \m_1, \n_1:=\n \cap A_1)$. Hence, the lemma follows by induction. 

By \eqref{eq: factorization}, the map $\m_1/(\m_1)^2 \rightarrow \m/\m^2$ is injective onto the skew-orthogonal subspace of $\BC x_1 \oplus \BC y_1$, in particular, $x_2, \dots, x_d, y_2, \dots, y_d$ are contained in $\m_1/(\m_1)^2$. So it is left to show that $x_2, \dots, x_d$ is contained in the image of $\n_1$ under the map $\theta$. Let $c' \in \n$ such that $\theta(c')=x_2$. Let us consider the following element: 
\[ c=c'+ \sum_{i,j \geq 0, (i,j) \neq (0,0)} \frac{f^i g^j}{i!j!}c_{ij},\]
\[ c_{0,1}=\{c', f\}, \qquad c_{1,0}=-\{ c', g\}, \qquad c_{i,j}=\{c_{i,j-1}, f\}=-\{ c_{i-1, j}, g\},\]
then $c\in A_1$. Since $A$ is Noetherian, $\n$ is complete in the $\m$-adic topology. The element $c_{0,i}\in \n$ since $\n$ is closed under Poisson bracket. Therefore, $c \in \n$. This shows that $x_2$ is contained in the image of $\n_1$ under the map $\theta$, e.g, $\theta(c)=x_2$. The proofs for $x_3, \dots , x_d$ are the same.
\end{proof}
Recall the map  $\phi_\hbar: R_\hbar \xrightarrow[]{\hbar} Z^{\wedge_\chi}$. The following lemma refines Proposition \ref{prop: Decomposition of completions}. 
\begin{Lem}[cf. Lemma 4.1\cite{IL12}]\label{lem: lift V to Rh}We can find a lift $\iota: V\rightarrow R_\hbar$ so that $\fu \hookrightarrow  \phi_\hbar^{-1}(\mathfrak{I})$ and $[\iota(v_1), \iota(v_2)]=\hbar\{v_1, v_2\}$ for $v_1, v_2\in V$. The lift gives the decomposition $R_\hbar \cong \CA_q\widehat{\otimes}_{\BC[[\hbar]]} \CW_q^{\wedge_{\uchi}}$. 
\end{Lem}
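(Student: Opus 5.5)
The plan is to combine Proposition \ref{prop: factorization of complete poisson} at the classical level with the quantization argument of \cite[Lemma 4.1]{IL12}, as in the proof of Proposition \ref{prop: Decomposition of completions}.

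\emph{Classical step.} Apply Proposition \ref{prop: factorization of complete poisson} to $A=Z^{\wedge_\chi}$. This is a finite product of the Noetherian complete local Poisson algebras $Z^{\wedge_\xi}$, so we may work in one factor, or directly with $Z_{Fr}^{\wedge_\chi}$. For the ideal we take $\n=\fI Z^{\wedge_\chi}$; it is Poisson closed by Lemma \ref{lem: J poisson closed}(a), and completing preserves this. The discussion preceding the proposition shows that $\text{Im}(\iota)\cap V=\fu$ is Lagrangian in $V$. The proposition then gives a lift $V\to Z^{\wedge_\chi}$ with $\fu$ mapping into $\n$, extending to a Poisson embedding $\BC[[V]]\hookrightarrow Z^{\wedge_\chi}$. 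The Poisson centralizer of its image is $\CW_\e^{\wedge_{\uchi}}$ (Corollary \ref{cor: Decompose Poisson center}), and this gives $Z^{\wedge_\chi}\cong \BC[[V]]\widehat{\otimes}\CW_\e^{\wedge_{\uchi}}$.

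\emph{Quantum step.} Lift the symplectic basis $x_1,\dots,x_d,y_1,\dots,y_d$ (with $x_i\in\fu$) successively to $R_\hbar$, imitating the proof of Proposition \ref{prop: factorization of complete poisson} with $\hbar^{-1}[\,,\,]$ in place of $\{\,,\,\}$.

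\begin{enumerate}
\item Choose $\hat x_1\in\phi_\hbar^{-1}(\n)$ lifting $x_1$, which is possible because $\phi_\hbar^{-1}(\n)\supset \hbar R_\hbar$ is the full preimage.
\item Choose $\hat y_1$ with $[\hat x_1,\hat y_1]=\hbar$. This is done by successive $\hbar$-adic and $\m$-adic corrections, using that $\hbar^{-1}[\hat x_1,\cdot]$ reduces to $\{x_1,\cdot\}$, whose derivation equations can be solved in the complete algebra as in Step 1 of that proof.
\item The pair $\hat x_1,\hat y_1$ generates a copy of a completed Weyl algebra in one variable pair. By the quantum Kaledin-type splitting of \cite[Lemma 4.1]{IL12}, $R_\hbar$ is the completed tensor product of this Weyl algebra with its centralizer $R_{\hbar,1}$, which is again a formal deformation of the classical centralizer $A_1$.
\item Induct on $d$, replacing $\n$ by $\phi_\hbar^{-1}(\n)\cap R_{\hbar,1}$.
\end{enumerate}

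The main obstacle is the inductive step. We need the lifts of $x_2,\dots,x_d$ into the centralizer to still lie in $\phi_\hbar^{-1}(\fI)$. Classically this was the element $c=c'+\sum \frac{f^ig^j}{i!j!}c_{ij}$. Quantum mechanically we would use the analogous normally ordered expression built from iterated $\hbar^{-1}[\,,\,]$-brackets. The point to check is that $\phi_\hbar^{-1}(\n)$ is stable under $\hbar^{-1}[\hat x_1,\cdot]$ and $\hbar^{-1}[\hat y_1,\cdot]$. This holds because its image mod $\hbar$ is Poisson closed, and because $[\phi_\hbar^{-1}(\n),R_\hbar]\subset \hbar R_\hbar$ together with the condition $\hbar R_\hbar\subset \phi_\hbar^{-1}(\n)$. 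Convergence follows from $\m$-adic and $\hbar$-adic completeness (Lemma \ref{lem: completion of Uev}).

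\emph{Identification.} After all $2d$ lifts are made, the centralizer of $\iota(V)$ in $R_\hbar$ contains the image of $\CW_q^{\wedge_{\uchi}}$. Both are flat formal deformations of $\CW_\e^{\wedge_{\uchi}}$, so they coincide. The map $\CA_q^\wedge\widehat{\otimes}\CW_q^{\wedge_{\uchi}}\to R_\hbar$ is then an isomorphism by the same $\hbar$-adic Nakayama argument as in Proposition \ref{prop: Decomposition of completions}.
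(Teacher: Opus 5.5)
The step you flag as the crux rests on a false claim: $\phi_\hbar^{-1}(\n)$ is \emph{not} stable under $\hbar^{-1}[\hat y_1,\cdot\,]$. For example, $\hat x_1\in\phi_\hbar^{-1}(\n)$, but $\hbar^{-1}[\hat y_1,\hat x_1]=-1$ is a unit, while $\phi_\hbar^{-1}(\n)$ is a proper ideal. Closedness of $\n$ under the bracket only means $\{\n,\n\}\subset\n$; classically $\{g,f\}=-1\notin\n$ too. So reducing mod $\hbar$ gives stability under $\hbar^{-1}[a,\cdot\,]$ only for $a\in\phi_\hbar^{-1}(\n)$, and the facts $[\phi_\hbar^{-1}(\n),R_\hbar]\subset\hbar R_\hbar$ and $\hbar R_\hbar\subset\phi_\hbar^{-1}(\n)$ do not change this. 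Your induction can be repaired by the mechanism of the classical element $c$ in Proposition \ref{prop: factorization of complete poisson}. Put $D_x=\hbar^{-1}[\hat x_1,\cdot\,]$ and $D_y=\hbar^{-1}[\hat y_1,\cdot\,]$; they commute and act as $\partial_y$ and $-\partial_x$ on normally ordered symbols. The projection of $c'$ to the centralizer is then
\[
c=\sum_{i,j\ge 0}\frac{(-1)^j}{i!\,j!}\,\hat x_1^{\,i}\,\bigl(D_y^{\,i}D_x^{\,j}c'\bigr)\,\hat y_1^{\,j}.
\]
Every term with $i\ge1$ lies in the two-sided ideal $\phi_\hbar^{-1}(\n)$ because of the factor $\hat x_1$. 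The terms with $i=0$ involve only $D_x$, which does preserve $\phi_\hbar^{-1}(\n)$. So only stability under $\hbar^{-1}[\hat x_1,\cdot\,]$ is needed.

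More to the point, the quantum re-induction is unnecessary. The lemma is meant to follow by combining Proposition \ref{prop: factorization of complete poisson}, which gives the classical Poisson embedding with $\fu$ landing in $\fI$, with the quantum lifting used in Proposition \ref{prop: Decomposition of completions}. That lifting corrects arbitrary lifts of the classical $x_i,y_i$ only by elements of $\hbar R_\hbar$. At each order, the defect in the Weyl relations is a $2$-cocycle for the commuting derivations $\{v,\cdot\}$, $v\in V$, on $Z^{\wedge_\chi}\cong\BC[[V]]\widehat{\otimes}\CW_\e^{\wedge_{\uchi}}$, and the formal Poincar\'e lemma makes it a coboundary. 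Hence the quantum lift reduces mod $\hbar$ to the classical embedding. Since $\phi_\hbar^{-1}(\fI)$ is a full preimage, $\iota(\fu)\subset\phi_\hbar^{-1}(\fI)$ is then automatic. In your version, item 2 allows $\m$-adic corrections, so your quantum lift no longer reduces to the classical one. Your classical step is therefore effectively unused, which is why you had to re-establish the ideal condition at the quantum level. Your final identification of the centralizer with $\CW_q^{\wedge_{\uchi}}$ is fine and is the argument of Proposition \ref{prop: Decomposition of completions}.
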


Recall the decomposition $U_q^{ev\wedge_\chi} \cong \Mat_{\sd}(\BC) \otimes R_\hbar$  and the matrix unit $e= E_{11} \in \Mat_{\sd}(\BC)$. By Lemma \ref{lem: extension action on cat O}, any $M\in O_q$ is a finitely generated $U_q^{ev, \uchi}$-module.  For any $M \in O_q$, let $M^{\wedge_\chi}:= U_q^{ev\wedge_\chi}\otimes_{U^{ev, \uchi}_q} M$. Then  $eM^{\wedge_\chi}$ is a finitely generated module over $R_\hbar$. Fix a lift $V\rightarrow R_\hbar$ as in Lemma \ref{lem: lift V to Rh}.

\begin{Lem} \label{lem: action of u}(a) For any $M \in O_q$, there is  a bilinear map $\{ \;, \; \}: \fu \x eM^{\wedge_\chi} \rightarrow eM^{\wedge_\chi}$ satisfying
\begin{itemize}
 \item $\hbar \{ u, m\} =um$ for $u \in \fu $ and $ m \in eM^{\wedge_\chi}$.
 \item $\{u, xm\}=\{u,x\}m+x \{ u, m\}$ for $u\in \fu, x \in R_\hbar $ and $ m \in eM^{\wedge_\chi}$.
\end{itemize}
We call the bilinear map $\{\;,\;\}$ on $eM^{\wedge_\chi}$ is a $\fu$-Poisson structure on $eM^{\wedge_\chi}$.  

For any $f\in \Hom_{O_q}(M,N)$, the map $f: eM^{\wedge_\chi} \rightarrow e N^{\wedge_\chi}$ satisfies $f\{u,m\}=\{u, f(m)\}$. We call the map with latter property a Poisson map.

\noindent
(b) For any $M\in O_q$, we have a decomposition of an $R_\hbar:= \CA^{\wedge}_q \widehat{\otimes}_{\BC[[\hbar]]} \CW_q^{\wedge_{\uchi}}$-module
\[ eM^{\wedge_\chi}\cong (\CA^\wedge_q/\CA^\wedge_q\fu)\widehat{\otimes}_{\BC[[\hbar]]} M_\dag,\]
here $M_\dag:=\{ m \in eM^{\wedge_\chi}~|~ \{ u, m\}=0 ~\forall~ u \in \fu\}$. 
\begin{Rem}\label{rem: poi-tensor on O}The map $\pi: (\CA^\wedge_q/\CA^\wedge_q\fu)\widehat{\otimes}_{\BC[[\hbar]]}M_\dag\rightarrow M$ is defined by $x\otimes m\mapsto \underline{x}m$ for $\underline{x}\in \CA^\wedge_q$ representing $x\in \CA^\wedge_q/\CA^\wedge_q \fu$ and $m\in M_\dag$. This is well-defined because $\fu M_\dag=0$. We also have a $\fu$-Poisson structure on pairing $(\CA^\wedge_q/\CA^\wedge_q\fu)\widehat{\otimes}_{\BC[[\hbar]]}M_\dag$ defined by $\{u, x\otimes m\}=\{u, x\}\otimes m$ for $u\in \fu, x\in \CA^\wedge_q/\CA^\wedge_q\fu$ and $m\in M_\dag$. Then the map $\pi$ is a Poisson map.
\end{Rem}
\end{Lem}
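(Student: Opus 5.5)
The idea is to exploit the fact that $\phi_\hbar^{-1}(\mathfrak{I})$ acts on $eM^{\wedge_\chi}$ with image divisible by $\hbar$. The bracket is then defined by dividing by $\hbar$, and the decomposition in (b) follows from a Kashiwara-type lemma for the Weyl algebra relative to the Lagrangian $\fu$.

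\textbf{Reduction to $\hbar$-flat modules.} For (a), I first treat $M\in O_q$ flat over $\BC[[\hbar]]$, for instance Verma modules $\Delta_q(\lambda)$. By Lemma \ref{lem: J poisson closed}(b), $\mathfrak{I}$ kills every object of $O_\e$. Hence for $u\in\fu\subset\phi_\hbar^{-1}(\mathfrak{I})$, lifted via Lemma \ref{lem: lift V to Rh}, we get $uM\subset\hbar M$. By exactness of completion (Remark \ref{rem: completion of U(ev, chi)}) the same holds on $eM^{\wedge_\chi}$. If $eM^{\wedge_\chi}$ is $\hbar$-torsion free, we set $\{u,m\}:=\hbar^{-1}um$. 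The two listed identities are then formal consequences of $[u,x]=\hbar\{u,x\}$ in $R_\hbar$, which holds because $R_\hbar$ is flat. Morphisms of $O_q$ commute with $u$, so they commute with $\hbar^{-1}u$ on flat targets.

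\textbf{General $M$.} Objects of $O_q$ are not flat in general, so I would take a presentation $P_2\to P_1\to M\to 0$ by flat objects of $O_q$. Suitable ones are direct sums of $\pr$-projections of $T_q(\nu)\otimes\Delta_q(\lambda)$, or Verma-filtered objects. I then define the bracket on $eM^{\wedge_\chi}$ by lifting, exactly as in Step 1 of Lemma \ref{lem: bullet-loc functor}. Independence of the presentation and compatibility with morphisms follow by the fibered-product and lifting arguments used there.

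\textbf{Decomposition (b).} Put $N=eM^{\wedge_\chi}$; this is a finitely generated $R_\hbar$-module. Choose the symplectic basis $x_1,\dots,x_d$ of $\fu$ and $y_1,\dots,y_d$ of $V$ as in Proposition \ref{prop: factorization of complete poisson}. The operators $\partial_i:=\{x_i,\cdot\}$ satisfy $[\partial_i,y_j]=\delta_{ij}$ and commute with $\CW_q^{\wedge_{\uchi}}$. I would construct the projector
\[
\Pi=\sum_{k}\frac{(-1)^{|k|}}{k!}\,y^{k}\partial^{k}
\]
onto $M_\dag=\bigcap_i\ker\partial_i$, which is the standard Euler/Kashiwara argument. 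The surjection $\CA_q^\wedge/\CA_q^\wedge\fu\,\widehat{\otimes}\,M_\dag\to N$, $y^k\otimes m\mapsto y^km$, has inverse $n\mapsto\sum_k \frac{y^k}{k!}\otimes\Pi(\partial^kn)$. Here $\CA_q^\wedge/\CA_q^\wedge\fu$ is identified with $\BC[[\hbar]][[y_1,\dots,y_d]]$.

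\textbf{Main obstacle: convergence.} The expected main obstacle is proving that these series converge. This requires that $\partial_i$ be topologically nilpotent in a suitable sense, namely that $\partial^k n\to 0$ in the $(\m_\chi,\hbar)$-adic topology. Note that $\partial_i$ does not raise filtration degree, since it lowers $\m$-degree by one while the $y^k$ raise it. I would try first to control this by induction on $d$, splitting off one pair $(x_1,y_1)$ at a time as in Step 1 of Proposition \ref{prop: factorization of complete poisson}. For that step I would use the finite generation of $N$ and the fact that $N/\m N$ is finite dimensional, so that an Artin–Rees style argument reduces to $N$ being killed by a power of the ideal generated by $\hbar,\fu$ modulo $\m$-adic tails.

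Uniqueness of the decomposition follows from injectivity: any relation $\sum y^k\otimes m_k\mapsto 0$ yields $m_k=0$ after applying $\Pi\partial^k$.
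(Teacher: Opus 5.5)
Your part (a) is the paper's argument. Lemma~\ref{lem: J poisson closed}(b) gives $\fu\, eM^{\wedge_\chi}\subset\hbar\, eM^{\wedge_\chi}$. One sets $\{u,m\}=\hbar^{-1}um$ on $\hbar$-flat objects and handles a general $M$ through a flat presentation, using the fibered-product trick for independence of choices. The paper takes the flat covers to be projectives of the truncated categories $O_q^{\leq\nu}$, which are $\hbar$-flat by \cite{Si24} and also let one lift morphisms. You should do the same, because you have not shown that $\pr$-projections of $T_q(\nu)\otimes\Delta_q(\lambda)$ surject onto an arbitrary $M$. For (b) the paper only cites \cite[Lemma 4.2]{IL15}. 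Your projector $\Pi$ with the Taylor inverse is the right mechanism, but your convergence discussion is wrong.

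You assert that the series need $\partial^kn\to0$, i.e.\ topological nilpotence of $\partial_i=\{x_i,\cdot\}$, and you plan to prove this. It is false, so that step would fail. On $\CA^\wedge_q/\CA^\wedge_q\fu\cong\BC[[\hbar]][[y_1,\dots,y_d]]$ with the structure of Remark~\ref{rem: poi-tensor on O}, $\partial_1$ acts as $\partial/\partial y_1$, and $n=\sum_{j\geq0}y_1^j/j!$ satisfies $\partial_1^kn=n$ for all $k$. By the very decomposition being proved, the same happens in $eM^{\wedge_\chi}$ whenever it is nonzero. Your fallback, that $N$ is killed by a power of the ideal generated by $\hbar$ and $\fu$, cannot hold either: on an $\hbar$-flat $N$ that ideal acts with image exactly $\hbar N$.

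No such property is needed, because the factors $y^k$ alone force convergence. Put $\mathcal{J}:=\phi_\hbar^{-1}(\m_\chi Z^{\wedge_\chi})\subset R_\hbar$, which contains $\hbar$, $x_i$ and $y_i$. Then $N=eM^{\wedge_\chi}$ is $\mathcal{J}$-adically complete and separated (Remark~\ref{rem: completion of U(ev, chi)}, Lemma~\ref{lem: module over completion}).
\begin{itemize}
\item For every $n\in N$ one has $\partial^kn\in N$, hence $y^k\partial^kn\in\mathcal{J}^{|k|}N$, so $\Pi(n)$ converges.
\item The series $\sum_k\frac{y^k}{k!}\otimes\Pi(\partial^kn)$ converges because its $k$-th term has $y$-degree $|k|$.
\item Applying $\partial_j$ term by term is justified because $\partial_j(\mathcal{J}^kN)\subset\mathcal{J}^{k-1}N$. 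This follows from the Leibniz rule in (a) together with $\{x_j,\mathcal{J}^k\}\subset\mathcal{J}^{k-1}$.
\item You also need $[\partial_i,\partial_j]=0$, which comes from $[x_i,x_j]=0$ on flat covers.
\end{itemize}
With these in hand, $\partial_j\Pi=0$, $\Pi|_{M_\dag}=\mathrm{id}$ and $\sum_k\frac{y^k}{k!}\Pi\partial^k=\mathrm{id}$ are the usual formal computations. No induction on $d$ or Artin--Rees argument is required.
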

\begin{proof}(a) For any $M\in O_q$, the ideal $\mathfrak{I}$ acts as zero on $M/\hbar M$ by Lemma \ref{lem: J poisson closed}.b), hence, $u eM^{\wedge_\chi}  \in \hbar eM^{\wedge_\chi}$ for all $u \in \fu$. 

For any $M\in O_q$, we can find a $\BC[[\hbar]]$-flat object $N\in O_q$ with a surjective map $N \twoheadrightarrow M$. Indeed, the truncated category  $O_q^{\leq \nu}$ has enough projectives, see \cite[$\mathsection 2.3.2$]{Si24}. On the other hand, $M=\oplus_\lambda  M^\lambda$ for $M^\lambda \in O_q^{[\lambda]}$. Each  $M^\lambda$ belongs to $ O_q^{\leq \nu}$ for some $\nu$ so that  we can find a projective $P^\lambda \in O_q^{\leq \nu}$ and a surjective map $P^\lambda \twoheadrightarrow M^\lambda$. Such $P^\lambda$ is flat over $\BC[[\hbar]]$ by {\it loc.cit}. Hence, we can choose $N=\oplus P^\lambda$ with the surjective map $\oplus_\lambda P^\lambda \twoheadrightarrow \oplus M^\lambda =M$.

So we can find $\BC[[\hbar]]$-flat objects $N_2, N_1\in O_q$ with an exact sequence $N_2\rightarrow N_1\rightarrow M \rightarrow 0$. This gives us  an exact sequence by Remark \ref{rem: completion of U(ev, chi)}
\[ eN_2^{\wedge_\chi} \xrightarrow[]{\phi} e N_1^{\wedge_\chi} \xrightarrow[]{\pi} eM^{\wedge_\chi} \rightarrow 0.\]
Since $N_i$ for $i=1,2$ is flat over $\BC[[\hbar]]$, the completion $N_i^{\wedge_\chi}$ is also flat over $\BC[[\hbar]]$, hence so is $eN_i^{\wedge_\chi}$. Let us define $\{\;,\;\}: \fu \x eM^{\wedge_\chi} \rightarrow eM^{\wedge_\chi}$  by $\{u, m\}=\pi(\hbar^{-1}un)$ for $u\in \fu$, $m\in eM^{\wedge_\chi}$ and  any $n \in e N_1^{\wedge_\chi}$ such that $\pi(n)=m$. 
\begin{itemize}
    \item This bilinear map is well-defined: for $n_1, n_2$ such that $\pi(n_1)=\pi(n_2)$, we have $\pi(\hbar^{-1}un_1)-\pi(\hbar^{-1}u n_2)= \pi(\hbar^{-1} u \phi(n'))=\pi(\phi(\hbar^{-1}un'))=0$ for some $n'\in eN_2^{\wedge_\chi}$. 
    \item This bilinear map does not depend on the choice of the surjective map $N_1\twoheadrightarrow M$. Indeed let us consider another surjective map $N'_1\twoheadrightarrow M$. The fiber product $N_1 \x_{M} N'_1\subset N_1 \oplus N'_1$ is an object in $O_q$ ( it is equal to $\bigoplus_{\lambda} N_{1, \lambda} \x_{M_\lambda} N'_{1, \lambda}$). It is flat over $\BC[[\hbar]]$. So both bilinear maps defined by $N_1, N'_1$ coincide with the bilinear map defined by  $N_1 \x_{M} N'_1$.
    \end{itemize}
It is then straightforward to verify the properties of this bilinear map.

 The truncated category  $O_q^{\leq \nu}$ has enough projectives, see \cite[$\mathsection 2.3.2$]{Si24}. So in the category $O_q^{\leq \nu}$ with $\nu$ large enough, we can find a commutative diagram 
    \begin{equation*}\begin{tikzcd} M' \arrow[r, "f' "] \arrow[d, two heads, "\pi_1"] & N' \arrow[d, two heads, "\pi_2"]&\\
    M \arrow[r, "f"] & N 
    \end{tikzcd}
    \end{equation*}
where $M', N'$ are flat over $\BC[[\hbar]]$. Then $f\{u, m\}=f\circ \pi_1\{ u, m'\}=\pi_2 \circ f'\{u, m'\}=\{ u, \pi_2 \circ f'(m')\}=\{u, f(m)\}$, where $m'\in M'$ such that $\pi_1(m')=m$.

(b) By using part (a), this part follows the proof of \cite[Lemma 4.2]{IL15}. 
\end{proof}
Then we define the restriction functor $\bullet_\dag: O_q \rightarrow \CW_q^{\wedge_{\uchi}}\Mod^\Lambda$ by $M\mapsto M_\dag$. The $\Lambda$-grading on $M_\dag$  comes from the $\Lambda$-grading on $M$.

\begin{Prop}(a) The functor $\bullet_\dag: O_q\rightarrow \CW_q^{\wedge_{\uchi}}\Mod^\Lambda$ is exact and $\CW_q^{\wedge_{\uchi}}$-linear.

\noindent
(b) $(\Delta_\e(\lambda))_\dag \cong \BC$ for all $\lambda \in P$.
\end{Prop}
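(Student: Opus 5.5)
For (a), I will factor $\bullet_\dag$ as a composition of exact functors. By Lemma \ref{lem: extension action on cat O}, each $M\in O_q$ carries an action of $U_q^{ev,\uchi}$. The completion $M\mapsto M^{\wedge_\chi}=U_q^{ev\wedge_\chi}\otimes_{U_q^{ev,\uchi}}M$ is exact, by Remark \ref{rem: completion of U(ev, chi)} and the analogue of Lemma \ref{lem: completion of Uev}(d). Multiplication by the idempotent $e$ is exact. It remains to show that $eM^{\wedge_\chi}\mapsto M_\dag$ is exact. For this I use Lemma \ref{lem: action of u}(b): there is a natural isomorphism $eM^{\wedge_\chi}\cong (\CA^\wedge_q/\CA^\wedge_q\fu)\widehat{\otimes}_{\BC[[\hbar]]}M_\dag$, compatible with Poisson maps (Remark \ref{rem: poi-tensor on O}). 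Write $\CA^\wedge_q/\CA^\wedge_q\fu\cong \BC[[\fu^*]][[\hbar]]$, a completed polynomial module in the variables $y_1,\dots,y_d$ dual to the basis $x_1,\dots,x_d$ of $\fu$. Then $M_\dag$ is recovered as the image of the operator "set all $y_i=0$", i.e. $M_\dag\cong eM^{\wedge_\chi}/\sum_i y_i\, eM^{\wedge_\chi}$, and this identification is natural in $M$.

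Given a short exact sequence $0\to M_1\to M_2\to M_3\to 0$, I obtain a short exact sequence of the modules $(\CA^\wedge_q/\CA^\wedge_q\fu)\widehat{\otimes}(M_i)_\dag$. The maps preserve the $\fu$-Poisson structure and hence have the form $\Id\widehat{\otimes}f_\dag$. Since $\BC[[\fu^*]][[\hbar]]$ is faithfully flat and topologically free, exactness of $\Id\widehat\otimes f_\dag$ forces exactness of the $f_\dag$. I expect this to be the delicate point, because completed tensor products need care. I would handle it by applying the functor of $\fu$-invariants, namely the kernel of $\{x_i,\cdot\}$. On the tensor product this functor equals $\{x_i,\cdot\}=\partial_{y_i}\otimes \Id$. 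Its exactness follows from a Koszul/de Rham argument: $\partial_{y_i}$ is surjective on formal power series, and the map $\partial_y$ has a natural section, integration, which splits the sequences. $\CW_q^{\wedge_{\uchi}}$-linearity is immediate, since $\CW_q^{\wedge_{\uchi}}$ is central and commutes with $\fu$ under the decomposition $R_\hbar\cong\CA^\wedge_q\widehat\otimes\CW_q^{\wedge_{\uchi}}$ of Lemma \ref{lem: lift V to Rh}. The $\Lambda$-grading is preserved by construction.

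For (b), I compute a rank. Since $\Delta_\e(\lambda)$ is annihilated by $\hbar$ and by $\mathfrak{I}$, which contains $\fu$ up to the lift, we have $e\Delta_\e(\lambda)^{\wedge_\chi}\cong \BC[[\fu^*]]\otimes (\Delta_\e(\lambda))_\dag$ as a module over $\BC[[V]]\widehat\otimes \CW_\e^{\wedge_{\uchi}}$. I then compare with the structure of $U_\e^{ev\wedge_\chi}\cong\Mat_{\sd}(Z^{\wedge_\chi})$. The completion $\Delta_\e(\lambda)^{\wedge_\chi}$ is a module over $U_\e^{ev}/U_\e^{ev}\mathfrak{I}$ completed at $\chi$. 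A PBW count with Lemma \ref{lem: PBW-basis for even part} shows that $\Delta_\e(\lambda)$ is free of rank one over $U_\e^{ev<}$, and hence free of rank one over $\BC[\tF_\a^{\ell}]\otimes$ (a small part of dimension $\sd$) after passing to the Frobenius image. Completing at $\chi$ then gives $\Delta_\e(\lambda)^{\wedge_\chi}\cong \BC^{\sd}\otimes\BC[[\fu^*]]$, where $\fu^*$ is identified with the $N$ directions $\tF_\a^\ell$ transversal to $\mathfrak{I}$. Applying $e$ removes the factor $\BC^{\sd}$, leaving $\BC[[\fu^*]]$, so $(\Delta_\e(\lambda))_\dag\cong\BC$. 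The main obstacle here is to check that the central character is pinned down to a single point: the $\CW_\e$-support of $\Delta_\e(\lambda)$ must be a single point, and the $K^{2\ell\mu}$ must act by $1$, so that no extra $\CW_\e^{\wedge_{\uchi}}$ rank arises.
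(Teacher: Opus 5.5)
Your proposal is correct and follows the paper's proof. For (a) you combine exactness of $M\mapsto eM^{\wedge_\chi}$ with the decomposition $eM^{\wedge_\chi}\cong(\CA^\wedge_q/\CA^\wedge_q\fu)\widehat{\otimes}_{\BC[[\hbar]]}M_\dag$ of Lemma \ref{lem: action of u}. For (b) you use that $\Delta_\e(\lambda)$ is free of rank $\sd$ over $\BC[\tF_\a^{\ell}]_{\a\in\Delta_+}$, so that $(\Delta_\e(\lambda))_\dag$, the fiber of $e\Delta_\e(\lambda)^{\wedge_\chi}$ at $\chi$, has dimension $\sd/\sd=1$. The only variation is the surjectivity step in (a): the paper argues that $\text{Im}(f^\wedge)\subset(\CA^\wedge_q/\CA^\wedge_q\fu)\widehat{\otimes}_{\BC[[\hbar]]}\text{Im}(f_\dag)$ and uses topological freeness, whereas your formal Poincar\'e lemma argument (or your natural identification $M_\dag\cong eM^{\wedge_\chi}/\sum_i y_i\,eM^{\wedge_\chi}$, which is visibly right exact) reaches the same conclusion without manipulating images inside completed tensor products.
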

\begin{proof}
(a)  It is easy to see that $\bullet_\dag$ is $\CW_q^{\wedge_{\uchi}}$-linear. By Remark \ref{rem: completion of U(ev, chi)}, the functor $e(-)^{\wedge_\chi}$ is exact, hence $\bullet_\dag$ is left exact by construction. We only need to show that if $f\in \Hom_{O_q}(M,N)$ is surjective then $f_\dag: M_\dag \rightarrow N_\dag$ is also surjective. Note that $f^\wedge: eM^{\wedge_\chi}\rightarrow eN^{\wedge_\chi}$ is surjective. Since $f^\wedge$ is $R_\hbar$-linear, $\text{Im}(f^\wedge) \subset (\CA^\wedge_q/\CA^\wedge_q\fu)\widehat{\otimes}_{\BC[[\hbar]]} \text{Im}(f_\dag)$. Since $\CA^\wedge/\CA^\wedge_q\fu$ is topological free over $\BC[[\hbar]]$, in order to $\text{Im}(f^\wedge)=eN^{\wedge_\chi}$, we must have $\text{Im}(f_\dag)=N_\dag$, i.e., $f_\dag$ is surjective.

\noindent
(b) By Lemma \ref{lem: action of u}.b), for any $M\in O_\e \subset O_q$ then $eM^{\wedge_\chi} \cong (\CA^\wedge_q/\CA^\wedge_q\<\fu, \hbar\>)\widehat{\otimes}_{\BC} M_\dag$. Therefore, $M_\dag$ is isomorphic to the fiber of $eM^{\wedge_\chi}$ at $\chi$. This implies that $\dim_\BC M_\dag$ is equal to the dimension of fiber of $M$ at $\chi$ divided by $\sd=\ell^{N}$. Now the computation follows since $\Delta_\e(\lambda)$ is a free sheaf of rank $\sd$ over $U_+$.
\end{proof}

\begin{Lem}\label{lem: resfunctor on HC and O are compatible} For any $X\in U_q^{fin,\uchi}\rmod^{G_q}$ and $N \in O_q$, we have an isomorphism of $\CW_q^{\wedge_{\uchi}}$-modules
\[ (X\otimes_{U_q^{fin, \uchi}}N)_\dag \cong X_\dag \otimes_{\CW_q^{\wedge_{\uchi}}} N_\dag.\]
\end{Lem}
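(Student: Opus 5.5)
The plan is to compute both sides after completing at $\chi$ and cutting with the idempotent $e=E_{11}$, and to compare the resulting decompositions coming from Lemma \ref{lem: action of u}(b) and from the equivalence $\fP$ of \eqref{eq: functor fP}.

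First, $X\otimes_{U_q^{fin,\uchi}}N$ lies in $O_q$: it is a quotient of $V_q\otimes N$ for a finite rank $V_q$. Using Lemma \ref{lem: extension action on cat O} and the definitions, we have
\[ (X\otimes_{U_q^{fin,\uchi}}N)^{\wedge_\chi}\cong X^{\wedge_\chi}_{loc}\otimes_{U_q^{ev\wedge_\chi}} N^{\wedge_\chi}, \]
where $X^{\wedge_\chi}_{loc}=\fC(X)$. This should follow from $X\otimes_{U_q^{fin}}U_q^{ev}\otimes_{U_q^{ev}}N\cong X\otimes_{U_q^{fin}}N$, which holds because $N$ is already a $U_q^{ev,\uchi}$-module, together with exactness of completion (Remark \ref{rem: completion of U(ev, chi)}) and the equality of the left and right $J_\chi$-adic topologies on $\fC(X)$ from Lemma \ref{lem: bullet-complete functor}. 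Writing $U_q^{ev\wedge_\chi}=\Mat_\sd(R_\hbar)$ and applying Morita theory, we get
\[ e(X\otimes N)^{\wedge_\chi}\cong e\fC(X)e\otimes_{R_\hbar} eN^{\wedge_\chi}. \]

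Next, $\fP_2$ gives $e\fC(X)e\cong \CA^\wedge_q\widehat\otimes_{\BC[[\hbar]]}X_\dag$ as Poisson $R_\hbar$-bimodules, with $X_\dag$ commuting with $\CA^\wedge_q$. Lemma \ref{lem: action of u}(b) gives $eN^{\wedge_\chi}\cong(\CA^\wedge_q/\CA^\wedge_q\fu)\widehat\otimes N_\dag$. Combining these,
\[ e(X\otimes N)^{\wedge_\chi}\cong (\CA^\wedge_q/\CA^\wedge_q\fu)\widehat\otimes_{\BC[[\hbar]]}\bigl(X_\dag\otimes_{\CW_q^{\wedge_{\uchi}}}N_\dag\bigr). \]
The point to check is that $\CA^\wedge_q$ and $\CW_q^{\wedge_\uchi}$ commute with each other in $R_\hbar$, so that $X_\dag\otimes_{\CW}N_\dag$ can be moved past the Weyl factor.

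It remains to show that the $\fu$-Poisson structure on the left side, which is constructed via flat resolutions in Lemma \ref{lem: action of u}(a), matches the structure $\{u,x\otimes m\}=\{u,x\}\otimes m$ on the right side. Once this is known, the $\fu$-invariants of the right side are $X_\dag\otimes N_\dag$, because the invariants of $\CA^\wedge_q/\CA^\wedge_q\fu$ under $\{\fu,-\}$ are $\BC[[\hbar]]$. Taking invariants and using Lemma \ref{lem: action of u}(b) for $X\otimes N$ then gives the claim.

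I expect this matching step to be the main obstacle. To handle it, I would first treat the case where $X$ and $N$ are $\hbar$-flat, since there all brackets equal $\hbar^{-1}[u,-]$. In that case $\{u,a\otimes n\}=\{u,a\}\otimes n+a\otimes\{u,n\}$, and $\{u,x\}$ for $x\in X_\dag$ vanishes because $x$ commutes with $\CA^\wedge_q$ in the Poisson sense. The general case would then follow by choosing flat presentations, projective in $U_q^{fin,\uchi}\rmod^{G_q}$ and in $O_q^{\leq\nu}$, and using right exactness of both sides: $\bullet_\dag$ is exact and the tensor products are right exact. Naturality of the isomorphism with respect to maps of presentations then follows as in Lemma \ref{lem: action of u}(a).
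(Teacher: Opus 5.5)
Your proposal is correct and follows essentially the same route as the paper. Both arguments commute the completion at $\chi$ past the tensor product, cut down by $e=E_{11}$ to get $e\fC(X)e\otimes_{R_\hbar}eN^{\wedge_\chi}$, insert the decompositions from $\fP_2$ and Lemma~\ref{lem: action of u}(b), match the $\fu$-Poisson structures using a projective cover of $X$ and an $\hbar$-flat cover of $N$ (the step the paper leaves to the reader), and take $\fu$-invariants.

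One small precision concerns your ``flat case''. What you actually need there is that the cover $P\twoheadrightarrow X$ is projective, because then $P\otimes_{U_q^{fin,\uchi}}Q$ is a summand of $V_q\otimes_{\BC[[\hbar]]}Q$ and hence $\hbar$-flat. For merely $\hbar$-flat $X$ and $N$ the tensor product can acquire $\hbar$-torsion, and then the bracket on it is no longer $\hbar^{-1}u\cdot(-)$.
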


\begin{proof} We have that  $X\otimes_{U_q^{fin, \uchi}}N \cong X_{loc} \otimes_{U_q^{ev, \uchi}} N$, here $X_{loc}=X\otimes_{U_q^{fin, \uchi}} U_q^{ev, \uchi}$. Therefore, $(X\otimes_{U_q^{fin, \uchi}} N)^{\wedge_\chi} \cong X_{loc}^{\wedge_\chi}\otimes_{U_q^{ev\wedge_\chi}} N^{\wedge_\chi}$. On the other hand, $U_q^{ev\wedge_\chi}\cong \Mat_{\sd}(\BC) \otimes_{\BC} R_\hbar$ and $R_\hbar=eU_q^{ev\wedge_\chi}e$ where $e=E_{11}$ the matrix unit of $U_q^{ev\wedge_\chi}$. Therefore, $X^{\wedge_\chi}_{loc} \cong U_q^{ev\wedge_\chi} \otimes_{R_\hbar} eX^{\wedge_\chi}_{loc} e\otimes_{R_\hbar} eU_q^{ev\wedge_\chi}$ as Poisson bimodules. Meanwhile, $N^{\wedge_\chi} \cong U_q^{ev\wedge_\chi}e \otimes_{R_\hbar} eN^{\wedge_\chi}$. This implies that $(*)\;\;e(X\otimes_{U_q^{fin, \uchi}} N)^{\wedge_\chi}\cong eX^{\wedge_\chi}_{loc}e\otimes_{R_\hbar} eN^{\wedge_\chi}$as $R_\hbar$-modules.

Both $e(X\otimes_{U_q^{fin, \uchi}}N)^{\wedge_\chi}$ and $eN^{\wedge_\chi}$ are $R_\hbar$-modules with $\fu$-Poisson structures. Then tensor product $eX^{\wedge_\chi}_{loc}e\otimes_{R_\hbar} eN^{\wedge_\chi}$ is a $R_\hbar$-module with $\fu$-Poisson structure as follows: $\{u, x\otimes n\}=\{u,x\}\otimes n +x \otimes \{u,n\}$ for $u\in \fu, x\in eX^{\wedge_\chi}_{loc} e$ and $n \in eN^{\wedge_\chi}$. Then the isomorphism $(*)$ is an isomorphism of $R_\hbar$-modules with $\fu$-Poisson structures. This is proved by choosing a projective object $P_1 \in U_q^{fin, \uchi}\Rmod^{G_q}$ and a $\BC[[\hbar]]$-flat object $Q_1\in O_q$ with surjective maps $P_1\twoheadrightarrow X$ and $Q_1\twoheadrightarrow N$ and the constructions of Poisson structure in Lemma \ref{lem: bullet-loc functor} and Lemma \ref{lem: action of u}, we left the detail to the reader. The object $P_1$ exists since $U_q^{fin, \uchi}\Rmod^{G_q}$ has enough projective while the existence of $Q_1$ is proved in Lemma \ref{lem: action of u}.a.

Recall that $R_\hbar \cong \CA^{\wedge}_q\widehat{\otimes}_{\BC[[\hbar]]} \CW_q^{\wedge_{\uchi}}$. Now, $eX^{\wedge_\chi}_{loc}e \cong \CA^{\wedge}_q \widehat{\otimes}_{\BC[[\hbar]]} X_\dag$ in $\Pbim(R_\hbar)$ and $eN^{\wedge_\chi} \cong (\CA^\wedge_q/\CA^\wedge_q\fu)\widehat{\otimes}_{\BC[[\hbar]]} N_\dag$ as $R_\hbar$-modules with $\fu$-Poisson structures. Therefore, $e(X\otimes_{U_q^{fin, \uchi}}N)^{\wedge_\chi}\cong (\CA^\wedge_q/\CA^\wedge_q\fu)\widehat{\otimes}_{\BC[[\hbar]]}(X_\dag\otimes_{\CW_q^{\wedge_{\uchi}}} N_\dag)$ as $R_\hbar$-modules with $\fu$-Poisson structures.
This implies that $(X\otimes_{U_q^{fin, \uchi}} N)_\dag \cong X_\dag \otimes_{\CW_q^{\wedge_{\uchi}}} N_\dag$.
\end{proof}
\begin{Rem}There is the restriction functor for $O_{q, \sR}$. Since it is technical and we will not use it in this paper, the detail will be covered elsewhere.
\end{Rem}


\section{Soergel Bimodules}

\subsection{Soergel's definition}\ \label{ssec: defi of Soergel bimod}

 Recall $\Lambda:= P/Q$, here $P$ and $Q$ are the  weight and root lattices of $\g$. Let $S_{aff}$ be the set of simple reflections of $W_{aff}:=W\ltimes Q$. Recall the complete local algebra $(\sR, \m)$ with the action of $W_{ext}:=W \ltimes P$ on it in Section \ref{ssec: the algebra R}. 
 
 For any $s\in S_{aff}$, the {\em Bott-Samelson bimodule}  $\BS_s$ is $ \sR\otimes_{\sR^s} \sR$, here $\sR^s$ is the $s$-invariant part of $\sR$. For any $x\in W_{ext}$, the {\em graph bimodule} $\sR_x$ is isomorphic to $\sR$ as left $\sR$-modules while the right $\sR$-module structure  on $\sR_x$ is defined by $mr=x(r)m$ for $m\in \sR_x$ and $r\in \sR$.

 Let $\sR\bimod^\Lambda$ be the category of $\Lambda$-graded $\sR$-bimodules, where $\sR$ is in degree $0 \in \Lambda$.
 
\begin{defi}The category of  (extended) affine Soergel bimodules $\SB_\hbar$ is the full Karoubian monoidal subcategory of $\sR \bimod^\Lambda$ generated by Bott-Samelson bimodules $\sR\otimes_{\sR^s} \sR$ for $s\in S_{aff}$ (in degree $0 \in \Lambda$) and the graph bimodules $ \sR_x$ for $x\in \Lambda$ (in  degree $x \in \lambda$). 
\end{defi}

\begin{Rem}Since the action of $W_{ext}$ on $\h \oplus \BC \hbar$ is faithful, it follows that $\SB_\hbar$ is a full subcategory of the category of  $\sR$-bimodules as well.
\end{Rem}
The following result of Soergel is well known:
\begin{Prop}There is one-to-one correspondence between indecomposable objects in $\SB_\hbar$ and $W_{ext}$: $B_x \leftrightarrow x \in W_{ext}$. In which, $B_1 \cong \sR, B_s\cong \BS_s$ for $s\in S_{aff}$.
\end{Prop}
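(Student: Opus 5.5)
The plan is to reduce to the non-extended affine Weyl group $W_{aff}$ and then run Soergel's classical argument, with the role of the grading played by the completeness of $\sR$.

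\emph{Step 1: reduction to $W_{aff}$.} Write $W_{ext}=\Lambda\ltimes W_{aff}$. For $x\in\Lambda$ and $s\in S_{aff}$, conjugation by $x$ permutes $S_{aff}$, and one checks directly that there is an isomorphism of $\Lambda$-graded bimodules
\[ \sR_x\otimes_\sR \BS_s\otimes_\sR \sR_{x^{-1}}\cong \BS_{xsx^{-1}}, \]
given by $a\otimes b\mapsto x(a)\otimes x(b)$. We also have $\sR_x\otimes_\sR\sR_y\cong\sR_{xy}$. Hence every tensor product of generators is isomorphic to $\sR_x\otimes_\sR \BS_{s_1}\otimes_\sR\cdots\otimes_\sR\BS_{s_k}$ for a single $x\in\Lambda$, sitting in $\Lambda$-degree $x$. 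Consequently, every object of $\SB_\hbar$ is a direct sum of objects $\sR_x\otimes_\sR B$, where $x\in\Lambda$ and $B$ lies in the Karoubian subcategory $\SB_\hbar^{aff}$ generated by the $\BS_s$. Since $\sR_x\otimes_\sR-$ is an equivalence and $\Lambda$-degrees separate different $x$, it suffices to prove the classification $B_w\leftrightarrow w\in W_{aff}$ for $\SB_\hbar^{aff}$. We then set $B_{xw}:=\sR_x\otimes_\sR B_w$.

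\emph{Step 2: Krull--Schmidt.} Every object is a finitely generated module over $\sR\widehat{\otimes}\sR$. Hom spaces are therefore finitely generated over the complete local ring $\sR$, so $\SB_\hbar^{aff}$ is Krull--Schmidt by the same reference as in Remark \ref{rem: Krull-Schmidt}. In particular, endomorphism rings of indecomposables are local, and idempotents lift from reductions modulo $\m$. This replaces Soergel's graded argument, which we cannot use here since nothing is graded.

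\emph{Step 3: construction and uniqueness of $B_w$.} The action of $W_{aff}$ on $\hat\h$ is faithful and reflection faithful: distinct reflections have distinct fixed hyperplanes. So we may use Soergel's support theory. View a bimodule as a sheaf on $\hat\h^*\times\hat\h^*$ and use the graphs $\mathrm{Gr}(y)$. For a reduced expression $\underline w=s_1\cdots s_k$, the Bott--Samelson $\BS_{\underline w}$ is supported on $\bigcup_{y\le w}\mathrm{Gr}(y)$. Its restriction to the generic point of $\mathrm{Gr}(w)$ has rank one. Define $B_w$ to be the unique indecomposable summand of $\BS_{\underline w}$ whose support contains $\mathrm{Gr}(w)$; it is unique because that stalk has rank one.

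We then show that every indecomposable summand of any $\BS_{\underline v}$ (reduced or not) is isomorphic to some $B_w$. This goes by induction on length, via the standard fact that $B_w\otimes_\sR\BS_s\cong B_{ws}\oplus(\text{summands } B_y,\ y<ws)$ for $ws>w$. For $ws<w$, the product $B_w\otimes_\sR\BS_s$ is a sum of $B_y$'s. Finally, $B_w\not\cong B_{w'}$ for $w\neq w'$, since the maximal element of the support differs.

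\emph{Main obstacle.} The hard part is the inductive decomposition of $B_w\otimes\BS_s$ without a grading. I would work with the filtration by supports and the stalk functors $\Gamma_{\le y}/\Gamma_{<y}$. The key input is that these subquotients are free over $\sR$, which is Soergel's Hom formula over a local ring. Failing a direct proof, I would pass to $\sR/\m$-fibers of the stalks together with Nakayama's lemma, and lift the resulting decompositions by idempotent lifting as in Step 2.
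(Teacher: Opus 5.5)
Your Steps 1 and 2 are fine. So is the part of Step 3 that defines $B_w$ as the unique summand of $\BS_{\underline w}$ whose support contains $\mathrm{Gr}(w)$, and that separates the $B_w$ by the maximal element of their support; this uses only that $W_{ext}$ acts faithfully on $\hat\h$. The gap is in the essential part of Step 3: the claim that every indecomposable summand of a Bott--Samelson bimodule is some $B_y$, equivalently the decomposition of $B_w\otimes\BS_s$.

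There you invoke Soergel's support theory on the grounds that $\hat\h$ is reflection faithful, but it is not. Soergel's condition is that an element fixes a hyperplane if and only if it is a reflection. (Your version, that distinct reflections have distinct hyperplanes, is not the relevant condition.) Here every translation $t_\lambda$ with $\lambda\neq 0$ fixes the hyperplane $\{x+c\hbar:\langle\lambda,x\rangle=0\}$ of $\hat\h$, and dually the hyperplane $\{z=0\}=V(\hbar)$ of $\hat\h^*$. Hence $\mathrm{Gr}(y)$ and $\mathrm{Gr}(yt_\lambda)$ meet in codimension one for every $y$ and every $\lambda$; in fact all these graphs coincide over $V(\hbar)$. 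This is exactly the phenomenon the paper stresses: modulo $\hbar$, the action of $W_{ext}$ factors through $W$.

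Soergel's machinery cannot tolerate this. The following all rest on the fact that, for a reflection faithful representation, two graphs $\mathrm{Gr}(x)$, $\mathrm{Gr}(y)$ meet in codimension one only when $x^{-1}y$ is a reflection:
\begin{itemize}
\item his proof that the subquotients $\Gamma_{\le y}/\Gamma_{<y}$ of Soergel bimodules are free standard modules;
\item his Hom formula;
\item the lifting of morphisms between stalks at $y$ to morphisms of bimodules, which is what actually yields $B\cong B_y$ for an indecomposable $B$ with $y$ maximal in its support.
\end{itemize}
At the height-one prime $\hbar\sR$ this fails, so the key input you want to quote is not available. Your fallback (fibers over $\sR/\m$ plus Nakayama) does not help, since the problem is the structure of the stalks themselves, not their reduction. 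The missing grading is also not the real obstacle: everything here is the completion of graded objects over $S(\hat\h)$.

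What you need is an input valid for this non-reflection-faithful realization. One option is Abe's framework \cite{Abe1}, applied to the realization $\hat\h$ over $\sR$. The paper already uses it for $\SB$, and its category $\CC_\hbar$ imposes a decomposition precisely after localizing at $\hbar\sR$. Combine this with the observation that, because $W_{ext}$ acts faithfully on $\sR$, the decomposition datum over the fraction field of $\sR$ is determined by the underlying bimodule. Abe's category is therefore a full subcategory of $\sR$-bimodules containing $\SB_\hbar$. Another option is the Elias--Williamson diagrammatic category for this realization, where the classification of indecomposables does not require reflection faithfulness, together with a comparison to bimodules. The paper gives no proof and simply quotes the statement as a result of Soergel, so this is exactly the point your argument would have to supply.
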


\begin{defi}Let $\SB$ be the category with the same collection of objects as in $\SB_\hbar$ but the morphisms  given  by 
\[\Hom_{\SB}(M,N)=\Hom_{\SB_\hbar}(M,N)/\hbar \Hom_{\SB_\hbar}(M,N).\]
\end{defi}
\begin{Lem}\label{lem: SB category} The object $B_x, (x\in W_{ext}),$ is still indecomposable in $\SB$. The category $\SB$ is Karoubian, Krull-Schmidt with the set of indecomposable objects $\{B_x, x\in W_{ext}\}$.
\end{Lem}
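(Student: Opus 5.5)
The plan is to transport the Krull--Schmidt property of $\SB_\hbar$ across the reduction mod $\hbar$, using a lifting of idempotents for the local endomorphism rings of the $B_x$. First, every Hom space in $\SB_\hbar$ is a finitely generated $\sR$-module. Indeed, $\Hom_{\SB_\hbar}(M,N)\subset \Hom_{\sR}(M,N)$ for the left structure, $M$ and $N$ are finitely generated over the Noetherian complete local ring $\sR$, and the Hom space is an $\sR$-bimodule. Hence it is finitely generated over $\sR^{W_{aff}}$, or at least over $\sR$ acting on the left; this is the standard fact for Soergel bimodules, whose Hom spaces are graded free of finite rank over $\sR$.

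Step one: local endomorphism rings. For $x\in W_{ext}$ set $E_x:=\End_{\SB_\hbar}(B_x)$. By Soergel's classification, $B_x$ is indecomposable in the Karoubian category $\SB_\hbar$, so $E_x$ has no nontrivial idempotents. Since $E_x$ is a finitely generated module over the complete local ring $\sR$, it is $\m$-adically complete and $E_x/\m E_x$ is a finite-dimensional algebra. Idempotents lift from $E_x/\m E_x$ to $E_x$, so $E_x/\m E_x$ has no nontrivial idempotents and is therefore local. It follows that $E_x$ is local, with Jacobson radical containing $\m E_x$. Because $\hbar\in\m$, the quotient $E_x/\hbar E_x=\End_{\SB}(B_x)$ is a quotient of a local ring and is still local (nonzero, since $B_x\neq 0$ and $\hbar E_x\subset \m E_x\neq E_x$ by Nakayama). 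Thus $B_x$ is indecomposable in $\SB$, and in fact has a local endomorphism ring there.

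Step two: Krull--Schmidt. Every object of $\SB$ is an object of $\SB_\hbar$, hence a finite direct sum of the $B_x$, and direct sum decompositions pass to $\SB$. So every object of $\SB$ is a finite direct sum of objects with local endomorphism rings, which gives Krull--Schmidt with indecomposables among the $B_x$.

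Step three, and the main obstacle: Karoubianity together with $B_x\not\cong B_y$ in $\SB$ for $x\neq y$. For Karoubianity, take an idempotent $\bar e\in\End_{\SB}(M)=\End_{\SB_\hbar}(M)/\hbar$. The ring $\End_{\SB_\hbar}(M)$ is $\hbar$-adically complete, being finitely generated over $\sR$, so $\bar e$ lifts to an idempotent $e$. The summand $eM$ exists in $\SB_\hbar$, and hence $\bar e$ splits in $\SB$.

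For non-isomorphism, suppose $B_x\cong B_y$ in $\SB$ via $\bar f,\bar g$. Lift these to $f,g$ in $\SB_\hbar$. Then $gf\in E_x$ maps to $1$ in $E_x/\hbar E_x$, so $gf\in 1+\hbar E_x\subset 1+\mathrm{rad}\,E_x$ is invertible; likewise $fg$ is invertible. Therefore $B_x\cong B_y$ in $\SB_\hbar$, which forces $x=y$.

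The only delicate input is the completeness of the Hom spaces, that is, their finite generation over $\sR$. I would cite it from Soergel's Hom formula, or deduce it directly from $\Hom\subset\Hom_{\sR}(M,N)$ with $M$ and $N$ finitely generated.
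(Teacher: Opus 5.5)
Your proof is correct and follows the same route as the paper. You split idempotents in $\SB$ by lifting them $\hbar$-adically to $\SB_\hbar$, and you get locality of $\End_{\SB}(B_x)=\End_{\SB_\hbar}(B_x)/\hbar$ from locality of $\End_{\SB_\hbar}(B_x)$; your added check that distinct $B_x$ remain non-isomorphic in $\SB$ fills in a point the paper leaves implicit. One side remark is false but unused: the Hom spaces are not finitely generated over $\sR^{W_{aff}}$, which is just $\BC[[\hbar]]$, whereas your argument only needs finite generation over $\sR$, and that you justify correctly.
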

\begin{proof} Let $p \in \End_{\SB}(M)$ be an idempotent. Since $\End_{\SB_\hbar}(M)$ is finitely generated over $\sR$ hence complete and separated in the $\hbar$-adic topology, we can lift the idempotent $p$ to an idempotent element $p_\hbar \in \End_{\SB_{\hbar}}(M)$. Since $\SB_\hbar$ is Karoubian, there is a direct sum decomposition $M\cong P\oplus Q$ and $p_\hbar$ is the projection to $P$. This implies that $p \in \End_{\SB}(M)$ splits. In the other word, $\SB$ is Karoubian.

It is easy to see that  $\Hom_{\SB}(M,N)$ is a finitely generated module over the complete local ring $\usR:= \sR/\hbar \sR$ for any $M, N \in \SB$ hence $\SB$ is Krull-Schmidt. Moreover, since $B_x$ is indecomposable in $\SB_\hbar$, $\End_{\SB_\hbar}(B_x)$ is a local ring hence $\End_{\SB}(B_x)$ is also a local ring. Therefore, $B_x$ is indecomposable in $\SB$.
\end{proof}
\subsection{Abe's realization}\label{ssec: abe realization}\ 

Let $\usR:= \sR/\hbar \sR$ then $\usR =\BC[[\h^*]]$ and $W_{ext}$ acts on $\usR$ through the projection $W_{ext}\twoheadrightarrow W$. Since this action of $W_{ext}$ on $\h^*$ is not faithful, Soergel's definition will not yield a good category. A fix to Soergel's approach was proposed in \cite{Abe1}.

Let $Q$ be the fraction field of $\usR$. We define the category $\mathcal{C}$ following \cite{Abe1}. Its objects are $\usR$-bimodules $M$ with a decomposition of $(\usR, Q)$-bimodules  $M\otimes_{\usR} Q = \bigoplus_{w\in W_{ext}} M_{Q, w}$ such that
\begin{itemize}
     \item $M$ is finitely generated as an $\usR$-bimodule and flat as a right $\usR$-module.
    \item $M^w_Q \neq 0$ for only finitely many $w\in W_{ext}$.
    \item If $m\in M_{Q,w}$ and $f\in \usR$ then $mf=w(f)m$.
\end{itemize}
Inside $\mathcal{C}$, we construct the following objects
\begin{itemize}
    \item The graph bimodule $\usR_w$ with the decomposition $\usR_w\otimes_{\usR} Q=(\usR_w\otimes_{\usR} Q)_w$.
    \item For $s\in S_{aff}$, the Bott-Samelson bimodule $\uBS_s:= \usR \otimes_{\usR^s} \usR$ with the decomposition $\uBS_s\otimes_{\usR} Q= Q\oplus Q_s$ as in \cite[(2.1)]{Abe1}.
    
\end{itemize}
\begin{defi}Let $\ASB$ be the full Karoubian subcategory of $\mathcal{C}$ generated by the objects $\uBS_s, \usR_x$ with $s\in S_{aff}, x \in \Lambda$ under taking tensor products, direct sums and direct summands.
\end{defi}
\begin{Rem}The paper \cite{Abe1} works with graded bimodules over $\BC[\h^*]$ but their results also hold in our setting of bimodules over the complete local ring $\BC[[\h^*]]$. Moreover, the  {\em loc. cit.} is about the Coxeter group $W_{aff}$ but it is straightforward to extend their results to $W_{ext}$. 
\end{Rem}
\begin{Prop}[Theorem 4.1 \cite{Abe1}] There is one-to-one correspondence between indecomposable objects in $\ASB$ and $W_{ext}$: $\uB_x \leftrightarrow x\in W_{ext}$. Here $\uB_1\cong \usR, \uB_s \cong \uBS_s$ for $s\in S_{aff}$.
\end{Prop}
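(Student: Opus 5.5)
The plan is to deduce the statement from the graded case in \cite{Abe1} by two reductions: first from $W_{ext}$ to $W_{aff}$ using the graph bimodules $\usR_x$, $x\in\Lambda$, and then from graded bimodules over $\BC[\h^*]$ to bimodules over the completion $\usR=\BC[[\h^*]]$.

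\emph{Reduction from $W_{ext}$ to $W_{aff}$.} For $x\in \Lambda$ the functor $\usR_x\otimes_{\usR}-$ is an autoequivalence of $\mathcal{C}$. It sends an object with $Q$-decomposition $\bigoplus_w M_{Q,w}$ to one supported on $\{xw\}$. Since $x s x^{-1}\in S_{aff}$ for $s\in S_{aff}$, we have $\usR_x\otimes \uBS_s\cong \uBS_{xsx^{-1}}\otimes \usR_x$. Hence every object of $\ASB$ is a direct sum of objects $\usR_x\otimes M$ with $M$ in the subcategory $\ASB_{aff}$ generated by the $\uBS_s$. These summands are distinguished by the $\Lambda$-grading, so it suffices to classify the indecomposables of $\ASB_{aff}$ by $W_{aff}$. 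We then set $\uB_{xw}:=\usR_x\otimes \uB_w$.

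\emph{Reduction from graded to complete.} Let $\ASB^{gr}_{aff}$ be Abe's graded category over $\BC[\h^*]$. Completion at $0$, $M\mapsto \usR\otimes_{\BC[\h^*]}M\otimes_{\BC[\h^*]}\usR$ (equivalently $M\mapsto \usR\otimes_{\BC[\h^*]}M$ for objects finitely generated on both sides), is exact and monoidal. It sends $\BC[\h^*]\otimes_{\BC[\h^*]^s}\BC[\h^*]$ to $\uBS_s$ and respects the $Q$-decompositions. By Abe's theorem, Hom spaces between Bott--Samelson objects in the graded setting are graded free of finite rank, and taking Hom commutes with the flat base change to $\usR$. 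So $\Hom_{\ASB_{aff}}$ is the completion of the graded Hom; in particular it is finitely generated over the complete local ring $\usR$. Consequently $\ASB_{aff}$ is Krull--Schmidt, as in Lemma \ref{lem: SB category}, and idempotents in a completed endomorphism ring lift from its reduction modulo the maximal ideal. A graded indecomposable $\uB^{gr}_w$ has graded-local endomorphism ring. Its completion is therefore local, since its reduction modulo the maximal ideal is the finite-dimensional local algebra obtained from the degree-$0$ part. So the completion $\uB_w$ stays indecomposable.

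\emph{Classification in the complete setting.} Conversely, every indecomposable of $\ASB_{aff}$ is a summand of some completed Bott--Samelson object $\uBS(\underline{w})$. Splittings of $\uBS(\underline{w})$ correspond to idempotents in its endomorphism ring, and these reduce to the same finite-dimensional algebra as in the graded case, so the indecomposable summands match. To see that distinct $w$ give non-isomorphic $\uB_w$, use supports. $\uB_w$ has $(\uB_w)_{Q,w}\neq 0$ and $(\uB_w)_{Q,v}=0$ unless $v\le w$, so $w$ is recovered as the unique maximal element of the support. Finally, $\uB_1=\usR$, and $\uB_s=\uBS_s$ because the latter is indecomposable: its endomorphism ring is local, since its reduction is two-dimensional and local.

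The main obstacle is the Hom-compatibility with completion. This requires Abe's freeness of Homs between Bott--Samelson objects, and one must check that morphisms respecting the $Q$-decomposition are preserved under completion. Here I would argue that compatibility with the decomposition over the fraction field is a closed condition on Hom, and that it is preserved by flat base change.
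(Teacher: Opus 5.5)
Your proposal is correct and is essentially the paper's approach: the paper gives no argument beyond citing \cite{Abe1} and remarking that Abe's graded statement for $W_{aff}$ transfers to $\usR=\BC[[\h^*]]$ and to $W_{ext}$, and you carry out exactly these two transfers. You do this via the autoequivalences $\usR_x\otimes-$ for $x\in\Lambda$, and via flat base change along $\BC[\h^*]^W\to\widehat{\BC[\h^*]^W}$; decomposition-compatibility survives the base change because the quotient of all bimodule maps by the compatible ones is torsion-free, and the fraction field of $\usR$ is obtained from that of $\BC[\h^*]$ by base change. In your locality arguments, take the ``reduction'' modulo the maximal ideal of $\usR$ acting on one side, which is central in the endomorphism ring; modulo the maximal ideal of $\widehat{\BC[\h^*]^W}$, $\End(\uBS_s)$ reduces to a $2|W|$-dimensional algebra rather than a two-dimensional one. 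Also note that a finite-dimensional $\BZ$-graded algebra whose degree-$0$ part is local is itself local, so you need no non-negativity of the grading on $\End^\bullet(\uB^{gr}_w)$.
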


\begin{Prop}\label{prop: abe realization}There is an equivalence of monoidal categories $\SB \cong \ASB$.
\end{Prop}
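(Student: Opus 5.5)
We will build a monoidal functor $\Phi\colon \SB\to\ASB$ by reducing modulo $\hbar$ with an extra bookkeeping of the $W_{ext}$-decomposition, and then check full faithfulness and essential surjectivity.

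\textbf{Step 1: the decomposition for $\SB_\hbar$.} Let $\hat Q$ be the fraction field of $\sR$. Every object $M$ of $\SB_\hbar$ is flat as a right $\sR$-module. Since $W_{ext}$ acts faithfully on $\hat\h$, the bimodule $M\otimes_{\sR}\hat Q$ splits canonically as $\bigoplus_{w} M_{\hat Q,w}$. Instead of working over $\hat Q$, we localize only at the product of the linear forms $\alpha^\vee+k\hbar$ over affine roots. These forms remain nonzero modulo $\hbar$: they become $\alpha^\vee\in\h$. So let $\sR'$ be the localization of $\sR$ at these elements. By standard Soergel theory (as in \cite{Abe1}), $M\otimes_\sR\sR'$ splits as $\bigoplus_w M'_w$ with the $w$-twisted condition $mf=w(f)m$. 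This holds for $\BS_s$, via the usual idempotents $\tfrac{\alpha_s\otimes1\pm1\otimes\alpha_s}{2\alpha_s}$, and for $\sR_x$. It then passes to tensor products and direct summands. Reducing modulo $\hbar$ gives $\sR'/\hbar\subset Q$ together with a decomposition of $\underline M:=M/\hbar M$ over $\usR$, satisfying $mf=\bar w(f)m$. Hence $\underline M\in\mathcal C$. We send $\BS_s\mapsto\uBS_s$ and $\sR_x\mapsto\usR_x$, and check that the decompositions match Abe's \cite[(2.1)]{Abe1}. The functor $M\mapsto M/\hbar M$ is monoidal because $(M\otimes_\sR N)/\hbar=\underline M\otimes_{\usR}\underline N$. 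On morphisms, a map in $\SB_\hbar$ preserves the $\sR'$-decomposition, so its reduction is a morphism in $\mathcal C$. This gives a monoidal functor $\SB\to\ASB$ whose image contains the generators.

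\textbf{Step 2: full faithfulness.} This is the main obstacle. We need
\[
\Hom_{\SB_\hbar}(M,N)/\hbar\iso\Hom_{\mathcal C}(\underline M,\underline N)
\]
for $M,N$ Bott--Samelson objects. Via adjunction, as $\BS_s$ is self-biadjoint on both sides compatibly, we reduce to $M=\sR$. The task then becomes computing $\Hom(\sR,N)$, that is, the elements of $N$ supported on the identity component. Soergel's Hom formula says $\Hom_{\SB_\hbar}(\sR,\BS_{\underline w})$ is graded free over $\sR$ with rank given by the standard pairing in the Hecke algebra of $W_{ext}$. Abe's Hom formula \cite{Abe1} gives the same graded rank over $\usR$ for $\Hom_{\mathcal C}(\usR,\uBS_{\underline w})$. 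Freeness implies that reduction modulo $\hbar$ is injective on $\Hom_{\SB_\hbar}/\hbar$: a map in the kernel factors through $\hbar N$, and $N$ is $\hbar$-torsion free. Equality of ranks then gives surjectivity. The point to watch is that in the complete local setting, graded ranks must be replaced by using a $\BC^\times$-equivariant (graded) version and completing; both Hom spaces are completions of graded free modules, so this comparison is legitimate. Alternatively, one can argue with the localized support filtration, checking that $\Gamma_{\le w}$ pieces reduce compatibly.

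\textbf{Step 3: essential surjectivity and conclusion.} The functor is essentially surjective onto $\ASB$ because it hits the generators and $\SB$ is Karoubian (Lemma \ref{lem: SB category}). It is also compatible with tensor products. A fully faithful monoidal functor between Karoubian categories that reaches all generators is an equivalence. As a consistency check, it sends $B_x\mapsto \uB_x$, since both are the unique new indecomposable summand of the Bott--Samelson object of a reduced expression.
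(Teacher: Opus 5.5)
Your proposal is correct and has the same overall structure as the paper's proof. You reduce modulo $\hbar$ and read off the $W_{ext}$-decomposition from a localization of $\sR$ in which it is canonical. You then reduce full faithfulness, via biadjointness of $\BS_s$ and $\sR_x$, to Hom spaces involving the unit, and finish using that $\SB$ is Karoubian and the generators are hit. There are two differences.

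The first is cosmetic. You invert only the affine roots $\alpha^\vee+k\hbar$. The paper localizes at the height-one prime $\hbar\sR$, so that $\sR_\hbar/\hbar\sR_\hbar$ is already $Q$. Either choice works, since in both cases the decomposition is canonical by $\hbar$-torsion-freeness and faithfulness of the $W_{ext}$-action on $\sR$.

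The second difference is in the final Hom comparison, where the argument runs in the opposite direction.
\begin{itemize}
\item The paper shows that the reduction map on $\Hom(\BS_{s_1}\otimes\cdots\otimes\BS_{s_k},\sR)/\hbar$ is surjective, via light leaves \cite[Theorem 3.12(3)]{Abe1}. It then uses that a surjection between free $\usR$-modules of the same finite rank is an isomorphism. This works directly over the complete local ring.
\item You get injectivity from the $\hbar$-torsion-freeness of the target alone; the freeness of the Hom spaces is not needed for that step. You then deduce bijectivity from equality of graded ranks, using Soergel's Hom formula for $\SB_\hbar$ and Abe's for $\ASB$.
\end{itemize}

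Your route buys something: you never have to check that light leaves in $\SB_\hbar$ reduce to Abe's light leaves. Only the two numerical Hom formulas enter.

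The price is that the graded detour you mention is essential, not a technicality. Over $\usR=\BC[[\h^*]]$ an injective map between free modules of equal rank need not be onto; multiplication by $\alpha^\vee$ on $\usR$ is an example. So the dimension count must be done degree by degree over the polynomial rings $S(\hat{\h})$ and $S(\h)$. You then have to check that the relevant Hom spaces in both $\SB_\hbar$ and Abe's $\CC$ commute with completion.

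For $\CC$ this holds for the following reason. The condition that a vector lies in the $e$-component of the decomposition over $Q$ involves only finitely many denominators. So the relevant Hom space is the kernel of a map between finitely generated modules, and such kernels commute with the flat base change to the completion. Your proposal only sketches this step, and it is where the remaining work in your version lies.
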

\begin{proof} {\it Step 1:} Let us construct a monoidal functor $\mathfrak{Q}_\hbar: \SB_\hbar \rightarrow \ASB$.  Let $\sR_\hbar$ be the localization of $\sR$ at the prime ideal $\hbar \sR$, then $\sR_\hbar/\hbar \sR_\hbar \cong Q$. Since $W_{ext}$ fixes $\hbar$, it acts on $\sR_\hbar$. 

First, similar to Abe's construction in \cite{Abe1}, we define the category  $\CC_\hbar$: its object are $\sR$-bimodules $M$ with a decomposition $M\otimes_{\sR} \sR_\hbar=\bigoplus_{w\in W_{ext}} M_{\hbar,w}$ such that 
\begin{itemize}
    \item $M$ is finitely generated as a $\sR$-bimodules and flat as a right $\sR$-module
    \item $M_{\hbar,w} \neq 0$ for only finitely many $w\in W_{ext}$.\
    \item If $m \in M_{\hbar,w}$ and $f\in \sR_\hbar$ then $mf=w(f)m$.
\end{itemize}
Then it is clear that we have a monoidal functor $F_1: \CC_\hbar\rightarrow \CC$ by sending $M \in \CC_\hbar$ to $M/\hbar M$.

The graph bimodule $\sR_w$ belongs to $\CC_\hbar$ with the decomposition $\sR_w\otimes_{\sR} \sR_\hbar =(\sR_w\otimes_{\sR} \sR_\hbar)_w$. The Bott-Samelson bimodule $\BS_s, (s\in S_{aff})$ admits a short exact sequence of $\sR$-bimodules: 
\[0\rightarrow \sR \xrightarrow[]{\cdot(\a_s\otimes 1+1\otimes \a_s)} \BS_s \xrightarrow[]{m\otimes n \mapsto ms(n)} \sR_s\rightarrow 0,\]
where $\a_s \in \sR$ such that $s(\a_s)=-\a_s$ and $\sR=\sR^s \oplus \sR^s \a_s$. Moreover, the following composition 
\[  \sR_s \xrightarrow[]{\cdot(\a_s\otimes1-1\otimes \a_s)} \BS_s \xrightarrow[]{m\otimes n \mapsto m s(n)} \sR_s\]
is equal to multiplication by $2\a_s$  which is invertible in $\sR_\hbar$. The discussion so far implies that $\BS_s$ admits a decomposition $\BS_s\otimes_\sR \sR_\hbar=\sR_\hbar \oplus \sR_{\hbar,s}$, and then naturally belongs to $\CC_\hbar$.

So there is a natural monoidal functor $F_2: \SB_\hbar \rightarrow \CC_\hbar$. Because the action of $W_{ext}$ on $\sR$ is faithful, the functor $F_2$ is an embedding. 

Under the  composition $F_2\circ F_1: \SB_\hbar \rightarrow \CC$, the image of $\SB_\hbar$ is contained in $\ASB$. Therefore, we obtained the desired monoidal  functor $\mathfrak{Q}_\hbar: \SB_\hbar \rightarrow \ASB$.

{\it Step 2:} It is clear that the monoidal functor $\mathfrak{Q}_\hbar$ factors through a monoidal functor $\mathfrak{Q}: \SB \rightarrow \ASB$. The objects $\uBS_{s} (s\in S_{aff})$ and $\usR_w (w\in \Lambda)$ belong to the image of $\mathfrak{Q}$. Moreover, $\SB$ is Karoubian by Lemma \ref{lem: SB category}. Therefore, $\mathfrak{Q}$ is an equivalence if $\mathfrak{Q}$ is fully faithful.

Let us show that $\mathfrak{Q}$ is fully faithful. In the category $\SB_\hbar$, the left and right adjoint of $\BS_s \otimes -$ are both isomorphic to $\BS_s \otimes -$ for $s\in S_{aff}$, meanwhile the left and right adjoint of $\sR_w\otimes-$ are both isomorphic to $\sR_{w^{-1}} \otimes-$ for $w\in \Lambda$. Similarly, in the category $\ASB$, the left and right adjoint of $\uBS_s\otimes-$ are both isomorphic to $\uBS_s\otimes-$ for $s\in S_{aff}$, meanwhile the left and right adjoint of $\usR_{w}\otimes-$ are both isomorphic to $\usR_{w^{-1}}\otimes-$ for $w\in \Lambda$. The statements for $\sR_w$ and $\usR_w$ are easy to see. See \cite[Lemma 2.15]{Abe1} for the proof of statement for $\BS_s$ and $\uBS_s$.

Therefore, we reduce to show that  the following map is an isomorphism
\begin{equation}\label{eq: Hom of SB} \Hom_{\SB_\hbar}(\sR_w\otimes \BS_{s_1}\otimes \dots \BS_{s_k}, \sR)/\hbar \rightarrow \Hom_{\ASB}(\usR_w\otimes\uBS_{s_1}\otimes \dots \otimes \uBS_{s_k}, \usR),
\end{equation}
for $s_1, \dots, s_k \in S_{aff}$ and $w\in \Lambda$. Moreover, we only need to care about the case when $w$ is the unit $e\in \Lambda$, because otherwise both sides of \eqref{eq: Hom of SB} are zero.  In the case when $w=e$, by analyzing the {\em light leaves} in \cite[Theorem 3.12(3)]{Abe1}, both sides of \eqref{eq: Hom of SB} are free module over $\usR$ of same rank and \eqref{eq: Hom of SB} is surjective. This implies that \eqref{eq: Hom of SB} is an isomorphism.
\end{proof}
\subsection{Two-sided cells in $\SB_\hbar$, $\SB$}\

We have the Bruhat order $\prec$ on $W_{aff}$ and $W_{ext}$.
The indecomposable objects $\{ B_x| x\in W_{ext}\}$ in $\SB_\hbar$ can be constructed as follows. For $x=e$ then $B_e=\sR$ and for $s\in S_{aff}$, then $B_s=\BS_s$. For any $w\in W_{aff}$ with a reduced expression $w=s_1 \dots s_{r}$ then $\BS_{s_1}\otimes \dots \otimes \BS_{s_r}=B_w \bigoplus \oplus_{u \prec w} B_u^{\oplus_?}$, so $B_w$ is a certain indecomposable summand of this tensor product of Bott-Samelson bimodules. If $\pi \in \Lambda$ and $w\in W_{aff}$ then $B_{\pi w} =\sR_\pi  \otimes B_w$.

The group algebra $\BC[W_{aff}]$ has the standard basis $\{ T_w| w\in W_{aff}\}$ and the Kazhdan-Lusztig basis $\{C_w| w\in W_{aff}\}$. The group algebra $\BC[W_{ext}]$ has the standard basis $\{\pi T_w| \pi \in \Lambda, w\in W_{aff}\}$ and the Kazhdan-Lusztig basis $\{ \pi C_w| \pi\in \Lambda, w \in W_{aff}\}$.

The following result follows by Soergel's categorification theorem by specialization to $q=1$
\begin{Thm}\label{thm: So-categorify}
There is an isomorphism of algebras $K_0(\SB_\hbar)\cong \BC[W_{ext}]$ by  $[B_w]\mapsto C_w$.
\end{Thm}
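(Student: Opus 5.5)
The plan is to deduce Theorem \ref{thm: So-categorify} from Soergel's categorification theorem for the graded, polynomial version, and then pass to our completed, ungraded setting by specializing $v\mapsto 1$.

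First, consider the graded category $\SB^{gr}$. It is generated by $\BS_s=S(\hat{\h})\otimes_{S(\hat{\h})^s}S(\hat{\h})$ and the graph bimodules for $x\in\Lambda$. Soergel's theorem (proved by Soergel for reflection-faithful representations; in the extended case it follows by twisting with $\Lambda$) gives an isomorphism $K_0(\SB^{gr})\cong \mathcal{H}(W_{ext})$ with the extended affine Hecke algebra, sending $[B_s]\mapsto \underline{H}_s$ and the graph bimodule of $\pi$ to $\pi$. The representation $\hat{\h}$ of $W_{aff}$ is reflection faithful; faithfulness of $W_{ext}$ is noted in the Remark after the definition of $\SB_\hbar$. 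Using the characteristic zero input (Soergel's conjecture, Elias--Williamson), $[B_w]$ goes to the Kazhdan--Lusztig basis element for every $w$.

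Second, compare $\SB^{gr}$ with $\SB_\hbar$. Completion at $0$, $M\mapsto \sR\otimes_{S(\hat{\h})}M\otimes_{S(\hat{\h})}\sR$, followed by forgetting the grading, is a monoidal functor onto the generators of $\SB_\hbar$. It is fully faithful up to completion, since Hom spaces between Soergel bimodules are graded free of finite rank over $S(\hat{\h})$, so Hom in $\SB_\hbar$ is the completion of the total ungraded Hom. Graded indecomposables remain indecomposable after completion: the degree-zero endomorphism ring is local and finite dimensional, and the completed endomorphism ring is local because it is finite over the complete local ring $\sR$ with local quotient. Objects differing by a grading shift become isomorphic. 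Hence $K_0(\SB_\hbar)\cong K_0(\SB^{gr})\otimes_{\BZ[v^{\pm1}]}\BZ$, with $v\mapsto1$. Tensoring with $\BC$ and using $\mathcal{H}(W_{ext})|_{v=1}=\BC[W_{ext}]$, the class $[B_w]$ maps to $C_w$. The monoidal product goes to the algebra product, so this is an algebra isomorphism.

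The main obstacle is the second step. One must check that specializing $v\mapsto 1$ loses no information: distinct indecomposables $B_x$ must stay non-isomorphic, and $K_0$ of a Krull--Schmidt category must be free on the indecomposables. Both follow from Krull--Schmidt in $\SB_\hbar$ together with the bijection between indecomposables and $W_{ext}$ in the Soergel classification proposition above.
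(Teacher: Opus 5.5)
Your overall route (Soergel's categorification theorem for the graded bimodules, then completion and $v\mapsto 1$) is the same as the paper's one-line argument. However, the hypothesis check you give for the first step is false: $\hat{\h}=\h\oplus\BC\hbar$ is \emph{not} a reflection-faithful representation of $W_{aff}$.

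\textbf{Why it fails.} For $0\neq\lambda\in Q$, the translation $t_\lambda$ acts by $x\mapsto x+\langle\lambda,x\rangle\hbar$ and fixes $\hbar$. So $t_\lambda-1$ has rank one, and $t_\lambda$ fixes the hyperplane $\{x+c\hbar:\langle\lambda,x\rangle=0\}$. Yet $t_\lambda$ has infinite order and is not a reflection. Up to $\h\cong\h^*$, this is the geometric representation of the affine Weyl group, a standard example of a faithful but not reflection-faithful representation. The Remark you cite only records faithfulness, which is strictly weaker. Hence Soergel's classification and categorification theorem, and the Elias--Williamson proof of Soergel's conjecture, do not apply as you state them, since both are formulated for reflection-faithful representations. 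In particular, the claim that $[B_w]$ goes to the Kazhdan--Lusztig basis needs an additional argument specific to this representation.

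\textbf{Possible repairs.} One option is Libedinsky's treatment of the geometric representation of affine Weyl groups, which extends Soergel's theory there and reduces Soergel's conjecture for it to the reflection-faithful case. Another is to pass through the Elias--Williamson diagrammatic Hecke category of this realization. You would identify it with bimodules via Abe's description as in Section~\ref{ssec: abe realization}; faithfulness of $W_{ext}$ on $\hat{\h}$ makes Abe's decomposition data automatic. You would then compare with the reflection-faithful Kac--Moody realization, which has the same Cartan matrix.

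\textbf{A smaller slip in the second step.} The bimodule $\sR\otimes_{S(\hat{\h})}M\otimes_{S(\hat{\h})}\sR$ is not the completion: for $M=S(\hat{\h})$ it gives $\sR\otimes_{S(\hat{\h})}\sR\neq\sR$. Use the one-sided completion $\sR\otimes_{S(\hat{\h})}M$ instead. For Soergel bimodules this agrees with $M\otimes_{S(\hat{\h})}\sR$, because the origin is fixed by $W_{ext}$; for example, for $M=S(\hat{\h})\otimes_{S(\hat{\h})^s}S(\hat{\h})$ both give $\BS_s=\sR\otimes_{\sR^s}\sR$. With this correction the rest of your specialization argument is sound: Hom spaces are completions of graded-free Homs, graded indecomposables stay indecomposable, shifts collapse, and distinct $B_x$ stay non-isomorphic.
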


For $u, w\in W_{ext}$, we define $u<^{LR} w$ if there $y, y' \in W_{ext}$ such that $C_yC_wC_{y'}=c^{y,y'}_{u,w}C_u +\dots$ with $c^{u,w}_{y,y'} \neq 0$. This defines  an equivalence relation  on $W_{ext}$: $u \sim^{LR} w$ if and only if $u<^{LR} w$ and $w<^{LR}u$, and the equivalence classes in $W_{ext}$ are called {\em two-sided cells}. 

We also have the order on the set of two-sided cells as follows: Let $C,C'$ be two-sided cells in $W_{ext}$ then $C < C'$ if there $x\in C$ and $y \in C'$ such that $x<^{LR} y$. It turns out that there is a unique smallest two-sided cell in this order. We denote this smallest two-sided cell by $C_{0}$. Let $w_0$ be the longest element in the finite Weyl group $W$, then $w_0\in C_{0}$, see \cite{Shi87}.

\begin{Rem} \label{rem: smallest two-sided cell} The above discussion combined with Theorem \ref{thm: So-categorify} implies that there is a full subcategory of $\SB_\hbar$, to be denoted by $\sC_{\hbar,0}$, consisting of all objects which are direct sums of $B_w, w\in C_0$. The full subcategory $\sC_{\hbar, 0}$ is closed under tensoring with any object of $\SB_\hbar$ on both sides. It contains $B_{w_0}=\sR\otimes_{\sR^W} \sR$ since $w_0 \in C_0$. Furthermore, for any $u,w \in C_0$, there are $y, y'\in W_{ext}$ such that $B_u$ is a direct summand of $B_y\otimes B_w\otimes B_{y'}$.
\end{Rem}


\begin{Rem}\label{rem: smallest twosided in SB}The above discussion is also applied to $\SB \cong \ASB$ by using Proposition \ref{prop: abe realization} and \cite{Abe1}. We  use $\sC_0$ to denote the full subcategory of $\SB\cong \ASB$ corresponding the smallest two-sided cell $C_0$.
\end{Rem}

\subsection{Anti-spherical module $M^{asph}$ and anti-spherical category $\mathcal{D}^{asph}$}\

Let $M^{asph}$ be the antispherical right module of the extended affine Weyl group $W_{ext}$, i.e., 
\[M^{asph}:= \mathsf{sign}\otimes_{\BC[W]} \BC[ W_{ext}]\]
where $\mathsf{sign}$ is the sign representation of $W$. Let $\{ T_x\}$ be the standard basis of $\BC[W_{ext}]$ and  $\{C_x\}$ be Kazhdan-Lusztig basis of $\BC[W_{ext}]$. Let $W_{ext}^f$ be the subset of $W_{ext}$ containing  all elements $x$ which have minimal length in the right coset $Wx$. Then $\{ 1\otimes T_x| x\in W_{ext}^f\}$ and $\{ 1\otimes C_x| x\in W_{ext}^f\}$ are the standard basis and Kazhdan-Lusztig basis of $M^{asph}$.

\begin{defi}Let $\mathcal{D}^{asph}$ be the category with the same objects as of $\SB \cong \ASB$ but let any morphism that factors through the direct sum of  some $B_w,~(w\not \in W_{ext}^f),$ to be zero.
\end{defi}
The category $\SB$ acts on $\mathcal{D}^{asph}$. Then $K_0(\mathcal{D}^{asph}) \cong M^{asph}$ as right $\BC[W_{ext}]$-modules, see \cite[$\mathsection 4$]{RW15}.


\section{Embedding $\SB_\hbar \rightarrow \Hilt_q(0,0)$ and $\SB \rightarrow \Hilt_\e(0,0)$}

We are still under the assumption \ref{eq: assumption on l}. Let us define the map $\iota: \h^*\rightarrow \sR$ via $\iota(\lambda)=(\lambda, -) \in \h \subset \sR$. For each $\lambda \in P$, we define the map 
\begin{equation*}\label{eq: varepsilon map}\varepsilon_\lambda: \CW_q \subset \BC[[\hbar]][K^{2\nu}]_{\nu \in P} \rightarrow \sR, \qquad K^{2\nu} \mapsto q^{(\lambda, 2\nu)} e^{2\pi \sqrt{-1}\iota(2\nu)}.
\end{equation*}
\begin{Lem}The map $\varepsilon_\lambda$ extends to an isomorphism $\CW_q^{\wedge_{\lambda}} \rightarrow \sR^{W_\lambda}$.
\end{Lem}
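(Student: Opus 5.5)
The plan is to reduce the statement to the classical Harish-Chandra isomorphism for $\CW_q$ (Proposition \ref{prop: HC center}, Remark \ref{rem: renormalize HC map}) combined with a formal-neighborhood comparison. I would proceed in three steps.

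\textbf{Step 1: $W$-equivariance and convergence.} The map $\varepsilon_\lambda$ is the pullback of functions along the formal map
\[
\hat{\h}^* \supset \{\text{formal neighbourhood of } 0\} \longrightarrow \Spec \BC[[\hbar]]\< K^{2\nu}\>,
\qquad x \mapsto \bigl(K^{2\nu}\mapsto q^{(\lambda,2\nu)}e^{2\pi\sqrt{-1}(2\nu,x)}\bigr),
\]
where $q=\e e^{2\pi\sqrt{-1}\hbar/\ell}$ is viewed inside $\sR$. Hence $\varepsilon_\lambda$ is defined on the whole polynomial algebra $\BC[[\hbar]][K^{2\nu}]$, and it sends $\fJ_\lambda$ into $\m$: modulo $\m$ the character is $K^{2\nu}\mapsto \e^{(\lambda,2\nu)}$, which is exactly the point $\lambda$ of \eqref{eq: closed point of HC}. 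Consequently $\varepsilon_\lambda$ extends continuously to $\CW_q^{\wedge_\lambda}\to\sR$.

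Next I would check that the image lies in $\sR^{W_\lambda}$. For $w\in W_\lambda\subset W_{aff}$ with $w\bullet\lambda=\lambda$, I would verify that $\varepsilon_\lambda\circ w_\bullet=w\circ\varepsilon_\lambda$ on the generators $K^{2\nu}$, using the affine action $t_\mu x=x+\<\mu,x\>\hbar$ on $\sR$. The translation part $t_{\ell\mu}$ in the $\ell$-shifted dot action (Remark \ref{rem: Waff for odd l}) contributes a factor $q^{\ell(\mu,2\nu)}=\e^{\ell(\mu,2\nu)}e^{2\pi\sqrt{-1}\hbar(\mu,2\nu)}$. The choice in assumption \ref{eq: assumption on l} kills the first factor, and the second factor matches the action of $t_\mu$ on $e^{2\pi\sqrt{-1}\iota(2\nu)}$. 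This is a bookkeeping computation I would carry out on generators.

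\textbf{Step 2: reduce to $\hbar=0$.} Both sides are complete, separated and flat over $\BC[[\hbar]]$. The target $\sR^{W_\lambda}$ is flat because $W_\lambda$ is finite and we are in characteristic $0$. So by the usual Nakayama/graded argument it suffices to show the reduction
\[
\CW_\e^{\wedge_\lambda}\longrightarrow (\sR^{W_\lambda})/\hbar
\]
is an isomorphism. Taking $W_\lambda$-invariants is exact, so $(\sR^{W_\lambda})/\hbar=\usR^{\overline{W_\lambda}}$, where $\overline{W_\lambda}$ is the image of $W_\lambda$ in $W$. This image is the stabilizer in $W$ of the point $\e^{2\lambda}$ of the torus $T$ under the $\bullet$-action. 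Note that $W_\lambda\to W$ is injective, since the stabilizer is finite and contains no translations.

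\textbf{Step 3: the $\hbar=0$ statement (main obstacle).} I need
\[
\bigl(\BC[T]^{(W,\bullet)}\bigr)^{\wedge}_{\lambda}\cong \BC[T]^{\wedge}_{\e^{2\lambda}}{}^{\Stab}
\]
together with the identification of $\BC[T]^\wedge_{\e^{2\lambda}}$ with $\usR=\BC[[\h^*]]$ via the exponential map, which is an isomorphism of formal groups. The displayed isomorphism is the standard fact that completing a finite quotient at the image of a point equals the invariants of the stabilizer on the completion at that point (Luna/étale slice).

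The delicate part is identifying the stabilizer in $W$ of $\e^{2\lambda}$ (for the dot action on $T$) with the image of $W_\lambda$, i.e., that no extra stabilizer appears. This uses Lemma \ref{lem: closed points on HC center}(b) and assumption \ref{eq: assumption on l}: $\ell$ is coprime to $|P/Q|$ and $\e$ is chosen so that $X\cap P=\ell P$. Under this, $w\bullet\lambda-\lambda\in X$ forces $w\bullet\lambda-\lambda\in\ell Q$, so the element lifts to $W_{aff}$ fixing $\lambda$. This is where I expect the real work to lie.
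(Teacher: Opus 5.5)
Your proof is correct. The paper gives no argument of its own here; it only cites \cite[Lemma 5.5.9]{IL23}. So your three steps are a self-contained replacement for that citation rather than a competing route. What they add is an explicit check that this paper's conventions fit together: the choice $q=\e e^{2\pi\sqrt{-1}\hbar/\ell}$, the normalization $t_\lambda x=x+\<\lambda,x\>\hbar$ on $\sR$, and $W_\lambda\subset W_{*aff}$ identified with a subgroup of $W_{aff}$.

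\textbf{Step 1.} The equivariance computation does close up. For $\tilde w=t_\beta w\in W_\lambda$ you get $w^{-1}\bullet\lambda=\lambda+\ell w^{-1}\beta$. The factor $\e^{\ell(2\nu,w^{-1}\beta)}$ equals $1$ because $(2\nu,w^{-1}\beta)\in\BZ$. The leftover factor $e^{2\pi\sqrt{-1}\hbar(2w\nu,\beta)}$ is exactly what $t_\beta$ does to $e^{2\pi\sqrt{-1}\iota(2w\nu)}$. Hence $\varepsilon_\lambda\circ w_\bullet=\tilde w\circ\varepsilon_\lambda$ on $\BC[[\hbar]]\<K^{2\mu}\>$, which gives $\varepsilon_\lambda(\CW_q)\subset\sR^{W_\lambda}$.

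\textbf{Step 2.} This step needs $\CW_q^{\wedge_\lambda}$ to be $\hbar$-flat and $\hbar$-adically complete, with reduction equal to $\CW_\e^{\wedge_\lambda}$; state why. The Harish-Chandra isomorphism gives $\CW_q\cong\BC[[\hbar]]\<K^{2\mu}\>^{W_\bullet}$, a finitely generated and hence Noetherian $\BC[[\hbar]]$-algebra. Exactness of $W$-invariants in characteristic $0$ gives $\CW_q/\hbar\CW_q\cong\CW_\e$. Consequently $\fJ_\lambda$ is a maximal ideal containing $\hbar$, and $\CW_q^{\wedge_\lambda}$ is a complete Noetherian local ring that is flat over $\CW_q$. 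On the other side, flatness of $\sR^{W_\lambda}$ is just torsion-freeness. Finiteness of $W_\lambda$ in characteristic $0$ is what you actually need, to get $\sR^{W_\lambda}/\hbar=\usR^{\overline{W_\lambda}}$.

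\textbf{Step 3.} You do not need $X\cap P=\ell P$ or the coprimality to $|P/Q|$. For $\lambda\in P$ one has $w\bullet\lambda-\lambda\in Q$. So Lemma \ref{lem: closed points on HC center}(b), namely $X\cap Q=Q^*=\ell Q$, already shows that the stabilizer of $\e^{2\lambda}$ under the dot action is the image of $W_\lambda$.
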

\begin{proof}See \cite[Lemma 5.5.9]{IL23}.
\end{proof}

Under identifications $\varepsilon_\mu: \CW_q^{\wedge_\mu} \iso \sR^{W_\mu}$ and $\varepsilon_\lambda: \CW_q^{\wedge_\lambda} \iso \sR^{W_\lambda}$, we have the functor 
\[\bullet_\dag: \HC_q(\mu, \lambda) \rightarrow (\sR^{W_\mu}, \sR^{W_\lambda})\bimod^\Lambda.\]
\begin{Prop}\label{prop: image of translation bimods}Let $\mu, \lambda$ be  in the closure of the fundamental alcove $\overline{C}$ in \eqref{eq: closure of C}. Assume $W_\lambda \subset W_\mu$. Recall the bimodule $P^{\mu, \lambda}_q$ defined in Definition \ref{defi: diagonal bimod}. Then

\noindent
(a) We have an isomorphism $P^{\mu, \lambda}_{q,\dag} \cong \sR^{W_\lambda}$ in the category $(\sR^{W_\mu}, \sR^{W_\lambda})\bimod^\Lambda$ , here on $\sR^{W_\lambda}$, the left $\sR^{W_\mu}$-action comes from the inclusion $\sR^{W_\mu} \hookrightarrow \sR^{W_\lambda}$, while the right $\sR^{W_\lambda}$-action comes from the right multiplication.

\noindent
(b) We have an isomorphism  $P^{\lambda, \mu}_{q,\dag} \cong \sR^{W_\lambda}$ in the category $(\sR^{W_\lambda}, \sR^{W_\mu})\bimod^\Lambda$, here on $\sR^{W_\lambda}$, the left $\sR^{W_\lambda}$-action comes from the left multiplication, while the right $\sR^{W_\mu}$-action comes from the inclusion $\sR^{W_\mu} \hookrightarrow \sR^{W_\lambda}$.
\end{Prop}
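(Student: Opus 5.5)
We would compute $P^{\mu,\lambda}_{q,\dag}$ by testing it against category $O$ via Lemma \ref{lem: resfunctor on HC and O are compatible}. Take $\chi$ regular unipotent, so that the restriction functors on $\HC_q$ and on $O_q$ are both defined and compatible. First we determine the bimodule structure up to rank. Since $P^{\mu,\lambda}_q$ is a direct summand of $T_q(\mu|\lambda)\otimes U_q^{fin,\lambda}$, its image $P^{\mu,\lambda}_{q,\dag}$ is a direct summand of $T_q(\mu|\lambda)_\dag\otimes$ (the restriction of $U_q^{fin,\lambda}$). By Proposition \ref{prop: Decomposition of completions} and the equivalence $\fP$, the latter is a finitely generated module over $\CW_q^{\wedge_\lambda}\cong\sR^{W_\lambda}$. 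Moreover, $P^{\mu,\lambda}_{q,\dag}$ is $\hbar$-flat, since $\bullet_\dag$ preserves flatness on diagonal bimodules. It is also a $(\sR^{W_\mu},\sR^{W_\lambda})$-bimodule on which, by Poisson-ness, the left $\sR^{W_\mu}$-action and the right action are related through the Harish-Chandra maps.

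Next, apply Lemma \ref{lem: resfunctor on HC and O are compatible} with $N=\Delta_{q,\sR}(\lambda)$, or more simply with $N=\Delta_q(\lambda)$. By the Remark after Definition \ref{defi: diagonal bimod} and Lemma \ref{lem: Verma under translation},
\[
P^{\mu,\lambda}_q\otimes_{U_q^{fin,\lambda}}\Delta_q(\lambda)\cong \pr_{[\mu]}\big(T_q(\mu|\lambda)\otimes\Delta_q(\lambda)\big)\cong\Delta_q(\mu),
\]
so $P^{\mu,\lambda}_{q,\dag}\otimes_{\sR^{W_\lambda}}\Delta_q(\lambda)_\dag\cong\Delta_q(\mu)_\dag$. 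Reducing mod $\hbar$ and using $(\Delta_\e(\nu))_\dag\cong\BC$, the fiber of $P^{\mu,\lambda}_{q,\dag}$ at the closed point of $\sR^{W_\lambda}$ is one-dimensional. By Nakayama's lemma, $P^{\mu,\lambda}_{q,\dag}$ is then cyclic as a right $\sR^{W_\lambda}$-module, i.e.\ a quotient of $\sR^{W_\lambda}$.

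To get freeness, we would use the deformed Verma modules $\Delta_{q,\sR}(\lambda)$, whose $\dag$ should be $\sR$ with $\CW_q^{\wedge_\lambda}$ acting through $\varepsilon_\lambda$. The isomorphism $\Delta_{q,\sR}(\mu)$ from Lemma \ref{lem: Verma under translation} then shows that tensoring the cyclic module with $\sR$ over $\sR^{W_\lambda}$ gives $\sR$, which is faithful. Hence the annihilator vanishes and $P^{\mu,\lambda}_{q,\dag}\cong\sR^{W_\lambda}$ as a right module. The paper omits the $O_{q,\sR}$ restriction functor, so if it is unavailable we would instead compare ranks generically over $\CW_q^{\wedge_\lambda}$, using Lemma \ref{lem: proj is hilt}(a) and adjointness (Lemma \ref{lem: adjoint}) to compute $\Hom(P^{\mu,\lambda}_q,P^{\mu,\lambda}_q)=\Hom_{\cU_q}(T_q(\mu|\lambda),P^{\mu,\lambda}_q)$. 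By full faithfulness (Proposition \ref{prop: ff on diagonal bimods}), this equals $\End(P^{\mu,\lambda}_{q,\dag})$.

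The left $\sR^{W_\mu}$-action is identified as follows. The left $\CW_q^{\wedge_\mu}$-action on $\Delta_q(\mu)$ is given by the central character at $\mu$, the right action on $\Delta_q(\lambda)$ by the one at $\lambda$, and these match under the $\varepsilon$-identifications: both come from $K^{2\nu}$ acting on highest weight vectors, with shift $\mu-\lambda\in\ell P$ (by assumption \ref{eq: assumption on l}, $q^{(\mu-\lambda,2\nu)}$ is absorbed). This forces the left action on the generator to factor through the inclusion $\sR^{W_\mu}\hookrightarrow\sR^{W_\lambda}$. The $\Lambda$-grading is trivial because $T_q(\mu|\lambda)$ has weights in $\mu-\lambda+Q$ and the relevant component sits in degree $0$.

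Part (b) follows by the symmetric argument with $P^{\lambda,\mu}_q$, using the right-module version of category $O$ (Remark \ref{rem: right hand version}), or alternatively by the adjointness of Lemma \ref{lem: adjoint}, which makes $P^{\lambda,\mu}_q$ biadjoint to $P^{\mu,\lambda}_q$. The main obstacle we expect is the freeness/faithfulness step, which requires the deformed category $O$ restriction.
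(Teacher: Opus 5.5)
Your fiber computation is correct and is the paper's first step: applying Lemma \ref{lem: resfunctor on HC and O are compatible} to $\Delta_\e(\lambda)$ gives $P^{\mu,\lambda}_{q,\dag}/P^{\mu,\lambda}_{q,\dag}\fJ_\lambda\cong\BC$, so the module is cyclic over $\sR^{W_\lambda}$. The freeness step, however, is not established.

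\emph{Freeness.} Your main route needs a restriction functor on $O_{q,\sR}$, which the paper explicitly does not construct. Your fallback does not work as stated. If $P^{\mu,\lambda}_{q,\dag}\cong\sR^{W_\lambda}/I$, its bimodule endomorphisms are again $\sR^{W_\lambda}/I$. So computing $\End(P^{\mu,\lambda}_q)$ through full faithfulness says nothing about $I$ unless you already know faithfulness over $\CW_q^{\wedge_\lambda}$. The missing input is the elementary one the paper uses.
\begin{itemize}
\item $P^{\mu,\lambda}_q$ is torsion-free over $\CW_q^{\wedge_\lambda}$, so right multiplication by a nonzero $a\in\CW_q^{\wedge_\lambda}$ is an injective endomorphism in $\HC_q(\mu,\lambda)$.
\item Since $\bullet_\dag$ is exact and $\CW_q^{\wedge_\lambda}$-linear, multiplication by $a$ stays injective on $P^{\mu,\lambda}_{q,\dag}$.
\item A cyclic torsion-free module over the domain $\sR^{W_\lambda}$ is free of rank one.
\item Proposition \ref{prop: ff on diagonal bimods} then gives $\End_{\HC_q(\mu,\lambda)}(P^{\mu,\lambda}_q)=\sR^{W_\lambda}$, acting by right multiplication.
\end{itemize}

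\emph{The left action.} Your identification does not work. The claim $\mu-\lambda\in\ell P$ is false in exactly the cases used later, e.g.\ $\lambda=0$ with $\mu=\lambda_s$ on a wall, or $\mu=x\bullet_\ell 0$ for $x\in\Lambda$; the only weight of $\overline{C}$ in $\ell P$ is $0$. More fundamentally, undeformed Verma modules cannot detect the action. $\CW_q$ acts on $\Delta_q(\lambda)$ through a $\BC[[\hbar]]$-valued character, so $P^{\mu,\lambda}_{q,\dag}\otimes_{\sR^{W_\lambda}}\Delta_q(\lambda)_\dag$ only sees the specialization $\sR^{W_\lambda}\to\BC[[\hbar]]$. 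That cannot pin down a homomorphism $\sR^{W_\mu}\to\sR^{W_\lambda}$. The paper argues as follows.
\begin{itemize}
\item Left multiplication by $z\in\CW_q^{\wedge_\mu}$ is a morphism in $\HC_q(\mu,\lambda)$, since $z$ is central and $\cU_q$-invariant. Hence it equals right multiplication by some $\phi(z)\in\sR^{W_\lambda}$.
\item The isomorphism $P^{\mu,\lambda}_q\otimes_{U_q^{fin,\lambda}}\Delta_{q,\sR}(\lambda)\cong\Delta_{q,\sR}(\mu)$ (Lemma \ref{lem: Verma under translation}) induces a map $\End_{\HC_q(\mu,\lambda)}(P^{\mu,\lambda}_q)\to\End_{O_{q,\sR}}(\Delta_{q,\sR}(\mu))$.
\item This map sends right multiplication by $a$ to the $\sR$-scalar $\varepsilon_\lambda(a)$, so it is injective because $\Delta_{q,\sR}(\mu)$ is $\sR$-free.
\item Since $z$ acts on $\Delta_{q,\sR}(\mu)$ by $\varepsilon_\mu(z)$, $\phi$ is the inclusion $\sR^{W_\mu}\hookrightarrow\sR^{W_\lambda}$.
\end{itemize}
This uses only the tensor identity in $O_{q,\sR}$, not a restriction functor on it.

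Your claim that the $\Lambda$-degree is $0$ is also wrong in general. The degree is the class of $\mu-\lambda$ in $P/Q$; compare $\sR_x$ in degree $x$ in Theorem \ref{thm: Sb-hbar embed}. Part (b) inherits all of the above gaps.
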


\begin{proof}(a) {\it Step 1:} Since $\bullet_\dag: \HC_q(\mu, \lambda) \rightarrow (\sR^{W_\mu}, \sR^{W_\lambda})\bimod^\Lambda$ is exact and $(\sR^{W_\mu}, \sR^{W_\lambda})$-linear, 
\[ P^{\mu, \lambda}_{q,\dag}/P^{\mu,\lambda}_{q, \dag}\fJ_\lambda \cong (P^{\mu, \lambda}_q/P^{\mu, \lambda}_q \fJ_\lambda)_\dag,\]
here $\fJ_\lambda$ is the maximal ideal of $\CW_q^{\wedge_\lambda} \iso \sR^{W_\lambda}$.

 By Lemma \ref{lem: resfunctor on HC and O are compatible}, $(P^{\mu, \lambda}_q/P^{\mu, \lambda}_q \fJ_\lambda)_\dag \otimes_{\BC} \Delta_\e(\lambda)_\dag \cong \Delta_\e(\mu)_\dag$. Hence $(P^{\mu, \lambda}_q/P^{\mu, \lambda}_q\fJ_\lambda)_\dag \cong \BC$. 
 
 By Proposition \ref{prop: ff on diagonal bimods}, 
\begin{equation}\label{eq: Hom on diagonal bimods} \Hom_{\HC_q(\mu, \lambda)} (P^{\mu, \lambda}_q, P^{\mu, \lambda}_q) \cong \Hom_{(\sR^{W_\mu}, \sR^{W_\lambda})\bimod^\Lambda}(P^{\mu, \lambda}_{q,\dag}, P^{\mu, \lambda}_{q,\dag}),
\end{equation}
Since $\bullet_\dag$ is $(\sR^{W_\mu}, \sR^{W_\lambda})$-linear and $P_q^{\mu, \lambda}$ is torsion free over $\CW_q^{\wedge_\lambda} \cong \sR^{W_\lambda}$, it follows that $P^{\mu, \lambda}_{q,\dag}$ is torsion free over $\sR^{W_\lambda}$. Moreover, $P^{\mu, \lambda}_{q, \dag}$ is finitely generated as a right $\sR^{W_\lambda}$-module, hence, $P^{\mu, \lambda}_{q,\dag} \cong \sR^{W_\lambda}$ as right $\sR^{W_\lambda}$-modules.

{\it Step 2:} From \eqref{eq: Hom on diagonal bimods} and Step 1, it follows that 
the right multiplication of $\CW_q^{\wedge_\lambda} \cong \sR^{W_\lambda}$ must induce an isomorphism 
\[ \sR^{W_\lambda} \iso \Hom_{\HC_q(\mu, \lambda)}(P^{\mu, \lambda}_q, P^{\mu, \lambda}_q).\]

{\it Step 3:} From the isomorphism $P^{\mu, \lambda}_q\otimes_{U^{fin, \lambda}_q} \Delta_{q, \sR}(\lambda) \cong \Delta_{q,\sR}(\mu)$, we have a $(\CW_q^{\wedge_\mu}, \CW_q^{\wedge_\lambda})$-linear map:
\begin{equation}\label{eq: Hom(HC) to Hom(O)}\Hom_{\HC_q(\mu, \lambda)}(P^{\mu, \lambda}_q, P^{\mu, \lambda}_q) \rightarrow \Hom_{O_{q, \sR}}(\Delta_{q,\sR}(\mu), \Delta_{q,\sR}(\mu)).
\end{equation}
Under the map  \eqref{eq: Hom(HC) to Hom(O)}, the right multiplication of $\CW_q^{\wedge_\lambda} \iso \sR^{W_\lambda}$ on $P^{\mu, \lambda}_q$ becomes the right multiplication of $\sR^{W_\lambda}$ on $\Delta_{q,\sR}(\mu)$. Combining this observation  with Step $2$, we see that \eqref{eq: Hom(HC) to Hom(O)} is injective.

Therefore, under the map  \eqref{eq: Hom(HC) to Hom(O)},  the left multiplication of $\CW^{\wedge_\mu}$ on $P^{\mu, \lambda}_q$  becomes the left multiplication of $\CW_q^{\wedge_\mu}$ on $\Delta_{q,\sR}(\mu)$. Note that $\varepsilon_\mu: \CW_q^{\wedge_\mu} \iso \sR^{W_\mu}$ identifies the left $\CW_q^{\wedge_\mu}$-action on $\Delta_{q,\sR}(\mu)$ with the right $\sR^{W_\mu}$-action on $\Delta_{q,\sR}(\mu)$. This implies that the left  and right $\sR^{W_\mu}$-actions on $P^{\mu, \lambda}_q$ coincide,  hence we obtain an isomorphism $P^{\mu, \lambda}_{q,\dag}\cong \sR^{W_\lambda}$ in $(\sR^{W_\mu}, \sR^{W_\lambda})\bimod^\Lambda$.

\noindent
(b) The proof is the same as in part $(a)$ but involving the right versions $ O^r_\e, O^r_q, O^r_{q, \sR}$.
\end{proof}

\begin{Thm}\label{thm: Sb-hbar embed} There is a full embedding of monoidal additive  categories $\SB_\hbar \rightarrow \Hilt_q(0,0)$.
\end{Thm}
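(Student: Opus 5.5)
We plan to build the embedding by sending the generators of $\SB_\hbar$ to translation bimodules and then checking full faithfulness through the restriction functor $\bullet_\dag$. For each $s\in S_{aff}$ we fix a weight $\mu_s\in\overline{C}$ on exactly one wall of the fundamental alcove, so that $W_{\mu_s}=\{1,s\}$. We use the identification $\varepsilon_0:\CW_q^{\wedge_0}\iso\sR^{W_0}=\sR$, since $W_0$ is trivial. We then define
\[\Theta_s:=P^{0,\mu_s}_q\otimes_{U_q^{fin,\mu_s}}P^{\mu_s,0}_q\in\HC_q(0,0).\]
By Proposition \ref{prop: image of translation bimods}(a),(b) together with monoidality of $\bullet_\dag$, we get
\[\Theta_{s,\dag}\cong\sR\otimes_{\sR^{s}}\sR=\BS_s.\]
For $x\in\Lambda$ we use the length zero element $\pi_x\in W_{ext}$ stabilizing $C$, which acts on $\overline{C}$ by an alcove automorphism fixing the $\bullet_\ell$-orbit of $0$. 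We take the bimodule $P^{0,0}(T_q(\nu))$ with $\nu$ the dominant weight in $W(\pi_x\bullet_\ell 0-0)$. Repeating the argument of Proposition \ref{prop: image of translation bimods} with $\Delta_{q,\sR}(\pi_x\bullet 0)$ in place of $\Delta_{q,\sR}(\mu)$, we expect this to restrict to the graph bimodule $\sR_x$, placed in $\Lambda$-degree $x$. The $\Lambda$-grading bookkeeping is exactly what pins down the twist by $x$.

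Next we pass to the whole category. Let $\mathcal{T}$ be the full monoidal additive Karoubian subcategory of $\Hilt_q(0,0)$ generated by the $\Theta_s$ and these graph-type bimodules. Tensor products of translation bimodules are diagonal bimodules, because $P^{\utheta,\utheta'}(V)\otimes P^{\utheta',\utheta''}(V')$ is a summand of $P^{\utheta,\utheta''}(V\otimes V')$. They are also HC-tilting, since tensor products of tilting modules are tilting. Proposition \ref{prop: ff on diagonal bimods} applies at a regular unipotent $\chi$, where $Z(G)\to C(\chi)$ is surjective. It shows that $\bullet_\dag$ is fully faithful on diagonal bimodules. Since $\bullet_\dag$ is also monoidal, it gives a fully faithful monoidal functor from $\mathcal{T}$ into $\sR\bimod^\Lambda$. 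The image of $\mathcal{T}$ contains every tensor product $\sR_x\otimes\BS_{s_1}\otimes\dots\otimes\BS_{s_k}$.

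Because the functor is fully faithful and the source is Krull–Schmidt (Remark \ref{rem: Krull-Schmidt}), idempotents correspond on both sides. Hence the essential image of the Karoubian closure is exactly $\SB_\hbar$. Inverting on the image gives a fully faithful monoidal functor $\SB_\hbar\to\Hilt_q(0,0)$, and it is additive. Finally, the monoidal structure constraints on the generators are inherited directly from the monoidality of $\bullet_\dag$.

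We expect the main obstacle to be the graph bimodules $\sR_x$. One must check that their restriction is precisely the twisted bimodule, and not merely isomorphic to $\sR$ on each side separately. To do this we would adapt Step 3 of Proposition \ref{prop: image of translation bimods}. That step compares the left $\CW_q^{\wedge_0}$-action on $\Delta_{q,\sR}(\pi_x\bullet 0)$ with its right $\sR$-action, and should produce the twist by $\pi_x$ through $\varepsilon_{\pi_x\bullet 0}=\pi_x\circ\varepsilon_0$. A secondary check is that every object involved is a diagonal bimodule, so that Proposition \ref{prop: ff on diagonal bimods} really applies.
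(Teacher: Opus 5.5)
Your proposal is correct and follows the paper's own argument: $\BS_s$ and $\sR_x$ are realized as images under $\bullet_\dag$ of the translation bimodules $P^{0,\lambda_s}_q\otimes P^{\lambda_s,0}_q$ and $P^{x\bullet_\ell 0,0}_q$ via Proposition~\ref{prop: image of translation bimods}, and full faithfulness on the generated Karoubian monoidal subcategory comes from Proposition~\ref{prop: ff on diagonal bimods} together with monoidality of $\bullet_\dag$. One simplification: $x\bullet_\ell 0$ and $0$ define the same point of $\Spec\CW_\e$, so your $P^{0,0}(T_q(\nu))$ is literally $P^{x\bullet_\ell 0,0}_q$, and Proposition~\ref{prop: image of translation bimods}(a) applies directly with $\mu=x\bullet_\ell 0$, $\lambda=0$. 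The twist by $x$ then comes from comparing $\varepsilon_{x\bullet_\ell 0}$ with $\varepsilon_0$, not from the $\Lambda$-grading, so no re-derivation is needed.
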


\begin{proof}For each $s\in S_{aff}$, let $\lambda_s$ be a weight contained in the facet of the closure of the fundamental alcove associated to $s$. For each $x \in \Lambda \hookrightarrow W_{ext}= \Lambda \ltimes W_{aff}$, then $x \bullet_\ell 0$ is contained in the fundamental alcove $C$. The following bimodules are contained in $\Hilt_q(0,0)$: 
\[ P^{x, 0}_q:= P^{x\bullet_\ell 0, 0}_q, \quad P^{0,x}_q:= P^{0, x \bullet_\ell 0}_q, \quad P^{0, \lambda_s}_q\otimes_{U_q^{fin, \lambda_s}} P^{\lambda_s, 0}_q.\]
By Proposition \ref{prop: image of translation bimods}, the images of these bimodules under $\bullet_\dag: \HC_q(0,0) \rightarrow \sR\bimod^\Lambda$ are 
\[ \sR_x , \quad \sR_{x^{-1}}, \quad \sR\otimes_{\sR^s} \sR ,\]
in the degree $x, x^{-1}, 0 \in \Lambda$, respectively.

Combining this  with Proposition \ref{prop: ff on diagonal bimods},  we obtain a full embedding $\SB_\hbar \rightarrow \Hilt_q(0,0)$.  
\end{proof}

\begin{Cor}\label{cor: Sb embed}(a) There is a monoidal embedding  $\fI_1: \SB\rightarrow \Hilt_\e(0,0)$. Furthermore, tensoring with objects in $\SB \subset \Hilt_\e(0,0)$ on the left or on the right give exact endofunctors of $\HC_\e(0,0)$.

\noindent
(b) This  gives us an embedding of monoidal triangulated categories $K^b(\SB) \hookrightarrow D^b(\HC_\e(0,0))$
\end{Cor}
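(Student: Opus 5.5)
The plan is to reduce modulo $\hbar$ from Theorem \ref{thm: Sb-hbar embed}. Denote by $\fI_\hbar:\SB_\hbar\hookrightarrow \Hilt_q(0,0)$ the full monoidal embedding built there. It sends $\sR_x\mapsto P^{x,0}_q$ and $\BS_s\mapsto P^{0,\lambda_s}_q\otimes P^{\lambda_s,0}_q$. Composing with reduction $M\mapsto M/\hbar M$ lands in $\HC_\e(0,0)$. Indeed, $P^{\utheta,\utheta'}(V_q)/\hbar$ is $P^{\utheta,\utheta'}(V_q/\hbar V_q)$, since reduction commutes with the block projections and $U_q^{fin,\utheta'}/\hbar=U_\e^{fin,\utheta'}$. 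A tilting $V_q$ reduces to a tilting $V_\e$ (good filtrations reduce to good filtrations), so the image lies in $\Hilt_\e(0,0)$. The functor $M\mapsto M/\hbar M$ is monoidal, because $(M\otimes_{U_q^{fin,0}}N)/\hbar\cong (M/\hbar)\otimes_{U_\e^{fin,0}}(N/\hbar)$.

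To descend to $\SB$, I will show that on diagonal bimodules
\[\Hom_{\HC_q}(P,P')/\hbar\;\iso\;\Hom_{\HC_\e}(P/\hbar,P'/\hbar).\]
By Lemma \ref{lem: proj is hilt}(a), the left side is $\Hom_{\cU_q}(V_q,P')/\hbar$ and the right side is $\Hom_{\cU_\e}(V_\e,P'/\hbar)=\Hom_{\cU_q}(V_q,P'/\hbar P')$. Applying $\Hom_{\cU_q}(V_q,-)$ to $0\to P'\xrightarrow{\hbar}P'\to P'/\hbar\to 0$ gives an exact sequence
\[0\to \Hom(V_q,P')/\hbar\to\Hom(V_q,P'/\hbar)\to \Ext^1_{\cU_q}(V_q,P').\]
So it suffices to show $\Ext^1_{\Rep(\cU_q)}(V_q,P')=0$. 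Here $P'$ is a summand of $V'_q\otimes U_q^{fin,0}$ with $V'_q$ tilting, so this reduces to $\Ext^1({}^*V'_q\otimes V_q,U_q^{fin})\otimes\CW_q^{\wedge_0}=0$. That vanishing holds because $U_q^{fin}$ has a good filtration and ${}^*V'_q\otimes V_q$ is tilting, exactly as in Proposition \ref{prop: embed of Hilt}, but over $\BC[[\hbar]]$. Granting this, $\fI_\hbar$ induces a fully faithful monoidal functor $\fI_1:\SB\to\Hilt_\e(0,0)$. Fullness and faithfulness of $\fI_\hbar$ and the displayed isomorphism give the bijection on $\Hom/\hbar$.

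The main obstacle is this Ext-vanishing in the deformed setting: it needs flat-base-change of Ext along completion, $U_q^{fin}$ having a good filtration over $R=\BC[[\hbar]]$, and the Ext-orthogonality between Weyl-filtered and good-filtered objects over $R$. I would cite \cite[Lemma 7.28]{LTV} for the latter. Should that fail, the fallback is a Nakayama argument. Both Hom spaces are finitely generated over $\sR$. Surjectivity onto $\Hom_{\HC_\e}$ then follows by lifting maps out of $V_q$, which is legitimate when $V_q$ is projective (Lemma \ref{lem: Im divided by h}); the general tilting case is handled via summands of $\St_q\otimes N_q$.

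Exactness of tensoring: each object of $\fI_1(\SB)$ is a summand of $P^{0,\lambda}(V_\e)$ with $V_\e$ finite dimensional (a translation bimodule or a product of such). The functor $P^{0,\lambda}(V)\otimes_{U_\e^{fin,\lambda}}-$ is $\pr_{0,\cdot}(V\otimes_\BC-)$, which is exact. Similarly on the right, via the left-module description of $\HC_\e$.

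(b) Extend $\fI_1$ to $K^b(\SB)\to K^b(\Hilt_\e(0,0))$ and compose with Proposition \ref{prop: embed of Hilt}. The composite is fully faithful, and monoidality comes from exactness in (a), which ensures that the derived tensor product on objects of $\Hilt_\e$ agrees with the ordinary one.
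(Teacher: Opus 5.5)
Your proposal is correct and is essentially the paper's argument. You reduce the embedding of Theorem~\ref{thm: Sb-hbar embed} modulo $\hbar$, show that $\Hom$ between HC-tilting bimodules commutes with reduction mod $\hbar$ (using the good filtration on $U_q^{fin}$, tiltingness of $V_q$ and flatness of $\CW_q^{\wedge_0}$ over $\CW_q$), and get (b) from Proposition~\ref{prop: embed of Hilt}.

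One correction. $U_q^{fin,0}$ is the quotient of $\widehat U=U_q^{fin}\otimes_{\CW_q}\CW_q^{\wedge_0}$ by $\bigcap_k\hbar^k\widehat U$, so the Ext-vanishing you cite gives surjectivity of $\Hom_{\cU_q}(V_q,\widehat U)\to\Hom_{\cU_\e}(V_\e,U_\e^{fin,0})$, not $\Ext^1(V_q,P')=0$ outright. The paper moves this surjectivity to $U_q^{fin,0}$ by a commutative diagram, using that $\widehat U$ and $U_q^{fin,0}$ have the same reduction mod $\hbar$. That surjectivity is all your long exact sequence actually needs.

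Your fallback via summands of $\St_q\otimes N_q$ would not work. Non-projective tilting modules, such as the trivial one needed for the unit $\sR$, are not of that form.
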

\begin{proof}
(a) There is a functor $\Hilt_q(0,0)\rightarrow \Hilt_\e(0,0)$ defined  via $M_q \mapsto M_\e:= M_q/\hbar M_q$. 

We will show that for any HC-tilting bimodule $M_q, N_q \in \Hilt_q(0,0)$, we have
\[ \Hom_{\HC_\e(0,0)}(M_\e, N_\e) \cong \Hom_{\HC_q(0,0)}(M_q, N_q)/\hbar \Hom_{\HC_q(0,0)}(M_q, N_q).\]
It is enough to prove that for any tilting modules $V_q,W_q$   in $\Rep^{fd}(\cU_q(\g))$, we have
\begin{multline*}\Hom_{U_\e^{fin, \uchi}\rmod^{G_\e}}(V_\e\otimes_{\BC} U_\e^{fin, \uchi}, W_\e \otimes_{\BC}U_\e^{fin, \uchi}) \cong \\
\Hom_{U_q^{fin, \uchi} \rmod^{G_q}}(V_q\otimes_{\BC[[\hbar]]}U_q^{fin, \uchi}, W_q\otimes_{\BC[[\hbar]]} U_q^{fin, \uchi})/\hbar
\end{multline*}
which amounts to proving the following isomorphism for any tilting module $V_q$
\begin{equation}\label{eq: Hom(V, Ufin)}
\Hom_{\cU_q}(V_\e, U_\e^{fin, \uchi}) \cong \Hom_{\cU_q}(V_q, U_q^{fin, \uchi})/\hbar \Hom_{\cU_q}(V_q, U_q^{fin, \uchi})
\end{equation}
Recall that $U_q^{fin, \uchi}=\widehat{U}/\bigcap_k \hbar^k \widehat{U}$ where $\widehat{U}:=U_q^{fin}\otimes_{\CW_q}\CW_q^{\wedge_{\uchi}}$. 
We consider the following diagram 
\[\begin{tikzcd}
     0 \arrow[r]&\Hom_{\cU_q}(V_q, \widehat{U}) \arrow[d] \arrow[r, hook, " \cdot \hbar"] & \Hom_{\cU_q}(V_q, \widehat{U}) \arrow[d] \arrow[r, two heads, " \pi"] & \Hom_{\cU_\e} (V_\e, U_\e^{fin, \uchi})\arrow[d, "\cong"]\arrow[r] &0\\
     0\arrow[r]& \Hom_{\cU_q}(V_q, U_q^{fin, \uchi}) \arrow[r, hook,  "\cdot \hbar"] & \Hom_{\cU_q}(V_q, U_q^{fin, \uchi}) \arrow[r] & \Hom_{\cU_\e} (V_\e, U_\e^{fin, \uchi}) \arrow[r]& 0 
\end{tikzcd}
\]
The first row is exact since $U_q^{fin}$ has an exhaustive good filtration, $V_q$ is tilting and $\CW_q^{\wedge_{\uchi}}$ is flat over $\CW_q$. Therefore, the second row is also exact.  This proves \eqref{eq: Hom(V, Ufin)}. 

\noindent
(b) Follow by part (a) and Proposition \ref{prop: embed of Hilt}.
\end{proof}

 \begin{defi}\label{defi: Hq and He}
 Let $\CH_q$ be the image of $\SB_\hbar$ in $\Hilt_q(0,0)$. Let $\CH_\e$ be the image of $\SB$ in $\Hilt_\e(0,0)$.
 \end{defi}


\section{Simple Harish-Chandra bimodules}  \label{sec: simple HC}


We are under the assumption \ref{eq: assumption on l}. We will classify simple Harish-Chandra bimodules in order to study projective objects in $\HC_\e(0,0)$ and $\HC_q(0,0)$. 

To simplify the notation, we replace the tensor product $-\otimes_{U_\e^{fin, ?}}-$ by $-\star-$.  We write $\Rep(G_\e)$ for $\Rep^{fd}(\cU_\e(\g))$. Under the assumption \ref{eq: assumption on l}, $\Rep^{fd}(\cU^*_\e(\g))$ is $\Rep(G)$, the category of finite dimensional  $G$-representations. Recall the Frobenius  functor $\tFr^*: \Rep(G) \rightarrow \Rep(G_\e)$.

Let $\chi_u$ be a regular unipotent element in $G_0$, the open Bruhat cell in $G$. Let $\HC_u$ denote the category $U_\e^{fin, \uchi_u}\rmod^{G_\e}$. Under the identification $Z_\cap \cong \BC[T/W]$ in Proposition \ref{prop: description of Z} (note that under the assumption \ref{eq: assumption on l}, $T^d \cong T$), the completion $Z_\cap^{\wedge_{\uchi_u}}$ corresponds to the completion of $\BC[T/W]$ at the point $1$.

\begin{Lem}\label{lem: diagonal bimodule and Rep(G)} For any diagonal bimodule $D \in \HC_\e(\mu, \lambda)$ and $M\in \Rep_{[\lambda]}(G_\e)$, $V\in \Rep(G)$, we have
\[ D\star (\tFr^*(V) \otimes M) \cong \tFr^*(V) \otimes(D \star M) \]   
\end{Lem}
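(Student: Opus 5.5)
The plan is to reduce to the generating diagonal bimodules and then produce a natural isomorphism. Here $M$ is a $U_\e^{fin}$-module in $\Rep(G_\e)$, completed at $\lambda$ (a module in the block $[\lambda]$), and $D\star M = D\otimes_{U_\e^{fin,\lambda}} M$. Because $\star$ is additive in $D$ and commutes with direct summands, it suffices to treat $D=P^{\mu,\lambda}(W)$, the summand of $W\otimes_\BC U_\e^{fin,\lambda}$ lying in $\HC_\e(\mu,\lambda)$, for $W\in\Rep^{fd}(\cU_\e(\g))$. For such $D$, $D\star N$ is the $[\mu]$-component of $W\otimes_\BC N$ for any module $N$ in the block $[\lambda]$, by the same mechanism as in the Remark following Definition~\ref{defi: diagonal bimod}. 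So the claim becomes
\[
\pr_{[\mu]}\big(W\otimes \tFr^*(V)\otimes M\big)\cong \tFr^*(V)\otimes \pr_{[\mu]}(W\otimes M).
\]

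First I will check that $\tFr^*(V)\otimes M$ is again a module in the block $[\lambda]$, with the $U_\e^{fin}$-action given through $M$ via the equivariant structure. The key observation is that the action of $U_\e^{fin}$ on a tensor product with $\tFr^*(V)$ is governed by $\Delta$. The small quantum group part, i.e.\ the generators $\tE_i,\tF_i,K^\lambda$, acts trivially on $\tFr^*(V)$. Hence $\CW_\e$ acts on $\tFr^*(V)\otimes N$ through its action on $N$, at least after passing to associated graded of a suitable filtration.

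To make this precise I will use Lemma~\ref{lem: left and right Zfin-actions coincide}, together with the fact that $\CW_\e$ commutes with $U_\e^{fin}$. The left $U_\e^{fin}$-action on $X\otimes N$, for $X\in\Rep(G_\e)$, is defined by formula \eqref{eq: left action}, $h\cdot(x\otimes n)=\sum \iota(h_{(1)})x\otimes h_{(2)}n$. When $X=\tFr^*(V)$, the factor $\iota(h_{(1)})$ acts through $\tFr$, whose image kills every element of positive degree in $U^{ev}_\e$ other than the Frobenius-type elements. I will then construct the isomorphism as the braiding (or flip) map
\[
W\otimes\tFr^*(V)\otimes M\to \tFr^*(V)\otimes W\otimes M,
\]
coming from the braided structure on $\Rep(G_\e)$. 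I will verify that this map intertwines the $\CW_\e$-actions, so that it preserves the $[\mu]$-generalized eigenspaces and hence commutes with $\pr_{[\mu]}$.

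The main obstacle is showing that $\CW_\e$, or at least its completion at $\mu$, acts on $\tFr^*(V)\otimes N$ compatibly with its action on $N$. Concretely, I need that the generalized $[\mu]$-component of $\tFr^*(V)\otimes N$ equals $\tFr^*(V)\otimes N^{[\mu]}$. I will attack this through central characters. By Proposition~\ref{prop: HC center}, $\CW_\e$ is identified with $W_\bullet$-invariants in the group algebra of $2P$. Tensoring with $\tFr^*(V)$ shifts weights by elements of $\ell P$. Under assumption \ref{eq: assumption on l}, $K^{2\ell\nu}$ acts trivially, since $\e^{(2\ell\nu,\cdot)}=1$. Therefore the shift does not change $\tau$-classes: by Lemma~\ref{lem: closed points on HC center}, $[\mu+\ell\nu]=[\mu]$ in $P/\tau$. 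Filtering $N$ by subquotients of Verma-type modules (the category-$O$ argument of Lemma~\ref{lem: Verma under translation}) then shows that every subquotient of $\tFr^*(V)\otimes N^{[\mu']}$ lies in block $[\mu']$. This yields the compatibility with $\pr_{[\mu]}$ and completes the proof.
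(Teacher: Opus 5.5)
Your proposal is correct and follows the paper's own proof. Both reduce to $D=P^{\mu,\lambda}(W)$, identify $D\star(-)$ with $\pr_{[\mu]}(W\otimes -)$, and move $\tFr^*(V)$ past $W$ via the braiding; you additionally justify that tensoring with $\tFr^*(V)$ preserves the blocks, a step the paper leaves implicit. That step is easier than you make it: $\tFr\circ\iota$ equals the counit on $U^{ev}_\e$ (the $\tE_i,\tF_i$ go to $0$ and the $K^{\nu}$, $\nu\in 2P$, go to $1$), so $\CW_\e$ acts on $\tFr^*(V)\otimes N$ exactly as $\mathrm{id}\otimes(\text{its action on } N)$, and neither the associated graded nor the Verma-filtration argument is needed.
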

\begin{proof}
    It is enough to prove this  when  $D=P^{\mu, \lambda}_\e(N)$ for some $N\in \Rep(G_\e)$. In this case, 
\[P_\e^{\mu, \lambda}(N)\star(\tFr^*(V)\otimes M) \cong \pr_{[\mu]}(N\otimes \tFr^*(V)\otimes M) \cong \tFr^*(V) \otimes \pr_{[\mu]}(N\otimes M)
\cong \tFr^*(V) \otimes (P_\e^{\mu, \lambda}(N)\star M).\]
\end{proof}

\subsection{Left (right)-trivial Harish-Chandra bimodules}\

Let us recall the longest element $w_0$ in $W$. The action of $-w_0$ on $P$ induces an action of $-w_0$ on $P/(W_{ext}, \bullet_\ell)$ since if $\lambda=wt_\mu \bullet_\ell \lambda'$ then $-w_0 \lambda= w_0ww_0t_{-w_0\mu}\bullet_\ell(-w_0 \lambda')$. For any $\lambda \in P$ let $\lambda^*=-w_0\lambda$.

Recall the map $\iota: U_\e^{ev}(\g) \rightarrow \cU_\e(\g)$. Let $\varepsilon: U_\e^{ev}(\g)\rightarrow \BC$ be the counit of $U_\e^{ev}(\g)$.
\begin{defi}[Definition/Lemma] Let $V\in \Rep_{[\lambda]}(G_\e)$

\noindent
(a)  $V$ can be viewed as an object in $\HC_\e(\lambda, 0)$ as follows:
\[ uv=\iota(u) v, \qquad vu =\varepsilon(u)v,\]
here $u \in U_\e^{fin} \subset U_\e^{ev}$ and $v\in V$. We call this bimodule structure on $V$ the {\it right-trivial Harish-Chandra bimodule} and denote it by $V^r$.

\noindent
(b) $V$ can be viewed as an object in $\HC_\e(0, \lambda^*)$ as follows:
\[ uv=\varepsilon(u) v, \qquad vu =\iota(S^{-1}(u))v,\]
here $u \in U_\e^{fin}$ and $v\in V$. We call this bimodule structure on $V$ the {\it left-trivial Harish-Chandra bimodule} and denote it by $V^{l}$.
\end{defi}
\begin{proof}We will explain why $V^l \in \HC_\e(0, \lambda^*)$, other parts are similar.  Let $u \in \CW_\e$, the Harish-Chandra center. It is enough to show that  $(*) ~V^l (u -\chi_{\lambda^*}(u))^k=0$ for some $k$. Note that $S^{-1}(u)$ still is  a central element in $U_\e^{ev}(\g)$. Furthermore, both $\iota(u)$ and $\iota(S^{-1}(u))$ are central in $\cU_\e(\g)$, which is proved by using the computation $(10.6)$ in \cite{LTV} and equality $\ad'_l(x)(\iota(u))=\varepsilon(x) \iota(u)$ for all $x\in \cU_\e(\g)$. On the other hand, $u=u_0+\sum_{\a\in Q_+} u_\a$ where $u_\a \in U_{\e,-\a}^{ev <} U_\e^{ev 0} U_{\e, \a}^{ev >}$ and $u_0\in U_\e^{ev, 0}$. Since $V\in \Rep_{[\lambda]}(G_\e)$, it admits a finite filtration whose successive quotients  are of the form $L_\e(\mu)$ for $\mu \in W_{ext} \bullet_\ell \lambda$. We then see that $\iota(S^{-1}(u))$ acts on $L_\e(\mu)$ as a multiplication by $\chi_{w_0\mu}(S^{-1}(u_0))=\chi_{\lambda^*}(u_0)=\chi_{\lambda^*}(u)$. This implies $(*)$ and finishes the proof.
\end{proof}
\begin{Lem}\label{lem: left-right trivial bimodules}
(a) The following functors are fully faithful:
\begin{equation*}
\bullet^r~:~ \Rep_{[\lambda]}(G_\e) \rightarrow \HC_\e(\lambda, 0) \qquad  V\mapsto V^r; \qquad \qquad \bullet^l ~:~ \Rep_{[\lambda]}(G_\e) \rightarrow \HC_\e(0, \lambda^*)\qquad V\mapsto V^l.
\end{equation*}


\noindent
(b) For any $V\in \Rep(G)$ then $\tFr^*(V)^r \cong \tFr^*(V)^l \in \HC_\e(0,0)$.
\end{Lem}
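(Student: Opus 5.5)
For part (a) I will compare morphism spaces directly. The functor $\bullet^r$ does not change the underlying $\cU_\e(\g)$-module, so it is faithful. A morphism $f\colon V^r\to W^r$ in $\HC_\e(\lambda,0)$ is in particular a $\cU_\e(\g)$-linear map $V\to W$. Conversely, let $f\colon V\to W$ be $\cU_\e(\g)$-linear. Then $f$ commutes with the left $U_\e^{fin}$-action, because that action is given by $\iota(u)$ and $\iota(u)\in\cU_\e(\g)$. It also commutes with the right action, since $vu=\varepsilon(u)v$ and $\varepsilon(u)$ is a scalar. For the completed algebras the same argument applies: the $U_\e^{fin,\utheta}$-actions are recovered by continuity by Lemma \ref{lem: unique left utheta-action}, and $f$ is automatically continuous because it is linear over the completed centers.

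I also need that $V^r$ really is a $\cU_\e$-equivariant bimodule, i.e.\ that the right action is compatible with the equivariance and with the rule \eqref{eq: left action}. This reduces to the identity $\sum \iota(h_{(1)})\varepsilon(h_{(2)})=\iota(h)$, so with trivial right action \eqref{eq: left action} returns exactly $\iota(h)v$. Hence $\Hom_{\HC_\e(\lambda,0)}(V^r,W^r)=\Hom_{G_\e}(V,W)$, and $\bullet^r$ is fully faithful. The functor $\bullet^l$ is treated identically, using $\iota\circ S^{-1}$ in place of $\iota$.

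For part (b), let $V\in\Rep(G)$ and put $X=\tFr^*(V)$. On $X^r$ the right action is trivial and the left action is $\iota(u)$. On $X^l$ the left action is trivial and the right action is $\iota(S^{-1}u)$. So I must show that both actions on $X$ are trivial, i.e.\ that $\iota(u)$ acts on $X$ by $\varepsilon(u)$ for all $u\in U_\e^{fin}$; the analogous statement for $\iota(S^{-1}u)$ then follows because $\varepsilon\circ S^{-1}=\varepsilon$.

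Since $\tFr\colon\cU_\e(\g)\to\cU_\BC(\g^d)$ kills $\tE_i,\tF_i$ and sends $K^\lambda\mapsto1$, the image $\iota(U_\e^{ev})$ lies in the subalgebra generated by $\tE_i,\tF_i,K^\lambda$ with $\lambda\in 2P$. On $X$ the elements $\tE_i,\tF_i$ act by $0$. The elements $K^{\lambda}$ act by $\e^{(\lambda,\mu)}$ with $\mu\in\ell P$ a weight of $X$, and this equals $1$ by the choice of $\e$ in assumption \ref{eq: assumption on l}. Therefore $\iota(u)$ acts on $X$ through the character of $U_\e^{ev}$ sending $\tE_i,\tF_i\mapsto0$ and $K^\lambda\mapsto1$, which is the counit $\varepsilon$. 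Consequently $X^r$ and $X^l$ have the same underlying $G_\e$-module and the same (trivial) left and right actions, so they coincide as objects of $\HC_\e(0,0)$; the identity map is the required isomorphism.

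The step I expect to need the most care is the bookkeeping of the block labels. One must check that $X\in\Rep_{[0]}(G_\e)$, which holds because its weights lie in $\ell P$, hence in the $W_{ext}\bullet_\ell$-orbit class of $0$. One also needs $0^*=0$, which is immediate. With these, both $X^r$ and $X^l$ lie in $\HC_\e(0,0)$, and the comparison above is an identification inside the same category.
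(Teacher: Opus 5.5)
Your proof is correct and is the straightforward argument the paper has in mind; the paper gives no details beyond noting that part (b) uses assumption \ref{eq: assumption on l}. In (a), morphisms in the Harish-Chandra category reduce to $\cU_\e(\g)$-linear maps, because one action factors through $\iota$ and the other through $\varepsilon$. In (b), on $\tFr^*(V)$ the image $\iota(U^{ev}_\e)$ acts by the counit: $\tE_i,\tF_i$ act by zero, and $K^{\lambda}$ ($\lambda\in 2P$) acts on weights in $\ell P$ by $1$, which is exactly where that assumption enters.
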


The proof is straightforward, part (b) needs to use the assumption \ref{eq: assumption on l}. The next lemma  concerns the action of diagonal bimodules on left (right)-trivial bimodules.
\begin{Lem}\label{lem: translation and left-right trivial bimod}
(a) For any $V \in \Rep(G_\e)$ and $V_1, V_2 \in \Rep_{[\lambda]}(G_\e)$ we have 
\begin{align*} P_\e^{\mu, \lambda}(V) \star V_1^r &\cong (\pr_{[\mu]}(V\otimes V_1))^r \cong (P_\e^{\mu, \lambda}(V) \star V_1)^r, \\
V_2^l \star P_\e^{\lambda^*, \mu}(V) &\cong (\pr_{[\mu^*]}(V\otimes V_2))^l\cong (P_\e^{\mu^*, \lambda}(V)\star V_2)^l.
\end{align*}

\noindent
(b) For any $V\in \Rep_{[\lambda]}(G_\e)$, we have 
\[ P_\e^{\mu, \lambda}\star V^r \cong (P_\e^{\mu, \lambda} \star V)^r, \qquad V^l \star P_\e^{\lambda^*, \mu^*}\cong (P_\e^{\mu, \lambda} \star V)^l.\]
\end{Lem}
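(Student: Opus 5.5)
The plan is to reduce everything to the description of diagonal bimodules as direct summands of $V\otimes U_\e^{fin,\utheta'}$ (equivalently of $U_\e^{fin,\utheta}\otimes V$). Let $V_1^r$ be right-trivial. The right $U_\e^{fin}$-action on $V_1^r$ factors through the counit $\varepsilon$. Hence
\[
(V\otimes U_\e^{fin,\lambda})\star V_1^r=(V\otimes U_\e^{fin,\lambda})\otimes_{U_\e^{fin,\lambda}}V_1^r\cong V\otimes V_1 .
\]
Here $V\otimes V_1$ carries the tensor product $\cU_\e$-action and the trivial right action. Its left action is recovered from the right action and the $\cU_\e$-action by formula \eqref{eq: left action}. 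With trivial right action, that formula gives $hm=\iota(h)\cdot m$, the honest action of $U_\e^{fin}$ through $\iota$ on the tensor product $V\otimes V_1$. So $V\otimes V_1\cong (V\otimes V_1)^r$ as bimodules.

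Next I project to the block with left central character $\mu$. The projection $\pr_{\mu,\lambda}$ onto the summand $P^{\mu,\lambda}_\e(V)$ commutes with $-\star V_1^r$. On the bimodule side, taking the generalized $\mu$-eigenspace of the left Harish-Chandra center on $(V\otimes V_1)^r$ is the same as taking the summand of $V\otimes V_1$ in $\Rep_{[\mu]}(G_\e)$. This uses the compatibility of central characters with the linkage decomposition, the same computation as in the proof of the Definition/Lemma, and gives $P^{\mu,\lambda}_\e(V)\star V_1^r\cong(\pr_{[\mu]}(V\otimes V_1))^r$.

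The second isomorphism in (a) comes from reading $P^{\mu,\lambda}_\e(V)\star V_1$ as the action of diagonal bimodules on $\Rep_{[\lambda]}(G_\e)$, where $V_1$ is regarded as a left $U_\e^{fin,\lambda}$-module via $\iota$. The same computation identifies it with $\pr_{[\mu]}(V\otimes V_1)$, as in the proof of Lemma \ref{lem: diagonal bimodule and Rep(G)}. The left-trivial statement is symmetric. There I use $U_\e^{fin}\otimes V$, the right action \eqref{eq: right action} with $S^{-1}$, and the replacement $\lambda\mapsto\lambda^*$ coming from the twist by $S^{-1}$, again as in the Definition/Lemma. This produces $\pr_{[\mu^*]}(V\otimes V_2)$ up to the braiding isomorphism $V_2\otimes V\cong V\otimes V_2$ in $\Rep(G_\e)$.

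Part (b) is the special case $V=T_\e(\mu|\lambda)$. This follows from Definition \ref{defi: diagonal bimod}(c) and the reduction mod $\hbar$ of Corollary \ref{cor: Sb embed}, using that $P^{\lambda^*,\mu^*}_\e$ is the left-handed counterpart of the translation bimodule, because $T(\mu^*|\lambda^*)=T(\mu|\lambda)$.

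The main obstacle is the bookkeeping in the left-trivial case. I will check carefully that the right action defined through $\iota\circ S^{-1}$ matches \eqref{eq: right action}. I will also check that the central character really flips to $\mu^*$, which I would verify via the Harish-Chandra projection argument already used for $V^l\in\HC_\e(0,\lambda^*)$.
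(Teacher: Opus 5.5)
Your proposal is correct and is essentially the paper's argument: identify $(V\otimes U_\e^{fin,\lambda})\star V_1^r$ with $(V\otimes V_1)^r$, split it by blocks, match the $[\mu]$-summand with $P^{\mu,\lambda}_\e(V)\star V_1$, treat the left-trivial case symmetrically, and obtain (b) by specializing $V=T_\e(\mu|\lambda)$ (Corollary \ref{cor: Sb embed} is not needed for this). There is one index slip in (b): getting it from the second line of (a) with $\mu$ replaced by $\mu^*$ requires $T_\e(\lambda^*|\mu^*)=T_\e(\mu|\lambda)$, whereas $T_\e(\mu^*|\lambda^*)=T_\e(\nu^*)\cong T_\e(\mu|\lambda)^*$, where $\nu$ is the dominant weight in $W(\mu-\lambda)$, is in general a different module.
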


\begin{proof}(a) The first isomorphism follows from
\[ V\otimes V_1^r \cong (V\otimes V_1)^r \cong \left(\bigoplus_{[\mu]} \pr_{[\mu]}(V\otimes V_1)\right)^r \cong \left(\bigoplus_{[\mu]} P_\e^{\mu, \lambda}\star V\right)^r\]
The proof for the second isomorphism is similar.

\noindent
(b) By definition $P_\e^{\mu, \lambda}=P_\e^{\mu, \lambda}(T_\e(\mu|\lambda))$ where the tilting module $T_\e(\mu|\lambda)$ is defined in Definition \ref{defi: Weyl(mu-lambda)}. Note that $T_\e(\lambda^*|\mu^*)=T_\e(\mu|\lambda)$. Hence part (b) is a special case of part (a).
\end{proof}

 \subsection{Hecke action on $\Rep_{[0]}(G_\e)$}\ 
 

Let $\text{Tilt}_{[0]}(G_\e)$ be the subcategory of tilting modules in $\Rep_{[0]}(G_\e)$. Recall the Steinberg module $\St_\e=W_\e((\ell-1)\rho)$. Let 
\begin{equation} (P/\ell P)_+ :=\{ \lambda \in P~|~0\leq (\lambda, \a_i^\vee) \leq \ell-1 ~\forall ~ 1\leq i \leq r\}
\end{equation}

\begin{Rem}\label{rem: rho block}The block $\Rep_{[-\rho]}(G_\e)$ is semisimple with simple objects $\St_\e \otimes \tFr^*(V)$ for any irreducible module $V \in \Rep(G)$. 
\end{Rem}
Recall the tilting module $T_\e(\mu|\lambda)$ defined in Definition \ref{defi: Weyl(mu-lambda)}.  
 \begin{defi}
 (a) Let $\mu, \lambda$ be in the closure of the fundamental alcove $C$. The {\em translation functor} $  T_{\lambda}^\mu: \Rep_{[\lambda]}(G_\e) \rightarrow \Rep_{[\mu]}(G_\e)$ is defined by $V \mapsto \pr_{[\lambda]}(T_\e(\mu|\lambda) \otimes V)\cong P_\e^{\mu, \lambda}\star V$ for any $V \in \Rep_{[\lambda]}(G_\e)$.

 \noindent
 (b) For $s\in S_{aff}$, let $\lambda_s$ be a weight contained in the facet associated to $s$ in the closure of the fundamental alcove. The {\em reflection functor}  $\Theta_s: \Rep_{[0]}(G_\e) \rightarrow \Rep_{[0]}(G_\e)$ is defined by $\Theta_s:=T_{\lambda_s}^0 \circ T_{0}^{\lambda_s}$.
 \end{defi}

 \begin{Rem}For $x \in \Lambda$, the element $x\bullet_\ell 0$ belongs to the fundamental alcove under the assumption \ref{eq: assumption on l}. Therefore, we have the following endofunctors of $\Rep_{[0]}(G_\e)$: $T_0^{x\bullet_\ell 0}, T_{x\bullet_\ell 0}^0, \Theta_s$ for $x\in \Lambda, s\in S_{aff}$. With definition of $\CH_\e$  in Definition \ref{defi: Hq and He} and the construction of objects in Theorem \ref{thm: Sb-hbar embed}, we obtain an action of $\SB$ on $\Rep_{[0]}(G_\e)$ via the above endofunctors.
 \end{Rem}
\begin{Rem}Since direct summands of tilting modules are tilting and tensor products of tilting modules are tilting, it is clear that the action of $\SB$ on $\Rep_{[0]}(G_\e)$ preserves $\Tilt_{[0]}(G_\e)$.
\end{Rem}

\begin{Lem}(a) There is an equivalence of categories:
\[ \mathcal{D}^{asph}\cong \Tilt_{[0]}(G_\e).\]

\noindent
(b) We have an isomorphism of right $\BC[W_{ext}]$-modules:  $M^{asph} \iso K_0(\Rep_{[0]}(G_\e)) $ defined by $1\otimes H_x \mapsto [W_\e(x\bullet_\ell 0)] =[H^0_\e(x\bullet_\ell 0)]$. Under this isomorphism, $1\otimes \uH_x \mapsto [T_\e(x\bullet_\ell 0)]$.
\end{Lem}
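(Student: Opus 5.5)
This lemma is the quantum analogue of the well-known results of Riche--Williamson and Achar--Makisumi--Riche--Williamson on tilting modules and the antispherical category. The plan is to realize $\Tilt_{[0]}(G_\e)$ as a module category over $\SB$ generated by the trivial module, and then compare it with $\mathcal{D}^{asph}$. Concretely, we consider the additive functor
\[ \Phi: \SB \rightarrow \Tilt_{[0]}(G_\e), \qquad B\mapsto B\star \BC, \]
where $\BC=T_\e(0)$ and $\SB$ acts on $\Rep_{[0]}(G_\e)$ through the translation functors $T_0^{x\bullet_\ell 0}, T^0_{x\bullet_\ell 0},\Theta_s$, as in the preceding Remark. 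Since $\BC=\tFr^*(\BC)$ is a left-trivial (equivalently right-trivial, by Lemma \ref{lem: left-right trivial bimodules}) bimodule, Lemma \ref{lem: translation and left-right trivial bimod} identifies $\Phi(B)$ with $\fI_1(B)\star \BC^r$, viewed as an object of $\Rep_{[0]}(G_\e)$ via $\bullet^r$.

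Step 1 computes the images of the generators. By Lemma \ref{lem: Verma under translation} and the standard translation combinatorics, $\Theta_{s_1}\cdots\Theta_{s_k}\BC$ for a reduced expression of $x\in W^f_{ext}$ (twisted by the graph objects for $\Lambda$) has $T_\e(x\bullet_\ell 0)$ as a summand with multiplicity one, plus summands $T_\e(y\bullet_\ell 0)$ with $y<x$. In addition, $B_w\star\BC=0$ whenever $w\notin W^f_{ext}$: for $s\in W$ finite simple we have $\Theta_s\BC=0$, because $0$ is the wall-minimal point and $T^{\lambda_s}_0\BC=0$. Hence $\Phi$ factors through $\mathcal{D}^{asph}$ and is essentially surjective onto indecomposable tiltings. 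Part (b) follows on the Grothendieck group side: $[W_\e(\mu)]=[H^0_\e(\mu)]$ by Lemma \ref{lem: dual Weyl and Weyl} and equal characters. The map $1\otimes H_x\mapsto [W_\e(x\bullet_\ell 0)]$ is an isomorphism by linkage, since the Weyl modules in the block form a basis. Translation functors act by the standard formulas, which match the right action of $C_s=H_s+1$ on the antispherical module, so the map is $\BC[W_{ext}]$-linear. The statement $1\otimes \uH_x\mapsto[T_\e(x\bullet_\ell0)]$ then follows from Soergel's character formula for tilting modules of quantum groups at roots of unity in the characteristic zero setting (Soergel, via Kazhdan--Lusztig/Kashiwara--Tanisaki/Andersen). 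Alternatively, it follows from full faithfulness in (a) together with the indecomposability of $\uB_x$ in $\mathcal{D}^{asph}$.

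Step 2, the main obstacle, is full faithfulness of the induced functor $\mathcal{D}^{asph}\rightarrow\Tilt_{[0]}(G_\e)$. Using biadjointness (Lemma \ref{lem: adjoint} and the self-adjointness of $\Theta_s$), this reduces to showing that
\[ \Hom_{\SB}(\BS_{\underline w},\sR_\pi)\rightarrow \Hom_{G_\e}(\Theta_{\underline w}\BC,\pi\star\BC) \]
is surjective with kernel exactly the morphisms factoring through non-antispherical $B_w$. For the dimension count, the right side has dimension $\sum_\mu (\Theta_{\underline w}\BC:\Delta(\mu))(\pi\BC:\nabla(\mu))$ by the Ext-vanishing between good and cogood filtrations, and this equals the antispherical pairing of the classes computed in Step 1. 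For the left side, Abe's light leaves (Proposition \ref{prop: abe realization}) give the corresponding count in $\mathcal{D}^{asph}$, with the $\usR$-action killed by $\usR_+$ since $\HC$ degree considerations make $\h$ act by zero on $\BC^r$. So it suffices to show surjectivity. I would try to deduce it from $\fI_1$ being full (Corollary \ref{cor: Sb embed}) together with the projectivity-type statement that $\Hom_{\HC_\e(0,0)}(\fI_1 B,\BC^r)\rightarrow\Hom_{G_\e}(B\star\BC,\BC)$ is surjective. This surjectivity comes from Lemma \ref{lem: proj is hilt}(a), which identifies $\Hom(P(V),-)$ with $\Hom_{G_\e}(V,-)$. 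If this fails, the fallback is to invoke the known quantum version of the Riche--Williamson equivalence (Andersen--Tubbenhauer/Elias--Losev) directly.
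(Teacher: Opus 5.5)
Your plan is the paper's own. The paper proves (a) by running the argument of \cite[$\mathsection 5$]{RW15} through the embedding $\fI_1\colon \SB\rightarrow \Hilt_\e(0,0)$: act on the trivial module, factor through the antispherical quotient, prove fullness, and conclude by a Hom-dimension count; it then gets (b) from (a) on $K_0$. Your fullness step is sound once it is phrased as lifting along the counit $U_\e^{fin,0}\twoheadrightarrow \BC^r$. That lifting is immediate from Lemma \ref{lem: proj is hilt}(a), because the unit $\BC\rightarrow U_\e^{fin,0}$ splits the counit $G_\e$-equivariantly.

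Two small fixes are needed. First, in $\fI_1(B)\star\BC^r$ the rightmost factor of $B$ acts first on $\BC$. So this functor kills $B_w$ for $w$ not minimal in $wW$, rather than not minimal in $Wx$; use $\BC^l\star\fI_1(B)$ instead to land in the paper's $\mathcal{D}^{asph}$. Second, the upper bound on $\dim\Hom_{\mathcal{D}^{asph}}$ comes from the Riche--Williamson result that antispherical light leaves span. Abe's light-leaves basis of $\SB$ alone does not give it.
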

\begin{proof}(a) With the embedding $\SB\rightarrow \Hilt_\e(0,0)$, the proof follows the strategy in \cite[$\mathsection 5$]{RW15}.

\noindent
(b) Follow by  part (a) via taking $K$-theory and using $K_0(\Rep_{[0]}(G_\e)) \cong K_0(\Tilt_{[0]}(G_\e))$.
\end{proof}

\begin{Lem}\label{lem: translation 0 to -rho} The map $T:=T_0^{-\rho}: K_0(\Rep_{[0]}(G_\e)) \rightarrow K_0(\Rep_{[-\rho]}(G_\e))$ can be identified with the following quotient  map as $\BC$-linear maps
\begin{equation*}\label{eq: T(0 to -rho)} M^{asph} \twoheadrightarrow M_W^{asph}:=M^{asph}/\< mg-m~|~g\in W\>.
\end{equation*}
\end{Lem}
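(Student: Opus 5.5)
We compute both sides on explicit bases and match them. By the preceding lemma, $K_0(\Rep_{[0]}(G_\e))$ has the basis $\{[W_\e(x\bullet_\ell 0)]\}_{x\in W^f_{ext}}$ corresponding to $\{1\otimes H_x\}$, so it suffices to compute $T([W_\e(x\bullet_\ell 0)])$. The first step is an Euler characteristic computation. Translation is exact, and $T_\e(-\rho|0)=W_\e(\rho)$ is simple (its highest weight $\rho$ is small relative to $\ell$). Hence $T_0^{-\rho}W_\e(\nu)$ is the $[-\rho]$-block projection of $W_\e(\rho)\otimes W_\e(\nu)$. The latter has a Weyl filtration with factors $W_\e(\nu+\eta)_{\rm dom}$, counted with signs and with $\eta$ running over the weights of $W_\e(\rho)$. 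This is the usual linkage and translation computation, as in Lemma \ref{lem: Verma under translation}, but now with $\lambda=0$ and $\mu=-\rho$, so that $W_0\subset W_{-\rho}$.

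We then use the standard identity $[W(\mu)\otimes W(\nu)]=\sum_\eta \dim W(\mu)_\eta\,[W(\nu+\eta)]$, where the Weyl character formula is used to interpret $[W(\xi)]$ for non-dominant $\xi$ as $\pm[W(w\cdot\xi)]$ or $0$. Together with Lemma \ref{lem: Key lemma on weights}, this shows that for $x\in W^f_{ext}$ exactly one weight $\eta$ survives the block projection. It is the weight satisfying $x\bullet_\ell 0+\eta=x\bullet_\ell(-\rho)$, and with $x=wt_\lambda$ it equals $x(\rho)-\rho$ up to the linear part. This gives
\[T[W_\e(x\bullet_\ell 0)]=[W_\e(x\bullet_\ell(-\rho))]\]
whenever $x\bullet_\ell(-\rho)$ is dominant. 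We also expect the general formula $T[W_\e(y\bullet_\ell 0)]=\pm[W_\e(\cdot)]$, with sign $(-1)^{\ell(w)}$, for $y=wx$ with $w\in W$. Here we use that for non-dominant $y\bullet_\ell 0$ the symbol $[W_\e(y\bullet_\ell 0)]$ is defined via the dot action, so that it carries the sign relation $[W(w\cdot\xi)]=(-1)^{\ell(w)}[W(\xi)]$. This sign relation is exactly the relation $1\otimes H_{wx}=(-1)^{\ell(w)}1\otimes H_x$ defining $M^{asph}=\mathsf{sign}\otimes_{\BC[W]}\BC[W_{ext}]$. The point is that after translating to the wall $-\rho$, where $W$ acts on the right through its stabilizer, these signs are absorbed.

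Next we identify the target. By Remark \ref{rem: rho block}, $K_0(\Rep_{[-\rho]}(G_\e))$ has basis $\{[W_\e(\nu)]\}$, with $\nu$ dominant in the orbit $W_{ext}\bullet_\ell(-\rho)$. Since the stabilizer of $-\rho$ under $\bullet_\ell$ is the finite Weyl group $W$, conjugated into the $\bullet_\ell$-action, these $\nu$ are in bijection with $W\backslash W_{ext}/W$. The quotient $M^{asph}_W$ has basis the images of $1\otimes H_x$ for $x$ ranging over $W\backslash W_{ext}/W$, with right multiplication by $g\in W$ identified with the identity. We therefore define the map $M^{asph}_W\to K_0(\Rep_{[-\rho]})$ by $[1\otimes H_x]\mapsto[W_\e(x\bullet_\ell(-\rho))]$, and we check that it is well defined and compatible with $T$. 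Well-definedness means $W_\e(xg\bullet_\ell(-\rho))=W_\e(x\bullet_\ell(-\rho))$ for $g\in W$, which holds because $g$ fixes $-\rho$. The identity $T(m)=T(mg)$ then follows from the computation in the previous paragraph. Surjectivity onto the basis gives the claimed identification.

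The main obstacle is getting the signs right. For $x\in W^f_{ext}$ and $g\in W$, the element $xg$ is generally not minimal in its coset. One must check that $1\otimes H_{xg}$, rewritten in the minimal-coset basis, equals $1\otimes H_{x'}$ with sign $+1$ exactly when it is compared through $T$. I would first try to handle this on Verma and Weyl characters for $U_\e$, treating the map as translation to the most singular weight $-\rho$, following Jantzen's treatment of translation onto walls in the Grothendieck group. The $\pm$ bookkeeping would be checked on generators $s\in W$ only.
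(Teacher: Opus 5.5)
Your route differs from the paper's, and it can be made to work. The paper proves $T([V]g)=T([V])$ through the Hecke action $[V](s+1)=[\Theta_sV]$, checking $T[\Theta_s\nabla(x\bullet_\ell 0)]=2T[\nabla(x\bullet_\ell 0)]$ case by case, and then compares bases. You instead use Euler characteristics on the whole orbit, which needs only the linear identification from the previous lemma and not its module structure.

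As written, however, the invariance step is not proved. Your computation covers only $x\in W^f_{ext}$, the formula for general $y$ is left as an expectation, and you name the signs as the main obstacle. There is in fact no sign problem once you commit to the convention $[W_\e(w\bullet_\ell\xi)]=(-1)^{\ell(w)}[W_\e(\xi)]$ for $w\in W$.
\begin{itemize}
\item Write $y=wx$ with $x\in W^f_{ext}$. Then $1\otimes H_y=(-1)^{\ell(w)}\,1\otimes H_x$, so the identification sends $1\otimes H_y\mapsto[W_\e(y\bullet_\ell 0)]$ for \emph{every} $y\in W_{ext}$.
\item Brauer's formula together with Lemma \ref{lem: Key lemma on weights}, applied to the pair $0,-\rho$, gives $T[W_\e(y\bullet_\ell 0)]=[W_\e(y\bullet_\ell(-\rho))]$ for every $y$, not only minimal ones. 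To pass from the $W_{ext}$-orbit to the $W_{*aff}$-orbit, use $\ell P\cap Q=\ell Q$, which holds because $\ell$ is prime to $|P/Q|$.
\item Hence $T(1\otimes H_{yg})=T(1\otimes H_y)$, because $g\bullet_\ell(-\rho)=-\rho$.
\end{itemize}
Your claim that $T_\e(-\rho|0)=W_\e(\rho)$ is simple is false under the standing assumptions. For $\mathfrak{sl}_4$ and $\ell=5$ one has $\langle 2\rho,\alpha_0^\vee\rangle=6>\ell$, and $W_\e(\rho)$ is not simple. This is harmless, since only the multiplicity-one extremal weights of $T_\e(\rho)$ contribute to the block projection.

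The second gap is a genuine error, and it sits exactly where injectivity is needed; surjectivity alone does not give the identification. The dominant weights in $W_{ext}\bullet_\ell(-\rho)=-\rho+\ell P$ are the $\ell\mu-\rho$ with $\mu$ \emph{regular} dominant, so they are not in bijection with all of $W\backslash W_{ext}/W$. Dually, $M^{asph}_W=\mathsf{sign}\otimes_{\BC[W]}\BC[W_{ext}]\otimes_{\BC[W]}\mathsf{triv}$ does not have a basis indexed by all double cosets. For singular dominant $\mu$, pick $s\in S$ with $s\mu=\mu$. Then $st_\mu=t_\mu s$ forces the class of $1\otimes t_\mu$ to equal its own negative, so it vanishes; this is the paper's Claim (3). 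Under your indexing, the map $[1\otimes H_x]\mapsto[W_\e(x\bullet_\ell(-\rho))]$ would send those supposed basis vectors to zero and fail to be injective. With the vanishing in hand, $M^{asph}_W$ is spanned by the classes $[1\otimes t_\mu]$ with $\mu$ regular dominant. These map bijectively onto the basis $\{[W_\e(\ell\mu-\rho)]\}$ of $K_0(\Rep_{[-\rho]}(G_\e))$ from Remark \ref{rem: rho block}, which completes the argument.
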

Let $\Delta(\lambda):=W_\e(\lambda)$ and $\nabla(\lambda) :=H^0_\e(\lambda)$ for dominant weight $\lambda$. If $\lambda$ is not dominant then $\nabla(\lambda) =\Delta(\lambda)=0$.  Denote $x \bullet_\ell \lambda$ by $x \cdot \lambda$ for $x\in W_{ext}$ and $\lambda \in \h^*$.

In the below proof, the computations of images of various $\nabla(\lambda)$ under the translation functors and reflection functors are done by the strategy in \cite[Proposition 7.13]{J03}, we omit the details.
\begin{proof}
Under the assumption \ref{eq: assumption on l}, $0$ is contained in the fundamental alcove.

First, we  note that $K_0(\Rep_{[0]}(G_\e))$ has a $\BC$-basis $\{[\nabla(t_\mu w\cdot 0)]~|~ t_\mu w \cdot 0 \in P_+\}$. The condition $t_\mu w \cdot 0 \in P_+$ implies that $\mu$ is dominant. Indeed, otherwise, there is  a finite simple reflection $s\in S$ such that $\< \mu, \a_s^\vee\> \leq -1$ then $\< t_\mu w \cdot 0, \a_s^\vee\>= \ell \< \mu, \a_s^\vee\>+\< w\cdot 0, \a_s^\vee\> <0$ since $\< w\cdot 0, \a_s^\vee\> \in [-\ell-1, \ell-1]$ due to $0$ is contained in the fundamental alcove.

\noindent
{ \it Step 1:} We will show that $T([V]g)=T([V])$ for $g\in W$ and $V\in \Rep_{[0]}(G_\e)$.

By definition, $[V](s+1)=[\Theta_s(V)]$ for a finite simple reflection $s\in S$.  It suffices to show that $T([\Theta_s\nabla(x \cdot 0)]=2T([\nabla(x\cdot 0)])$ for all $ x=t_\mu w$ such that $t_\mu w \cdot 0 \in P_+$

If $x= t_{\mu}w$ with $\mu$ not regular then $t_\mu w \cdot (-\rho)$ is not dominant, hence,  $T([\nabla(x \cdot 0)])=T(\nabla(xs \cdot 0))=0$. But $\Theta_s\nabla(x\cdot 0)$ has a filtration whose subquotients are of the form $\nabla(xs \cdot 0)$ or $\nabla(x \cdot 0)$, hence $T(\Theta_s\nabla(x \cdot 0))=0=2 T(\nabla(x\cdot 0))$.

If $x=t_\mu w$ with $\mu$ regular, we want to show that $[\Theta_s\nabla(x \cdot 0)]=[\nabla(x \cdot 0)]+[\nabla(xs \cdot 0)]$ because this implies $T([\Theta_s\nabla(x\cdot 0)])=2[\nabla(\ell\mu-\rho)]=2T([\nabla(x\cdot 0)])$. It is required to show that $x\cdot \lambda_s$ and $xs \cdot 0$ are dominant.
\begin{itemize}
    \item $t_\mu w \cdot \lambda_s=\ell  \mu -\rho+ w(\lambda_s+\rho)$. Note that $\<w( \lambda_s+\rho), \a_i^\vee\>=\< \lambda_s+\rho, w^{-1}(\a_i^\vee)\>\in [-\ell+1, \ell-1]$ for a finite simple reflection $s\in S$ by Remark \ref{rem: not belong to affine hyper} .Therefore, $t_\mu w \cdot \lambda_s$ is dominant.
    \item $t_\mu ws \cdot 0=\ell \mu -\rho+ws(\rho)$. Since $0$ is contained in the fundamental alcove, $\< ws (\rho), \a_i^\vee\>=\< \rho, (ws)^{-1}(\a_i^\vee)\> \in [-\ell+1, \ell-1]$. Therefore, $xs \cdot 0$ is dominant.
\end{itemize}

\noindent
{\it Step 2:} Recall that $K_0(\Rep_{[0]}(G_\e))$ has a $\BC$-basis $\{[\nabla(t_\mu w\cdot 0)]~|~ t_\mu w \cdot 0 \in P_+\}$. The lemma clearly follows by the following claims:
\begin{enumerate}[label=(\arabic*)]
\item  The set $\big\{T([\nabla(t_\mu w \cdot 0)])=[\nabla(\ell \mu-\rho)]\big\}$ (with all regular dominant $\mu$ such that $t_\mu w \cdot 0$ dominant) forms a basis of $K_0(\Rep_{[-\rho]}(G_\e))$.
\item $T([\nabla(t_\mu w\cdot 0])=0$ for $\mu$ dominant but not regular such that $t_\mu w \cdot 0$ is dominant. 
\item $1\otimes t_\mu w \in K=\< mg-m\>$ for $\mu$ dominant but not regular such that $t_\mu w \cdot 0$ dominant.
\end{enumerate}

Claim $(1)$ follows by Remark \ref{rem: rho block}. Let us prove Claim $(3)$.  Note that 
\[ M_W^{asph}=\mathsf{sign}\otimes_{\BC[W]} \BC[W_{ext}]\otimes_{\BC[W]} \mathsf{triv}.\]
If $\mu$ is dominant but not regular then there is a simple reflection $s\in S$ so that $s\mu=\mu$. Then $t_\mu s=st_\mu$. Then in $M_W^{asph}$, we have
\[ 1_{\mathsf{sign}} \otimes t_\mu  \otimes 1_{\mathsf{triv}}=1_{\mathsf{sign}}\otimes t_\mu s \otimes 1_{\mathsf{triv}}=1_{\mathsf{sign}} \otimes s t_\mu \otimes 1_{\mathsf{triv}}=-1_{\mathsf{sign}}\otimes t_\mu \otimes 1_{\mathsf{triv}},\]
Hence $1_{\mathsf{sign}}\otimes t_\mu \otimes 1_{\mathsf{triv}}=0$, and then $1_{\mathsf{sign}} \otimes t_\mu w \otimes 1_{\mathsf{triv}}=1_{\mathsf{sign}} \otimes t_\mu \otimes 1_{\mathsf{triv}}=0$ for all $w\in W$. This proves Claim $(3)$. The proof completes.
\end{proof}
\begin{Cor}\label{cor: Hecke action on Rep(Ge)}
(a) For any simple object $L_\e(\lambda) \in \Rep_{[0]}(G_\e)$, there is $P\in \CH_\e$ such that $P_\e^{-\rho, 0} \star P \star L_\e(\lambda) \neq 0$.

\noindent
(b) For any simple object $L_\e(\lambda) \in \Rep_{[0]}(G_\e)$, there is $P\in \CH_\e$ such that $L_\e(\lambda)$ is an composition factor of $P \star \BC$, here $\BC$ is the trivial module.

\noindent
(c) For any $V\in \Rep(G)$, there is $P \in \CH_\e$ such that $\BC$ is a composition factor of $P\star \tFr^*(V)$.
\end{Cor}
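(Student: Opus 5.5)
Throughout, I would work in the Grothendieck group, identify $K_0(\Rep_{[0]}(G_\e))$ with $M^{asph}$ via the preceding lemma, and use the quotient description of $T=T_0^{-\rho}$ from Lemma \ref{lem: translation 0 to -rho}. The action of $\CH_\e\cong\SB$ on $\Rep_{[0]}(G_\e)$ is exact on each side (Corollary \ref{cor: Sb embed}), so it descends to $K_0$. On $M^{asph}$ it becomes the right action of $\BC[W_{ext}]$ through the Kazhdan--Lusztig basis: $[V]C_w=[P\star V]$ for the object $P$ corresponding to $B_w$.

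For (a), let $L=L_\e(\lambda)$. It suffices to find $P\in\CH_\e$ with $T([P\star L])\neq0$. The class $m=[L]$ is a nonzero element of $M^{asph}$, so the right $\BC[W_{ext}]$-submodule it generates is nonzero. I would show that its image in $M^{asph}_W=\mathsf{sign}\otimes_{\BC[W]}\BC[W_{ext}]\otimes_{\BC[W]}\mathsf{triv}$ is nonzero. Writing $m=\sum_x a_x(1\otimes x)$, I would pick $x_0$ in the support and right-multiply by a suitable $g\in W_{ext}$, namely a translation $t_\nu$ with $\nu$ very dominant. The aim is for $m\,t_\nu$ to have a component $1\otimes t_{\mu}w$ with $\mu$ regular dominant that does not cancel modulo the relations $mg-m$. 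Since $\BC[W_{ext}]$ is spanned by the $C_w$, some $C_w$ with $mC_w\notin\ker$ then exists, and we take $P$ to be the corresponding object.

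Cancellation is the main obstacle. It is cleaner to argue through the module-theoretic relation $T\circ\Theta_s=2T$ for finite $s$, established in Step 1 of that lemma. I would try an alternative route: choose $P$ to be the image of $B_{t_\nu}$-type objects, realized as translations $P^{x,0}$ composed with wall-crossings. This makes $P\star L$ contain the simple module with highest weight $\lambda+\ell\nu$ in a position where $T$ is an exact translation onto a nonzero object of the semisimple block $\Rep_{[-\rho]}$. Since translation to $-\rho$ kills a simple $L(x\cdot0)$ exactly when $x\cdot(-\rho)$ is non-dominant, it is enough to exhibit a composition factor $L_\e(x\cdot 0)$ of $P\star L$ with $x\cdot(-\rho)$ dominant. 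By exactness and the nonvanishing of $T$ on that factor, $T(P\star L)\neq0$.

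For (b), I would use adjunction (Lemma \ref{lem: adjoint}) together with (a). By (a) there is $P$ with $\Hom(P_\e^{0,-\rho}\star\St\text{-type object},\ P\star L)$ nonzero, that is, $P^{-\rho,0}_\e\star P\star L\neq0$. Using the self-duality of reflection and translation functors (the biadjointness of $\BS_s$ and of $\sR_x\leftrightarrow\sR_{x^{-1}}$), I would transport this to the statement that $L$ occurs in $P'\star P^{0,-\rho}_\e\star N$ for some simple $N\in\Rep_{[-\rho]}$. Then $N=\St_\e\otimes\tFr^*V$ by Remark \ref{rem: rho block}, and $P^{0,-\rho}\star\St_\e$ is a tilting module occurring in $\CH_\e\star\BC$, namely $T^0_{-\rho}T^{-\rho}_0\BC$. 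By Lemma \ref{lem: diagonal bimodule and Rep(G)}, the factor $\tFr^*V$ commutes past diagonal bimodules. This reduces (b) to (c) applied in reverse.

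For (c), take $\BC\in\Rep_{[0]}$ and apply (a)--(b) symmetrically. By Lemma \ref{lem: diagonal bimodule and Rep(G)},
\[P\star\tFr^*V\cong\tFr^*V\otimes(P\star\BC).\]
Choose $P$ so that $P\star\BC$ has the factor $\tFr^*(V^*)$; this is possible by (b), since $\tFr^*V^*$ has simple factors in $\Rep_{[0]}$. Tensoring with $\tFr^*V$ then produces $\tFr^*(V\otimes V^*)$, which contains $\BC$ as a composition factor.
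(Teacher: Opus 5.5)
As written, parts (a) and (b) have genuine gaps; part (c) is fine once (b) is proved independently.

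\textbf{Part (a).} Your reduction is correct and is also the paper's. On $K_0$, $T=T_0^{-\rho}$ is the quotient map $M^{asph}\to M^{asph}_W$, so it suffices to find $h\in\BC[W_{ext}]$ with $[L_\e(\lambda)]\,h$ nonzero in $M^{asph}_W$, and then expand $h$ in the Kazhdan--Lusztig basis. But you never prove this non-vanishing.

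Your fallback rests on a false criterion: $T_0^{-\rho}$ can kill $L_\e(x\bullet_\ell 0)$ even when $x\bullet_\ell(-\rho)$ is dominant. Take $x=t_\rho$. Then $x\bullet_\ell(-\rho)=(\ell-1)\rho$ is dominant. Yet $L_\e(\ell\rho)\cong\tFr^*L(\rho)$, and $T_0^{-\rho}\tFr^*L(\rho)\cong\tFr^*L(\rho)\otimes T_0^{-\rho}\BC=0$, because $T_\e(\rho)$ has no composition factor in the block $-\rho+\ell P$. The correct criterion is the upper-closure one, under which $L_\e(t_\mu w\bullet_\ell 0)$ survives only for $w=w_0$. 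For the same reason, producing a factor $L_\e(\lambda+\ell\nu)$ cannot help. By the tensor product theorem and the commutation of $T_0^{-\rho}$ with $\tFr^*(V)\otimes-$, that factor is killed whenever $L_\e(\lambda)$ is.

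The cancellation you feared is in fact harmless, so your first route can be completed:
\begin{itemize}
\item $M^{asph}$ has basis $\{1\otimes t_\mu\}_{\mu\in P}$.
\item On this basis $t_\nu$ acts by $\mu\mapsto\mu+\nu$, and $g\in W$ acts by $1\otimes t_\mu\mapsto\mathsf{sign}(g)\,1\otimes t_{g^{-1}\mu}$.
\item Hence the images of $1\otimes t_\mu$ with $\mu$ regular dominant form a basis of $M^{asph}_W$, and the singular ones vanish.
\item Choose $\nu$ so dominant that every $\mu+\nu$, for $\mu$ in the support of $[L_\e(\lambda)]$, is regular dominant. Then $[L_\e(\lambda)]\,t_\nu\neq 0$ in $M^{asph}_W$.
\end{itemize}
The paper argues differently. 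It uses $\phi\colon\mathsf{sign}\otimes\BC[P]\iso M^{asph}$ and writes $[L_\e(\lambda)]=\phi(1\otimes x)$. The element $\big(\prod_{g\in W}xg\big)\sum_{g\in W}\mathsf{sign}(g)t_{g(\rho)}$ is nonzero (as $\BC[P]$ is a domain), lies in the cyclic submodule, and is $W$-invariant, so it survives in the coinvariants. Either argument fills the gap, but one of them has to be supplied.

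\textbf{Part (b).} Your argument is circular. After the adjunction you get $L_\e(\lambda)$ as a composition factor of $Q\star\tFr^*V$ for some $Q\in\CH_\e$. To replace $\tFr^*V$ by $\BC$, you need $\tFr^*V$ to be a composition factor of some $Q'\star\BC$. That is (b) for the simple module $\tFr^*V$, while your proof of (c) itself invokes (b) for $\tFr^*(V^*)$. Moreover, $P^{0,-\rho}_\e\star\St_\e$ is not $T^0_{-\rho}T^{-\rho}_0\BC$; the latter vanishes because $T^{-\rho}_0\BC=0$.

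Part (b) needs neither (a) nor adjunction. Since $[\BC]=1\otimes H_e$ generates $M^{asph}$, we have $[L_\e(\lambda)]=\sum_w c_w[P_w\star\BC]$, with $P_w\in\CH_\e$ the image of $B_w$. Comparing coefficients of $[L_\e(\lambda)]$ in the basis of simple classes shows that $L_\e(\lambda)$ is a composition factor of some $P_w\star\BC$. The paper phrases this through $\mathcal{D}^{asph}\cong\Tilt_{[0]}(G_\e)$, with $T_\e(\lambda)$ a direct summand of some $P\star\BC$.

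\textbf{Part (c).} With (b) established this way, your argument for (c) is exactly the paper's and is correct.
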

\begin{proof}
(a) {\it Step 1:} $\BC[P]$ is a right $\BC[W_{ext}]$-module as follows: $t_\mu H_{g}=t_{g^{-1}(\mu)}$ and $t_\mu H_{t_\lambda}=t_{\mu+\lambda}$ for $g\in W$ and $\mu, \lambda \in P$. On the other hand, the right $W$-representation $\mathsf{sign}$ can be naturally viewed as a right $\BC[W_{ext}]$ representation via the quotient $W_{ext}\twoheadrightarrow W$. Then by tensor-product action, $\mathsf{sign}\otimes \BC[P]$ becomes a right $\BC[W_{ext}]$-module. 

Then we have an isomorphism of right $\BC[W_{ext}]$-modules:  
\[ \phi: \mathsf{sign}\otimes \BC[P]\iso M^{asph} \qquad \text{defined by}~1\otimes t_\mu \mapsto 1 \otimes H_{t_\mu} ~\text{for}~\mu \in P.\]

{\it Step 2:} The object $[L_\e(\lambda)]$ is nonzero in $K_0(\Rep_{[0]}(G_\e)) \iso M^{asph}$, hence we can find a nonzero $x\in \BC[P]$ such that $\phi(1\otimes x)=[L_\e(\lambda)] \in M^{asph}$. Since $\BC[P]$ is an integral domain, the following element is nonzero in $\BC[P]$:
\[ A= \left(\prod_{g\in W} xg\right) \sum_{g\in W} \mathsf{sign}(g)t_{g(\rho)}\]
Moreover, 
\[ 1\otimes A \in (1\otimes x) \BC[W_{ext}] \cap (\mathsf{sign}\otimes \BC[P])^W,\]
here $(\mathsf{sign}\otimes \BC[P])^W$ is the $W$-invariant part, and $(1\otimes x) \BC[W_{ext}]$ is the right $\BC[W_{ext}]$-submodule of $\mathsf{sign}\otimes \BC[P]$. Therefore,
\[\phi(1\otimes A) \in [L_\e(\lambda)]\BC[W_{ext}]\cap (M^{asph})^W.\]
By Lemma \ref{lem: translation 0 to -rho}, $T_0^{-\rho}([L_\e(\lambda)]\BC[W_{ext}])$ contains a nonzero element $T_0^{-\rho}(\phi(1\otimes A))$.  Therefore, we can find $P \in \CH_\e$ such that $[T_0^{-\rho}(P\star L_\e(\lambda))] \neq 0$, hence $P_\e^{-\rho, 0}\star P \star L_\e(\lambda)=T_0^{-\rho}(P\star L_\e(\lambda)) \neq 0$. 

\noindent
(b) Since $[\BC]$ generates the $\BC[W_{ext}]$-module $M^{asph}$, there is $P\in \CH_\e$ such that $P\star \BC$ contains the indecomposable tilting module $T_\e(\lambda)$ as a direct summand. This implies part (b).

\noindent
(c) We can assume $V$ is simple, then by part (b), there is $P \in \CH_\e$ such that $\tFr^*(V^*)$ is a composition factor of $P \star \BC$. Then $\tFr^*(V^*) \otimes \tFr^*(V)$ is a subquotient of $\tFr^*(V) \otimes (P\star \BC)\cong P \star \tFr^*(V)$. This implies that $\BC$ is a composition factor of $P \star \tFr^*(V)$.
\end{proof}

\subsection{Simple Harish-Chandra bimodules}\

In this section, the tensor product $\otimes$ is the tensor product over $\BC$. 

Recall the small quantum group $\fu_\e$ is defined as the Hopf subalgebra of $\cU_\e(\g)$ generated by $\{ \tE_i, \tF_i, K^{2\lambda}\}_{1\leq i \leq r}^{\lambda \in P}$. Let $\uu$ be the quotient algebra of $U_\e^{ev}$ at the point $1 \in  G^d_0 \cong \Spec Z_{Fr}$, which is the same as the quotient of $U_\e^{fin}$ at the point $1$. Then we see that $\uu$ is the quotient  of $\fu_\e$ by the two-sided ideal generated by $I:=\{ K^{2\ell \lambda}-1\}_{\lambda \in P}$. Since $I$ is a Hopf ideal in $\fu_\e$, we see that $\uu$ is also a Hopf algebra. Furthermore, the $\cU_\e(\g)$-action on $U_\e^{ev}$ induces an action of $\cU_\e(\g)$ on $\uu$.

\begin{Rem}\label{rem: irred u-mod}By \cite[Proposition 5.11]{L90} and the assumption \ref{eq: assumption on l} on $\ell$, the irreducible $\uu$-modules are parametrized by $P/\ell P$. Any $V\in \Rep(G_\e)$ is a $\fu_\e$-module by restriction, and then under the assumption \ref{eq: assumption on l} that $\fu_\e$-action on $V$ factors through an action of $\uu$. The collection of $\cU_\e(\g)$-modules $\{L_\e(\lambda)~\}_{ \lambda \in (P/\ell P)_+}$, viewed as $\uu$-modules as above, gives all irreducible $\uu$-modules.
\end{Rem}
\begin{Lem} The Harish-Chandra center gives a decomposition  of categories 
\[ \Rep(\uu)=\bigoplus_{[\lambda] \in P/(W_{ext},\bullet_\ell) } \Rep_{[\lambda]}(\uu).\]
Furthermore, all simple modules in $\Rep_{[\lambda]}(\uu)$ can be obtained  from simple modules $L_\e(\lambda)\in \Rep_{[\lambda]}(G_\e)$ with $ \lambda \in (P/\ell P)_+$ as in Remark \ref{rem: irred u-mod}.
\end{Lem}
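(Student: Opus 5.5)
The decomposition will come from the action of the Harish-Chandra center $\CW_\e$ on $\uu$-modules. The image of $\CW_\e$ in $\uu$ is central: $\CW_\e$ is central in $U_\e^{ev}$ by \cite[Lemma 9.29]{LTV}, and $\uu$ is a quotient of $U_\e^{ev}$. Any finite dimensional $\uu$-module $M$ therefore splits as $M=\bigoplus_{\utheta} M^{\utheta}$, the generalized eigenspaces for the maximal ideals $\m_\utheta$ of $\CW_\e$. This is well defined and functorial. I would take $\Rep_{[\lambda]}(\uu)$ to be the full subcategory of modules supported at the point $\tau(\lambda)$, defined via \eqref{eq: tau map} and \eqref{eq: closed point of HC}. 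Points not of this form do not occur, since every simple module has a central character of this form (see below) and finite length then forces the support into this set. By Lemma \ref{lem: closed points on HC center} together with Remark \ref{rem: Ptau under assumption on l}, $\tau(\lambda)=\tau(\mu)$ exactly when $[\lambda]=[\mu]$ in $P/(W_{ext},\bullet_\ell)$. Hence the blocks are indexed by $P/(W_{ext},\bullet_\ell)$, and the direct sum decomposition holds because there are no homomorphisms, and no extensions, between modules with disjoint central supports.

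For the second claim, I would use Remark \ref{rem: irred u-mod}: every simple $\uu$-module is the restriction of some $L_\e(\lambda)$ with $\lambda\in(P/\ell P)_+$. It then suffices to compute the central character of $\CW_\e$ on this restriction and show it equals $\tau(\lambda)$. This places $L_\e(\lambda)$ in $\Rep_{[\lambda]}(\uu)$, and since $L_\e(\lambda)\in\Rep_{[\lambda]}(G_\e)$, the blocks match the $G_\e$-blocks. For the computation, write $u\in\CW_\e$ in the triangular form $u=u_0+\sum_{\a\in Q_+\setminus 0}u_\a$ with $u_\a\in U^{ev<}_{\e,-\a}U^{ev0}_\e U^{ev>}_{\e,\a}$, as in the Definition/Lemma on left-/right-trivial bimodules. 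Act on a highest weight vector $v_\lambda$ of $L_\e(\lambda)$, regarded as a $\uu$-module through $\iota$. The terms $u_\a$ kill $v_\lambda$, because their $U^{ev>}$ parts raise the weight. The term $u_0$ acts by $\chi_\lambda(u_0)$, which by Proposition \ref{prop: HC center} and \eqref{eq: closed point of HC} is the value of $u$ at $\tau(\lambda)$. Since $u$ is central and $L_\e(\lambda)$ is simple, Schur's lemma shows that $u$ acts on all of $L_\e(\lambda)$ by this scalar.

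The main obstacle is compatibility. The $\uu$-module structure on $L_\e(\lambda)$ comes from restricting along $\fu_\e\subset\cU_\e$. The central element $u$, however, lives in $U_\e^{ev}$ and acts through $\iota$. One must check that the PBW triangular decomposition of $U_\e^{ev}$ is carried by $\iota$ to the triangular decomposition of $\cU_\e$, so that the highest weight argument is valid. One must also check that $K^{2\mu}$ acts on $v_\lambda$ by $\e^{(2\mu,\lambda)}$, consistent with the normalization in \eqref{eq: closed point of HC}, and that assumption \ref{eq: assumption on l} makes the $K^{2\ell\mu}-1$ act by zero. I would verify these facts directly on the generators $\tE_i,\tF_i,K^{2\mu}$, for which $\iota$ is the identity on the generating symbols.
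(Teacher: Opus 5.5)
Your proposal is correct and is essentially the argument the paper relies on; the paper states the lemma without a separate proof. You use that $\CW_\e$ acts centrally on $\uu$, that its points over $1$ are indexed by $P/\tau=P/(W_{ext},\bullet_\ell)$ via Remark \ref{rem: Ptau under assumption on l}, and that the central character of $L_\e(\mu)$ is read off on a highest weight vector from $u=u_0+\sum_{\a}u_\a$, exactly as in the paper's Definition/Lemma on left-/right-trivial bimodules. Combined with Remark \ref{rem: irred u-mod} this gives both claims, and the compatibility checks for $\iota$ that you flag are routine, since $\iota$ fixes $\tE_i,\tF_i,K^{2\mu}$ and preserves weights.
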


\begin{defi}Let $\uu \bimod^{G_\e}$ be the category of $\cU_\e$-equivariant  $\uu$-bimodules with rational $\cU_\e(\g)$-actions.
\end{defi}

By the Hopf algebra morphism $\fu_\e \twoheadrightarrow \uu$, any $\uu$-bimodule $V$ carries the  adjoint action of $\fu_\e$ defined by $x \cdot m=\sum x_{(1)}m S(x_{(2)})$ for $x\in \fu_\e$ and $m \in V$.
\begin{defi} Let $\HC(\uu)$ be the full subcategory of $\uu \bimod^{G_\e}$ consisting of all objects on which the adjoint action of $\fu_\e$ coincides with the action obtained  via restriction from $\cU_\e(\g)$.
\end{defi}

\begin{Rem} \label{rem: objects in HC(u)} For $\lambda\in (P/\ell P)_+$, the left (right) trivial HC-bimodules $L_\e(\lambda)^l, L_\e(\lambda)^r$ belong to $\HC(\uu)$ by Remark \ref{rem: irred u-mod}. Let $\lambda_1, \lambda_2 $ be two weights in $(P/\ell P)_+$. Let $V \in \Rep(G)$. The following object belongs to $\HC(\uu)$: 
\[ L_\e(\lambda_1)^r \otimes L_\e(\lambda_2)^l \otimes \tFr^*(V)\]
 with the  $\cU_\e$-equivariant $\uu \otimes \uu^{op}$-module structure defined as follows:
 \begin{itemize}
     \item $\cU_\e$ acts via the tensor product action.
     \item for $x_1, x_2 \in \uu$ and $v_1 \in L_\e(\lambda_1)^l$, $v_2\in L_\e(\lambda_2)^l$, $ v\in \tFr^*(V)$ then 
     \[ x_1(v_1\otimes v_2\otimes v) x_2=(x_1 v_1)\otimes (v_2 x_2)\otimes v.\]
 \end{itemize}
Equivalently, $L_\e(\lambda_1)^r \otimes L_\e(\lambda_2)^l \otimes \tFr^*(V) \cong L_\e(\lambda_1)^r \otimes \Big(L_\e(\lambda_2)\otimes \tFr^*(V)\Big)^l$.
\end{Rem}
\begin{Prop}\label{prop: simple HCp= simple HCu}Simple Harish-Chandra bimodules in $\HC_u$ are in one-to-one correspondent with simple objects in $\HC(\uu)$. 
\end{Prop}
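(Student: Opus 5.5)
The plan is to pass from $\HC_u$ to $\HC(\uu)$ by reducing modulo the maximal ideal of the Frobenius center at the point $1$, and to match simples in both directions.

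\emph{Step 1 (simples of $\HC_u$ are killed by $\m_1$).} Let $L\in\HC_u$ be simple. By Lemma \ref{lem: left and right Zfin-actions coincide}, the left and right actions of $Z^{fin}_{Fr}$ on $L$ agree. Since $Z^{fin}_{Fr}\cong \BC[G]$ by Proposition \ref{prop: properties of Ufin}, this gives a $G$-equivariant $\BC[G]$-module structure on $L$ (equivariance via $\tFr$). The support of $L$ in $G$ is therefore a closed $G$-stable subset, and it lies over $1\in T/W$ because we work over $Z_\cap^{\wedge_{\uchi_u}}$; hence the support is contained in the unipotent cone. For a closed $G$-orbit $O$ in the support, let $I_O$ be its ideal; then $LI_O$ is a proper sub-bimodule by Nakayama, using the separatedness in Lemma \ref{lem: unique left Ue-uchi}. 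Simplicity gives $LI_O=0$, so the support is a single closed orbit, and the only closed orbit in the unipotent cone is $\{1\}$. Thus $L$ is annihilated by the augmentation ideal $\m_1$ on both sides, so $L$ is a $\cU_\e$-equivariant bimodule over $U^{fin}_\e/U^{fin}_\e\m_1=\uu$.

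\emph{Step 2 (compatibility of the adjoint action).} I must check that $L$ satisfies the defining condition of $\HC(\uu)$: the adjoint $\fu_\e$-action coincides with the restricted $\cU_\e$-action. This should follow from the construction of the left action in Remark \ref{rem: left action on equiv Uev-mod}. Formula \eqref{eq: left action} gives $hm=\sum(\iota(h_{(1)})\cdot m)h_{(2)}$, and after inverting this one finds $\iota(h)\cdot m=\sum h_{(1)}mS(h_{(2)})$ for $h$ in the image of the small quantum group generators $\tE_i,\tF_i,K^{2\lambda}$. The factorization $\fu_\e\to\uu$ uses the assumption \ref{eq: assumption on l}, namely that $K^{2\ell\lambda}$ acts trivially. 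Hence the simple objects of $\HC_u$ are exactly the simple objects of $\HC_u$ killed by $\m_1$, and these embed into $\HC(\uu)$.

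\emph{Step 3 (the converse).} Conversely, an object of $\HC(\uu)$ is a $\cU_\e$-equivariant right $U^{fin}_\e$-module through $U^{fin}_\e\twoheadrightarrow\uu$. It is finite dimensional, so it is finitely generated and lies in $\HC_u$. Its left action agrees with the one recovered in Lemma \ref{lem: left action on HCmod}: by the compatibility in Step 2, formula \eqref{eq: left action} reproduces the given left $\uu$-action. Subobjects correspond on both sides, since both categories are full subcategories of equivariant bimodules killed by $\m_1$. Therefore simples correspond bijectively and the functor is fully faithful.

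\emph{Main obstacle.} I expect the main obstacle to be Step 1, namely justifying that a simple object has support exactly $\{1\}$. This needs care with the completion and with the need for $L\m_1$ to be a sub-bimodule. I would handle it using the $\Hom_{\cU_\e}(V,L)$ finiteness of Lemma \ref{lem: Hom is fg over HC} together with the equality of the left and right actions of $\m_1$.
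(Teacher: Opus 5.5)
Your proposal is correct and follows essentially the paper's route. You show that the support of a simple $L\in\HC_u$ in the unipotent cone is a nonempty closed $G$-stable set, so it contains $1$ (your closed-orbit formulation is equivalent). This gives $L\m_1=0$ and puts $L$ in the image of the fully faithful functor $\HC(\uu)\to\HC_u$. The differences are cosmetic: the paper gets $G$-stability of the support from the genuinely $G$-equivariant $Z^{fin}_{Fr}$-modules $\Hom_{\ad(\fu_\e)}(V,M)$, $V$ projective, rather than from a structure on $L$ itself. Since $L$ is only a $\cU_\e$-module, your phrase ``$G$-equivariant $\BC[G]$-module'' should be read as $G$-stability of its annihilator. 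You also spell out the adjoint-action compatibility, which the paper leaves implicit.
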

\begin{proof}Recall $\uchi_u$ be the image of $\chi_u$ under the map $\Spec Z_{Fr} \rightarrow \Spec Z_\cap$. Note that any $M \in \HC_u=U_\e^{fin, \uchi_u}\rmod^{G_\e}$ is a finitely generated module over $Z_{Fr}^{fin, \uchi_u}:= Z_{Fr}^{fin}\otimes_{Z_\cap} Z_{\cap}^{\wedge_{\uchi_u}}$. Let $\m_u$ be the maximal ideal of $Z_\cap$ at $\uchi_u$ then $Z_{Fr}^{fin}/Z_{Fr}^{fin}\m_u$ is the algebra of functions on the unipotent cone $U$ of $G$. Let $\Supp(M)$ be the support of $M$ in the unipotent cone $U$. Then $\Supp(M)$ is a closed subvariety of $U$. The group $G$ acts on $U$ by the conjugation action. For $x\in G$ and $g\in G$ we denote $g.x:=gxg^{-1}\in G$.

{\it Step 1:} We will show that $\Supp(M)$ is a closed $G$-stable subvariety of $U$. Let $x\in U \subset G$ and $\m_x$ be the maximal ideal of $x$ in $\BC[G]$. Assume $M/M \m_x \neq 0$, we want to show that $M/M \m_{g.x} \neq 0$ for all $g\in G$.

Any projective module in $\Rep^{fd}(\cU_\e)$ is also projective in $\Rep(\fu_\e)$. Moreover, simple objects in $\Rep(\fu_\e)$ can be obtained from certain simple modules in $\Rep^{fd}(\cU_\e)$ by restricting the actions from $\cU_\e(\g)$ to $\fu_\e$. Therefore, we can find a projective module $V\in \Rep^{fd}(\cU_\e)$ such that $\Hom_{\ad(\fu_\e)}(V, M/M \m_x) \neq 0$ and we have a short exact sequence
\begin{equation}\label{eq: fiber and taking u-inv} 0\rightarrow \Hom_{\ad(\fu_\e)}(V, M \m_{x'})\rightarrow \Hom_{\ad(\fu_\e)}(V, M) \rightarrow \Hom_{\ad(\fu_\e)}(V, M/M \m_{x'}) \rightarrow 0,
\end{equation}
for all $x'\in U$. In above discussion, we use that  $\fu_\e$ acts trivially on the center $Z(U_\e^{fin})$ of $U_\e^{fin}$.

This implies that $\Hom_{\ad(\fu_\e)}(V, M)$, which is an object in $Z_{Fr}^{fin, \uchi_u}\rmod^{G}$, has a nonzero fiber at $x$. Therefore, the fiber of $\Hom_{\fu_\e}(V,M)$ at $g.x$ is nonzero for all $g\in G$. Combining this with \eqref{eq: fiber and taking u-inv}, we have $\Hom_{\fu_\e}(V, M/M \m_{g.x}) \neq 0$, hence $M/M\m_{g.x} \neq 0$ for all $g\in G$.

{\it Step 2:} There is a fully faithful  functor $\HC(\uu) \rightarrow \HC_u$: let $U_\e^{fin}$ act on the right  of any $M \in \HC(\uu)$ through the quotient $U_\e^{fin} \twoheadrightarrow \uu$ then $M$ becomes an object in $\HC_u$. 

Let $\m_1$ be the maximal ideal of $Z_{Fr}^{fin}$ at $1 \in U$.  For any simple object $L$ in $\HC_u$, by Step 1, $\Supp(L) \subset U$ always contains $1 \in U$. Therefore, $L \m_1=0$.  On the other hand, the left and right action of $Z_{Fr}^{fin}$ on $L$ coincide by Lemma \ref{lem: left and right Zfin-actions coincide}. Hence $L$ is an object in $\HC(\uu)$.
\end{proof}

\begin{Lem}[follow from by Remark \ref{rem: irred u-mod}] \label{lem: simple modules of u xu}The set $\{ L_\e(\lambda_1)^r \otimes L_\e(\lambda_2)^l~|~(\lambda_1, \lambda_2) \in (P/\ell P)_+^{\oplus 2}\}$ classifies all simple $\uu \otimes \uu^{op}$-modules.
\end{Lem}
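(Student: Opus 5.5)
The plan is to reduce the statement to the classification of simple $\uu$-modules in Remark \ref{rem: irred u-mod}, using that $\uu$ is a finite-dimensional Hopf algebra over $\BC$. Simple $\uu\otimes\uu^{op}$-modules are the same as simple $\uu$-bimodules. For finite-dimensional algebras $A,B$ over the algebraically closed field $\BC$, every simple $A\otimes B$-module has the form $L_1\otimes L_2$, with $L_1$ a simple $A$-module and $L_2$ a simple $B$-module. Distinct pairs give non-isomorphic modules. This is the standard fact that $(A\otimes B)/\mathrm{rad}\cong A/\mathrm{rad}A\otimes B/\mathrm{rad}B$ is a product of matrix algebras $\Mat_{n_i}(\BC)\otimes\Mat_{m_j}(\BC)$. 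The proof therefore splits into two parts: simple left $\uu$-modules, and simple right $\uu$-modules.

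For left modules, Remark \ref{rem: irred u-mod} says the simple left $\uu$-modules are exactly the restrictions of $L_\e(\lambda_1)$ for $\lambda_1\in(P/\ell P)_+$, pairwise non-isomorphic. By the definition of the right-trivial bimodule, $L_\e(\lambda_1)^r$ carries precisely this left action, through $\iota$. Its right action is via the counit $\varepsilon$, i.e.\ it is the trivial right module.

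For right modules, I would use the antipode to turn a left $\uu$-module $V$ into a right module via $vu=S^{-1}(u)v$. This is the structure on $L_\e(\lambda_2)^l$. Since $S^{-1}$ is an anti-automorphism of the Hopf algebra $\uu$, this is an equivalence between left and right $\uu$-modules. Hence the simple right $\uu$-modules are exactly $L_\e(\lambda_2)^l$ for $\lambda_2\in(P/\ell P)_+$, and they are pairwise non-isomorphic. The left action on $L_\e(\lambda_2)^l$ is via $\varepsilon$, i.e.\ trivial.

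It remains to combine the two parts. The tensor product $L_\e(\lambda_1)^r\otimes L_\e(\lambda_2)^l$ of Remark \ref{rem: objects in HC(u)}, with $V$ trivial, has action $x_1(v_1\otimes v_2)x_2=x_1v_1\otimes v_2x_2$. This is the outer tensor product of the simple left module $L_\e(\lambda_1)$ and the simple right module $L_\e(\lambda_2)^l$. The general fact above then gives the stated classification.

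I expect no substantial obstacle. The only point needing care is that $\uu$ is finite-dimensional, which holds because $\uu$ is a quotient of the finite-dimensional $\fu_\e$ by the Hopf ideal generated by $I$, so that the radical argument applies. A second point is that the right-trivial and left-trivial constructions really induce the $\uu$-actions after passing through $U_\e^{fin}\twoheadrightarrow\uu$; this is exactly the content of Remark \ref{rem: irred u-mod} under assumption \ref{eq: assumption on l}.
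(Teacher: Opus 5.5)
Your proposal is correct and follows the paper's argument. The paper's proof is only the reference to Remark \ref{rem: irred u-mod}. What you spell out is what that reference implicitly relies on: simple modules over a tensor product of finite-dimensional $\BC$-algebras are outer tensor products of simple modules, and the passage $vu=S^{-1}(u)v$ turns the simple left $\uu$-modules $L_\e(\lambda)$ into the simple right $\uu$-modules $L_\e(\lambda)^l$.
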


\begin{Lem}\label{lem: Classification of simple bimods} The set
\[ S:= \Big\{ L_\e(\lambda_1)^r \otimes L_\e(\lambda_2)^l \otimes \tFr^*(V) ~|~ \lambda_1, \lambda_2 \in (P/\ell P)_+, V\in \textnormal{Irr}(G)\Big\}\]
classifies all simple objects in $\HC(\uu)$.
\end{Lem}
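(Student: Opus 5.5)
The plan is to reduce the classification to the representation theory of the algebra $\uu\otimes\uu^{op}$ equipped with a compatible rational $\cU_\e(\g)$-action, and then to split off the Frobenius part. The defining condition of $\HC(\uu)$ says that the $\fu_\e$-part of the $\cU_\e(\g)$-action is the adjoint action, hence is determined by the bimodule structure. The remaining freedom is an action of $\cU_\e(\g)/\!/\fu_\e$, which factors through $\tFr$ to an action of $G$. So I will treat $\HC(\uu)$ as the category of $\uu\otimes\uu^{op}$-modules equipped with a compatible $G$-equivariant structure, twisted by the adjoint action.

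\emph{Step 1 (elements of $S$ are simple).} Let $M=L_\e(\lambda_1)^r\otimes L_\e(\lambda_2)^l\otimes\tFr^*(V)$ and let $N\subset M$ be a nonzero subobject. As a $\uu\otimes\uu^{op}$-module, $M$ is a multiple of the simple module $L_\e(\lambda_1)^r\otimes L_\e(\lambda_2)^l$ (Lemma \ref{lem: simple modules of u xu}), with multiplicity space $\tFr^*(V)$. Hence $N=L_\e(\lambda_1)^r\otimes L_\e(\lambda_2)^l\otimes N_0$ for a subspace $N_0\subset \tFr^*(V)$. The multiplicity space is
\[
\Hom_{\uu\otimes\uu^{op}}\bigl(L_\e(\lambda_1)^r\otimes L_\e(\lambda_2)^l,\,M\bigr)\cong \tFr^*(V),
\]
and it carries a $\cU_\e$-action on which $\fu_\e$ acts trivially. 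Since $N$ is $\cU_\e$-stable, $N_0$ is a $G$-subrepresentation of $V$. Irreducibility of $V$ then forces $N=M$.

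\emph{Step 2 (every simple is in $S$).} Let $L\in\HC(\uu)$ be simple. It is finite dimensional, being finitely generated over the finite dimensional algebra $\uu\otimes\uu^{op}$. Choose a simple $\uu\otimes\uu^{op}$-submodule of $L$; by Lemma \ref{lem: simple modules of u xu} it has the form $E=L_\e(\lambda_1)^r\otimes L_\e(\lambda_2)^l$. Consider the multiplicity space $H=\Hom_{\uu\otimes\uu^{op}}(E,L)$. Since $E$ itself carries a $\cU_\e$-equivariant structure compatible with the adjoint action, $\cU_\e$ acts on $H$ by conjugation, and the compatibility condition makes $\fu_\e$ act trivially on $H$. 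Therefore $H=\tFr^*(V')$ for some $V'\in\Rep(G)$; this is the step that uses the description of $\cU_\e$-modules with trivial $\fu_\e$-action as Frobenius pullbacks. Take an irreducible $V\subset V'$. Evaluation gives a nonzero morphism $E\otimes\tFr^*(V)\to L$ in $\HC(\uu)$. By Step 1 its source is simple, so the morphism is an isomorphism.

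\emph{Step 3 (distinctness).} Distinct parameters give non-isomorphic objects. The pair $(\lambda_1,\lambda_2)$ is recovered as the isotypic type of the underlying $\uu\otimes\uu^{op}$-module, and $V$ is recovered from the multiplicity space as a $G$-module.

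The main obstacle is Steps 1–2, namely making precise that the $\cU_\e$-action on the multiplicity space $H$ has trivial $\fu_\e$-part. This needs the Hopf-algebraic identity that the twisted adjoint action of $\fu_\e$ on $\Hom(E,L)$ agrees with the given action. I will check this on the generators $\tE_i,\tF_i,K^{2\lambda}$ using the coproduct \eqref{eq: twist-Hopf-2}, and use that $K^{2\ell\lambda}$ acts trivially under assumption \ref{eq: assumption on l}, so that everything factors through $\uu$.
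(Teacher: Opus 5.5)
Your proposal is correct and follows the paper's argument. Both you and the paper use the multiplicity space $\Hom_{\uu\otimes\uu^{op}}(L_\e(\lambda_1)^r\otimes L_\e(\lambda_2)^l, M)$ with its $\cU_\e(\g)$-action factoring through $\tFr$, compare $M$ with $L_\e(\lambda_1)^r\otimes L_\e(\lambda_2)^l\otimes\tFr^*(V)$, and get simplicity of elements of $S$ because the multiplicity space of a subobject is a $G$-subrepresentation of $V$. The only difference is ordering: you prove simplicity first and map from an irreducible $V$, whereas the paper maps from the whole multiplicity space and deduces irreducibility afterwards.

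The technical point you flag is really about whether conjugation by $\cU_\e(\g)$ preserves $\Hom_{\uu\otimes\uu^{op}}$ at all, since a priori it only preserves one-sided module maps; the triviality of the $\fu_\e$-part is then immediate. The paper settles this in Remark~\ref{rem: action on Hom} by identifying bimodule maps with $\fu_\e$-invariant right $\uu$-module maps and using that $\fu_\e$ is normal in $\cU_\e(\g)$.
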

\begin{proof} 
Let $M$ be a simple object in $\HC(\uu)$. Then there are $\lambda_1, \lambda_2\in (P/\ell P)_+$ such that 
\begin{equation}\label{eq: Hom(uu,M)} V:=\Hom_{\uu \otimes \uu^{op}}(L_\e(\lambda_1)^r\otimes L_\e(\lambda_2)^l, M) \neq 0.
\end{equation}
The space $V$ in \eqref{eq: Hom(uu,M)} is a $\cU_\e(\g)$-module so that the action of $\cU_\e(\g)$ on it  factors thorough the Frobenius morphism $\tFr: \cU_\e(\g) \rightarrow \cU_\BC(\g)$, see Remark \ref{rem: action on Hom} below.
 Let us consider the following object in $\HC(\uu)$ as defined in Remark \ref{rem: objects in HC(u)}:
 \[ A= L_\e(\lambda_1)^r\otimes L_\e(\lambda_2)^l \otimes \tFr^*(V).\]
 Then 
 \begin{align*}
     \Hom_{\HC(\uu)}(A,M) & \cong \Hom_{\uu \otimes \uu^{op}}(A, M)^{\cU_\e} \\
     &\cong (V\otimes \Hom_{\uu\otimes \uu^{op}}(L_\e(\lambda_1)^r\otimes L_\e(\lambda_2)^l, M)^*)^{\cU_\e}\\
     & \cong (V\otimes V^*)^{\cU_\e} \neq 0.
 \end{align*}
Since $M$ is simple, we must have $A\cong M$, hence $V$ is an irreducible $\cU_\BC(\g)$-representation otherwise $A$ will not be simple.

So we have that if $M$ is a simple object in $\HC(\uu)$ then 
\begin{itemize}
    \item There are $\lambda_1, \lambda_2 \in (P/\ell P)_+$ such that $V=\Hom_{\uu \otimes \uu^{op}}(L_\e(\lambda)^r\otimes L_\e(\lambda_2)^l, M)$ belongs to $\text{Irr}(G)$. Then $M \cong L_\e(\lambda_1)^r \otimes L_\e(\lambda_2)^r \otimes \tFr^*(V)$.
    \item The triple $(\lambda_1, \lambda_2, V)$ is uniquely recovered from $M$.
\end{itemize}
Therefore, the lemma follows if we can show that any object in $S$ is simple.

Let $L:=L_\e(\lambda)^r  \otimes L_\e(\lambda_2)^l \otimes \tFr^*(V)$ be in $S$ and $N$ be a simple suboject of $L$. Then $V':=\Hom_{\uu \otimes \uu^{op}}(L_\e(\lambda_1)^r\otimes L_\e(\lambda_2)^l, N) \in \text{Irr}(G)$ and $N \cong L_\e(\lambda_1)^r\otimes L_\e(\lambda_2)^l \otimes \tFr^*(V')$. On the other hand, we have an inclusion of $G$-representations  $V' \hookrightarrow V$, but $V$ is irreducible hence $V'=V$. Therefore $N=L$, equivalently, $L$ is simple.
\end{proof}
\begin{Rem} \label{rem: action on Hom} In the above proof, for $M,N \in \HC(\uu)$, we equip the space $\Hom_{\uu \otimes \uu^\op}(M,N)$ with a rational $\cU_\e$-module structure. There is a subtlety, see Appendix \ref{append: R-module H}. Nevertheless, 
\[ \Hom_{\uu \otimes \uu^\op}(M,N) \cong \Hom_{\uu\rmod^{\fu_\e}}(M,N)  \quad \text{in $\Hom_{\BC}(M,N)$.}\]
The space $\Hom_{\uu \rmod}(M,N)$ has a natural $\cU_\e(\g)$-action defined by $(h f)(m)=\sum h_{1}f(S({h_2})m)$ for $h \in \cU_\e(\g), f\in \Hom_{\uu \rmod}(M,N)$ and $m\in M$. Then 
\[\Hom_{\uu\rmod^{\fu_\e}}(M,N) \cong \Hom_{\uu \rmod}(M,N) ^{\fu_\e}\]
is closed under this action of $\cU_\e(\g)$ (the reason is that $\fu_\e$ is "normal" in $\cU_\e(\g)$), furthermore, this $\cU_\e(\g)$-action factors through $\cU_\BC(\g)$.
\end{Rem}
\begin{Cor}\label{cor: simple HC}Any simple object in $\HC(\uu)$ is of the form $L_\e(\lambda_1)^r \otimes L_\e(\lambda_2)^l$ for some dominant weights $\lambda_1, \lambda_2$.
\end{Cor}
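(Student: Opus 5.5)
By Lemma \ref{lem: Classification of simple bimods}, every simple object of $\HC(\uu)$ has the form
\[ L:=L_\e(\lambda_1)^r \otimes L_\e(\lambda_2)^l \otimes \tFr^*(V) \]
with $\lambda_1,\lambda_2 \in (P/\ell P)_+$ and $V\in \textnormal{Irr}(G)$. The plan is to absorb the Frobenius twist into one of the two tensor factors. By the last sentence of Remark \ref{rem: objects in HC(u)}, we have $L\cong L_\e(\lambda_1)^r\otimes \big(L_\e(\lambda_2)\otimes \tFr^*(V)\big)^l$. So it suffices to show that $L_\e(\lambda_2)\otimes \tFr^*(V)$ is a simple object of $\Rep(G_\e)$ of the form $L_\e(\lambda_2')$ with $\lambda_2'$ dominant.

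The key input is the quantum Steinberg tensor product theorem (Lusztig). Write $V=L(\nu)$ with $\nu$ a dominant weight of $G$. Under assumption \ref{eq: assumption on l} we have $P^*=\ell P$ and $\tFr^*$ sends $L(\nu)$ to the module with highest weight $\ell\nu$. Then
\[ L_\e(\lambda_2)\otimes \tFr^*(L(\nu)) \cong L_\e(\lambda_2+\ell\nu), \]
because $\lambda_2$ is $\ell$-restricted. Setting $\lambda_2':=\lambda_2+\ell\nu$, which is dominant, gives $L\cong L_\e(\lambda_1)^r\otimes L_\e(\lambda_2')^l$, as claimed.

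One bookkeeping point needs care. In the definition of $V^l$, the right $\uu$-action is twisted by $S^{-1}$, and the $\uu\otimes\uu^{op}$-structure of Remark \ref{rem: objects in HC(u)} lets $\uu$ act only on the $L_\e(\lambda_2)$ factor. I therefore need the isomorphism $L_\e(\lambda_2)\otimes\tFr^*(V)\cong L_\e(\lambda_2')$ to be compatible with these structures. This is automatic for two reasons:
\begin{itemize}
\item $\fu_\e$, and hence $\uu$, acts trivially on $\tFr^*(V)$. Concretely, $\tE_i$ and $\tF_i$ are killed by $\tFr$, and $K^{2\lambda}$ maps to $1$.
\item The coproduct formulas \eqref{eq: twist-Hopf-2} then show that $\uu$ acts on $L_\e(\lambda_2)\otimes\tFr^*(V)$ through the first factor only.
\end{itemize}
Consequently the left-trivial structure on $(L_\e(\lambda_2)\otimes\tFr^*(V))^l$ agrees with the one described in Remark \ref{rem: objects in HC(u)}, and the $\cU_\e$-equivariant isomorphism $L_\e(\lambda_2)\otimes\tFr^*(V)\cong L_\e(\lambda_2')$ transports it correctly.

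The only potentially delicate step is citing the Steinberg tensor product theorem in the twisted setting $\cU_\e(\g)$. I would transfer it from Lusztig's form $\dmU_\e(\g,P)$ using the braided equivalences of Remark \ref{rem: equivalence of Rat reps} together with the compatibility of $\tFr$ with the untwisted Frobenius (Proposition \ref{prop: Frobenius morphism}, \cite[$\mathsection 5$]{LTV}). A simple module with highest weight $\lambda_2+\ell\nu$ is characterized by its highest weight, and this characterization survives the Drinfeld twist. The rest of the argument is formal.
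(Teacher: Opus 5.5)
Your proposal is correct and matches the paper's argument. Like the paper, you start from the classification in Lemma \ref{lem: Classification of simple bimods}, use Remark \ref{rem: objects in HC(u)} to absorb $\tFr^*(V)$ into the left-trivial factor, and apply the quantum Steinberg tensor product theorem to get $L_\e(\lambda_2)\otimes\tFr^*(L(\nu))\cong L_\e(\lambda_2+\ell\nu)$. Your extra check that $\uu$ acts on $L_\e(\lambda_2)\otimes\tFr^*(V)$ only through the first factor justifies the ``equivalently'' in that remark, which the paper leaves implicit.
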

\begin{proof}By Remark \ref{rem: objects in HC(u)}, for $V=L(\lambda)\in \text{Irr}(G)$ then $L_\e(\lambda)_1^r \otimes L_\e(\lambda_2)^l \otimes \tFr^*(V) \cong L_\e(\lambda_1)^r \otimes \Big(L_\e(\lambda_2)\otimes \tFr^*(V)\Big)^l \cong L_\e(\lambda_1)^r\otimes L_\e(\lambda_2+\ell \lambda)^l$.
\end{proof}
\begin{defi}\label{defi: equivalence on simples}Let $L_1, L_2$ be two simple objects in $\HC_\e(0,0)$. We say $L_1 \prec L_2$ if there are bimodules $P_1, P_2$ in $\CH_\e$ such that $L_1$ is the composition factor of the object $P_1 \star L_2 \star P_2$. We say that $L_1 \sim L_2$ if $L_1 \prec L_2$ and $L_2 \prec L_1$. 
\end{defi}
\begin{Rem}\label{rem: equivalence on simples} Simple objects in $\HC_\e(0,0)$ are also simple objects in $\HC_q(0,0)$. In Definition \ref{defi: equivalence on simples}, we can replace $P_1, P_2\in \CH_\e$ by $P_1, P_2\in \CH_q$ since the action of $\CH_q$ on $\HC_\e(0,0)$ factors through the action of $\CH_\e$.
\end{Rem}
\begin{Thm}\label{thm: simples are in one class} All simple objects in $\HC_\e(0,0)$ are equivalent $ \sim$ to each other.
\end{Thm}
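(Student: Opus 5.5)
We will show that every simple object $L$ of $\HC_\e(0,0)$ is equivalent to the trivial bimodule $\BC=\BC^r$. Since $\sim$ is transitive this gives the claim.

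\textbf{Step 1: identify $L$.} By Proposition \ref{prop: simple HCp= simple HCu}, $L$ is a simple object of $\HC(\uu)$. By Lemma \ref{lem: Classification of simple bimods} and Corollary \ref{cor: simple HC},
\[ L\cong L_\e(\lambda_1)^r\otimes L_\e(\lambda_2)^l\otimes \tFr^*(V) \]
for some $\lambda_1,\lambda_2\in (P/\ell P)_+$ and $V\in \textnormal{Irr}(G)$. Since $L$ lies in $\HC_\e(0,0)$, the central characters force $L_\e(\lambda_1)\in \Rep_{[0]}(G_\e)$ and $L_\e(\lambda_2)^*\in\Rep_{[0]}(G_\e)$; the latter uses that $0^*=0$.

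\textbf{Step 2: reduce the left and right factors using the Hecke action.} The key tool is Lemma \ref{lem: translation and left-right trivial bimod}, extended to products of translation bimodules, i.e.\ to all of $\CH_\e$. Taking $P$ from the left acts only on the right-trivial factor,
\[ P\star L_\e(\lambda_1)^r\otimes(\cdots)\cong (P\star L_\e(\lambda_1))^r\otimes(\cdots), \]
and symmetrically right multiplication acts on the left-trivial factor. This requires a version of Lemma \ref{lem: diagonal bimodule and Rep(G)} in which the extra tensor factors commute past the diagonal bimodule. We would prove it by the same projection argument, writing $L$ as $L_\e(\lambda_1)^r\otimes (L_\e(\lambda_2)\otimes\tFr^*V)^l$.

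\textbf{Step 3: prove $\BC\prec L$.} By Corollary \ref{cor: Hecke action on Rep(Ge)}(a), applied on both sides, some $P_1\star L\star P_2$ has as a composition factor a bimodule whose two factors lie in the semisimple block $\Rep_{[-\rho]}$, namely modules $\St_\e\otimes\tFr^*(V_i)$ by Remark \ref{rem: rho block}. Translating back via $P^{0,-\rho}$, using Lemma \ref{lem: diagonal bimodule and Rep(G)} to move the Frobenius twists out, produces a composition factor of the form $\BC^r\otimes\BC^l\otimes\tFr^*(V')$. By Lemma \ref{lem: left-right trivial bimodules}(b), this is $\tFr^*(V')^r$. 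Then Corollary \ref{cor: Hecke action on Rep(Ge)}(c) gives a $P$ with $\BC$ a composition factor of $P\star\tFr^*(V')$. Hence $\BC\prec L$.

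\textbf{Step 4: prove $L\prec\BC$.} Corollary \ref{cor: Hecke action on Rep(Ge)}(b) gives $P_1$ with $L_\e(\lambda_1)$ a composition factor of $P_1\star\BC$, so $L_\e(\lambda_1)^r$ is a composition factor of $P_1\star\BC^r$. The left-trivial analogue gives $P_2$ with $L_\e(\lambda_1)^r\otimes(L_\e(\lambda_2)\otimes\tFr^*V)^l$ a composition factor of $L_\e(\lambda_1)^r\star P_2$. Note that $L_\e(\lambda_2)\otimes\tFr^* V$ is itself simple in $\Rep_{[0]}$, as in the proof of Corollary \ref{cor: simple HC}. Exactness of tensoring with $\CH_\e$ (Corollary \ref{cor: Sb embed}) makes $\prec$ transitive, so $L\prec\BC$.

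The main obstacle is Step 3. We must ensure that the nonzero class produced in $K_0$ by Corollary \ref{cor: Hecke action on Rep(Ge)}(a) gives a genuine composition factor of the two-sided product that is a simple bimodule of the required shape. We must also check that the left and right Hecke actions commute with each other and with the $\tFr^*$ factor inside $\HC_\e(0,0)$; this is where the block condition and assumption \ref{eq: assumption on l} enter.
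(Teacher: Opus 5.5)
Your overall strategy is the paper's. Your Step 4 is its first step (Corollary \ref{cor: Hecke action on Rep(Ge)}(b) on each side), and your Step 3 is its second half: push into $\HC_\e(-\rho,-\rho)$ with Corollary \ref{cor: Hecke action on Rep(Ge)}(a), come back, and finish with Corollary \ref{cor: Hecke action on Rep(Ge)}(c). However, Step 3 has a gap exactly where you return from the $-\rho$ block.

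Let $N=\St_\e^r\otimes\St_\e^l\otimes\tFr^*(V)$ be a composition factor of $P_\e^{-\rho,0}\star P_1\star L\star P_2\star P_\e^{0,-\rho}$. After Lemma \ref{lem: diagonal bimodule and Rep(G)} pulls the existing Frobenius twists out, what you actually have is
\[ P_\e^{0,-\rho}\star N\star P_\e^{-\rho,0}\cong (T^0_{-\rho}\St_\e)^r\otimes(T^0_{-\rho}\St_\e)^l\otimes\tFr^*(V). \]
Nothing you cite produces a composition factor of the shape $\BC^r\otimes\BC^l\otimes\tFr^*(V')$ from this. In general $\BC$ is not even a composition factor of $T^0_{-\rho}\St_\e$. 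For $\mathfrak{sl}_2$, $T^0_{-\rho}\St_\e=\St_\e\otimes L_\e(\omega)$, whose composition factors are $L_\e((\ell-2)\omega)$ twice and $L_\e(\ell\omega)$ once.

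The missing input is a Frobenius-twisted composition factor of the translated Steinberg module. Since $T^0_{-\rho}\St_\e=\pr_{[0]}(T_\e(\rho)\otimes\St_\e)$ is tilting with highest weight $\ell\rho=t_\rho\bullet_\ell 0\in[0]$, it has $W_\e(\ell\rho)$ as a subquotient. Hence it has $L_\e(\ell\rho)\cong\tFr^*L(\rho)$ as a composition factor. It follows that
\[ L_\e(\ell\rho)^r\otimes L_\e(\ell\rho)^l\otimes\tFr^*(V)\cong\big(\tFr^*(L(\rho)\otimes L(\rho)\otimes V)\big)^r \]
is a composition factor of $P_\e^{0,-\rho}\star N\star P_\e^{-\rho,0}$, using Lemmas \ref{lem: left-right trivial bimodules} and \ref{lem: translation and left-right trivial bimod}. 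Only at this point does Corollary \ref{cor: Hecke action on Rep(Ge)}(c) close the loop. Also, $P_\e^{0,-\rho}\star P_\e^{-\rho,0}$ is the image of $B_{w_0}$, so the composite bimodules you use do lie in $\CH_\e$.

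The obstacle you flagged is not a real one. Corollary \ref{cor: Hecke action on Rep(Ge)}(a) already gives nonzero modules on each side, so $P_\e^{-\rho,0}\star P_1\star L\star P_2\star P_\e^{0,-\rho}$ is a nonzero finite-dimensional bimodule. Its simple subquotients have the form $\St_\e^r\otimes\St_\e^l\otimes\tFr^*(V)$ by Lemma \ref{lem: Classification of simple bimods} and Remark \ref{rem: rho block}.
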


\begin{proof}We will prove that any  simple object $L$ is equivalent to the trivial bimodule $\BC$. 

{\it Step 1:} We will show that $L \prec \BC$. By Corollary  \ref{cor: simple HC}, $L=L_\e(\lambda_1)^r \otimes L_\e(\lambda_2)^l$ for some $\lambda_1, \lambda_2 \in W_{ext} \bullet_\ell 0$. Note that $\BC \cong \BC^r \otimes \BC^l$. By Corollary \ref{cor: Hecke action on Rep(Ge)}.b, there are $P_1, P_2\in \CH_\e$ such that $L_\e(\lambda_1)^r$ is a composition factor of $P_1\star \BC^r$ and $L_\e(\lambda_2)^l$ is a composition factor of $\BC^l \star P_2$. Therefore, $L$ is a composition factor of $P_1 \star \BC \star P_2$, equivalently, $L \prec \BC$.

{\it Step 2:} We will show that $\BC \prec L$.  By Corollary \ref{cor: Hecke action on Rep(Ge)}.a, there are $P_1, P_2 \in \CH_\e$ such that 
\[ P_\e^{-\rho, 0} \star P_1 \star L_\e(\lambda_1)^r \neq 0, \qquad L_\e(\lambda_2)^l \star P_2 \star P_\e^{0,-\rho} \neq 0.\]
Hence, 
\begin{equation}\label{eq: eq1} P_\e^{-\rho, 0} \star P_1 \star L \star P_2 \star P_\e^{0, -\rho} \neq 0.
\end{equation}

{\it Step 2':} We will show that for any simple object $N$ in $\HC_\e(-\rho, -\rho)$, there is $P_3 \in \CH_\e$ such that $\BC$ is the composition factor of $P_3\star P_\e^{0, -\rho} \star N \star P_\e^{-\rho, 0} $. By Lemma \ref{lem: Classification of simple bimods} and Remark \ref{rem: rho block}, we can assume $N:=\St_\e^r \otimes \St_\e^l \otimes \tFr^*(V)$ for some $V\in \textnormal{Irr}(G)$. Then 
\[ P_\e^{0, -\rho} \star N \star P_\e^{-\rho, 0} \cong (T_{\rho \rightarrow 0} \St_\e)^r \otimes (T_{-\rho \rightarrow 0} \St_\e)^l \otimes \tFr^*(V) \]
Since $T_{-\rho \rightarrow 0}\St_\e$ has a subquotient $W_\e(\ell \rho)$, it has a composition factor $L_\e(\ell \rho)$. This implies that $L_\e(\ell\rho)^r \otimes L_\e(\ell \rho)^l \otimes \tFr^*(V)$ is a composition factor of $P_\e^{0, -\rho}\star N \star P_\e^{-\rho, 0}$.

On the other hand, by Lemma \ref{lem: left-right trivial bimodules}-\ref{lem: translation and left-right trivial bimod}
\[ L_\e(\ell \rho)^r \otimes L_\e(\ell \rho)^l \otimes \tFr^*(V)\cong \Big(\tFr^*\big(L(\rho)\otimes L(\rho) \otimes V\big)\Big)^r\]
By Corollary \ref{cor: Hecke action on Rep(Ge)}, there is $P_3\in \CH_\e$ such that $\BC$ is a composition factor of $P_3 \star \tFr^*\big(L(\rho) \otimes L(\rho) \otimes V\big)$. Hence $\BC$ is a composition factor of $P_3 \star P_\e^{0, -\rho} \star N \star P_\e^{-\rho, 0}$.

Now we can finish Step $2$. Pick a composition factor $N$ in \eqref{eq: eq1} and $P_3$ as in Step $2'$, we see that $\BC$ is a composition factor of 
\[ P_3 \star P_\e^{0, -\rho} \star P_\e^{-\rho, 0} \star P_1 \star L \star P_2 \star P_\e^{0, -\rho} \star P_\e^{-\rho, 0}.\]
This implies that $\BC \prec L$.
\end{proof}
\subsection{Projective Harish-Chandra bimodules and the category $\sC_{\hbar, 0}, \sC_0$}\

Let $\Proj(\HC_q(0,0))$ and $\Proj(\HC_\e(0,0))$ be the subcategories of projective objects in $\HC_q(0,0)$ and  $ \HC_\e(0,0)$, respectively. Recall the categories $\sC_{\hbar, 0}, \sC_0$ in Remark \ref{rem: smallest two-sided cell} and Remark \ref{rem: smallest twosided in SB}.
\begin{Rem}\label{rem: semi-monoid} The definition of  {\it monoidal categories} in the paper is defined in \cite[Chapter 2]{EGNO}. If we drop the unit axioms in the definition of monoidal categories, we get {\it semi-monoidal categories}. The categories $\sC_{\hbar, 0}, \sC_0, \Proj(\HC_\e(0,0))$ and $\Proj(\HC_q(0,0))$ are semi-monoidal categories.
\end{Rem}

\begin{Thm}\label{thm: smalles two-sided cell vs proj} The full embeddings of monoidal categories
\[ \SB_\hbar \rightarrow \Hilt_q(0,0), \qquad \SB \rightarrow \Hilt_\e(0,0),\]
restrict to equivalences of semi-monoidal categories
\[ \sC_{\hbar, 0} \cong \Proj(\HC_q(0,0)), \qquad \qquad \sC_0  \cong \Proj(\HC_\e(0,0)).\]
\end{Thm}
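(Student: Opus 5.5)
We will prove the statement for $\SB \rightarrow \Hilt_\e(0,0)$; the $\hbar$-version then follows by lifting. The argument has four steps.

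\textbf{Step 1: $\fI_1(B_{w_0})$ is projective.} Since $w_0\in C_0$, the object $B_{w_0}=\uBS_{w_0}$ lies in $\sC_0$. Realize it via translation to and from the most singular point $-\rho$. The $(-\rho,-\rho)$ block is $\Rep_{[-\rho]}(G_\e)$, which is semisimple by Remark \ref{rem: rho block}. So $\St_\e\otimes-$ gives projectives there, and $P_\e^{-\rho,0}(V)$ for projective $V$ is projective by Lemma \ref{lem: proj is hilt}.

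Concretely, one checks via Proposition \ref{prop: image of translation bimods} that
\[
(P_q^{0,-\rho}\star P_q^{-\rho,0})_\dag\cong \sR\otimes_{\sR^{W}}\sR=B_{w_0}.
\]
Here $W_{-\rho}=W$; this needs $-\rho\in\overline{C}$ with stabilizer $W$, so the proposition applies. Full faithfulness of $\fI_1$ then shows that $\fI_1(B_{w_0})$ is the image of $P^{0,-\rho}_\e\star P^{-\rho,0}_\e$. This object equals $P_\e^{0,0}(T_\e(0|-\rho)\otimes T_\e(-\rho|0))$ restricted to the block. Since $T_\e(\rho|0)$ through $-\rho$ involves $\St_\e$, the underlying $G_\e$-module is projective. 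Hence the object is projective by Lemma \ref{lem: proj is hilt}.b.

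\textbf{Step 2: $\fI_1(\sC_0)$ consists of projectives.} By Corollary \ref{cor: Sb embed}, left and right tensoring with objects of $\CH_\e$ are exact. By Lemma \ref{lem: adjoint}, these functors have exact adjoints given by tensoring with objects of $\CH_\e$ (duals of tiltings are tiltings), so they preserve projectives. By Remark \ref{rem: smallest two-sided cell}, every $B_u$ with $u\in C_0$ is a summand of some $B_y\star B_{w_0}\star B_{y'}$. Therefore $\fI_1(\sC_0)\subset\Proj(\HC_\e(0,0))$. Since $\fI_1$ is fully faithful and additive, it remains to prove essential surjectivity onto indecomposable projectives.

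\textbf{Step 3: essential surjectivity (the main obstacle).} Let $L$ be a simple object and $P_L$ its projective cover, which exists by Lemma \ref{lem: proj cover of simple}. It suffices to find $X\in\sC_0$ with $\Hom(\fI_1(X),L)\neq0$; then $P_L$ is a summand of $\fI_1(X)$, and Krull--Schmidt (Remark \ref{rem: Krull-Schmidt}) plus Karoubianity of $\sC_0$ finish the argument.

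\begin{itemize}
\item First treat the trivial simple. Use $\Hom(\fI_1(P_1\star B_{w_0}\star P_2),L)\cong\Hom(\fI_1(B_{w_0}),P_1^\vee\star L\star P_2^\vee)$ from Lemma \ref{lem: adjoint}. Since $\fI_1(B_{w_0})$ is projective, this Hom is nonzero exactly when $P_1^\vee\star L\star P_2^\vee$ has a composition factor $L'$ with $\Hom(\fI_1(B_{w_0}),L')\neq0$. Take $L'=\BC$. The head of $P^{0,-\rho}_\e\star P^{-\rho,0}_\e$ surjects onto $\BC$: by adjunction, $\Hom(P^{0,-\rho}_\e\star P^{-\rho,0}_\e,\BC)=\Hom(P^{-\rho,0}_\e,P^{-\rho,0}_\e\star\BC)$, and $P^{-\rho,0}_\e\star\BC=T_0^{-\rho}\BC$ is nonzero. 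So $\Hom(\fI_1(B_{w_0}),\BC)\neq0$.
\item For general $L$, Theorem \ref{thm: simples are in one class} gives $\BC\prec L$. That is, $\BC$ is a composition factor of $P_1\star L\star P_2$ for some $P_1,P_2\in\CH_\e$.
\item Exactness of tensoring and projectivity of $\fI_1(B_{w_0})$ then give
\[
\Hom\bigl(\fI_1(B_{w_0}),P_1\star L\star P_2\bigr)\neq0.
\]
By adjunction this equals $\Hom(\fI_1(P_1^\vee\star B_{w_0}\star P_2^\vee),L)\neq0$. Here $P^\vee$ denotes the adjoint, which again lies in $\CH_\e$.
\end{itemize}

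The delicate points are identifying the adjoints inside $\CH_\e$ and the nonvanishing of $\Hom(\fI_1(B_{w_0}),\BC)$. I would verify both via Lemma \ref{lem: translation and left-right trivial bimod} applied to the trivial bimodule.

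\textbf{Step 4: the $\hbar$-version.} For $\sC_{\hbar,0}\cong\Proj(\HC_q(0,0))$, show that $M_q\in\Hilt_q(0,0)$ is projective iff $M_q/\hbar M_q$ is projective. The proof is the argument of Proposition \ref{prop: proj in Rep(Uq(g))}: run the claim there with Nakayama, and use Lemma \ref{lem: proj is hilt}.b, which says projectives are diagonal bimodules $P(V_q)$ with $V_q$ free. Then reduce modulo $\hbar$ using Corollary \ref{cor: Sb embed}, lift idempotents, and use that indecomposables of $\SB_\hbar$ and $\SB$ correspond (Lemma \ref{lem: SB category}). Semi-monoidality is automatic because both embeddings are monoidal.
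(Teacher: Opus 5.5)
Your overall architecture matches the paper's: realize $B_{w_0}$ through the singular weight $-\rho$, use Lemma~\ref{lem: adjoint} to see that the $\CH$-actions preserve projectives, then combine Theorem~\ref{thm: simples are in one class} with projective covers. Doing $\e$ first and lifting to $q$ in Step~4 is a legitimate reordering; the paper goes the other way. However, the step everything rests on, Step~1, is not proved.

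\textbf{Step 1.} You conflate the bimodule block $\HC_\e(-\rho,-\rho)$ with the representation block $\Rep_{[-\rho]}(G_\e)$. The former is not semisimple: $\End(U_\e^{fin,-\rho})=\CW_\e^{\wedge_{-\rho}}$ is a complete local ring, not a field. Your concrete reason is also false. $P^{0,-\rho}_\e\star P^{-\rho,0}_\e$ is a direct summand of $P^{0,0}_\e(V)$ with $V=T_\e(\rho)\otimes T_\e(\rho)$, and this $V$ is not projective. For instance, once $\ell>2h-2$ it equals $L_\e(\rho)\otimes L_\e(\rho)$, which has $\BC$ as a direct summand. So Lemma~\ref{lem: proj is hilt} gives you nothing here.

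The missing idea is that the \emph{unit} of the singular block is projective. The paper proves this via the composite $U_q^{fin,-\rho}\to\St_q^*\otimes\St_q\otimes U_q^{fin,-\rho}\to U_q^{fin,-\rho}$, built from coevaluation and evaluation. This composite is multiplication by the Harish-Chandra central element $c_{\St_q^*}$. Its value at $-\rho$ is $\operatorname{rank}\St_q\neq0$, so it is a unit in $\CW_q^{\wedge_{-\rho}}$. Hence $U_q^{fin,-\rho}$ splits off the projective $\St_q^*\otimes\St_q\otimes U_q^{fin,-\rho}$ (Proposition~\ref{prop: proj in Rep(Uq(g))}). The same argument works verbatim over $\e$. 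Only after this does the functor $P^{0,-\rho}\star(-)\star P^{-\rho,0}$, which has an exact right adjoint, produce a projective whose $\dag$-image is $B_{w_0}$. This is where the Steinberg module genuinely enters.

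\textbf{Step 3.} The first bullet is false. By Lemma~\ref{lem: translation and left-right trivial bimod},
$P^{-\rho,0}_\e\star\BC\cong(\pr_{[-\rho]}T_\e(\rho))^r$, and this is zero. Indeed, the dominant weights in $-\rho+\ell P$ lie in $(\ell-1)\rho+\ell P_+$, while the dominant weights of $T_\e(\rho)$ are $\le\rho$. Hence $\Hom(\fI_1(B_{w_0}),\BC)=0$. This is precisely why Corollary~\ref{cor: Hecke action on Rep(Ge)}(a) needs an auxiliary $P\in\CH_\e$.

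This error is easy to repair. Replace $\BC$ by any simple quotient $L_0$ of $\fI_1(B_{w_0})$, and use $L_0\prec L$ from Theorem~\ref{thm: simples are in one class}. Your remaining bullets then go through unchanged. This is exactly the paper's Step~1, which shows that any indecomposable projective is a summand of $P_1\star Q\star P_2$ for any other indecomposable projective $Q$.

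Steps~2 and~4 are fine as outlined.
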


\begin{proof}
The statement for $\sC_{ 0}$ is deduced from the statement for $\sC_{\hbar, 0}$. We will now prove the statement for $\sC_{\hbar, 0}$.

{\it Step 1:} We will show that for any indecomposable projective objects $Q_1, Q_2$ in $\HC_q(0,0)$, there are $P_1, P_2\in \CH_q$ such that $Q_1$ is a direct summand of $P_1\star Q_2 \star P_2$.  By Zorn's lemma, there are simple quotients $Q_1\twoheadrightarrow L_{Q_1}$ and $Q_2\twoheadrightarrow L_{Q_2}$. By Theorem \ref{thm: simples are in one class} and Remark \ref{rem: equivalence on simples}, there are $P'_1, P'_2\in \CH_q$ such that $L_{Q_2}$ is a subquotient of $P'_1\star L_{Q_1} \star P'_2$.

Since $T_q(\lambda|\mu)^* \cong T_q(\mu|\lambda)$, by Lemma \ref{lem: adjoint}, there are $P_1, P_2 \in \CH_q$ such that
\[ \Hom_{\HC_q(0,0)}(P_1 \star Q_2 \star P_2, L_{Q_1}) \cong \Hom_{\HC_q(0,0)}(Q_2, P_1' \star L_{Q_1} \star P_2'),\]
which  is nonzero since $L_{Q_2}$ is a subquotient of $P'_1 \star L_{Q_1} \star P'_2$. On the other hand, $\HC_q(0,0)$ is Krull-Schmidt such that any simple object in this category admits a projective cover by Lemma \ref{lem: proj cover of simple}. Therefore, the projective cover $Q_1$ of $L_{Q_1}$ must be a direct summand of $P_1\star Q_2 \star P_2$.

{\it Step 2:} By Step 1 and Remark \ref{rem: smallest two-sided cell} about $\sC_{\hbar, 0}$, it suffices to show  that there is a projective object $Q$ in $\HC_q(0,0)$  such that $Q_\dag$ is in the category $\sC_{\hbar,0}$. This is done by showing that $U_q^{fin, -\rho}$ is projective in $\HC_q(-\rho, -\rho)$. Because then $P_q^{0,-\rho}\star P_q^{-\rho, 0}$ is projective in $\HC_q(0,0)$; moreover, $(P_q^{0,-\rho}\star P_q^{-\rho, 0})_\dag \cong \sR\otimes_{\sR^W} \sR=B_{w_0}$ is contained in $\sC_{\hbar, 0}$ by Proposition \ref{prop: image of translation bimods} and Remark \ref{rem: smallest two-sided cell}.

 Let $R:=\BC[[\hbar]]$. Recall that for  $V\in \Rep^{fd}(\cU_q(\g))$, we set $V^*=\Hom_R(V, R)$.

{\it Step 2.1:}  Recall the isomorphism $U_q^{fin} \cong O_q[G]$. For any $V \in \Rep^{fd}(\cU_q(\g))$, we have a morphism $V\otimes_R V^* \rightarrow O_q[G]$  in $\Rep(\cU_q(\g))$ defined by $v\otimes f \mapsto c_{f, K^{-2\rho} v}$. For $V$ free of finite rank over $R$ with a basis $\{ v_i\}$ and  the dual basis $\{ v_i^*\}$,  the element $c_V:=\sum_i c_{v^*_i, K^{-2\rho} v_i} \in O_q[G]^{\cU_q} \iso \CW_q \subset U_q^{fin}$. See \cite[$\mathsection 7.8$]{LTV} for detail.

    Let $V=W_q(\lambda)$, then the image  of $c_\lambda:= c_{W_q(\lambda)}$ under the Harish-Chandra morphism $\CW_q \cong R\Big[ K^{\pm 2w_1}, \dots K^{\pm 2 \w_r}\Big]^{W_\bullet}$ is 
    \[ \sum_{\mu \in P_{+, \lambda}} \text{rank}\big(W_q(\lambda)_\mu\big)\sum_{\mu'\in W\mu}q^{(\rho, 2\mu')}K^{2\mu'},\]
    here $P_{+, \lambda}$ is the set of dominant weights in $W_q(\lambda)$.

    We see that the evaluation of $c_{\lambda}$ at the point $-\rho \in \Spec \CW_\e$ is  equal to $\text{rank}_R({W_q(\lambda)})  \neq 0$. Hence $c_{\lambda}$ is an invetible element in $\CW_q^{\wedge_{-\rho}}$.

{\it Step 2.2:} Let $\{v_i\}$ be a basis of $\St_q$, then $\{ v^*_i\}$ and $\{ v_i^{**}\}$ be the dual bases in $\St_q^*$ and $\St_q^{**}$, respectively. We have the following morphisms in $\Rep(\cU_q(\g))$:
    \begin{gather*}
        \St_q^*\otimes_R \St_q^{**} \rightarrow O_q[G]\cong U_q^{fin} \\
        R\xrightarrow[]{\text{coev}_{\St_q^*\otimes_R \St_q}} (\St_q^*\otimes_R \St_q) \otimes_R (\St_q^*\otimes_R \St_q)^* \cong (\St_q^*\otimes_R \St_q)\otimes_R (\St_q^*\otimes_R \St_q^{**}) 
    \end{gather*}
    Combining these two morphisms and using the evaluation map $\St_q^*\otimes_R \St_q \rightarrow R$, we get
    \[ R\rightarrow \St_q^*\otimes_R \St_q \otimes_R U_q^{fin} \xrightarrow[]{\text{ev}_{\St_q}\otimes \text{Id}} U_q^{fin},\]
    where the image of $ 1\in R$ is $c_{\St_q^*} \in \CW_q\subset U_q^{fin}$. So we have the  composition of morphisms in $U_q^{fin}\rmod^{G_q}$
    \[ U_q^{fin} \rightarrow \St_q^*\otimes_R \St_q \otimes_R U_q^{fin}\xrightarrow[]{\text{ev}_{\St_q}\otimes \text{Id}} U_q^{fin},\]
    which then gives us the  composition of morphisms  in $U_q^{fin, -\rho} \rmod^{G_q}$
    \begin{equation}\label{eq: composition 1} U_q^{fin, -\rho}\rightarrow \St_q^*\otimes_R \St_q \otimes_R U_q^{fin, -\rho} \rightarrow U_q^{fin, -\rho},
    \end{equation}
     where the image of $1\in U_q^{fin, -\rho}$ is $c_{\St_q^*} \in \CW_q^{\wedge_{-\rho}} \subset U_q^{fin, -\rho}$. Since $c_{\St_q^*} $ is invertible in $\CW_q^{\wedge_{-\rho}}$ by Step 2.1, the composition \eqref{eq: composition 1} is an isomorphism, hence $U_q^{fin, -\rho}$ is a direct summand of $\St_q^*\otimes_R \St_q \otimes_R U_q^{fin, -\rho}$. The latter is projective in $U_q^{fin, -\rho}\rmod^{G_q}$ since $\St^*_q \otimes_R \St_q$ is projective in $\Rep(\cU_q(\g))$ by Proposition \ref{prop: proj in Rep(Uq(g))}. Therefore, $U_q^{fin, -\rho}$ is projective in $U_q^{fin, -\rho}\rmod^{G_q}$ and then projective in $\HC_q(-\rho, -\rho)$.

     This finishes the proof.
\end{proof}
\begin{Cor}\label{cor: C0 to HC} The equivalence $\sC_0 \cong \Proj(\HC_\e(0,0))$ gives us an equivalence of triangulated semi-monoidal categories
\[ \fI^-_1: K^-(\sC_0) \iso D^-(\HC_\e(0,0)).\]
\end{Cor}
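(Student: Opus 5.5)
The plan is to derive the equivalence from the standard fact that, for an abelian category $\CA$ with enough projectives, the natural functor $K^-(\Proj(\CA)) \to D^-(\CA)$ is an equivalence of triangulated categories. We apply this to $\CA = \HC_\e(0,0)$, which is abelian by Lemma \ref{lem: complete HCq is abelian} and has enough projectives by Lemma \ref{lem: proj is hilt}(b). Composing with the equivalence $\sC_0 \iso \Proj(\HC_\e(0,0))$ of Theorem \ref{thm: smalles two-sided cell vs proj}, which induces an equivalence $K^-(\sC_0)\iso K^-(\Proj(\HC_\e(0,0)))$ termwise, we obtain $\fI^-_1$ as a triangulated equivalence. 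For completeness we recall the argument for the standard fact. Full faithfulness follows because bounded-above complexes of projectives are K-projective, so $\Hom_{K^-}(P^\bullet, Q^\bullet[i]) = \Hom_{D^-}(P^\bullet, Q^\bullet[i])$. Essential surjectivity follows because every bounded-above complex admits a quasi-isomorphism from a bounded-above complex of projectives, constructed by the usual Cartan--Eilenberg-type inductive resolution.

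The remaining work is the semi-monoidal compatibility. On $K^-(\sC_0)$ the product is the totalization of the termwise $\star$; totalization is well defined here because both complexes are bounded above, so each total degree involves finitely many terms. On $D^-(\HC_\e(0,0))$ the product is the derived tensor $\star^L$ over $U_\e^{fin,0}$. I would check the following points in order.

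First, for projectives $Q_1,Q_2$ the object $Q_1\star Q_2$ is again projective. This holds by Remark \ref{rem: semi-monoid} together with Theorem \ref{thm: smalles two-sided cell vs proj}, since $\sC_0$ is closed under $\star$.

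Second, $Q_1 \star -$ is exact on $\HC_\e(0,0)$. Indeed $Q_1$ is a summand of some $P^{0,0}(V)$, and $P^{0,0}(V)\star M = \pr_{[0]}(V\otimes M)$ is exact; this is also covered by Corollary \ref{cor: Sb embed}(a). Hence $\star$ restricted to projectives computes $\star^L$, and complexes of projectives are acyclic for $\star$.

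Third, the resulting identification $\fI^-_1(X\star Y)\cong \fI^-_1(X)\star^L \fI^-_1(Y)$ must be natural and compatible with the associativity constraints. This reduces to the associativity isomorphisms of $\fI_1$ already given by Corollary \ref{cor: Sb embed}, applied termwise.

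The main obstacle I expect is the monoidal coherence, in particular making sure that $\star^L$ on $D^-$ is well defined. This requires the Tor-vanishing $Q\star^L M\cong Q\star M$ for projective $Q$, and I would deduce it from the exactness of tensoring with $Q$. There is no unit to check, since the statement is only semi-monoidal.
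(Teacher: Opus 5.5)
Your proposal is correct and follows the route the paper takes implicitly; the paper gives no separate proof of this corollary. Since $\HC_\e(0,0)$ is abelian with enough projectives, $K^-(\Proj(\HC_\e(0,0)))\to D^-(\HC_\e(0,0))$ is a triangulated equivalence, and one precomposes with the equivalence of Theorem~\ref{thm: smalles two-sided cell vs proj}. Your additional checks supply the semi-monoidal compatibility that the paper leaves unstated: projectives are closed under $\star$, and $Q\star-$ is exact, so $\star^L$ is computed termwise on complexes of projectives.
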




\section{Non-commutative Springer resolution}

In this section, we recall the non-commutative Springer resolution $\SA$ constructed in \cite{BM13} then connect it to the category $\HC_\e(0,0)$.

Fix a $G$-invariant non-degenerate form on $\g$ and the corresponding $G$-equivariant isomorphism $\g \cong \g^*$. Let $\g^{reg}$ denote the subset of the regular elements in $\g$ as well as the corresponding subset in $\g^*$ under the isomorphism $\g \cong \g^*$.  Let us recall the Grothendieck-Springer resolution 
\[ \begin{tikzcd} & \tg \arrow[dl, "\pi"'] \arrow[dr] &\\
\g^* \arrow[dr] &&\h^*\arrow[dl]&\\
& \h^*/ W&   
\end{tikzcd}
\]
here, $\tg$ is a resolution of $\g^* \x_{\h^*/W} \h^*$ which is an isomorphism over $\g^{reg} \x_{\h^*/W} \h^*$.
Let $\tg^{reg} \subset \tg$ be the preimage of $\g^{reg} \x_{\h^*/W} \h^*$ and $\Stn:= \tg \times_{\g^*} \tg$ be the Steinberg variety.

For a variety $X$ with an action of $G$, let $D^b\Coh^G(X)$ be the bounded derived categories of $G$-equivariant coherent sheaves on $X$. 

\subsection{Convolution}\label{ssec: convolution}The category $D^b\Coh^G(\St)$ is a monoidal category where the monoidal structure comes from convolution: $\CF_1 *\CF_2:= \pr_{13*}(\pr_{12}^*(\CF_1) \otimes^L \pr_{23}^*(\CF_2))$, where $\pr_{12}, \pr_{23}, \pr_{13}$ are three projections $ \tg \x^R_{\g^*} \tg \x^R_{\g^*} \tg \rightarrow \tg \x_{\g^*} \tg$. This monoidal category acts on $D^b\Coh^G(\tg)$ on the left by $\CF: \CG \rightarrow \pr_{1*}(\CF \otimes^L \pr_2^* \CG)$ and on the right by $\CF: \CG \rightarrow \pr_{2*}(\pr_1^*(\CG) \otimes^L \CF)$. The usage of derived fiber product $\tg \x_{\g^*}^R \tg \x^R_{\g^*} \tg$ and Derived Algebraic Geometry in the construction of the convolution is discussed in \cite[$\mathsection 6.1$]{BR24}. We note that $\tg \x_{\g} \tg =\tg \x_{\g^*}^R \tg$.

\subsection{Extended braid group}\label{sec: extended braid group}

The reference is \cite[$\mathsection 0.2$]{R08}. Let $S$ be the set of simple reflections in $W$ and $S_{aff}$ be the set of simple reflections in $W_{aff}$. To the group $W_{aff}$, the braid group $B_{aff}$ is the group generated by $T_{w}, w\in W_{aff}$ subject to relation $T_{v} T_{w}=T_{vw}$ if $\ell(vw)=\ell(v)+\ell(w)$, here $\ell(v)$ is the length function on $W_{aff}$. The extended braid group $B_{ext}$ associated to $W_{ext}$ is defined similarly. We have $W_{ext}=\Lambda \ltimes W_{aff}$ and $B_{ext}=\Lambda \ltimes B_{aff}$. Here $\Lambda \subset W_{ext}$ is the finite abelian subgroup consisting of elements of length zero.

There is a canonical map $W_{aff}\rightarrow B_{aff}$ defined by sending $w$ to $T_w$.. This map is a section to the canonical map $B_{aff}\rightarrow W_{aff}$ but is not a group homomorphism.  There is similar map for $B_{ext}$.

For $x\in P$, let $x= x_1-x_2$ for dominant weights $x_1, x_2$  and set $\theta_x=T_{x_1}(T_{x_2})^{-1}$, which  only depends on $x$. Then $B_{ext}$ is generated by $\overline{s}, (s\in S)$ and $\theta_\lambda, (\lambda \in P)$ subject to the known relations, see \cite[$\mathsection 0.2$]{R08}. This presentation shows that  there is an anti-involution $\iota: B_{ext} \rightarrow B_{ext}$ such that $\iota(\overline{s})=\overline{s}$ for $s\in S$ and $\iota(\theta_\lambda)=\theta_\lambda$ for $\lambda \in P$.
\begin{Lem}\label{lem: property of iota}Let $w_0$ be the longest element in $W$.

\noindent
(a) If $w\in W$ then $\iota(\overline{w})=\overline{w^{-1}}$.

 \noindent
(b) Let $s_{\a_0}\in S_{aff}-S$ then $\iota(\overline{s}_{\a_0})=\overline{w_0}^{-1} \overline{s}_{\a_0} \overline{w_0}$.

\noindent
(c) Let $b=wt_\lambda \in \Lambda$ then $b'=w_0w^{-1}w_0 t_\lambda \in \Lambda$ and $\iota(\overline{b})=\overline{w_0}^{-1} \overline{b'} \overline{w_0}$

\noindent
(d) The conjugation by  $\overline{w_0}^{-1}$ permutes  the set $\{ \overline{s}~|~ s\in S\}$.
\end{Lem}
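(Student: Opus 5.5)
The plan is to work with the Coxeter presentation of $B_{ext}$ from \cite[$\mathsection 0.2$]{R08}, in which $B_{ext}$ is generated by $\overline{s}$ ($s\in S$) and $\theta_\lambda$ ($\lambda\in P$). The anti-involution $\iota$ fixes each of these generators.

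For (a), take a reduced expression $w=s_1\cdots s_k$, so that $\overline{w}=\overline{s_1}\cdots\overline{s_k}$. Since $\iota$ is an anti-homomorphism fixing each $\overline{s_i}$, we get
\[\iota(\overline{w})=\overline{s_k}\cdots\overline{s_1}.\]
The word $s_k\cdots s_1$ is a reduced expression of $w^{-1}$, so this product equals $\overline{w^{-1}}$.

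For (d), recall that $w_0 s w_0^{-1}=s'$ for some $s'\in S$, and that $\ell(w_0)=\ell(s'w_0)+1=\ell(w_0s)+1$. These give
\[\overline{w_0}=\overline{s'}\,\overline{s'w_0}=\overline{w_0s}\,\overline{s}.\]
Since $s'w_0=w_0s$, it follows that $\overline{s'}\,\overline{w_0}=\overline{w_0}\,\overline{s}$. Therefore $\overline{w_0}^{-1}\overline{s'}\,\overline{w_0}=\overline{s}$, so conjugation by $\overline{w_0}^{-1}$ permutes the set $\{\overline{s}\}$.

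For (b) and (c), the idea is to express the given element in the Bernstein generators and apply $\iota$.

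In (b), write $s_{\a_0}=t_{\a_0^\vee}s_{\a_0}^{fin}$ (up to the usual conventions). A standard expression is
\[\overline{s}_{\a_0}=\theta_{\a_0}\,\overline{s_{\a_0}^{fin}}^{-1},\]
or the analogous formula valid in \cite{R08}; it follows from the length additivity $\ell(t_{\a_0})=\ell(s_{\a_0})+\ell(s^{fin}_{\a_0})$. Then by (a),
\[\iota(\overline{s}_{\a_0})=\overline{s^{fin}_{\a_0}}^{-1}\theta_{\a_0}.\]
It remains to identify this with $\overline{w_0}^{-1}\overline{s}_{\a_0}\overline{w_0}$. For this we use the relation $\overline{w_0}^{-1}\theta_\lambda\overline{w_0}=\theta_{w_0\lambda}$-type identities. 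More precisely, $\theta_{\lambda}=\overline{t_\lambda}$ for dominant $\lambda$, together with $\overline{w_0}\,\overline{t_\lambda}=\overline{t_{w_0\lambda}}$-style length-additive factorizations $\overline{t_{\lambda}}\,\overline{w_0}=\overline{w_0}\,\overline{t_{w_0\lambda}}$. These combine with (a) and (d) applied to $s^{fin}_{\a_0}$ and $w_0s^{fin}_{\a_0}w_0=s^{fin}_{\a_0}$, using that $-w_0\a_0=\a_0$.

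In (c), similarly write $b=wt_\lambda$ of length zero as a length-additive product
\[\overline{b}=\overline{t_{\lambda'}}\,\overline{v}^{-1},\]
with $\lambda'$ dominant and $v\in W$ (using $\ell(b)=0$). Then compute $\iota(\overline{b})$ using (a), and conjugate back by $\overline{w_0}$ to recognize $\overline{b'}$.

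The main obstacle is the bookkeeping in (b) and (c): pinning down correct length-additive factorizations of $s_{\a_0}$ and of length-zero elements in terms of $\theta$'s and finite $\overline{w}$'s, and checking the conventions of \cite{R08}. An alternative for (c) is to verify the identity after showing both sides are length-zero lifts of the same element $w_0b^{-1}w_0\cdots$. Here one uses that $\iota$ and $x\mapsto\overline{w_0}^{-1}x\overline{w_0}$ both preserve $\Lambda$ (the length-zero elements, a subgroup mapped isomorphically onto its image), so one only needs to compute the images in $W_{ext}$, which is a direct check.
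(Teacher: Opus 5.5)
Your proofs of (a) and (d) are correct and match the paper. In (b), your factorization $\overline{s}_{\alpha_0}=\theta_\beta\,\overline{s_\beta}^{-1}$ is the paper's first step (here $\beta$ is the highest root, $s_{\alpha_0}=s_\beta t_{-\beta}$ and $\ell(t_\beta)=\ell(s_\beta)+1$).

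The gap is the decisive step of (b): moving $\overline{w_0}$ past $\overline{s}_{\alpha_0}$. Both identities you invoke for this are false in $B_{ext}$, in the length convention that this very factorization forces. Take $\mathfrak{sl}_2$. Here $B_{aff}$ is free on $\overline{s}$ and $\overline{s}_0:=\overline{s}_{\alpha_0}$, and $t_\alpha=s_0s$, $t_{-\alpha}=ss_0$ both have length $2$. Then $\overline{s}^{-1}\theta_\alpha\overline{s}=\overline{s}^{-1}\overline{s}_0\overline{s}^{2}\neq\overline{s}^{-1}\overline{s}_0^{-1}=\theta_{-\alpha}$. Likewise $\overline{t_\alpha}\,\overline{w_0}=\overline{s}_0\overline{s}^{2}\neq\overline{s}^{2}\overline{s}_0=\overline{w_0}\,\overline{t_{-\alpha}}$.

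In general $\ell(t_\lambda w_0)<\ell(t_\lambda)+\ell(w_0)$ for dominant $\lambda\neq0$. The length-additive identity is the mirror one: $\overline{w_0}\,\overline{t_\lambda}=\overline{t_{w_0\lambda}}\,\overline{w_0}$ for $\lambda$ dominant. With it your plan does close:
\begin{itemize}
\item $t_{-\beta}=s_\beta s_{\alpha_0}$ with $\ell(t_{-\beta})=\ell(s_\beta)+1$, so $\overline{t_{-\beta}}=\overline{s_\beta}\,\overline{s}_{\alpha_0}$;
\item conjugation by $\overline{w_0}^{-1}$ fixes $\overline{s_\beta}$ (apply (d) to a reduced word, using $w_0s_\beta w_0=s_\beta$);
\item hence $\overline{w_0}^{-1}\overline{s}_{\alpha_0}\overline{w_0}=\overline{s_\beta}^{-1}\theta_\beta=\iota(\overline{s}_{\alpha_0})$.
\end{itemize}
The paper argues differently. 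It notes that $s_{\alpha_0}$ commutes with $w_0s_\beta$ and that $\ell(s_{\alpha_0}w_0s_\beta)=\ell(w_0s_\beta s_{\alpha_0})=\ell(w_0s_\beta)+1$. So $\overline{s}_{\alpha_0}$ commutes with $\overline{w_0}\,\overline{s_\beta}^{-1}$, and conjugation by $\overline{w_0}$ reduces to conjugation by $\overline{s_\beta}$.

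In (c), your main route is only announced. You never prove $b'\in\Lambda$, which is part of the statement, and you do not supply the facts that make the conjugation work. The paper extracts them from $\ell(wt_\lambda)=0$, which gives $\Phi^+_w=\{\alpha:\langle\lambda,\alpha^\vee\rangle=0\}$ and $\Phi^-_w=\{\alpha:\langle\lambda,\alpha^\vee\rangle=-1\}$. From this it deduces:
\begin{itemize}
\item $w_0w\lambda=\lambda$;
\item $\Phi^\pm_{w_0w^{-1}w_0}=\Phi^\pm_w$, hence $\ell(b')=0$;
\item $\ell(w_0wt_{-\lambda})=\ell(w_0w)+\ell(t_{-\lambda})$.
\end{itemize}
Then, writing $\overline b=\overline w\,\theta_{-\lambda}^{-1}$ and $\overline{b'}=\overline{w_0w^{-1}w_0}\,\theta_{-\lambda}^{-1}$, the claim reduces to $\overline{t_{-\lambda}}$ commuting with $\overline{w_0w}$.

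Your proposed alternative for (c) is wrong. Neither $\iota$ nor conjugation by $\overline{w_0}^{-1}$ preserves the subgroup of length-zero lifts. For $\mathfrak{sl}_2$ take $b=st_{-\omega}$, which has length zero and satisfies $b'=b$. Then $\overline b=\overline s\,\theta_\omega^{-1}$ and $\iota(\overline b)=\theta_\omega^{-1}\overline s$. Its image $t_{-\omega}s=st_\omega$ in $W_{ext}$ has length $2$; equivalently, $\overline{s}^{-1}\overline{b}\,\overline{s}=\overline{s}^{-1}\overline{s}_0\,\overline{b}$. So comparing images in $W_{ext}$ cannot establish (c); the braid-level identity has to be proved directly.
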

\begin{proof}See Appendix \ref{appendix: Lemma iota}.
\end{proof}

\subsection{Braid group actions and reflection functors}\label{ssec: braid group action}

In \cite{BM13}, the authors construct a (weak) homomorphism from the extended braid group $B_{ext}\rightarrow D^b\Coh^G(\St): b\mapsto K_b$ defined as follows:
\begin{itemize}
    \item For $\lambda \in P$, $\theta_\lambda$ corresponds to the direct image of the line bundle $\CO_{\tg}(\lambda)$ under the diagonal embedding.
    \item For a finite simple reflection $s_\a \in W$, $ \overline{s}_\a \in B_{ext}$ corresponds to the structure sheaf $\CO_{\Gamma_{s_\a}}$, where $\Gamma_{s_\a}\subset \St$ is the closure of the graph of the action of $s_\a$ on $\tg_{reg}$.
\end{itemize}
Then $B_{ext}$ weakly acts on $D^b\Coh^G(\tg)$ via $\CG \mapsto K_b * \CG$ and $B^{ op}_{ext}$ weakly acts on $D^b\Coh^G(\tg)$ via $\CG \mapsto \CG * K_b$. Recall that $B_{ext}= \Lambda \ltimes B_{aff}$.

On the other hand, there are reflection functors $\CR_\a,~ (\a \in I_{aff})$ constructed in \cite[$\mathsection 2.3$]{BM13}. Let $\CR_\a$ also denote  the kernel of $\CR_\a$. For a finite simple root $\a \in I$, let $P_\a \supset B$ be a minimal parabolic of type $\a$. Let $\mathfrak{p}_\a, \mathfrak{u}_\a$ be the Lie algebra of $P_\a$ and the unipotent radical. The Grothendieck resolution associated with $\a$ is 
\[ \tg_\a:= G \x_{P_\a} (\g/\mathfrak{u}_\a)^*\]
then we have a natural $G$-equivariant projective morphism $\tg \rightarrow \tg_\a$. Then 
\begin{equation}\label{eq: Ra}
\CR_\a :=\CO(\tg \x_{\tg_\a} \tg)
\end{equation}

For $\b\in I_{aff}-I$, there $x\in B_{ext}$ and $s_{\a}\in S$  such that $T_{s_\b}=b T_{s_\a}b^{-1}$. Fix such elements and set 
\[\CR_\b:=K_{b} * \CR_\a * K_{b^{-1}}.\]

Then we have the following distinguished triangles \cite[Lemma $2.3.1$]{BM13}:
\begin{equation}\label{eq: distinguished triangles} K_{(\overline{s}_\a)^{-1}} \rightarrow \CR_\a \rightarrow \CO_{\Delta\tg}\xrightarrow[]{+1} \quad \text{and} \quad \CO_{\Delta \tg} \rightarrow \CR_\a \rightarrow K_{\overline{s}_\a}\xrightarrow[]{+1}, \qquad \text{for $s_\a \in S_{aff}$}.
\end{equation}

\subsection{Tilting generator and exotic $t$-structure}\label{ssec: tilting generator} There is a tilting bundle $\CE$ on $\tg$ which satisfies the following \cite[$\mathsection 2.5$]{BM13}:
\begin{itemize}
     \item $\CE=\CO_{\tg} \bigoplus \oplus_{\mathcal{J}}  \CR_{\a_1}* \dots * \CR_{\a_k}*\CO_{\tg}$, here $\mathcal{J}=\{(\a_1, \dots, \a_k)\}$ is a finite collection of tuples with entries in $I_{aff}$.
     \item $\Ext^i_{\tg}(\CE, \CE)=0$ for all $i \neq 0$.
    \item Let $\SA:=\End(\CE)^{op}$. The functor $D^b\Coh^G(\tg) \rightarrow D^b(\SA\Mod^G)$ defined by $\CG \mapsto \RHom_{\tg}(\CE, \CG)$ is an equivalence of categories.
\end{itemize}
This equivalence (and its non-equivariant version) is a reason why we call $\SA$ the non-commutative Springer resolution. Note that $\SA$ is an algebra over $\BC[\g^*]\otimes_{\BC[\h^*/W]}\BC[\h^*]$.

The standard $t$-structure on $D^b(\SA \Mod^G)$ gives the {\em exotic $t$-structure} on $D^b\Coh^G(\tg)$, sometimes we will call it {\em $\CE~t$-structure}. We collect some properties of this $t$-structure 
\begin{Lem}\label{lem: exotic t-structure}
(a) The exotic $t$-structure is {\em braid positive}, i.e., the action of $K_{\overline{s}_\a}*-$ is right $t$-exact for all $\a \in I_{aff}$.

\noindent
(b) The action $\CR_{\a}*-$ for $\a \in I_{aff}$ and $K_b *-$ for $b\in \Lambda$ are exotic $t$-exact.

\noindent
(c) The left and right adjoint of the reflection functor $\CR_\a*-$ are both isomorphic to $\CR_\a*-$ for $\a \in I_{aff}$. The left and right adjoint of $K_b*-$ are both isomorphic to $K_{b^{-1}}*-$ for $b \in \Lambda$. 

\noindent
(d) $\SA$ is projective over $\BC[\g^*]$.

\noindent
(e) Let $L(\lambda)$ be the irreducible $G$-representation of the highest weight $\lambda \in P_+$. Then $\CO_{\tg}\otimes L(\lambda)$ is a direct summand of some object of the form $K_b* \CR_{\a_1}*\dots \CR_{\a_k} *\CO_{\tg}$ for some $b\in \Lambda$ and a tuple $\{\a_1, \dots, \a_k\}$ with entries in $I_{aff}$.
\end{Lem}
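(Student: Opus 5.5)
The statement collects properties (a)–(e) of the exotic $t$-structure and the tilting bundle $\CE$, all due to Bezrukavnikov–Mirković. I would prove it mainly by reduction to \cite{BM13}, checking only that the conventions there (kernels $K_b$, reflection functors $\CR_\a$, left versus right convolution) match ours.

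\textbf{Step 1 (parts (c) and (d)).} For a finite simple root $\a$ the kernel $\CR_\a=\CO(\tg\x_{\tg_\a}\tg)$ is the composite $\pi_\a^*\pi_{\a*}$ for the $\mathbb{P}^1$-fibration $\pi_\a\colon\tg\to\tg_\a$. It is symmetric under swapping the factors. By Grothendieck duality it is self-adjoint up to the relative dualizing sheaf, which is trivial along the diagonal correction used in \cite[\S 2.3]{BM13}. For affine $\b$ the definition $\CR_\b=K_b*\CR_\a*K_{b^{-1}}$ then transports self-adjointness, since $K_b*-$ and $K_{b^{-1}}*-$ are mutually inverse equivalences. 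The same inverse-equivalence argument gives the adjoints of $K_b$ for $b\in\Lambda$. For (d), $\SA=\End(\CE)^{op}$ with $\Ext^{>0}(\CE,\CE)=0$ equals $\Gamma(\tg,\mathcal{E}nd\,\CE)$. Since $\pi\colon\tg\to\g^*$ is projective with $\g^*$ smooth and the higher direct images vanish, $\pi_*\mathcal{E}nd\,\CE$ is maximal Cohen–Macaulay on the regular affine scheme $\g^*$, hence locally free by Auslander–Buchsbaum. This is the argument of \cite[\S 1.5]{BM13}.

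\textbf{Step 2 (part (b) and the exactness in (a)).} Since $\CE$ is a sum of objects $\CR_{\a_1}*\dots*\CR_{\a_k}*\CO_{\tg}$, the functor $\RHom(\CE,\CR_\a*\CF)$ is identified with $\RHom(\CR_\a*\CE,\CF)$ using (c). Moreover $\CR_\a*\CE$ lies in $\operatorname{add}(\CE)$ by the construction of $\CE$ in \cite{BM13}. Hence $\CR_\a*-$ sends exotic-heart objects to objects with cohomology only in degree $0$, so it is $t$-exact. The argument for $K_b$, $b\in\Lambda$, is the same, since conjugation by length-zero elements permutes $I_{aff}$. For (a), the triangle $\CO_{\Delta\tg}\to\CR_\a\to K_{\overline{s}_\a}\xrightarrow{+1}$ from \eqref{eq: distinguished triangles} shows that $K_{\overline{s}_\a}*\CF$ is a cone of a map between two objects of the heart. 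Therefore it lies in $D^{\le 0}$, which is right $t$-exactness.

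\textbf{Step 3 (part (e)).} This is the step I expect to be the main obstacle. I would use that $\CO_{\tg}\otimes L(\lambda)$ has a filtration by the line bundles $\CO_{\tg}(\nu)$, $\nu$ a weight of $L(\lambda)$. The line bundles $\CO(\nu)=K_{\theta_\nu}*\CO$ are generated from $\CO$ under braid kernels. Then I would apply the decategorified statement: $K_0$ of the Hecke action identifies $[\CO\otimes L(\lambda)]$ with the image of $[L(\lambda)]$, which by Kazhdan–Lusztig positivity is a nonnegative combination of classes of objects built from reflection functors. Upgrading this from $K_0$ to a direct-summand statement needs the exotic projectives: $\CO\otimes L(\lambda)$ is a projective object of the exotic heart, because $\RHom(\CO\otimes L(\lambda),-)=\Gamma(-\otimes L(\lambda)^*)$ is exact on the heart by \cite[\S 1.8]{BM13}. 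Indecomposable projectives are summands of objects $K_b*\CR_{\a_1}*\dots*\CR_{\a_k}*\CO$, since the latter generate $\operatorname{add}\CE$ by \cite[Theorem 1.8.2, \S 2.5]{BM13}. I would cite this identification rather than reprove it.
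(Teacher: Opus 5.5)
The real gap is in Step~3. Statement (e) is a $G$-equivariant statement: non-equivariantly $\CO_{\tg}\otimes L(\lambda)\cong\CO_{\tg}^{\oplus\dim L(\lambda)}$. It is also used equivariantly, in the proof of Proposition~\ref{prop: image of the functor R}(c).

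In the equivariant heart $\SA\Mod^G$, the indecomposable projectives are the summands of $\CE\otimes V$ with $V\in\Rep(G)$, not of $\CE$ alone. So the fact that the words $K_b*\CR_{\a_1}*\dots*\CR_{\a_k}*\CO_{\tg}$ generate $\operatorname{add}(\CE)$ tells you nothing about $\CO_{\tg}\otimes L(\lambda)$. Your sentence ``indecomposable projectives are summands of objects $K_b*\CR_{\a_1}*\dots*\CR_{\a_k}*\CO$'' is, equivariantly, a restatement of (e) itself, and \cite{BM13} does not supply it.

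The decategorified sketch does not close the gap either. A filtration by the $\CO_{\tg}(\nu)$ never yields a direct-summand relation. A positivity statement in $K_0$ would additionally need an argument that classes of indecomposable equivariant projectives are linearly independent, and that $[K_b*\CR_{\a_1}*\dots*\CO_{\tg}]-[\CO_{\tg}\otimes L(\lambda)]$ is an effective combination of them. This equivariant input is exactly what the paper imports from \cite[Lemma 6.7, \S 6.3]{MR18}. For (a)--(d) the paper simply cites \cite{BM13}.

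Steps~1--2 also need repair.

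\emph{Geometry in (c).} The map $\pi_\a\colon\tg\to\tg_\a$ is not a $\mathbb{P}^1$-fibration. We have $\dim\tg=\dim\tg_\a=\dim\g$, $\pi_\a$ is proper and generically of degree~$2$, and $\tg\x_{\tg_\a}\tg=\Delta\tg\cup\Gamma_{s_\a}$; this is where \eqref{eq: distinguished triangles} comes from. For a genuine $\mathbb{P}^1$-fibration, $\pi^*\pi_*$ is not self-adjoint: its right adjoint is $\pi^*\pi_*(-)\otimes\omega_\pi[1]$. So your argument as written points the wrong way. The correct reason is that $\tg$ and $\tg_\a$ are smooth of equal dimension with $G$-equivariantly trivial canonical bundles. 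Hence $\pi_\a^!\cong\pi_\a^*$, so $\pi_{\a*}\dashv\pi_\a^*\dashv\pi_{\a*}$.

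\emph{Circularity in (b).} Closure of $\operatorname{add}(\CE)$ under $\CR_\a*-$ cannot be read off from the displayed form of $\CE$, which is a finite sum over $\mathcal{J}$. The same goes for $K_b*\CO_{\tg}\in\operatorname{add}(\CE)$, which your remark about permuting $I_{aff}$ does not address. In the paper this closure is deduced (Lemma~\ref{lem: global sections of ref}(b)) from the $t$-exactness you are trying to prove. A non-circular route for $\CR_\a$ takes (a) from \cite{BM13} and uses both triangles:
\begin{itemize}
\item $\CO_{\Delta\tg}\to\CR_\a\to K_{\overline{s}_\a}$ gives right $t$-exactness of $\CR_\a*-$;
\item $K_{(\overline{s}_\a)^{-1}}\to\CR_\a\to\CO_{\Delta\tg}$ gives left $t$-exactness, since $K_{(\overline{s}_\a)^{-1}}*-$ is inverse to a right $t$-exact equivalence and hence left $t$-exact.
\end{itemize}

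\emph{Maximal Cohen--Macaulay in (d).} Vanishing of higher direct images does not by itself make $\pi_*\mathcal{E}nd(\CE)$ maximal Cohen--Macaulay. You need Grothendieck duality with $\pi^!\CO_{\g^*}\cong\CO_{\tg}$ and $\mathcal{E}nd(\CE)^\vee\cong\mathcal{E}nd(\CE)$, so that the derived dual of $\pi_*\mathcal{E}nd(\CE)$ is again concentrated in degree~$0$.
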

\begin{proof}Part (a) follows by \cite[Proposition 2.2.1]{BM13}. Part (b) and  part $1$ of (c)  follow by \cite[Remark 1.5.2, Corollary 2.3.1]{BM13}. Part $2$ of (c) holds since $K_b * K_{b^{-1}}\cong K_{b^{-1}}* K_b \cong \CO_{\Delta \tg}$ for $b\in \Lambda$. Part (d) is proved in \cite[Lemma 1.5.3]{BM13}. Part (e) is proved in \cite[Lemma 6.7, $\mathsection$ 6.3]{MR18}.
\end{proof}



\subsubsection{Tilting bundle $\CE^\vee$}  The vector bundle $\CE^\vee:=\mathcal{H}om_{\tg}(\CE, \CO_{\tg})$ is also a tilting generator of $\tg$. Let call the $t$-structure coming from $\CE^\vee$ by {\em transposed $t$-structure}, sometimes we call it the {\em $\CE^\vee~t$-structure}.  In \cite[$\mathsection 1.8$]{BM13}, the authors associate the alcoves with a set of $t$-structures on $D^b\Coh(\tg)$. The $\CE~t$-structure corresponds to the fundamental alcove, while the $\CE^\vee~t$-structure corresponds to the opposite of the fundamental alcove. Furthermore, the $\CE^\vee~t$-structure can be obtained from the $\CE~t$-structure by conjugation with $K_{\overline{w_0}^{-1}}$. For $\a in I_{aff}$, the convolution $K_{\overline{w_0}^{-1}\overline{s}_{\a} \overline{w_0}}*-$ is right $t$-exact with respect to the $\CE^\vee~t$-structure.

Let $\sigma: \St \rightarrow \St$ be the map defined by $(x, y)\mapsto (y,x)$. Then we have $\sigma^*(K_b)\cong K_{\iota(b)}$ for all $b\in B_{ext}$, here $\iota: B_{ext}\rightarrow B_{ext}$ is the anti-involution defined in Section \ref{sec: extended braid group}. In particular, $\sigma^*(K_{\overline{s}_\a})\cong K_{\overline{s}_\a}$ for $\a \in I$ and $\sigma^*(K_{\theta_\lambda})\cong K_{\theta_\lambda}$ for $\lambda \in P$.

For any $\a \in I_{aff}$, let $\CR^t_\a:=\sigma^*(\CR_\a)$ then $\CR^t_\a\cong \CR_\a$ for $\a \in I$. For any $b\in \Lambda$, let $K^t_b:=\sigma^*(K_b)$. Since $\sigma^*(K_b)\cong K_{\iota(b)}$ and Lemma \ref{lem: property of iota}, from \eqref{eq: distinguished triangles} we have the following distinguished triangles:
\begin{equation}\label{eq: transpose R-functor}
\begin{split}
&K_{(\overline{s}_\a)^{-1}} \rightarrow \CR^t_\a \rightarrow \CO_{\Delta\tg} \qquad  \qquad \CO_{\Delta\tg} \rightarrow \CR^t_\a \rightarrow K_{\overline{s}_\a} \qquad \text{for $\a \in I$}\\
&K_{\overline{w}^{-1}_0(\overline{s}_{\a_0})^{-1} \overline{w}_0} \rightarrow \CR^t_{\a_0} \rightarrow \CO_{\Delta\tg} \quad \quad \CO_{\Delta\tg} \rightarrow \CR^t_{\a_0} \rightarrow K_{\overline{w}^{-1}_0 \overline{s}_{\a_0} \overline{w}_0}  \qquad \text{for $\a_0\in I_{aff}-I$}
\end{split}
\end{equation}

Following the construction in \cite[$\mathsection 2$]{BM13}, there is a finite collection of tuples of affine simple reflections $\mathcal{J}'$ such that 
\[ \CE'=\CO_{\tg} \bigoplus \oplus_{\mathcal{J}'} \CR^t_{\a_1} *\dots *\CR^t_{\a_k} *\CO_{\tg}= \CO_{\tg} \bigoplus \oplus_{\mathcal{J}'} \CO_{\tg} *\CR_{\a_k}* \dots *\CR_{\a_1}\]
is a vector bundle and a tilting generator of $D^b\Coh^G(\tg)$ with respect to the transposed $t$-structure. 

\begin{Lem}\label{lem: CE' and CE dual} There are $m,n \in \BZ_{>0}$ such that $\CE'$ is a direct summand of $(\CE^\vee)^{\oplus m}$ and $\CE^\vee$ is a direct summand of $(\CE')^{\oplus n}$.
\end{Lem}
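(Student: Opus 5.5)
The plan is to compare both bundles with a third object, namely the dual of $\CE$ written in terms of reflection kernels, via Grothendieck–Serre duality on $\tg$. First I will establish a formula for the dual of a convolution. The key identity should be
\[ (\CF * \CO_{\tg})^\vee \cong \CO_{\tg} * \sigma^*(\CF)^{\vee'}, \]
where $\vee'$ denotes the appropriate relative duality on $\St$. Since $\tg$ is smooth and $\tg \rightarrow \g^*$ is proper, this follows from base change and Grothendieck duality applied to $\pr_1$. For the kernels $\CR_\a = \CO(\tg \x_{\tg_\a} \tg)$ with $\a \in I$, the relative dualizing sheaf of the $\BP^1$-fibration $\tg \rightarrow \tg_\a$ is a line bundle $\CO_{\tg}(-\a)$ up to twist, pulled back along one factor. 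Consequently the dual of $\CR_\a * \CG$ should be $\CR_\a$ convolved with $\CG^\vee$ up to twisting by the line bundles $K_{\theta_{\pm\a}}$. For affine $\b$ I will conjugate by $K_b$ and use $K_b^\vee$-type identities coming from $\sigma^*(K_b) \cong K_{\iota(b)}$ and Lemma \ref{lem: property of iota}.

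Next I will reduce to a generation statement rather than trying to match summands exactly. Both $\CE^\vee$ and $\CE'$ are tilting generators of $D^b\Coh^G(\tg)$ for the same $t$-structure, namely the transposed $\CE^\vee$ $t$-structure. The structure of $\CE^\vee$ here follows from its definition, and $\CE'$ is tilting for it by construction following \cite[$\mathsection 2$]{BM13}. The heart of that $t$-structure is then equivalent both to $\End(\CE^\vee)^{op}\Mod^G$ and to $\End(\CE')^{op}\Mod^G$. In each description the tilting generator corresponds to a projective generator of the heart, and in fact to the same heart, because the $t$-structure is determined by either generator. Equivariance will be handled by remembering that projective objects in the equivariant heart are summands of objects of the form $V \otimes \CE$ with $V \in \Rep(G)$, which is where Lemma \ref{lem: exotic t-structure}(e) enters.

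The general principle I will use is that two projective generators of an abelian category whose objects have finite-length-type behaviour over the base are summands of finite direct sums of each other. I will apply this using Krull–Schmidt over $\BC[\g^*]^G$-completions. Concretely, each indecomposable summand of $\CE'$ is projective in the heart and hence a summand of $V \otimes \CE^\vee$. To dispose of the $V \otimes -$ twist, I will use that the indecomposable projectives are indexed by the finitely many simples of the heart over the point $0$, as in \cite[$\mathsection 1.8$]{BM13}. By the description of the $\CE^\vee$ $t$-structure as the conjugate of the $\CE$ $t$-structure by $K_{\overline{w_0}^{-1}}$, both bundles contain every indecomposable projective up to twisting by $G$-characters, and $G$ is semisimple, so no such twisting occurs. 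Hence suitable multiplicities $m,n$ exist.

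The main obstacle is the identification of the two $t$-structures, that is, checking that $\CE'$ is tilting for exactly the $t$-structure attached to $\CE^\vee$. I will attack this first by applying $\sigma^*$ to the defining properties of $\CE$. From \eqref{eq: transpose R-functor}, the $\CR^t$-kernels satisfy the triangles of the reflection functors for the braid generators conjugated by $\overline{w_0}$. Since conjugation by $K_{\overline{w_0}^{-1}}$ carries the $\CE$ $t$-structure to the $\CE^\vee$ one, the construction of \cite[$\mathsection 2$]{BM13}, which applies verbatim to the transported data, yields a tilting generator for the conjugated $t$-structure. I will then compare it with $\CE'$.
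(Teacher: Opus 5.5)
Your central reduction is the paper's own proof. The paper argues that $\CE^\vee$ and $\CE'$ are tilting generators for the same (transposed) $t$-structure, so they correspond to projective generators of one heart, and then cites equidecomposability \cite[$\mathsection 1.4.2$]{BM13}; it takes ``$\CE'$ is tilting for the transposed $t$-structure'' as part of the construction.

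The step that fails is your removal of the $V\otimes-$ twist.
\begin{itemize}
\item In the $G$-equivariant heart the simples over $0$ are not finitely many: they come in families $L\otimes V$ with $V\in\textnormal{Irr}(G)$.
\item The twists are by arbitrary irreducible representations, not characters, so semisimplicity of $G$ removes nothing.
\item Lemma \ref{lem: exotic t-structure}(e) does not help either. It makes $\CO_{\tg}\otimes L(\lambda)$ a summand of convolution objects, not of $\CE$ or $\CE^\vee$.
\end{itemize}
In fact the principle is false equivariantly with trivial multiplicities. Over a point, $\BC$ and a nontrivial irreducible $V$ are both $G$-equivariant tilting generators with the same $t$-structure, yet $\BC$ is not an equivariant summand of $V^{\oplus m}$. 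So a same-$t$-structure argument gives the lemma only in two readings:
\begin{itemize}
\item Non-equivariantly, which is the setting of \cite{BM13}. There $V\otimes\CE^\vee\cong(\CE^\vee)^{\oplus\dim V}$ and the problem disappears.
\item Equivariantly with multiplicity spaces: $\CE'$ is a summand of $\CE^\vee\otimes W$ and $\CE^\vee$ a summand of $\CE'\otimes W'$, for finite-dimensional $W,W'\in\Rep(G)$. This is all that Proposition \ref{prop: image of the functor R} uses.
\end{itemize}

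Either version follows without Krull--Schmidt. Krull--Schmidt over completions would in any case need a graded argument to descend to the bundles themselves. Put $A=\End(\CE^\vee)^{op}$ and let $P'$ be the projective object of $A\Mod^G$ corresponding to $\CE'$.
\begin{itemize}
\item Choose a finite-dimensional $G$-stable generating subspace $W$. The surjection $A\otimes W\twoheadrightarrow P'$ splits.
\item Since $P'$ is a generator, its trace ideal is $A$. Taking $W'\subset\Hom_A(P',A)$, this gives a $G$-equivariant surjection $P'\otimes W'\twoheadrightarrow A$, $p\otimes f\mapsto f(p)$. It splits because $A$ is projective in $A\Mod^G$.
\end{itemize}

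Your Grothendieck duality paragraph is never used and can be dropped.

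For your ``main obstacle'' you need not transport \cite[$\mathsection 2$]{BM13} through $K_{\overline{w_0}^{-1}}$ and then compare, a step you leave unspecified. Argue directly instead:
\begin{itemize}
\item $\CO_{\tg}$ is a summand of $\CE^\vee$, hence projective in the transposed heart.
\item Right convolution with $\CR_\a$ is transposed $t$-exact with $t$-exact adjoints (Lemma \ref{lem: tranposed t-structure}), so it preserves projectives. Hence $\CE'$ is projective in the heart.
\item Since $\CE'$ generates, $\Hom(\CE',M)=0$ forces $\RHom(\CE',M)=0$ and so $M=0$. Thus $\CE'$ is a projective generator defining the same $t$-structure.
\end{itemize}
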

\begin{proof}Since $\CE^\vee$ and $\CE'$ are both tilting generators for the transposed $t$-structure of $D^b\Coh^G(\tg)$, these two vector bundles are {\em equidecomposable} in the sense of \cite[$\mathsection 1.4.2$]{BM13}. Here, two objects $M_1, M_2$ of an additive categories are called {\em equidecomposable} if for $k=1,2$, we have $M_k \cong \bigoplus N_i^{\oplus d^i_k}$ for some $N_i$ and $d^i_k >0$. This implies the lemma.    
\end{proof}
Similarly to Lemma \ref{lem: exotic t-structure} and note that $\sigma^*(\CF)* \CG \cong \CG * \CF$ for $\CF \in D^b\Coh^G(\St)$ and $\CG\in D^b\Coh^G(\tg)$, we have the following lemma
\begin{Lem}\label{lem: tranposed t-structure}
(a)The action $-*K_{\overline{s}_\a}$ is right $t$-exact for all $\a\in I_{aff}$ with respect to the transposed $t$-structure.

\noindent
(b) The action $-*\CR_\a$ for $\a \in I_{aff}$ and $-* K_b$ for $b\in \Lambda$ are tranposed $t$-exact.

\noindent
(c) The left and right adjoint of the functor $-*\CR_\a$ are both isomorphic to $-* \CR_\a$ for $\a \in I_{aff}$. The left and right adjoint of the functor $-*K_b$ are both isomorphic to $-* K_{b^{-1}}$ for $b\in \Lambda$. 
\end{Lem}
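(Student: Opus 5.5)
The plan is to deduce the three statements from Lemma \ref{lem: exotic t-structure} using the relation $\sigma^*(\CF)*\CG \cong \CG*\CF$ for $\CF\in D^b\Coh^G(\St)$ and $\CG\in D^b\Coh^G(\tg)$, together with the fact that the transposed $t$-structure is the $\CE^\vee$ $t$-structure. Rather than re-deriving braid positivity from scratch, I would transport the exotic statements through Grothendieck--Serre duality. The functor $\mathbb{D}:=\mathcal{RH}om_{\tg}(-,\CO_{\tg})$ (suitably shifted, since $\tg$ has trivial canonical bundle up to equivariant twist) exchanges $\RHom(\CE,-)$ with $\RHom(-,\CE^\vee)$. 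It therefore turns the exotic $t$-structure into the opposite of the $\CE^\vee$ $t$-structure, after composing with the contravariance of $\mathbb{D}$ on $D^b(\SA\Mod^G)$ versus $D^b(\SA^{op}\Mod^G)$. Here one uses that $\SA$ is projective over $\BC[\g^*]$ (Lemma \ref{lem: exotic t-structure}(d)), so that $\SA$-duality preserves hearts.

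The key steps, in order, are as follows.

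\textbf{Step 1: duality intertwines convolutions.} I would check that $\mathbb{D}(K*\CG)\cong \mathbb{D}_{\St}(K)^{\vee}*\mathbb{D}(\CG)$, where the kernel dual on $\St$ sends $K_b$ to $K_{b^{-1}}$ up to $\sigma$, $\CR_\a$ to $\CR_\a$, and $\CO_{\Delta}(\lambda)$ to $\CO_\Delta(-\lambda)$. Combining this with $\sigma^*(\CF)*\CG\cong\CG*\CF$ converts left convolution by $K_{\overline{s}_\a}$ into right convolution by $K_{\overline{s}_\a}$ or its inverse.

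\textbf{Step 2: part (b) and part (c).} Part (c) is formal. Applying $\sigma^*$ to the left and right adjunctions of Lemma \ref{lem: exotic t-structure}(c) gives the adjunctions for $-*\sigma^*(\CR_\a)$ and $-*\sigma^*(K_b)$. Using $\sigma^*\CR_\a=\CR^t_\a$ and Lemma \ref{lem: property of iota}, these are again of the same shape, and then one translates back to $-*\CR_\a$, $-*K_b$ via $\CG*\CF=\sigma^*\CF*\CG$. For $t$-exactness in part (b), I would instead use the description of $\CE'$: it is built from $\CO_{\tg}*\CR_{\a_k}*\dots*\CR_{\a_1}$, and by Lemma \ref{lem: CE' and CE dual} it is equidecomposable with $\CE^\vee$. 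Then $\RHom(\CE',\CG*\CR_\a)\cong\RHom(\CE'*\CR_\a,\CG)$ by self-adjointness. Since $\CE'*\CR_\a$ is again a sum of such products, it is a projective (tilting) object in the $\CE'$ heart, so Hom from it vanishes in nonzero degrees on the heart. This gives exactness of $-*\CR_\a$. For $K_b$ with $b\in\Lambda$ the argument is the same, because $K_b$ is invertible and permutes the $\CR$'s by conjugation.

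\textbf{Step 3: part (a).} Right $t$-exactness of $-*K_{\overline{s}_\a}$ follows from the triangle $\CO_{\Delta}\to\CR_\a\to K_{\overline s_\a}$, read as $\CG\to\CG*\CR_\a\to\CG*K_{\overline s_\a}$. If $\CG$ lies in $D^{\le0}$, then $\CG*\CR_\a$ lies in $D^{\le0}$ by part (b). The long exact sequence then puts the cone $\CG*K_{\overline s_\a}$ in $D^{\le0}$, because $\CG$ lies in $D^{\le 0}\subset D^{\le 1}$.

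I expect the main obstacle to be Step 2's identification of $\CE'*\CR_\a$ as a summand of sums of $\CE'$. This needs the transposed version of \cite[$\mathsection 2.5$]{BM13}: that the finite collection $\mathcal{J}'$ is closed enough that right convolution by $\CR_\a$ stays in $\operatorname{add}(\CE')$. If that fails, I would fall back on the conjugation description, namely that the $\CE^\vee$ $t$-structure is the $\CE$ $t$-structure conjugated by $K_{\overline{w_0}^{-1}}$, and transport Lemma \ref{lem: exotic t-structure} along it.
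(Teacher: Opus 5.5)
Your Step 3 (deducing (a) from (b) via $\CG\to\CG*\CR_\a\to\CG*K_{\overline{s}_\a}$) is correct, and so is your treatment of (c) through $\sigma$. But everything then rests on the $t$-exactness of $-*\CR_\a$ in Step 2, and that is not established. Via the adjunction you use, it amounts to $\CE'*\CR_\a$ (and its twists by $G$-representations) being projective in the transposed heart. Knowing that $\CE'*\CR_\a$ is a sum of products $\CO_{\tg}*\CR_{\a_k}*\dots*\CR_{\a_1}*\CR_\a$ does not give this. The collection $\mathcal{J}'$ is fixed and finite, and projectivity of an arbitrary such product is exactly what $t$-exactness of all the $-*\CR_\beta$ would yield, so the argument is circular, as you suspected. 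The duality framework in your first paragraph is also wrong and should be dropped. One has $\RHom(\CE^\vee,\mathbb{D}\CG)\cong\RHom(\CG,\CE)\cong\RHom_{\SA}(M,\SA)$ with $M=\RHom(\CE,\CG)$. For non-projective $M$ in the heart (e.g.\ finite-dimensional $M$) this is not concentrated in degree $0$, and projectivity of $\SA$ over $\BC[\g^*]$ does not help. So $\mathbb{D}$ does not carry the exotic heart to the (opposite) $\CE^\vee$ heart; fortunately your later steps do not use it.

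The missing idea is your fallback, made precise. Conjugation by $K_{\overline{w_0}^{-1}}$ (\cite[$\mathsection 1.8$]{BM13}) turns the \emph{left}-convolution statements of Lemma \ref{lem: exotic t-structure} into left-convolution statements for the $\CE^\vee$ $t$-structure, with $\overline{w_0}$-conjugated kernels. To reach the right convolutions in the lemma you need $\sigma$ together with Lemma \ref{lem: property of iota}. Since $\CG*\CF\cong\sigma^*(\CF)*\CG$ and $\sigma^*K_b\cong K_{\iota(b)}$, you get $-*K_{\overline{s}_\a}\cong K_{\iota(\overline{s}_\a)}*-$ and $-*K_b\cong K_{\iota(b)}*-$. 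Lemma \ref{lem: property of iota}(b),(d) gives $\iota(\overline{s}_\a)=\overline{w_0}^{-1}\overline{s}_{\a'}\overline{w_0}$ for some $\a'\in I_{aff}$, and (c) gives $\iota(\overline{b})=\overline{w_0}^{-1}\overline{b'}\,\overline{w_0}$ with $b'\in\Lambda$. Hence Lemma \ref{lem: exotic t-structure}(a),(b) transport to (a) and to the $K_b$-part of (b). The $\CR_\a$-part of (b) then follows from (a) through the triangles \eqref{eq: transpose R-functor}. One uses right $t$-exactness of $K_{\iota(\overline{s}_\a)}*-$ and left $t$-exactness of its inverse, which is its right adjoint. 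So the logical order is (a)$\Rightarrow$(b), the reverse of your Step 3; this is the route the paper takes.
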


\subsection{Results on the Steinberg variety}\label{ssec: result on Steinberg}

The extended affine Weyl group of the group $G \x G$ is $W_{ext} \x W_{ext}$. Let us consider the version of Section \ref{ssec: braid group action}-\ref{ssec: tilting generator} for the group $G \x G$ in which the tilting bundle is given by $\CE \boxtimes \CE^\vee$. Then by applying the base change \cite[$\mathsection 1.3$]{BM13} via the diagonal embedding $\g^*\xrightarrow[]{\Delta} \g^* \x \g^*$ to the Grothendieck -Springer resolution $\tg \x \tg \rightarrow \g^*\x \g^*$, we obtain the following equivalence of monoidal categories:

\[ \Gamma_{\CE}: D^b \Coh^G(\St) \rightarrow D^b(\SA\otimes_{\BC[\g^*]}\SA^{op}\Mod^G)\qquad \CG \mapsto \RHom_{\St}(\CE \boxtimes \CE^\vee|_{\St}, \CG)\]

\begin{Rem} The standard $t$-structure on $D^b(\SA \otimes_{\BC[\g^*]} \SA^{op}\Mod)^G$ gives the {\em exotic $t$-structure} on $D^b\Coh^G(\St)$. Moreover, the functor $\Gamma_\CE$ is monoidal where the monoidal structure on the domain comes from the convolution and the monoidal structure on the codomain coming from the derived tensor $-\overset{L}{\otimes}_\SA-$.
\end{Rem}

\subsubsection{Braid group actions}\

Let us discuss the braid group actions on $D^b\Coh^G (\tg \x \tg)$, $D^b\Coh^G(\St)$ and their compatibilities. Here $G$ acts on $\tg \x \tg$ diagonally. Consider the following diagram:
\begin{align*}
    (\tg \x \tg)\x_{\g^* \x \g^*} (\tg \x \tg) \xleftarrow[]{\Delta_{13} \x \text{Id}_{24}} \tg \x \St \xrightarrow[]{\pr_2} \St \\
    (\tg \x \tg) \x_{\g^* \x \g^*} (\tg \x \tg) \xleftarrow[]{\text{Id}_{13} \x \Delta_{24}} \St \x \tg \xrightarrow[]{\pr_1} \St,
\end{align*}
Here $\Delta_{13}$ means the diagonal embedding of $\tg$ into the first and third components $\tg$, while $\text{Id}_{24}$ means the identity map of $\St$ into the second and forth components $\tg$, same for other notations.

For $\CF \in D^b \Coh^G(\tg \x \tg)$, let $\CF^l, \CF^r \in D^b\Coh^G((\tg \x \tg)\x_{\g^* \x \g^*} (\tg \x \tg))$ defined by 
\[ \CF^l:=(\text{Id}_{13} \x \Delta_{24})_*\pr_1^*(\CF), \qquad \qquad \CF^r:=(\Delta_{13} \x \text{Id}_{24})_* \pr_2^*(\CF).\]
Then the braid group $B_{ext} \x B_{ext}$ acts on $D^b\Coh^G(\tg \x \tg)$ as follows: $(b, 1)$ acts by the kernel $(K_b)^l$, meanwhile $(1,b)$ acts by the kernel $(K_b)^r$.

\begin{Lem}\label{lem: exotic structure on tgxtg}
(a) The functors $(K_b)^l*-, (K_b)^r*-, (\CR_\a)^l*-, (\CR^t_\a)^r*-$ on $D^b\Coh^G(\tg \x \tg)$ are exotic $t$-exact for $b\in \Lambda$ and $\a \in I_{aff}$.

\noindent
(b) The left and right adjoint of $(\CR_\a)^l*-$  are both isomorphic to $(\CR_\a)^l *-$ for $\a \in I_{aff}$. The left and right adjoint of $(\CR^t_\a)^r*-$ are both isomorphic to $(\CR_\a^t)^r *-$ for $\a \in I_{aff}$. The left and right adjoints of $(K_b)^{l/r}*-$ are both isomorphic to $(K_{b^-1})^{l/r}*-$ for $b\in \Lambda$.
\end{Lem}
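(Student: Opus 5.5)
The plan is to reduce everything to the one-factor statements, Lemma \ref{lem: exotic t-structure} and Lemma \ref{lem: tranposed t-structure}, via a Künneth-type compatibility. The exotic $t$-structure on $D^b\Coh^G(\tg\x\tg)$ is the one defined by the tilting bundle $\CE\boxtimes\CE^\vee$. So it corresponds to the standard $t$-structure on $D^b(\SA\otimes\SA^{op}\Mod^G)$, where $\SA^{op}=\End(\CE^\vee)^{op}$ up to the identification $\End(\CE^\vee)\cong\End(\CE)^{op}$.

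The first step is to identify the functors. I will show that $(\CF)^l*-$ is the functor $\CF*\boxtimes\Id$ acting on the first factor, and $(\CF)^r*-$ is $\Id\boxtimes(-*\CF)$ acting on the second factor. This is base change for the diagrams defining $\CF^l,\CF^r$: one pushes and pulls along $\Delta_{13}\x\Id_{24}$ and uses the projection formula, which amounts to computing convolution on $(\tg\x\tg)\x_{\g^*\x\g^*}(\tg\x\tg)$ factorwise. Here I use that $\sigma^*(\CF)*\CG\cong\CG*\CF$, which turns left convolution by $\CR^t_\a$ or $K_b^t$ on the second factor into right convolution by $\CR_\a$ or $K_{\iota(b)}$.

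Next comes $t$-exactness. Under $\RHom(\CE\boxtimes\CE^\vee,-)$, the functor $\CF*\boxtimes\Id$ becomes $M\mapsto \RHom(\CE,\CF*\CE)\otimes^L_{\SA}M$. This is because $\RHom(\CE\boxtimes\CE^\vee,(\CF*\boxtimes\Id)\CG)$ can be computed by resolving $\CG$ by objects of the form $\CG_1\boxtimes\CG_2$; the tilting property gives the Künneth formula $\RHom(\CE\boxtimes\CE^\vee,\CG_1\boxtimes\CG_2)=\RHom(\CE,\CG_1)\otimes\RHom(\CE^\vee,\CG_2)$. By Lemma \ref{lem: exotic t-structure}(b), the bimodule $\RHom(\CE,\CR_\a*\CE)$ (respectively $\RHom(\CE,K_b*\CE)$) is concentrated in degree $0$. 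It is moreover projective as a left and as a right $\SA$-module, because the functor it represents is $t$-exact with a $t$-exact adjoint (Lemma \ref{lem: exotic t-structure}(c)). Hence tensoring with it is exact on $\SA\otimes\SA^{op}$-modules. The second factor is handled in the same way using Lemma \ref{lem: tranposed t-structure}(b),(c), together with Lemma \ref{lem: CE' and CE dual}, which guarantees that the transposed $t$-structure is the one attached to $\CE^\vee$. This proves part (a).

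For part (b), adjunctions are preserved by the external product $F\boxtimes\Id$. Concretely, the unit and counit of $\CR_\a*-\dashv\CR_\a*-$ are given by morphisms of kernels, and these induce morphisms of the kernels $(\CR_\a)^l$ under the functor $\CF\mapsto\CF^l$, which is monoidal for convolution. So part (b) follows directly from Lemma \ref{lem: exotic t-structure}(c) and Lemma \ref{lem: tranposed t-structure}(c).

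I expect the main obstacle to be the first step: justifying rigorously that $\CF\mapsto\CF^l,\CF^r$ is monoidal and intertwines the convolutions in the derived (non-transversal) fiber product setting. I would handle this with the derived fiber products of \cite[$\mathsection 6.1$]{BR24}. Once that is in place, the remaining arguments are formal.
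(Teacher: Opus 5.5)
Your strategy of reducing to the one-factor Lemmas \ref{lem: exotic t-structure} and \ref{lem: tranposed t-structure} through the tilting bundle $\CE\boxtimes\CE^\vee$ is what the paper intends. It gives no separate proof, only the remark that this is the $G\x G$ version of \S\ref{ssec: braid group action}--\ref{ssec: tilting generator}. Your bimodule/K\"unneth argument for exactness is a fine way to fill this in.

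The step you call the main obstacle is actually easy. We have $(\tg\x\tg)\x_{\g^*\x\g^*}(\tg\x\tg)=\St_{13}\x\St_{24}$, and unwinding the definitions gives $\CF^l=\CF\boxtimes\CO_{\Delta\tg}$ and $\CF^r=\CO_{\Delta\tg}\boxtimes\CF$. So the identification is plain K\"unneth; derived fiber products only matter on $\St$.

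The genuine problem is the second factor, in two steps.

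First, your two descriptions of $(\CF)^r*-$ contradict each other. You first write $\Id\boxtimes(-*\CF)$, and then convert \emph{left} convolution by $\CR^t_\a$. The second is correct: $\CF^r=\CO_{\Delta\tg}\boxtimes\CF$ gives $(\CF)^r*-=\Id\boxtimes(\CF*-)$. This agrees with \eqref{eq: convolutions} and with the computation of $P*\CR_\a$ in Lemma \ref{lem: global sections of ref}(c).

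Second, with this identification the two claims behave differently:
\begin{itemize}
\item $(\CR^t_\a)^r$ acts on the second factor by $\CR^t_\a*-\cong-*\CR_\a$. Lemma \ref{lem: tranposed t-structure}(b) covers this.
\item $(K_b)^r$ acts by $K_b*-\cong-*K_{\iota(b)}$. Lemma \ref{lem: tranposed t-structure}(b) only treats $-*K_c$ with $c\in\Lambda$. By Lemma \ref{lem: property of iota}(c), $\iota(\overline b)=\overline{w_0}^{-1}\overline{b'}\,\overline{w_0}$, which is not in $\Lambda$ in general. For $SL_2$ and $b\neq 1$, its image in $W_{ext}$ has length $2$.
\end{itemize}
So your reduction does not give the $(K_b)^r$ claim as literally stated, and one should not expect that claim to hold.

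Your text silently replaces $K_b$ by $K^t_b=\sigma^*K_b$. It also misstates the conversion: left convolution by $K^t_b$ is right convolution by $K_b$, not by $K_{\iota(b)}$.

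What your argument actually proves is the $t$-exactness of $(K^t_b)^r*-$. That is the form used afterwards: by \eqref{eq: convolutions}, right convolution with $K_b$ on $\St$ equals $(K^t_b)^r*-$. You should state (a) with $(K^t_b)^r$ and say so explicitly. Part (b) is unaffected, since for an invertible kernel both adjoints are given by the inverse kernel.
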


Following \cite[$\mathsection 5$]{BR12}, the braid group action on $D^b\Coh^G(\St)$ can be obtained from the braid group action on $D^b\Coh^G(\tg \x \tg)$ via the pullback under the closed embedding
\[ \tg \x^R_{\g^*} \tg \x^R_{\g^*} \tg \x^R_{\g^*} \tg \hookrightarrow (\tg \x \tg)\x_{\g^* \x \g^*} (\tg \x \tg).\]
The action can be described similarly as above. Let us consider the following diagram:
\begin{align*} \tg \x^R_{\g^*} \tg \x^R_{\g^*} \tg \x^R_{\g^*} \tg \xleftarrow[]{\Delta_{13} \x \text{Id}_{24}} \tg \x^R_{\g^*} \St \xrightarrow[]{\pr_2} \St\\
\tg \x^R_{\g^*} \tg \x^R_{\g^*} \tg \x^R_{\g^*} \tg \xleftarrow[]{\text{Id}_{13} \x \Delta_{24}} \St \x^R_{\g^*} \tg \xrightarrow[]{\pr_1} \St
\end{align*}
For $\CF \in D^b \Coh^G(\St)$, let $\CF^l, \CF^r \in D^b \Coh(\tg \x^R_{\g^*} \tg \x^R_{\g^*} \tg \x^R_{\g^*} \tg)^G$ defined by 
\[ \CF^l:= (\text{Id}_{13} \x \Delta_{24})_* \pr_1^*(\CF), \qquad \CF^r:=(\Delta_{13} \x \text{Id}_{24})_*\pr_2^*(\CF).\]
Then for $\CG \in D^b\Coh^G(\St)$, we have
\begin{equation}\label{eq: convolutions}
\CF^l * \CG \cong \CF * \CG, \qquad \CF^r* \CG \cong \CG * \sigma^*(\CF),
\end{equation}
here $\sigma: \St \rightarrow \St$ is the map $(x, y) \mapsto (y,x)$.

The braid group $B_{ext} \x B_{ext}$ acts on $D^b\Coh^G(\St)$ as follows: $(b, 1)$ acts by the kernel  $(K_b)^l$, meanwhile $(1, b)$ acts by  the kernel $(K_b)^r$. 
\begin{Lem}\label{lem: compatibility of braid group action}
The braid group actions of $B_{ext} \x B_{ext}$ on $D^b\Coh^G(\tg \x \tg)$ and $D^b\Coh^G(\St)$ commute (up to isomorphism) with the direct and inverse image functors
\[ D^b \Coh^G(\St) \leftrightarrows D^b\Coh^G(\tg \x \tg).\]
\end{Lem}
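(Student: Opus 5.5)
The plan is to reduce everything to base change along closed embeddings and to the projection formula, applied to the explicit kernels defining the two actions. I first treat the direct image $i_*$ and then the inverse image $i^*$, where
\[ i: \St \cong \tg \x^R_{\g^*} \tg \hookrightarrow \tg \x \tg \]
is the closed embedding. The embedding $i$ is the derived base change of $\Delta: \g^* \rightarrow \g^* \x \g^*$ along $\tg \x \tg \rightarrow \g^* \x \g^*$. Likewise, the embedding
\[ j: \tg \x^R_{\g^*} \tg \x^R_{\g^*} \tg \x^R_{\g^*} \tg \hookrightarrow (\tg \x \tg)\x_{\g^* \x \g^*} (\tg \x \tg) \]
is the base change along the same diagonal.

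The key observation is that the kernel $\CF^l$ on the Steinberg side is $j^*$ of the kernel $\CF^l$ on the $\tg \x \tg$ side; the same holds for $\CF^r$. To see this, I would use the two diagrams defining these kernels. The maps $\text{Id}_{13} \x \Delta_{24}$ and $\pr_1$ in the Steinberg diagram are obtained from the corresponding maps in the $\tg \x \tg$ diagram by base change along $\Delta: \g^* \rightarrow \g^* \x \g^*$, where $\g^*$ is the factor corresponding to the pair $(2,4)$ for $\CF^l$ and to $(1,3)$ for $\CF^r$. Derived base change $j^* (\text{Id}_{13}\x\Delta_{24})_* \cong (\ldots)_* (\ldots)^*$ then identifies the kernels. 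This holds because all fiber products are derived, so no Tor-independence hypothesis is needed, in the derived-algebraic-geometry framework of \cite[$\mathsection 6.1$]{BR24} and \cite[$\mathsection 5$]{BR12}.

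With this identification, the convolution action of a kernel $\mathcal{K}$ on $D^b\Coh^G(\tg\x\tg)$ is $p_{34*}(\mathcal{K}\otimes^L p_{12}^*(-))$. For the direct image, I would write $i_*(\mathcal{K}' * \CG)$, where $\mathcal{K}' = j^*\mathcal{K}$. Base change for the square formed by $j$, $i$ and the two projections gives $p_{12}^* i_* \cong j_* p'^*_{12}$. The projection formula then gives $\mathcal{K}\otimes^L j_*(-) \cong j_*(j^*\mathcal{K}\otimes^L -)$. Finally, $p_{34}\circ j = i\circ p'_{34}$, so together these steps yield $\mathcal{K} * i_*\CG \cong i_*(j^*\mathcal{K} * \CG)$.

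For the inverse image, I would show $i^*(\mathcal{K} * \CG') \cong j^*\mathcal{K} * i^*\CG'$. This follows from base change for $p_{34}$ along $i$, which is again a derived cartesian square, followed by the monoidality of $j^*$ with respect to $\otimes^L$. Since both actions are only weak actions, compatibility on each generator $K_b$ for $b$ in the generating set suffices. Combined with \eqref{eq: convolutions}, which identifies the Steinberg-side kernels with left and right convolution, this gives the statement for all of $B_{ext}\x B_{ext}$.

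I expect the main obstacle to be checking that the relevant squares are genuinely (derived) cartesian, namely the square with $j$, $i$ and $p_{12}$, and its analogue for $p_{34}$. In the classical setting these fiber products are not transversal, since $\St$ is not of expected dimension in the naive sense. I would first try to verify the squares at the level of the diagonal $\g^* \rightarrow \g^*\x\g^*$: the whole configuration is pulled back from $\g^*$-factors, so cartesianness should reduce to the identity $\g^*\x_{\g^*\x\g^*}(\g^*\x\g^*) = \g^*$, with equivariance automatic because all maps are $G$-equivariant.
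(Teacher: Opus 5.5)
Your argument is correct and is essentially the intended one. The paper gives no separate proof; it relies, via \cite{BR12}, on the fact that the Steinberg-side kernels are the pullbacks $j^*$ of the $\tg\x\tg$-side kernels, after which base change and the projection formula give exactly your computation.

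One small correction to your last paragraph. $\St$ itself does have the expected dimension, as the paper notes when it writes $\tg\x_{\g}\tg=\tg\x^R_{\g^*}\tg$; it is the iterated fiber products, such as the quadruple one carrying the kernels, that fail to. Also, the cartesianness of your squares is just the pasting law for derived pullbacks, so it is not a genuine obstacle.
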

 
From these discussions, we obtain the following lemma:
\begin{Lem}\label{lem: t-exact} Left and right convolutions with $K_b~ (b\in \Lambda)$ and $\CR_\a~(\a \in I_{aff})$   are exotic $t$-exact on $D^b\Coh^G(\St)$.
\end{Lem}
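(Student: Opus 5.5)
The plan is to deduce the statement from the corresponding properties on $D^b\Coh^G(\tg \x \tg)$ (Lemma \ref{lem: exotic structure on tgxtg}), transported to $\St$ via the compatibility of Lemma \ref{lem: compatibility of braid group action}. I treat left and right convolutions separately. By \eqref{eq: convolutions}, left convolution $\CF * -$ on $D^b\Coh^G(\St)$ equals $\CF^l * -$, and right convolution $- * \CF$ equals $(\sigma^*\CF)^r * -$. So the left claim concerns the kernels $(K_b)^l$ and $(\CR_\a)^l$. For the right claim, $\sigma^*(K_b) \cong K_{\iota(b)}$ and $\sigma^*(\CR_\a) = \CR^t_\a$ reduce it to the kernels $(K_{\iota(b)})^r$ and $(\CR^t_\a)^r$. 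By Lemma \ref{lem: property of iota}(c), $\iota(b)$ is $\overline{w_0}^{-1}\overline{b'}\overline{w_0}$ with $b' \in \Lambda$, so $(K_{\iota(b)})^r$ is the transposed-twist version of $K_{b'}$. It is therefore $t$-exact for the $\CE^\vee$ $t$-structure by Lemma \ref{lem: tranposed t-structure}(b), equivalently on the second factor of $\tg\x\tg$ with tilting bundle $\CE\boxtimes\CE^\vee$. This is exactly the set of functors listed in Lemma \ref{lem: exotic structure on tgxtg}(a).

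Next I identify the exotic $t$-structure on $\St$ with the structure pulled back along the closed embedding $i:\St \hookrightarrow \tg\x\tg$, which is the base change along $\Delta:\g^*\to\g^*\x\g^*$. The heart on $\St$ is $\SA\otimes_{\BC[\g^*]}\SA^{op}\Mod^G$, obtained from $(\SA\boxtimes\SA^{op})\Mod^G$ by base change along $\BC[\g^*\x\g^*]\to\BC[\g^*]$. Hence $i_*$ is $t$-exact and conservative on hearts: an object $\CG$ of $D^b\Coh^G(\St)$ lies in $D^{\le 0}$ (resp. $D^{\ge 0}$) iff $i_*\CG$ does, since $\RHom(\CE\boxtimes\CE^\vee, i_*\CG) = \RHom(\CE\boxtimes\CE^\vee|_{\St},\CG)$ as modules over $\SA\boxtimes\SA^{op}$. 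Let $\Phi$ denote any of the functors above. By Lemma \ref{lem: compatibility of braid group action}, $i_*\Phi(\CG)\cong \Phi(i_*\CG)$. Lemma \ref{lem: exotic structure on tgxtg}(a) then gives $i_*\Phi(\CG)$ in the heart whenever $i_*\CG$ is, and the criterion above transfers this back to $\St$.

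The main obstacle is justifying that $i_*$ detects the $t$-structure. That is, I need $\RHom_{\tg\x\tg}(\CE\boxtimes\CE^\vee, i_*\CG)$ and the corresponding $\RHom$ on $\St$ to agree as complexes. This is adjunction together with derived base change for the tilting equivalence (\cite[$\mathsection 1.3$]{BM13}), which uses that $\St = \tg\x^R_{\g^*}\tg$ has no higher Tor, by the remark $\tg\x_\g\tg = \tg\x^R_{\g^*}\tg$. I would also check carefully that the compatibility isomorphism of Lemma \ref{lem: compatibility of braid group action} holds for the reflection kernels $\CR_\a$ and not only for the braid generators. If needed, I would derive this from the distinguished triangles \eqref{eq: distinguished triangles} and \eqref{eq: transpose R-functor}, since $\CR_\a$ is defined from the $K$'s by conjugation and $\CO(\tg\x_{\tg_\a}\tg)$.
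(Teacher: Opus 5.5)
Your proposal is correct and follows the paper's route: you use \eqref{eq: convolutions} to rewrite left and right convolution as $\CF^l*-$ and $(\sigma^*\CF)^r*-$, and you get exotic $t$-exactness of these on $D^b\Coh^G(\St)$ from the $\tg\x\tg$ case. Your explicit transfer through $i_*$, which detects the $t$-structure by adjunction because $\CE\boxtimes\CE^\vee$ is locally free (so no Tor-vanishing is needed there) and intertwines the kernels by Lemma \ref{lem: compatibility of braid group action}, simply makes precise the paper's ``similar to Lemma \ref{lem: exotic structure on tgxtg}(a)''. One small correction: the right-convolution kernel for $K_b$ is $(K^t_b)^r=(K_{\iota(b)})^r$, which acts on the second factor as $-*K_b$ and is therefore exact directly by Lemma \ref{lem: tranposed t-structure}(b); you do not need Lemma \ref{lem: property of iota}(c), and this kernel is not literally the $(K_b)^r$ listed in Lemma \ref{lem: exotic structure on tgxtg}(a).
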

\begin{proof} Similar to Lemma \ref{lem: exotic structure on tgxtg}(a), the functors $(K_b)^l *-, (K_b)^r*-, (\CR_\a)^l*-, (\CR^t_\a)^r*-$ on $D^b\Coh^G(\St)$ are exotic $t$-exact. Then lemma follows by \eqref{eq: convolutions}.
\end{proof}
\begin{Rem} It is not sure that \ref{lem: exotic structure on tgxtg}(b) also holds for $D^b\Coh^G(\St)$.
\end{Rem}

\begin{Lem}\label{lem: global sections of ref}
Let $\CF$ be one of these objects:  $\CO_{\Delta\tg},$ $ \CR_\a~(\a \in I_{aff}),$ $ K_b~( b\in \Lambda)$. Then,

\noindent
(a) $\CF$ belongs to the heart of the exotic $t$-structure on $D^b\Coh^G(\St)$.

\noindent
(b) $\Gamma_{\CE}(\CF)$ is projective as a left $\SA$ and as a right $\SA$-module. 

\noindent
(c) Left and right convolution with $\CR_\a~(\a\in I_{aff})$ and $K_b~(b\in \Lambda)$ send projective objects to projective objects in the heart of the exotic $t$-structure on $D^b\Coh^G(\St)$.
\end{Lem}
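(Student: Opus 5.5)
The plan is to handle the three parts in order, since (b) and (c) are formal consequences of (a) together with the structure of the exotic $t$-structure coming from the tilting bundle $\CE\boxtimes\CE^\vee|_{\St}$.

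For (a), start with $\CF=\CO_{\Delta\tg}$. By definition, $\Gamma_\CE(\CO_{\Delta\tg})=\RHom_{\St}(\CE\boxtimes\CE^\vee|_{\St},\CO_{\Delta\tg})$, and adjunction along the diagonal $\Delta:\tg\to\St$ turns this into $\RHom_{\tg}(\CE\otimes\CE^\vee,\CO_{\tg})\cong \RHom_{\tg}(\CE,\CE)$. This complex has cohomology only in degree $0$ by the vanishing $\Ext^i_{\tg}(\CE,\CE)=0$ for $i\neq0$. Hence $\Gamma_\CE(\CO_{\Delta\tg})\cong\SA$, viewed as an $\SA$-bimodule, and $\CO_{\Delta\tg}$ lies in the heart. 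For the remaining objects, use Lemma \ref{lem: t-exact}: left convolution with $\CR_\a$ or $K_b$ is exotic $t$-exact on $D^b\Coh^G(\St)$. Since $\CR_\a\cong\CR_\a*\CO_{\Delta\tg}$ and $K_b\cong K_b*\CO_{\Delta\tg}$, these are $t$-exact images of an object of the heart, so they lie in the heart as well.

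For (b), identify the underlying one-sided modules using monoidality of $\Gamma_\CE$. As a left $\SA$-module, $\Gamma_\CE(\CR_\a*\CO_{\Delta\tg})$ should be $\RHom_{\tg}(\CE,\CR_\a*\CE)$, computed on $\tg$ via base change along $\St\to\tg$ (first projection). By Lemma \ref{lem: exotic t-structure}(b), $\CR_\a*\CE$ lies in the exotic heart of $D^b\Coh^G(\tg)$. By Lemma \ref{lem: exotic t-structure}(c), $\Hom(\CR_\a*\CE,-)\cong\Hom(\CE,\CR_\a*-)$ is exact on that heart. Therefore $\Gamma_\CE(\CR_\a*\CE)$ is a projective $\SA$-module; non-equivariantly it is a direct summand of a sum of copies of $\SA$, because $\CR_\a*\CE$ is a summand of sums of objects of the form $\CR_{\a_1}*\cdots*\CO_{\tg}$ appearing in $\CE$. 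The same argument works for $K_b$. The right module structure is handled in the same way, using the transposed $t$-structure and $\CE^\vee$. Lemma \ref{lem: tranposed t-structure} supplies the needed exactness and adjunctions, and Lemma \ref{lem: CE' and CE dual} compares $\CE^\vee$ with $\CE'$.

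For (c), let $\CP$ be projective in the heart, so $\Gamma_\CE(\CP)$ is a projective object of $\SA\otimes_{\BC[\g^*]}\SA^{op}\Mod^G$. First, $\CR_\a*\CP$ lies in the heart by Lemma \ref{lem: t-exact}. Next, compute $\Hom(\CR_\a*\CP,-)$ on the heart. I would like to rewrite it as $\Hom(\CP,\CR_\a*-)$, which is exact because $\CR_\a*-$ is $t$-exact. The self-adjointness of $\CR_\a*-$ on $D^b\Coh^G(\St)$ is exactly the unresolved point flagged in the preceding remark, so this step is the main obstacle. What I would try first is to bypass adjunction and use monoidality of $\Gamma_\CE$ instead:
\[
\Gamma_\CE(\CR_\a*\CP)\cong\Gamma_\CE(\CR_\a)\overset{L}{\otimes}_\SA\Gamma_\CE(\CP).
\]
By (b), $\Gamma_\CE(\CR_\a)$ is projective as a right $\SA$-module, so this derived tensor product is underived. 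Tensoring a projective bimodule $\SA\otimes_{\BC[\g^*]}\SA^{op}\otimes V$ on the left by a bimodule $X$ gives $X\otimes_{\BC[\g^*]}\SA^{op}\otimes V$. This is projective provided $X$ is projective as a left $\SA$-module, which again follows from (b), and $X$ is suitably flat over $\BC[\g^*]$, which follows from Lemma \ref{lem: exotic t-structure}(d). Summands of projectives then reduce the general case to this one. Right convolution and $K_b$ are handled symmetrically.
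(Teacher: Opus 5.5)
Your proof is correct. Parts (a) and (b) follow the paper.

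\textbf{Parts (a) and (b).} Part (a) is identical to the paper's argument. In (b) the essential input is also the same: $\CR_\a*\CE$ (resp.\ $K_b*\CE$) is projective in the exotic heart of $D^b\Coh^G(\tg)$, because $\CR_\a*-$ is $t$-exact and self-adjoint there. Only the identification step differs.
\begin{itemize}
\item The paper passes to $\tg\times\tg$ and uses self-adjointness of $(\CR_\a)^l*-$. This gives $\Gamma_\CE(\CR_\a)\cong\RHom_{\tg}(\CR_\a*\CE,\CE)$ as a right module.
\item You use the $(\pr_1^*,\pr_{1*})$ adjunction, $\Gamma_\CE(\CF)\cong\RHom_{\tg}(\CE,\CF*\CE)$, for the left structure. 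Dually, $\RHom_{\tg}(\CE^\vee,\CE^\vee*\CF)$ with the transposed $t$-structure handles the right one.
\end{itemize}

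\textbf{Part (c).} This is where you genuinely diverge from the paper.
\begin{itemize}
\item The paper argues on the coherent side. A projective exotic object is a summand of $\CE\boxtimes\CE^\vee|_{\St}\otimes V$. By Lemma \ref{lem: compatibility of braid group action}, $\CR_\a*(\CE\boxtimes\CE^\vee|_{\St})\cong((\CR_\a*\CE)\boxtimes\CE^\vee)|_{\St}$. The same projectivity of $\CR_\a*\CE$ on $\tg$ then finishes, with the transposed $t$-structure used for right convolution.
\item You work on the algebra side. Monoidality of $\Gamma_\CE$ plus right flatness from (b) gives $\Gamma_\CE(\CR_\a*P)\cong\Gamma_\CE(\CR_\a)\otimes_\SA\Gamma_\CE(P)$. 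On $\SA\otimes_{\BC[\g^*]}\SA^{op}\otimes V$ this is extension of scalars of the projective left module $\Gamma_\CE(\CR_\a)$ along $\SA\to\SA\otimes_{\BC[\g^*]}\SA^{op}$, hence projective.
\end{itemize}
Your route makes (c) a formal consequence of (b). It also correctly sidesteps the unavailable self-adjointness of convolution on $\St$. The cost is reliance on the monoidality of $\Gamma_\CE$, which the paper only asserts in a remark. The paper's route stays with sheaves and needs only the compatibility of the $\tg\times\tg$ and $\St$ braid actions.

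\textbf{Two small points.}
\begin{itemize}
\item The flatness of $X$ over $\BC[\g^*]$ that you invoke in (c) is unnecessary. Extension of scalars preserves projectives regardless.
\item In (b), the fact that $\CR_\a*\CE$ is a summand of sums of summands of $\CE$ is a consequence of its projectivity in the heart, since it is a summand of $\CE\otimes V_1$. It is not an independent reason for that projectivity.
\end{itemize}
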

\begin{proof}(a) It is enough to prove  the statement for $\CO_{\Delta \tg}$ since $\CF * \CO_{\Delta\tg} \cong \CF$ and the functor $\CF*-$ is exotic $t$-exact by Lemma \ref{lem: t-exact}. We have
\begin{align*}
    \Gamma_{\CE}(\CO_{\Delta \tg})=\RHom_{\St}(\CE\boxtimes \CE^\vee|_{\St},  \CO_{\Delta\tg})
    &\cong \RHom_{\tg}( \CE\overset{L}{\otimes}_{\tg} \CE^\vee, \CO_{\tg})\\
    &\cong \RHom_{\tg}(\CE, \CE)
\end{align*}
Since $\RHom_{\tg}(\CE, \CE)$ is homologically concentrated in degree $0$, it follows that $\CO_{\Delta \tg}$ belongs to the heart of the exotic $t$-structure of $D^b\Coh^G(\St)$. This proves part (a)

\noindent
(b) Let us consider $\CF= \CR_\a$, proofs for other objects are the same. First, $\CR_\a=(\CR_\a)^l *\CO_{\Delta \tg}$,
\begin{align*}\Gamma_\CE(\CR_\a)=\RHom_{\St}(\CE \boxtimes \CE^\vee|_{\St}, (\CR_\a)^l *\CO_{\Delta \tg}) &\cong \RHom_{\tg \x \tg}(\CE \boxtimes \CE^\vee, (\CR_\a)^l * \CO_{\Delta \tg})\\
&\overset{\text{Lemma \ref{lem: exotic structure on tgxtg}}}{\cong}\RHom_{\tg \x \tg}((\CR_\a)^l *(\CE \boxtimes \CE^\vee), \CO_{\Delta \tg})\\
&\cong \RHom_{\tg \x \tg}((\CR_\a* \CE) \boxtimes \CE^\vee, \CO_{\Delta \tg})\\
&\cong \RHom_{\tg}((\CR_\a*\CE)\otimes^L_{\tg} \CE^\vee, \CO_{\tg})\\
& \cong \RHom_{\tg} (\CR_\a* \CE, \CE).
\end{align*}
By $\SA^{op}=\End_{\tg}(\CE, \CE)$, the last object $\RHom_{\tg}(\CR_\a *\CE, \CE)$ is a right $\SA$-module. meanwhile, by $\SA^{op}=\End_{\tg}(\CE^\vee, \CE^\vee)^{op}$, all other objects in the above isomorphisms are right $\SA$-modules. Then the above isomorphisms are isomorphisms of right $\SA$-modules.

By Lemma \ref{lem: exotic t-structure}, the right adjoint of $\CR_\a*-$ on $D^b\Coh^G(\tg)$ is isomorphic to $\CR_\a*-$, which is exotic $t$-exact. Therefore, $\CR_\a*\CE$ must be projective in the heart of the exotic $t$-structure on $D^b\Coh^G(\tg)$. Then $\CR_\a*\CE$ is a direct summand of  $\CE\otimes_{\BC} V_1$ for some $G$-representation $V_1$. Therefore, $\Gamma_{\CE}(\CR_\a)$ is a direct summand of $\RHom_{\CE}(\CE, \CE)\otimes_{\BC} (V_1)^*=\SA\otimes_{\BC} (V_1)^*$ as a right $\SA$-module. Therefore, $\Gamma_\CE(\CR_\a)$ is a projective right $\SA$-module.  The proof that $\Gamma_\CE(\CR_\a)$ is projective as a left $\SA$-module is similar, when we use $\CR_\a\cong \CO_{\Delta \tg} * \CR_\a \cong (\CR^t_\a)^r* \CO_{\Delta \tg}$.

\noindent
(c) We call projective object in the heart of the exotic $t$-structure on $D^b\Coh^G(\St)$ is the {\it projective exotic object}.  The projective  exotic object $P$ is a direct summand of some object of the form $\CE \boxtimes \CE^\vee|_{\St} \otimes_\BC V$ for some $G$-representation $V$. So it is enough to prove the statement for $P=\CE \boxtimes \CE^\vee|_{\St} \otimes_\BC V$. 
By the compatibility between braid group actions in Lemma \ref{lem: compatibility of braid group action},
\[ \CR_\a * P \cong (\CR_\a)^l* (\CE \boxtimes \CE^\vee|_{\St}\otimes_{\BC} V) \cong ((\CR_\a)^l*(\CE \boxtimes \CE^\vee))|_{\St}\otimes_{\BC} V\cong ((\CR_\a * \CE)\boxtimes \CE^\vee)|_{\St} \otimes_{\BC} V.\]
As in part (b), $\CR_\a * \CE$ is a direct summand  of some object of the form $\CE \otimes_{\BC} V_1$ for some $G$-representation $V_1$. This implies that $\CR_\a * P$ is a projective  exotic object.

Similarly, we have 
\[ P* \CR_\a \cong (\CR_\a^t)^r* P \cong ((\CE \boxtimes (\CR^t_\a * \CE^\vee))|_{\St} \otimes_{\BC} V \cong ((\CE \boxtimes (\CE^\vee * \CR_\a))|{\St} \otimes_{\BC} V.\]
and then $P * \CR_\a$ is a projective exotic object by Lemma \ref{lem: tranposed t-structure}. The statement for $K_b~ (b\in \Lambda)$ is proved similarly with Lemma \ref{lem: exotic t-structure}-\ref{lem: tranposed t-structure}.
\end{proof}

The objects $\Gamma_\CE(\CF)$ in Lemma \ref{lem: global sections of ref}.b belong to the following subcategory:
\begin{defi}Let $\CP_\SA$ be the full subcategory of the category $\SA \otimes_{\BC[\g^*]}\SA^{op}\Mod^G$ consisting of all objects which are projective modules over $\sA$ and over $\sA^{op}$
\end{defi}
\begin{Rem}$\CP_\SA$ is closed under monoidal structure of $\SA\otimes_{\BC[\g^*]}\SA^{op}\Mod^G$.
\end{Rem}

\subsubsection{The functor $\mathfrak{R}$}\

Recall that  $\Stn \xrightarrow[]{p} \h^* \x_{\h^*/W} \g^* \x_{\h^*/W} \h^*$ is proper which is isomorphic over 
\[ X:= \h^* \x_{\h^*/W} \g^{reg} \x_{\h^*/W} \h^*.\]
On the other hand, by construction,  
\[\SA \otimes_{\BC[\g^*]} \SA^{op}=\End_{\Stn}(\CE \boxtimes \CE^\vee|_{\Stn})=p_*\mathcal{E}nd_{\Stn}(\CE \boxtimes \CE^\vee|_{\Stn})\]
Therefore, by proper base change, the restriction of $\SA \otimes_{\BC[\g^*]}\SA^{op}$ to $X$ is a sheaf of Azumaya algebras
with the splitting bundle $\CE\boxtimes \CE^{\vee}|_X$. Let $\CS$ be the Kostant section in $\g^{reg}$. Let $\BJ$ be the group scheme of universal centralizer of $G$ on $\g^*$. Let $\BI$ be the pullback of $\BJ\x_{\g^*} \CS$ under the map $\h^* \x_{\h^*/W} \CS \x_{\h^*/W} \h^* \rightarrow \CS$.

Let us consider the following composition of functors
\begin{multline}\label{eq: functor R}
\fR: \SA \otimes_{\BC[\g^*]}\SA^{op}\Mod^G \xrightarrow[]{j^*} \SA \otimes_{\BC[\g^*]}\SA^{op}|_X\Mod^G \xrightarrow[]{\fR_1}\Coh(X)^G \\\xrightarrow[]{\fR_2}(\h^* \x_{\h^*/W} \CS \x_{\h^*/W} \h^*)\Mod^{\BI}
\end{multline}
in which:
\begin{itemize}
    \item $j^*$ is the pullback along the open embedding $j: X \hookrightarrow \h^* \x_{\h^*/W} \g^* \x_{\h^*/W} \h^*$.
    \item $\fR_1$ is an equivalence via splitting of Azumaya algebras $\SA \otimes_{\BC[\g^*]} \SA^{op}|_X$ with the splitting bundle $\CE\boxtimes \CE^\vee|_X$.
    \item $\fR_2$ is obtained via restriction to the Kostant section $\CS$. The functor $\fR_2$ is an equivalence by \cite[Proposition 3.3.11]{R17}.
\end{itemize}
\begin{Prop}\label{prop: full faithful on R}The functor $\fR$ is fully faithful on the subcategory $\CP_\SA$.
\end{Prop}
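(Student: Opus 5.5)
The plan is to reduce full faithfulness to an extension property for sections across the complement of $X$, in the style of Bezrukavnikov--Riche. Each of the three functors in $\fR = \fR_2\circ\fR_1\circ j^*$ is handled separately. $\fR_1$ and $\fR_2$ are equivalences by construction (Azumaya splitting and \cite[Proposition 3.3.11]{R17}), so it suffices to show that $j^*$ is fully faithful on $\CP_\SA$.

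For $M,N\in\CP_\SA$ we have
\[
\Hom_{\SA\otimes\SA^{op}}(M,N)^G \to \Hom(j^*M,j^*N)^G.
\]
Since taking $G$-invariants is exact, it is enough to prove that the non-equivariant map is an isomorphism. Write $Y:=\h^*\x_{\h^*/W}\g^*\x_{\h^*/W}\h^*$ and $A:=\SA\otimes_{\BC[\g^*]}\SA^{op}$. The module $\Hom_A(M,N)$ is a submodule of $\Hom_{\BC[Y]}(M,N)$, cut out by $A$-linearity. Linearity is a closed condition that can be checked on a dense open set once torsion-freeness is known. So the key claim is the following: $\Hom_{\BC[Y]}(M,N)$ is torsion free, and its sections extend uniquely from $X$ to $Y$.

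To prove the claim, I will show that $N$, viewed as a coherent sheaf on $Y$ (or on $\g^*$ after pushforward), satisfies Serre's condition $S_2$ and that $Y\setminus X$ has codimension at least $2$. The codimension statement is standard. The non-regular locus $\g^*\setminus\g^{reg}$ has codimension $3$ in $\g^*$, and $Y\to\g^*$ is finite and flat over $\h^*/W$-compatible pieces. The fiber product $\h^*\x_{\h^*/W}\h^*$ is equidimensional, so the preimage has codimension at least $2$ (in fact $3$) in every component of $Y$.

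For the depth statement, $N$ is projective as a left $\SA$-module, and $\SA$ is projective over $\BC[\g^*]$ by Lemma \ref{lem: exotic t-structure}(d). Hence $N$ is projective over $\BC[\g^*]$, and in particular Cohen--Macaulay of full support on the smooth variety $\g^*$. Therefore $\Hom_{\BC[\g^*]}(M,N)$ is a reflexive module, for instance projective, since $M$ is also projective over $\BC[\g^*]$. Sections of a reflexive module on the normal variety $\g^*$ extend across subsets of codimension $\geq 2$. This gives
\[
\Hom_{\BC[\g^*]}(M,N)\iso \Gamma(\g^{reg},\Hom(M,N)).
\]
Intersecting with $A$-linearity, and using that $X$ is exactly the preimage of $\g^{reg}$ in $Y$, gives $\Hom_A(M,N)\iso\Hom_{A|_X}(j^*M,j^*N)$. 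Faithfulness follows from torsion-freeness of $N$ over $\BC[\g^*]$.

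The main obstacle is being careful that working over $\g^*$ rather than $Y$ is legitimate. $X$ should be the full preimage of $\g^{reg}$ under $Y\to\g^*$, which holds by definition. The module structures over $\h^*\x\h^*$ are part of the $A$-action, so they are automatically respected after extension, by density and torsion-freeness. A second point to check is that restriction to $X$ commutes with $\Hom$ for finitely generated modules; this is flat base change for the open embedding $j$.
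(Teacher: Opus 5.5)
Your proposal is correct and follows essentially the same route as the paper. Both arguments reduce to full faithfulness of $j^*$, because $\fR_1$ and $\fR_2$ are equivalences. Both use Lemma~\ref{lem: exotic t-structure}(d) to see that objects of $\CP_\SA$ are projective over $\BC[\g^*]$. Both then apply Hartogs extension on the smooth variety $\g^*$ across the complement of $\g^{reg}$, which has codimension $\geq 2$, using that $X$ is the full preimage of $\g^{reg}$.

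The only difference is packaging. The paper shows that the adjunction unit $N\to j_*j^*N$ is an isomorphism, so $A$-linearity of the extended maps comes for free from $j^*\dashv j_*$, and only the target needs to be projective over $\BC[\g^*]$. You instead extend sections of $\Hom_{\BC[\g^*]}(M,N)$ and then recover $A$-linearity by density and torsion-freeness.
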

\begin{proof}Since $\SA$ is projective over $\BC[\g^*]$, any object in $\CP_{\SA}$ is projective over $\BC[\g^*]$.

    Since $\fR_1$ and $\fR_2$ are equivalences, it is enough to show that $j^*$ is fully faithful on $\CP_\SA$. The right adjoint of $j^*$ is the pushforward $j_*$. So it is enough to show that if $P \in \CP_\SA$ then the natural map $P \rightarrow j_*j^*P$ is an isomorphism. 

     Let $\pr: \h^* \x_{\h^*/W} \g^* \x_{\h^*/W} \h^* \rightarrow \g^*$ and $i: \g^{reg} \hookrightarrow \g^*$. Since both $\g^*$ and $\h^* \x_{\h^*/W} \g^*\x_{\h^*/W} \h^*$ are affine,  $P \rightarrow j_*j^*P$ is an isomorphism if and only if $\pr_*P \rightarrow \pr_*j_*j^* P$ is an isomorphism. On the other hand $\pr_*j_*j^*P \cong i_*i^*\pr_*P$. So we only need to show $\pr_*P \rightarrow i_*i^*\pr_*P$ is an isomorphism.

    Now, since $\g^{reg}$ has codimension $\geq 2$ in the smooth space $\g^*$ and $\pr_*P$ is projective over $\BC[\g^*]$ by the definition of $\CP_\SA$, the natural map $\pr_*P \rightarrow i_*i^*\pr_*P$ is an isomorphism by the Hartog lemma.
\end{proof}
\subsection{Completed version}\label{ssec: complete version}\
 Let 
\begin{gather*}\SA^{\wedge_0}:= \SA\otimes_{\BC[\h^*]} \BC[\h^*]^{\wedge_0}, \qquad \qquad \BC[\g^*]^{\wedge_0}:= \BC[\g^*]\otimes_{\BC[\h^*/W]} \BC[\h^*/W]^{\wedge_0}.
\end{gather*}

For any scheme or algebra $Y$ over $\BC[\h^*/W]$, let $Y^\wedge$ be the scheme or algebra obtained via base change $\BC[\h^*/W] \rightarrow \BC[\h^*/W]^{\wedge_0}$. So we have the following schemes and algebras
\[ (\SA\otimes_{\BC[\g^*]}\SA^{op})^{\wedge}, \qquad X^{\wedge}, \qquad (\h^*\x_{\h^*/W} \CS \x_{\h^*/W} \h^*)^{\wedge}\]
Note that $(\SA \otimes_{\BC[\g^*]}\SA^{op})^\wedge \cong \SA^{\wedge_0} \otimes_{\BC[\g^*]^{\wedge_0}} \SA^{op\wedge_0}$.

The scheme $\St^\wedge$ is obtained from $\tg \x \tg$ by base change $\g^\wedge \rightarrow \g \xrightarrow[]{\Delta} \g \x \g$, which is an exact base change in the sense of \cite[$\mathsection 1.3$]{BM13}. Therefore, $\CE\boxtimes \CE^\vee|_{\St^\wedge}$ is a tilting generator for $\St^\wedge$ and then  we have an equivalence of triangulated categories:
\begin{equation}\label{eq: hat(Gamma) functor} \hat{\Gamma}_{\CE}: D^b \Coh^G(\St^\wedge) \rightarrow D^b((\SA\otimes_{\BC[\g^*]}\SA^{op})^\wedge\Mod^G)\qquad \CG \mapsto \RHom_{\St^\wedge}(\CE \boxtimes \CE^\vee|_{\St^\wedge}, \CG)
\end{equation}

\begin{Rem} The standard $t$-structure on $D^b((\SA \otimes_{\BC[\g^*]} \SA^{op})^\wedge\Mod^G)$ gives the {\em exotic $t$-structure} on $D^b\Coh^G(\St^\wedge)$. Moreover, the functor $\Gamma_\CE$ is monoidal where the monoidal structure on domain comes from the convolution and the monoidal structure on codomain comes from derived tensor product $-\overset{L}{\otimes}_{\SA^{\wedge_0}}-$.
\end{Rem}

\begin{defi}Let $\hat{\CR}_\a~(a\in I_{aff}), \hat{K_b}~ (b\in B_{aff})$ be obtained from $\CR_\a, K_b$ by base change. 
\end{defi}
The restriction of  $(\SA \otimes_{\BC[\g^*]}\SA^{op})^{\wedge}$ to $X^\wedge$ is a sheaf of Azumaya algebras over $X^{\wedge}$
with the splitting bundle $\CE \boxtimes \CE^\vee|_{X^{\wedge}}$. Let $\BI^\wedge$ be the pullback of the group scheme $\BJ\x_{\g^*} \CS$ under   $(\h^* \x_{\h^*/W} \CS \x_{\h^*/W}\h^*)^{\wedge} \rightarrow \CS$ .
We have the following composition of functors as in \eqref{eq: functor R}:
\begin{multline}
     \hat{\fR}: (\SA \otimes_{\BC[\g^*]}\SA^{op})^\wedge \Mod^G \xrightarrow[]{j^*}(\SA \otimes_{\BC[\g^*]}\SA^{op})^\wedge|_{X^\wedge} \Mod^G \xrightarrow[]{\hat{\fR}_1} X\Mod^G \\\xrightarrow[]{\hat{\fR}_2} (\h^* \x_{\h^*/W}\CS \x_{\h^*/W} \h^*)^{\wedge} \Mod^{\BI^\wedge} 
\end{multline}

\begin{defi}\label{defi: complete PA} Let $\hat{\CP}_\SA$ be the image of $\CP_\SA$ under the natural functor $\SA\otimes_{\BC[\g^*]}\SA^{op} \Mod^G \rightarrow (\SA\otimes_{\BC[\g^*]}\SA^{op})^{\wedge}\Mod^G$ defined by  $F\mapsto F\otimes_{\BC[\h^*/W]}\BC[\h^*/W]^{\wedge_0}$. 
\end{defi}

\begin{Lem}The category $\hat{\CP}_\SA$ is closed under the monoidal structure of $(\SA\otimes_{\BC[\g^*]}\SA^{op})^\wedge\Mod^G$. The category $\hat{\CP}_\SA$ contains all projective objects in $(\SA \otimes_{\BC[\g^*]}\SA^{op})^\wedge\Mod^G$.
\end{Lem}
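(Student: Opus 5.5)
The plan is to deduce both assertions from the corresponding properties of $\CP_\SA$ together with the exactness and flatness of the base change $F\mapsto \hat F:=F\otimes_{\BC[\h^*/W]}\BC[\h^*/W]^{\wedge_0}$.

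\emph{Closure under the monoidal structure.} For $F_1,F_2\in\CP_\SA$ I would write down the natural isomorphism
\[\hat F_1\otimes_{\SA^{\wedge_0}}\hat F_2\cong (F_1\otimes_\SA F_2)\otimes_{\BC[\h^*/W]}\BC[\h^*/W]^{\wedge_0}.\]
This holds because $\BC[\h^*/W]$ is central, $\BC[\h^*/W]^{\wedge_0}$ is flat over it, and completion commutes with the relative tensor product. Since $\CP_\SA$ is closed under $\otimes_\SA$ (by the Remark following its definition), the right-hand side is the image of an object of $\CP_\SA$, hence lies in $\hat\CP_\SA$. No derived corrections appear, since objects of $\CP_\SA$ are projective over $\SA$ and over $\SA^{op}$. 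The unit $\SA^{\wedge_0}$ equals $\hat\SA$ and $\SA\in\CP_\SA$, so it also lies in $\hat\CP_\SA$.

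\emph{Projectives.} The standard projective generators of $(\SA\otimes_{\BC[\g^*]}\SA^{op})^\wedge\Mod^G$ are of the form $(\SA\otimes_{\BC[\g^*]}\SA^{op})^\wedge\otimes_\BC V$ for finite-dimensional $G$-representations $V$. Indeed, $\Hom$ out of such an object computes $G$-invariants of $V^*\otimes(-)$, which is exact by reductivity, and every finitely generated equivariant module is a quotient of one of them. The object $\SA\otimes_{\BC[\g^*]}\SA^{op}\otimes V$ is projective as a left $\SA$-module, because $\SA^{op}$ is projective over $\BC[\g^*]$ by Lemma~\ref{lem: exotic t-structure}(d). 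By the symmetric argument it is also projective as a right module, so it lies in $\CP_\SA$ and its completion lies in $\hat\CP_\SA$.

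An arbitrary projective $Q$ is then a direct summand of a finite sum of such generators. It remains to check that $\hat\CP_\SA$ is closed under direct summands, and this is the step I expect to be the main obstacle, since the image of a functor need not be Karoubian. My plan is as follows. The Hom spaces in $(\SA\otimes\SA^{op})^\wedge\Mod^G$ are finitely generated over the complete local ring $\BC[\h^*/W]^{\wedge_0}$, so the category is Krull–Schmidt and idempotent endomorphisms of $\hat F$ lift through the graded (that is, $\BC^\times$-equivariant) structure to the uncompleted module $F$. I would use the contracting $\BC^\times$-action on $\g^*$ and $\h^*$ under which everything is graded, so that completion at $0$ is faithful on graded Hom spaces. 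An idempotent $e$ on $\hat F$ would then be conjugated to one coming from $\End(F)$, and the resulting summand $eF$ is again projective over $\SA$ and $\SA^{op}$, giving $Q\cong\widehat{eF}\in\hat\CP_\SA$.

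If the grading argument turns out to be awkward, I would fall back on a direct route. I would define $\hat\CP_\SA$-membership intrinsically as projectivity over $\SA^{\wedge_0}$ on both sides, show that a summand of a generator is projective on each side, and then identify it with the completion of the uncompleted equivariant module obtained by taking $\BC^\times$-finite vectors.
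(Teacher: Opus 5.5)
Your proposal is correct and is essentially the paper's argument. Closure under the tensor product follows because the flat base change along $\BC[\h^*/W]\to\BC[\h^*/W]^{\wedge_0}$ is compatible with $\otimes_\SA$. Every projective is a summand of some $(\SA\otimes_{\BC[\g^*]}\SA^{op})^\wedge\otimes_\BC V$, which is the completion of an object of $\CP_\SA$ because $\SA$ and $\SA^{op}$ are projective over $\BC[\g^*]$.

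The paper skips the direct-summand step you flag, tacitly treating $\hat{\CP}_\SA$ as closed under summands; that is all it needs later, since full faithfulness of $\hat{\fR}$ passes to summands. Your $\mathbb{G}_m$-graded argument fills this step correctly: it conjugates an idempotent of $\End(F)^G\otimes_{\BC[\h^*/W]}\BC[\h^*/W]^{\wedge_0}$ into degree $0$ of the graded algebra $\End(F)^G$, which is finite over the positively graded ring $\BC[\h^*/W]$. So the fallback is not needed.
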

\begin{proof}The first statement is easy to see. A projective object in $(\SA \otimes_{\BC[\g^*]}\SA^{op})^\wedge\Mod^G$  is the direct summand of some object of the form $(\SA \otimes_{\BC[\g^*]}\SA^{op})^\wedge\otimes_{\BC} V$ for some $G$-representation $V$. Then lemma follows since $\SA, \SA^{op}$ are projective over $\BC[\g^*]$.
\end{proof}
\begin{Lem}\label{lem: complete R monoidal and ff on PA} (a) The functor $\hat{\fR}$ is monoidal.

\noindent
(b) The functor $\hat{\fR}$ is fully faithful on the subcategory $\hat{\CP}_\SA$.
\end{Lem}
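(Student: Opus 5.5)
For (b), I will deduce the statement from Proposition \ref{prop: full faithful on R}, which says $\fR$ is fully faithful on $\CP_\SA$. The base change $\BC[\h^*/W]\to\BC[\h^*/W]^{\wedge_0}$ is flat. For $P,P'\in\CP_\SA$ the Hom spaces are finitely generated over $\BC[\g^*]$. The category $\SA\otimes_{\BC[\g^*]}\SA^{op}\Mod^G$ consists of finitely generated modules over a Noetherian algebra, and the $G$-invariants functor is exact in characteristic zero. Hence
\[\Hom(\hat P,\hat P')\cong \Hom(P,P')\otimes_{\BC[\h^*/W]}\BC[\h^*/W]^{\wedge_0},\]
using Lemma \ref{lem fg quotient imply fg}(b) before taking $G$-invariants.

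On the other side, $\hat\fR=\hat\fR_2\circ\hat\fR_1\circ j^*$ is likewise compatible with this flat base change. The two factors $\hat\fR_1$ and $\hat\fR_2$ are equivalences: the first by Azumaya splitting, the second by the completed version of \cite[Proposition 3.3.11]{R17}, which I expect to hold with the same proof. So it suffices to show that $j^*$ is fully faithful on $\hat{\CP}_\SA$. I will rerun the proof of Proposition \ref{prop: full faithful on R}: for $\hat P$ in $\hat{\CP}_\SA$ it is enough that $\hat P\to j_*j^*\hat P$ is an isomorphism. After pushing forward to $(\g^*)^\wedge$ this becomes the Hartogs statement for the projective $\BC[\g^*]^{\wedge_0}$-module $\pr_*\hat P$. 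The module is projective because it is a base change of a projective module. The complement of $(\g^{reg})^\wedge$ in $(\g^*)^\wedge$ still has codimension at least $2$, since its preimage in the flat base change has codimension at least $2$. The scheme $(\g^*)^\wedge$ is normal (indeed regular), being flat over $\g^*$ with regular fibres, since completion is a regular morphism. Alternatively, $j_*j^*$ commutes with the flat base change, so the isomorphism $P\cong j_*j^*P$ from the uncompleted case base-changes directly. I will use this second route, as it is cleaner.

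For (a), I will first define the monoidal structure of $\hat\fR$ on the target. On $(\h^*\x_{\h^*/W}\CS\x_{\h^*/W}\h^*)^\wedge\Mod^{\BI^\wedge}$ the monoidal product is tensor over the middle copy of $(\h^*\x_{\h^*/W}\CS)^\wedge$, i.e.\ convolution of bimodules over $\h^*$. I will check that each factor is monoidal. The functor $j^*$ is a localization, so it commutes with $\otimes_{\SA^{\wedge_0}}$. Under the splitting bundle $\CE\boxtimes\CE^\vee|_{X^\wedge}$, tensoring bimodules over $\SA|_{\g^{reg}}\cong\mathcal{E}nd(\CE|_{\g^{reg}})$ corresponds to tensoring the Morita-equivalent sheaves over $\CO_{\tg^{reg}}$, because $\CE^\vee\otimes_{\SA}\CE\cong\CO$ over the regular locus where $\tg^{reg}\cong\g^{reg}\x_{\h^*/W}\h^*$. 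This Morita compatibility is the step I expect to need the most care, in particular identifying the product on $\Coh(X)$ as tensor over the middle factor $\h^*\x_{\h^*/W}\g^{reg}$. Restriction to the Kostant section is compatible with this product, since it is a base change. The associativity constraints are canonical, so they match.
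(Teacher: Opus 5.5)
Your proposal is correct and follows the paper's proof: (b) is exactly Proposition~\ref{prop: full faithful on R} combined with flat base change along $\BC[\h^*/W]\to\BC[\h^*/W]^{\wedge_0}$, and your second route, using that $j_*j^*$ commutes with flat base change, is the cleanest way to make this precise; for (a) the paper likewise only equips the targets with the underived convolution $\pr_{13*}(\pr_{12}^*M\otimes\pr_{23}^*N)$ along finite flat projections, i.e.\ tensor over the middle factor, so your localization, Morita and Kostant-restriction checks fill in what the paper leaves implicit. One small convention slip: with $\SA=\End(\CE)^{op}$, $\CE$ is a right and $\CE^\vee$ a left $\SA$-module, so the contraction you need on the regular locus is $\CE\otimes_{\SA}\CE^\vee\cong\CO$ rather than $\CE^\vee\otimes_{\SA}\CE\cong\CO$.
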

\begin{proof}(a) We will define the monoidal structures on $X\Mod^G$ and $(\h^* \x_{\h^*/W} \CS \x_{\h^*/W} \h^*)^\wedge \Mod^{\BI^\wedge}$.

The projections $\pr_{12}, \pr_{23}, \pr_{13}$: 
\[ (\g^{reg} \x_{\h^*/W} \h^* \x_{\h^*/W} \h^* \x_{\h^*/W} \h^*)^\wedge \rightarrow (\g^{reg} \x_{\h^*/W} \h^* \x_{\h^*/W} \h^*)^\wedge\]
are finite flat morphisms. For $M,N \in X\Mod^G$, we define the convolution 
\[M * N:= \pr_{13*}(\pr^*_{12}(M) \otimes \pr^*_{23}N),\]
in which we use the original push-pull and tensor product instead of the derived ones. This convolution gives $X\Mod^G$ a monoidal structure. The monoidal structure on $(\h^* \x_{\h^*/W} \CS \x_{\h^*/W} \h^*)^\wedge \Mod^{\BI^\wedge}$ is similarly defined.

\noindent
(b)  follows by Proposition \ref{prop: full faithful on R} and flat base change $\BC[\h^*/W]\rightarrow \BC[\h^*/W]^{\wedge_0}$.
\end{proof}
The following lemma is useful for computations
\begin{Lem}\label{lem: computation for R}Let  $M\in D^b\Coh^G(\St^\wedge)$ belong to the heart of the exotic $t$-structure. Suppose that $M|_{X^\wedge}$ is concentrated in degree $0$, i.e., is a sheaf on $X^\wedge$. Then 
\[ \hat{\fR}_1 \circ j^*(\hat{\Gamma}_{\CE}(M)) \cong M|_{X^\wedge}.\]
\end{Lem}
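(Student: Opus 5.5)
The plan is to unwind the definition of $\hat{\fR}_1\circ j^*$ and reduce everything to the statement that $\CE\boxtimes\CE^\vee|_{X^\wedge}$ is a splitting bundle for the Azumaya algebra $(\SA\otimes_{\BC[\g^*]}\SA^{op})^\wedge|_{X^\wedge}$.

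First, since $M$ lies in the heart of the exotic $t$-structure, $\hat{\Gamma}_\CE(M)=\RHom_{\St^\wedge}(\CE\boxtimes\CE^\vee|_{\St^\wedge},M)$ is concentrated in degree $0$. It is therefore an honest module over $(\SA\otimes_{\BC[\g^*]}\SA^{op})^\wedge$, namely $\Hom_{\St^\wedge}(\CE\boxtimes\CE^\vee|_{\St^\wedge},M)$. Write $\CE_2:=\CE\boxtimes\CE^\vee|_{\St^\wedge}$. Next, use the proper map $p:\St^\wedge\to(\h^*\x_{\h^*/W}\g^*\x_{\h^*/W}\h^*)^\wedge=:Y^\wedge$, with target affine. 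This gives $\hat{\Gamma}_\CE(M)=R\Gamma(Y^\wedge, Rp_*R\mathcal{H}om(\CE_2,M))$, so as a quasi-coherent sheaf on $Y^\wedge$ it is $Rp_*R\mathcal{H}om(\CE_2,M)$, which is concentrated in degree $0$. Since $\CE_2$ is a vector bundle, $R\mathcal{H}om(\CE_2,M)=\CE_2^\vee\otimes M$.

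Then restrict along the open embedding $j$. Restriction is flat, and $p$ is an isomorphism over $X^\wedge$, so base change gives
\[ j^*\hat{\Gamma}_\CE(M)\cong (\CE_2^\vee\otimes M)|_{X^\wedge}=\CE_2^\vee|_{X^\wedge}\otimes M|_{X^\wedge}. \]
Here the hypothesis that $M|_{X^\wedge}$ is a sheaf in degree $0$ is used. This identification is compatible with the module structures: the algebra $j^*(\SA\otimes\SA^{op})^\wedge$ is $\mathcal{E}nd(\CE_2|_{X^\wedge})$, acting by precomposition on the $\CE_2^\vee$ factor.

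Finally, $\hat{\fR}_1$ is the Morita equivalence for the splitting bundle $\CE_2|_{X^\wedge}$, that is, $N\mapsto \CE_2|_{X^\wedge}\otimes_{\mathcal{E}nd(\CE_2)}N$. Applying it gives $\CE_2\otimes_{\mathcal{E}nd\CE_2}(\CE_2^\vee\otimes M|_{X^\wedge})\cong M|_{X^\wedge}$, using $\CE_2\otimes_{\mathcal{E}nd\CE_2}\CE_2^\vee\cong\CO$. The $G$-equivariance is preserved throughout.

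The main obstacle is bookkeeping rather than substance. One must fix the side and normalization conventions, that is, which of $\SA$ and $\SA^{op}$ acts on which side and whether $\hat{\fR}_1$ uses $\CE_2$ or $\CE_2^\vee$, so that $\hat{\fR}_1$ is literally the inverse of $\Hom(\CE_2,-)$. One must also justify the base change step. This step needs the derived fiber product $\St^\wedge$ to agree with the classical one over $X^\wedge$, which holds because $p$ is an isomorphism there.
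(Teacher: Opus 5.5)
Your proposal is correct and follows the paper's argument. The shared steps are: write $\hat{\Gamma}_{\CE}(M)$ as the pushforward of $R\mathcal{H}om(\CE\boxtimes\CE^\vee|_{\St^\wedge},M)$; restrict to the open $X^\wedge$, over which $\St^\wedge\to(\h^*\x_{\h^*/W}\g^*\x_{\h^*/W}\h^*)^\wedge$ is an isomorphism; use local freeness and the hypothesis on $M|_{X^\wedge}$ to replace $R\mathcal{H}om$ by $\mathcal{H}om$; then undo $\mathcal{H}om(\CE\boxtimes\CE^\vee|_{X^\wedge},-)$ by the splitting-bundle Morita equivalence $\hat{\fR}_1$.

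Your closing worry about derived versus classical fiber products is unnecessary, and ``$p$ is an isomorphism there'' would not be the right justification for it anyway. Restricting a pushforward to an open subset needs no such input, and the paper already records $\tg\x_{\g^*}\tg=\tg\x^R_{\g^*}\tg$.
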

\begin{proof} Let $\pi: \St^\wedge \rightarrow (\h^* \x_{\h^*/W} \g^* \x_{\h^*/W} \h^*)^\wedge$. Then 
\[\hat{\Gamma}_{\CE}(M)=\pi_*(R\mathcal{H}om_{\St^\wedge}(\CE\boxtimes \CE^\wedge|_{\St^\wedge}, M)).\]
Since $j$ is open and the restriction $\pi|_{X^\wedge}$ is an isomorphism,   
\[ j^*\hat{\Gamma}_{\CE}(M) \cong R\mathcal{H}om_{X^\wedge}(\CE\boxtimes \CE^\vee|_{X^\wedge}, M|_{X^\wedge}).\]
Since $\CE \boxtimes \CE^\vee|_{X^\wedge}$ is a vector bundle and  $M|_{X^\wedge}$ is a coherent sheaf on $X^\wedge$, 
\[ R\mathcal{H}om_{X^\wedge}(\CE\boxtimes \CE^\vee|_{X^\wedge}, M|_{X^\wedge})=\mathcal{H}om_{X^\wedge}(\CE\boxtimes \CE^\vee|_{X^\wedge}, M|_{X^\wedge}).\]
Therefore, 
\[\hat{\fR}_1\circ j^* (\hat{\Gamma}_\CE(M)) \cong M|_{X^\wedge}. \qedhere\]
\end{proof}

\subsection{Soergel bimodules and NCS}\label{ssec: Soergel-NCS}\

We first describe an embedding of monoidal categories of $\ASB$ into $(\h^* \x_{\h^*/W} \CS \x_{\h^*/W} \h^*)^\wedge \Mod^{\BI^\wedge}$. The construction follows \cite[$\mathsection 2.3, 2.4$]{BR22} and hence the details are referred to {\em loc. cit.} We also note that the {\em loc. cit.} works with the $\mathbb{G}_m$-equivariant sheaves on $\h^* \x_{\h^*/W} \h^*$  but the arguments carry to our setting verbatim.
\begin{Rem}We need to emphasize that geometric spaces we are using are schemes, not formal schemes. For example, $\h^{*\wedge_0}:=\Spec(\BC[[\h^*]])$.
\end{Rem}

 Let $\h^*_{\rs}$ be the set of regular elements in $\h^*$, and $\h^{*\wedge_0}_{\rs}:= \h^*_{\rs} \x_{\h^*/W} \h^{*\wedge_0}$. Let us construct the following functor 
\begin{equation}\label{eq: Rep(I) to C}
(\h^* \x_{\h^*/W} \CS \x_{\h^*/W} \h^*)^\wedge \Mod^{\BI^\wedge} \rightarrow \CC,
\end{equation}
where the category $\CC$ is defined in Section \ref{ssec: abe realization}. Any object  in $(\h^* \x_{\h^*/W} \CS \x_{\h^*/W} \h^*)^\wedge \Mod^{\BI^\wedge}$ is in particular a coherent sheaf on $(\h^* \x_{\h^*/W} \h^*)^\wedge$, hence can be regarded as an $\usR$-bimodule.   

For any coherent sheaf $\CF$ on $(\h^* \x_{\h^*/W} \h^*)^\wedge$, the tensor product
\[ \Gamma((\h^* \x_{\h^*/W} \h^*)^\wedge, \CF) \otimes_{\usR} \CO(\h^{*\wedge_0}_{\rs}),\]
admits a canonical  decomposition ( as a $\CO(\h^{*\wedge_0}_\rs)$-bimodule) parametrized by $W$ such that the factor corresponding  to $w\in W$ factors through the quotient
\[ \CO(\h^{*\wedge_0}_{\rs} \x \h^{*\wedge_0}_{\rs}) \rightarrow \CO(\text{Gr}(w, \h^{*\wedge_0}_{\rs})),\]
here $\text{Gr}(w, \h^{*\wedge_0}_{\rs})$ is the graph  of $w$ acting on $\h^{*\wedge_0}_\rs$.

Let $\BI^{\wedge}_w$ be the restriction of $\BI^{\wedge}$ to $\text{Gr}(w, \h^{*\wedge_0}_\rs)$. Identify $\text{Gr}(w, \h^{*\wedge_0}_\rs)$ with $\h^{*\wedge_0}_\rs$ via the first projection, then there is a canonical isomorphism of group schemes:
\[ \BI^{\wedge}_w \cong \h^{*\wedge_0}_\rs \x T.\]
This means that  the category of representation of $\BI^\wedge_w$ on coherent sheaf on $\text{Gr}(w, \h^{*\wedge_0}_\rs)$ is  equivalent  to the category of $P$-graded coherent sheaves on $\h^{*\wedge_0}_\rs$.

So starting with an object $\CF$ in $(\h^* \x_{\h^*/W} \CS \x_{\h^*/W} \h^*)^\wedge\Mod^{\BI^\wedge}$, we obtain a decomposition of $\Gamma(\h^* \x_{\h^*/W} \h^*)^\wedge, \CF) \otimes_{\usR} \CO(\h^{*\wedge_0}_\rs)$ parametrized by $W_{ext}$ by defining, for $\lambda \in P$ and $w\in W$,  the summand associated with $t_\lambda w$ (note the order of $t_\lambda$ and $w$ here) as the $\lambda$-graded  part in  the summand  associated with $w$ (which is a representation of $\BI^\wedge_w$). This finishes the construction  of the functor \eqref{eq: Rep(I) to C}.

Let $(\h^* \x_{\h^*/W} \CS \x_{\h^*/W} \h^*)^{\wedge}\Mod_{\text{fr}}^{\BI^\wedge}$ be the full subcategory of $(\h^* \x_{\h^*/W} \CS \x_{\h^*/W} \h^*)^{\wedge}\Mod^{\BI^\wedge}$ consisting of objects whose underlying coherent sheaves are free with respect to the second projection $(\h^* \x_{\h^*/W} \h^*)^\wedge\rightarrow \h^{*\wedge_0}$. Let us consider the restriction of the functor \eqref{eq: Rep(I) to C}
\begin{equation}\label{eq: Rep(I)-free to C}
(\h^* \x_{\h^*/W} \CS \x_{\h^*/W} \h)^\wedge \Mod_{\text{fr}}^{\BI^\wedge} \rightarrow \CC.
\end{equation}
Now following the same arguments  in \cite[Proposition 2.7, Lemma 2.8-2.9]{BR22}, we have:
\begin{Prop}The monoidal functor \eqref{eq: Rep(I)-free to C} is fully faithful and its image contains $\ASB $.    
\end{Prop}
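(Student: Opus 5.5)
The statement is the analogue of \cite[Proposition 2.7, Lemma 2.8--2.9]{BR22} in our completed, ungraded setting. I would check that each step there depends only on structural features that survive base change along $\BC[\h^*/W]\rightarrow\BC[\h^*/W]^{\wedge_0}$, and then run the same three steps. Throughout, write $Y:=(\h^*\x_{\h^*/W}\CS\x_{\h^*/W}\h^*)^\wedge$ and $\mathsf{F}$ for the functor \eqref{eq: Rep(I)-free to C}.

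\emph{Step 1: $\mathsf{F}$ is a well-defined monoidal functor.} Take $\CF$ free over the second factor $\h^{*\wedge_0}$. Its global sections form an $\usR$-bimodule that is finitely generated and flat on the right. The $W_{ext}$-decomposition built above lives over $\CO(\h^{*\wedge_0}_\rs)$, not yet over $Q$, so I would localize further to the fraction field $Q$ to obtain the required decomposition. The condition $mf=w(f)m$ on the $t_\lambda w$-summand holds by construction: $t_\lambda$ acts trivially on $\usR$, and the summand lies on $\mathrm{Gr}(w)$. For monoidality I would compare the convolution on $Y\Mod^{\BI^\wedge}$ with $\otimes_{\usR}$. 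Over $\h^{*\wedge_0}_\rs$ the graphs of $v$ and $w$ compose to the graph of $vw$, and $\BI^\wedge_v\cdot\BI^\wedge_w$ matches $\BI^\wedge_{vw}$, up to the twist by $w$ on $P$-gradings. This twist is exactly what makes the order $t_\lambda w$ compatible with the multiplication in $W_{ext}$.

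\emph{Step 2: full faithfulness.} Injectivity on Hom spaces follows from freeness: a morphism of free sheaves is determined by its restriction to the regular locus $\h^{*\wedge_0}_\rs$ of the second factor. Surjectivity is the real content. Given a morphism in $\CC$ of the underlying bimodules respecting the $Q$-decompositions, I must show it commutes with the $\BI^\wedge$-action. Over each graph $\mathrm{Gr}(w,\h^{*\wedge_0}_\rs)$, the group $\BI^\wedge_w\cong \h^{*\wedge_0}_\rs\x T$ acts only through the $P$-grading, which the $W_{ext}$-decomposition records. So the morphism is $\BI^\wedge$-equivariant generically. I would then extend equivariance across the non-regular locus using that $\BI^\wedge$ is flat and that the source and target are torsion free, i.e.\ free over $\h^{*\wedge_0}$. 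This extension is the step I expect to be the main obstacle. The subtle points are codimension-one walls, where $\BI$ contains the root $\mathbb{G}_a$-pieces and not merely $T$. Following \cite[Lemma 2.8]{BR22}, I would handle it by reducing to the rank-one case at a generic point of each wall $\ker\a^\vee$, where $\BI$ and its representations can be written down explicitly, and then applying a Hartogs argument.

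\emph{Step 3: the image contains $\ASB$.} Since the image is closed under tensor products, direct sums and, by full faithfulness, direct summands, it suffices to treat generators. For $\usR_x$ with $x\in\Lambda$, take the structure sheaf of the graph of the image of $x$ in $W$, with $\BI^\wedge$ acting through the character $\lambda$ determined by $x=t_\lambda w$. For $\uBS_s$ with $s$ finite, take $\CO$ of $\h^*\x_{\h^*/\langle s\rangle}\h^*$ with its natural $\BI$-action coming from the minimal Levi. For the affine simple reflection $s_0$, conjugate the $\uBS_{s}$ construction by a length-zero element, or twist by the character $\a_0^\vee$ along the $s_{\a_0}$-graph as in \cite[\S 2.4]{BR22}. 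In each case freeness over the second factor and the splitting $\uBS_s\otimes Q=Q\oplus Q_s$ can be checked directly.
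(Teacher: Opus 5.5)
Your route is the paper's own; the paper simply defers to \cite[Proposition 2.7, Lemmas 2.8--2.9]{BR22}. However, the first of your two constructions for the affine simple reflection $s_0$ fails in general. Conjugating $\uBS_s$ by a length-zero element needs a nontrivial element of $\Lambda=P/Q$ moving the affine node to a finite one. There is none when $\g$ has a simple factor of type $E_8$, $F_4$ or $G_2$, where $P/Q$ is trivial.

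The repair is already in your toolkit. Your object for $\usR_x$ is the structure sheaf of the graph of the image $w$ of $x$ in $W$, with $\BI^\wedge$ acting through the character $\lambda$, where $x=t_\lambda w$. It is defined, and free over the second factor, for every $x\in W_{ext}$, not only for $x\in\Lambda$. In every type one has $s_0=xsx^{-1}$ for some $x\in W_{ext}$ and some finite simple reflection $s$. When $\Lambda$ is trivial, the affine node is joined to a finite node by an edge labelled $3$, so even some $x\in W_{aff}$ works. This is the same device the paper uses when it sets $\CR_\b=K_b*\CR_\a*K_{b^{-1}}$ for $\b\in I_{aff}-I$. Since $\usR_x\otimes\uBS_s\otimes\usR_{x^{-1}}\cong\uBS_{s_0}$ in $\CC$ (the labels $e,s$ become $e,xsx^{-1}$) and the image is closed under tensor products, $\uBS_{s_0}$ lies in the image.

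Your second option, twisting the action by a character on the $s_{\a_0}$-graph only, can also be made to work. But then its entire content is checking that the twisted coaction glues along the intersection of that graph with the diagonal, i.e.\ that the character is trivial on $\BI^\wedge$ restricted to the reflecting wall. As written, you leave exactly this check to \cite{BR22}.

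Conversely, the step you single out as the main obstacle is not one. Equivariance of an $\CO$-linear map $f\colon\CF\to\CG$ is the equality of two maps $\CF\to\CG\otimes\CO(\BI^\wedge)$ (tensor product over the base). Because $\BI^\wedge$ is flat and $\CG$ is free over the second factor, this target injects into its localization over $\h^{*\wedge_0}_{\rs}$. There equivariance holds because $f$ preserves the $W_{ext}$-decompositions; by freeness, preserving them over $Q$ is the same as preserving them over $\CO(\h^{*\wedge_0}_{\rs})$. So the sentence preceding your diagnosis already completes fullness. No rank-one analysis at the walls and no Hartogs argument is needed, since the behaviour of $\BI$ over the walls only matters when one constructs objects (as in the character-twist option above), not when one compares morphisms.

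Finally, closure of the image under direct summands also uses that the source category is Karoubian. This holds because a direct summand of a module free over the complete local ring $\usR$ is again free.
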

So we have the following embedding of monoidal categories:
\begin{Cor} \label{lem: SB to Rep(I)} $\mathfrak{L}: \ASB \rightarrow (\h^* \x_{\h^*/W} \CS \x_{\h^*/W} \h^*)^\wedge\Mod^{\BI^\wedge}$.
\end{Cor}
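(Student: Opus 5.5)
The plan is to obtain $\mathfrak{L}$ as the composite of Abe's category $\ASB$ with the fully faithful monoidal functor \eqref{eq: Rep(I)-free to C}, by inverting that functor on its essential image. By the preceding Proposition, \eqref{eq: Rep(I)-free to C} is a fully faithful monoidal functor
\[ \Phi: (\h^* \x_{\h^*/W} \CS \x_{\h^*/W} \h^*)^\wedge \Mod_{\text{fr}}^{\BI^\wedge} \rightarrow \CC, \]
and its essential image contains $\ASB$. The first step is to let $\mathcal{D}$ be the full subcategory of the source consisting of objects $\CF$ with $\Phi(\CF)\in \ASB$. Then $\Phi|_{\mathcal{D}}:\mathcal{D}\to \ASB$ is fully faithful and essentially surjective, hence an equivalence.

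The second step is to choose a quasi-inverse $\Psi:\ASB\to\mathcal{D}$. Because $\Phi$ is monoidal, $\Psi$ inherits a monoidal structure: the isomorphisms $\Phi(\Psi(M)*\Psi(N))\cong \Phi\Psi(M)\otimes\Phi\Psi(N)\cong M\otimes N$ lift uniquely through the fully faithful $\Phi$ to $\Psi(M)*\Psi(N)\cong \Psi(M\otimes N)$. The coherence axioms for these isomorphisms are checked after applying $\Phi$, where they reduce to coherence of $\Phi$. One must also confirm that $\mathcal{D}$ is closed under convolution and contains the unit. This holds because $\ASB$ is closed under tensor product and contains $\usR$, and $\Phi$ preserves products and the unit.

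The third step is to define $\mathfrak{L}$ as $\Psi$ followed by the inclusion $\Mod_{\text{fr}}^{\BI^\wedge}\subset \Mod^{\BI^\wedge}$. The inclusion is full and monoidal, since the convolution of two objects that are free over the second factor is again free. So $\mathfrak{L}$ is a fully faithful monoidal embedding.

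The main obstacle is the Proposition itself, which this argument only invokes. Its proof follows \cite{BR22}: one checks on the generators $\uBS_s$ and $\usR_x$, for $x\in\Lambda$, that explicit $\BI^\wedge$-equivariant objects map to them. For the affine reflection $s_{\a_0}$, this requires a $T$-grading twist by $\lambda$ on the $w$-summand, matching the $t_\lambda w$-summand in the construction of \eqref{eq: Rep(I) to C}. Given that Proposition, the corollary is formal.
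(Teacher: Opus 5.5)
Your proposal is correct and matches the paper. The paper obtains $\mathfrak{L}$ directly from the preceding Proposition: it inverts the fully faithful monoidal functor \eqref{eq: Rep(I)-free to C} on its essential image, which contains $\ASB$, and composes with the inclusion of the free subcategory. You spell out the routine details the paper leaves implicit, namely the monoidal structure on the quasi-inverse and the closure of the free subcategory under convolution.
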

Let us view $\SB \cong \ASB$ as a subcategory of $(\h^* \x_{\h^*/W} \CS \x_{\h^*/W} \h^*)^\wedge \Mod^{\BI^\wedge}$ via the embedding $\mathfrak{L}$. Let us recall the monoidal subcategory $\sC_0 \subset \SB$ corresponding to the smallest two-sided cell in $W_{ext}$ as in Remark \ref{rem: smallest two-sided cell}.

\begin{Lem}\label{lem: computations with R2}(a) $\hat{\CR}_\a|_{X^\wedge}, (\a \in I_{aff})$ and $\hat{K}_b|_{X^\wedge}, ~(b\in B_{ext})$ are sheaves on $X^\wedge$.

\noindent
(b)  $\hat{\fR}_2(\hat{\CR}_\a|_{X^\wedge}) \cong \uBS_{\a}$ for $\a \in I_{aff}$ and $\hat{\fR}_2(\hat{K}_b|_{X^\wedge}) \cong \usR_b$ for $b\in B_{ext}$. Note that in $\usR_b$, we view $b$ as an element in $W_{ext}$ under the natural map $B_{ext}\rightarrow W_{ext}$.
\end{Lem}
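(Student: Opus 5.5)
For part (a), I would first reduce to the finite pieces. The objects $\hat{\CR}_\a$ and $\hat{K}_b$ are base changes of $\CR_\a$ and $K_b$ along the flat map $\BC[\h^*/W]\to\BC[\h^*/W]^{\wedge_0}$, and restriction to the open subset $X^\wedge$ commutes with this base change. So it suffices to show that $\CR_\a|_X$ and $K_b|_X$ are sheaves in degree $0$. Over $X$ the map $p:\St\to\h^*\x_{\h^*/W}\g^*\x_{\h^*/W}\h^*$ is an isomorphism. Hence $\St|_X\cong X$, and the derived fiber products over $\g^{reg}$ are underived there, because $\tg^{reg}\to\g^{reg}$ is finite flat.

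I would then treat the generators one at a time.
\begin{itemize}
\item For a finite simple root $\a\in I$, $\CR_\a=\CO(\tg\x_{\tg_\a}\tg)$ is the structure sheaf of a subscheme. Its restriction to $X$ is the structure sheaf of the closed subscheme $\h^*\x_{\h^*/s_\a}\h^*$, pulled back over $\g^{reg}$.
\item For $\theta_\lambda$, $K_{\theta_\lambda}=\Delta_*\CO_{\tg}(\lambda)$ restricts to a line bundle on the diagonal component.
\item For $\overline s_\a$ with $\a\in I$, $\CO_{\Gamma_{s_\a}}$ restricts to the structure sheaf of the graph of $s_\a$.
\end{itemize}
A general $b\in B_{ext}$ is a product of these generators. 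Convolution over $X$ is underived, since the projections $\pr_{ij}$ are finite flat over $\g^{reg}$ as in the proof of Lemma \ref{lem: complete R monoidal and ff on PA}. So $K_b|_X$ is obtained by iterated underived convolution of sheaves and stays a sheaf. For $\a_0\in I_{aff}-I$ I would use $\CR_{\a_0}=K_{b}*\CR_{\a}*K_{b^{-1}}$ and the same reasoning. Alternatively, one can use the triangles \eqref{eq: distinguished triangles}: they restrict to short exact sequences once each end is known to be a sheaf.

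For part (b), I would use the monoidality of $\hat{\fR}_2$ together with the fact that $\mathfrak{L}$ is a monoidal embedding (Corollary \ref{lem: SB to Rep(I)}). This reduces the claim to the generators. Let $\lambda\in P$ and $w\in W$.
\begin{itemize}
\item I would identify $\hat{\fR}_2(\hat{K}_{\theta_\lambda}|_{X^\wedge})$ with $\CO$ of the diagonal $\h^{*\wedge_0}\subset(\h^*\x_{\h^*/W}\h^*)^\wedge$. Its $\BI^\wedge$-action is through the character $\lambda$ of $T$, since $\CO_{\tg}(\lambda)$ restricted to the Kostant section carries the centralizer action by $\lambda$. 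Under the construction of \eqref{eq: Rep(I) to C}, this is $\usR_{t_\lambda}$.
\item Similarly, $\CO_{\Gamma_{s_\a}}$ restricts to the graph of $s_\a$ with trivial $T$-grading, giving $\usR_{s_\a}$.
\end{itemize}
Multiplicativity then gives $\usR_b$, where $b$ is read through $B_{ext}\to W_{ext}$. For $\a\in I$, $\hat{\CR}_\a|_{X^\wedge}$ is $\CO$ of $\h^*\x_{\h^*/s_\a}\h^*$, whose global sections are $\usR\otimes_{\usR^{s_\a}}\usR=\uBS_\a$. The $\BI^\wedge$-structure should be the one compatible with the decomposition $Q\oplus Q_{s}$, and I would check this via the triangles \eqref{eq: distinguished triangles}. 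For the affine root I would use conjugation, $\hat{\CR}_{\a_0}=\hat K_b*\hat{\CR}_\a*\hat K_{b^{-1}}$, which gives $\usR_b\star\uBS_\a\star\usR_{b^{-1}}\cong\uBS_{\a_0}$ in $\ASB$.

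The main obstacle is pinning down the $\BI^\wedge$-equivariant structure precisely, and in particular the $P$-grading conventions ($t_\lambda w$ versus $wt_\lambda$). These must match the order used in the construction of \eqref{eq: Rep(I) to C}. I would verify them by direct computation on the Kostant section, following \cite[\S 2.3--2.4]{BR22}.
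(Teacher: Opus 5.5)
This is essentially the paper's proof. You reduce to $\hat{\CR}_\a$ ($\a\in I$), $\hat{K}_{\overline{s}_\a}$ and $\hat{K}_{\theta_\lambda}$, identify their restrictions to $X^\wedge$ directly, get $\hat{\CR}_{\a_0}$ and general $\hat{K}_b$ by convolution, and use that restriction to $X^\wedge$ commutes with convolution. (The identity $\usR_b\star\uBS_\a\star\usR_{b^{-1}}\cong\uBS_{\a_0}$ holds in Abe's ambient category $\CC$ rather than in $\ASB$, since $\usR_b$ is a Soergel bimodule only when the image of $b$ lies in $\Lambda$.)

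The one justification to correct is that last point. Finite flatness of the $\pr_{ij}$ only makes the pull-backs and the push-forward exact. What makes $\pr_{12}^*\CF_1\otimes^L\pr_{23}^*\CF_2$ underived is that the restricted generators are free with respect to both projections $X^\wedge\to(\g^{reg}\times_{\h^*/W}\h^*)^\wedge$, which is clear from your explicit descriptions. You also need that this freeness is stable under convolution, so that the iteration goes through; this is exactly what the paper's argument rests on.
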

\begin{proof}(a) By the construction in Section \ref{ssec: braid group action}, the restrictions of 
\begin{equation}\label{eq: set S}S:=\{\hat{\CR}_\a, ~\a\in I\} \cup \{ \hat{K}_{\overline{s}_\a}, ~ \a \in I\} \cup \{ \hat{K}_{\theta_\lambda}, ~ \lambda \in P\}
\end{equation}
 to $X^\wedge$ are sheaves on $X^\wedge$. These restrictions are free sheaves with respect to the first and second projections $X^\wedge=(\h^*\x_{\h^*/W} \g^{reg} \x_{\h^*/W} \h^*)^\wedge \rightarrow (\g^{reg} \x_{\h^*/W} \h^*)^\wedge$. Therefore,
 \begin{equation}\label{eq: convolution on X}(\CF_1 * \CF_2)|_{X^\wedge} \cong \CF_1|_{X^\wedge} * \CF_2|_{X^\wedge},\qquad \text{  for $\CF_1, \CF_2$ in \eqref{eq: set S}},
 \end{equation} 
 where the convolution on the right hand side is defined in Lemma \ref{lem: complete R monoidal and ff on PA}. So part (a) follows.

 \noindent
 (b) By \eqref{eq: convolution on X}, it is enough to prove the statement for objects in \eqref{eq: set S}, which is straightforward by their construction in Section \ref{ssec: braid group action}.   
\end{proof}
\begin{Rem} Let $L(\lambda)$ be the irreducible $G$-representation of the highest weight $\lambda$. It turns out that $\hat{\fR}(\hat{\Gamma}_\CE(\CO_{\St^\wedge}\otimes L(\lambda))$ is isomorphic to $\uB_{w_0t_\lambda}$, the indecomposable Soergel bimodule in $\ASB$ associated to $w_0t_\lambda$. We will not use this result.
\end{Rem}
\begin{Prop}\label{prop: image of the functor R}
(a) $\hat{\fR}(\hat{\Gamma}_{\CE}(\hat{\CR}_\a))\cong \uBS_{\a}$ for $\a\in I_{aff}$ and $\hat{\fR}(\hat{\Gamma}_{\CE}(\hat{K}_b)) \cong \usR_b$ for $b\in \Lambda$.

\noindent
(b) The subcategory $\SB$ is contained in the image of the subcategory $\hat{\CP}_{\SA}$ under the functor $\hat{\fR}$.

\noindent
(c) The functor $\hat{\fR}$ restricts to an equivalence of semi-monoidal additive categories between the subcategories of projective objects in $(\SA \otimes_{\BC[\g^*]}\SA^{op})^\wedge\Mod^G$ and the category $\sC_0$.
\end{Prop}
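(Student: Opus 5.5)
For part (a), I will combine Lemma \ref{lem: global sections of ref}, Lemma \ref{lem: computation for R} and Lemma \ref{lem: computations with R2}. By Lemma \ref{lem: global sections of ref}(a), after the flat base change to $\St^\wedge$ the objects $\hat{\CR}_\a$ and $\hat K_b$ lie in the heart of the exotic $t$-structure on $D^b\Coh^G(\St^\wedge)$. By Lemma \ref{lem: computations with R2}(a), their restrictions to $X^\wedge$ are sheaves in degree zero. Lemma \ref{lem: computation for R} then gives $\hat{\fR}_1\circ j^*\hat\Gamma_\CE(\hat{\CR}_\a)\cong \hat{\CR}_\a|_{X^\wedge}$, and similarly for $\hat K_b$. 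Applying $\hat{\fR}_2$ and Lemma \ref{lem: computations with R2}(b), we obtain $\uBS_\a$ and $\usR_b$ respectively.

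For part (b), I first note that $\hat\Gamma_\CE(\hat{\CR}_\a)$ and $\hat\Gamma_\CE(\hat K_b)$ lie in $\hat{\CP}_\SA$, by Lemma \ref{lem: global sections of ref}(b) and base change. Next I use that $\hat{\CP}_\SA$ is closed under the monoidal product, which is tensor product over $\SA^{\wedge_0}$, and that $\hat{\fR}$ is monoidal by Lemma \ref{lem: complete R monoidal and ff on PA}(a). Together these give that every tensor product of generators $\uBS_{\a_1}\star\cdots\star\usR_b$ is the image of an object of $\hat{\CP}_\SA$.

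For direct summands, I use that $\hat{\fR}$ is fully faithful on $\hat{\CP}_\SA$ (Lemma \ref{lem: complete R monoidal and ff on PA}(b)). An idempotent on the image therefore lifts to an idempotent on the source. The source object is a finitely generated module over a complete algebra with rational $G$-action, so the idempotent splits there. Since $\hat{\CP}_\SA$ is closed under direct summands (being projective on each side is preserved), the summand stays in $\hat{\CP}_\SA$. Hence all of $\SB\cong\ASB$, which is generated by these objects under $\star$, $\oplus$ and summands, lies in the image.

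For part (c), the main step is to identify where projectives go. The key object is $Q:=(\SA\otimes_{\BC[\g^*]}\SA^{op})^\wedge$, and I will show $\hat{\fR}(Q\otimes V)$ lies in $\sC_0$ for $G$-representations $V$. Via $\hat\Gamma_\CE$, the object $Q\otimes V$ corresponds to $\CE\boxtimes\CE^\vee|_{\St^\wedge}\otimes V$. By Lemma \ref{lem: exotic t-structure}(e) and Lemma \ref{lem: CE' and CE dual}, this is a direct summand of sums of objects $\hat K_b*\hat{\CR}_{\a_1}*\cdots*\CO_{\St^\wedge}\otimes L(\lambda)*\cdots$. Alternatively one can handle $\CO_{\St^\wedge}\otimes L(\lambda)$ directly: $\CO_{\St^\wedge}$ corresponds to $\BC[\h^*]\otimes_{\BC[\h^*/W]}\BC[\h^*]$, whose image is $\uB_{w_0}\in\sC_0$. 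Then $Q\otimes V$ is a summand of convolutions of $\hat{\CR}_\a$, $\hat K_b$ with $\CO_{\St^\wedge}\otimes L(\lambda)$. The images of the latter lie in $\uB_{w_0}\star\SB$-type products, which are in $\sC_0$ because $\sC_0$ is a two-sided ideal (Remark \ref{rem: smallest twosided in SB}).

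Conversely, I need every object of $\sC_0$ to be in the image of a projective. By Remark \ref{rem: smallest two-sided cell}, each $B_u$ with $u\in C_0$ is a summand of $B_y\star B_{w_0}\star B_{y'}$. By Lemma \ref{lem: global sections of ref}(c), convolution with the generators preserves projectives. So it suffices to show $\uB_{w_0}$ is the image of a projective. I plan to do this by showing $\hat\Gamma_\CE(\CO_{\St^\wedge})$ is projective, or, failing that, that $\uB_{w_0}$ is a summand of the image of $Q$ itself.

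Full faithfulness then follows from Lemma \ref{lem: complete R monoidal and ff on PA}(b), since projectives lie in $\hat{\CP}_\SA$. The semi-monoidal compatibility comes from part (a) of that lemma. I expect the main obstacle to be this identification of the image of $Q\otimes V$ with objects of $\sC_0$, in particular showing $\CO_{\St^\wedge}\otimes L(\lambda)$ is exotic projective and maps to $\uB_{w_0t_\lambda}$. This is essentially the statement in the remark preceding the proposition, and I would try to prove it via \cite{BR22}'s computation of the image of $\CO_{\St}$.
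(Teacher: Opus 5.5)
Your proposal is essentially the paper's argument: (a) via Lemmas~\ref{lem: computation for R} and~\ref{lem: computations with R2}, (b) via monoidality and full faithfulness of $\hat{\fR}$ on $\hat{\CP}_\SA$, and (c) via $\hat{\fR}(\hat{\Gamma}_\CE(\CO_{\St^\wedge}))\cong\uB_{w_0}$ together with Lemma~\ref{lem: global sections of ref}(c), Lemma~\ref{lem: CE' and CE dual} and Lemma~\ref{lem: exotic t-structure}(e). The step you leave open is immediate: $\CO_{\tg}$ is a direct summand of $\CE$, hence of $\CE^\vee$, so $\CO_{\St^\wedge}$ is a summand of $\CE\boxtimes\CE^\vee|_{\St^\wedge}$ and $\hat{\Gamma}_\CE(\CO_{\St^\wedge})$ is a summand of the free module, hence projective; and the identification with $\uB_{w_0t_\lambda}$ that you flag as the main obstacle is not needed, because Lemma~\ref{lem: exotic t-structure}(e) already makes the image of $\CO_{\St^\wedge}\otimes L(\lambda)$ a summand of some $B\star\uB_{w_0}$ with $B\in\SB$, hence an object of $\sC_0$.
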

\begin{proof}
(a) By Lemma \ref{lem: computations with R2}.(a),  $\hat{\CR}_\a, \hat{K}_b$ satisfy the condition in Lemma \ref{lem: computation for R} so that 
\[ \hat{\fR}_1 \circ j^*(\hat{\Gamma}_\CE(\hat{\CR}_\a)) \cong \hat{\CR}_\a|_{X^\wedge}, \qquad \hat{\fR}_1 \circ j^*(\hat{\Gamma}_\CE(\hat{K}_b)) \cong \hat{K}_b|_{X^\wedge}.\]
But then $\hat{\fR}_2(\hat{\CR}_\a|_{X^\wedge}) \cong \uBS_\a$ for $\a\in I_{aff}$ and $\hat{\fR}_2(\hat{K}_b|_{X^\wedge}) \cong \usR_b$ for $b\in \Lambda$ by Lemma \ref{lem: computations with R2}.(b).

\noindent
(b) Recall that $\SB \cong \ASB$ is the Karoubian category generated by $\{ \uBS_{\a}, \sR_b| \a\in I_{aff}, b\in \Lambda\}$ via taking tensor products and direct summands. Then this part follows by  part (a) and the fact that $\hat{\fR}$ is monoidal and fully faithful on $\hat{\CP}_\SA$ in Lemma \ref{lem: complete R monoidal and ff on PA}.

\noindent
(c) {\em Step 1:} We will show that any object in $\sC_0$ is isomorphic to $\hat{\fR}(P)$ for some projective $P$ in $ (\SA\otimes_{\BC[\g^*]}\SA^{op})^\wedge\Mod^G$.

First, $\hat{\fR}(\hat{\Gamma}_\CE(\CO_{\St^\wedge}))\cong \uB_{w_0}$. By Lemma \ref{lem: global sections of ref}(c), left and right convolutions with $\hat{\Gamma}_\CE(\hat{\CR}_\a), \a \in I_{aff}$ and $\hat{\Gamma}_\CE(\hat{K}_b), b\in \Lambda$ send projective objects in $(\SA \otimes_{\BC[\g^*]} \SA^{\op})\Mod^G$ to projective objects. On the other hand, any object in $\sC_0$ is a direct summand of object $B_1* \uB_{w_0}*B_2$ for some $B_1, B_2\in \SB$. So Step 1 follows since $\hat{\fR}$ is fully faithful on the subcategory of projective objects in $(\SA\otimes_{\BC[\g^*]}\SA^{op})^\wedge \Mod^G$.

\noindent 
{\em Step 2:} We will show that if $P$ is projective in $(\SA\otimes_{\BC[\g^*]} \SA^{op})^\wedge \Mod^G$ then  $\hat{\fR}(P)$ is isomorphic to some object in $\sC_0$. 

First, by Lemma \ref{lem: CE' and CE dual}, $P$ must be a direct summand of  $B_1* \hat{\fR}(\hat{\Gamma}_\CE(\CO_{\St^\wedge}\otimes L(\lambda)))*B_2$ for some $B_1, B_2\in \SB$. Second, by Lemma \ref{lem: exotic t-structure}.e, $\CO_{\St^\wedge} \otimes L(\lambda)$ is a direct summand of  $\hat{K}_b * \hat{\CR}_{\a_1}*\dots * \hat{\CR}_{\a_k} * \CO_{\St^\wedge}$ for some $b\in \Lambda$ and $ \a_1, \dots, \a_k \in I_{aff}$. Hence $\hat{\fR}(\hat{\Gamma}_\CE(\CO_{\St^\wedge} \otimes L(\lambda)))$ is a direct summand of $ B_3 * \uB_{w_0}$ for some $B_3\in \SB$. Therefore, $\hat{\fR}(P)$ is a direct summand of  $B_1'* \uB_{w_0} * B'_2$ for some $B'_1, B'_2 \in \SB$, hence $\hat{\fR}(P)$ belongs to $\sC_0$.

This finishes the proof of part (c).
\end{proof}
Since $\hat{\fR}$ is fully faithful on $\hat{\CP}_\SA$, we have the following corollary
\begin{Cor}\label{Cor: SB and NCS}(a) There is a monoidal embedding $\fI_2: \SB \hookrightarrow (\SA\otimes_{\BC[\g^*]}\SA^{op})^\wedge\Mod^G$. Furthermore, convolution with objects in $\SB$ on the left or on the right give exact endofunctors of $(\SA \otimes_{\BC[\g^*]} \SA^{op})^\wedge \Mod^G$.

\noindent
(b) Under this embedding, $\usR_b \mapsto \hat{\Gamma}_\CE(\hat{K}_b)$ for $ b\in \Lambda$ and $\uBS_\a \mapsto \hat{\Gamma}_\CE(\hat{\CR}_\a)$ for $\a\in I_{aff}$.

\noindent
(c) The restriction of $\fI_2$ on $\sC_0$ give an equivalence of semi-monoidal categories 
\[\sC_0 \iso \Proj((\SA\otimes_{\BC[\g^*]} \SA^{op})^\wedge\Mod^G),\]
where the right hand side is the full subcategory of projective objects. This induces an equivalence of semi-monoidal triangulated categories 
\[\fI^-_2:  K^-(\sC_0) \iso D^-((\SA\otimes_{\BC[\g^*]} \SA^{op})^\wedge \Mod^G).\]

\end{Cor}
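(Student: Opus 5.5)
The corollary bundles three claims, and I will derive each from the fact that $\hat{\fR}$ is fully faithful and monoidal on $\hat{\CP}_\SA$ (Lemma \ref{lem: complete R monoidal and ff on PA}), combined with Proposition \ref{prop: image of the functor R}.

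\textbf{Part (a).} I first define $\fI_2$ as a quasi-inverse of $\hat{\fR}$ restricted to the essential preimage of $\SB$.
\begin{itemize}
\item By Proposition \ref{prop: image of the functor R}(b), every object of $\SB\cong\ASB$, viewed inside $(\h^*\x_{\h^*/W}\CS\x_{\h^*/W}\h^*)^\wedge\Mod^{\BI^\wedge}$ via $\mathfrak{L}$, is isomorphic to $\hat{\fR}(P)$ for some $P\in\hat{\CP}_\SA$.
\item Let $\hat{\CP}_\SA^{\SB}\subset\hat{\CP}_\SA$ be the full subcategory of such $P$. Then $\hat{\fR}\colon\hat{\CP}_\SA^{\SB}\to\SB$ is fully faithful and essentially surjective, hence an equivalence.
\item This equivalence is monoidal, because $\hat{\fR}$ is monoidal and $\hat{\CP}_\SA$ is closed under $\otimes_{\SA^{\wedge_0}}$.
\item Set $\fI_2$ to be a monoidal quasi-inverse, followed by the inclusion into $(\SA\otimes_{\BC[\g^*]}\SA^{op})^\wedge\Mod^G$.
\end{itemize}

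For exactness of $B\otimes_{\SA^{\wedge_0}}-$ and $-\otimes_{\SA^{\wedge_0}}B$ with $B=\fI_2(M)$, I will use that objects of $\hat{\CP}_\SA$ are projective over $\SA^{\wedge_0}$ on each side. The underived tensor product with a one-sided projective module is therefore exact.

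\textbf{Part (b).} This follows directly from Proposition \ref{prop: image of the functor R}(a). We have $\hat{\fR}(\hat{\Gamma}_\CE(\hat{\CR}_\a))\cong\uBS_\a$ and $\hat{\fR}(\hat{\Gamma}_\CE(\hat K_b))\cong\usR_b$. These objects lie in $\hat{\CP}_\SA$ by Lemma \ref{lem: global sections of ref}(b), after base change. Full faithfulness of $\hat{\fR}$ on $\hat{\CP}_\SA$ then pins down their preimages under $\fI_2$.

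\textbf{Part (c).} The first statement is Proposition \ref{prop: image of the functor R}(c), transported through $\fI_2$. Every projective lies in $\hat{\CP}_\SA$, so $\fI_2|_{\sC_0}$ agrees with the quasi-inverse of $\hat{\fR}$ on projectives. For the derived statement I will run the standard argument.
\begin{itemize}
\item The abelian category $\mathcal{A}:=(\SA\otimes_{\BC[\g^*]}\SA^{op})^\wedge\Mod^G$ of finitely generated equivariant modules has enough projectives.
\item Every object has a projective resolution, bounded in one direction, so the natural functor $K^-(\Proj\mathcal{A})\to D^-(\mathcal{A})$ is an equivalence.
\item Composing with the equivalence $K^-(\sC_0)\iso K^-(\Proj\mathcal{A})$ induced termwise gives $\fI_2^-$.
\item Semi-monoidality holds because the derived tensor product on $D^-$ is computed on projective resolutions by the termwise underived tensor product, which matches the termwise product in $K^-(\sC_0)$.
\end{itemize}

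I expect the main obstacle to be this last compatibility of monoidal structures, in particular checking that the underived tensor of projectives stays projective so that the termwise product lands in $K^-(\Proj\mathcal{A})$. I would prove this using Lemma \ref{lem: global sections of ref}(c), or its completed version: convolution preserves projectives, and projectives are summands of $(\SA\otimes\SA^{op})^\wedge\otimes V$.
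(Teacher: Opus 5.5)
Your proposal is correct and is essentially the paper's route: the paper presents the corollary as an immediate consequence of Lemma \ref{lem: complete R monoidal and ff on PA} (full faithfulness and monoidality of $\hat{\fR}$ on $\hat{\CP}_\SA$) together with Proposition \ref{prop: image of the functor R}. That is exactly how you construct $\fI_2$ and deduce (b) and (c). The step you flag as the main obstacle, closure of projectives under $\otimes_{\SA^{\wedge_0}}$, is actually immediate: $\fI_2$ is monoidal, $\fI_2|_{\sC_0}$ is essentially surjective onto projectives, and $\sC_0$ is closed under $\star$.
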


\subsection{Main results}\label{ssec: NCS-main results}\

 Recall the functor $\fI_1$ in Corollary \ref{cor: Sb embed} and the functor $\fI_2$ in Corollary \ref{Cor: SB and NCS}.
\begin{Thm}\label{thm: HC and NCS}(a) There is an equivalence of monoidal abelian categories:
\[ \fE: \HC_\e(0,0) \cong (\SA \otimes_{\BC[\g^*]}\SA^{op})^{\wedge}\Mod^G.\]
which fits into the following commutative diagram:
\[\begin{tikzcd} & \SB \arrow[dl, "\fI_1"'] \arrow[dr, "\fI_2"] &\\
\HC_\e(0,0) \arrow[rr, "\fE"] && (\SA \otimes_{\BC[\g^*]} \SA^{op})^\wedge \Mod^G  
\end{tikzcd}
\]
\noindent
(b) There are equivalences of monoidal categories: 
\[ D^b(\HC_\e(0,0)) \cong D^b((\SA\otimes_{\BC[\g^*]} \SA^{op})^\wedge\Mod^G) \cong D^b\Coh^G(\St^\wedge).\]
\end{Thm}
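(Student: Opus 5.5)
The plan is to build $\fE$ from the two semi-monoidal equivalences with $\sC_0$, and to pass to abelian categories via projective presentations. By Theorem \ref{thm: smalles two-sided cell vs proj} the functor $\fI_1$ restricts to $\sC_0 \iso \Proj(\HC_\e(0,0))$. By Corollary \ref{Cor: SB and NCS}(c) the functor $\fI_2$ restricts to $\sC_0 \iso \Proj(\mathsf{NCS})$, where $\mathsf{NCS}:=(\SA\otimes_{\BC[\g^*]}\SA^{op})^\wedge\Mod^G$. Composing the first with the inverse of the second gives an additive equivalence
\[\fE_0: \Proj(\HC_\e(0,0)) \iso \Proj(\mathsf{NCS}).\]
This equivalence is compatible with $\star$ and with the action of $\SB$ on both sides.

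Both categories are abelian, have enough projectives, and every object has a presentation $P_1\to P_0\to M\to 0$ by projectives. For $\HC_\e(0,0)$ this is Lemma \ref{lem: complete HCq is abelian} together with Lemma \ref{lem: proj is hilt}; for $\mathsf{NCS}$ it is a standard module category. An abelian category with enough projectives is equivalent to the category of finitely presented additive functors $\Proj^{op}\to \mathrm{Ab}$, that is, to the category of cokernels of maps between projectives. Hence $\fE_0$ extends uniquely, up to isomorphism, to an exact equivalence $\fE:\HC_\e(0,0)\iso \mathsf{NCS}$, defined by $\fE(\mathrm{coker}(P_1\to P_0)):=\mathrm{coker}(\fE_0P_1\to\fE_0P_0)$.

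For the commutative triangle, the first step is to check that $\fE\circ\fI_1\cong \fI_2$ on all of $\SB$, not only on $\sC_0$. Take $B\in\SB$. Then $\fI_1(B)\star -$ is exact on $\HC_\e(0,0)$ by Corollary \ref{cor: Sb embed}(a), and $\fI_2(B)\star -$ is exact on $\mathsf{NCS}$ by Corollary \ref{Cor: SB and NCS}(a). Both preserve projectives: on the $\HC$ side this is the adjunction Lemma \ref{lem: adjoint} with exact adjoints, and on the NCS side it is Lemma \ref{lem: global sections of ref}(c). Since $\sC_0$ is a two-sided ideal in $\SB$, these two actions are intertwined by $\fE_0$, and hence by $\fE$.

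To recover $\fI_1(B)$ itself, I would write it as a cokernel of projectives that arise by convolving $B$ with objects of $\sC_0$. Concretely, choose a projective presentation $Q_1\to Q_0\to \fI_1(\sR)\to0$ in $\HC_\e(0,0)$ whose image under $\fE$ is a presentation of the unit $\fI_2(\sR)$. Tensoring with $B$ then gives compatible presentations of $\fI_1(B)$ and $\fI_2(B)$, so $\fE(\fI_1(B))\cong\fI_2(B)$. Naturality in $B$ follows from the full faithfulness of $\fI_1$ and $\fI_2$.

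The main obstacle is exactly this identification of the unit objects, $\fE(U_\e^{fin,0})\cong \fI_2(\sR)=\hat\Gamma_\CE(\CO_{\Delta\tg})$, because the unit is not projective. I would try first to characterize the unit intrinsically as the object representing the identity functor on the semi-monoidal category of projectives. That is, the unit $\mathbf 1$ is determined by natural isomorphisms $\Hom(P,\mathbf 1\star Q)\cong\Hom(P,Q)$ functorial in $P,Q\in\Proj$. Since $\fE$ is determined by $\fE_0$ and respects $\star$ on projectives, this would force $\fE(\mathbf 1)\cong\mathbf 1$. Monoidality of $\fE$ on all objects then follows by right exactness of $\star$ in each variable on both sides and by taking presentations.

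Part (b) follows from (a). The equivalence $\fE$ induces $D^b(\HC_\e(0,0))\cong D^b(\mathsf{NCS})$. The monoidal structures are the derived tensor products, which are computed by projective resolutions and hence are matched by $\fE_0$. The second equivalence is \eqref{eq: hat(Gamma) functor}, and its monoidality is recorded in the remark following it. One should check that projectives have finite resolutions, so that $D^-$ restricts to $D^b$. On the NCS side this holds because $\SA$ has finite global dimension, the completed Steinberg variety being handled by \eqref{eq: hat(Gamma) functor}. That property then transfers to $\HC_\e(0,0)$ via $\fE$.
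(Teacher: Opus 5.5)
Your argument is correct in substance, but it is organized differently from the paper's. The paper does not use finitely presented functors on $\Proj$. It composes the triangulated semi-monoidal equivalences $\fI_1^-\colon K^-(\sC_0)\iso D^-(\HC_\e(0,0))$ and $\fI_2^-\colon K^-(\sC_0)\iso D^-(\mathsf{NCS})$ (Corollaries \ref{cor: C0 to HC} and \ref{Cor: SB and NCS}(c)) into $\fE^-$. It notes that a semi-monoidal equivalence between monoidal categories must send unit to unit. It then recovers both abelian categories inside $D^-$ as the objects $M$ with $\RHom(P,M)$ concentrated in degree $0$ for all projective $P$. The triangle is obtained exactly as you do, by intertwining the $\SB$-actions and evaluating at the unit.

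Your Yoneda-type extension of $\fE_0$ gives the abelian equivalence more elementarily and avoids the $K^-$ statements in (a). In (b), however, you rebuild the same $D^-$ comparison anyway (derived tensor products computed in $K^-(\Proj)$), so the two routes converge.

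The unit is not a genuine obstacle. As written, your treatment of it is slightly circular, because your characterization of $\mathbf 1$ already uses compatibility of $\fE$ with $\star$ against the non-projective object $\mathbf 1$. Do the right-exactness step first: it extends the semi-monoidal isomorphisms from projectives to all objects and never mentions units. Then pick $X$ with $\fE(X)\cong\mathbf 1'$ and compute $\fE(\mathbf 1)\cong\fE(\mathbf 1)\star\mathbf 1'\cong\fE(\mathbf 1)\star\fE(X)\cong\fE(\mathbf 1\star X)\cong\fE(X)\cong\mathbf 1'$. This is the paper's one-line argument.

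The justification you give in (b) needs correcting. $\mathsf{NCS}$, and hence $\HC_\e(0,0)$, does not have finite global dimension ($\St^\wedge$ is singular). Suppose it did. Then the unit of $\HC_\e(0,0)$ would have a finite resolution by objects of $\fI_1(\sC_0)$. By Corollary \ref{cor: Sb embed}(b), the unit of $\SB$ would then be isomorphic in $K^b(\SB)$ to a bounded complex of objects of $\sC_0$. That is impossible on Grothendieck groups (Theorem \ref{thm: So-categorify}): the span of $\{C_w\}_{w\in C_0}$ is a proper ideal of $\BC[W_{ext}]$, killed by the sign character $x\mapsto(-1)^{\ell(x)}$. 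So objects do not have finite projective resolutions, and this is not how one passes from $D^-$ to $D^b$.

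What you actually need is only that the derived tensor product of two bounded complexes is bounded. On the $\mathsf{NCS}$ side this holds because $\overset{L}{\otimes}_{\SA^{\wedge_0}}$ involves only one-sided $\mathrm{Tor}$ over $\SA^{\wedge_0}$, which has finite global dimension since $\tg$ is smooth. The property then transfers to $\HC_\e(0,0)$ because your equivalence of $D^-$'s is monoidal and $t$-exact, which is exactly how the paper argues.
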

\begin{proof}(a) Let us denote $\HC_0 :=\HC_\e(0,0)$ and $\mathsf{NCS}:= (\SA \otimes_{\BC[\g^*]} \SA^{op})^\wedge\Mod^G$.

By Corollary \ref{cor: C0 to HC} and Corollary \ref{Cor: SB and NCS}, we have the following diagram whose arrows are equivalences of triangulated  semi-monoidal categories
\[\begin{tikzcd} & K^-(\sC_0) \arrow[dl, "\fJ^-_1"'] \arrow[dr, "\fI^-_2"] &\\
D^-(\HC_0) \arrow[rr, "\fE^-= \fI^-_2 \circ (\fI^-_1)^{-1}"] && D^-(\mathsf{NCS})  
\end{tikzcd}
\]
Since $D^-(\HC_0), D^-(\mathsf{NCS})$ are monoidal categories, the equivalence forces $\fE^-$ to send unit to unit, hence $\fE^-$ is a monoidal functor.

{\it Step 1:} We will show that $\fE^-$ maps $\HC_0$ equivalently onto $\mathsf{NCS}$. Since $\HC_0$ has enough projectives, $\HC_0\subset D^-(\HC_0)$ is the full subcategory consisting of all $M$ such that

\begin{enumerate}[label=($\star$)]
\item \label{eq: characterization of HC and NCS}$\RHom_{D^-(\HC_0)}(P,M)$ is homologically concentrated in degree $0$ for all  $P$ in $\Proj(\HC_0)$.
\end{enumerate}
There is a similar description for $\mathsf{NCS}\subset D^-(\mathsf{NCS})$. But $\Proj(\HC_0) \xrightarrow[\cong]{(\fJ^-_1)^{-1}}\sC_0 \xrightarrow[\cong]{\fJ^-_2} \Proj(\mathsf{NCS})$, hence, the property \ref{eq: characterization of HC and NCS} is preserved under the equivalences $\fE^-$. So Step 1 follows and we get an equivalence
of monoidal categories 
\[ \fE: \HC_0 \iso \mathsf{NCS}.\]

{\it Step 2:} Via the monoidal embeddings $\fJ_1$, $B \in \SB$ acts on $\HC_0$ via left convolution $\fI_1(B) *-$ and via right convolution $-* \fI_2(B)$. Similarly, via the monoidal embedding $\fI_2$, $B\in \SB$ acts on $\mathsf{NCS}$ via left and right convolutions. 

These convolutions are exact by Corollary \ref{cor: Sb embed}.a) and Corollary \ref{Cor: SB and NCS}.a), hence they extend to  actions of $\SB$ on $D^-(\HC_0)$ and $D^-(\mathsf{NCS})$. Then the equivalence $\fE^-$ is compatible with these actions of $\SB$, hence so is the equivalence $\fE: \HC_0\iso \mathsf{NCS}$, i.e, for $B \in \SB$ and $X\in\HC_0$, we have $\fE(\fI_1(B)*X)\cong \fI_2(B)* \fE(X)$ and $\fE(X*\fI_1(B)) \cong \fE(X)*\fI_2(B)$. If we choose $X=\uB_1$, the unit in $\SB$, we have $\fE(\fI_1(B)) \cong \fI_2(B)$ for all $B\in \SB$. Hence the desired commutative diagram in part a) exists.

\noindent
(b) By same reasoning as in Step 1, the functor $\fC^-$ maps $D^b(\HC_0)$ equivalently onto $D^b(\mathsf{NCS})$. The monoidal structure on $D^b(\mathsf{NCS})$ is well-defined hence so is the monoidal structure on $D^b(\HC_0)$, i.e., the convolution of two-bounded complexes is bounded. So we obtain the first monoidal equivalence in part b. The second equivalence is  in \eqref{eq: hat(Gamma) functor}.
\end{proof}

\begin{Thm} \label{thm: triangulate equivalence as HCtilt}
The embedding $\SB \hookrightarrow \HC_\e(0,0)$ induces 

\noindent
(a) an equivalence of monoidal triangulatedcategories
\[ K^b(\SB) \cong D^b(\HC_\e(0,0)),\]
(b) an equivalence of monoidal additive categories
\[ \SB \cong \Hilt_\e(0,0).\]
\end{Thm}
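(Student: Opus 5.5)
The plan is to reduce both statements to the coherent side, where the analogous facts are available through Bezrukavnikov--Riche type arguments, and transport them back along the equivalence $\fE$ of Theorem \ref{thm: HC and NCS}. Since $\fE\circ\fI_1\cong\fI_2$, part (a) is equivalent to showing that the embedding $K^b(\SB)\to D^b(\mathsf{NCS})$ induced by $\fI_2$ is an equivalence. Here $\mathsf{NCS}:=(\SA\otimes_{\BC[\g^*]}\SA^{op})^\wedge\Mod^G$. Full faithfulness on the homotopy category is already in hand: Corollary \ref{cor: Sb embed}(b) gives it on the $\HC$ side, and Proposition \ref{prop: embed of Hilt} shows that HC-tilting objects have no higher self-extensions. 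So for (a) the only remaining issue is essential surjectivity, namely that the image of $K^b(\SB)$ generates $D^b(\HC_\e(0,0))$ as a triangulated category. Because the image is a full triangulated subcategory (Karoubian, since $K^b$ of a Karoubian category is idempotent complete), it suffices to show that every object of the heart $\HC_\e(0,0)$ lies in it.

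The argument for generation proceeds in three steps.
\begin{enumerate}
\item Every projective lies in the image. This follows from Theorem \ref{thm: smalles two-sided cell vs proj}, since $\Proj(\HC_\e(0,0))\cong\sC_0\subset\SB$.
\item Every object has a bounded projective resolution, i.e.\ $\HC_\e(0,0)$, equivalently $\mathsf{NCS}$, has finite global dimension. I would prove this on the coherent side. The category $\mathsf{NCS}$ is the heart of the exotic $t$-structure on $D^b\Coh^G(\St^\wedge)$, where $\St^\wedge$ has finite Tor-dimension in the relevant sense, and $\SA$ is a noncommutative crepant resolution with $\BC[\g^*]$ regular. Hence $\SA\otimes_{\BC[\g^*]}\SA^{op}$ has finite global dimension, and taking $G$-invariants is exact in characteristic $0$.
\item Combining (1) and (2), every object of the heart is a bounded complex of objects of $\sC_0$, hence lies in $K^b(\SB)$. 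Since $D^b$ is generated by its heart, (a) follows.
\end{enumerate}

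Monoidality needs no separate argument. The functor is induced from the monoidal functor $\fI_1$, and the convolutions on both sides are computed termwise on bounded complexes of HC-tilting objects, which are acyclic for the tensor product because they are projective on each side.

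For (b), I would take the heart-level statement out of (a). By construction $\fI_1(\SB)\subset\Hilt_\e(0,0)$, and $\fI_1$ is fully faithful, so it remains to show that every HC-tilting bimodule lies in the Karoubian image. Let $M=P^{0,0}(V)$ with $V$ tilting. By (a), $M\cong\fI_1(C)$ for some complex $C\in K^b(\SB)$. Proposition \ref{prop: embed of Hilt} makes $K^b(\Hilt_\e(0,0))\to D^b$ fully faithful, so $C\cong M$ already in $K^b(\Hilt_\e(0,0))$. A complex over a Karoubian additive category that is homotopy equivalent to a complex concentrated in degree $0$ has its degree-$0$ object as a direct summand of a term of the complex. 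Hence $M$ is a direct summand of an object of $\fI_1(\SB)$, and by Karoubianity of $\SB$ (Lemma \ref{lem: SB category}) it lies in the image.

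I expect the main obstacle to be step (2), finite global dimension. If the direct geometric argument proves awkward, I would first try an alternative: show that $D^b(\mathsf{NCS})\cong D^b\Coh^G(\St^\wedge)$ is generated by $\hat\Gamma_\CE$ of objects $\CO_{\St^\wedge}\otimes L(\lambda)$ twisted by $\hat K_b$. Via Lemma \ref{lem: exotic t-structure}(e) these lie in $\fI_2(K^b(\SB))$ through Rouquier complexes, built from the triangles \eqref{eq: distinguished triangles}.
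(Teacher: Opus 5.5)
\textbf{Gap: step (2) is false.} $\HC_\e(0,0)$, equivalently $\mathsf{NCS}$, does \emph{not} have finite global dimension. So ``projectives plus finite resolutions'' cannot reach all of $D^b$.

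Your inference fails because the tensor product is over $\BC[\g^*]$, not over $\BC$. You argued that since $\SA$ has finite global dimension and $\BC[\g^*]$ is regular, $\SA\otimes_{\BC[\g^*]}\SA^{op}$ does too. But this algebra models $\St=\tg\times_{\g^*}\tg$, which is a Tor-independent (lci) fibre product and is nevertheless singular.

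\begin{itemize}
\item $\hat\Gamma_\CE^{-1}$ sends $K^b(\Proj(\mathsf{NCS}))$ into perfect complexes on $\St^\wedge$, since projectives correspond to summands of $\CE\boxtimes\CE^\vee|_{\St^\wedge}\otimes V$.
\item Hence a non-perfect sheaf such as $\CO_{\Delta\tg^\wedge}$ gives an object of infinite projective dimension. This sheaf corresponds to $\fI_2(\uB_1)$, i.e.\ to the unit of $\HC_\e(0,0)$.
\item A toy model is $\usR\otimes_{\usR^W}\usR$, which for $\mathfrak{sl}_2$ is $\BC[[x,y]]/(x^2-y^2)$. Over it, the diagonal module $\BC[[x,y]]/(x-y)$ has the infinite $2$-periodic resolution with differentials $x-y$ and $x+y$.
\end{itemize}

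Finite global dimension in fact contradicts the statement you are proving. It would give $D^b(\HC_\e(0,0))\simeq K^b(\Proj(\HC_\e(0,0)))\simeq K^b(\sC_0)$. Then the inclusion $K^b(\sC_0)\to K^b(\SB)$ would be an equivalence. That is impossible: the triangulated Grothendieck group of $K^b$ of an additive category is its split Grothendieck group, and the image of $K_0(K^b(\sC_0))$ is spanned by the $[B_x]$ with $x\in C_0$, which misses $[B_1]$.

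\textbf{Your fallback has the same defect as phrased.} The objects $\hat K_b*(\CO_{\St^\wedge}\otimes L(\lambda))$ are projective exotic objects convolved with Rouquier complexes. By Lemma \ref{lem: global sections of ref}(c) and the triangles \eqref{eq: distinguished triangles}, they stay in $K^b(\Proj(\mathsf{NCS}))$ and generate only that subcategory.

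\textbf{The missing idea is to use non-perfect objects built from the diagonal.} This is what the paper does:
\begin{itemize}
\item The image $\mathcal{D}$ of $K^b(\SB)$ in $D^b\Coh^G(\St^\wedge)$ contains $\CO_{\Delta\tg^\wedge}$, the $\hat K_b$ for $b\in\Lambda$, and the $\hat\CR_\a$.
\item By \eqref{eq: distinguished triangles}, $\mathcal{D}$ then contains $\hat K_b$ for every $b\in B_{ext}$. In particular it contains $\CO_{\Delta\tg^\wedge}(\lambda)=\hat K_{\theta_\lambda}$.
\item Hence $\mathcal{D}$ contains all $\CO_{\Delta\tg^\wedge}(\lambda)*\hat\CR_{\a_1}*\cdots*\hat\CR_{\a_k}$ with $\a_i\in I$.
\item These objects generate $D^b\Coh^G(\St^\wedge)$ as a triangulated category by \cite[Proposition 7.5]{BR24}. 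One then transports back through $\hat\Gamma_\CE$ and $\fE$.
\end{itemize}

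Your deduction of (b) from (a) is correct and matches the paper. You split off the degree-zero term inside $K^b(\Hilt_\e(0,0))$, using Proposition \ref{prop: embed of Hilt}, and then use that $\SB$ is Karoubian.
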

\begin{proof}(a) By Theorem \ref{thm: HC and NCS}, we have an embedding of monoidal triangulated categories:
\[ K^b(\SB) \hookrightarrow D^b(\HC_\e(0,0)) \cong D^b((\SA\otimes_{\BC[\g^*]}\SA^{op})^\wedge\Mod^G)\cong D^b\Coh^G(\St^\wedge).\]
Under this embedding
\[ \uB_1 \mapsto \CO_{\Delta\tg^\wedge}, \qquad\usR_b\mapsto \hat{K}_b \;\; (b\in \Lambda), \qquad \uB_\a= \uBS_\a\mapsto \hat{\CR}_\a \;\; (\a\in I_{aff}).\]
Let $\mathcal{D}$ denote the image of $K^b(\SB)$ in $D^b(\Coh^G(\St^\wedge))$ under this embedding. By distinguished triangles \eqref{eq: distinguished triangles},  $\hat{K}_b, \hat{K}_{\tilde{s}_\a}, \hat{K}_{(\tilde{s}_\a)^{-1}}$ belong to $\mathcal{D}$. Note that $b, \tilde{s}_\a, (\tilde{s}_\a)^{-1}$ with $b\in \Lambda, \a\in I_{aff}$ generates the extended braid group $B_{ext}$. Hence $\CO_{\Delta\tg^\wedge}(\lambda)=\hat{K}_{\tilde{t}_\lambda} \in \mathcal{D}$ for all $\lambda \in P$. So $\mathcal{D}$ contains all the following objects
\[ \CO_{\Delta\tg^\wedge}(\lambda) *\hat{\CR}_{\a_1}*\dots *\hat{\CR}_{\a_k}, \qquad (\a_1, \dots \a_k \in I, ~ \lambda \in P).\]
These objects generate $D^b\Coh^G(\St^\wedge)$ as a triangulated category by \cite[Proposition 7.5]{BR24}\footnote{The statement is contained in the proof of the proposition in {\em loc. cit.} and it is enough to consider a finite collection of these objects. Furthermore,  the {\em loc. cit.} works over field of positive characteristics, but the argument is verbatim in characteristic $0$.}. Therefore, we must have $\mathcal{D}=D^b\Coh^G(\St^\wedge)$.

\noindent
(b) We have the composition of two embeddings
\[ K^b(\SB) \hookrightarrow K^b(\Hilt_\e(0,0)) \hookrightarrow D^b(\HC_\e(0,0)).\]
By part (a), we  have an equivalence $K^b(\SB) \cong K^b(\Hilt_\e(0,0))$ which comes from the embedding $\SB \hookrightarrow \Hilt_\e(0,0)$. On the other hand, $\SB$ is Karoubian. Hence part (b) follows.
\end{proof}

\section{Appendices}\label{sec: appendices}

\subsection{Equivariant modules over Hopf algebras}\  \label{append: equivariant modules} 

Let $(H, m, \Delta, \varepsilon, \eta, S)$ be a Hopf algebra over a commutative algebra $k$. The left adjoint action of $H$ on itself is defined as follows:
\[ h \cdot h'= \sum h_{(1)} h' S(h_{(2)}).\]
This left adjoint action makes $H$ an  $H$-module algebra.
\begin{defi}\label{defi: module algebra} A $k$-algebra $A$ with an $H$-module structure is called an $H$-module algebra if the multiplication map $A\otimes_k A \rightarrow A$ is a morphism of $H$-modules.
\end{defi}
\begin{defi}(a) A right $H$-module $M$ equipped with another $H$-action (called {\it equivariant $H$-action}) is {\it  weakly $H$-equivariant} if the multiplication map $M \otimes_k H \rightarrow M$ is a morphism of $H$-modules, where $H$ acts on $H$ by the left adjoint action and $H$ acts on $M$ via the equivariant $H$-action. Let $H\Rmod^H$ denote the category of weakly $H$-equivariant right $H$-modules.

\noindent
(b) The category $H\Lmod^H$ of weakly $H$-equivariant left $H$-modules  and  the category  $H\Bimod^H$ of weakly $H$-equivariant $H$-bimodules are defined similarly.
\end{defi}
\begin{Lem}(a) For any $M\in H\Rmod^H$, the following left $H$-action makes $M$  an object in $H \Bimod^H$:
\begin{equation}\label{eq: left H-action} hm= \sum( h_{(1)}\cdot m)h_{(2)}, \qquad \qquad \text{for $m \in M, h\in H$.}
\end{equation}
here $h \cdot m$ refers to the equivariant $H$-action on $M$. Under this left $H$-action, we have 
\begin{equation}\label{eq: adjoint action} h \cdot m=\sum h_{(1)}mS(h_{(2)}),\qquad \qquad \text{for $m \in M, h \in H$.}
\end{equation}
\noindent
(b) Assume that $S$ is invertible. For any $N \in H\Lmod^H$, the following right $H$-action makes $N$ into an object in $H \Bimod^H$: 
\begin{equation}\label{eq: right H-action}
nh =\sum h_{(2)} (S^{-1}(h_{(1)})\cdot n), \qquad \quad \text{for all $n\in N, h \in H$.}
\end{equation}
Furthermore, under this right $H$-action, the module $N$ satisfies \eqref{eq: adjoint action}.
\end{Lem}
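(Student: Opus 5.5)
We verify the bimodule axioms directly with Sweedler notation, using the Hopf algebra identities and the weak equivariance condition $h\cdot(mh')=\sum (h_{(1)}\cdot m)(h_{(2)}\cdot h')$, where $H$ acts on itself by the left adjoint action. For part (a) there are four things to check, in this order. First, the formula \eqref{eq: left H-action} defines a left action: $1m=m$ is immediate. For associativity, expand $h(h'm)=\sum (h_{(1)}\cdot((h'_{(1)}\cdot m)h'_{(2)}))h_{(2)}$. Weak equivariance turns this into $\sum (h_{(1)}h'_{(1)}\cdot m)\,h_{(2)}h'_{(2)}S(h_{(3)})h_{(4)}$. Then $S(h_{(3)})h_{(4)}=\varepsilon(h_{(3)})$ collapses it to $\sum ((hh')_{(1)}\cdot m)(hh')_{(2)}=(hh')m$. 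Second, the left and right actions commute, $(hm)h'=h(mh')$. Here we again expand $h\cdot(mh')$ by equivariance, producing $\sum(h_{(1)}\cdot m)\,h_{(2)}h'S(h_{(3)})h_{(4)}$, and collapse as before.

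Third, the left action is compatible with the equivariant action, i.e. $k\cdot(hm)=\sum (k_{(1)}\cdot h)(k_{(2)}\cdot m)$, so that $M\otimes H\to M$ is $H$-linear on the left as well. We expand using equivariance and the fact that $\cdot$ is an action. We then use coassociativity and the antipode identity $\sum k_{(1)}hS(k_{(2)})$ to rearrange. This is the one computation needing care with indices, since one must insert $S(k_{(i)})k_{(i+1)}=\varepsilon$ at the right place. I expect it to be the main bookkeeping obstacle, though it is routine. Fourth, identity \eqref{eq: adjoint action} holds: $\sum h_{(1)}mS(h_{(2)})=\sum (h_{(1)}\cdot m)h_{(2)}S(h_{(3)})=\sum (h_{(1)}\cdot m)\varepsilon(h_{(2)})=h\cdot m$.

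For part (b) we argue symmetrically, now with \eqref{eq: right H-action}, using invertibility of $S$ and the identities $\sum h_{(2)}S^{-1}(h_{(1)})=\varepsilon(h)=\sum S^{-1}(h_{(2)})h_{(1)}$ in place of the antipode identities. Associativity of the right action becomes $(nh)h'=n(hh')$ after reversing the coproduct order. Weak equivariance of the left module $N$, in the form $h\cdot(h'n)=\sum(h_{(1)}\cdot h')(h_{(2)}\cdot n)$, plays the role it did in (a). Finally, \eqref{eq: adjoint action} follows by writing $\sum h_{(1)}nS(h_{(2)})$ with the new right action. This gives $\sum h_{(1)}S(h_{(3)})\,(S^{-1}S(h_{(2)})\cdot n)$. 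Moving the equivariant action past via equivariance and collapsing the resulting $S$–$S^{-1}$ pairs yields $h\cdot n$. Alternatively, one can deduce (b) from (a) applied to $N$ viewed as a right module over $H^{\mathrm{op},\mathrm{cop}}$, whose antipode is $S^{-1}$; this formal reduction is the cleaner route and is the one I would try first, checking only that the adjoint actions match under this identification.
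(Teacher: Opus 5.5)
Your route is the paper's own: the paper only asserts that the bimodule axioms and equivariance conditions are straightforward Sweedler checks, and your four computations for part (a) are correct. Part (b), however, contains two concrete slips.

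\emph{The identity $h\cdot n=\sum h_{(1)}nS(h_{(2)})$ in (b).} You expanded $nS(h_{(2)})$ as though $S$ were comultiplicative. Since $\Delta(S(y))=\sum S(y_{(2)})\otimes S(y_{(1)})$, the new right action gives $nS(y)=\sum S(y_{(1)})\,(y_{(2)}\cdot n)$. Hence $\sum h_{(1)}nS(h_{(2)})=\sum h_{(1)}S(h_{(2)})\,(h_{(3)}\cdot n)=h\cdot n$ directly by the antipode axiom, and no equivariance is needed. The expression you wrote, $\sum h_{(1)}S(h_{(3)})\,(h_{(2)}\cdot n)$, is not $h\cdot n$ in general, and no use of equivariance repairs it. For example, take $N=H$ with left multiplication and the adjoint action, in $U_q(\mathfrak{sl}_2)$ with $\Delta(E)=E\otimes 1+K\otimes E$ and $KE=q^2EK$. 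With $h=E$, $n=K$ your expression equals $(1-q^2)q^2EK^2$, whereas $E\cdot K=(1-q^2)EK$.

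\emph{The reduction to (a).} $H^{\mathrm{op},\mathrm{cop}}$ has antipode $S$, not $S^{-1}$. It is $H^{\mathrm{op}}$ (with the same coproduct $\Delta$) whose antipode is $S^{-1}$, and that is the Hopf algebra the reduction needs.
\begin{itemize}
\item Regard $N$ as a right $H^{\mathrm{op}}$-module via $n*k:=kn$.
\item Since $\cdot$ is not an $H^{\mathrm{op}}$-action, use the twisted action $h\cdot' n:=S^{-1}(h)\cdot n$.
\item The adjoint action of $H^{\mathrm{op}}$ is $k\mapsto\sum S^{-1}(h_{(2)})kh_{(1)}=S^{-1}(h)\cdot_{\mathrm{ad}}k$.
\item Because $S^{-1}$ is anti-comultiplicative, the flip $N\otimes H\cong H\otimes N$ intertwines the two tensor-product actions. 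So weak equivariance transfers in both directions.
\end{itemize}
Then (a) produces the left $H^{\mathrm{op}}$-action $\sum (h_{(1)}\cdot' n)*h_{(2)}=\sum h_{(2)}\bigl(S^{-1}(h_{(1)})\cdot n\bigr)$, which is exactly the formula in (b). The identity from (a) reads $S^{-1}(h)\cdot n=\sum S^{-1}(h_{(2)})\,n\,h_{(1)}$; replacing $h$ by $S(h)$ gives $h\cdot n=\sum h_{(1)}nS(h_{(2)})$. With $\Delta^{\mathrm{cop}}$ instead, the coproduct factors come out interchanged, and neither the formula nor the adjoint actions match. So checking only that the adjoint actions agree is not enough: you must use $H^{\mathrm{op}}$, twist the equivariant action by $S^{-1}$, and check the flip.
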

\begin{proof}(a) One needs to show that \eqref{eq: left H-action} defines a $H$-bimodule structure on $M$  then check the equivariant conditions. These verifications are straightforward. (b) The proof is the same.
\end{proof}

\subsubsection{}\label{append: R-module H}Similarly, for a $H$-module algebra $R$, we can form the category $R\Rmod^H,$ $R\Lmod^H$ and $R\Bimod^H$. For $H$-modules $M$ and $N$, there are two $H$-module structures  on $\Hom_k(M,N)$
\begin{enumerate}
    \item[(A1)] $(hf)(m)=\sum h_{(1)}f(S(h_{(2)})m)$ for $h \in H, f\in \Hom_k(M,N)$ and $m\in M$.
    \item[(A2)] $(hf)(m)=\sum h_{(2)}f(S^{-1}(h_{(1)})m)$ for $h \in H, f\in \Hom_k(M,N)$ and $m\in M$.
\end{enumerate}
It turns out that for $M,N \in R\Rmod^H$ then $(A1)$ equips $\Hom_{R\Rmod}(M,N)$ with a $H$-module structure but $(A2)$ does not. On the other hand, for $M,N \in R\Lmod^H$ then $(A2)$ equips $\Hom_{R\Lmod}(M,N)$ with a $H$-module structure but $(A1)$ does not. So for $M,N \in R\Bimod^H$, it is not clear how to equip $\Hom_{R\Bimod}(M,N)$ with a $H$-module structure.

\subsection{Proof of Lemma \ref{lem: complete HCq is abelian}} \label{ssec: proof of abelian}  
We will need the following lemma
\begin{Lem}\label{lem: Criterion for HCq}  Assume $M$ is a $\cU_q$-equivariant right $U_q^{fin, \utheta}$-module such that
\begin{itemize}
    \item $M$ is a rational $\cU_q(\g)$-module. 
    \item The quotient $M/\hbar M$ belongs to $U_\e^{fin, \utheta}\rmod^{G_\e}$.
    \item For any $V_q \in \Rep(\cU_q(\g))$, the space $\Hom_{\cU_q}(V_q, M)$ is a finitely generated module over $\CW_q^{\wedge_{\utheta}}$.
\end{itemize}
Then $M$ is finitely generated over $U_q^{fin, \utheta}$ so that it belongs to $U_q^{fin,\utheta}\rmod^{G_q}$.
\end{Lem}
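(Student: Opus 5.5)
The strategy is to reduce to the special fiber and then lift generators along the $\hbar$-adic filtration, with the Hom-finiteness hypothesis supplying completeness. First, I find finitely many generators modulo $\hbar$. Since $M/\hbar M\in U_\e^{fin,\utheta}\rmod^{G_\e}$ and $\Rep(\cU_\e(\g))$ has enough projectives (Proposition \ref{prop: proj in Rep(Ue(g))}), there is a projective $V_\e\in\Rep^{fd}(\cU_\e(\g))$ and a surjection $V_\e\otimes_\BC U_\e^{fin,\utheta}\twoheadrightarrow M/\hbar M$. By Proposition \ref{prop: proj in Rep(Uq(g))}(b), I would choose instead a projective $V_q=\St_q\otimes_{\BC[[\hbar]]}N_q$, free over $\BC[[\hbar]]$, whose reduction covers $V_\e$. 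Projectivity of $V_q$ in $\Rep(\cU_q(\g))$ then lets me lift $V_q\to M/\hbar M$ to a $\cU_q$-map $V_q\to M$. This yields a morphism $\phi:V_q\otimes_{\BC[[\hbar]]}U_q^{fin,\utheta}\to M$ of equivariant right modules whose reduction mod $\hbar$ is surjective. Let $N$ be the image of $\phi$.

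Next, I show that $M=N+\hbar^kM$ for all $k$. Because $M=N+\hbar M$, iterating gives $M=N+\hbar^k M$ for every $k$. To conclude that $M=N$, I pass to Hom spaces. For any projective $W_q\in\Rep^{fd}(\cU_q(\g))$, set $H(X):=\Hom_{\cU_q}(W_q,X)$. Exactness of $H$ gives $H(M)=H(N)+\hbar^kH(M)$. By hypothesis, $H(M)$ is a finitely generated module over the complete local ring $\CW_q^{\wedge_\utheta}$, and $\hbar$ lies in its maximal ideal. Krull's theorem (equivalently, Lemma \ref{lem fg quotient imply fg}(a)) then shows that $H(M)$ is $\hbar$-adically separated and that the submodule $H(N)$ is closed. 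Hence $H(M)=\bigcap_k\bigl(H(N)+\hbar^kH(M)\bigr)=H(N)$, using that $H(N)$ is finitely generated as a submodule of a Noetherian module.

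Since every object of $\Rep(\cU_q(\g))$ is a quotient of projectives in $\Rep^{fd}$ (Proposition \ref{prop: proj in Rep(Uq(g))}(c)), I have $H(M/N)=H(M)/H(N)=0$ for all such $W_q$, and therefore $M/N=0$. Thus $M=N$ is finitely generated, and equivariance and rationality are given by the hypotheses.

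The main obstacle is justifying the closedness step, namely $\bigcap_k(H(N)+\hbar^kH(M))=H(N)$. This needs $H(M)$ to be a finitely generated module over a Noetherian complete local ring. $\CW_q^{\wedge_\utheta}$ is such a ring, being a flat deformation of the complete local Noetherian ring $\CW_\e^{\wedge_\utheta}$. Once that is granted, the statement is standard Artin--Rees. A subtle point is that only $\hbar$-adic (not $\fJ_\utheta$-adic) control on $M$ itself is available, which is why the argument is routed through Hom spaces rather than through $M$ directly.
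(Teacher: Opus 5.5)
Your proposal is correct and follows the paper's route: lift a generating map $V_q\otimes_{\BC[[\hbar]]}U_q^{fin,\utheta}\to M$ from the special fiber using projectivity of $V_q$, then check surjectivity after applying $\Hom_{\cU_q}(V'_q,-)$ for projective $V'_q$. In that step, finite generation over the complete local ring $\CW_q^{\wedge_\utheta}$ upgrades surjectivity mod $\hbar$ to actual surjectivity. Your Krull-intersection step is just Nakayama applied to $H(M)/H(N)$, so the iteration $M=N+\hbar^kM$ and the closedness discussion can be dropped.
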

\begin{proof}There is a projective object $V_q \in \Rep^{fd}(\cU_q(\g))$ with the following commutative diagram 
\[ \begin{tikzcd}  & V_q \otimes_{\BC[[\hbar]]}U_q^{fin, \utheta} \arrow[dl, dashed] \arrow[d, two heads]&\\
M \arrow[r]& M/\hbar M
\end{tikzcd}
\]
We  want to show that the map $V_q\otimes_{\BC[[\hbar]]} U_q^{fin, \utheta}\rightarrow M$ is surjective. It is enough to show that for any projective $V'_q \in \Rep^{fd}(\cU_q(\g))$, the following map is surjective:
\begin{equation}\label{eq: equation1}
 \Hom_{\cU_q}(V'_q, V_q\otimes_{\BC[[\hbar]]} U_q^{fin, \utheta}) \rightarrow \Hom_{\cU_q}(V'_q, M).
\end{equation}
Note that the induced map 
\[ \begin{tikzcd}\Hom_{\cU_q}(V'_q, V_q\otimes_{\BC[[\hbar]]}U_q^{fin, \utheta})/\hbar\Hom_{\cU_q}(V'_q, V_q\otimes_{\BC[[\hbar]]}U_q^{fin, \utheta})\arrow[d, "\cong"] \arrow[r]&  \Hom_{\cU_q}(V'_q, M)/\hbar \Hom_{\cU_q}(V'_q, M)\arrow[d, "\cong"],\\
\Hom_{\cU_q}(V'_q, V_q\otimes_{\BC[[\hbar]]}U_q^{fin, \utheta}/\hbar V_q\otimes_{\BC[[\hbar]]}U_q^{fin, \utheta})\arrow[r]&\Hom_{\cU_q}(V'_q, M/\hbar M)
\end{tikzcd}\]
is surjective by  construction, where the two vertical maps are isomorphisms since $V'_q$ is projective in $\Rep^{fd}(\cU_q(\g))$. On the other hand, both $\Hom_{\cU_q}(V'_q, V_q\otimes_{\BC[[\hbar]]} U_q^{fin, \utheta})$ and $\Hom_{\cU_q}(V'_q, M)$ are finitely generated over $\CW_q^{\wedge_\utheta}$, hence complete and seperated in the $\hbar$-adic topology. Therefore, \eqref{eq: equation1} is surjective.
\end{proof}
\begin{proof}[Proof of Lemma \ref{lem: complete HCq is abelian}]  It is obvious that $\HC_\e(\utheta, \utheta')$ is abelian since $U^{fin, \utheta'}_\e$ is  Noetherian. Let us prove the statement for $\HC_q(\utheta, \utheta')$.  Let $M\in \HC_q(\utheta, \utheta')$. It is enough to show that if $N$ is a subobject of $M$  in $U_q^{fin, \utheta'}\Rmod^{G_q}$ then $N$ belongs to $ U_q^{fin, \utheta'}\rmod^{G_q}$.

{\it Step 1:} For any $V_q\in \Rep(\cU_q)$, we have an inclusion 
\[ \Hom_{\cU_q}(V_q, N) \hookrightarrow \Hom_{\cU_q}(V_q, M).\]
Hence, $\Hom_{\cU_q}(V_q, N)$ is a finitely generated right module over $\CW_q^{\wedge_{\utheta'}}$ for all $V_q\in \Rep(\cU_q)$. 

{\it Step 2:} We will show that $N/\hbar N \in U_\e^{fin, \utheta'}\rmod^{G_\e}$. We have a short exact sequence in $U_q^{fin, \utheta'}\Rmod^{G_q}$
\[ 0\rightarrow N \rightarrow M \rightarrow P \rightarrow 0.\]

{\it Step 2.1:} Suppose that  $P$ is torsion free over $\BC[[\hbar]]$. Then we have a short exact sequence in $U_\e^{fin, \utheta'}\Rmod^{G_\e}$
\[ 0 \rightarrow N/\hbar N \rightarrow M/\hbar M \rightarrow P/\hbar P \rightarrow 0.\]
Since $M/\hbar M \in U_\e^{fin, \utheta'}\rmod^{G_\e}$ and the latter category is abelian, it follows that $N/\hbar N \in U_\e^{fin, \utheta'}\rmod^{G_\e}$. In this case, $N$ satisfies the conditions of Lemma \ref{lem: Criterion for HCq}, hence $N \in U_q^{fin, \utheta'}\rmod^{G_q}$.

{\it Step 2.2:} For general $P$, let $\text{Tor}(P)$ be the $\hbar$-torsion part of $P$. Let $M'=\phi^{-1}(\text{Tor}(P))$ then we have short exact sequences in $U_q^{fin, \utheta'}\Rmod^{G_q}$
\begin{align}
    \label{eq: equation2} 0\rightarrow N \rightarrow M' \rightarrow \text{Tor}(P)\rightarrow 0 \\
    \label{eq: equation3} 0\rightarrow M'\rightarrow M \rightarrow P/\text{Tor}(P)\rightarrow 0.
\end{align}
Since $P/\text{Tor}(P)$ is torsion free over $\BC[[\hbar]]$, by Step $2.1$, we have that $M'\in U_q^{fin, \utheta'}\rmod^{G_q}$. Therefore, $\text{Tor}(P)$ also belongs to $U_q^{fin, \utheta'}\rmod^{G_q}$, in particular, $\text{Tor}(P)$ is finitely generated over $U_q^{fin,\utheta'}$. This implies that there is $k \in \BZ_{\geq 0}$ such that $\hbar^k \text{Tor}(P)=0$, and then $\text{Tor}_{1}(P):=\{ p \in P| \hbar p=0\}$ is finitely generated over $U_\e^{fin, \utheta'}$. The short exact sequence \eqref{eq: equation2} gives us a long exact sequence:
\[ \dots \text{Tor}_1(M')\rightarrow \text{Tor}_1(P) \rightarrow N/\hbar N \rightarrow M' /\hbar M' \rightarrow \text{Tor}(P)/\hbar \text{Tor}(P)\rightarrow 0.\]
Since $M/\hbar M $ and $\text{Tor}_1(P)$ are finitely generated over $U_\e^{fin, \utheta'}$, we see that $N/\hbar N$ is finitely generated over $U_\e^{fin, \utheta'}$, hence $N/\hbar N \in U_\e^{fin, \utheta'}\rmod^{G_\e}$.

{\it Step 3:} By Lemma \ref{lem: Criterion for HCq} with Step 1 and Step 2, we conclude that $N \in U_q^{fin, \utheta'}\rmod^{G_q}$.
\end{proof}

\subsection{Proof of Lemma \ref{lem: equal isogeny}}\ \label{append: equal isogeny} 

{\it Conjugacy classes.} Let $G$ be a {\em simply connected} semisimple algebraic group. We give a nonexhaustive list of some geometric facts about the conjugacy action of $G$ on itself from \cite{St74}.

\begin{Prop}\label{prop: properties of conjugacy classes} Let $G$ act on itself via conjugation and consider  the categorical quotient map $\pi: G\rightarrow G$$\sslash$$ G$. Let $F$ be the  fiber of any closed point $p$ in $G$$\sslash$$G$.

\noindent
(a) There is a natural isomorphism $\BC[G]^G\cong \BC[T]^W$. Furthermore, $G$$\sslash$$G \cong \mathbb{A}^r$, the affine space of dimension $r$ equal to the rank of the Lie algebra $\g$.

 \noindent
 (b) $F$ is a closed, irreducible and normal subvariety of codimension $r$ in $G$. Let $\m_p$ be the maximal ideal of $\BC[G$$\sslash$$G]$ corresponding to $p$ then the defining ideal of $F$ is $\m_p \BC[G]$.

 \noindent
(c)  $F$ contains a unique class of regular elements. This class is open and dense in $F$ and its complement has codimension $ \geq 2$.

\noindent
(d) There is {\em Steinberg section} $S$  that parametrizes conjugacy classes of regular elements and is contained in the regular locus of $G$. The restriction of $\pi$ to $S$ is an isomorphism $S \iso G$$\sslash$$G$.

\end{Prop}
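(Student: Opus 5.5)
These four facts are classical results of Steinberg on the adjoint quotient of a simply connected semisimple group \cite{St74}. The plan is to derive them in the order (a), (d), (c), (b), with each step feeding the next, rather than reprove them from scratch.

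For (a), restriction of functions $\BC[G]^G\to\BC[T]^W$ is injective because semisimple elements are dense and every semisimple element is conjugate into $T$. Its image is $W$-invariant because $N_G(T)$ acts on $T$ through $W$. For surjectivity I use the characters $\chi_{\w_i}$ of the fundamental representations. Their restrictions to $T$ are $e^{\w_i}+(\text{lower terms})$. Since $G$ is simply connected, $\BC[T]^W=\BC[P]^W$ is a polynomial ring on these restrictions, by the standard argument over the dominance order. Hence $G\sslash G\cong\mathbb{A}^r$ with coordinates $\chi_{\w_1},\dots,\chi_{\w_r}$.

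For (d), I take Steinberg's cross-section
\[ S=\prod_{i=1}^r U_{\a_i}\dot s_i, \]
built from a Coxeter element $s_1\cdots s_r$. The key computation, which is the technical heart, is that $\chi_{\w_i}$ restricted to $S$ is a unitriangular polynomial map in the root-subgroup coordinates. This makes $\pi|_S\colon S\to\mathbb{A}^r$ an isomorphism. One then checks that every element of $S$ is regular, i.e.\ has centralizer of dimension $r$.

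For (c) and (b), I first show that $\pi$ is flat with fibers of pure dimension $\dim G-r$. The source $G$ is smooth, the target $\mathbb{A}^r$ is smooth, and all fibers have the same dimension: each contains the unipotent-cone-type component after translating by the semisimple part, and the unipotent variety has dimension $2N$. Miracle flatness then gives flatness. Each fiber $F=\pi^{-1}(p)$ is therefore a complete intersection cut out by $\m_p\BC[G]$, so it is Cohen--Macaulay of codimension $r$. Via the Jordan decomposition, the regular elements of $F$ form a single orbit, namely the orbit of $S\cap F$. This orbit has dimension $\dim G-r$, so it is open and dense. Irreducibility follows from this together with density.

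The main obstacle is codimension $\ge2$ of the non-regular locus in $F$. I would reduce, via the centralizer of the semisimple part, to the unipotent variety of a reductive subgroup. There the subregular class has codimension $2$ in the nilpotent cone, by dimension counts of Springer fibers. Granting this, $F$ is Cohen--Macaulay and smooth in codimension $1$ along the regular orbit, so Serre's criterion $R_1+S_2$ gives normality. Normality in turn shows that $\m_p\BC[G]$ is radical, hence equals the full defining ideal of $F$. This also completes (b).
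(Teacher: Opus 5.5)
The paper does not prove this proposition; it imports (a)--(d) wholesale from Steinberg \cite{St74}. Your outline reconstructs that classical argument: fundamental characters for (a), the Coxeter cross-section with a triangular computation of the $\chi_{\w_i}$ for (d), Jordan decomposition plus uniqueness of the regular unipotent class for (c), and Serre's criterion for normality. The architecture is right, but two steps need repair.

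First, the fiber-dimension count. Let $p=\pi(s)$ with $s$ semisimple and set $H:=Z_G(s)$, which is connected reductive of rank $r$ because $G$ is simply connected. Then $F$ is the bijective image of $G\times^{H}s\,\mathrm{Uni}(H)$ under $[h,u]\mapsto hsuh^{-1}$, and $\mathrm{Uni}(H)$ has dimension $\dim H-r$. Your figure $2N$ is only correct when $s$ is central. This description gives $\dim F=\dim G-r$, and it also gives irreducibility of $F$ directly. As written, you call the regular class dense merely because it has dimension $\dim F$. That only makes its closure \emph{a} component. Density comes from irreducibility, or from your later codimension bound combined with purity of the complete intersection, so the deduction must run in that order.

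Second, Serre's criterion must be applied to the scheme $\Spec\BC[G]/\m_p\BC[G]$, since that is what you proved Cohen--Macaulay. Condition $R_1$ then requires this scheme to be regular along the regular class; smoothness of the reduced orbit is automatic and does not suffice. The missing input is that $\pi$ is smooth at every regular element, which you can extract from (d):
\begin{itemize}
\item since $\pi|_S$ is an isomorphism, $d\pi_x$ is surjective for every $x\in S$;
\item conjugation invariance of $\pi$ propagates this to $G\cdot S$;
\item $G\cdot S$ is the whole regular locus, because the unique regular class in each fiber contains the regular point $S\cap F$.
\end{itemize}
With this, $R_1$ follows from $\mathrm{codim}_F(F\setminus F^{\mathrm{reg}})\ge 2$, and normality and radicality of $\m_p\BC[G]$ follow as you say.

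For the codimension bound, your Springer-fiber count works: inside $H$, the class of $u$ has codimension $2\dim\mathcal{B}_u$ in $\mathrm{Uni}(H)$, and there are finitely many unipotent classes. Steinberg's route is more elementary. An element of the unipotent radical of a Borel subgroup is regular iff all its simple-root coordinates are nonzero. So irregular unipotents are conjugate into the unipotent radical $U_{P_\a}$ of some minimal parabolic $P_\a$. The variety $G\times^{P_\a}U_{P_\a}$ has dimension $2(N-1)$, two less than the unipotent variety, and the same argument works inside $H$.
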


We need the following technical result from  \cite[$\mathsection 3.2$]{IL11} : Let $G$ be a simply connected semisimple algebraic group. Let $H$ be a subgroup of $G$ such that $G/H$ is a quasi-affine and $\BC[G/H]$ is finitely generated. Let $x$ be a point of $G/H$ whose stablizer in $G$ is $H$. Let $H^0$ be the identity component of $H$ and denote by $C(x)=H/H^0$  the component group of $H$. Consider the natural map $\phi: G/H^0\rightarrow G/H$. Let $M$ be a $G$-equivariant vector bundle on $G/H$. The completion $M^{\wedge_x}$ carries natural actions of $\g$ and $H$. Denote this $H$-action by $\rho$. Integrating the $\g$-action on the locally $\g$-finite part $M^{\wedge_x}_{\g\dash fin}$ into the $G$-action then restricting to $H$ we get another $H$-action on $M^{\wedge_x}_{\g\dash fin}$. Denote this action by $\rho'$. Then $\sigma(h)=\rho(h)\rho'(h^{-1})$ defines a new $H$-action on $M^{\wedge_x}_{\g\dash fin}$ which commutes with the  $G$-action. Furthermore, $\sigma(H^0)$ acts trivially so that we have an action of $C(x)$ on $M^{\wedge_x}_{\g\dash fin}$, hence we can define the $C(x)$-invariant part $M^{\wedge_x, C(x)}_{\g\dash fin}$. 
\begin{Lem}[ Proposition $3.2.3$ \cite{IL11}]\label{lem: homogenous section} $M^{\wedge_x}_{\g\dash fin}\cong \Gamma(G/H^0, \phi^*M)$ and $M^{\wedge_x, C(x)}_{\g\dash fin}\cong \Gamma(G/H, M)$.
\end{Lem}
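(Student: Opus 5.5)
The plan is to reduce both isomorphisms to a Frobenius reciprocity statement, computed isotypic component by isotypic component. Write $V:=M_x$ for the fiber at $x$; it is a finite-dimensional $H$-representation and $M\cong G\times_H V$. Then
\[\Gamma(G/H,M)\cong \big(\BC[G]\otimes V\big)^H, \qquad \Gamma(G/H^0,\phi^*M)\cong \big(\BC[G]\otimes V\big)^{H^0}.\]
Both are rational $G$-modules, since $\BC[G/H]$ is finitely generated and $G/H$ is quasi-affine. By Frobenius reciprocity, for every finite-dimensional $G$-module $U$ we have
\[\Hom_G\big(U,\Gamma(G/H^0,\phi^*M)\big)\cong \Hom_{H^0}(U,V), \qquad \Hom_G\big(U,\Gamma(G/H,M)\big)\cong \Hom_H(U,V).\]
So it suffices to compute $\Hom_\g(U,M^{\wedge_x})$ for finite-dimensional $U$, and to compare the resulting multiplicity spaces compatibly with evaluation at $x$.

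\textbf{Step 1 (formal reciprocity).} Since $G/H$ is homogeneous and smooth at $x$, the formal neighborhood of $x$ is the formal quotient $\hat G/\hat H$. Accordingly, $M^{\wedge_x}$ is the formally coinduced module from the $\mathfrak h$-module $V$ to $\g$. I will show that evaluation at $x$ gives a bijection
\[\Hom_\g(U,M^{\wedge_x})\;\iso\;\Hom_{\mathfrak h}(U,V).\]
For injectivity: if a $\g$-map $f\colon U\to M^{\wedge_x}$ has all values vanishing at $x$, then the Taylor coefficients of $f(u)$ are obtained by applying $U(\g)$ to $f(u)$ and then evaluating at $x$, so they all vanish. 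For surjectivity: integrate a given $\mathfrak h$-map $U\to V$ formally, via $u\mapsto\big(\exp(\xi)\mapsto \xi^{-1}u\big)$ on $\hat G$. Because $H^0$ is connected, $\Hom_{\mathfrak h}(U,V)=\Hom_{H^0}(U,V)$.

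\textbf{Step 2 (first isomorphism).} The $\g$-finite part $M^{\wedge_x}_{\g\dash fin}$ is the sum of the images of all finite-dimensional $U\to M^{\wedge_x}$. Since $G$ is simply connected, each finite-dimensional $\g$-module integrates to a $G$-module. Hence $M^{\wedge_x}_{\g\dash fin}$ is a rational $G$-module, and it has the same multiplicity spaces as $\Gamma(G/H^0,\phi^*M)$. For the comparison map I take Taylor expansion at $x$, $\Gamma(G/H^0,\phi^*M)\to M^{\wedge_x}$, which exists because $\phi$ is étale. It is $\g$-equivariant and injective: a section vanishing to infinite order at $x$ vanishes on the connected component through $x$, and then on all of $G/H^0$ by $G$-equivariance. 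Under the two reciprocity isomorphisms above, this map induces the identity on $\Hom_{H^0}(U,V)$, so it is an isomorphism onto the $\g$-finite part.

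\textbf{Step 3 (the $C(x)$-action).} I will check that on multiplicity spaces the twisted action $\sigma(h)=\rho(h)\rho'(h^{-1})$ becomes $f\mapsto \rho_V(h)\circ f\circ\rho_U(h)^{-1}$ on $\Hom_{H^0}(U,V)$. By construction this action is trivial on $H^0$, and its invariants are exactly $\Hom_H(U,V)$. These are the multiplicity spaces of $\Gamma(G/H,M)$, which gives the second isomorphism, realized as the restriction of Taylor expansion to $\Gamma(G/H,M)\subset\Gamma(G/H^0,\phi^*M)$.

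The main obstacle is Step 1, together with showing that the $\g$-finite part decomposes as a direct sum of its isotypic components rather than a completed product. I would handle the latter by using that each $\Hom_\g(U,M^{\wedge_x})$ is finite-dimensional (it embeds into $\Hom(U,V)$) and that the sum of images of $\g$-equivariant maps from finite-dimensional modules is, by definition, a direct sum of isotypic pieces.
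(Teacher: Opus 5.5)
Your argument is correct. The paper gives no proof of this lemma; it imports it from \cite[Proposition 3.2.3]{IL11}, so there is no in-text route to compare with. What you have written is a complete, self-contained proof along standard lines:
\begin{itemize}
\item both $\Gamma(G/H^0,\phi^*M)$ and $M^{\wedge_x}_{\g\dash fin}$ are semisimple $G$-modules;
\item evaluation at $x$ identifies their multiplicity spaces with $\Hom_{H^0}(U,V)$, compatibly with Taylor expansion;
\item the $C(x)$-statement reduces to your correct computation that $\sigma(h)$ acts on $\Hom_{H^0}(U,V)$ by $\psi\mapsto\rho_V(h)\circ\psi\circ\rho_U(h)^{-1}$, whose invariants are $\Hom_H(U,V)$.
\end{itemize}

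Some simplifications are available.

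\textbf{Step 1.} You only need two facts:
\begin{itemize}
\item $\mathrm{ev}_x(\eta s)=\rho_V(\eta)\,\mathrm{ev}_x(s)$ for $\eta\in\mathfrak h$;
\item injectivity of $\mathrm{ev}_x$ on $\Hom_\g(U,M^{\wedge_x})$, which holds because jets at $x$ are detected by $U(\g)$, as $\g$ spans $T_x(G/H)$.
\end{itemize}
Surjectivity then comes for free. Given $f\in\Hom_\g(U,M^{\wedge_x})$, Frobenius reciprocity produces $F\in\Hom_G(U,\Gamma(G/H^0,\phi^*M))$ with $\mathrm{ev}_{x}\circ T\circ F=\mathrm{ev}_x\circ f$. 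Then $f-T\circ F$ lies in the kernel of $\mathrm{ev}_x$, so $f=T\circ F$. The identification of $M^{\wedge_x}$ with a formally coinduced module, and the formal integration, are therefore unnecessary.

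\textbf{Your last paragraph.} The ``main obstacle'' there is not one. Finite-dimensional $\g$-modules are semisimple, so the $\g$-finite part is automatically $\bigoplus_U U\otimes\Hom_\g(U,M^{\wedge_x})$.

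\textbf{Injectivity of $T$.} Irreducibility of $G/H^0$ already suffices. Alternatively, $\ker T$ is $\g$-stable, hence $G$-stable, so its sections vanish to infinite order at every point. Your phrase ``by $G$-equivariance'' is only justified in this second reading.

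\textbf{Hypotheses used.} Your proof never uses that $G/H$ is quasi-affine or that $\BC[G/H]$ is finitely generated. Global sections of an equivariant sheaf are always a rational $G$-module. The hypothesis that genuinely matters is simple connectedness of $G$, which you use to integrate the $\g$-finite part. Without it the first isomorphism already fails for $H=\{1\}$ and $M=\CO_G$: there the $\g$-finite part of $\BC[G]^{\wedge_e}$ is $\BC[\tilde G]\supsetneq\BC[G]$, where $\tilde G$ is the universal cover.
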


Let $\chi$ be a regular element in $G$ and $\underline{\chi}$  be the image of $\chi$ under the map  $\pi: G\rightarrow G$$\sslash$$G$. Let $\BC[G]^{\wedge_\chi}, \BC[G$$\sslash$$G]^{\wedge_{\underline{\chi}}}$ be the completions of $\BC[G], \BC[G$$\sslash$$G]$ at the closed points $\chi, \underline{\chi}$, respectively.  Denote $I_\chi:=\m_{\uchi} \BC[G]$ and $\BC[G]^{\wedge_{I_\chi}}$ the completions of $\BC[G]$ with respect to the ideal $I_\chi$.

The conjugation action of $G$ on $G$ gives rise to an action of $\g$ on $\BC[G]$. This  action of $\g$ on $\BC[G]$ extends to a $\g$-action on $\BC[G]^{\wedge_\chi}$ by continuity. Let $\BC[G]^{\wedge_\chi}_{\g \dash fin}$ be the locally finite part of this $\g$-action. Integrate the $\g$-action into the $G$-action on $\BC[G]^{\wedge_\chi}_{\g \dash fin}$, and let $\BC[G]^{\wedge_\chi, Z}_{\g \dash fin}$ denote the $Z(G)$-invariant part. We have a natural map $\BC[G]\otimes_{\BC[G\sslash G]} \BC[G$$\sslash$$G]^{\wedge_{\underline{\chi}}} \rightarrow \BC[G]^{\wedge_\chi,Z}_{\g \dash fin}$.
\begin{Lem}\label{lem: fin part C[G] complete at chi}Suppose the natural map $Z(G)\rightarrow C(\chi)$ is surjective. Then the natural map $\BC[G]\otimes_{\BC[G\sslash G]} \BC[G$$\sslash$$G]^{\wedge_{\underline{\chi}}} \rightarrow \BC[G]^{\wedge_\chi,Z(G)}_{\g \dash fin}$ is an isomorphism.
\end{Lem}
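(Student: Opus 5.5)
We reduce the statement to Lemma \ref{lem: homogenous section} applied to the regular conjugacy class. Set $F:=\pi^{-1}(\uchi)$. By Proposition \ref{prop: properties of conjugacy classes}(c), $F$ contains the open dense orbit $O:=G\chi\cong G/H$ with $H=G_\chi$, and $F\setminus O$ has codimension $\geq 2$ in the normal variety $F$. The plan is to compare both sides modulo powers of $\m_{\uchi}$. Then we pass to the limit using the Artin--Rees type completeness properties.

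\emph{Step 1: the $k$-th infinitesimal neighbourhoods.} Fix $k\geq 1$ and put $A_k:=\BC[G]/\m_{\uchi}^k\BC[G]$. Each $\m_{\uchi}^j\BC[G]/\m_{\uchi}^{j+1}\BC[G]\cong \BC[F]\otimes \m_{\uchi}^j/\m_{\uchi}^{j+1}$, using Proposition \ref{prop: properties of conjugacy classes}(b) and the flatness of $\pi$ (it is flat since $G$ is Cohen--Macaulay and the fibers are equidimensional of codimension $r$ over the smooth base $\mathbb{A}^r$). Hence $A_k$ is a $G$-equivariant coherent module on $F$ filtered by copies of $\BC[F]$. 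Restrict $A_k$ to $O$; this gives a $G$-equivariant vector bundle $M_k$ on $G/H$, filtered by trivial bundles. Since $F$ is normal, $\BC[F]$ is $S_2$, and the complement of $O$ has codimension $\ge2$. By induction on the filtration, the restriction map $A_k\to \Gamma(O,M_k)$ is an isomorphism. We use the five lemma here, together with the vanishing of local cohomology $H^1_{F\setminus O}(\BC[F])=0$, which gives left exactness and surjectivity step by step.

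\emph{Step 2: identify $\Gamma(O,M_k)$.} By Lemma \ref{lem: homogenous section}, $\Gamma(G/H,M_k)\cong (M_k^{\wedge_\chi})^{C(\chi)}_{\g\dash fin}$. The completion of $M_k$ at $\chi$ along $O$ agrees with $\BC[G]^{\wedge_\chi}/\m_{\uchi}^k$. By hypothesis $Z(G)\to C(\chi)$ is surjective, and $Z(G)\subset H$ acts trivially by conjugation, so $\rho$ is trivial on $Z(G)$. Hence $\sigma(z)=\rho'(z^{-1})$, and taking $C(\chi)$-invariants equals taking $Z(G)$-invariants for the integrated $G$-action. We conclude that $A_k\cong (\BC[G]^{\wedge_\chi}/\m_{\uchi}^k)^{Z(G)}_{\g\dash fin}$, compatibly in $k$.

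\emph{Step 3: pass to the limit.} The left side of the lemma is $\varprojlim$ of $A_k$ restricted to $G$-finite parts. More precisely, $\BC[G]\otimes\BC[G\sslash G]^{\wedge_{\uchi}}$ decomposes into isotypic components. Each component has the form $V\otimes \Hom_G(V,\BC[G])\otimes\BC[G\sslash G]^{\wedge}$, and $\Hom_G(V,\BC[G])$ is finitely generated over $\BC[G]^G$. So each isotypic component is the inverse limit of its images in $A_k$.

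On the right side we take $\g$-finite, $Z$-invariant parts of $\BC[G]^{\wedge_\chi}$ componentwise. The $V$-isotypic component of the $\g$-finite part is $V\otimes\Hom_{\g}(V,\BC[G]^{\wedge_\chi})$. This is the inverse limit of $\Hom_{\g}(V,\BC[G]^{\wedge_\chi}/\m_{\uchi}^k)$, because $\BC[G]^{\wedge_\chi}$ is $\m_{\uchi}$-adically complete.

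We then compare both sides isotypic component by isotypic component, using Step 2, and the maps agree with the natural map. This passage to the limit is where we expect the main obstacle. Two things need care here. First, the limits must commute with taking $\g$-finite parts; this is handled by working isotypic component by isotypic component, where the Hom spaces are finite over the complete ring and Mittag-Leffler holds. Second, the $S_2$ extension in Step 1 must be carried out for the non-reduced thickenings.
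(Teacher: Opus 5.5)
Your strategy is the paper's: filter by powers of $\m_{\uchi}$, use Lemma \ref{lem: homogenous section} together with normality of $F$ and $\operatorname{codim}\geq 2$, then pass to the limit one isotypic component at a time. The extension statement $A_k\iso\Gamma$ in Step 1 and the limit argument in Step 3 are fine. The gap is the identification of $M_k$ as a $G$-equivariant vector bundle on $G/H$, on which Step 2 rests.

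For $k\geq 2$ the quotient $A_k=\BC[G]/\m_{\uchi}^k\BC[G]$ is not annihilated by $\m_{\uchi}\BC[G]$, so it is not a $\BC[F]$-module. Its restriction to the regular locus is an equivariant coherent sheaf on the $k$-th infinitesimal neighbourhood of $O$. It is not a sheaf of $\CO_O$-modules, so it is not a vector bundle on $G/H$; only the graded pieces of your filtration are. Correspondingly, $\BC[G]^{\wedge_\chi}/\m_{\uchi}^k$ is not a module over the completion of $\BC[O]$ at $\chi$. Lemma \ref{lem: homogenous section} concerns vector bundles on $G/H$, so it does not yield $\Gamma(O,M_k)\cong(\BC[G]^{\wedge_\chi}/\m_{\uchi}^k)^{Z(G)}_{\g\dash fin}$. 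A version of that lemma for non-reduced thickenings is exactly what is still missing.

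The repair, which is what the paper does, is to apply Lemma \ref{lem: homogenous section} only to the graded pieces and run the induction on the completion side. The pieces $\m_{\uchi}^k\BC[G]/\m_{\uchi}^{k+1}\BC[G]\cong\BC[F]\otimes\m_{\uchi}^k/\m_{\uchi}^{k+1}$ are free $\BC[F]$-modules with trivial $Z(G)$-action. So Lemma \ref{lem: homogenous section}, the surjectivity of $Z(G)\to C(\chi)$ and normality of $F$ give isomorphisms from each piece onto its $(\cdot)^{\wedge_\chi,Z(G)}_{\g\dash fin}$.

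Now compare the exact sequence $0\to\m_{\uchi}^k\BC[G]/\m_{\uchi}^{k+1}\BC[G]\to A_{k+1}\to A_k\to 0$ with its image under $(\cdot)^{\wedge_\chi,Z(G)}_{\g\dash fin}$. That image is only left exact: completion is exact, taking $\g$-finite parts is left exact, and invariants of the finite group $Z(G)$ are exact. A diagram chase with the two outer vertical maps isomorphisms shows the middle one is an isomorphism, so by induction $A_k\iso(A_k)^{\wedge_\chi,Z(G)}_{\g\dash fin}$.

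On this route the local cohomology argument for thickenings in your Step 1 is not needed, since normality is only used for free $\BC[F]$-modules. Your Step 3 then goes through as written.
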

\begin{proof}
Since $G$ is Cohen-Macaulay (indeed regular) and $I_\chi$ is generated by codim($I_\chi$) elements,   $I^k_\chi/I^{k+1}_\chi$ is a free module of finite rank over  $\BC[G]/I_\chi=\BC[\overline{G\chi}]$.

Let $M$ be a $G$-equivariant coherent sheaf on $\overline{G\chi}$ such that $Z(G)$ acts on $M$ trivially. Since the natural map $Z(G)\rightarrow C(\chi)$ is surjective,  we have an isomorphism $M^{\wedge_\chi, Z(G)}_{\g\dash fin} \cong M^{\wedge_\chi, C(\chi)}_{\g\dash fin}$. By Lemma $\ref{lem: homogenous section}$, we have $M^{\wedge_\chi, C(\chi)}_{\g\dash fin}\cong \Gamma(G\chi,M|_{G\chi})$. Note that $\overline{G\chi}$ is a normal variety by Proposition \ref{prop: properties of conjugacy classes}.b). Therefore, if $M$ is a free sheaf then $M^{\wedge_\chi, Z(G)}_{\g\dash fin}\cong \Gamma(\overline{G\chi}, M)$. Applying  this analysis to the free $\BC[\overline{G\chi}]$-module $I^k_\chi/I^{k+1}_\chi$ with trivial $Z(G)$-action  we have
\[ I^k_\chi/I^{k+1}_\chi \rightarrow (I^k_\chi/I^{k+1}_\chi)^{\wedge_\chi,Z(G)}_{\g \dash fin}\]
is an isomorphism of $\g$-modules for all $k \geq 0$, where we set $I^0_\chi=\BC[G]$.
Now consider the following commutative diagram in the category of $\g$-modules:
\[\begin{tikzcd} 0\arrow[r]& I^k_\chi/I^{k+1}_\chi \arrow[d]\arrow[r]& \BC[G]/I^{k+1}_\chi \arrow[d]\arrow[r]& \BC[G]/I^k_\chi \arrow[d]\arrow[r]&0\\
0\arrow[r]&  (I^k_\chi/I^{k+1}_\chi)^{\wedge_\chi,Z(G)}_{\g \dash fin} \arrow[r]& (\BC[G]/I^{k+1}_\chi)^{\wedge_\chi,Z(G)}_{\g \dash fin} \arrow[r]& (C[G]/I^k_\chi)^{\wedge_\chi,Z(G)}_{\g \dash fin} 
\end{tikzcd}\]
This diagram allows us to  inductively prove that the natural map
\[ \BC[G]/I^k_\chi \rightarrow (\BC[G]/I^k_\chi)^{\wedge_\chi,Z(G)}_{\g \dash fin}\]
is an isomorphism of $\g$-modules for all $k \geq 1$. 

 Let $V$ be any finite-dimensional representation of $\g$ that is a reprerentation of the adjoint group $G_{\text{ad}}$. Since $\m_{\uchi} \subset \m_\chi$, $\BC[G]^{\wedge_\chi}$ is $\m_{\uchi}$-adically complete. Moreover, $\BC[G]^{\wedge_\chi}/\m^k_{\uchi}\BC[G]^{\wedge_\chi} \cong (\BC[G]/I^k_\chi)^{\wedge_\chi}$. Therefore,  we have
\begin{align*}\Hom_\g(V, \BC[G]^{\wedge_\chi})&\cong \varprojlim \Hom_\g(V, (\BC[G]/I^k_\chi)^{\wedge_\chi})\\
							&\cong \varprojlim \Hom_\g(V, (\BC[G]/I^k_\chi)^{\wedge_\chi,Z(G)}_{\g \dash fin})\\
							&\cong \varprojlim \Hom_\g(V, \BC[G]/I^k_\chi) \\
                            &\cong \varprojlim \Hom_\g(V, \BC[G])/\Hom_\g(V, \BC[G])\m^k_{\uchi}\\
                            &\cong \Hom_\g\Big(V, \BC[G]\Big)\otimes_{\BC[G\sslash G]} \BC[G\sslash G]^{\wedge_{\uchi}}\\
                            & \cong \Hom_\g\Big(V, \BC[G] \otimes_{\BC[G\sslash G]} \BC[G\sslash G]^{\wedge_{\uchi}}\Big).
\end{align*}
In the second last isomorphism, we use that $\Hom_\g(V, \BC[G])$ is finitely generated over $\BC[G\sslash G]$. The last isomorphism holds since $\BC[G\sslash G]^{\wedge_{\uchi}}$ is flat over $\BC[G\sslash G]$.
\end{proof}

\begin{proof}[Proof of Lemma \ref{lem: equal isogeny}]Recall that $V_\e\in \Rep^{fd}(\cU_\e(\g))$ has weights contained in the root lattice $Q$. Since $U_\e^{ev\wedge_\chi}= U_\e^{fin \wedge_\chi}$, we need to show that  the following map is bijective:
\begin{equation}\label{eq: iso of invariant part}
    (V_\e^t \otimes U_\e^{fin, \uchi})^{\cU_\e} \rightarrow (V_\e^t \otimes U_\e^{fin \wedge_\chi})^{\cU_\e},
\end{equation}
here $V_\e^t$ is the right dual. Since $\Rep^{fd}(\cU_\e(\g))$ has enough projectives,  we can assume that $V_\e$ is projective, then $V^t_\e$ is also projective since projective objects in $\Rep^{fd}(\cU_\e(\g))$ are injective.

Let $Z^{fin}_{Fr}\Mod^{G_\e, Q}$ be the category of  finitely generated $Z^{fin}_{Fr}$-modules in $\Rep(\cU_\e(\g))$ whose weights are contained in the root lattice $Q$. 

\noindent
{\it Step 1:}    Since $V^t_\e$ is projective in $\Rep^{fd}(\cU_\e(\g))$ and $U^{fin}_\e$ is a finitely generated projective module over $Z^{fin}_{Fr}$ by Proposition \ref{prop: properties of Ufin}.b, one can show that $V^t_\e\otimes U^{fin}_\e$ is a projective object in $Z^{fin}_{Fr} \Mod^{G_\e, Q}$. So $V^t_\e \otimes U^{fin}_\e$ is a direct summand of $W_\e \otimes Z^{fin}_{Fr}$ for some $W_\e \in \Rep^{fd}(\cU_\e(\g))$ whose weights are contained in the root lattice $Q$. 

\noindent
{\it Step 2:}  We define the following two functors: 
\[ A:~ Z^{fin}_{Fr} \Mod^{G_\e, Q} \rightarrow \text{Vect}_\BC, \qquad \qquad  B: ~ Z^{fin}_{Fr} \Mod^{G_\e,Q} \rightarrow \text{Vect}_\BC\]
Let $M \in Z^{fin}_{Fr} \Mod^{G_\e, Q}$. Then $B(M):= (M\otimes_{Z_\cap} Z_\cap ^{\wedge_{\uchi}})^{\fu_\e}$. 

Let us define $A(M)$. First, we take the completion $ M^{\wedge_\chi}=M\otimes_{Z^{fin}_{Fr}} Z^{fin \wedge_\chi}_{Fr}$. Then take $\fu_\e$-invariants to get $(M^{\wedge_\chi})^{\fu_\e}$ which  is a module over $\cU_\BC(\g)$. Then take the $\g$-finite part $(M^{\wedge_\chi})^{\fu_\e}_{\g \dash fin}$. Finally, take  $Z(G)$-invariants and define $A(M):=((M^{\wedge_\chi})^{\fu_\e}_{\g\dash{fin}})^{Z(G)}$.

So we have a natural transformation $B(M) \rightarrow A(M)$.

\noindent
{ \it Step 3:}  Let $W_\e\in \Rep^{fd}(\cU_\e(\g))$ be such that the weights are contained in the root lattice $Q$. We will show that $B(W_\e\otimes Z^{fin}_{Fr})\rightarrow A(W_\e \otimes Z^{fin}_{Fr})$ is an isomorphism. Indeed we have
\[ B(W_\e\otimes Z^{fin}_{Fr})= (W_\e)^{\fu_\e} \otimes Z_{Fr}^{fin, \uchi}, \qquad \qquad A(W_\e\otimes Z^{fin}_{Fr})=(W_\e)^{\fu_\e} \otimes (Z_{Fr}^{fin \wedge_\chi})_{\g\dash fin}^{Z(G)},\]
here, since the weights of $W_\e$ are contained in $Q$, then the $\fu_\e$-invariant part $(W_\e)^{\fu_\e}$ is a rational representation of $\g$ with a trivial action of $Z(G)$.
Therefore, 
\[ \Big((W_\e)^{\fu_\e} \otimes (Z^{fin\wedge_\chi}_{Fr})_{\g\dash fin}\Big)^{Z(G)} \cong (W_\e)^{\fu_\e} \otimes (Z_{Fr}^{fin \wedge_\chi})_{\g\dash fin}^{Z(G)}.\] 

By Proposition \ref{prop: properties of Ufin}, $Z^{fin}_{Fr}\cong \BC[G]$. By Lemma \ref{lem: fin part C[G] complete at chi}, we have $Z^{fin, \uchi}_{Fr}\iso (Z^{fin \wedge_\chi}_{Fr})_{\g\dash fin}^{Z(G)}$. So, 
\[B(W_\e\otimes Z^{fin}_{Fr}) \iso A(W_\e \otimes Z^{fin}_{Fr}).\]

\noindent
{\it Step 4:} By Steps 1 and 3, we have 
\[ B(V^t_\e \otimes U^{fin, \uchi}_\e )\iso A(V^t_\e \otimes U^{fin, \uchi}_\e).\]
Note that \eqref{eq: iso of invariant part} is obtained from the above isomorphism by taking $\cU_\BC(\g)$-invariants, hence \eqref{eq: iso of invariant part} is an isomorphism. This completes the proof of the lemma.
\end{proof}


\subsection{Proof of Lemma \ref{lem: property of iota}}\label{appendix: Lemma iota} 
(a) is obvious. 

\noindent
(b) Let us fix the notation: for any $w\in W$ then 
\[ \Phi^+_w:=\{\a\in\Delta_+~|~ w(\a)\in \Delta_{+}\} \quad \quad \Phi^-_w :=\{ \a \in \Delta_+~|~ w(\a) \in \Delta_{-}\}.\]
Then $\Phi^+_w=\Phi^-_{w_0w}$ and $\Phi^-_w=\Phi^+_{w_0w}$.

\noindent {\em Step 1:}
$s_{\a_0}=s_{\b} t_{-\b}$ where $\b$ is the highest positive root. Let us  show that $\overline{s}_{\a_0}=\theta_\b \overline{s}^{-1}_\b$. 
\[ 1=\ell(s_{\a_0})=\ell(s_\b t_{-\b})=\sum_{\a \in \Phi^+_{s_\b}}|\< -\b, \a^\vee\>| +\sum_{\a \in \Phi^-_{s_\b}} |\< -\b, \a^\vee\> +1|,\]
therefore,
\begin{equation}\label{eq: sb}\Phi^+_{s_\b}=\{ \a\in \Delta_+~|~ \< \b, \a^\vee\>=0\}, \qquad \Phi^-_{s_\b}=\{ \a\in \Delta_+~|~\< \b, \a^\vee\>=1\} \cup \{ \b\}.
\end{equation}
It follows that $\ell(t_{\b})=\ell(s_\b)+1$, but $\b$ is dominant, hence $\theta_{\b}= \overline{t_{\b}}= \overline{s}_{\a_0}\overline{s}_\b$. So $\overline{s}_{\a_0}=\theta_{\b} \overline{s}^{-1}_\b$.




\noindent
 {\em Step 2:}  We have $w_0s_\b w_0=s_{w_0(\b)}=s_\b$, hence, $s_{\a_0}w_0 s_\b=w_0s_\b s_{\a_0}$.  Since $\ell(s_{\a_0} w_0 s_\b)=\ell(w_0s_\b s_{\a_0})=\ell(w_0 s_\b)+1$ by using \eqref{eq: sb},  we have
 \[\overline{s}_{\a_0} \overline{w_0}~\overline{s}^{-1}_{\b}= \overline{s}_{\a_0} \overline{w_0s_\b}=\overline{s_{\a_0}w_0 s_\b}= \overline{w_0s_\b s_{\a_0}}=\overline{w_0s_\b} ~\overline{s}_{\a_0} =\overline{w_0}~ \overline{s}^{-1}_\b \overline{s}_{\a_0}.\]
 In the other word, $\overline{s}_{\a_0}$ commutes with $\overline{w_0}~\overline{s}^{-1}_\b$. Now we have
 \[ \overline{w_0}^{-1} \overline{s}_{\a_0} \overline{w_0}= \overline{w_0}^{-1} \overline{s}_{\a_0} \overline{w_0}~ \overline{s}^{-1}_{\b} \overline{s}_\b=\overline{s}^{-1}_\b \overline{s}_{\a_0}\overline{s}_\b=\overline{s}^{-1}_\b \theta_\b=\iota(\overline{s}_{\a_0}).\]

(c) {\em Step 1:}  $b=wt_\lambda \in \Lambda$ if and only if $\ell(wt_\lambda)=0$. We first show that $w_0w(\lambda)=\lambda$; $\ell(w_0w^{-1}w_0t_\lambda)=0$ and $\ell(w_0wt_{-\lambda})=\ell(w_0w)+\ell(t_{-\lambda})$. Indeed, 

\[ 0=\ell(wt_\lambda)=\sum_{\a \in \Phi^+_w}|\<\lambda, \a^\vee\>|+\sum_{\a\in \Phi^-_w} |\< \lambda, \a^\vee\> +1|,\]
hence 
\[ \Phi^+_w=\{ \a\in \Delta_+~|~ \< \lambda, \a^\vee\>=0\}, \quad \quad \Phi^-_w=\{ \a\in \Delta_+~|~ \< \lambda, \a^\vee\>=-1\}.\]
From this we see that $\< \lambda, \a^\vee\>=0$ for all $\a\in \Phi^-_{w_0w}$, therefore $w_0w(\lambda)=\lambda$.

The equality $\ell(w_0w^{-1}w_0 t_\lambda)=0$ follows by $\Phi^-_{w_0w^{-1}w_0}=\Phi^-_w$ and $\Phi^+_{w_0w^{-1}w_0}=\Phi^+_w$. Indeed, $w_0w^{-1}w_0(\a)\in \Delta_- \Rightarrow w_0(\a)=w(\gamma) $ for some $\gamma \in \Phi^-_w$. So $-1=\< \lambda, \gamma^\vee\>=\< w_0w(\lambda), \a^\vee\>=\< \lambda, \a^\vee\>$, hence $\a \in \Phi^-_{w}$. Similarly, $w_0w^{-1}w_0(\a)\in \Delta_+ \Rightarrow \a \in \Phi^+_w$. 

Therefore, $\ell(t_{-\lambda})=|\Phi^-_w|; ~\ell(w_0w)=|\Phi^+_w|$, hence  $\ell(w_0wt_{-\lambda})=|\Phi^-_w|+|\Phi^+_w|=\ell(t_{-\lambda})+\ell(w_0w)$.

\noindent
{\em Step 2:} We see that  $-\lambda$ is a dominant weight and $b t_{-\lambda}=w$, hence 
\[ \theta_{-\lambda}=\overline{t}_{-\lambda}, \qquad \overline{b}=\overline{w} \theta^{-1}_{-\lambda}, \qquad \iota(\overline{b})=\theta^{-1}_{-\lambda} \iota(\overline{w})=\theta^{-1}_{-\lambda} \overline{w^{-1}}.\] Similarly, $\overline{b'}=\overline{w_0w^{-1}w_0} \theta^{-1}_{-\lambda}$. Note that $\overline{w_0}=\overline{w_0w^{-1}w_0}~\overline{w_0 w}=\overline{w_0w}~\overline{w^{-1}}$. What we want to show $\iota(\overline{b})=\overline{w_0}^{-1} \overline{b'} \overline{w_0}$ is equivalent to
\begin{equation*}
\overline{w_0}^{-1} \overline{w_0w^{-1}w_0}~\overline{t}_{-\lambda}^{-1} \overline{w_0} =\overline{t}_{-\lambda}^{-1} \overline{w^{-1}} \Leftrightarrow  \overline{t}_{-\lambda}~\overline{w_0w}=\overline{w_0w}~\overline{t}_{-\lambda}.
\end{equation*}
The last equality holds since $t_{-\lambda}w_0w=w_0w t_{-\lambda}$ and $\ell(t_{-\lambda}w_0w)=\ell(t_{-\lambda})+\ell(w_0w)$.

\noindent
(d) It is  because $\overline{w_0}^{-1} \overline{s}_\a \overline{w_0}=\overline{s}_{w_0(\a)}$, which is  equivalent to $\overline{s_\a w_0}=\overline{w_0s_{w_0(\a)}} \Leftrightarrow s_\a w_0=w_0 s_{w_0(\a)}$ for a simple reflection $s_\a \in S$.




\subsection{Proposition \ref{prop: ff on diagonal bimods}}\ \label{append: enhanced version}  We work under the condition \ref{eq: assumption on l}. One of the key points in this paper is Proposition \ref{prop: ff on diagonal bimods} where we assumed that the map $Z(G) \rightarrow C(\chi)$ is surjective. We would like to remove that restriction. It requires to refine the target categories of the functor $\bullet_\dag$ in \eqref{eq: res functor for chi}. 

Let us consider a regular $\chi \in G$ such that the semisimple part $\chi_s$ is contained in $T$. Let $\chi_u$ be the unipotent part of $\chi$.  Let $G_s=\Stab_G(\chi_s)$, the stablizer of $\chi_s$ in $G$.  
\begin{defi}\text{Let $\Xi$ be the center of $G_s$.}
\end{defi}
\begin{Rem}Since $\chi$ is regular, we have that $\Xi$ is a finite abelian group in $ \Stab_{G_s}(\chi_u) =\Stab_G(\chi)$ and the natural map $\Xi \rightarrow \Stab_G(\chi) \rightarrow C(\chi)$ is bijective (since $G$ is simply connected, $G_s$ must be connected). Since $\chi_s\in T$, the maximal torus of $G_s$ is $T$, hence $\Xi \subset T$. 
\end{Rem}

\begin{Rem}\label{rem: Xi-invariant}Lemma \ref{lem: fin part C[G] complete at chi} holds  with the same proof if we replace $\BC[G]_{\g\dash fin}^{\wedge_\chi, Z}$ by $\BC[G]_{\g\dash fin}^{\wedge_\chi, \sigma(\Xi)}$. The latter  is defined as follows : Since $\Xi$ fixes $\chi$, we have $\sigma_1(\Xi)$-action on $\BC[G]_{\g\dash fin}^{\wedge_\chi}$. By integrating the $\g$-action, we obtain a $G$-action on $\BC[G]_{\g\dash fin}^{\wedge_\chi}$ then $\sigma_2(\Xi)$-action is obtained by restricting the $G$-action to $\Xi$. Then $\sigma(x):= \sigma_1(x) \sigma_2(x^{-1})$ defines the $\sigma(\Xi)$-action on $\BC[G]_{\g\dash fin}^{\wedge_\chi}$ that commutes with the action of $G$.

\end{Rem}

Let us define the following categories:
\begin{equation}\label{eq: cat with Xi} 
\Pbim^{\Xi}(U_q^{ev, \uchi}),\;\;\Pbim^\Xi(U_q^{ev\wedge_\chi}), \;\;\Pbim^\Xi(R_\hbar),  \;\;\Pbim^\Xi(\CW_q^{\wedge_{\uchi}})
\end{equation}

\begin{Rem}Let $X\subset P$ be the lattice of all weights on which $\Xi \subset T$ acts trivially. Then giving a $P/X$-grading is  the same as giving an action of $\Xi$.
\end{Rem}
\begin{itemize}
\item The group $\Xi$ acts on $\CW_q^{\wedge_{\uchi}}$ trivially. Then $\Pbim^\Xi(\CW_q^{\wedge_{\uchi}})$ is the category of $\Xi$-graded finitely generated Poisson $\CW_q^{\wedge_\chi}$-bimodules.
\item The $P$-grading on $U_q^{ev, \uchi}$ gives a $\Xi$-action on it. Recall $\phi_{1, \hbar}: U_q^{ev,\uchi} \rightarrow U_\e^{ev, \uchi}$. The subalgebra $P_{1, \hbar}=\phi^{-1}_{1, \hbar}(Z_{Fr}\otimes_{Z_\cap} Z_\cap^{\wedge_{\uchi}})$ is stable under the $\Xi$-action on $U_q^{ev, \uchi}$. So $\Pbim^{\Xi}(U_q^{ev, \uchi})$ is the category of $\Xi$-graded finitely generated Poisson $U_q^{ev, \uchi}$-bimodules.
\item  Since $\Xi$ fixes $\chi$, the maximal ideal $\m_\chi \subset Z_{Fr}$ is homogeneous in the  $P/\ell X$-grading. Since the map $\phi: U_q^{ev} \rightarrow U_\e^{ev}$ is graded, the ideal $\fJ_\chi=\phi^{-1}(\m_\chi)$ also graded. So $U_\e^{ev\wedge_\chi}$ and $U_q^{ev\wedge_\chi}$ have $P/\ell X$-grading, hence naturally have $P/X$-grading, equivalently, a $\Xi$-action. Moreover, $P_\hbar \subset U_q^{ev\wedge_\chi}$ is stable under this $\Xi$-action. So $\Pbim^\Xi(U_q^{ev\wedge_\chi})$ is the category of $\Xi$-graded finitely generated Poisson $U_q^{ev\wedge_\chi}$-bimodules

\item The $\Xi$-action on $(R_\hbar, C_\hbar)$ is more tricky and explained in Remark  \ref{rem: Xi act on R} below. Then $\Pbim^\Xi(R_\hbar)$ is the category of $\Xi$-graded finitely generated Poisson $R_\hbar$-bimodules.
\end{itemize}
\begin{Lem}\label{lem: Xi act on R}We further choose $\chi=\chi_s \chi_u$ such that $\chi_s \in T, \chi_u \in U_+$. Then 

\noindent
(a) There is a complete system of $\Xi$-invariant idempotents $e_1, \dots, e_\sd$ of $U_\e^{ev\wedge_\chi}$ which are matrix units in a decomposition $U_\e^{ev\wedge_\chi}\cong \Mat_\sd(\BC)\otimes_\BC Z^{\wedge_\chi}$.

\noindent
(b) By lifting of  idempotents, there is a complete system of $\Xi$-invariant idempotents $e_1, \dots, e_\sd$ of $U_\e^{ev\wedge_\chi}$ which are matrix units in a decomposition $U_q^{ev\wedge_\chi}\cong \Mat_\sd(\BC)\otimes_\BC R_\hbar$.

We do not claim that $\Xi$ acts on $\Mat_\sd(\BC)$ in both parts.
\end{Lem}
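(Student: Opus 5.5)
The plan is to construct the idempotents first at $\hbar=0$ from a $\Xi$-stable splitting module of the Azumaya algebra, and then to lift them equivariantly to $\hbar\neq 0$.

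For part (a), the starting point is that $U_\e^{ev\wedge_\chi}$ is Azumaya over the complete local ring $Z^{\wedge_\chi}$ (Theorem \ref{thm: Azumaya locus}), so it is a matrix algebra over $Z^{\wedge_\chi}$. The task is to choose the matrix decomposition compatibly with $\Xi$. With $\chi_s\in T$ and $\chi_u\in U_+$, the point $\chi$ lies in the locus where $\tZ^>_{Fr}$ and $\tZ^0_{Fr}$ take prescribed values, and $\tZ^<_{Fr}$ vanishes. I will construct an explicit $\Xi$-stable splitting module from a baby Verma type module. Concretely, take
\[
N:=U_\e^{ev\wedge_\chi}\otimes_{(U_\e^{ev\geq})^{\wedge}}\mathbb{L},
\]
where $U_\e^{ev\geq}$ is the Borel part and $\mathbb{L}$ is a rank one module over the completion of $Z^{\wedge_\chi}\cdot U_\e^{ev\geq}$ on which the $\tE_i$ and $K^\lambda$ act through characters lifting those at $\chi$. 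By the PBW basis (Lemma \ref{lem: PBW-basis for even part}), $N$ is free over $Z^{\wedge_\chi}$ with basis $\tF^{\vec k}v$, $0\leq k_j<\ell_{\b_j}$, so it has rank $\sd$. Each basis vector has a $P$-degree, and $\Xi\subset T$ acts on it through its weight. Since $U_\e^{ev\wedge_\chi}$ is Azumaya of rank $\sd^2$ and $N$ is faithful of rank $\sd$, we get
\[
U_\e^{ev\wedge_\chi}\cong\End_{Z^{\wedge_\chi}}(N)\cong \Mat_\sd(Z^{\wedge_\chi}),
\]
and the projections $e_j$ onto the lines spanned by the weight basis vectors are matrix units. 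The $\Xi$-action on $U_\e^{ev\wedge_\chi}$ is the $P/X$-grading, that is, conjugation by $\Xi$ acting on $N$. The diagonal idempotents commute with the $\Xi$-action on $N$, so they are $\Xi$-invariant. The off-diagonal units are only homogeneous, which is why the statement makes no claim that $\Xi$ acts on $\Mat_\sd(\BC)$.

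The main obstacle is the Borel character. It has to be defined over $Z^{\wedge_\chi}$ rather than just at the point, and it has to be compatible with $\Xi$. The $\Xi$-compatibility is automatic, since $\mathbb{L}$ sits in a single $P/X$-degree. Extending the character over the completion requires $\tE_\a^{\ell}$ and $K^{\ell\lambda}$ to act by the corresponding central elements, which is consistent. If this construction fails, I would fall back on a second route: pick any decomposition, and average idempotents over the finite group $\Xi$ in the fiber $\Mat_\sd(\BC)$. The algebra $\BC[\Xi]$ is semisimple, so we can decompose the $\Xi$-representation on the splitting module $\BC^\sd$ into characters. This gives a $\Xi$-eigenbasis and hence $\Xi$-invariant diagonal idempotents at the closed point, which we then lift over $Z^{\wedge_\chi}$ equivariantly by the argument below.

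Part (b) is equivariant lifting of idempotents along the $\hbar$-adically complete, flat deformation $U_q^{ev\wedge_\chi}\to U_\e^{ev\wedge_\chi}$ (Lemma \ref{lem: completion of Uev}). Lift the orthogonal system $e_1,\dots,e_\sd$ step by step modulo $\hbar^k$ using the standard formula, for instance by lifting $e$ and iterating $e\mapsto 3e^2-2e^3$. Since $\Xi$ is finite and acts by algebra automorphisms, at each step we can replace a lift by its $\Xi$-average, or simply note that the standard formulas are $\Xi$-equivariant when the initial lift is chosen in degree $0$. This keeps the lifted idempotents $\Xi$-invariant. Orthogonality and completeness lift similarly. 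The sum of the lifts is an invertible element congruent to $1$, and conjugating by it restores completeness; that conjugating element is also $\Xi$-invariant. The matrix units $e_{ij}$ lift as homogeneous elements in the same way. Then, as in Proposition \ref{prop: Decomposition of completions}, $U_q^{ev\wedge_\chi}\cong\Mat_\sd(\BC)\otimes_\BC R_\hbar$ with $R_\hbar=e_1U_q^{ev\wedge_\chi}e_1$, and the $e_j$ are $\Xi$-invariant.
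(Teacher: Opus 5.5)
Your part (a) has a genuine gap in both routes; part (b) is fine and matches the paper's lifting of idempotents. In the main route, the module $\mathbb{L}$ you describe cannot exist. Suppose $K^\mu$ ($\mu\in 2P$) and $\tE_i$ act on a rank-one $Z^{\wedge_\chi}$-module by scalars $k_\mu, c_i\in Z^{\wedge_\chi}$. The relation $K^{\mu}\tE_iK^{-\mu}=\e^{(\a_i,\mu)}\tE_i$ with $\mu=2\omega_i$ gives $(1-\e^{2\sd_i})c_ik_\mu=0$. Since $1-\e^{2\sd_i}\in\BC^\times$ and $k_\mu$ is a unit, this forces $c_i=0$. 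So $U_\e^{ev>}$ acts through the counit, and the central elements $\tE_\a^{\ell}$ act by $0$ on $\mathbb{L}$ and hence on $N$. But the $\tE_\a^\ell$ are nonzero in $Z^{\wedge_\chi}$: up to the units $K^{\ell\gamma(\a)}$ they are coordinates on the $U_-$-factor of $G_0$. At $\chi=\chi_s\chi_u\in TU_+$ it is the $\tE_\a^\ell$ that vanish, while $\tF_\a^\ell$ take the values $\chi(\a)$; you have this reversed. Consequently $N$ is supported on the proper closed subscheme $\{\tE_\a^\ell=0\}$ of $\Spec Z^{\wedge_\chi}$. It is not free of rank $\sd$, and $U_\e^{ev\wedge_\chi}\cong\End_{Z^{\wedge_\chi}}(N)$ fails. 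The ``consistency'' you assert for extending the Borel character over $Z^{\wedge_\chi}$ is exactly what breaks, so a baby Verma module only works fiberwise.

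Your fallback assumes the key point instead of proving it. For an arbitrary decomposition, $\Xi$ acts on the fiber $\Mat_\sd(\BC)$ only by automorphisms, i.e. through a projective representation $\Xi\to PGL_\sd(\BC)$. A compatible linear $\Xi$-action on the splitting module $\BC^\sd$ need not exist when $\Xi$ is not cyclic, since the obstruction lies in $H^2(\Xi,\BC^\times)$. Without it, invariant primitive idempotents can fail to exist: $\BZ/2\times\BZ/2$ acting on $\Mat_2(\BC)$ by conjugation with two anticommuting involutions fixes no rank-one idempotent. Averaging idempotents over $\Xi$ does not produce idempotents either. This linearization is the real content of the lemma, and the paper supplies it at each closed point $\xi=(\chi,\lambda)$ over $\chi$. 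It uses $M_\xi=U_\e^{ev}/U_\e^{ev}\langle\tE_i,\,K^{2\mu}-\e^{(\lambda,2\mu)},\,\tF_\a^\ell-\chi(\a)\rangle$, which is $\sd$-dimensional with $U_\e^{ev}/U_\e^{ev}\m_\xi\cong\End_\BC(M_\xi)$. Since $\tF_\a^\ell$ acts by $\chi(\a)$ rather than $0$, $M_\xi$ is not $P$-graded, contrary to your ``$P$-degree'' basis. It is $Q/\ell X$-graded because $\chi(\a)=0$ for $\a\notin X$ (this is where the choice of $\chi=\chi_s\chi_u$ enters). Hence it is $Q/X$-graded, $\Xi$ acts linearly on $M_\xi$, and the isomorphism is $\Xi$-equivariant. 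Projections onto a homogeneous basis give $\Xi$-invariant diagonal matrix units in the fiber. These are lifted $\Xi$-equivariantly to $U_\e^{ev\wedge_\xi}$ and assembled via $U_\e^{ev\wedge_\chi}\cong\prod_\xi U_\e^{ev\wedge_\xi}$. Your lifting argument (average a lift over $\Xi$, then iterate $a\mapsto 3a^2-2a^3$) is exactly what this last step needs.
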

\begin{proof}(a) Recall the isomorphism $Z_{Fr}\cong \tZ^>_{Fr} \otimes \tZ^0_{Fr} \otimes \tZ^<_{Fr} \iso \BC[U_-]\otimes \BC[T] \otimes \BC[U_+]$ in Proposition \ref{prop: F-center vs openBruhat}.  The maximal ideal $\m_\chi \in Z_{Fr}$ is generated by 
\[ \tE_\a^\ell, \;\; K^{2\ell \lambda}-\chi_s(K^{2\ell \lambda}),  \;\; \tF_\a^\ell -\chi(\a) \qquad  \qquad \text{for $\a \in \Delta_+$ and $\lambda \in P$,}\]
here $\chi(\a) \in \BC$ so that $\chi(\a) =0$ if $\a \not \in X$, the set of all weights on which $\Xi \subset T$ acts trivially.

For any $\xi=(\chi, \lambda) \in \Spec Z$, let 
\[ M_\xi=U_\e^{ev}/U_\e^{ev}\< \tE_i, K^{2\mu} -\e^{(\lambda, 2\mu)}, \tF_\a^\ell-\chi(\a)\>.\]
Then 
\begin{equation}\label{eq: fiber}
U_\e^{ev}/U_\e^{ev} \m_\xi \cong \Mat_\BC(M_\xi).
\end{equation}

Since $\chi(\a)=0$ if $\a \not \in X$, there is a $Q/\ell X$-grading hence a $Q/X$-grading on $M_\xi$. Hence $M_\xi$ has an action of $\Xi$ so that the isomorphism \eqref{eq: fiber} is $\Xi$-equivariant. So there is a complete system of $\Xi$-invariant idempotents $e_1, \dots, e_\sd$ in $U_\e^{ev}/U_\e^{ev}\m_\xi$. This system is lifted to a complete system of $\Xi$-invariant idempotents in $U_\e^{ev\wedge_\xi}$.  Since $U_\e^{ev\wedge_\chi}=\prod_{\xi=(\chi, \lambda)} U_\e^{ev\wedge_\xi}$, we can find a complete system of $\Xi$-invariant idempotents in $U_\e^{ev\wedge_\chi}$ as desired.

\noindent
(b) follows by lifting of  idempotents.    
\end{proof}
\begin{Rem}\label{rem: Xi act on R} Let $e=e_1=E_{11}$ then $R_\hbar \cong eU_q^{ev\wedge_\chi}e$ carries an action of $\Xi$. The map $\phi_\hbar: R_\hbar \rightarrow Z^{\wedge_\chi}$ is identified with the map $eU_q^{ev\wedge_\chi}e \rightarrow eU_\e^{\wedge_\chi}e$. Then the map $\phi_\hbar: R_\hbar \rightarrow Z^{\wedge_\chi}$ becomes a $\Xi$-equivariant map. Then $C_\hbar=\phi_\hbar^{-1}(Z_{Fr}^{\wedge_\chi})$ is stable under the $\Xi$-action on $R_\hbar$.
\end{Rem}

The following composition of equivalences
\[ \Pbim^\Xi(U_q^{ev\wedge_\chi}) \xrightarrow[]{\fP_1}\Pbim^\Xi(R_\hbar)\xrightarrow[]{\fP_2} \Pbim^\Xi(\CW_q^{\wedge_{\uchi}})\]
is constructed similarly to  Section \ref{sec: Poisson bimodules}. One needs to show that there is a $\Xi$-equivariant lift $\iota: V\rightarrow R_\hbar$, where $V$ is the cotangent space of the symplectic leave (the conjugacy class of $\chi$) at $\chi$ hence admits a natural action of $\Xi$. 

Following Section \ref{ssec: bullet-complete functor}, there is an enhanced restriction functor
\begin{equation}\label{eq: enhanced bullet}\bullet_\dag: U_q^{fin, \uchi}\rmod^{\cU_q } \rightarrow \Pbim^\Xi(\CW_q^{\wedge_{\uchi}}).
\end{equation}
\begin{Prop}\label{prop: enhanced ff on diagonal bimodules} Proposition \ref{prop: ff on diagonal bimods} holds for the enhanced functor $\bullet_\dag$ in \eqref{eq: enhanced bullet}. 
\end{Prop}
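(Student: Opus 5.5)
The plan is to repeat the proof of Proposition \ref{prop: ff on diagonal bimods} verbatim, replacing the $\Lambda$-grading by the $\Xi$-grading, and to feed in the $\Xi$-version of Lemma \ref{lem: equal isogeny}. The reduction steps in that proof did not use the surjectivity of $Z(G)\rightarrow C(\chi)$. Morphisms between the $\hbar$-flat objects $V_q\otimes U_q^{ev\wedge_\chi}$ in $\Pbim^\Xi(U_q^{ev\wedge_\chi})$ are just $\Xi$-graded bimodule maps, by Remark \ref{rem: mor between flat Pbim}. These coincide with $\Xi$-graded $\cU_q$-equivariant right-module maps, because the $\cU_q$-action is recovered from the adjoint $U_q^{ev}$-action. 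So fullness and faithfulness reduce, via adjunction, to the bijectivity of
\[ \Hom_{\cU_q}(V_q, U_q^{fin,\uchi}) \rightarrow \Hom_{\cU_q}(V_q, U_q^{ev\wedge_\chi})^{\Xi}, \]
where $V_q$ is projective and free over $\BC[[\hbar]]$, and the superscript $\Xi$ denotes the part compatible with the $\Xi$-grading. In the old argument this is where weights in $Q$ were used. Now we allow arbitrary weights and take $\Xi$-invariants instead, with $\Xi$ acting on $V_q$ through $T$ and on $U_q^{ev\wedge_\chi}$ through the $P/X$-grading.

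The $\hbar$-adic step goes exactly as before. We use the two short exact sequences obtained from $\cdot\hbar$, which exist by flatness and projectivity of $V_q$. We also need finite generation over $\CW_q^{\wedge_{\uchi}}$: Lemma \ref{lem: extension of left-actions} gives it for the source, and the $\Xi$-version of Lemma \ref{lem: Hom(, Ucomplete at chi)} gives it for the target, proved by the same Nakayama argument. Together with Lemma \ref{lem fg quotient imply fg}, this reduces everything to the $\e$-level statement. That statement is the bijectivity of $\Hom_{\cU_\e}(V_\e, U_\e^{fin,\uchi})\rightarrow \Hom_{\cU_\e}(V_\e,U_\e^{ev\wedge_\chi})^{\Xi}$ for arbitrary projective $V_\e\in\Rep^{fd}(\cU_\e(\g))$.

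For that $\e$-level statement I will follow Appendix \ref{append: equal isogeny}, with $Z(G)$ replaced by $\sigma(\Xi)$. Step 1 still writes $V_\e^t\otimes U^{fin}_\e$ as a direct summand of $W_\e\otimes Z^{fin}_{Fr}$, now with $W_\e$ of arbitrary weights. The functor $A$ becomes $M\mapsto ((M^{\wedge_\chi})^{\fu_\e}_{\g\dash fin})^{\sigma(\Xi)}$. For $M=W_\e\otimes Z^{fin}_{Fr}$ we must show
\[ (W_\e)^{\fu_\e}\otimes Z^{fin,\uchi}_{Fr}\iso \big((W_\e)^{\fu_\e}\otimes (Z^{fin\wedge_\chi}_{Fr})_{\g\dash fin}\big)^{\sigma(\Xi)}. \]
Here $\sigma(\Xi)$ commutes with $G$ and acts trivially on $(W_\e)^{\fu_\e}$, since $(W_\e)^{\fu_\e}$ is a $G$-representation pulled back by Frobenius. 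So the right-hand side is $(W_\e)^{\fu_\e}\otimes(\BC[G]^{\wedge_\chi}_{\g\dash fin})^{\sigma(\Xi)}$. Remark \ref{rem: Xi-invariant} then identifies this with $(W_\e)^{\fu_\e}\otimes\BC[G]\otimes_{\BC[G\sslash G]}\BC[G\sslash G]^{\wedge_{\uchi}}$, using that $\Xi\rightarrow C(\chi)$ is bijective and Lemma \ref{lem: homogenous section}. Taking $\cU_\BC(\g)$-invariants and passing to direct summands then finishes the argument.

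The main obstacle is to make the gradings match. On the $\e$-level target, the $\Xi$-invariance coming from the grading of $\Pbim^\Xi$ must coincide with the $\sigma(\Xi)$-invariance of Remark \ref{rem: Xi-invariant}. This requires showing that the $\Xi$-action on $U_\e^{ev\wedge_\chi}$ defined through the $P/\ell X$-grading (where $\Xi$ fixes $\chi$, the $\sigma_1$ part), twisted by the $\Xi$-action on $V_\e$ (the $\sigma_2$ part), is exactly $\sigma$. I would check this on generators $\tE_\a^\ell,\tF_\a^\ell,K^{2\ell\lambda}$ of $Z_{Fr}$, using the choice $\chi_s\in T$, $\chi_u\in U_+$ from Lemma \ref{lem: Xi act on R}. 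A secondary point is that $\fP$ and the idempotent $e$ must be $\Xi$-equivariant; this needs a $\Xi$-equivariant lift $V\rightarrow R_\hbar$, which I expect to obtain by averaging over the finite group $\Xi$ in the proof of Proposition \ref{prop: factorization of complete poisson}.
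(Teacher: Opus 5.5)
Your proposal is correct and follows the paper's route. The paper likewise reruns the proof of Proposition \ref{prop: ff on diagonal bimods} verbatim with Lemma \ref{lem: equal isogeny} replaced by its $\Xi$-enhanced version (Lemma \ref{lem: enhanced equal isogeny}), proved as you describe: pass to $\fu_\e$-invariants, use that $\sigma(\Xi)$ acts trivially on the Frobenius-pulled-back representation so that the $\cU(\g)\rtimes\Xi$-invariants become $\cU(\g)$-invariants of $V\otimes(Z^{\wedge_\chi}_{Fr,\g\dash fin})^{\sigma(\Xi)}$, and invoke Remark \ref{rem: Xi-invariant}. The paper also only asserts the existence of a $\Xi$-equivariant lift $V\rightarrow R_\hbar$; if you supply one, build it equivariantly step by step by choosing $\Xi$-eigenvectors in the constructions of Proposition \ref{prop: factorization of complete poisson} and Lemma \ref{lem: lift V to Rh}, since averaging a finished lift would not preserve $[\iota(v_1),\iota(v_2)]=\hbar\{v_1,v_2\}$.
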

By the same proof, Proposition \ref{prop: enhanced ff on diagonal bimodules} holds by the enhanced version of Lemma \ref{lem: equal isogeny} below:
\begin{Lem}\label{lem: enhanced equal isogeny} For $V_\e \in \Rep(\cU_\e(\g))$, the following natural map is bijective
\[ \Hom_{\cU_\e}(V_\e, U_\e^{fin, \uchi}) \rightarrow \Hom_{\cU_\e \rtimes \Xi}(V_\e, U_\e^{ev\wedge_\chi}).\]
\end{Lem}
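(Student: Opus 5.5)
We follow the proof of Lemma \ref{lem: equal isogeny} in Appendix \ref{append: equal isogeny}, replacing the $Z(G)$-invariants by the $\sigma(\Xi)$-invariants of Remark \ref{rem: Xi-invariant}. The $\Xi$-action on $U_\e^{ev\wedge_\chi}$ is the one coming from the $P/X$-grading. Since $U_\e^{ev\wedge_\chi}=U_\e^{fin\wedge_\chi}$, the map in question becomes
\[ (V_\e^t\otimes U_\e^{fin,\uchi})^{\cU_\e}\rightarrow (V_\e^t\otimes U_\e^{fin\wedge_\chi})^{\cU_\e\rtimes \Xi}. \]
Using enough projectives (which are also injective), we may assume $V_\e$ is projective. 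Unlike the earlier lemma, we do not restrict to weights in $Q$. The group $\Xi$ now records the $P/X$-grading, so the role of the root lattice is played by the full grading.

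\emph{Step 1.} As in Step 1 of the earlier proof, $V_\e^t\otimes U_\e^{fin}$ is projective in the category $Z^{fin}_{Fr}\Mod^{G_\e}$ of finitely generated $Z^{fin}_{Fr}$-modules in $\Rep(\cU_\e)$. Hence it is a direct summand of some $W_\e\otimes Z^{fin}_{Fr}$.

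\emph{Step 2.} We define the two functors
\[ B(M)=(M\otimes_{Z_\cap}Z_\cap^{\wedge_{\uchi}})^{\fu_\e}, \qquad A(M)=\big(((M^{\wedge_\chi})^{\fu_\e})_{\g\dash fin}\big)^{\sigma(\Xi)}, \]
where $\Xi$ acts on $M^{\wedge_\chi}$ through the grading, combined with the integrated $G$-action as in Remark \ref{rem: Xi-invariant}. There is a natural transformation $B\rightarrow A$. By Step 1, it suffices to check that it is an isomorphism on $M=W_\e\otimes Z^{fin}_{Fr}$.

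For such $M$, we have $(W_\e\otimes Z^{fin\wedge_\chi}_{Fr})^{\fu_\e}=W_\e^{\fu_\e}\otimes Z^{fin\wedge_\chi}_{Fr}$. Here $W_\e^{\fu_\e}$ is a $G$-representation, and its grading action of $\Xi\subset T$ agrees with the restriction of its $G$-action. Consequently $\sigma$ acts trivially on the factor $W_\e^{\fu_\e}$, and
\[ A(M)=W_\e^{\fu_\e}\otimes\big((Z^{fin\wedge_\chi}_{Fr})_{\g\dash fin}\big)^{\sigma(\Xi)}. \]
By Remark \ref{rem: Xi-invariant}, that is, Lemma \ref{lem: fin part C[G] complete at chi} with $\sigma(\Xi)$ in place of $Z(G)$, and using that $\Xi\rightarrow C(\chi)$ is bijective, the last factor is $Z^{fin,\uchi}_{Fr}$. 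This identifies $A(M)$ with $B(M)$.

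\emph{Step 3.} We pass to direct summands and then take $\cU_\BC(\g)$-invariants, which yields the claimed bijection.

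We expect the main obstacle to be the compatibility in Step 2: the $\Xi$-action defined through the $P/\ell X$-grading of the completion must match the $\sigma_1$-action of Remark \ref{rem: Xi-invariant} on $\BC[G]^{\wedge_\chi}$ under $Z^{fin}_{Fr}\cong\BC[G]$. In addition, the grading and $G$-actions on $W_\e^{\fu_\e}$ must cancel in $\sigma$. The plan is to verify both on the generators $\tE_\a^\ell,\tF_\a^\ell,K^{2\ell\lambda}$, using that $\chi_s\in T$ and $\chi_u\in U_+$, as arranged in Lemma \ref{lem: Xi act on R}.
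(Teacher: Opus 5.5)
Your proposal is correct and takes essentially the same route as the paper. You reduce via projectivity over $Z^{fin}_{Fr}$ to objects $W_\e\otimes Z^{fin}_{Fr}$, pass to $\fu_\e$-invariants, and then rewrite the $\cU(\g)\rtimes\Xi$-invariants as $\g$-invariants of $\sigma(\Xi)$-invariants so that Remark \ref{rem: Xi-invariant} applies. The compatibility you flag as the main obstacle, namely that $\sigma(\Xi)$ acts trivially on $W_\e^{\fu_\e}$, is simply asserted in the paper's proof; also, only surjectivity of $\Xi\to C(\chi)$ is actually needed, not bijectivity.
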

\begin{proof}Using that $U_\e^{fin}$ is a finitely generated projective module over $Z_{Fr}$,  we reduce to proving the following is bijective
\[ (W_\e \otimes Z_{Fr}^{fin, \uchi})^{\cU_\e}\rightarrow (W_\e \otimes Z_{Fr}^{\wedge_\chi})^{\cU_\e \rtimes \Xi} \qquad \text{for any $W_\e \in \Rep^{fd}(\cU_\e(\g))$}\]
By taking $\fu_\e$-invariants, we reduce to proving that  the following is bijective for any $V\in \Rep^{fd}(\cU(\g))$:
\begin{equation*}(V\otimes Z_{Fr}^{fin, \uchi})^{\cU(\g)} \rightarrow (V\otimes Z_{Fr}^{\wedge_\chi})^{\cU(\g) \rtimes \Xi}
\end{equation*}
which is equivalent to showing the following is bijective
\begin{equation}\label{eq: invariant iso}  
    (V\otimes Z_{Fr}^{fin, \uchi})^{\cU(\g)} \rightarrow (V\otimes Z_{Fr, \g \dash fin}^{\wedge_\chi})^{\cU(\g) \rtimes \Xi},
\end{equation}
where $Z^{\wedge_\chi}_{Fr, \g\dash fin}$ is the $\g$-locally finite part of $Z^{\wedge_\chi}_{Fr}$.  

On $V$ and $Z^{\wedge_\chi}_{Fr, \g\dash fin} \in \Rep(\cU(\g) \rtimes \Xi)$, there are  $\sigma_1(\Xi)$-actions coming from $\Xi \subset \cU_\e(\g) \rtimes \Xi$. On the other hand, there are the $\sigma_2(\Xi)$-actions obtained by restricting the $G$-actions (integrating the $\g$-actions). Then $\sigma(x):=\sigma_1(x) \sigma_2(x^{-1})$ defines the $\sigma(\Xi)$-actions on $V$ and $ Z^{\wedge_\chi}_{Fr, \g\dash fin}$ that commute with the action of $\cU(\g)$, indeed, $\sigma(\Xi)$ acts trivially on $V$. Then 
\[ (V\otimes Z^{\wedge_\chi}_{Fr, \g \dash fin})^{\cU(\g) \rtimes \Xi} = \left( (V\otimes Z^{\wedge_\chi}_{Fr, \g \dash fin})^{\sigma(\Xi)}\right)^{\cU(\g)}=\left( V\otimes (Z^{\wedge_\chi}_{Fr, \g \dash fin})^{\sigma(\Xi)}\right)^{\cU(\g)},\]
since the elements on the left hand side are fixed by the $\sigma_1(\Xi)$ and $\sigma_2(\Xi)$-actions. By Remark \ref{rem: Xi-invariant}, $Z_{Fr}^{fin, \uchi} \cong (Z_{Fr, \g\dash fin}^{\wedge_\chi})^{\sigma(\Xi)}$, hence \eqref{eq: invariant iso} holds. This finishes the proof.
\end{proof}
\begin{Rem}The improvement  of other results in this paper is either more complicated or unknown, hence will be covered elsewhere.
\end{Rem}

\printbibliography
\end{document}